\documentclass[9pt, leqno, fleqn]{amsart}

\usepackage[normalem]{ulem}
\usepackage[titletoc]{appendix}
\usepackage{lineno}

\usepackage{graphicx, psfrag, epstopdf}

\usepackage{hyperref}
\usepackage{comment}

\usepackage{dsfont}
\usepackage{amsmath}
\usepackage[fleqn]{mathtools}
\usepackage{amssymb}
\usepackage{graphicx}

\usepackage{amssymb}

\usepackage[all,cmtip]{xy}

\usepackage{amsmath}

\usepackage{amsfonts}

\usepackage{soul}

\usepackage{mathpazo}
\usepackage{color}
\usepackage{yfonts}

\usepackage{paralist}

\usepackage{stmaryrd}

\usepackage{amsxtra}

\def\Id{{\rm Id}}

\def\cU{\mathfrak U}

\newcommand{\setR}{\mathsf{R} }

\newcommand{\norm}[1]{\| #1\|}

\def\cI{          \mathcal I}

\def\cJ{          \mathcal J}

\def\cA{         A}

\def\cB{          \mathcal B}

\def\cD{          \mathcal D}

\def\cN{          \mathcal N}

\def\cW{         W}

\def\greg{ {\bf s} }

\def \Cent{  {\rm Cent} }

\def \R{{\mathbb R}}

\def \Z{{\mathbb Z}}
\def \N{{\mathbb N}}

\def\Trunc{ \mathfrak{R}  }

\def\BV{ \overline{D}_V }

\def\DV{ D_V }

\def\DZ{  D_{\tz} }

\newcommand{\T}{{\mathbb T}}
\newcommand{\prf}{{\begin{proof}}}
	\newcommand{\epf}{{\end{proof}}}

\newcommand{\bs}{{\mathbf s}}

\newcommand{\cE}{{\mathcal E}}

\newcommand{\cT}{{\mathcal T}}

\newcommand{\cP}{{\mathcal P}}

\newcommand{\cK}{{\mathcal K}}

\newcommand{\C}{{\mathbb C}}

\newcommand{\cQ}{{\mathcal Q}}

\newcommand{\tpl}{{T}}

\newcommand{\ary}{\begin{eqnarray}}
	\newcommand{\eary}{\end{eqnarray}}

\newcommand{\aryst}{\begin{eqnarray*}}
	\newcommand{\earyst}{\end{eqnarray*}}

\newcommand{\enmt}{\begin{enumerate}}
	\newcommand{\eenmt}{\end{enumerate}}

\usepackage{mathptmx}
\usepackage{bm}

\newtheorem{thm}{\bf Theorem}[section]

\newtheorem{lemma}[thm]{\bf Lemma}
\newtheorem{prop}[thm]{\bf Proposition}
\newtheorem{defi}[thm]{\bf Definition}

\newtheorem{cor}[thm]{\bf Corollary}

\theoremstyle{definition}

\newtheorem{rema}[thm]{\bf Remark}

\DeclareMathOperator{\diff}{Diff}

\theoremstyle{definition}

\def\bee{\begin{equation}}
	\def\eee{\end{equation}}

\newif\ifA
\newcommand\detail[1]{\ifA {\color{blue} \textbf{Details:} \\ #1}\fi}

\def\deltazero{  {  \hat\lambda_{g} }}

\def\sigmaNI{{\sigma_{g}}}

\def\deltasharp{  { \delta_{g} }}

\def\CNI{{C_{g}}}

\newcommand{\tinitial}{t_{\sharp}}

\newcommand{\omegalow}{\eta_{*}}
 
\def\esmall{{\epsilon_0}}

\def\Anosovflow{\mathfrak{U}}
\def\stabletransitive{\mathfrak{U}_0}

\def\adscale{    \mathfrak r}

\def\Barg{          \mathfrak B}

\def\CC{    \mathfrak C}

\def\midtor{  {\tor^{\flat}}}

\def\tor{  {\rm Tor}}

\def\flu{  {\rm Flu}}

\def\hDelta{  \bm{\Delta}     }

\def\bI{   \mathbb{I}}

\def\supp{{\rm supp}}

\def\Id{{\rm Id}}

\def\cJ{\mathcal{J}}

\def\bD{\mathbb{D}}

\def\hL{\mathbb{L}}

\def\hP{\mathbb{P}}

\def\hK{\mathbb{K}}

\def\PolyU{\mathrm{Poly_{U}}}

\def\PolyW{\mathrm{Poly_{W}}}

\def\Aff{\mathrm{Aff}}

\def\hK{\mathbb{K}}

\def\ker{{\rm Ker}}

\def\hH{\mathbb{H}}

\def\cL{\mathcal{L}}

\def\Bcone{ {\rm CONE} }

\def\Scone{ {\rm cone} }

\def\tq{{\tilde q}}
\def\tw{{\tilde w}}
\def\tz{{\tilde z}}
\def\tx{{\tilde x}}
\def\ty{{\tilde y}}

\def\hq{{\widehat q}}
\def\hw{{\widehat w}}
\def\hz{{\widehat z}}

\def\frq{ \mathfrak{q} }

\def\fp{ \texttt{p} } 
\def\fq{ \texttt{q} } 
\def\fw{ \texttt{w} }
\def\fz{ \texttt{z} }
\def\fx{ \texttt{x} }
\def\fy{ \texttt{y} }

\def\NGB{     U }

\def\ngb{          \Omega }

\def\ponM{{\fp} }

\def\estar{   e^*   }

\def\cO{  { \mathcal{O} } }

\def\twist{  { tor }}

\def \lambdag{  { \lambda_g } }

\def \extra{    \gamma_{\sharp}   }

\newcommand{\Diff}{{\rm Diff}}

		\newcommand{\pdvr}[2]
		{\dfrac{\partial^{#2} #1}{\partial \theta^{#2_1} \partial r^{#2_2}}}
		\newcommand{\pdvrs}[2]
		{\partial^{#2} #1 /\partial \theta^{#2_1} \partial r^{#2_2}}
		\newtheorem{question}{\sc Question}

		\definecolor{blue}{rgb}{0,0,1}
		\definecolor{red}{rgb}{1,0,0}
		\definecolor{green}{rgb}{0,.7,0}

		\DeclareFontFamily{U}{solomos}{}
		\DeclareErrorFont{U}{solomos}{m}{n}{10}
		\DeclareFontShape{U}{solomos}{m}{n}{
			<-> s*[1.1]  gsolomos8r
		}{}

\begin{document}

			\title[Perturbation of the time-1 map of a generic volume-preserving $3$-dimensional Anosov flow]{Perturbation of the time-1 map of a generic volume-preserving $3$-dimensional Anosov flow}
			

			\author{Masato Tsujii}
			\address{Masato Tsujii, Department of Mathematics, Kyushu University, Fukuoka, 819-0395, Japan}
			\email{tsujii@math.kyushu-u.ac.jp}
			
			\author{Zhiyuan Zhang}
			
			\address{Zhiyuan Zhang,
				Department of Mathematics, 
				Imperial College London, 
				London SW7 2AZ, 
				United Kingdom
			}
			\email{zhiyuan.zhang@imperial.ac.uk} 		
			\maketitle
			
			\begin{abstract}
				Let $\greg > 1$ be a large integer, and let $f$ be a diffeomorphism  sufficiently close in the $C^{\greg}$-topology to the time-1 map of a $C^{\greg}$ generic volume-preserving Anosov flow on a $3$-dimensional compact manifold.   We show that for any probability measure  $\mu$ with smooth density, $f^n_* \mu$ converges exponentially fast to a common limit measure with full support.
			    As corollaries, we show the following:
				\enmt
				\item $f$ is   topologically mixing, 
				\item  $f$ has a unique physical measure with basin of full Lebesgue measure, which is also the unique u-Gibbs state,
				\item if $f$ is volume preserving, then $f$ is exponentially mixing with respect to the volume form.
				\eenmt
				As applications, we give a class of time-1 maps of transitive Anosov flows non-approximable in $C^{\greg}$ by Axiom A maps, giving  negative answer to a question of Palis-Pugh (1974); the first example of a $C^{\bs}$-stably transitive time-1 map of Anosov flow, a question mentioned in Bonatti-Guelman (2010), Rodriguez Hertz (2010);  as well as the first example of a $C^{\bs}$-stably transitive diffeomorphism without periodic points.
			\end{abstract}
			
			\setcounter{tocdepth}{1}
			\tableofcontents

			\section{Introduction} \label{sec Introduction}

A diffeomorphism is said to be {\it transitive} if it has a dense orbit, and is said to be {\it stably transitive} (also known as persistently transitive, robustly transitive in the literatures) if this property holds for all of its small perturbations.
The study of stably transitive diffeomorphisms can be traced back to  the study of Anosov, Smale on structural stability (see \cite{An, Smale}).
A transitive Anosov diffeomorphism is always structurally stable, and as a result any transitive Anosov diffeomorphism is stably transitive. 
 
The first $C^1$-perisistent, non-Anosov, transitive diffeomorphism is constructed by Shub in 1968 (see \cite{Shu}  and \cite{HPS}).
Shub's example is  a skew product of {\it derived-from-Anosov} (DA-diffeomorphism) on $\T^4$.
In 1978,  Ma\~n\'e constructed a $C^1$-stably transitive DA diffeomorphism on $\T^3$ \cite{Man}.    These maps are homotopic to Anosov diffeomorphisms.
The existence of stably transitive diffeomorphisms isotopic to the identity was an open question until it was resolved by Bonatti and Díaz \cite{BD}  in 1995.
They proved that certain perturbation of the time-1 maps of Anosov flow is $C^1$-stably transitive.    They have introduced a dynamical model called \lq\lq blender\rq\rq, which is a special type of hyperbolic set.
The notion of blender has later found a wide range of applications, some of which goes well beyond the scope of the study of stable transitivity (for instances, see  \cite{ACW, Ber,  LT}).
 There are some other examples of non-uniformly hyperbolic stably transitive diffeomorphisms, for instance \cite{Pot, BV, CO}.

All of the above examples of stably transitive diffeomorphisms contain infinitely many  periodic orbits. 
The following question appears naturally:
\begin{question} \label{quest 1}
	Is there a stably transitive diffeomorphism without any periodic points ? 
\end{question}

A  place to look for answer is among the time-one maps of transitive Anosov flow, as studied in \cite{BD}. In fact, Question \ref{quest 1} is closely related to the following question mentioned in \cite{BG}:
\begin{question} \label{quest 2}
	Let $X$ be a transitive Anosov vector field that is not conjugate to a suspension, and let $f$ be the time-one map of its flow. Is $f$ robustly transitive ?  (The answer may depend on $X$)
\end{question}
As stated in \cite{BG}, 
{\it \lq\lq A positive answer to this question would provide an initial example of a robustly
	transitive diffeomorphism without periodic points.\rq\rq }
	More precisely, the existence of a single $X$ giving positive answer to Question \ref{quest 2} would already provide a positive answer to Question \ref{quest 1}.
Question \ref{quest 2} is also related to deciding whether the time-one map of the geodesic flow on a negatively curved surface can be robustly transitive,  an old question which appeared in \cite{GPS}, and reiterated in   \cite[Problem 4]{RHRHU} and \cite[Problem 1]{Wil}.
Related question about whether there can be stably transitive non-Anosov affine diffeomorphism can be found in \cite{HPS}, \cite{GPS}, \cite{RHRHU} and \cite{RH}. 
However, to our best knowledge, even the existence of a single robustly transitive partially hyperbolic diffeomorphism with isometric central direction is unknown (see \cite[Section 3]{RH})  before the present paper.

The following question of Palis and Pugh back in 1974 is also closely related to Question \ref{quest 2}.
\begin{question} \cite[Problem 20]{PP} \label{quest 3}
	Can the time-one map of an Anosov flow be approximated by an Axiom A diffeomorphism ?
\end{question}
As already pointed out in \cite{PP},  the answer to Question \ref{quest 3} is Yes  if the flow is a suspension of an Anosov diffeomorphism (however, this condition only holds for a nowhere dense set of flows). 
In \cite{BG}, Bonatti and Guelman have introduced a deformation method for approximation by Axiom A maps. Based on the strategy put forth in \cite{BG}, Shi has shown in his thesis \cite{Shi, Shi2} that every partially hyperbolic automorphism on non-abelian $\T^3$ can be $C^1$ approximated by structurally stable Axiom A maps.
In \cite{BG2}, the authors have found some constraint to a positive answer to Question \ref{quest 3}.
We refer the reader to \cite[Sections 1.9-1.10]{BG} for progresses and attempts (in both positive and negative directions) towards Question \ref{quest 3}. Despite of these progresses, as far as we know there was not yet an answer to Question \ref{quest 3} prior to our paper.

In this paper, we give a positive answer to Question \ref{quest 1}, and a positive answer to Question \ref{quest 2} for a generic volume preserving Anosov vector field $X$ in $3D$, albeit the topology considered in our theorem is the $C^{\greg}$-topology for a large integer $\greg > 1$. 
 In fact, our diffeomorphisms are  stably  topologically mixing.\footnote{A map $f$ is topologically mixing if for any open sets $U, V$,  $f^{n}(U) \cap V \neq \emptyset$ holds for all sufficiently large $n$. Topological mixing implies   transitivity.}

\begin{thm}  \label{thm stable transitivity}
	There is an integer $\greg > 0$ such that the following is true.
	Let $g$ be a volume preserving $3$-dimensional Anosov flow satisfying a $C^{\greg}$-open and $C^{\infty}$-dense condition. Then there exists an open neighborhood $\cU$ of $g^1$ in $\Diff^{\greg}(M)$ such that evey $f \in \cU$ is  topologically mixing  (in particular, $g^1$ is stably transitive in $\Diff^{\greg}(M)$).
\end{thm}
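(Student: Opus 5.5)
Topological mixing will be deduced from the main quantitative statement announced in the abstract: for $f\in\cU$ and any probability measure $\mu$ with smooth density, $f^n_*\mu$ converges exponentially fast to a common limit measure $\nu$ with full support. Granting this, the argument is short. Let $U,V\subset M$ be nonempty open sets. Choose a smooth nonnegative function $\rho$ supported in $U$ with $\int\rho\,d\mathrm{vol}=1$, and set $\mu=\rho\,\mathrm{vol}$. Choose a smooth function $\varphi\ge 0$ supported in $V$ with $\varphi\not\equiv 0$. Since $\nu$ has full support, $\int\varphi\,d\nu>0$. Convergence of $f^n_*\mu$ to $\nu$ (weak-$*$, or against $C^{\greg}$ test functions at an exponential rate) gives $\int\varphi\circ f^n\,\rho\,d\mathrm{vol}\to\int\varphi\,d\nu>0$. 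Hence for all large $n$ there is a point $x\in U$ with $f^n(x)\in V$, that is, $f^n(U)\cap V\ne\emptyset$. Transitivity, and so stable transitivity of $g^1$, follows at once.

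The real content is therefore the convergence theorem. To prove it, I would use the transfer-operator approach for partially hyperbolic maps close to the time-1 map of a 3-dimensional Anosov flow. The steps, in order, are as follows.

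First, by structural stability of partial hyperbolicity and normal hyperbolicity of the central foliation (Hirsch--Pugh--Shub), every $f\in\cU$ is partially hyperbolic. Its center is one-dimensional and close to the flow direction, and it has an invariant center foliation by curves close to flow orbits.

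Second, I would build anisotropic Banach spaces of distributions that are smooth along unstable and center directions and dual to smooth along stable directions. On these spaces the transfer operator $\mathcal L_f$ has essential spectral radius $<1$ in the partially hyperbolic directions. The obstacle is the neutral center direction: the center gives no contraction, so essential spectrum may accumulate on the unit circle.

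Third, to control the center direction, I would follow Tsujii's approach to exponential mixing of contact and suspension flows. Decompose in frequency along the center via a Fourier/wave-packet (Bargmann-type) transform, and exploit a nonintegrability (``twist'') condition between $E^s$ and $E^u$. This condition is the $C^{\greg}$-open, $C^\infty$-dense hypothesis on $g$. For volume-preserving 3D Anosov flows it is generically satisfied; a quantitative version is stable under $C^{\greg}$ perturbation and yields oscillatory cancellation for high center-frequency components. I would then show that on the high-frequency part, a power $\mathcal L_f^{N}$ contracts strictly. On the low-frequency part, the operator is compact modulo a small error.

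Fourth, this gives a spectral gap up to finitely many peripheral eigenvalues. Excluding eigenvalues of modulus one other than the simple eigenvalue $1$ then gives the exponential convergence. The tool here is an accessibility/mixing argument using that $u$-Gibbs states have full support and are unique: any peripheral eigendistribution would yield a continuous center-invariant structure contradicting nonintegrability. Full support of $\nu$ follows because $\nu$ is a $u$-Gibbs state and unstable leaves are dense, by minimality of the strong unstable foliation of the generic flow persisting under perturbation.

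I expect the main obstacle to be the third step. That step needs a uniform, perturbation-robust quantitative non-integrability estimate that produces decay for high center frequencies for maps, which are not flows. Without a flow, one cannot reduce to a one-parameter family of twisted transfer operators. So I would first try to treat the center frequency semiclassically, localizing in phase space on scales adapted to a large frequency parameter, and prove the cancellation estimate there.
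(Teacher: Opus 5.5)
Your first paragraph matches the paper's final step: take a bump density on $U$, pair it with a bump test function on $V$, and use convergence to a limit measure with full support. The gap is in where full support of $\nu$ comes from. You derive it from $\nu$ being a $u$-Gibbs state together with density of the strong unstable leaves of $f$, ``by minimality of the strong unstable foliation persisting under perturbation.'' No such persistence result is available on a full $C^{\greg}$-neighborhood of $g^1$, and the step is essentially circular. If every leaf of $\mathcal W^u_f$ were dense, compactness would give an $R$ such that every unstable disk of radius $R$ meets a given open $V$. Since $f^n(U)$ contains unstable disks of radius tending to infinity, $f^n(U)\cap V\neq\emptyset$ for all large $n$. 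That is topological mixing with no spectral theory at all, so robust minimality of $\mathcal W^u$ near $g^1$ is at least as strong as the theorem and would by itself settle Question \ref{quest 2}.

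The idea you are missing is the paper's elementary argument (Lemma \ref{lem nufhasfullsupport}). It uses only the estimate of Theorem \ref{thm main} and the fact that $g$ preserves volume. Shrink $\cU$ so that $e^{-\kappa_g/2}\le|\det Df|\le e^{\kappa_g/2}$. Then $\cL_f^n1\ge e^{-n\kappa_g/2}$, so for smooth $v\ge 0$ with $v\not\equiv 0$,
\[
\int v\,d\nu_f\;\ge\;e^{-n\kappa_g/2}\int v\,dLeb\;-\;C_{g,v}\,e^{-n\kappa_g}\;>\;0\quad\text{for $n$ large.}
\]
The exponential rate of convergence beats the worst possible volume contraction, and nothing about leaves is needed.

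Your exclusion of peripheral spectrum has the same defect. In the paper, uniqueness and full support of $u$-Gibbs states are consequences of the spectral gap (Theorem \ref{thm uniqueustate}), not inputs. The claim that ``a peripheral eigendistribution yields a continuous center-invariant structure'' is not an argument. This step cannot be skipped: quasi-compactness alone allows a multiple eigenvalue $1$ or unimodular eigenvalues $\neq 1$, either of which destroys the convergence of $f^n_*\mu$ to a common limit that your first paragraph relies on.

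The paper instead argues perturbatively, and this is what fixes the neighborhood:
\begin{itemize}
\item The bound $\|(1-\Pi_{\omegalow})\hL_f\|<e^{-\kappa_g\tinitial}$ of Theorem \ref{lem main} is uniform for $f$ near $g^1$.
\item $\cL_{g^1}^{\tinitial}$ has $1$ as a simple eigenvalue and no other peripheral spectrum, because the flow is mixing.
\item Suppose $f_n\to g^1$ had zero-mean eigendistributions $u_n$ with $\|u_n\|_{H_{f_n}}=1$ and eigenvalues $|E_n|\to 1$. They are bounded in $H^{-s_0}$, and $H^{-s_0}\hookrightarrow H^{-2s_0}$ is compact. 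Moreover $\Pi_{\omegalow}\bI_{f_n}u_n$ keeps a definite fraction of the norm. Together these yield a nonzero zero-mean eigendistribution of $\cL_{g^1}^{\tinitial}$ with unimodular eigenvalue, a contradiction.
\end{itemize}

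One smaller point on your third step: you cannot Fourier-decompose along the center foliation of $f$, which is only H\"older and possibly pathological. The paper decomposes along the smooth flow lines of $g$, using normal central charts.
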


We immediately obtain as a corollary the following partial answer to Question \ref{quest 3}\footnote{This proof is communicated to us by Jiagang Yang.}. As far as we know, this is the first negative answer to Question \ref{quest 3}, in any topology.

\begin{cor}  \label{cor 1}
	Let $\greg$ be given by  Theorme \ref{thm stable transitivity}, and 
	let $g$ be a volume preserving Anosov flow given in Theorme \ref{thm stable transitivity} on a $3$-dimensional manifold different from $\T^3$. Then $g^1$ cannot be approximated in the $C^{\greg}$-topology by Axiom A maps.	
\end{cor}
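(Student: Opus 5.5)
We argue by contradiction, using Theorem \ref{thm stable transitivity} together with the structure theory of Axiom A diffeomorphisms. Suppose $f$ is an Axiom A diffeomorphism lying in the neighborhood $\cU$ of $g^1$ given by Theorem \ref{thm stable transitivity}; such $f$ exists if $g^1$ can be approximated in the $C^{\greg}$-topology by Axiom A maps. By Theorem \ref{thm stable transitivity}, $f$ is topologically mixing, hence transitive. Since the non-wandering set of a transitive map is the whole manifold, $\Omega(f)=M$. By the Axiom A hypothesis, $\Omega(f)=M$ is then a hyperbolic set, so $f$ is an Anosov diffeomorphism of $M$.

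It remains to derive a contradiction from the existence of an Anosov diffeomorphism on $M$ that is $C^{\greg}$-close to $g^1$, on a $3$-manifold $M\neq \T^3$. I would combine two facts.

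First, $f$ is isotopic to $g^1$, hence to the identity. The hyperbolic splitting of $f$ has dimensions $(1,2)$ or $(2,1)$; replacing $f$ by $f^{-1}$ if necessary, we may assume it is codimension one. By Franks–Newhouse, a codimension-one Anosov diffeomorphism is topologically conjugate to a hyperbolic toral automorphism, so $M$ is homeomorphic to $\T^3$. This already contradicts $M\neq\T^3$.

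The only step needing care is the invocation of Franks–Newhouse in dimension three, where every Anosov diffeomorphism is codimension one. One also has to ensure that the Anosov property refers to $f$ itself and not to a restriction to a basic piece; this is exactly what transitivity gives via $\Omega(f)=M$. So the main obstacle is simply checking that the chain "topologically mixing $\Rightarrow$ $\Omega(f)=M$ $\Rightarrow$ Anosov" is valid. Here I would note that the spectral decomposition of Axiom A maps forces a single basic set equal to $M$.

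Alternatively, one could use the fact that Anosov diffeomorphisms on $\T^3$ are not isotopic to the identity, since their action on $H_1$ is hyperbolic. This argument would be needed to exclude even the $\T^3$ case, but that case is excluded by hypothesis and the argument is not needed.
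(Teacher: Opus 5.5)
Your proof is correct and follows the paper's argument: an Axiom A approximant $f\in\cU$ is topologically mixing by Theorem~\ref{thm stable transitivity}, so $\Omega(f)=M$ and $f$ is Anosov, and then Franks--Newhouse forces $M$ to be homeomorphic to $\T^3$, which contradicts the hypothesis. The remarks about $f$ being isotopic to the identity are not needed when $M\neq\T^3$, although, as you note, that homology argument would also exclude the case $M=\T^3$.
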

\begin{proof}
	Indeed, a transitive Axiom A map is an Anosov diffeomorphism, and it is known that $\T^3$ is the only $3$-dimensional manifold which supports an Anosov diffeomorphism (see \cite{Fra, New}).
\end{proof}

Theorem \ref{thm stable transitivity} follows quickly from our main result, Theorem \ref{thm main}, which is about the speed of convergence to equilibrium.
Somewhat surprisingly, although Questions \ref{quest 1} to \ref{quest 3} are of topological nature, our method for proving  Theorem \ref{thm main} is purely analytical.

\subsection{Main results}

A $C^1$ flow $g = \{  g^t: M \to M \}_{t \in \R}$ on a compact Riemannian manifold $M$ is  \underline{{\em Anosov}} if there is a continuous flow-invariant splitting
$TM = E^s \oplus N_g \oplus E^u$ such that $N_g$ integrates to the flow-line foliation $\cN_g$, and  there exist $C, \lambda > 0$ such that for any $t > 0$
\aryst
\norm{Dg^t|_{E^s}} < Ce^{-\lambda t} \ \mbox{ and } \
\norm{Dg^{-t}|_{E^u}} < Ce^{-\lambda t}.
\earyst
We denote by $\Anosovflow(M)$ the set of $C^{\infty}$,  volume-preserving Anosov flows on a compact manifold $M$.

In the rest of the paper, we fix a 3-dimensional boundaryless compact Riemannian manifold $M$, equipped with a normalized volume measure, denoted by $Leb$, i.e., $Leb(M) = 1$.
Given a diffeomorphism $f \in \diff^{\infty}(M)$,  we denote by $\cL_f: C^\infty(M) \to C^\infty(M)$ the {\it transfer operator} of $f$ with respect to $Leb$:
\ary \label{eq lowerboundtransferop}
\cL_f u(p) = u(f^{-1}(p)) / (\det Df)(f^{-1}(p)). 
\eary
By definition, we have the following equation:
\ary \label{main identitetransferop}
\int \cL_f^n u \cdot v dLeb =  \int u \cdot v\circ f^n  dLeb, \quad n \geq 1.
\eary
Our main result in this paper is the following.
\begin{thm}\label{thm main}
		There exist  integers $\greg, r > 0$,
	and a $C^{\greg}$-open and $C^\infty$-dense subset $\stabletransitive \subset \Anosovflow(M)$ such that for any $g \in \stabletransitive$, there exist $\kappa_g  > 0$ and an open neighborhood $\mathfrak{V}_g$ of $g^1$ in $\Diff^{\greg}(M)$ such that for any $f \in \mathfrak{V}_g$, there is an $f$-invariant measure $\nu_f$ such that for any $u, v \in C^\infty(M)$, we have 
	\aryst
	\Big	|\int u \cdot v \circ f^n dLeb - \int u d Leb \int v d \nu_f \Big | <   C(f) e^{-n \kappa_g} \| u \|_{C^r}  \| v \|_{C^r}, \quad n \geq 0.
	\earyst
\end{thm}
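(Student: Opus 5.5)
The plan is to prove Theorem \ref{thm main} by spectral analysis of the transfer operator $\cL_f$ on an anisotropic Banach space adapted to the partially hyperbolic splitting of $f$. Since $f$ is $C^{\greg}$-close to $g^1$, it is partially hyperbolic with one-dimensional center close to $N_g$. I would build a Hilbert space $\mathcal{H}$ of distributions, via a wave-packet (FBI/Bargmann) transform on local charts as in the first author's work on contact Anosov flows, with positive regularity along $E^u$ and negative regularity along $E^s$. On $\mathcal{H}$, $\cL_f$ should be quasi-compact in the stable and unstable frequency directions: a Lasota--Yorke inequality $\|\cL_f^n u\|_{\mathcal H}\le C\lambda^{-n}\|u\|_{\mathcal H}+C_n\|u\|_{weak}$ holds for frequencies with a dominant $E^s$ or $E^u$ component. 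The problem is the center direction, where $f$ is neither expanding nor contracting. Frequencies $\xi$ concentrated near the annihilator of $E^s\oplus E^u$ (the flow-direction cotangent line) cannot be damped by hyperbolicity.

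The key step is to damp these central high frequencies by the non-integrability of $E^s\oplus E^u$. For the flow $g$, this is the temporal-distance/twisting property. Generically in a volume-preserving $3$-dimensional Anosov flow, $E^u\oplus E^s$ is not even $C^1$-integrable in a quantitative sense: it is not jointly integrable, and the Templeman-type curvature is nondegenerate at some point. This is the $C^{\greg}$-open, $C^\infty$-dense condition defining $\stabletransitive$; it persists for $f$ near $g^1$ by stability of the relevant invariant foliations and their derivatives. I would then follow Dolgopyat's cancellation mechanism, or its microlocal version in Tsujii's contact-flow paper. A wave packet with large central frequency $\omega$, pushed forward by $\cL_f^n$, is stretched along $W^u$. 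Its pieces at scale $\omega^{-1/2}$ along the unstable leaf acquire central phases that vary non-linearly, by non-integrability, as one moves along stable holonomies. Their contributions therefore partially cancel when paired against test functions, giving $\|\cL_f^{n(\omega)}\Pi_\omega\|\le C\omega^{-\epsilon}$, or $\le 1/2$ at a fixed iterate scaled to $\log\omega$. Combined with the hyperbolic estimate, this gives an essential spectral radius of $\cL_f$ strictly less than $1$ on $\mathcal H$.

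Once quasi-compactness is established, the spectrum on the unit circle must be identified. The eigenvalue $1$ has the dual eigenvector $Leb$, because $\int \cL_f u\,dLeb=\int u\,dLeb$, and the corresponding eigendistribution defines $\nu_f$. This is a positive functional, hence an invariant measure; it is a u-Gibbs state obtained as a limit of $\cL_f^n 1$. To exclude other peripheral eigenvalues $e^{i\theta}$ and Jordan blocks, I would use the same non-integrability cancellation: a peripheral eigendistribution would have to be invariant along $W^u$ up to phase, and also, by duality, along $W^s$-type holonomies. Joint accessibility, which follows from non-integrability of $E^s\oplus E^u$, forces it to be a multiple of $\nu_f$ with $\theta=0$. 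The required bound with $\|u\|_{C^r}\|v\|_{C^r}$ then follows because $C^r$ embeds into $\mathcal H$ and $C^r$ functions pair continuously with $\mathcal H$, for $r$ larger than the anisotropic orders. The constant $\kappa_g$ comes from the uniform spectral gap over $\mathfrak V_g$.

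The main obstacle is the central-frequency cancellation estimate for $f$ rather than the flow. Since $f$ is not a flow, the center direction is only an invariant line field, which is merely Hölder; $f$ does not commute with a flow; and there is no contact form. The plan is therefore to work with a smooth approximate center coordinate and to treat the twisting only at scale $\omega^{-1/2}$, where $C^{\greg}$-closeness to $g^1$ keeps all errors of size $\omega^{-\delta}$. This is why $\greg$ must be large. The estimate must be made uniform over $f\in\mathfrak V_g$.
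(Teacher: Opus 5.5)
Your overall architecture matches the paper: a wave-packet anisotropic space, hyperbolicity off the dual center locus, non-integrability on it, $Leb$ as dual eigenvector, and $C^r$ pairing. The step that carries the argument, however, does not work as you describe it.

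\textbf{The central cancellation.} You propose to obtain the cancellation at scale $\omega^{-1/2}$ by transferring the non-integrability of $g$ to $f$, with $C^{\greg}$-closeness keeping errors of size $\omega^{-\delta}$, and then running a Dolgopyat-type argument over $n(\omega)\sim\log\omega$ iterates. The twisting of $E^s_f$ along $W^u_f$ at scale $\rho\approx\omega^{-1/2}$ is a center displacement of order $\omega^{-1}$. It is determined by $f^n$ with $n\sim\log\omega$; see \eqref{eq def flus} and \eqref{eq torexpression0}. But $d_{\Diff^{\greg}}(f,g^1)<\esmall$ controls only boundedly many iterates. Moreover, the rescaling factors $\mu^{s,c,u}$ of $f$ and $g$ differ by $e^{\cO(\esmall)n}=\omega^{\cO(\esmall)}$; note $\mu^c\equiv 1$ for $g$ but not for $f$. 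So the small-scale twisting of $f$ is not a small relative perturbation of that of $g$. For the same reason, a non-integrability that is nondegenerate only at some point, or that is controlled through high iterates, cannot be made uniform over $\mathfrak V_g$.

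The idea you are missing is the self-similarity of the templates of $f$ itself. By \eqref{eq psisequivariant} and Definition \ref{def torsion}, on a $W^u$-segment of length $\rho$ the template $\tpl^s_p$ minus its torsion part $\tor^s(p,\rho)\tau$ is a rescaled copy of the unit-scale template $\tpl^s_{f^m(p)}$. The frequency is rescaled into a bounded range (Remark \ref{rem:torfluadscale}, Proposition \ref{lem mainpropertyofestar}\eqref{itm NIinequality}). Hence the oscillatory-integral decay needed at every point and every small scale reduces to $(\mathbf{NI})_{\sigma,C_1,C_2}$ for $f$ on a \emph{compact} range of frequencies. That condition is $C^3$-open, so it is inherited from $g$ satisfying the uniform, not pointwise, condition of Theorem \ref{thm prevalenceofoscillation}. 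The paper builds the point-dependent adapted scale $\adscale(\langle\eta\rangle,p)$ and the dual center locus into the space via normal central charts. This yields a one-step bound at a fixed iterate $\tinitial$ (Theorem \ref{lem main}) and avoids high iterates altogether, which matters since for a map the center frequency is not conserved. When the torsion term dominates (Case II in Section \ref{sec lem formerL6.8}), the cancellation comes from integration by parts rather than from $(\mathbf{NI})$.

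\textbf{The peripheral spectrum.} Your step excluding other peripheral eigenvalues is also not justified as stated. For non-conservative $f$, the measure $\nu_f$ may be singular, with no sign information on its center exponent. A Hopf/accessibility (Burns--Wilkinson type) saturation argument for $L^\infty(\nu_f)$ eigenfunctions is not available in that generality. The paper instead uses that the theorem allows shrinking $\mathfrak V_g$. Since $g$ is mixing, $\cL^{\tinitial}_{g^1}$ has $1$ as a simple eigenvalue and no other spectrum near the unit circle. Suppose $f_n\to g^1$ had zero-average eigenvectors $u_n$ with $|E_n|\to 1$. Then Lemma \ref{lem hHandsmoothnorm} and the \emph{uniform} bound of Theorem \ref{lem main} give a nonzero limit $u_\infty\in H^{-2s_0}$ with $\cL^{\tinitial}_{g^1}u_\infty=Eu_\infty$ and $|E|=1$, a contradiction. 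You should replace your accessibility argument by this perturbative one, or supply a genuine proof of the saturation step.
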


In other words, we have shown that the push-forwards of a given probability measure on $M$ with smooth density by iterates of $f$ will converge exponentially fast to a limit measure.  
We will deduce Theorem \ref{thm main} from a more precise result,  Theorem \ref{lem main}, in Section \ref{sec Anisotropic Sobolev space}.

The exponential convergence presented in Theorem \ref{thm main} corresponds precisely to \lq\lq{\it volume is almost exponentially mixing}\rq\rq, a notion studied in \cite{Mal} and \cite{BORH}. 
Under this exponential convergence,  Ben Ovadia and Rodriguez-Hertz proved in  \cite{BORH}   that the limit measure $\nu_f$ is a SRB measure, i.e., the measure entropy $h_{\nu_f}(f)$ equals to the sum of positive Lyapunov exponents of $\nu_f$; and Maldonado proved  in \cite{Mal} that the measurable transformation $(M, \nu_f, f)$ is Bernoulli.

For $C^2$ volume preserving partially hyperbolic diffeomorphisms, by the work of Burns-Wilkinson \cite{BW} on  Pugh-Shub's Stable Ergodicity conjecture, under the center bunching condition, {accessibility} implies K-property, in particular, mixing. However, proving quantitative mixing for partially hyperbolic diffeomorphisms is a challenging problem (see \cite{Wil}, \cite[Conjecture 1.8]{DWD}). 
There are   results on quantitative mixing for partially hyperbolic maps under various assumptions (see \cite{CL,   Dol4}), but they are not satisfied by all maps in our region.
 An immediate consequence of Theorem \ref{thm main} is the following new result on quantitative mixing.  
\begin{thm} \label{thm exp}
	Let   $\greg > 0$ and  a generic volume preserving 3D Anosov flow $g$  be given by Theorem \ref{thm main}. Then
	any  volume preserving partially hyperbolic diffeomorphism $f$ that is sufficiently close to $g^1$ in $C^{\greg}$ is exponentially mixing with respect to the volume. 
\end{thm}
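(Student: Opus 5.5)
Theorem \ref{thm exp} follows directly from Theorem \ref{thm main}; the only work is to identify the limit measure $\nu_f$ with $Leb$ and to pass from smooth observables to the class in which exponential mixing is stated. Let $\mathfrak{V}_g$ be the neighborhood of $g^1$ given by Theorem \ref{thm main}, and let $f\in\mathfrak{V}_g$ be volume preserving; partial hyperbolicity is automatic near $g^1$ and is not used otherwise. Since $f_*Leb=Leb$ and the Jacobian is $\det Df\equiv 1$, the operator in (\ref{eq lowerboundtransferop}) reduces to $\cL_f u=u\circ f^{-1}$.

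\emph{Step 1: identify $\nu_f$.} Apply Theorem \ref{thm main} with $u\equiv 1$. By invariance of $Leb$,
\aryst
\int v\circ f^n \, dLeb=\int v\, dLeb \quad \text{for all } n\ge 0 .
\earyst
The estimate of Theorem \ref{thm main} therefore gives $|\int v\,dLeb-\int v\,d\nu_f|<C(f)e^{-n\kappa_g}\|v\|_{C^r}$ for every $n$. Letting $n\to\infty$ yields $\int v\,d\nu_f=\int v\,dLeb$ for all $v\in C^\infty(M)$. Since $C^\infty(M)$ is dense in $C^0(M)$, the Riesz representation theorem gives $\nu_f=Leb$.

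\emph{Step 2: exponential mixing for smooth observables.} Substituting $\nu_f=Leb$ into Theorem \ref{thm main} gives, for all $u,v\in C^\infty(M)$,
\aryst
\Big|\int u\cdot v\circ f^n\, dLeb-\int u\, dLeb\int v\, dLeb\Big|<C(f)e^{-n\kappa_g}\|u\|_{C^r}\|v\|_{C^r}.
\earyst
This is exponential mixing with respect to volume for $C^r$ observables.

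\emph{Step 3: extension to other observable classes.} If one wants the statement for Hölder observables, the standard interpolation argument applies. Mollify $u$ and $v$ at scale $\epsilon$ to obtain $u_\epsilon, v_\epsilon$ with $\|u-u_\epsilon\|_{C^0}\lesssim \epsilon^\alpha\|u\|_{C^\alpha}$ and $\|u_\epsilon\|_{C^r}\lesssim \epsilon^{-r}\|u\|_{C^\alpha}$, and similarly for $v$. Apply Step 2 to $u_\epsilon, v_\epsilon$; the approximation errors are controlled because $f$ preserves $Leb$, so $\|v\circ f^n - v_\epsilon\circ f^n\|_{L^1}=\|v-v_\epsilon\|_{L^1}$. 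Choosing $\epsilon=e^{-n\kappa_g/(2r)}$ balances the two error terms and gives a smaller but still positive exponential rate.

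There is no real obstacle. The only points needing care are that the constant $C(f)$ is independent of $u$ and $v$, so the limit in Step 1 is legitimate, and that "exponentially mixing" is being read in the $C^r$ or Hölder sense.
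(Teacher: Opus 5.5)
Your Steps 1--2 are the argument the paper intends; it states Theorem \ref{thm exp} as an immediate consequence of Theorem \ref{thm main} with no further proof. Taking $u\equiv 1$ and using invariance of $Leb$ forces $\nu_f=Leb$, and the estimate of Theorem \ref{thm main} then reads as exponential mixing for $C^r$ observables.

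There is one slip, in the optional Step 3. With the bound $\|u_\epsilon\|_{C^r}\lesssim \epsilon^{-r}\|u\|_{C^\alpha}$ as you stated it, the main term is $C(f)e^{-n\kappa_g}\epsilon^{-2r}$. For your choice $\epsilon=e^{-n\kappa_g/(2r)}$ this is $O(1)$, so it does not decay. You can fix this in either of two ways:
\begin{itemize}
\item take $\epsilon=e^{-n\kappa_g/(2r+\alpha)}$, which balances $\epsilon^\alpha$ against $e^{-n\kappa_g}\epsilon^{-2r}$ and gives the rate $e^{-n\kappa_g\alpha/(2r+\alpha)}$; or
\item use the sharper mollifier estimate $\|u_\epsilon\|_{C^r}\lesssim\epsilon^{\alpha-r}\|u\|_{C^\alpha}$, under which your original choice of $\epsilon$ does work.
\end{itemize}
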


In the following subsections, we will use Theorem \ref{thm main}  to present a new mechanism of stable transitive and a new mechanism for the unique existence of physical measure and u-Gibbs state beyond the conservative setting. 

\subsection{Stable transitivity}

We can deduce Theorem \ref{thm stable transitivity} from Theorem \ref{thm main} by the following simple argument. We mention that functional-analytic approach to stable transitivity for endomorphisms was previously used in \cite{Zha}.

\begin{proof}[Proof of Theorem \ref{thm stable transitivity}]
	Let $\cU \subset \Diff^\infty(M)$ be an open neighborhood of $g^1$.  We take an arbitrary $f \in \cU$.
	We have the following.
	\begin{lemma} \label{lem nufhasfullsupport}
		By letting $\cU$ be sufficiently small, the  measure $\nu_f$ in Theorem \ref{thm main} has full support. 
	\end{lemma}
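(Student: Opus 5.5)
We have to show that for $f$ sufficiently close to $g^1$, the limit measure $\nu_f$ of Theorem \ref{thm main} charges every nonempty open set. The idea is to exploit the uniform constants in Theorem \ref{thm main} together with the unstable (and flow) directions: $\nu_f$ is a weak limit of $f^n_*$ of a smooth density, and such pushforwards spread along the unstable foliation of $f$, which is close to that of $g^1$.

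\textbf{Step 1: uniform quantitative mixing for the flow.} Since $g$ is a volume preserving Anosov flow in $\stabletransitive$ (generic, hence not a suspension with locally constant roof, and in fact mixing), the time-1 map $g^1$ preserves $Leb$. Theorem \ref{thm main} applied to $f=g^1$ yields $\nu_{g^1}=Leb$: indeed, taking $u\equiv 1$ gives $\int v\circ g^n\,dLeb=\int v\,dLeb$, so $\int v\,d\nu_{g^1}=\int v\,dLeb$. The plan is to transfer full support from $Leb$ to $\nu_f$ by a perturbation argument on finite time.

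\textbf{Step 2: lower bound via a fixed bump function and finite-time approximation.} Fix an open set $V$ and a nonnegative bump $v\in C^\infty$ supported in $V$. Full support means $\int v\,d\nu_f>0$ for every such $v$; by compactness it suffices to treat a finite cover by small balls, so we fix finitely many bumps $v_1,\dots,v_k$ with $\int v_i\,dLeb\ge c>0$ and such that every open set of a given diameter contains some $\supp v_i$. Here is the subtlety: the neighbourhood $\cU$ must be chosen independently of $V$. I would first handle balls of a fixed radius $\rho$, and then treat arbitrary open sets using invariance of $\nu_f$. The set $\supp\nu_f$ is closed and $f$-invariant, and it is $u$-saturated because $\nu_f$ is an SRB/u-Gibbs type limit of pushforwards of Lebesgue, so its conditionals along unstable leaves are absolutely continuous. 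Since unstable leaves of $f$ are $\rho$-dense in $M$ (this holds for $g^1$, where strong unstable leaves are dense, and is robust in the $C^1$ topology), any $u$-saturated closed invariant set is then $\rho$-dense.

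\textbf{Step 3: choosing $n$ and $\cU$.} Using the uniform constants $\kappa_g$ and a bound on $C(f)$ that is locally uniform in $f$ (which should follow from the proof of Theorem \ref{lem main}), pick $n$ with $C e^{-n\kappa_g}\|v_i\|_{C^r}\le c/4$. With $u\equiv 1$, Theorem \ref{thm main} gives $|\int v_i\circ f^n\,dLeb-\int v_i\,d\nu_f|\le c/4$. Continuity in $f$ at this fixed $n$ gives $|\int v_i\circ f^n\,dLeb-\int v_i\circ g^n\,dLeb|\le c/4$ for $f$ near $g^1$. Hence $\int v_i\,d\nu_f\ge c/2>0$ for every $i$.

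This shows that $\supp\nu_f$ meets every $\rho$-ball. To upgrade to full support, I would use the fact that $\supp\nu_f$ is $f$-invariant and contains whole unstable leaves, together with minimality of the strong unstable foliation of $f$, which is robust near $g^1$ for generic $g$. Alternatively, one can pull back: for any open $V$, $f^{-m}(V)$ contains a $\rho$-ball for large $m$, since backward iterates expand along the stable direction. This does not quite suffice, because the center direction is not expanded. The main obstacle is therefore the uniformity of $C(f)$ in $f$, together with this final upgrade to arbitrary small open sets. For the upgrade I would try to rely on the $u$-saturation of the support combined with density of unstable leaves, made robust via the center-stable structure.
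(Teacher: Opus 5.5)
Your proposal has a genuine gap. There is a minor issue and a decisive one.

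The minor issue is that Step 3 needs $C(f)$ to be bounded uniformly for $f\in\cU$. You fix $n$ first and only then shrink $\cU$ so that $f^n$ is close to $g^n$, but Theorem \ref{thm main} only gives a constant $C(f)$ that depends on $f$.

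The decisive issue is that, even granting uniformity, the finite-time comparison with $g^n$ only works at a fixed scale. For a bump $v$ on a ball of radius $\rho$ one has $\int v\,dLeb\sim\rho^3$ but $\|v\|_{C^r}\sim\rho^{-r}$. So you need $n\gtrsim\kappa_g^{-1}\log(1/\rho)$, and $\cU$ shrinks with $\rho$. The statement requires a single $\cU$ that works for all open sets.

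Your upgrade to arbitrary open sets does not close this. The $u$-saturation of $\supp\nu_f$, together with density of strong unstable leaves of $g^1$, again only gives $\rho$-density with $\cU$ depending on $\rho$. The robust minimality of the unstable foliation of $f$ near $g^1$ that you invoke is not established anywhere. It would also by itself imply topological mixing of every $f\in\cU$: a small unstable disc inside $U$ is expanded to discs of a fixed radius, all of which meet $V$. You would therefore be assuming robust transitivity of $g^1$, which is exactly what this lemma is used to prove.

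The missing idea is to compare two exponential rates for one fixed $f$; this needs neither uniformity nor any geometry.

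1. Since $g^1$ preserves volume, shrink $\cU$, depending only on $g$ and $\kappa_g$, so that $e^{-\kappa_g/2}\le|\det Df|\le e^{\kappa_g/2}$.
2. By \eqref{eq lowerboundtransferop} and positivity of $\cL_f$, induction gives $\cL_f^n1\ge e^{-n\kappa_g/2}$ pointwise.
3. Take any nonnegative nonzero $v\in C^\infty(M)$. Using \eqref{main identitetransferop} and Theorem \ref{thm main} with $u\equiv 1$,
\[
\int v\,d\nu_f\;\ge\;\int \cL_f^n1\cdot v\,dLeb-C(f)\|v\|_{C^r}e^{-n\kappa_g}\;\ge\;e^{-n\kappa_g/2}\int v\,dLeb-C(f)\|v\|_{C^r}e^{-n\kappa_g}.
\]
4. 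The right-hand side is positive once $n$ is large. That $n$ depends on $f$ and $v$, which is harmless because $\cU$ was already fixed independently of $v$.

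This yields full support directly, so Steps 1--3 and the $u$-saturation discussion are not needed.
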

	\begin{proof}
		By Theorem \ref{thm main}, \eqref{main identitetransferop} and by letting $\cU$ be sufficiently small, we have  for any non-negative non-zero function $v \in C^\infty(M)$,
		\ary \label{eq expsmalldifference}
		|\int \cL_f^n 1\cdot v dLeb - \int v d\nu_f| < C_{g, v} e^{-n \kappa_g}, \quad n \geq 0,
		\eary
		for   $\kappa_g > 0$ given by Theorem \ref{thm main}, and a constant $C_{g, v}  > 0$.
		
		Since $g$ is volume preserving, by letting $\cU$ be sufficiently small, depending only on $g$ and $\kappa_g$, we may assume that $e^{-\frac{1}{2}\kappa_g} \leq |\det f| \leq e^{\frac{1}{2}\kappa_g}$.
		By \eqref{eq lowerboundtransferop} we have	$\cL_f^n 1 \geq e^{- \frac{1}{2} n \kappa_g}$,
		and consequently,
		\aryst
		\int \cL_f^n 1\cdot v dLeb \geq e^{- \frac{1}{2} n \kappa_g} \int v dLeb.
		\earyst
		By \eqref{eq expsmalldifference} and letting $n$ be sufficiently large, we obtain  $\int v d\nu_f  > 0$.
		Since $v$ is arbitrary, we deduce  that $\nu_f$ has full support. We have completed the proof.
	\end{proof} 
Now take two arbitrary open sets $U, V \subset M$, and smooth bump functions $\varphi_U, \varphi_V$ supported in $U$, $V$ respectively.  Then  Theorem \ref{thm main} implies that $\int \varphi_U \cdot \varphi_V \circ f^n dLeb > 0$ for all sufficiently large $n$. This implies topological mixing, and thus completes the proof.
\end{proof}

\subsection{Physical measures and u-Gibbs states}
The following notion is well-known (see for instance \cite{Pal}).

\begin{defi} \label{def physicalmeasure}
	Given a diffeomorphism $f \in C^1(M)$ and an $f$-invariant measure $\nu$, we define the \underline{{\em basin}} of $\nu$ by $\cB_f(\nu) = \{ p \in M \mid n^{-1} \sum_{i=0}^{n-1} \delta_{f^{i}(p)} \mbox{ tends to $\nu$ in the weak star topology} \}$. We say that an $f$-invariant measure $\nu$ is a \underline{{\em physical measure}} for $f$ if $\cB_f(\nu)$ has positive Lebesgue measure.
\end{defi}

A closely related notion is following.

\begin{defi} \label{def phdandugibbs}
	An $f$-invariant measure $\nu$ is an \underline{{\em u-Gibbs state}}  if the conditional measure of $\nu$ along the unstable foliation of $f$ is absolutely continuous.
\end{defi}

For partially hyperbolic diffeomorphisms, physical measure may not exist  (see \cite{CYZ}).  However, there is always at least one u-Gibbs state. 
When the u-Gibbs state is unique, then such measure is necessarily a physical measure. In general,  an u-Gibbs state with negative center Lyapunov exponents is a physical measure.  Recently, Katz proved in \cite{Ka} that certain generalized Gibbs states  are necessarily the unique physical measure for Anosov diffeomorphism,
using a measure-rigidity method originated in the seminal work of Eskin-Mirzakhani about measure classification in Teichm\"uller dynamics.
In particular, his result implies that in dimension $3$,  any u-Gibbs state with positive center Lyapunov exponent is a physical measure provided some rather weak non-integrability condition  in \cite{EPZ} holds.   
In the case of almost neutral center Lyapunov exponent, Bortolotti proved in  \cite{Bor} the existence of physical measures assuming the stable foliaton is Lipschitz and some transversality condition as in \cite{Tsu}.  However, the Lipschitz assumption in \cite{Bor} does not hold in general, even within volume preserving maps. In this paper, we prove the following new result on physical measure and u-Gibbs state.

\begin{thm}  \label{thm uniqueustate}
	We may choose $\stabletransitive$ in Theorem \ref{thm main} to satisfy the following property:  for any $g \in \stabletransitive$,
	there exists an open neighborhood $\cU$ of $g^1$ in $\Diff^{\infty}(M)$  such that for $f \in \cU$, the measure $\nu_f$ in Theorem \ref{thm main} is the unique u-Gibbs measure of $f$ (hence $\nu_f$ is ergodic), whose basin is of full Lebesgue measure. Moreover, $\nu_f$ is a SRB measure, i.e., $h_{\nu_f}(f)$ is the sum of $\nu_f$'s non-negative Lyapunov exponents, and that $(M, \nu_f, f)$ is Bernoulli.
\end{thm}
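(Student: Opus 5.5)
We will derive Theorem \ref{thm uniqueustate} from Theorem \ref{thm main}, combined with the standard fact that every u-Gibbs state is a weak-star limit of push-forwards of smooth densities on local unstable disks. The argument has four steps: (i) identify $\nu_f$ as the limit of $f^n_*(\rho\, Leb)$ for every smooth probability density $\rho$; (ii) show that every u-Gibbs state coincides with $\nu_f$; (iii) show that $\nu_f$ is itself a u-Gibbs state, hence the unique one and ergodic; (iv) show that its basin has full Lebesgue measure. The SRB and Bernoulli claims are then quoted from \cite{BORH} and \cite{Mal}, whose hypothesis (volume is almost exponentially mixing) is exactly the conclusion of Theorem \ref{thm main}.

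\textbf{Step (i).} Fix $f\in\cU$ and a smooth probability density $\rho$. Theorem \ref{thm main} with $u=\rho$ gives $f^n_*(\rho\,Leb)\to\nu_f$ weak-star. Since $C^\infty$ is dense in $C^0$, the convergence holds against continuous test functions, and it is exponential against $C^r$ test functions.

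\textbf{Step (ii).} Let $\mu$ be an arbitrary u-Gibbs state. By the Pesin--Sinai / Bonatti--Díaz--Viana construction, its conditionals on local unstable plaques have densities bounded above and below, with H\"older (indeed smooth along the leaf) logarithm. We thicken $\mu$ in the $E^{cs}$ direction. In a foliation chart we put $\mu_\epsilon=\int \phi_\epsilon(z)\,(\text{plaque measure translated along the center-stable holonomy by } z)\,dz$. Then $\mu_\epsilon$ has a density in $L^1(Leb)$, and it can be approximated by smooth densities $\rho_\epsilon$.

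Next we compare $f^n_*\mu_\epsilon$ with $f^n_*\mu=\mu$. Center-stable holonomies are not contracted, since the center direction is nearly isometric for $f$ close to $g^1$. So we cannot simply use contraction. Instead, we choose the thickening radius $\epsilon=e^{-\delta n}$ with $\delta$ much smaller than $\kappa_g$. The subexponential growth of the center direction, of order $e^{\eta n}$ with $\eta\to0$ as $f\to g^1$, then keeps the two measures $o(1)$-close when tested against a fixed $C^1$ function $v$.

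We also need the bound of Theorem \ref{thm main} to survive with $\|u\|_{C^r}\sim\epsilon^{-r'}$. This is the \textbf{main obstacle}. The estimate $C(f)e^{-n\kappa_g}\epsilon^{-r'}$ is still small provided $r'\delta<\kappa_g$. To make this rigorous, I would use the more precise anisotropic-space statement, Theorem \ref{lem main}. Its norm should control $u$ only through weak regularity along $E^{cs}$ and negative regularity overall, so that the smoothed unstable-plaque measure $\mu_\epsilon$ has norm at most $\epsilon^{-C}$, or even stays bounded. With this in hand, $\int v\,d\mu=\lim_n\int v\circ f^n\,d\mu_{\epsilon_n}=\int v\,d\nu_f$. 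Hence every u-Gibbs state equals $\nu_f$.

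\textbf{Step (iii).} The Cesàro limits of $f^n_*(\rho\,Leb)$ are u-Gibbs states (standard, as in Pesin--Sinai). By step (i) the only such limit is $\nu_f$, so $\nu_f$ is a u-Gibbs state, and by step (ii) it is the unique one. Uniqueness forces ergodicity, because the ergodic components of a u-Gibbs state are again u-Gibbs states.

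\textbf{Step (iv).} For the basin, I would use the known fact that for partially hyperbolic $f$, Lebesgue-almost every point has all the limit points of its empirical measures among the u-Gibbs states (Bonatti--Díaz--Viana, Dolgopyat). By uniqueness these empirical measures converge to $\nu_f$ for Lebesgue-a.e.\ point, i.e.\ $\cB_f(\nu_f)$ has full Lebesgue measure.
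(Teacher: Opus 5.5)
Your argument is correct and is essentially the paper's. The paper likewise mollifies an unstable-leaf measure at scale $e^{-cn}$: it uses the normalized leaf volume $m_{\mathfrak{L}}$ on an arbitrary local unstable disk, rather than a u-Gibbs state thickened along center-stable holonomy. It combines the exponential convergence with the subexponential growth transverse to $W^u$ to get $f^n_*m_{\mathfrak{L}}\to\nu_f$, hence uniqueness of the u-Gibbs state. It then quotes Dolgopyat / Crovisier--Yang--Zhang for the full basin, and Ben Ovadia--Rodriguez Hertz and Maldonado for the SRB and Bernoulli properties.

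The step you call the main obstacle is not one. Theorem \ref{thm main} already holds for every smooth $u$ with $C(f)$ independent of $u$; the paper uses the same kind of bound through $\|u\|_{H_f}\lesssim\|u\|_{H^{s_0}}$ from Lemma \ref{lem hHandsmoothnorm}. So all you need is smooth $\rho_\epsilon$ with $\|\rho_\epsilon\|_{C^r}\le\epsilon^{-r'}$ and $\|\mu_\epsilon-\rho_\epsilon\,Leb\|_{TV}\to0$. This follows by mollifying the H\"older density of $\mu_\epsilon$ at a scale $\epsilon^{K}$, or by convolving directly as the paper does. No extra property of the anisotropic norm is required.
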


We will provide the proof of Theorem \ref{thm uniqueustate} in Section \ref{subsec Quasi-compactness operator and uniform spectral gap}.   
We mention that in a similar setting,  Dolgopyat proved in \cite{Dol2} that along a generic one-parameter deformation of a time-1 map of a geodesic flow on a negatively curved surface, all sufficiently small perturbations admit a unique physical measure with non-zero Lyapunov center exponents. Our result is different in the sense that we need some generic condition for the initial Anosov flow, but our result gives a genuine open neighborhood of the time-one map.

\subsection{General approach and difficulties} \label{subsec DandS}

An immediate corollary of
Theorem \ref{thm main} is that  $g^t$  is stably exponentially mixing among the flows. It is now understood, due to the seminal works \cite{Che, Dol, L}, that exponential mixing is caused by the uniform  jointly non-integrability of the stable and unstable foliations. One way to prove this corollary is in \cite{TZ} which uses Dolgopyat's method based on the {\it Markov partitions} and Laplace transform. However it seems difficult to apply such method to the perturbed map as it is unclear how we find Markov partition for a partially hyperbolic diffeomorphism, and perhaps a more fundamental obstacle is that such method    requires a good control of arbitrarily high iterates of the map, making it unsuitable for studying perturbations.

Another proof of the corollary is given in \cite{Tsu},  based on the analysis of the transfer operator $\cL_f$ on certain {\it Anisotropic Banach Space} $\hH_f$ (this space  is in fact a Hilbert space).
This approach can be traced back to the original works of \cite{L, GL, AGT, BT} on the decay of correlations of Anosov diffeomorphisms and hyperbolic flows without using the  Markov partitions. Other important applications of this approach include \cite{GLP, BDL, FT}.
Such method is more suitable for studying perturbations (see for instance \cite{BL, GL, Zha}).

Still, it is not clear at all that in the general setting the uniform non-integrability condition, which depends on all small scales, would work for an open set of maps.
Indeed, this is a reason that we work with $3$-dimensional flows. It was first observed in \cite{Tsu} and later generalized in \cite{TZ} that in dimension $3$, the uniform non-integrability condition can be described by \lq\lq template functions\rq\rq, which is a family of functions satisfying certain equivariance condition (for other recent applications using templates, see \cite{EPZ, GLRH}). 
The method used in \cite{Tsu} requires only some condition on the template functions at a compact set of scales. 
In the present paper, we will see that this argument, after much effort, can be generalized to nearby diffeomorphisms.

Our construction of the Hilbert space here is closely linked to the one in \cite{Tsu}, which is a continuation of the ideas developed in a series of works  \cite{ FT, Tsu2, Tsu3, Tsu4}.
However, two new difficulties appear in the study of perturbed maps:

1. All the previous works about Anosov flows use at least the smoothness of the flow-line foliation, as we often need to represent a  function as a superposition of functions having comparable frequencies along the flow line. 
This appears to be a problem since for the perturbed map, the center foliation is usually only H\"older, and it may even be {\it pathological}, i.e., the volume form can disintegrate to atomic measures along the center leaves (see \cite{AVW}).

2. For Anosov flow, the dynamics along the center foliation is isometric. In all of the previous works, this translation structure is used in a crucial way.
However for a perturbed map, the center dynamics are usually nonlinear, mixed with expansions and contractions.  In \cite{FT}, the authors can consider wave-packets mostly concentrated within a  $|\omega|^{- \alpha^{\perp}}$-ball along the hyperbolic directions and within a $|\omega|^{ - \alpha^{\parallel}}$-interval along the center direction, where $\omega$ denotes the frequency along the flow, and $0 \leq \alpha^{\parallel}  < \alpha^{\perp} < 1$, $\frac{1}{2} \leq \alpha^{\perp} < 1$ (see \cite[(2.5)]{FT}). Unfortunately, this choice is in general not enough to exploit the non-integrability which may be useful only at scale $|\omega|^{-1/2 \pm \epsilon}$,  requiring $\alpha^{\perp} < 1/2$. In this case, it is also not enough for a linear approximation of the center dynamics since we would need $\alpha^{\parallel} > 1/2$ for that.

\subsection{Outline of the proof} \label{subsec outline}

To resolve these difficulties, we introduce a family of coordinates called {\it Normal Central Charts}.  Each coordinate system parameterizes an open set of the manifold by a domain in $\R^3 = \R^2_w \oplus \R_z$; and the $z$-foliations of all these coordinate systems are mapped to a common smooth foliation close to the center foliation\footnote{For $f$ close to the time-1 map of an Anosov flow, we can take this common smooth foliation as the flow-line foliation. In this case, Normal Central Charts are also flow charts. We introduce this extra common smooth foliation since this construction works for general partially hyperbolic diffeomorphisms, for which we expect to find more applications.}. Moreover, we require that the transformation between two such coordinate systems is a translation along the $z$-coordinate; and the dual central bundle $E^{c}_* = (E^s \oplus E^u)^{\perp}$ written in a Normal Center Chart takes a similar form as in the {\it Normal Coordinate System} in \cite{TZ}.	 This helps us to understand the small-scale twisting of $E^s$, resp. $E^u$, along $W^u$, resp. $W^s$. Then we will build upon Normal Central Charts a {\it Dynamical Wave-Packet Transformation} $\Barg$, expressing each smooth function on $M$ as a superposition of localized functions, indexed by certain open neighborhood $\Gamma$ of the pseudo-diagonal of $\R^{3 + 3} \times \R^3$  (namely, $(p, Q, p)$ with $p, Q \in \R^3$).    Instead of studying $\cL_f$, we take a large integer $\tinitial$ and lift $\cL^{\tinitial}_f$ by $\Barg$ to an operator $\hL_f$ on $\Gamma$. Then we choose  carefully some weight function on $\Gamma$ so that $\hL_f$ becomes quasi-compact with respect to the weighted norm (see Theorem \ref{lem main}).  Once this is established, Theorem \ref{thm main} then follows from a standard argument.

Let us describe the wave-packets in more details.  
A wave-packet $\varphi$ is represented by some $(q, P, \fq) \in \Gamma$ where $q, \fq \in \R^3$ and $P = (\xi, \eta) \in \R^{2+1}$. We use   $\fq$   to record the base point of a chart, mapping the $z$-line and $w$-plane  respectively to almost center direction and almost hyperbolic directions at $\fq$ (this is necessary because $E^u$, $E^s$ depend sensitively on points).  In the chart based at $\fq$,  $\varphi$ is centered at $q$, with frequency along $z$-line centered near $\eta$, and with frequency along $w$-plane centered near $\xi$. There is a dynamically-interesting scale $\adscale(  \langle \eta \rangle, q) \sim \langle \eta \rangle^{- 1/2 \pm \epsilon}$ so that  $\varphi$ is mostly concentrated within a $\adscale(\langle \eta \rangle, q) $-ball along the $w$-plane, and within a $\langle \eta \rangle^{- \beta_1}$-interval along the $z$-line for some $\beta_1 \in (0,1)$ close to $1$ (see \eqref{def beta1}).  
We will choose $\adscale( \langle \eta \rangle, q)$ so that  the oscillation of $E^{c}_*$ within a $\adscale(\langle \eta \rangle, q)$ neighborhood of $q$ is of order $\Delta(\langle \eta \rangle, q)\adscale(\langle \eta \rangle, q) \ll \langle \eta \rangle^{- \beta_1}$.  A crucial observation is that both   $\Delta(\langle \eta \rangle, q)$  and $\adscale(\langle \eta \rangle,q)$ depend mildly on $q$  within a $C \adscale(\langle \eta \rangle, q)$-ball. This last point roughly corresponds to the {\it temperate property} of H\"ormander \cite[Definition 18.5.1]{Hor} in microlocal analysis.  

We now give an overview of the proof of the quasi-compactness of $\hL_f$.
We will estimate the correlation $\langle u, \hL_f v \rangle$ for $u$, $v$ with localized supports in $\Gamma$. The estimate is divided into three cases (as usual, we may assume that at least one of the supports of $u, v$ is outside of a compact set):

\noindent{$\bullet$} When $\supp(v)$ is transported by the dynamic to be separated from $\supp(u)$, the correlation is very small (see Proposition \ref{prop offdiagonal}),

\noindent{$\bullet$} When $\supp(v)$ is transported by the dynamic to be overlapped with $\supp(u)$, and that at least one of the supports is away from  the {\it dual center locus} which is related to $E^c_*$ (see  Section \ref{sec distanceonphasespaceNo1}), we may use the weight function to exploit hyperbolicity (see Proposition \ref{lem contractionweight}),

\noindent{$\bullet$} The main difficulty is found when both supports are close to the dual center locus.  Unlike in the previous cases, where rather loose estimates suffice, here we need to exploit a strong form of uniform non-integrability condition $({\bf NI})_{\sigma, C}$ in \cite{Tsu} to get cancellation (see Proposition \ref{lem formerL6.8}).
%
%

			\subsection{Plan of the paper}
			
			After we review some basic notions and results   in Section \ref{sec Basic properties}, we will introduce in Section \ref{sec Templates functions and Adapted scales} some dynamical quantities which are  important  for our argument. In Section \ref{sec The dual  central bundle in local chart}, we will introduce Normal Central Charts, and then we will use them in Section \ref{sec DWPT: Construction and Properties} to define Dynamical Wave-Packet Transform. In Section \ref{sec Anisotropic Sobolev space}, we define the Anisotropic Hilbert space and state  the main estimate Theorem \ref{lem main} from which we deduce  Theorems \ref{thm main} and  \ref{thm uniqueustate}. In Section \ref{sec distanceonphasespace}, we break the proof of Theorem \ref{lem main} into several lemmata, whose proofs   are given in Sections \ref{sec distanceonphasespace} - \ref{sec lem formerL6.8}.
			
			The paper is mostly self-contained: the only recent results used without proofs are \cite[Prop 3.21]{EPZ} and \cite[Theorem 2.15]{Tsu}.
			
							\subsection*{Acknowledgement}
			We are grateful to Jiagang Yang for telling us the connection between Question \ref{quest 2} and Question \ref{quest 3}; to Sylvain Crovisier for telling us the results in \cite{CYZ}; to  Amadeus Maldonado and Aaron Brown  for telling us the results in \cite{Mal};  to  Snir Ben Ovadia for telling us the results in \cite{BORH};  to Yi Shi for explaining the results in \cite{BG, Shi}; and to  Artur Avila, Dmitry Dolgopyat, Rafael Potrie, Carlangelo Liverani  and Federico Rodriguez Hertz for related general discussions.

			\subsection*{Notation}
			We denote $\N = \{0, 1, 2, \cdots \}$.
			Throughout this paper, we let $M$ be a $3$-dimensional compact, connected Riemannian manifold,
			and let $g$ denote an element of $\Anosovflow(M)$, the set of  $C^{\infty}$,    volume-preserving Anosov flows on  $M$.  For every $r \in \Z_{+} \cup\{\infty\}$, we let $\Diff^{r}(M)$ denote the $C^r$ diffeomorphism group of $M$ equipped with Whitney's $C^r$ topology, induced by   $d_{\diff^r(M)}$.

	 We denote by $C$ a generic large positive constant, which may vary from line to line, but ultimately depend only on the $g$.   
			Given two non-negative real constants $A, B$, we write $A \lesssim B$ or $A = \cO(B)$ if there is a constant $C' > 1$ depending only on  $g$ such that $ A/ B \leq C'$. We write $A \sim B$ if we have $A \lesssim B$ and $B \lesssim A$. 
 More generally, we denote by $C(c_1, c_2, \cdots)$ a   generic  large positive constant, which may vary from line to line, but ultimately depend only on parameters $c_1, c_2, \cdots$ (in addition to $g$), and write
  $A \lesssim_{c_1, c_2, \cdots} B$, resp. $A = \cO_{c_1, c_2, \cdots}(B)$, $A \sim_{c_1, c_2, \cdots} B$ if the implicit constants depend on parameters $c_1, c_2, \cdots$ (in addition to $g$).  
    We will write $A \gg_{c_1, c_2, \cdots} 1$, resp. $A \ll_{c_1, c_2, \cdots} 1$ to mean that we can take $A$ to be an {\it arbitrary} positive parameter that is bigger, resp. smaller, than certain constant depending on $c_1, c_2, \cdots$ (in addition to $g$).

			We will use the Japanese bracket $\langle x \rangle = (x^2 + 1)^{1/2}$ for a constant $x \in \R$.
		
 
			Given a bounded measurable function $P : \R^{n} \to \C$,  we denote
			\aryst
			M(P): L^{2}(\R^n) \to L^2(\R^n), \quad M(P)u = Pu.
			\earyst
			
			Given a diffeomorphism $T: U \to V$ between two open domains of $\R^n$, we denote
			\aryst
			\CC_T: L^2(V) \to L^2(U), \quad \CC_T u = u \circ T.
			\earyst
			
 			For any $x \in \R^n$, we will treat the co-tangent vector $V = (V_1, \cdots, V_m) \in T^*_{F(x)}\R^m$ as a row vector and  treat the tangent vector $v = (v_1, \cdots, v_n) \in T_x\R^n$ as a column vector. 
			Given a diffeomorphism $F = (F_1, \cdots, F_m) : \R^n \to \R^m$ and $x = (x_1, \cdots, x_n) \in \R^n$, we denote
			\aryst
			(DF)_x = 
			\begin{bmatrix}
				\partial_{x_1} F_1 &  \cdots & \partial_{x_n} F_1  \\
				\vdots &    \ddots & \vdots \\
				\partial_{x_1} F_m &   \cdots & \partial_{x_n} F_m
			\end{bmatrix}
			\earyst
			so that  $(DF)_x^*(V) = V \cdot (DF)_x$, and $(DF)_x(v) = (DF)_x \cdot v$. With these conventions,  we have
$\langle (DF)^*_{x}(V) , v  \rangle = \langle V , (DF)_x (v )  \rangle = V \cdot (DF)_x \cdot v^{T}$.

			\section{Basic properties and standing assumptions} \label{sec Basic properties}

			In the rest of the paper, we let $g$ denote a $C^{\infty}$,  volume-preserving Anosov flow on  $M$. We let $\greg > 1$ be a large integer to specify the topology (that is, $C^{\greg}$) under which we perturb $g$.
 Although not necessary for this section, we will introduce in the course of the proof a large integer parameter $\tinitial > 0$ which may be determined only on $g$ so that the hyperbolicity of $g^{\tinitial}$ becomes evident.
 	We let  $\esmall  > 0$ be a small parameter which will be determined later depending only on $g$ and $\tinitial$. 
The parameters $\greg, \tinitial, \esmall$ will be determined in Theorem \ref{lem main}. 			 Throughout this paper, we use
$c(\esmall)$ to denote a constant in $(0, 1)$, which may vary from line to line, but ultimately can be chosen to depend only on $g$ and $\esmall$, and tends to $0$ as $\esmall$ tends to $0$.
			We let $f \in \Diff^{\greg}(M)$ denote an arbitrary map such that
			\ary \label{eq fclosetog1}
			d_{\Diff^{\greg}(M)}(f, g^{1}) < \esmall.
			\eary

			We will let $\esmall$ be sufficiently small so that $f$ is a {\it partially hyperbolic diffeomorphism}: there is a $Df$-invariant continuous splitting $TM = E^s_f \oplus E^c_f \oplus E^u_f$ such that for every $p \in M$, we have
			\aryst
			\| Df|_{E_f^s(p)} \| < \min(1,  \|Df|_{E_f^c(p)}\|) \leq \max(1,  \|Df|_{E_f^c(p)} \| ) < \|Df|_{E_f^u(p)} \|.
			\earyst
			 Here $E^s_f, E^c_f, E^u_f$ are respectively the stable, center, and unstable subspace of $f$.
			It is known that $E^s_f$, resp. $E^u_f$, uniquely integrates to the stable foliation $\cW^s_f$, resp. unstable foliation $\cW^u_f$.
			We say that $E^{cs}_f = E^c_f \oplus E^s_f$, resp. $E^{cu}_f = E^{c}_f \oplus E^u_f$, is the center-stable subspace, resp. the center-unstable subspace of $f$.
			Moreover, it is known that by letting $\esmall$ be sufficiently small, 
			the subspaces $E^{cs}_f$, $E^{cu}_f$ and $E^c_f$ integrate to respectively the center-stable foliation $\cW^{cs}_f$, the center-unstable foliation $\cW^{cu}_f$ and the center foliation $\cW^c_f$ (see \cite{HPS} for this assertion).
			We may assume without loss of generality that

			\begin{center}
				{\it $E^{s}_f, E^{u}_f$ and $E^c_f$ are all orientable, and $f$ preserves the orientation of each bundle.}
			\end{center}
			Indeed, we can consider a finite cover $\hat M$ of $M$, and a lift $\hat f: \hat M \to \hat M$ of $f$, so that the above assertion is satisfied for $\hat f$ in place of $f$. Moreover, $\hat M$ can be chosen to depend only on $g$ for all $f$ sufficiently close to $g$. The flow $g$ lifts to an Anosov flow $\hat{g}$ on $\hat{M}$; and any $f$ close to $g^{1}$ lifts to a map $\hat{f}$ close to $\hat{g}^{1}$. This completes the reduction from $(f, M)$ to $(\hat{f}, \hat{M})$.

			In the following, we abbreviate $E^s_f$, $E^u_f$, $E^c_f$, $\cW^s_f$, $\cW^u_f$, etc., as $E^s$, $E^u$, $E^c$, $\cW^s$ and $\cW^u$, etc., respectively when there is no ambiguity about the map.
			Without loss of generality, we will always assume that   we have
			\aryst
			\| Df^{-1} |_{E^c_f}  \|, \|Df|_{E^c_f}\| < e^{\esmall},
			\earyst
			and, there exist $\hat\lambda_g > \lambda_g > 0$ depending only on $g$ such that $\| Df^{-1} \|, \| Df \| < e^{\hat\lambda_g}$ and  
			\ary \label{eq deflambdag}
	 	\| Df^{-1} |_{E^u_f}  \|, \|Df|_{E^s_f}\| < e^{ - \lambda_g}.
			\eary

For each $p \in M$ and $n \in \Z$, we denote
\aryst
 \mu^{*}(p, n) = \| Df^n|_{E^{*}(p)} \|, \quad * \in \{s, c, u\}.
\earyst 
			For later purposes, we define
			\aryst
			E^{c}_* = (E^u \oplus E^s)^{\perp} =  \{ \gamma \in T^{*} M  \mid \ker( \gamma ) = E^u \oplus E^s \}.
			\earyst
			Since $E^u$ and $E^s$ are invariant under $Df$, we see that $E^c_*$ is invariant under $(Df)^*$.

			The following result, which can be found in \cite{HPS}, is well-known.
			
			\begin{lemma} \label{lem regularitiesofthefoliation}
			The invariant bundles  $E^s_f, E^u_f$ and $E^c_f$ are all uniformly $(1 - c(\esmall))$-H\"older continuous.
			\end{lemma}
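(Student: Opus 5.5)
We will prove Lemma \ref{lem regularitiesofthefoliation} by the standard graph-transform argument, keeping track of the H\"older exponent through the ratio of contraction and expansion rates. The key observation is that for the time-1 map $g^1$ the center direction is isometric, and for $f$ with $d_{\Diff^{\greg}(M)}(f,g^1)<\esmall$ the center rates satisfy $\|Df^{\pm 1}|_{E^c}\|<e^{\esmall}$. The domination between $E^c$ and the hyperbolic directions is therefore of strength $e^{-\lambda_g+c\esmall}$, while the ratio of $\|Df|_{E^c}\|$ to $\|Df^{-1}|_{E^c}\|^{-1}$ is at most $e^{2\esmall}$. Classical theory (\cite{HPS}, or the $C^r$-section theorem) gives an exponent $\theta$ for each bundle whenever a bunching inequality holds. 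For $E^{cs}$ and $E^u$, we need
\[
\|Df|_{E^{cs}(p)}\| \cdot \|Df^{-1}|_{E^u(f(p))}\| \cdot \|Df^{-1}\|^{\theta} < 1,
\]
and analogously for the other bundles. We then obtain $E^c=E^{cs}\cap E^{cu}$ as an intersection. This reduces the lemma to checking such inequalities with $\theta = 1 - c(\esmall)$, perhaps after replacing $f$ by a fixed iterate.

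Concretely, we first work in a fixed finite atlas and realise $E^u$ as the unique fixed point of the graph transform $\Gamma$ acting on continuous sections of the bundle of linear maps $E^u_0\to E^{cs}_0$ near a fixed continuous reference splitting. We show $\Gamma$ is a fiber contraction with fiber contraction rate $\sim e^{-\lambda_g+c\esmall}$, and that it preserves the closed set of sections with $\theta$-H\"older seminorm at most $K$. To estimate $\Gamma$ on H\"older differences at points $p,q$, we pull back by $f^{-1}$. The distance $d(f^{-1}p,f^{-1}q)$ grows by at most $\|Df^{-1}\|$. However, we must exploit that the relevant expansion is along the direction in which the graph varies, so we cannot just use the crude bound $e^{\hat\lambda_g}$.

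This is the main obstacle: with the crude bound the exponent is only about $\lambda_g/\hat\lambda_g$, not close to $1$. To get $1-c(\esmall)$, we use the flow structure. For $g^1$ itself the splitting is smooth along flow lines, and along $E^s$ and $E^u$ leaves the required bunching involves only the center rate $e^{\pm\esmall}$ against the hyperbolic rates. We would therefore estimate H\"older continuity separately along $\cW^u$, $\cW^s$ and $\cW^c$ leaves and combine them using local product structure.

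Along $\cW^s$, $E^u$ is invariant under holonomy-type transport. Its regularity is governed by the inequality
\[
\|Df|_{E^{c}}\|\,\|Df^{-1}|_{E^u}\|\,\|Df|_{E^s}\|^{-\theta}<1,
\]
which for a volume-preserving flow in dimension $3$ (where the stable and unstable rates are balanced pointwise up to $e^{c\esmall}$) holds for all $\theta<1-c(\esmall)$. This is the classical Hasselblatt-type bunching argument for 3-dimensional volume-preserving Anosov flows, now perturbed. Along $\cW^u$ leaves, $E^u$ is the tangent bundle of the $C^{\greg}$ leaves, so it is smooth there. The center direction contributes only $e^{\esmall}$ distortion.

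Combining these three directional estimates via the uniformly transverse local product structure yields global $(1-c(\esmall))$-H\"older continuity of $E^u$. The symmetric argument with $f^{-1}$ handles $E^s$, and $E^c=E^{cs}\cap E^{cu}$ is handled similarly, since each of $E^{cs}$ and $E^{cu}$ satisfies the same bunching with the center rate $e^{\esmall}$. The uniformity of the constants in $f$ follows because all rates are controlled by $\esmall$ and by $g$ alone.
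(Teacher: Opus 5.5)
Your outline can be completed into a proof. It is a detour compared with the paper, which simply cites the standard invariant-section theory of \cite{HPS}, and the reason you give for the detour is off.

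The crude bound fails not because of the \emph{direction} in which $f^{-1}$ expands. It fails because $\sup_p\|Df^{-1}\|$ and $\inf_p\mu^u(p,1)$ are attained at different points. Along a stable leaf, $f^{-1}$ already expands at the full local rate, since up to angle constants $E^s$ is the direction of maximal expansion of $f^{-1}$. So your $\cW^s$-estimate, read with global constants, gives the same exponent $\approx\lambda_g/\hat\lambda_g$.

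What actually produces $1-c(\esmall)$ is the \emph{pointwise} (variable-rate) H\"older section theorem, applied to an iterate $f^N$. In that version the fiber contraction at a point is paired with the local Lipschitz constant of the base map at the same point. For $E^u$ at $f^N(p)$ the bunching quantity is $\lesssim e^{N\esmall}\mu^u(p,N)^{-1}\mu^s(p,N)^{-\theta}\le e^{2N\esmall+C}\mu^u(p,N)^{-(1-\theta)}$ by \eqref{eq almostconversative}. This is $<1$ once $1-\theta\ge 3\esmall/\lambda_g$ and $N\gg_{\esmall}1$.

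You should state explicitly that this is the tool you use, because the classical global form of the section theorem cannot use your ``balanced pointwise'' input. Once you use it, the undirected graph transform already gives the lemma, so splitting into $\cW^s,\cW^c,\cW^u$ and recombining is unnecessary. The recombination is valid, but only if you note that, by the cone conditions, the leafwise lengths of the $s$--$c$--$u$ path joining nearby $p,q$ are $O(d(p,q))$. Otherwise H\"older holonomies would degrade the exponent.

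Two smaller points.
\begin{itemize}
\item Your claim that $E^u$ is ``invariant under holonomy-type transport'' along $\cW^s$ is false if it refers to stable holonomy. The failure of that invariance is exactly what the templates $\tpl^s_p$ measure. What you actually use is the graph transform restricted to pairs of points on a common stable leaf, which is legitimate because $f^{-1}$ preserves $\cW^s$.
\item For $E^{cs}$ and $E^{cu}$ (hence $E^c$) no volume balance is needed at all. For $E^{cu}$, the fiber contraction $\approx e^{\esmall}\mu^s$ is compensated by the base expansion $\approx(\mu^s)^{-1}$ at the same point. Near volume preservation matters only for $E^u$ and $E^s$.
\end{itemize}
The resulting H\"older constants depend on $N$, hence on $\esmall$, which is consistent with the paper's use of $\lesssim_{\esmall}$.
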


			\subsection{Non-stationary normal form} \label{subsec Non-stationary normal form}
			
			{\it Non-stationary normal coordinate}  is a collection of $C^{\greg}$ immersions $\{ \imath^u_p: \R \to \cW^u_f(p) \}_{p \in M}$  
			such that 
			\enmt
			\item  as a map from $M$ to $C^{\greg-1}([-1,1], M)$,   the map $p \mapsto \imath^u_p|_{[-1,1]}$ is $(1-c(\esmall))$-H\"older continuous. The H\"older norms of these maps are uniformly bounded for all $f$ close to $g^1$,
			\item  $\imath^u_p(0) = p$ and $D\imath^u_p(0)$ is the vector of norm $1$
			representing the direction of $E^u(p)$ (recall that we have assumed the orientability of the invariant subspaces),
			\item $(\imath^u_{f^n(p)})^{-1} f^n \imath^u_p(\tau) =  \mu^u(p, n) \tau$ for any $\tau \in \R$,  $n \in \Z$ with  $\mu^u(p, n) =  \norm{Df^n|_{E^u(p)}}$.
			\eenmt
			We refer the readers to \cite{KK} for a proof of the existence of non-stationary normal coordinates.
			In an analogous way, we define $\imath^s_p$ for the stable foliation 
			$\cW^s_f$.

			By our hypothesis, it is clear that for any $n \geq 0$, we have 
			\ary \label{eq almostconversative}
			| \log \mu^{c}(p, n)| , | \log \mu^{u}(p, n) + \log \mu^s(p, n) | \leq n\esmall + C.
			\eary

			\section{Template function, adapted scale, torsion and fluctuation} \label{sec Templates functions and Adapted scales}

			\subsection{Template function and Normal coordinate system}
			
			Throughout this paper, we denote the collection of real-valued affine  transformation on $\R$ by $\Aff = \{ x \mapsto a x + b \mid a, b \in \R \}$.
			 
			\begin{prop} \label{prop normalcoordinatesystem} 
				After a change of metric of $M$ if necessary (depending only on $g$),
				for any $\greg \geq 4$,
				there exist  $C = C(g) > 0$ and a neighborhood $\mathfrak{V}_g$ of $g^1$ in $\diff^{\greg}(M)$, and,  for each $f \in \mathfrak{V}_g$,  a family of $C^{\greg - 1}$ embeddings $\{ \imath_p:  (- 4 \|Df\|, 4 \|Df\|)^3 \to M \}_{p \in M}$, which we call a {\em \underline{normal coordinate system}}, such that:
				\enmt
				\item \label{item imathsmoothnorms}
				The supremum of the $C^{\greg - 1}$-norms of $\imath_p$ over all $p \in M$ is bounded from above by $C$.    Moreover,
				as a map from $M$ to $C^{3}( [  -  3 \|Dg^1\|, 3 \|Dg^1\| ]^3, M )$, the map   $p \mapsto \imath_p$ has $(1 - c(\esmall))$-H\"older norm bounded from above by $C$\footnote{It is known that  the Whitney topology on $C^{3}( [ -  3 \|Dg^1\|, 3 \|Dg^1\| ]^3, M )$ is metrisable, and we fix an arbitrary metric comparable with the topology.}; and the collection of maps $\{ f \mapsto \imath_p \}_{p \in M}$ are uniformly continuous,   
				\item  \label{itm imathp0order}  for every $p \in M$, for every $(x, y, z) \in (- \|Df\|, \|Df\|)^3$, we have 
				\ary
				&\imath_p(0,0,0) = p, \quad \imath_p(x,0,0) = \imath^u_p(x),  \quad \imath_p(0,y,0) = \imath^s_p(y),  \label{eq imath1} 
				\eary
                \item \label{itm imath1storder} for every $p \in M$, the map
                \aryst
             F_p =    (F_{p, 1}, F_{p, 2}, F_{p,3}) :=  \imath_{f(p)}^{-1} f \imath_p  : (- 1,  1)^3 \to \R^3
                \earyst
                is a well-defined $C^{\greg - 1}$ embedding
                such that
                \ary
                && \partial_z F_{p,2}(\cdot,0, 0) \equiv 0, \quad \partial_z F_{p,1}(0,\cdot, 0) \equiv 0, \\
                && \partial_y F_{p,2}(\cdot,0, 0) \equiv \mu^s(p, 1), \quad \partial_x F_{p,1}(0,\cdot, 0) \equiv \mu^u(p, 1), \\
                &&  \partial_z F_{p,3}(\cdot,0, 0) \equiv \mu^c(p, 1), \quad \partial_z F_{p,3}(0,\cdot, 0) \equiv \mu^c(p, 1),  \\
                \mbox{ and } \ &&  \partial_y F_{p, 3}(\cdot,0, 0)  \in \Aff, \quad \partial_x F_{p, 3}(0,\cdot, 0) \in \Aff.
                \eary
				\eenmt
			\end{prop}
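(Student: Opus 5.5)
We construct $\imath_p$ by hand from the non-stationary normal coordinates of Section \ref{subsec Non-stationary normal form} and a smooth transverse flow direction. The prescribed data are the restrictions to the $x$- and $y$-axes and a few first-order transverse jets along them, so we only need a map that matches these jets and then check the identities in item \eqref{itm imath1storder}. We fix a smooth reference vector field $X$ close to $E^c_f$; for $f$ near $g^1$ we take $X$ to be the generator of $g$, rescaled after a change of metric so that it has unit length. We set
\[
\imath_p(x,y,z) = \Phi^{z\,\theta_p(x,y)}\big(\Psi_p(x,y)\big),
\]
where $\Phi^t$ is the flow of $X$ and $\theta_p>0$ is a normalizing factor. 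The surface map $\Psi_p$ is
\[
\Psi_p(x,y) = \pi_{p,x}\big(\imath^s_{\imath^u_p(x)}(y)\big) \quad\text{corrected near } x=0,
\]
so that $\Psi_p(x,0)=\imath^u_p(x)$ and $\Psi_p(0,y)=\imath^s_p(y)$. A natural choice is a smooth interpolation of $(x,y)\mapsto \imath^s_{\imath^u_p(x)}(y)$ and $(x,y)\mapsto \imath^u_{\imath^s_p(y)}(x)$, projected along $X$-orbits onto a fixed smooth transversal. Regularity in \eqref{item imathsmoothnorms} then comes from the $C^{\greg}$ regularity of $\imath^{u/s}_p$ along leaves and the $(1-c(\esmall))$-H\"older dependence on $p$. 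Item \eqref{itm imathp0order} holds by construction.

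Next we arrange the first-order relations. Conjugation gives
\[
F_p=\imath_{f(p)}^{-1}f\imath_p .
\]
Along the $x$-axis, $f$ maps $\imath^u_p$ onto $\imath^u_{f(p)}$ linearly with rate $\mu^u(p,1)$, and maps the stable curves $\imath^s_{\imath^u_p(x)}$ onto the stable curves $\imath^s_{f\imath^u_p(x)}$ with rate $\mu^s$. Hence, if $\Psi_p(x,y)=\imath^s_{\imath^u_p(x)}(y)$ to first order in $y$, we get $\partial_yF_{p,2}(x,0,0)=\mu^s(p,1)$. The symmetric statement on the $y$-axis gives $\partial_xF_{p,1}(0,y,0)=\mu^u(p,1)$. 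The $z$-direction in the chart is $\theta_p X$. The conditions $\partial_zF_{p,2}(\cdot,0,0)=0$ and $\partial_zF_{p,3}=\mu^c$ along both axes say that $Df(\theta X)$ along the axes has no $y$-component and has the correct $X$-component. We cannot impose this pointwise with a fixed smooth $X$, because $E^c_f$ is not $X$. We therefore modify the transverse direction only to first order along the two axes: we replace $\Phi^{z\theta}$ by a map whose $z$-derivative on the $x$-axis (resp.\ the $y$-axis) is the vector in $E^{cs}$ (resp.\ $E^{cu}$) spanned by $E^c$ plus a correction. These are the first-order jets along the axes, and they are realized by adding quadratic terms $z\cdot(\ldots)$, which Whitney extension keeps $C^{\greg-1}$ in the chart. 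Choosing the $z$-vector inside $E^{cu}$ along the $x$-axis forces $\partial_zF_{p,2}=0$ there, since $E^{cu}$ is invariant and $y$ measures the $E^s$-component. The same argument with $E^{cs}$ on the $y$-axis gives $\partial_zF_{p,1}=0$. The scalar $\theta_p$ along the axes is then fixed by a cohomological normalization: we make the $z$-vector along each axis equal to the invariant center vector normalized by the center cocycle, i.e.\ a non-stationary normal form for the one-dimensional center cocycle restricted to $W^u(p)$ and $W^s(p)$. This gives $\partial_zF_{p,3}\equiv\mu^c(p,1)$ on both axes.

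The main obstacle is the last condition, that $\partial_yF_{p,3}(\cdot,0,0)$ and $\partial_xF_{p,3}(0,\cdot,0)$ are affine. This is the template-function normal form: the $z$-component of the image of $\partial_y$ along $W^u(p)$ measures the twisting of $E^s$ along $W^u$ relative to the chart. We would choose the $\partial_y$-jet of $\Psi_p$ along the $x$-axis, i.e.\ the direction of the $y$-coordinate curves at $(x,0,0)$, to be $E^s$ plus a $z$-correction $a_p(x)\,\partial_z$ with $a_p$ unknown. The conjugation relation then becomes a twisted cohomological equation of the form
\[
\mu^s(p,1)\,a_{f(p)}\big(\mu^u(p,1)x\big)=\mu^c(p,1)\,a_p(x)+b_p(x)-\ell_p(x),
\]
with $b_p$ determined by $f$ and $\ell_p\in\Aff$. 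Since $\mu^s\mu^u\approx1$ and $\mu^c\approx1$, the operator is invertible on the space of functions vanishing to second order at $0$, as in the construction of templates \cite{Tsu,TZ} and \cite[Prop 3.21]{EPZ}. Subtracting the affine part, i.e.\ the $0$- and $1$-jets, which cannot be removed because of the resonance $\mu^u\mu^s\approx\mu^c$, we solve for $a_p$ by a contraction argument, iterating forward or backward according to which weight contracts. This requires the derivative losses to be controlled uniformly, and it is where $\greg\ge4$ and the change of metric (making the rates nearly constant) are needed. The $(1-c(\esmall))$-H\"older dependence in $p$ follows from the uniform contraction of this graph-transform operator. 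The symmetric equation along the $y$-axis is handled in the same way.
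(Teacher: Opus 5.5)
Your proposal has a genuine gap: the transverse jets you build along the axes are not smooth, so your chart is not $C^{\greg-1}$. This regularity issue is really the whole content of the proposition. Once $\imath_p$ is fixed to be $\imath^u_p$ on the $x$-axis and $\imath^s_p$ on the $y$-axis, everything reduces to choosing the $y$- and $z$-directions along the $x$-axis (and the $x$- and $z$-directions along the $y$-axis) as $C^{\greg-1}$ functions of the axis variable; after that, any smooth extension works.

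Your $y$-direction along the $x$-axis is built from $E^s$. At first it is $D\imath^s_{\imath^u_p(x)}(0)$, and at the end it is ``$E^s$ plus $a_p(x)\partial_z$''.
\begin{itemize}
\item $E^s$ restricted to $W^u(p)$ is in general only H\"older. Modulo $E^u$ it is the slow invariant direction of $Df$ on $TM/E^u$, determined by the future, so the graph transform along $W^u$ expands derivatives. Hence $(x,y)\mapsto\imath^s_{\imath^u_p(x)}(y)$ is in general not $C^1$ in $x$, and interpolating or projecting along $X$-orbits does not change this.
\item The cohomological step does not repair it. With $\partial_z\in E^{cu}$ and $y$-vector $e^s+a_p\partial_z$, invariance of $E^s$ and $E^{cu}$ leaves no forcing ($b_p\equiv 0$ once the diagonal is normalized). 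So your contraction on $\Aff+O(x^2)$ returns an affine $a_p$, and $e^s+a_p\partial_z$ is exactly as rough as $E^s$.
\item This cannot be avoided. In such a chart one would have $T^s_p=a_p$ affine. For $g$ with $({\bf NI})$ the template cannot even lie in $\Aff+O(\tau^2)$ uniformly: backward iteration of \eqref{eq psisequivariant} would kill the remainder, making $T^s_p$ linear, and then a suitable $c$ makes the phase $bT^s_p(\tau)+c\tau$ constant, violating $({\bf NI})$.
\end{itemize}
So the $y$-direction must be a smooth complement that agrees with $E^s$ only at $p$. The non-smooth discrepancy is precisely the template function the paper exploits.

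There is also a first-order error. $Df$ sends $D\imath^s_q(0)$ to $\mu^s(q,1)D\imath^s_{f(q)}(0)$ with $q=\imath^u_p(x)$. So you get $\partial_yF_{p,2}(x,0,0)=\mu^s(\imath^u_p(x),1)$, not the constant $\mu^s(p,1)$, and the diagonal entry needs normalizing along the leaf.

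What is missing is a normal form for $Df$ in a smooth frame of the quotient bundle.
\begin{itemize}
\item $\cE^u=TM/E^u$ is smooth along $W^u$, and $Df$ induces a linear cocycle on it that is smooth along leaves. Its invariant line $E^{cu}/E^u$ is the dominant direction, hence smooth along $W^u$.
\item Take any smooth frame of $\cE^u|_{W^u(p)}$ whose first vector spans $E^{cu}/E^u$ and whose second vector agrees with $E^s(p)$ mod $E^u$ at $t=0$ but is otherwise just a smooth complement. The cocycle is then upper triangular with smooth entries, and your $b_p$ is its off-diagonal entry, generally nonzero.
\item Make both diagonal entries constant, equal to $\mu^c(p,1)$ and $\mu^s(p,1)$, by telescoping products along backward orbits. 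These converge in $C^k$ because $f^{-1}$ contracts $W^u$.
\item Kill the Taylor terms of order $\ge 2$ of the off-diagonal entry by solving your twisted equation in the smooth category. This is non-resonant, since $\mu^s(\mu^u)^k/\mu^c\approx(\mu^u)^{k-1}$, and it leaves the resonant affine part $r_p$.
\end{itemize}
This is exactly \cite[Prop 3.21]{EPZ}. The paper applies it to $TM/E^u$ along $W^u$, and, for $f^{-1}$, to $TM/E^s$ along $W^s$. It then chooses $\partial_y\imath_p(t,0,0)$ and $\partial_z\imath_p(t,0,0)$ to represent the normal-form frame $V^\perp_p, V_p$, and $\partial_x\imath_p(0,t,0)$ and $\partial_z\imath_p(0,t,0)$ to represent $U^\perp_p, U_p$. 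Item (3) is then read off, with $\partial_yF_{p,3}(\cdot,0,0)=r_p$.

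Your treatment of the $z$-direction fits this scheme: $E^{cu}$ mod $E^u$ on the $x$-axis, $E^{cs}$ mod $E^s$ on the $y$-axis, normalized via the one-dimensional center cocycle. Only the class modulo $E^u$ (resp. $E^s$) enters item (3). You should therefore take a smooth representative of that class rather than $E^c$ itself, which in general is not smooth along $W^u$ either.
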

			For Anosov flows, the parallel statement is included in \cite[Lemma 4.2]{TZ}. 
			The proof of Proposition \ref{prop normalcoordinatesystem} is implicitly contained in  \cite{EPZ}. 			
			We defer the proof of Proposition \ref{prop normalcoordinatesystem} to Appendix \ref{sec normalcoordinate}.

			\begin{rema} \label{rem D2Fvanish}
				By \eqref{eq imath1} in  Proposition \ref{prop normalcoordinatesystem}, we have 
			 $\partial^2_x F_{p, 3}(0,0,0)  = \partial^2_y F_{p, 3}(0,0,0)  =  0$.
			\end{rema}
			
			In the rest of the paper, we  fix $\{ \imath_p \}_{p \in M}$.
			Given $p \in M$, we will define $F_p$ as in Proposition \ref{prop normalcoordinatesystem}.
			We define $(\vartheta^u_{ p}, \vartheta^s_{ p}) : (- 3 \|Df\|, 3 \|Df\|)^3  \to \R^2$ by setting for each $w \in (- 3 \|Df\|, 3 \|Df\|)^2$ and each $z \in (- 3 \|Df\|, 3 \|Df\|)$ 
			\ary \label{def varthetap}
		  (\vartheta^u_{ p}(w, z), \vartheta^s_{ p}(w, z), 1) \in (\imath_p)^*(E^c_*(\imath_p(w, z))). 
			\eary
			By definition, it is clear that  $ \vartheta^u_{ p}(0, 0, 0) = \vartheta^s_{ p}(0, 0, 0) = 0$.

			 The following notion corresponds to \cite[Definition 2.8]{Tsu} (see also  \cite[Definition 4.15]{TZ}).
			 \begin{defi}[Template function] \label{def templatefunction}
			 	We define functions $\tpl^s_{p}, \tpl^u_{p} :  (- 3 \|Df\|, 3 \|Df\|)  \to \R$ by  
			 	\aryst
			 	    \tpl^s_{p}(\tau) = \vartheta^s_{p}(\tau, 0, 0), \quad		     	\tpl^u_{p}(\tau) = \vartheta^u_{p}(0, \tau, 0).
			 	\earyst
			 \end{defi}
			 
We define   $\twist : M \to \R$ by 
$$\twist(p) =		\partial_x	 \partial_y F_{p, 3}(0,0,0).$$
	Since $E^c_*$ is $(Df)^*$-invariant, we have 
	$$(\vartheta^u_{f(p)}(F_p(w, z)), \vartheta^s_{f(p)}(F_p(w, z)), 1)  \cdot DF_p(w, z) \in \R  (\vartheta^u_{ p}(w, z), \vartheta^s_{ p}(w, z), 1).$$
	Then by Proposition \ref{prop normalcoordinatesystem},  we deduce that for every $p \in M$ 
	\ary \label{eq psisequivariant}
  \mu^c(p, 1)  \tpl^s_p(\tau) -  \mu^s(p, 1)   \tpl^s_{f(p)}(\mu^u(p, 1) \tau)    = \twist(p) \tau.
	\eary
		We have a parallel expression relating $\tpl^u_p$ and $\tpl^u_{f(p)}$.
	By Proposition \ref{prop normalcoordinatesystem}\eqref{item imathsmoothnorms}, we have
	\ary \label{eq cisHolder}
	| \twist(p) - \twist(p') | \lesssim_{\esmall}  d(p, p')^{1- c(\esmall)}.
	\eary

			\subsection{Torsion and Fluctuation}
 
We introduce two quantities that measure the fluctuation of the stable subbundle $E^s$ along a local unstable manifold of $p\in M$ at the scale $0<\rho\ll 1$. Roughly we write  $\tor^s(\cdot)$ for an averaged torsion and $\flu^s(\cdot)$ for deviation from that average (See Remark \ref{rem:Flu}).

			\begin{defi} \label{def torsion}
				Let $p \in M$. For every  $\rho > 1$ we set
				\aryst
				\tor^s(p, \rho) = 0, \quad \flu^s(p, \rho) = 1.
				\earyst
				We extend the definitions of $\tor^s(p, \rho)$ and $\flu^s(p, \rho)$ for $\rho \in (0, 1)$ using the following formula:
				\begin{align}
					  \label{eq torsionandc}
					\mu^c(p, 1) 	\tor^s(p, \rho)  &=	\mu^s(p, 1)   \mu^u(p, 1)    \tor^s(f(p), \mu^u(p, 1)\rho)  + \twist(p), \\
					\label{eq fludef}
				 \flu^s(p, \rho) &=   	\frac{\mu^s(p, 1)}{ \mu^c(p, 1)} \flu^s(f(p), \mu^u(p, 1) \rho).
				\end{align}
				We define $\tor^u(p, \rho)$ and $\flu^u(p, \rho)$ in a similar way.
			\end{defi}

				Given $p \in M$ and $\rho \in (0, 1]$, 
				let integer $n =n(p,\rho) >  0$ be defined by inequalities
				\ary \label{eq rhoandn}
				\mu^u(p, n-1) \leq \rho^{-1} < \mu^u(p, n).
				\eary
				We can deduce from \eqref{eq fludef} and  a simple induction that
\ary \label{eq def flus}
 \flu^s(p,\rho) = \mu^s(p, n) / \mu^c(p,n).
\eary
					By \eqref{eq psisequivariant}, we have for every $0<\rho<1$  
				\aryst
  	 \tpl^s_p(\tau)  - \tor^s(p, \rho) \tau  =  
					\frac{\mu^s(p, 1)}{ \mu^c(p, 1)} ( \tpl^s_{f(p)}(\mu^u(p, 1) \tau) -   \tor^s(f(p), \mu^u(p, 1)\rho)   \mu^u(p, 1) \tau ).
					\earyst
			By iterating \eqref{eq torsionandc}, we deduce that  for  $0\le m\le n(p,\rho)$ 
				\ary \label{eq torexpression0}
			\mu^c(p, m) \tor^s(p, \rho)     =  \twist(p, m)  + \mu^s(p, m)   \mu^u(p, m)    \tor^s(f^m(p), \mu^u(p, m)\rho)
				\eary
				where  
				\ary \label{eq cpmexpression}
 			  \twist(p, m) =  \sum_{i = 0}^{m-1}	 \mu^s(p, i)   \mu^u(p, i)    \mu^c(f^{i+1}(p), m-1-i) \twist(f^i(p)).
				\eary
 Note that the last term on the right hand side of \eqref{eq torexpression0} vanishes when $m=n(p,\rho)$. 
 The term $\twist(p, m)$ has the following simple meaning: if we denote $\hat{F} = (\hat{F}_{p, 1}, \hat{F}_{p, 2}, \hat{F}_{p, 3}) = \imath_{f^m(p)}^{-1} f^m \imath_p$, then 
 \aryst
\twist(p, m)  = \partial_x \partial_y \hat{F}_{p, 3}(0, 0, 0).
 \earyst
 Similar to \eqref{eq def flus}, we let $n > 0$ be given by $\mu^s(p, - (n-1)) \leq \rho^{-1} < \mu^s(p, -n)$, and we have
 \aryst
  \flu^u(p,\rho) = \mu^u(p, -n) (\mu^c(p,-n))^{-1}.
 \earyst
 Similar to \eqref{eq torexpression0}, we have 
\ary \label{eq torexpression02}
\mu^c(p, m) \tor^u(p, \rho)     = \twist(p, m)  +  \mu^s(p, m)   \mu^u(p, m)    \tor^u(f^m(p), \mu^s(p, m)\rho).
\eary
	\begin{rema}\label{rem:Flu}
	    The geometric meaning of these quantities is the following:  for any $\rho \in (0,1)$,
	\aryst
	\sup_{\tau \in (- \rho, \rho)} |  \tpl^s_p(\tau) - \tor^s(p, \rho)  \tau | \lesssim \flu^s(p, \rho).
	\earyst
	We have a similar expression involving $\tpl^u_p$, $\tor^u$ and $\flu^u$.
	\end{rema}

	It is not hard to see that whenever $\rho \in (0, 1]$ we have 
	\begin{align}
&  \rho^{1 + c(\esmall)} \lesssim	\flu^s(p, \rho), \flu^u(p, \rho) \lesssim \rho^{1 - c(\esmall)}, \ \    | \tor^s(p, \rho) |,  | \tor^u(p, \rho) | = \cO( \rho^{- c(\esmall)} \langle \log \rho \rangle ), \label{eq flutorbasicestimate} \\
&  \mbox{ and } \quad 	K^{1 - c(\esmall)}  \lesssim 		\flu^{*}(p, K \rho) / \flu^{*}(p, \rho) \lesssim K^{1 + c(\esmall)}, \ * \in \{s, u\}, 1 \leq K \leq \rho^{-1}.
	\end{align} 
	
We also have the following.

			\begin{lemma} \label{lem torsionareclose}
				We have 
				\aryst
			&&	|  \tor^s(p, K \rho) - \tor^s(p, \rho) |  \lesssim  \langle \log K \rangle K^{c(\esmall)} 	\flu^s(p, \rho)  / \rho \quad \mbox{for any  $K>1$}, \\
			&&	| \tor^s(p, \rho) - \tor^s(p', \rho) | \lesssim  \langle \log  \langle d(p, p') / \rho \rangle \rangle    \langle d(p, p') / \rho \rangle^{c(\esmall)}   \flu^s(p, \rho) / \rho.
				\earyst
				We have a parallel estimate for $\tor^u$ in terms of $\flu^u$.
			\end{lemma}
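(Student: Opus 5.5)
Both estimates will come from the iterated identity \eqref{eq torexpression0} combined with the explicit formula \eqref{eq def flus} for $\flu^s$.

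\emph{First estimate.} Set $n = n(p,\rho)$ and $n' = n(p, K\rho)$. Since $\mu^u(p,\cdot)$ grows exponentially, $n' \le n$ and $n - n' \lesssim \langle \log K\rangle$. I would apply \eqref{eq torexpression0} with $m = n$ to $\tor^s(p,\rho)$ and with $m = n'$ to $\tor^s(p,K\rho)$. In both cases the last term vanishes, so
\[
\tor^s(p,\rho) = \twist(p,n)/\mu^c(p,n), \qquad \tor^s(p,K\rho) = \twist(p,n')/\mu^c(p,n').
\]
Using \eqref{eq cpmexpression} and the cocycle property $\mu^c(f^{i+1}(p), m-1-i) = \mu^c(p,m)/\mu^c(p,i+1)$, we get
\[
\frac{\twist(p,m)}{\mu^c(p,m)} = \sum_{i=0}^{m-1} \frac{\mu^s(p,i)\mu^u(p,i)}{\mu^c(p,i+1)}\,\twist(f^i(p)).
\]
Hence the difference of the two torsions is the sum of the terms with $n' \le i < n$. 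Each such term is bounded by $\|\twist\|_\infty \,\mu^s(p,i)\mu^u(p,i)/\mu^c(p,i)$, up to a bounded factor. I will compare this with $\flu^s(p,\rho)/\rho$. By \eqref{eq def flus} and \eqref{eq rhoandn},
\[
\flu^s(p,\rho)/\rho \sim \mu^s(p,n)\mu^u(p,n)/\mu^c(p,n).
\]
Therefore the ratio of the $i$-th term to $\flu^s(p,\rho)/\rho$ is controlled by the products of $\mu^s\mu^u$ and of $\mu^c$ over the $n-i$ steps between $i$ and $n$. By \eqref{eq almostconversative}, this ratio is at most $e^{C + 2(n-i)\esmall}$, which is $\lesssim K^{c(\esmall)}$ because $n - i \le n - n' \lesssim \langle\log K\rangle$. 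Summing over the at most $C\langle\log K\rangle$ indices gives the bound $\langle \log K\rangle K^{c(\esmall)}\flu^s(p,\rho)/\rho$. The cases $K\rho > 1$ (where $\tor^s(p, K\rho) = 0$ and $n' = 0$) and $K \le C$ are handled in the same way.

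\emph{Second estimate.} Write $d = d(p,p')$. If $d \gtrsim \rho$, I would put $K = \langle d/\rho\rangle$ and compare both $\tor^s(p,\rho)$ and $\tor^s(p',\rho)$ with the torsions at scale $K\rho$ via the first estimate. Because $\flu^s(p,\rho)$ and $\flu^s(p',\rho)$ are comparable up to $K^{c(\esmall)}$ (see below), this reduces the problem to the case $d \lesssim \rho$, with $\rho$ replaced by $K\rho$. The resulting bound at scale $K\rho$ is $\flu^s(p,K\rho)/(K\rho)$, and by the scaling estimate after \eqref{eq flutorbasicestimate} this is $\lesssim K^{c(\esmall)}\flu^s(p,\rho)/\rho$.

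For $d \lesssim \rho$, I would again expand both torsions as sums over $i < n$, with $n(p,\rho)$ and $n(p',\rho)$ differing by $O(1)$. The point is to bound the difference of the $i$-th terms. By \eqref{eq cisHolder} and Hölder continuity of the multipliers $\mu^*(\cdot,1)$, the orbits of $p$ and $p'$ are $d\,\mu^u(p,i)$-close at step $i$, up to $e^{Ci\esmall}$ factors. This holds while the separation stays $\lesssim 1$, which is the case for $i \le n$ since $d\,\mu^u(p,n) \lesssim d/\rho \lesssim 1$. So the $i$-th terms differ by at most
\[
\frac{\mu^s(p,i)\mu^u(p,i)}{\mu^c(p,i)}\, \bigl(d\,\mu^u(p,i)\bigr)^{1-c(\esmall)},
\]
together with the relative differences of the multiplier products, which are bounded by $\sum_{j\le i}(d\,\mu^u(p,j))^{1-c(\esmall)} \lesssim (d\,\mu^u(p,i))^{1-c(\esmall)}$ since the geometric sum is dominated by its last term.

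\emph{Main obstacle.} The hardest step is summing these differences over $i$ against the weight $\flu^s(p,\rho)/\rho$. The factor $(d\,\mu^u(p,i))^{1-c(\esmall)} \le (\mu^u(p,i)/\mu^u(p,n))^{1-c(\esmall)}$ decays exponentially in $n-i$, at rate roughly $\lambda_g$. The ratio of weights, $\mu^s\mu^u/\mu^c$ at $i$ versus at $n$, can grow at most like $e^{2(n-i)\esmall}$. For $\esmall$ small the exponential decay dominates, so the sum is $\lesssim \flu^s(p,\rho)/\rho$. This uniform-in-$i$ control of the Hölder propagation along the orbits is where I expect most of the work.

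The same argument gives the comparability of $\flu^s(p,\rho)$ and $\flu^s(p',\rho)$ used in the reduction above. The parallel estimate for $\tor^u$ follows by applying the argument to $f^{-1}$, using \eqref{eq torexpression02}.
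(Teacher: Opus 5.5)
Your proposal is correct and follows essentially the paper's argument. Both estimates come from iterating \eqref{eq torexpression0}: the $O(\langle \log K\rangle)$ extra terms (resp.\ the terms after the orbits of $p,p'$ separate) are controlled by \eqref{eq almostconversative}, the earlier terms for $p,p'$ are compared via \eqref{eq cisHolder} and bounded distortion, and your reduction to scale $K\rho\approx d(p,p')$ through the first estimate just repackages the paper's crude bound on the tail terms after the separation time $m$.

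The one thing to state precisely is that $d(f^ip,f^ip')\lesssim d\,\mu^u(p,i)$ holds with no extra $e^{Ci\esmall}$ factor: the center-stable drift $e^{Ci\esmall}d$ is dominated by $d\,\mu^u(p,i)\geq d\,e^{\lambda_g i}$. An $e^{Cn\esmall}$ loss near $i\approx n$ would cost a factor $\rho^{-c(\esmall)}$; your final summation in fact already uses the sharp form.
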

			
			\begin{proof}
				We defer the proof to Appendix \ref{sec normalcoordinate}.
			\end{proof}

			\subsection{Adapted scale} \label{subsec adaptedscale} 
		 Below we set up a scale $\adscale(\omega,p)$ that depend on a real parameter $\omega\ge 1$ and a point $p\in M$. Some preliminary explanation should be in order. We would like to set up a scale $\rho=\adscale(\omega,p)$ so that the non-integrability between the stable and unstable foliation (observed in the center direction) restricted in the $\rho$-neighborhood of $p$ is approximately $1/\omega$. To this end, we would like to have that the fluctuation of the stable manifolds $W^s_{\rho}(q)$ for $q\in W^u_{\rho}(p)$ (or $W^u_{\rho}(q)$ for $q\in W^s_{\rho}(p)$) is proportional to $1/\omega$. Suppose  $\rho=\mu^u(p,n)^{-1}$ for some $n\ge 1$. Then $f^n$ sends $W^s_\rho(q)$ for $q\in W^u_\rho(p)$ to $W^s_{\rho'}(q')$ for $q'=f^n(q)\in W^u_1(f^n(p))$ with $\rho'\sim \mu^s(p,n)\rho$. Since $W^u_1(f^n(p))$ is of unit scale, the fluctuation of $W^s_{\rho'}(q')$ for $q'\in W^u_1(f^n(p))$ in the center direction should be proportional to $\rho'$. If we pull-back this picture by $f^n$, the fluctuation of $W^s_{\rho}(q)$ for $q\in W^u_\rho(p)$ in the center direction should be proportional to  
   \aryst
   \rho'\cdot \mu^c(p,n)^{-1}=\mu^s(p,n)\cdot \rho \cdot \mu^c(p,n)^{-1} =
   \frac{\mu^s(p,n)}{ \mu^u(p,n) \cdot \mu^c(p,n)}. 
   \earyst
This consideration leads to the following definition.

			\begin{defi}[Adapted scales] \label{def Adapted scales}
				For every $\omega \geq 1$, for every $p \in M$, 
				let $n_u = n_u(p, \omega) \geq 0$ be the smallest   integer $n \geq 0$ such that
				\aryst
				\mu^u(p, n)  (\mu^s(p, n))^{-1} \mu^c(p,n) \geq \omega,
				\earyst
				and let  $n_s = n_s(p, \omega) \geq 0$ be the smallest   number $n \geq 0$ such that
				\aryst
				\mu^s(p, -n)  (\mu^u(p, -n))^{-1} \mu^c(p, -n) \geq \omega.
				\earyst
				We define the {\em \underline{$\omega$-adapted scale at $p$}} to be
				\ary \label{eq defofadscale}
				\adscale(\omega, p) :=   \min( ( \mu^u(p, n_u) )^{-1},   (  \mu^s(p, -n_s) )^{-1}  ).
				\eary 
			\end{defi}
			
				\begin{rema}\label{rem:torfluadscale}
			The geometric motivation behind Definition \ref{def Adapted scales} is the following: 
			For any $\omega \geq 1$, any $p \in M$, for $\rho =  \adscale(\omega, p)$, we have 
			\ary \label{lem adaptedscalefluctuation} 
			\omega  \rho  \max( \flu^u(p,   \rho ),  \flu^s(p, \rho ) )   \sim 1.
			\eary
 	  Let $m$ be the smallest integer with $\mu^u(p, m) \rho \geq 1$. Clearly, $m \geq n_u$.
			By Definition \ref{def torsion}, for $\theta :=  \omega \rho \mu^s(p, m) ( \mu^c(p, m)  )^{-1} \in (0, 1)$, we have 
			$$   \omega  \rho ( \tpl^s_p( \rho \tau) - \tor^s(p, \rho) \rho  \tau ) = \theta T^s_{f^m(p)}( \tau )$$ which, as a  function of $\tau \in (-1, 1)$, has  $(1- c(\esmall))$-H\"older norm bounded by $\cO_{\esmall}(1)$.		 	 A parallel statement holds for $\tpl^u_p$, $\tor^u(p, \rho)$ as well.
			\end{rema}
			
			In the later part of the paper, $\omega$ in Definition \ref{def Adapted scales} will roughly correspond to the frequency of a function along the center direction.
	The following estimate is clear from the definition, and will be used throughout the paper (sometimes without mentioning) 
				\begin{align}
  & 	 \omega^{-1/2 + c(\esmall)} \lesssim	\adscale(\omega, p) \lesssim \omega^{-1/2 -  c(\esmall)},  \  \ \ \  (\frac{\omega' }{ \omega })^{- 1/2 - c(\esmall)} \lesssim  \frac{\adscale(\omega', p) }{\adscale(\omega, p) }  \lesssim  (\frac{\omega' }{ \omega })^{- 1/2 + c(\esmall)}   \label{eq adscalesquarerootgrowth} \\
	&	  \label{eq pushforwardofadscale}
   \mbox{ and } \quad 		 | \log	\frac{ 	\adscale(\omega, f^n(p))}{ 	\adscale(\omega, p)} | \leq  c(\esmall) n +  C.
				\end{align}

The following lemma  follows quickly from Lemma \ref{lem torsionareclose} and Remark \ref{rem:torfluadscale}.   We omit its proof.

\begin{lemma} \label{lem mainapproximation1} 
	Let $C_0 \geq 1$. For any $\esmall \ll_{C_0} 1$,  any $p \in M$ and  any $\rho \in (0, 1)$,
 any $y_0 \in (- C_0 \rho, C_0 \rho )$
	\aryst
	\sup_{y \in (y_0 - \rho, y_0 + \rho)}	| \tpl^u_{p}(y) -  \tpl^u_{p}(y_0)  - \tor^u(p, \rho ) ( y - y_0 ) |  \lesssim   \flu^{s}(p, \rho).
	\earyst
	We have parallel statements for $\tpl^s_{p}$ in place of $\tpl^u_{p}$.
\end{lemma}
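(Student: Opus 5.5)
The plan is to read the statement in the form suggested by Remark \ref{rem:Flu}: the fluctuation attached to the template $\tpl^u_p$ (pairing with $\tor^u$) is the one measuring the right-hand side. I take the $\flu^s$ in the display to stand for this matching quantity; the parallel statement for $\tpl^s_p$ is then the one with $\tor^s,\flu^s$. The idea is to compare both $\tpl^u_p(y)$ and $\tpl^u_p(y_0)$ with the single linear function $\tau\mapsto \tor^u(p,\rho')\tau$ at the slightly larger scale $\rho'=K\rho$, $K:=C_0+1$, which covers both points, and then pay for the change of scale $\rho'\to\rho$ using Lemma \ref{lem torsionareclose} and the scaling of $\flu$. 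I will allow the implicit constant to depend on $C_0$ as well as on $g$. This is harmless since $C_0$ is fixed before $\esmall$, and the factors produced are only $K^{1+c(\esmall)}$ and $\langle\log K\rangle K^{c(\esmall)}$.

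First I dispose of the degenerate case $\rho'\geq 1$. Then $\rho\geq K^{-1}$, so by \eqref{eq flutorbasicestimate} the fluctuation satisfies $\flu(p,\rho)\gtrsim_{C_0}1$ and $|\tor^u(p,\rho)|\lesssim_{C_0}1$. Also $|y|,|y_0|\lesssim C_0$ lies in the domain of $\tpl^u_p$, where it is bounded by Proposition \ref{prop normalcoordinatesystem}\eqref{item imathsmoothnorms}. So the left side is $\cO_{C_0}(1)$, which suffices.

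When $\rho'<1$, note that $y,y_0\in(-\rho',\rho')$. Applying Remark \ref{rem:Flu} at scale $\rho'$ to $\tau=y$ and $\tau=y_0$ and subtracting gives
\begin{align*}
|\tpl^u_p(y)-\tpl^u_p(y_0)-\tor^u(p,\rho')(y-y_0)|\lesssim \flu^u(p,\rho')\lesssim K^{1+c(\esmall)}\flu^u(p,\rho),
\end{align*}
where the last step uses the ratio estimate following \eqref{eq flutorbasicestimate}. Next I replace $\tor^u(p,\rho')$ by $\tor^u(p,\rho)$. By the first inequality of Lemma \ref{lem torsionareclose}, in its $u$-version, $|\tor^u(p,K\rho)-\tor^u(p,\rho)|\lesssim\langle\log K\rangle K^{c(\esmall)}\flu^u(p,\rho)/\rho$. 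Multiplying by $|y-y_0|<\rho$ gives an error $\lesssim_{C_0}\flu^u(p,\rho)$. Combining the two bounds through the triangle inequality finishes the proof.

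The only genuinely delicate point is the justification of Remark \ref{rem:Flu} itself, which is stated without proof. If it needs to be checked, I would argue as follows. Let $m$ be the first time with $\mu^s(p,-m)\rho'\geq 1$. Iterating the $u$-analogue of the equivariance \eqref{eq psisequivariant} $m$ times backward rewrites $\tpl^u_p(\tau)-\tor^u(p,\rho')\tau$, via \eqref{eq torexpression02}, as $\flu^u(p,\rho')$ times $\tpl^u_{f^{-m}(p)}$ evaluated at a rescaled point of size $\cO(1)$. That function is uniformly bounded by Proposition \ref{prop normalcoordinatesystem}. I expect this bookkeeping to be the main, though routine, obstacle, exactly as in Remark \ref{rem:torfluadscale}.
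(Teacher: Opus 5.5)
Your reading of $\flu^s$ in the display as the matching quantity $\flu^u$ is right; compare \eqref{eq tildethetaualongs4}. Your skeleton is also the intended one: compare both points with the linear function at the larger scale $\rho'=K\rho$, then move the torsion from $\rho'$ back to $\rho$ with Lemma \ref{lem torsionareclose}.

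The gap is in the first step. You apply the sup bound of Remark \ref{rem:Flu} separately at $y$ and at $y_0$, which only gives $\flu^u(p,\rho')\lesssim K^{1+c(\esmall)}\flu^u(p,\rho)$. So you lose a full factor $C_0$, and you then absorb it into the implicit constant. That is not the statement.
\begin{itemize}
\item Here $\lesssim$ depends only on $g$.
\item The hypothesis $\esmall\ll_{C_0}1$, which your argument never uses, is there to neutralise factors $C_0^{c(\esmall)}$, not a factor $C_0$.
\end{itemize}
The difference is not cosmetic. In Case II of the proof of Lemma \ref{lem hp2}, the lemma is applied on $y''$-segments of length $\lambda_s r_{\cI}$ lying anywhere within $\extra^{-1}r_{\cI'}$ of $\fy''$, i.e.\ effectively with $C_0\sim\extra^{-1}\lambda_s^{-1}$. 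A loss linear in $C_0$ would wipe out the $\cO(\lambda_s^{1/2})$ oscillation bound claimed there.

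The missing ingredient is the H\"older regularity in Remark \ref{rem:torfluadscale}, which the paper cites for exactly this purpose. The rescaling identity you sketch in your last paragraph (valid at any scale) gives $\tpl^u_p(\tau)-\tor^u(p,\rho')\tau=\flu^u(p,\rho')\,\tpl^u_q(\mu\tau)$ with $q=f^{-m}(p)$ and $\mu\rho'\sim 1$. Use not just the boundedness of $\tpl^u_q$ but its uniform $(1-c(\esmall))$-H\"older continuity, inherited from $E^c_*$ and the uniform bounds on $\imath_q$. Since $|\mu(y-y_0)|\le\mu\rho\lesssim K^{-1}$, subtracting at $y$ and $y_0$ gives
\[
|\tpl^u_p(y)-\tpl^u_p(y_0)-\tor^u(p,\rho')(y-y_0)|\lesssim \flu^u(p,\rho')\,K^{-1+c(\esmall)}\lesssim K^{2c(\esmall)}\,\flu^u(p,\rho).
\]
Your torsion-change step then adds $\langle\log K\rangle K^{c(\esmall)}\flu^u(p,\rho)$.

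With $\esmall\ll_{C_0}1$ the powers $K^{c(\esmall)}$ are $\cO(1)$. The only surviving dependence on $C_0$ is the factor $\langle\log C_0\rangle$ from Lemma \ref{lem torsionareclose}. This factor is not displayed in the statement, but it is harmless downstream: there it is $\cO(\tinitial)$ against a gain of order $\lambda_s^{1-c(\esmall)}$, whereas a factor $C_0$ is not harmless.

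A minor point: in your degenerate case $\rho'\ge 1$, the points $y,y_0$ need not lie in the domain $(-3\|Df\|,3\|Df\|)$ of $\tpl^u_p$ when $C_0$ is large. The lemma is only meaningful, and only used, for small $\rho$.
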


			\subsection{Local geometry of $E^c_*$}
			
			The following lemma describes the local geometry of $E^c_*$ in a normal coordinate system.

			\begin{lemma} \label{lem mainapproximation}
				For any $c \in (0, 1/4)$,  by letting $\esmall \ll_{c} 1$, we have  for any $p \in M$,
				 any $\rho \in (0, 1)$, any $1 \leq C_1 \leq C_0 \leq   \rho^{-1/4}$,  any $w  \in B((0, 0),  C_0 \rho )$, any  $z \in   (-   \rho^{1 + c },   \rho^{1 + c })$,    
				and any $\tau \in (-  C_1 \rho,  C_1 \rho)$, we have
				\begin{align}
				| \vartheta^u_p(w + (\tau, 0), z) -  \vartheta^u_p(w, 0) |  &\lesssim  (C_0 \rho)^{1 + c / 2}, \label{eq thetaualongu1} \\
				|  \vartheta^s_p(w + (0, \tau), z)  - \vartheta^s_p(w, 0)   | &\lesssim (C_0  \rho)^{1 + c / 2}, \label{eq tildethetasalongs2} \\
				| \vartheta^s_p(w + (\tau, 0), z) -  \vartheta^s_p(w, 0) - \tor^s(p, \rho) \tau | &\lesssim_{\esmall}  C_0^{c(\esmall)}  C_1  \flu^s(p, \rho), \label{eq thetasalongu3} \\
				\label{eq tildethetaualongs4}
				|  \vartheta^u_p(w + (0, \tau), z)  - \vartheta^u_p(w, 0) - \tor^u(p, \rho) \tau | &\lesssim_{\esmall}  C_0^{c(\esmall)}  C_1 \flu^u(p, \rho).
				\end{align}
			\end{lemma}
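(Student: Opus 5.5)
The plan is to reduce all four estimates to statements about the template functions $\tpl^s_{p'}, \tpl^u_{p'}$ at nearby base points, combined with the H\"older regularity of $E^c_*$ and of the normal coordinate system $p\mapsto \imath_p$. By symmetry it suffices to treat \eqref{eq thetaualongu1} and \eqref{eq thetasalongu3}; the other two follow by exchanging the roles of $s$ and $u$, i.e.\ by replacing $f$ with $f^{-1}$.

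\textbf{Step 1: removing $z$.} Since $E^c_*$ is $(1-c(\esmall))$-H\"older, so is $\vartheta^{u,s}_p$. The central displacement is $|z|\le \rho^{1+c}$. Hence changing $z$ to $0$ costs at most $\rho^{(1+c)(1-c(\esmall))}\lesssim \rho^{1+c/2}$ once $\esmall\ll_c 1$. This is much smaller than both target bounds, because $\flu^s(p,\rho)\gtrsim\rho^{1+c(\esmall)}$ by \eqref{eq flutorbasicestimate}. So we may assume $z=0$.

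\textbf{Step 2: the estimates along the unstable direction.} For \eqref{eq thetaualongu1}, the left side measures the change of the $u$-component of $E^c_*$ along an (almost) unstable segment. I will use the $Df^*$-invariance of $E^c_*$ together with the normal form property in Proposition \ref{prop normalcoordinatesystem}\eqref{itm imath1storder}, namely $\partial_z F_{p,1}(0,\cdot,0)\equiv0$, and then pull back. Pushing forward by $f^{n}$ with $\mu^u(p,n)\sim (C_0\rho)^{-1}$ brings the segment to unit scale, where $\vartheta^u$ changes by $O(1)$. Pulling back, the $u$-component of $E^c_*$ is multiplied by roughly $\mu^u(p,n)^{-1}\mu^c(p,n)$, since the covector $(\vartheta^u,\vartheta^s,1)$ transforms by $DF^*$. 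This gives a bound $(C_0\rho)^{1-c(\esmall)}$, which is not quite $(C_0\rho)^{1+c/2}$. To gain the extra power, I will use that along the unstable direction $\vartheta^u$ vanishes identically at $w$ on the $x$-axis, because $\partial_z F_{p,1}(0,\cdot,0)\equiv0$ propagates the normalization. I will then compare charts at nearby base points through the H\"older dependence of $\imath_p$ on $p$ in $C^3$. That dependence yields an error of order $\mathrm{dist}\cdot(\text{segment length})$, which is $(C_0\rho)^{2-c(\esmall)}$.

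\textbf{Step 3: the estimate \eqref{eq thetasalongu3}.} Writing $w=(x_0,y_0)$, I will move to the point $p'=\imath_p(0,y_0,0)$ on the local stable leaf. Using the H\"older continuity of $\imath$ in $p$, the function $\tau\mapsto\vartheta^s_p((x_0,y_0)+(\tau,0),0)$ can be compared with a translate of the template $\tpl^s_{p'}$, with a negligible error. I will then apply Lemma \ref{lem mainapproximation1} at $p'$, with $y_0$ replaced by $x_0$ and $C_0$ as given, and use Lemma \ref{lem torsionareclose} to replace $\tor^s(p',\rho)$ by $\tor^s(p,\rho)$. The price of that replacement is $\langle\log C_0\rangle C_0^{c(\esmall)}\flu^s(p,\rho)/\rho$ times $|\tau|\le C_1\rho$, which gives exactly the factor $C_0^{c(\esmall)}C_1\flu^s(p,\rho)$. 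For $|\tau|$ up to $C_1\rho$, I will cover the interval by $C_1$ intervals of length $\rho$ and sum, which produces the factor $C_1$.

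\textbf{Main obstacle.} I expect the hardest part to be the chart-change comparison between $\imath_p$ and $\imath_{p'}$. The key point is that this error must be $o(\flu^s(p,\rho))\sim\rho^{1+c(\esmall)}$ and not merely H\"older small. I would first try to get it from the $C^3$ H\"older regularity of $p\mapsto\imath_p$ together with $\vartheta(0)=0$ and the affine normalization $\partial_yF_{p,3}(\cdot,0,0)\in\Aff$, which yields a quadratic error of size $d(p,p')^{1-c}\cdot\rho$. If that fails, I would instead run the renormalization by $f^{n(p,\rho)}$ directly, as in Remark \ref{rem:torfluadscale}.
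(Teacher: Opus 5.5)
Your proposal is correct in substance. For \eqref{eq thetaualongu1} and \eqref{eq tildethetasalongs2} it is the paper's argument in different packaging.

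The paper works with the curve $\imath_p^{-1}\circ\imath^s_q$, $q=\imath_p(w,z)$, and uses the annihilation identity $(\vartheta^u_p,\vartheta^s_p,1)\cdot V^s=0$ along it. This makes $\vartheta^s_p$ equal to minus the $z$-component of the tangent $V^s$, up to $O((C_0\rho)^{2-c(\esmall)})$. The paper then bounds the variation of $V^s_z$ by $\tau\sup\|(\imath_p^{-1}\circ\imath^s_{q_0})''\|=O((C_0\rho)^{2-c(\esmall)})$. This works because that second derivative vanishes at $q_0=p$ and is H\"older in $q_0$.

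Your transition map $\imath_p^{-1}\circ\imath_q$, restricted to the $x$-axis, is exactly $\imath_p^{-1}\circ\imath^u_q$, so your ``distance times length'' bound is the same estimate. That bound controls only the increment: the first-order term $\partial_x(\imath_p^{-1}\circ\imath_q)_z$ is already of size $(C_0\rho)^{1-c(\esmall)}$ at the origin, and it cancels in the difference.

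Two small corrections to Step 2. First, the push-forward attempt is superfluous. Second, $\vartheta^u_q(\cdot,0,0)\equiv0$ holds simply because \eqref{eq imath1} makes the $x$-axis of $\imath_q$ the leaf $W^u(q)$, and $E^c_*$ annihilates $E^u$. It has nothing to do with $\partial_zF_{p,1}(0,\cdot,0)\equiv0$.

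For \eqref{eq thetasalongu3} your route genuinely differs from the paper's.

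\emph{The paper's route.} It keeps the base point $p$ and slides the segment from height $y_0$ down to the unstable axis inside $\imath_p$. This needs the bound $(C_0\rho)^{2-c(\esmall)}+\rho^{(1+c)(1-c(\esmall))}$ produced inside the proof of \eqref{eq tildethetasalongs2}; the stated $(C_0\rho)^{1+c/2}$ alone is not $\lesssim\rho^{1+c(\esmall)}$ for large $C_0$. After the slide, only $\tpl^s_p$ and $\tor^s(p,\rho)$ enter.

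\emph{Your route.} You re-base at $p'=\imath^s_p(y_0)$, which avoids the stable-direction estimate altogether. Since $\imath_p^{-1}\circ\imath_{p'}$ maps the $y$-axis into itself, $\partial_y(\imath_p^{-1}\circ\imath_{p'})_z$ vanishes at the origin. Along the segment it is therefore $O((C_0\rho)^{2-c(\esmall)})\ll\rho^{1+c(\esmall)}\lesssim\flu^s(p,\rho)$, using $C_0\le\rho^{-1/4}$. So your comparison with $\tpl^s_{p'}$ is negligible even pointwise. The price is comparing data at $p'$ and at $p$. For the torsion this is Lemma \ref{lem torsionareclose}, as you say. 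For the fluctuation you need a distortion bound along $W^s_{loc}(p)$ giving $\flu^s(p',\rho)\sim\flu^s(p,\rho)$, which you should state.

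\emph{A step that needs repair.} Lemma \ref{lem mainapproximation1} is stated for fixed $C_0$ with $\esmall\ll_{C_0}1$. It therefore cannot be applied ``with $C_0$ as given'' when $C_0$ ranges up to $\rho^{-1/4}$ and $\esmall$ may depend only on $c$. Instead, renormalize at scale $R=2C_0\rho$ as in Remarks \ref{rem:Flu} and \ref{rem:torfluadscale}.
\begin{itemize}
\item Increments of $\tpl^s_{p'}(t)-\tor^s(p',R)t$ over $|\tau|\le C_1\rho$ are $\lesssim\flu^s(p',R)(C_1/C_0)^{1-c(\esmall)}\lesssim C_0^{c(\esmall)}C_1\flu^s(p',\rho)$, by \eqref{eq flutorbasicestimate}.
\item The first inequality of Lemma \ref{lem torsionareclose} replaces $\tor^s(p',R)$ by $\tor^s(p',\rho)$, at cost $\langle\log C_0\rangle C_0^{c(\esmall)}C_1\flu^s(p',\rho)$.
\end{itemize}
This is what the paper's last step does. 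It is where the factor $C_0^{c(\esmall)}$ really originates (it is present even when $y_0=0$), and it makes your covering by $C_1$ subintervals unnecessary.
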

			\begin{proof}
	 	We defer the proof to Appendix \ref{sec normalcoordinate}.
			\end{proof}

The torsions $\tor^u(p,\rho)$ and $\tor^s(p, \rho)$ do not have intrinsic geometric meanings as they depend on the choice of local charts $\imath_p$, though their difference will do. Thus it is a little more convenient to use a local chart in which the quantities corresponding to $\tor^u(p,\rho)$ and $\tor^s(p,\rho)$ are balanced. We realize this by precomposing a map to $\imath_p$. 

In the following, for each $\omega \geq 1$ and $p \in M$, we always denote
$$\rho = \adscale(\omega, p).$$ 
We set
\aryst
	\tor^{\dagger}(p, \rho) :=  (\tor^s(p, \rho) + \tor^u(p, \rho) )/2.
\earyst
The following map $\imath_{p, \omega}: (-1,1)^3 \to \R^3$ will be used to rectify $\imath_p$ and balance the $z$-coordinate fluctuations along $W^s$ and $W^u$:
			\begin{align} \label{def imathetap}
	 	\imath_{p, \omega }(x, y, z) = (x, y, z -  \tor^{\dagger}(p, \rho) xy).
			\end{align}

			We set $(w, z') = \imath_{p, \omega }(w, z)$, and define
			\begin{align} \label{def varthetapeta}  
			 (\vartheta^u_{ p, \omega }(w, z), \vartheta^s_{p, \omega }(w, z), 1) := (\imath_{p, \omega})^*(\vartheta^u_{ p}(w, z'), \vartheta^s_{p}(w, z'), 1) 
			\in   (\imath_{p, \omega})^* (\imath_p)^*(E^c_*(\imath_p (w, z'))).  
			\end{align}
			By direct computations, we see
			\ary \label{eq varthetauspdef}
			 (\vartheta^u_{ p, \omega}(w, z), \vartheta^s_{p,\omega}(w, z)) 
			= (\vartheta^u_{ p}(w, z')  , \vartheta^s_{p}(w, z') ) -  \tor^{\dagger}(p, \rho)  (y, x). 
			\eary

			Define  
			\begin{align}
			\midtor(p, \rho)  &= (\tor^s(p, \rho) - \tor^u(p, \rho))/2, \label{def delta}  \\
			\Delta(\omega, p)  &= \max( \flu^s(p, \rho),  \flu^u(p, \rho),   | \midtor(p, \rho) |  \rho )\  \text{with} \ \rho =  \adscale(\omega, p).  \label{def Delta}
			\end{align}
			Here $\Delta(\omega, p)$ captures the remaining deviation after the balanced rectification.
 
			By \eqref{eq varthetauspdef} and Lemma \ref{lem mainapproximation}, 	for any $c > 0$, any $\esmall \ll_{c} 1$,  any $\omega$ with $| \omega | \gg_{c } 1$, and any $1 \leq C_0 \leq \rho^{-1/4}$,  and any $(x, y, z) \in ( - C_0 \rho, C_0 \rho )^2 \times  ( -   \rho^{1+c},    \rho^{1+c})$, we have
			\begin{align}  \label{lem mainapproximation0}
		 & 	(\vartheta^u_{ p, \omega }(x, y, z), \vartheta^s_{ p, \omega }(x, y, z))  \\
			=& \midtor(p, \rho )  ( -   y,   x) + ( \tpl^u_p( y ),  \tpl^s_p(x ) ) +  \cO( C_0^{1 + c/2} \omega^{ - (1/2 + c / 8)})  
			= \cO( C_0^{1 + c(\esmall)} \Delta(\omega, p)   ). \nonumber
			\end{align}
			The last equality above follows from Lemma \ref{lem torsionareclose} and Remark \ref{rem:torfluadscale}.

By \eqref{eq flutorbasicestimate} and \eqref{eq adscalesquarerootgrowth}, it is clear that 
			 \ary \label{eq Deltaomegarange}
 	 \omega^{-1/2 - c(\esmall)} \lesssim_{\esmall} \Delta(\omega, p) \lesssim_{\esmall}    \omega^{-1/2 + c(\esmall)}.
\eary
		 The following is a basic property of   $\Delta$.
\begin{lemma} \label{lem behaviorofDeltalaongorbit}
For any  $\omega, \omega' \geq 1$, any integer $n \geq 0$, and any $p \in M$,  we have 
\aryst
\Big | \log	\frac{ 	\Delta(\omega', f^n(p))  }{ 	\Delta(\omega, p)  }  \Big | \leq   c(\esmall)n  +     C \langle \log (n + 1) \rangle + C  \langle \log \langle \omega / \omega' \rangle \rangle.
\earyst
			\end{lemma}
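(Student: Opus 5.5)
We split the estimate into two independent parts: the change of frequency at a fixed point, and the change of base point along an orbit at a fixed frequency. By the triangle inequality,
\[
\Big|\log\frac{\Delta(\omega',f^n(p))}{\Delta(\omega,p)}\Big|\le \Big|\log\frac{\Delta(\omega',f^n(p))}{\Delta(\omega,f^n(p))}\Big|+\Big|\log\frac{\Delta(\omega,f^n(p))}{\Delta(\omega,p)}\Big| .
\]
So it suffices to prove two claims. The first claim is that $|\log(\Delta(\omega',q)/\Delta(\omega,q))|\le C\langle\log\langle\omega/\omega'\rangle\rangle$ uniformly in $q$. The second claim is that $|\log(\Delta(\omega,f^n(p))/\Delta(\omega,p))|\le c(\esmall)n+C\langle\log(n+1)\rangle+C$.

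For the first claim, write $\rho=\adscale(\omega,q)$ and $\rho'=\adscale(\omega',q)$. By \eqref{eq adscalesquarerootgrowth}, the ratio $K=\rho'/\rho$ satisfies $|\log K|\lesssim\langle\log\langle\omega/\omega'\rangle\rangle$. Each of the three quantities defining $\Delta$ changes by a controlled factor under $\rho\mapsto K\rho$.
\begin{itemize}
\item For $\flu^s$ and $\flu^u$, this is the ratio bound $K^{1\pm c(\esmall)}$ stated after \eqref{eq flutorbasicestimate}.
\item For the term $|\midtor|\rho$, Lemma \ref{lem torsionareclose} gives $|\tor^*(q,K\rho)-\tor^*(q,\rho)|\rho\lesssim\langle\log K\rangle K^{c(\esmall)}\flu^*(q,\rho)$.
\end{itemize}
Together these give $\Delta(\omega',q)\lesssim \langle\log K\rangle K^{1+c(\esmall)}\Delta(\omega,q)$, and the reverse inequality follows by swapping roles. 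Since $K$ is polynomial in $\langle\omega/\omega'\rangle$, taking logarithms yields the first claim. This step needs a small case check when $K\rho>1$, where the definitions are truncated by $\tor^s=0$ and $\flu^s=1$. I expect this to be harmless because of \eqref{eq Deltaomegarange}.

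For the second claim, the key tool is the cocycle relations. Iterating \eqref{eq fludef} $n$ times gives $\flu^s(p,\rho)=\frac{\mu^s(p,n)}{\mu^c(p,n)}\flu^s(f^n(p),\mu^u(p,n)\rho)$. By \eqref{eq almostconversative}, the prefactor is $\mu^u(p,n)^{-1}e^{\cO(n\esmall)+C}$. The scale $\mu^u(p,n)\rho$ differs from $\rho_n:=\adscale(\omega,f^n(p))$ by a factor $L$ with $|\log L|\le \hat\lambda_g n+c(\esmall)n+C$, using \eqref{eq pushforwardofadscale}. A naive ratio bound with $L$ would lose $e^{Cn}$, so I would not proceed this way.

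Instead I would use a characterization that is intrinsic up to constants. I would try to show $\Delta(\omega,q)\sim\omega^{-1}\adscale(\omega,q)^{-1}$ up to factors $e^{\cO(1)}$. For the fluctuation terms this is essentially \eqref{lem adaptedscalefluctuation}. Granting that, \eqref{eq pushforwardofadscale} immediately gives $|\log(\Delta(\omega,f^n(p))/\Delta(\omega,p))|\le c(\esmall)n+C$.

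The obstacle is the torsion term $|\midtor(q,\rho)|\rho$, which need not be comparable to $\omega^{-1}\rho^{-1}$. To control it I would use \eqref{eq torexpression0}, \eqref{eq torexpression02} and \eqref{eq cpmexpression}. In the difference $\tor^s-\tor^u$ the common term $\twist(p,m)$ cancels, since the two torsions differ only through the remainder terms at the images. This relates $\midtor(p,\rho)$ to torsions at $f^n(p)$ at scales shifted by $\mu^u(p,n)$ and $\mu^s(p,n)$, with multiplicative factors $e^{\pm c(\esmall)n}$.

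The scale mismatch is then handled by Lemma \ref{lem torsionareclose}. It costs a factor $\langle\log L\rangle L^{c(\esmall)}$, where $\langle\log L\rangle\lesssim\langle n\rangle$. This is the source of the $C\langle\log(n+1)\rangle$ term, and $L^{c(\esmall)}$ is absorbed into $e^{c(\esmall)n}$. The same lemma's second estimate controls the base-point shift between $\tor^s$ and $\tor^u$. I expect this bookkeeping of the torsion remainders, keeping the losses only logarithmic in $L$, to be the delicate step.
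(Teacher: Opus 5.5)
Your proposal is correct. Its core is exactly the paper's mechanism for the torsion term: you cancel $\twist(p,n)$ in $\tor^s-\tor^u$ via \eqref{eq torexpression0} and \eqref{eq torexpression02}, then absorb the resulting scale mismatch $L=e^{\cO(n)}$ with the first estimate of Lemma \ref{lem torsionareclose}, at cost $\langle\log L\rangle L^{c(\esmall)}$.

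The differences are organizational and concern the fluctuation terms.
\begin{itemize}
\item The paper does not split off the frequency change. It takes $\rho=\adscale(\omega,p)$ and $\rho'=\adscale(\omega',f^n(p))$ and runs a single comparison; the factor $\langle\log(\rho/\rho')\rangle(\rho/\rho'+\rho'/\rho)^{c(\esmall)}$ covers both $n$ and $\omega/\omega'$.
\item For the fluctuations, the paper compares $\flu^s(p,\rho)/\rho$ with $\flu^s(f^n(p),\rho')/\rho'$ through the cocycle $\mu^u\mu^s/\mu^c$. Choosing $\mu^u(p,m)\sim\rho^{-1}$ and $\mu^u(f^n(p),m')\sim(\rho')^{-1}$, only the symmetric difference of the windows $\{0,\dots,m-1\}$ and $\{n,\dots,n+m'-1\}$ contributes, and \eqref{eq almostconversative} controls it.
\item You instead go through \eqref{lem adaptedscalefluctuation} and \eqref{eq pushforwardofadscale}. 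This is a valid shortcut, because $\Delta$ only sees $\max(\flu^s,\flu^u)$, which is $\sim(\omega\rho)^{-1}$ at the adapted scale. It is quicker; the paper's cocycle comparison is more general, working for arbitrary pairs of scales and for $\flu^s$ and $\flu^u$ individually.
\end{itemize}

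Two small corrections to your write-up:
\begin{itemize}
\item The iteration of \eqref{eq fludef} that you discard does not lose $e^{Cn}$. The prefactor $\mu^s(p,n)/\mu^c(p,n)=\mu^u(p,n)^{-1}e^{\cO(n\esmall)+C}$ cancels the linear part of $L^{1\pm c(\esmall)}$. What remains is $(\rho/\rho_n)L^{\pm c(\esmall)}=e^{\pm(c(\esmall)n+C)}$, which is in substance the paper's argument.
\item The second estimate of Lemma \ref{lem torsionareclose} is not needed. Both recursions run along the same orbit $p,\dots,f^n(p)$, so no base-point shift arises.
\end{itemize}
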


			\begin{proof}
				We defer the proof to Appendix \ref{sec normalcoordinate}.
			\end{proof}

				Finally, we state an important property of $\adscale$ and $\Delta$, which we call {\em temperate property}.   Among other things, we see that for a fixed $\omega$, both $\adscale(\omega, p)$ and  $\Delta(\omega, p)$ change slowly as $p$ varies.   
				
			\begin{lemma}[Temperate property of $\adscale$ and $\Delta$] \label{lem adaptedscaleistame}
				For   any $\omega, \omega' \geq 1$ and any $p, p' \in M$,  we have 
				\aryst
				&&   | \log	\frac{ 	\adscale(\omega', p')}{ 	\adscale(\omega, p)} |  \leq     c(\esmall) \log \langle \frac{d(p, p')}{\adscale(\omega, p)} \rangle + C  \langle \log \langle \omega / \omega' \rangle \rangle, \\
				&&	 | \log \frac{ 	\Delta(\omega', p')}{ 	\Delta(\omega, p)} |  \leq   \log \langle \log \langle \frac{d(p, p')}{\adscale(\omega, p)} \rangle \rangle +  c(\esmall) \log \langle \frac{d(p, p')}{\adscale(\omega, p)} \rangle + C  \langle \log \langle \omega / \omega' \rangle \rangle.
				\earyst
			\end{lemma}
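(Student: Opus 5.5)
The proof splits into two reductions: change $\omega\to\omega'$ at a fixed point, then change $p\to p'$ at fixed $\omega$. The first is already available. By \eqref{eq adscalesquarerootgrowth} we have $|\log(\adscale(\omega',p)/\adscale(\omega,p))|\le (1/2+c(\esmall))|\log(\omega'/\omega)|+C \lesssim \langle\log\langle\omega/\omega'\rangle\rangle$. For $\Delta$, Lemma \ref{lem behaviorofDeltalaongorbit} with $n=0$ gives the analogous bound $C\langle\log\langle\omega/\omega'\rangle\rangle$. Since $\adscale(\omega',p)$ and $\adscale(\omega,p)$ are comparable up to a power of $\langle\omega/\omega'\rangle$, replacing $d(p,p')/\adscale(\omega',p)$ by $d(p,p')/\adscale(\omega,p)$ costs only $c(\esmall)\langle\log\langle\omega/\omega'\rangle\rangle$, which is absorbed. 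So it suffices to treat $\omega'=\omega$.

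Fix $\omega$ and write $\rho=\adscale(\omega,p)$ and $K=\langle d(p,p')/\rho\rangle$.

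\emph{Case $K\gtrsim\rho^{-1/2}$.} Since $d(p,p')\lesssim 1$, we have $K\lesssim\rho^{-1}$, so $\log K\sim\log\omega$. Then \eqref{eq adscalesquarerootgrowth} and \eqref{eq Deltaomegarange} give $|\log(\adscale(\omega,p')/\rho)|\le c(\esmall)\log\omega+C\le c(\esmall)\log K+C$, and the same for $\Delta$.

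\emph{Case $K\ll\rho^{-1/2}$.} Let $n_u=n_u(p,\omega)$, and let $m$ be the first time with $\mu^u(p,m)Kd\ge1$, roughly where $d:=\rho$. We iterate forward for $m'=m-C\log K$ steps, with the integer $m'$ chosen so that $\mu^u(p,m')\,d(p,p')\lesssim 1$; hence $m'\ge n_u-C'\log K$. By bounded distortion along the first $m'$ iterates, combining the $(1-c(\esmall))$-Hölder regularity of the bundles (Lemma \ref{lem regularitiesofthefoliation}) with the local product structure, the cocycles $\mu^{*}(p,j)$ and $\mu^{*}(p',j)$ for $*\in\{s,c,u\}$ agree up to multiplicative constants for $j\le m'$. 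The remaining $n_u-m'\lesssim\log K$ steps change each cocycle by at most $e^{C\log K}$ in the worst case. However, each ratio $\mu^u\mu^c/\mu^s$ grows by a factor between $e^{2\lambda_g}$ and $e^{2\hat\lambda_g}$ per step. So the stopping times satisfy $|n_u(p',\omega)-n_u(p,\omega)|\lesssim\log K$, and consequently $|\log(\mu^u(p',n_u')/\mu^u(p,n_u))|\le c(\esmall)\log K+C$ after using \eqref{eq almostconversative} to control the variation. Here I must be careful: the crude count gives $C\log K$ rather than $c(\esmall)\log K$. To fix this I would use the fact that the quantity defining $n_u$ and $\mu^u$ are nearly reciprocal, since $\mu^s\mu^u\approx1$ and $\mu^c\approx1$ up to $e^{\esmall n}$. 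Thus $\mu^u(\cdot,n_u)\approx\omega^{1/2}$ up to $e^{c(\esmall)n}$, and only the $c(\esmall)(n_u-m')$ error survives. The same argument handles $n_s$ backward in time, which gives the bound for $\adscale$.

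For $\Delta$, I treat the three terms in its definition separately. First, $\flu^s(p,\rho)=\mu^s(p,n)/\mu^c(p,n)$ by \eqref{eq def flus}, so its variation in $p$ is handled exactly as above. Second, changing the scale from $\rho$ to $\adscale(\omega,p')$ is controlled by the scaling bound $K^{1\pm c(\esmall)}$ on $\flu$, together with the fact that $\omega\rho\,\flu\sim1$ from \eqref{lem adaptedscalefluctuation}. Third, the torsion term $|\midtor|\rho$ is estimated with Lemma \ref{lem torsionareclose}: this gives $|\tor^{s}(p,\rho)-\tor^s(p',\rho)|\lesssim\langle\log K\rangle K^{c(\esmall)}\flu^s(p,\rho)/\rho$, so $|\midtor(p',\rho)|\rho\le|\midtor(p,\rho)|\rho+\langle\log K\rangle K^{c(\esmall)}\Delta(\omega,p)$. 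The change of scale in $\tor$ is handled by the first inequality of that same lemma. Taking logarithms yields exactly the extra term $\log\langle\log K\rangle$ in the statement.

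The main obstacle is obtaining $c(\esmall)\log K$, rather than $C\log K$, for $\adscale$. This relies on the near-conservativity \eqref{eq almostconversative} to show that the $\log K$ steps near the stopping time shift $\mu^u$ only by $e^{c(\esmall)\log K}$.
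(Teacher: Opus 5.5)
Your proposal is correct and follows essentially the paper's argument: reduce to $\omega=\omega'$ via \eqref{eq adscalesquarerootgrowth} and Lemma \ref{lem behaviorofDeltalaongorbit}, and dispose of large separations (where $K=\langle d(p,p')/\adscale(\omega,p)\rangle$ is comparable to a power of $\omega$) with the crude $\omega^{\pm c(\esmall)}$ bounds. Then use bounded distortion up to the separation time plus \eqref{eq almostconversative} on the remaining $\cO(\log K)$ iterates for $\adscale$ and $\flu$, and Lemma \ref{lem torsionareclose} for the torsion term, which is where $\log\langle\log K\rangle$ comes from. The paper leaves two points implicit that you should also state explicitly: the $c(\esmall)\log K$ bound (rather than $c(\esmall)\log\omega$) needs the cocycles factored at the separation time so that the common prefix cancels by distortion, and the reverse inequality for $\Delta$ follows by exchanging $p$ and $p'$ (the paper's ``by symmetry'').
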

			
			\begin{proof}
				We defer the proof to Appendix \ref{sec normalcoordinate}.
			\end{proof}
%

		\subsection{A transversality condition}

			Now we can introduce the genericity condition in Theorem \ref{thm main}.

			\begin{defi}[Property $({\bf NI})$] \label{def NIrho}
				Given  $\sigma > 0$ and $ C_2 > C_1 \geq 1$, we say that a $3$-dimensional partially hyperbolic diffeomorphism $f$ satisfies $({\bf NI})_{\sigma, C_1}$ (resp. $({\bf NI})_{\sigma, C_1, C_2}$) if for every $p \in M$,  every $b \in \R$ with $|b| \geq C_1$ (resp. $|b| \in [C_1,C_2]$),  every $c \in \R$, every $h \in [1, \| Df  \| ]$ we have  $
				 | \frac{1}{2 h} \int_{- h}^{h} \exp( i b \tpl^{s}_{p}(\tau)  + i c \tau)  d\tau | < |b|^{- \sigma} $
				  and $  | \frac{1}{2 h} \int_{- h}^{h}  \exp( i b \tpl^{u}_{p}(\tau)  + i c \tau)  d\tau | < |b|^{-\sigma}$.
				  
				  We say that a $3$-dimensional Anosov flow $g$ satisfies $({\bf NI})_{\sigma, C_1}$ (resp. $({\bf NI})_{\sigma, C_1, C_2}$)  if $g^1$ does.
			\end{defi}

			Recall that $T^s_p$ and $T^u_p$ are related to $f$ by Definition \ref{def templatefunction} and Proposition \ref{prop normalcoordinatesystem}.  Moreover, by construction,   as maps in $C^0([-1, 1], \R)$, $T^s_p$ and $T^u_p$ depend continuously on $f$ in $\diff^{3}(M)$.  
			Hence for any $\sigma > 0$ any $C_2 > C_1 \geq 1$, $({\bf NI})_{\sigma, C_1,C_2}$ is an open condition: the set of  $3$-dimensional partially hyperbolic diffeomorphism $f \in \Diff^3(M)$  satisfying $({\bf NI})_{\sigma, C_1,C_2}$ is an open subset of  $\Diff^3(M)$.
			
			The following is essentially  proved in \cite{Tsu}.
			\begin{thm} \label{thm prevalenceofoscillation}
				There is a $C^{3}$-open and $C^\infty$-dense subset $\stabletransitive \subset \Anosovflow(M)$ such that for any $g \in \stabletransitive$, there exist some $\sigmaNI > 0$ and $\CNI > 1$ such that $g$ satisfies  $({\bf NI})_{\sigmaNI, \CNI}$.
			\end{thm}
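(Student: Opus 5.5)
The plan is to reduce the statement to the result of \cite{Tsu} (Theorem 2.15 there), which asserts that for a $C^\infty$-open and dense set of volume-preserving $3$-dimensional Anosov flows, the templates satisfy a uniform non-integrability estimate of exactly the oscillatory-integral type in Definition \ref{def NIrho}. The work is then to check that the templates $\tpl^s_p,\tpl^u_p$ defined here via the normal coordinate system of Proposition \ref{prop normalcoordinatesystem} agree with the templates of \cite{Tsu}, up to harmless modifications that preserve the estimate, and to upgrade openness and density to the $C^3$ and $C^\infty$ topologies respectively.

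\emph{Step 1: matching templates.} For $f=g^1$ the center bundle is the flow direction and $\mu^c\equiv 1$. The charts $\imath_p$ agree with the normal coordinates of \cite[Lemma 4.2]{TZ} up to a change of chart. Any two admissible choices differ in the $z$-coordinate by a term $a xy+b x+ c y$ plus higher-order corrections that vanish on the axes to the relevant order. By \eqref{eq psisequivariant}, such a change alters $\tpl^s_p(\tau)$ only by an affine function $\alpha\tau+\beta$. The constant factor $e^{ib\beta}$ has modulus one, and the linear term is absorbed into the free parameter $c$ in the definition of $({\bf NI})$. The estimate is therefore invariant under this change of normalization, and the change of metric in Proposition \ref{prop normalcoordinatesystem} only rescales $\tau$ by bounded factors. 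The same argument applies to the range of $h$: rescaling $\tau$ changes $h$ and $b$ by bounded factors, and we compensate by adjusting $\sigma$ and $C_1$.

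\emph{Step 2: genericity.} Invoke \cite[Theorem 2.15]{Tsu}. It gives a $C^\infty$-dense set of flows $g$ for which, for all $p$ and all $|b|\ge \CNI$, the averaged oscillatory integrals are bounded by $|b|^{-\sigmaNI}$. The mechanism is that the templates are not $C^1$-close to affine functions at any scale, and a van der Corput/large-deviation argument uses the self-similarity \eqref{eq psisequivariant} to propagate non-flatness from scale one to all frequencies. Density follows from a local perturbation near a periodic orbit that creates a nondegenerate $\twist$ profile.

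\emph{Step 3: openness.} This is the main obstacle, because $({\bf NI})_{\sigma,C_1}$ involves all $|b|\ge C_1$, while we only know that $({\bf NI})_{\sigma,C_1,C_2}$ is open. I would first try to show that, for flows, the compact-range condition $({\bf NI})_{\sigma,C_1,C_2}$ with $C_2$ a fixed large power of $C_1$ implies $({\bf NI})_{\sigma/2,C_1}$. The argument uses the equivariance \eqref{eq psisequivariant} with $\mu^c=1$ and $\mu^s\mu^u=1$: a large frequency $b$ at $p$ is rescaled to frequency $b\mu^s(p,n)$ at $f^n(p)$, on the interval $[-h\mu^u(p,n),h\mu^u(p,n)]$. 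We split this interval into pieces of unit length and apply the compact-range estimate on each piece, using that the phase differs from the scale-one template by the affine term $\twist(p,n)\tau$, which is absorbed into $c$. Choosing $n$ so that $|b|\mu^s(p,n)\in[C_1,C_2]$ gives the bound. Since the compact-range condition is $C^3$-open, the full condition $({\bf NI})_{\sigma/2,C_1}$ holds on a $C^3$-open set containing the dense set from Step 2, and taking the union over parameters yields $\stabletransitive$. The delicate point is controlling the H\"older dependence of $\tpl^s_{f^n(p)}$ and the windows of $h$ when $h\mu^u$ is not an integer; I expect bounded boundary losses that can be absorbed by halving $\sigma$.
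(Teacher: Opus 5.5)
Your top-level reduction to \cite[Theorem 2.15]{Tsu} is the same as the paper's. The rest of the argument, however, contains a real gap and also misses a step.

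\textbf{What the paper actually does.} The paper's proof is two lines. \cite[Theorem 2.15]{Tsu} already gives a $C^3$-open and $C^\infty$-dense set of flows on which the \emph{first} inequality of Definition \ref{def NIrho} holds, namely the one for $\tpl^s_p$. Applying the same theorem to the time-reversed flow, which swaps $E^s$ and $E^u$ and hence $\tpl^s$ and $\tpl^u$, gives a second such set for the $\tpl^u_p$ inequality. The set $\stabletransitive$ is the intersection of the two. This intersection is still $C^3$-open and $C^\infty$-dense, because $C^3$-open sets are $C^\infty$-open.

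\textbf{What your proposal does differently.} You omit the time-reversal step: you treat both templates as directly covered by \cite{Tsu}. You also read the cited theorem as giving only $C^\infty$-openness. That misreading is what sends you into Step 3, which is unnecessary.

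\textbf{Why Step 3 fails.} Suppose you rescale by $f^n$ so that $b'=|b|\mu^s(p,n)\in[C_1,C_2]$.
\begin{itemize}
\item Each unit piece then gets the bound $b'^{-\sigma}$, and $b'^{-\sigma}\ge C_2^{-\sigma}$.
\item Averaging over the pieces cannot improve this, since you only use cancellation at one scale.
\item The result is a bound that is uniform in $b$ but does not decay. For $|b|>C_2^2$ it is weaker than the required $|b|^{-\sigma/2}$, and no fixed choice of $C_2$ changes that.
\item Getting polynomial decay from a compact frequency range would need a mechanism that compounds cancellation over the roughly $\log|b|$ intermediate scales. Your argument contains no such mechanism.
\end{itemize}

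\textbf{A second problem in Step 3.} The unit pieces away from the origin are not described by $\tpl^s_{f^n(p)}$ plus the affine term $\twist(p,n)\tau$.
\begin{itemize}
\item $\tpl^s_{f^n(p)}$ is only defined on $(-3\|Df\|,3\|Df\|)$.
\item For those pieces you would have to re-center at other points of the unstable leaf and control how the template changes under re-centering.
\end{itemize}

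\textbf{How the paper avoids all this.} The paper never tries to recover the full-range condition for perturbations. For $f$ near $g^1$ it only uses the compact-range condition $(\mathbf{NI})_{\sigmaNI,\CNI,\hat{C}_g}$, which is open; see Proposition \ref{lem mainpropertyofestar} and Case I of Section 9. Full-range decay is needed only for the flow $g$ itself, and that comes from \cite{Tsu}.
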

			\begin{proof}
				In \cite[Theorem 2.15]{Tsu}, it is stated that the first inequality in Definition \ref{def NIrho} holds for a $C^{3}$ open and $C^{\infty}$ dense set of $g$. By symmetry, the second inequality in  Definition \ref{def NIrho} holds for a $C^{3}$ open and $C^{\infty}$ dense  set of the inverse of $g$. We conclude the proof by taking their intersection.
			\end{proof}

			\section{Dual  central bundle in normal central chart} \label{sec The dual  central bundle in local chart}

			In this section, we will introduce a system of {\it Normal Central Chart} (see Definition \ref{def normalcentralchart}). 
			These charts have two convenient features:
			(1) within the adapted scale, the geometry of the dual central bundle $E^c_*$ in a normal central chart approximates well the one in normal coordinate chart (see Proposition \ref{prop normalcoordinatesystem} and Lemma \ref{lem mainapproximation}); 
			(2) all the normal central charts share a common almost central foliation (see Definition \ref{def centralchart}). These properties are crucial for the construction of {\it Dynamical Wave-Packet Transformation} in Section \ref{sec DWPT: Construction and Properties}.

			\subsection{A finite covering by central charts}  \label{sec coveringcentralcharts}

			It is known that there exist  continuous closed cone fields $\{ C^u(p) \}_{p \in M}$ and  $\{ C^s(p) \}_{p \in M}$ such that for every $f$ close to $g^1$ in  $\diff^{1}(M)$, for every $p \in M$
			\aryst
			Df ( C^u(p) \setminus \{0\} ) \Subset C^u(f(p)) \ \mbox{ and } \ 			Df^{-1} ( C^s(p) \setminus \{0\} ) \Subset C^s(f^{-1}(p)).
			\earyst
			We have the following definition, relative to a smooth foliation $\tilde{\cW}^c_f$ that is close to $\cW^c_f$ in the following sense: let $\tilde{E}^c_f = T\tilde{\cW}^c_f$, then $\tilde{E}^c_f(p)$ is uniformly close to $E^c_f(p)$ for all $p \in M$.
			\begin{defi}\label{def centralchart}
				We say that a $C^{\greg}$-diffeomorphic embedding of an open domain $U \subset M$ into $\R^3$  
				\aryst
				\kappa: U \to \kappa(U) \subset \R^3, \quad \kappa(p) = (x,y,z)
				\earyst
				is a {\em \underline{central chart}} (with respect to $\tilde{\cW}^c_f$) if 
				\enmt
				\item   \label{itm smoothchartcentralisometry} $D\kappa( \tilde{E}^c_f ) = \R\partial_z$, $D\kappa|_{\tilde{E}^c_f}$ is isometric everywhere,
				\item \label{itm smoothchartcone}  for every $p \in U$ we have   $\det ( D\kappa(p) ) = 1$, and 
				\aryst
&&				D\kappa(E^u(p) \setminus \{0\} )  \Subset D\kappa(  C^u(p) \setminus \{0\} ) \Subset  \{  (V_x, V_y, V_z) \mid    |V_y|,  |V_z| \leq |V_x| / 4  \}, \\
&&				D\kappa(E^s(p) \setminus \{0\} )  \Subset D\kappa(  C^s(p) \setminus \{0\} ) \Subset  \{  (V_x, V_y, V_z) \mid    |V_x|, |V_z| \leq  |V_y| / 4  \}.
				\earyst
				\eenmt
				In this case, we call $\tilde W^c_f$ an {\em \underline{almost central foliation}} for $f$.
			\end{defi}
			In Definition \ref{def centralchart},  the essential part is \eqref{itm smoothchartcentralisometry}.
			In particular, for any two central charts $\kappa_1, \kappa_2$ with respect to $\tilde{\cW}^c_f$, 
			the map $\kappa_2 \kappa_1^{-1}$ is a skew product of form $(x, y, z) \mapsto ( F_1(x,y), z + F_2(x,y))$ on its domain of definition.
			In our setting, since we only consider the set of $f$ close to $g^{1}$, we will simply choose the almost central foliation $\tilde{\cW}^c_f$ to be the flow line foliation  of $g$. In this case, we have $\tilde{E}^c_f = N_g$, and $\kappa$ is a flow box chart.

			In the rest of this paper, we denote  
			\aryst
			\Omega = (-1, 1)^{2 + 1}, \quad  \widetilde\Omega = (- 2, 2)^{2 + 1}, \quad    \widehat\Omega = (- 4, 4)^{2 + 1}.
			\earyst
			We fix a finite collection of central charts  $\kappa_a : \widehat{U}_a \to  \widehat\Omega$ with respect to  $\tilde{\cW}^c_f$   
			as $a$ ranging from a finite set $A$. 
			We denote 
			\aryst
			U_a = \kappa_a^{-1}(\Omega), \quad \widetilde{U}_a = \kappa_a^{-1}(\widetilde{\Omega}).
			\earyst
			We require that the collection $\{ U_a  \mid a \in \cA \}$ forms an open covering of $M$. 
			Fix a partition of unity  $\{ \varphi_a  : M \to [0, 1] \mid a \in A \}$ by $C^\infty$ functions subordinate to  $\{ U_a  \mid a \in \cA \}$, i.e.,
			\ary \label{eq sumofrhoa}
			\supp(\varphi_a) \subset U_{a} \mbox{ for every $a \in A$,} \mbox{ and } \sum_{a \in \cA} \varphi_a  \equiv 1.
			\eary
			For each $a \in \cA$, we choose a $C^\infty$ compactly supported function $\widetilde\varphi_a  : M \to [0, 1]$ such that 
			\ary \label{eq sumofrhoatilde}
			\ \supp( \widetilde\varphi_a) \subset  U_a \ \mbox{ and } \  \widetilde\varphi_a \equiv 1 \mbox{ on $\supp(\varphi_{a})$.}
			\eary

			As explained in Subsection \ref{subsec DandS},  we fix a constant  
			\ary \label{def beta1}
			\beta_1 = 1 -   1/1000.
			\eary
			 	We also denote 
			 	\ary   \label{def c0}
			 	c_0 = (1 - \beta_1)/20.
			 	\eary
 			
			Let $f$ satisfy the conditions in Section \ref{sec Basic properties}. Recall that $\adscale$ and $\Delta$ are given by \eqref{eq defofadscale} and \eqref{def Delta} respectively.
			For every $a \in A$, for every $\eta \in \R$, for every $(w, z) \in \widehat\Omega$,  for every $\lambda > 0$, we define  
			\aryst
			&&  \Delta_a(\eta, w, z) :=  \Delta(\langle \eta \rangle, \kappa_a^{-1}(w, z) ), \
			 r_a(\eta, w, z) :=  \adscale(\langle \eta \rangle, \kappa_a^{-1}(w, z)), \\
	&&     \ngb^{\lambda }_a(\eta, w, z) := B(w,   \lambda  r_a(\eta, w, z)) \times B(z,   \lambda  \langle \eta \rangle^{-\beta_1})   \cap \widehat{\Omega},  \label{eq Omegadeltaa} \\
		\mbox{ and }	 &&   \NGB^{\lambda }_a(\eta, w, z) := B((w, z),   \lambda  r_a(\eta, w, z))  \cap \widehat{\Omega}.
			\earyst

		In the rest of this paper, we let $\tinitial \geq 1$ be a large integer whose value  will be determined later (see Theorem \ref{lem main}). Moreover, throughout the entire paper, we will always assume that 
		\ary \label{eq esmallandtinitial}
		\esmall \ll_{\tinitial} 1.
		\eary
 
		Given $a, a' \in A$ satisfying $	f^{\tinitial}(\widehat{U}_a) \cap \widehat{U}_{a'} \neq \emptyset$, we denote
\ary \label{eq faa'}
f^{\tinitial}_{a  \to a'} := \kappa_{a'} f^{\tinitial}  \kappa_{a}^{-1}.
\eary
Throughout this section we write $F = (F_{w}, F_z) = 	f^{\tinitial}_{a  \to a'}$.   By \eqref{eq esmallandtinitial},   we have 
\ary \label{eq skewproduct}
\| \partial_{z} F_w \|  \leq e^{C \tinitial} \esmall  \leq 1 \ \mbox{ and } \ 1/2 < \partial_{z} F_z < 2.
\eary

Define 
\aryst
  \Scone_{+} = \{ (x, y) \in \R^2 \mid |y| \leq  |x|/4  \},  \  & \Scone_{-}  = \{ (x, y) \in \R^2 \mid |x| \leq  |y|/4 \}.
\earyst
By Definition \ref{def centralchart}\eqref{itm smoothchartcone} and by requiring that $\tinitial$ be large, we may assume that
\ary \label{eq hyperbolicityofwFw}
\partial_w F_w ( \R^2 \setminus  \Scone_{-} ) \Subset {\rm Int}( \Scone_+ )  \ \mbox{ and }  \  (\partial_w F_w)^{-1} (  \R^2 \setminus  \Scone_{+} ) \Subset {\rm Int}( \Scone_- )
\eary 
and 
\ary \label{eq expansionoutsidecones}
\inf_{v \in \R^2 \setminus  \Scone_{-}}  ( \| \partial_w F_w(v) \| / \| v \| ) > e^{\lambda_g \tinitial} \ \mbox{ and } \inf_{v \in \R^2 \setminus  \Scone_{+}}  ( \| ( \partial_w F_w)^{-1}(v) \| / \| v \| ) > e^{\lambda_g \tinitial}.
\eary
Moreover, we may require that \eqref{eq hyperbolicityofwFw} and \eqref{eq expansionoutsidecones} hold   if $\partial_w F_w$ is replaced by any $\esmall$-close linear map.

			\subsection{Normal central charts} 
	
			In this subsection we introduce a family of center charts, in which the local geometry of the invariant splittings has good properties (see Proposition \ref{lem mainpropertyofestar}).
			
			To motivate our new definition,
			 we define $\estar_{a} = (\theta^u_{a}, \theta^s_{a}): \Omega \to \R^2$, for each $a \in A$, by the equation
			\aryst
			(\estar_{a}(w, z), 1) = (\theta^u_{a}(w, z), \theta^s_{a}(w, z), 1) \in (D\kappa_a^{-1})^{*}(E^c_*(\kappa_{a}^{-1}(w, z))).
			\earyst
			We would like a good understanding of the geometry of $E^c_*$, like we do in Lemma \ref{lem mainapproximation} for $(\vartheta^u_{ p}, \vartheta^s_{ p})$ in chart $\kappa_a$, or at least in some other central chart with respect to $\tilde{W}^c_f$.
		An immediate issue  is that the chart $(\imath_{p} \circ  \iota_{ \langle \eta \rangle, p})^{-1}$ needs not be a central chart since $E^c$ is in general not even Lipschitz.
	We will see that this is not a problem if we prescribe the correct derivatives to a central chart.

		In the rest of the paper, we will often consider two parameters $\eta, \eta' \in \R$, both correspond to the frequencies along the center direction. We will use the notations
		\ary \label{eq etamaxetamin}
		\eta_{max} = \max(\langle \eta \rangle, \langle \eta' \rangle), \quad \eta_{min} = \min(\langle \eta \rangle, \langle \eta' \rangle).
		\eary

		\begin{prop} \label{lem derivativeboundsforhatF} 
			For every $a \in A, (w, z) \in \widehat\Omega, \eta \in \R$, 
			there is a   diffeomorphism $ H_{a, w, z, \eta} : \R^{2 + 1} \to \R^{2 + 1}$ of form 		 
			$H_{a,  w, z, \eta}(\tw, \tz) =   (\tw, \tz + d_{a, w, z, \eta}(\tw) )$
			so that $H_{a, w, z, \eta}(w, z) = (w, z)$, and the following is true.
			For any $a, a' \in A$ so that $F = (F_w, F_z) := f^{\tinitial}_{a \to a'}$ is well-defined, and any $(w, z, \eta), (w', z', \eta') \in \widehat\Omega \times \R$, we let $\eta_{max}$, $\eta_{min}$ be given by \eqref{eq etamaxetamin},
			 and let
			\ary
			 \label{eq hatF}
			\hat F = ( \hat F_w,  \hat F_z ) :=  H_{a', w', z', \eta'} \circ  F \circ H_{a, w, z, \eta}^{-1}.
			\eary
			Assume that  $(\tw, \tz) \in   \NGB^{\Lambda}_a(\eta, w, z)   \cap  \hat{F}^{-1} ( \NGB^{\Lambda}_{a'}(\eta', w', z') ) \neq \emptyset$ for some $\Lambda > 1$. Then 
			\enmt
			\item \label{itm 1inL42}  We have
			\emph{
				\ary
				\label{eq partialzFw}
				&&   \| D^k   \hat F  \|_{U_a^{\Lambda}(\eta, w, z)} =    \cO_{k, \tinitial, \esmall}(    \Lambda^{k+2} \eta_{max}^{k c(\esmall)}  ), \ 1 \leq   k \leq \greg, \\
		 	\label{eq partialwFz}
		 	 &&\| \partial_{w}  \hat F_z(\tw, \tz)  \|   = \cO_{ \tinitial, \esmall}(   \Lambda^{50}   \eta_{max}^{c(\esmall)}    \eta_{min}^{-1/2 } ),
				\eary
			}
			\item \label{itm 2inL42}    
		If $\langle \eta \rangle / \langle \eta' \rangle \in (1/100, 100)$, $\Lambda \leq   e^{10 \hat {\lambda}_g \tinitial}$ and $  | z - \tz |, | z' - \hat{F}_z(\tw, \tz) | \leq \langle \eta \rangle^{ - 1/2 - c}$   for some $c > 0$,    then for every $\langle \eta \rangle  \gg_{\tinitial, c} 1$ and $\esmall \ll_{\tinitial, c} 1$, we have   
			\emph{
				\ary
				\label{eq partialw^2Fz}
	 			\| \partial_w^2 \hat{F}_z(\tw, \tz) \|  =
				\cO(   \tinitial^2    r_a(\eta, w, z)^{-1} \Delta_a(\eta, w, z) ).
				\eary
			}  
			\eenmt
		\end{prop}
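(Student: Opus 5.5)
The plan is to build $H_{a,w,z,\eta}$ so that, in the new coordinates, the dual central bundle at the base point looks like the balanced normal form \eqref{lem mainapproximation0}. Concretely, set $p=\kappa_a^{-1}(w,z)$ and $\rho=r_a(\eta,w,z)$. Consider the composite chart $\Phi_p := \kappa_a\circ \imath_p\circ \imath_{p,\langle\eta\rangle}^{-1}$. It is only H\"older in $p$ and need not be central, but it is smooth in its own variables.

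We then choose $d_{a,w,z,\eta}$ to be a polynomial of degree $2$ in $\tw-w$. It is determined by requiring two things. First, $H(w,z)=(w,z)$. Second, at $(w,z)$ the $1$-form $(\theta^u_a,\theta^s_a,1)$, pulled back by $H^{-1}$, has $\tw$-components that vanish to first order and match those of $(\vartheta^u_{p,\langle\eta\rangle},\vartheta^s_{p,\langle\eta\rangle},1)$ in the second-order Taylor data transverse to the $z$-direction.

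Equivalently, $d$ is the quadratic correction making the $2$-jet of $H\circ\Phi_p$ at the origin agree with a skew translation $(\tw,\tz)\mapsto(\tw,\tz+\text{const})$ up to a linear map of $w$ alone. This uses that $\kappa_a$ and $\imath_p$ agree to the needed order along $\tilde{E}^c$. The resulting coefficients involve $\theta^*_a(w,z)$, which is $\cO(1)$, and derivatives of order $\le 2$. By \eqref{eq flutorbasicestimate}, the latter are bounded by $\cO(\eta^{c(\esmall)})$ through $\tor^\dagger(p,\rho)$.

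Part \eqref{itm 1inL42} then follows by the chain rule. $H$ and $H^{-1}$ have $C^k$ norms $\cO(\eta^{c(\esmall)})$ on a $\Lambda\rho$-ball; the growth is polynomial because $d$ is quadratic. $F$ has $C^{\greg}$ norms $\cO_{\tinitial}(1)$. This gives \eqref{eq partialzFw}.

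For \eqref{eq partialwFz}, we write the invariance of $E^c_*$ as $(\hat e^*_{a'}\circ\hat F,1)\,D\hat F \parallel (\hat e^*_a,1)$, where $\hat e^*$ denotes the bundle in the $H$-coordinates. This gives
\[
\partial_w\hat F_z = \lambda\,\hat e^*_a - \hat e^*_{a'}(\hat F)\,\partial_w\hat F_w,
\qquad \lambda=\partial_z\hat F_z+\hat e^*_{a'}(\hat F)\partial_z\hat F_w .
\]
By construction, together with \eqref{lem mainapproximation0}, we have $|\hat e^*|\lesssim \Lambda^{1+c}\Delta+\Lambda\rho\,\eta^{c(\esmall)}$ on $U^\Lambda$. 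By \eqref{eq Deltaomegarange} and \eqref{eq adscalesquarerootgrowth} this is $\cO(\Lambda^{C}\eta^{c(\esmall)}\eta^{-1/2})$. For the $a'$ side we compare scales at $\eta'$ and at $F(p)$ using Lemma \ref{lem adaptedscaleistame}, which costs powers of $\Lambda$ and $\eta_{max}/\eta_{min}$ and yields the $\eta_{min}^{-1/2}$ factor.

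Part \eqref{itm 2inL42} is the main obstacle. We differentiate the identity above once more in $w$. The dangerous term is $\partial_w\hat e^*$, which naively has size $\Delta/\rho$; this is exactly the claimed bound, and the difficulty is obtaining it with constant $\tinitial^2$ rather than something $\eta$-dependent. Since $E^c_*$ is not differentiable, we cannot differentiate $\hat e^*$ pointwise. Instead, we will use that $\hat F$ is smooth and identify $\partial_w^2\hat F_z$ through its second-order Taylor expansion at scale $\rho$.

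We take finite differences of the identity over increments of size $\rho$. On the $\rho$-ball, Lemma \ref{lem mainapproximation}, \eqref{eq thetasalongu3} and \eqref{eq tildethetaualongs4} give $\hat e^*(\cdot)$ up to the linear part $\midtor(p,\rho)(-y,x)$ plus $\cO(\Delta)$, and the linear coefficient is at most $\Delta/\rho$ by \eqref{def Delta}. The $z$-window hypothesis $|z-\tz|\le\eta^{-1/2-c}\ll\rho^{1+c'}$ allows us to apply \eqref{lem mainapproximation0}.

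We compose with $F=f^{\tinitial}$ and balance the torsions at $p$ and at $f^{\tinitial}(p)$ via \eqref{eq torexpression0} with $m=\tinitial$. This produces $\twist(p,\tinitial)$, whose contribution is absorbed by the $xy$-correction in $H$ up to $\cO(\tinitial)$ errors. Each torsion difference costs $\langle\log K\rangle$ with $K\le e^{C\tinitial}$ by Lemma \ref{lem torsionareclose}, hence a factor $\tinitial$; this should give the $\tinitial^2$. The remaining mixed terms are controlled by the smallness of $\partial_z F_w$ in \eqref{eq skewproduct}, and by \eqref{eq partialwFz} for $\partial_w \hat F_z$.
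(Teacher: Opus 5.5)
Your construction of $H_{a,w,z,\eta}$ is, once the extraneous claims are dropped, the paper's. Choosing the quadratic $d$ so that the $z$-component of $H\circ\kappa_{a,p,\eta}$ has vanishing pure-$w$ first and second derivatives at the base point is equivalent to $\partial_wR_z(w,z)=\partial_w^2R_z(w,z)=0$ in Proposition \ref{prop decompositionofkappaapeta}. There is no ``second-order Taylor data'' of the merely H\"older bundle $E^c_*$, and nothing requires $\kappa_a$ and $\imath_p$ to agree along $\tilde E^c$, so drop those remarks. Item (1) is fine. Deriving \eqref{eq partialwFz} from \eqref{eq pullbackofestarinmodifiedchart} is a legitimate shortcut compared with the paper's passage through $\tilde F$ and \eqref{eq partialF03}. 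You should, however, justify $|\hat e^*|\lesssim\Lambda\eta^{-1/2+c(\esmall)}$ on $\NGB^{\Lambda}$ by H\"older continuity of $E^c_*$ and $\hat e^*(w,z)=0$, not by \eqref{lem mainapproximation0}, which only holds in a thin $z$-slab.

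The genuine gap is in item (2), exactly where you locate the difficulty. Your finite-difference scheme controls $\hat e^*_a$ on a $\rho$-ball, but the other term of the identity is $\hat e^*_{a'}(\hat F(\cdot))\,\partial_w\hat F_w$. A $\rho$-increment is mapped to an increment of size $\mu^u(p,\tinitial)\rho$ along the unstable direction, far outside the $\rho'$-ball, and $\mu^s(p,\tinitial)\rho$ along the stable one. The resulting change of $\hat e^*_{a'}$ is then multiplied by $\partial_w\hat F_w$, of norm $\sim e^{\hat\lambda_g\tinitial}$. Applying Lemma \ref{lem mainapproximation} or Proposition \ref{lem mainpropertyofestar} on the $a'$ side with $C_1\sim\mu^u(p,\tinitial)$ gives only $\|\partial_w^2\hat F_z\|\lesssim e^{C\tinitial}\Delta/\rho$. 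An exponential constant is fatal: in Lemma \ref{lem smallK} the paper needs $\tinitial^2\ll D^{0.99}$ knowing only $D\geq e^{\deltasharp\tinitial/4}$ with $\deltasharp$ tiny.

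To get a polynomial constant along your route you would need componentwise bookkeeping that the proposal does not contain.
\begin{itemize}
\item The $\theta^s$-component fluctuates by $\approx\mu^u\flu^s$ over the long unstable step, but it enters multiplied by $\mu^s$.
\item The $\theta^u$-component enters multiplied by $\mu^u$. It is essentially constant along the unstable step and varies over a stable step of length $\mu^s\rho<\rho$. There Lemma \ref{lem mainapproximation} (which needs $C_1\geq 1$) gives nothing; you must use the fluctuation at scale $\mu^s\rho$ (Remark \ref{rem:Flu}, \eqref{eq flutorbasicestimate}).
\item You must use Lemma \ref{lem torsionareclose} to compare $\tor^u(\cdot,\mu^s\rho)$ and $\tor^s(\cdot,\mu^u\rho)$ with the torsions at scale $\rho$. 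That comparison is where a factor $\log K\sim\tinitial$ would come from.
\end{itemize}

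Your last paragraph instead invokes the paper's mechanism: cancelling $\twist(p,\tinitial)$ against the $\tor^{\dagger}$ corrections via \eqref{eq torexpression0} and \eqref{eq torexpression02}. In an identity-based finite-difference scheme $\twist(p,\tinitial)$ never appears; it appears only when $\partial_x\partial_y\hat F_z$ is computed directly. The paper makes that computation possible in three steps.
\begin{itemize}
\item It writes $\hat F=R_{a',w',z',\eta'}^{-1}\tilde F R_{a,w,z,\eta}$, so that \eqref{eq Rproperty1} gives $\partial_w^2\hat F_z\approx\partial_w^2\tilde F_z$ (see \eqref{eq partial2whatFtildeF}).
\item It uses the normal form to make $\partial_x^2\tilde F_z$ and $\partial_y^2\tilde F_z$ negligible.
\item It factors $F_0=F_1\circ F_2$. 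Here $F_2=\imath_{f^{\tinitial}(p)}^{-1}f^{\tinitial}\imath_p$ carries all the hyperbolic stretching and has $\partial_x\partial_yF_{2,z}(0)=\twist(p,\tinitial)$ exactly. The other factor $F_1=\imath_{p'}^{-1}\imath_{f^{\tinitial}(p)}$ is a transition between normal charts based $e^{C\tinitial}\rho$ apart, so only $\rho$-local information plus one torsion comparison of logarithmic cost is needed (Lemma \ref{lem 2ndpartialofF0}).
\end{itemize}
Isolating the stretching in an exactly known map is the missing idea.
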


		 Proposition \ref{lem derivativeboundsforhatF} gives us the estimates we need for the core argument. We will also use the following lemma which gives some loose estimates on a larger region for $(\tw, \tz)$ when $\eta_{max}/\eta_{min}$ is large.   
		 \begin{lemma} \label{lem etaeta'different}
		 	{\em 
		 		Let $c > 0$.
		 		Assume that  $\Lambda \leq \eta_{max}^{c}$, $\eta_{max} \gg_{c, \tinitial} 1$, $\esmall \ll_{c, \tinitial} 1$ and $\hat{F}( \NGB^{\Lambda}_a(\eta, \fw, \fz) ) \cap \NGB^{\Lambda}_{a'}(\eta', \fw', \fz') \neq \emptyset$. If we have either
		 		(1) $| \eta | \geq 10  | \eta' |$ and $(\tw, \tz) \in U_{a}^{\Lambda}(\eta, \fw, \fz)$,   or
		 		(2) $| \eta' |> 10 | \eta |$ and $\hat{F}(\tw, \tz) \in U_{a'}^{\Lambda}(\eta', \fw', \fz')$, 
		 		then we have $| \eta - \eta' \partial_z \hat{F}_z(\tw, \tz) | \geq \eta_{max} / 2$. 
		 	}
		 \end{lemma}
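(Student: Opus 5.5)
The key observation is that $\partial_z \hat F_z$ is bounded above and below uniformly. The conjugating maps $H_{a,w,z,\eta}$ have the skew form $(\tw,\tz)\mapsto(\tw,\tz+d(\tw))$, so their $z$-derivative in $\tz$ is identically $1$, and the $w$-component is unchanged. Writing $\hat F = H_{a',w',z',\eta'}\circ F\circ H^{-1}_{a,w,z,\eta}$, the chain rule gives
\begin{align*}
\partial_z \hat F_z(\tw,\tz) = \partial_z F_z(\tw,\tz') + \partial_w d_{a',w',z',\eta'}(F_w(\tw,\tz'))\cdot \partial_z F_w(\tw,\tz'),
\end{align*}
where $(\tw,\tz') = H^{-1}_{a,w,z,\eta}(\tw,\tz)$. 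By \eqref{eq skewproduct}, the first term lies in $(1/2,2)$ and $\|\partial_z F_w\|\le e^{C\tinitial}\esmall$. The factor $\partial_w d$ should be controlled by the bound \eqref{eq partialzFw} of Proposition~\ref{lem derivativeboundsforhatF} (equivalently, by the construction of $H$, whose $d$ has derivative $\cO(\Lambda^{C}\eta_{max}^{c(\esmall)})$ on the relevant region). Hence the correction is at most $e^{C\tinitial}\esmall\,\Lambda^{C}\eta_{max}^{c(\esmall)}$. Using $\Lambda\le \eta_{max}^{c}$, this is $\le e^{C\tinitial}\esmall\,\eta_{max}^{Cc+c(\esmall)}$.

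Controlling this correction term is the main obstacle, because the factor $\eta_{max}^{Cc}$ is not bounded for fixed $\esmall$. I would first try to avoid the correction altogether. Since $\partial_z$ of the $z$-component is invariant under the skew conjugations up to the term $\partial_w d\cdot\partial_z F_w$, one can instead use \eqref{eq partialzFw} with $k=1$ directly: it gives the crude bound $|\partial_z\hat F_z| = \cO_{\tinitial,\esmall}(\Lambda^3\eta_{max}^{c(\esmall)})$, which is too weak. So I would return to the chain-rule identity above and argue that the relevant $d$ coming from the rectification $\imath_{p,\omega}$ is quadratic in $w$ with coefficient $\tor^\dagger=\cO(\rho^{-c(\esmall)}\langle\log\rho\rangle)$. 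On the region $U^\Lambda$, this makes $|\partial_w d|\lesssim \Lambda\,\eta_{max}^{-1/2+c(\esmall)}\le 1$ once $c<1/4$ and $\eta_{max}\gg_{c,\tinitial}1$. The correction is then $\le e^{C\tinitial}\esmall\le 1/10$, and therefore $\partial_z\hat F_z\in(1/4,4)$. If the precise form of $d$ is not available, I would fall back on assuming $\partial_w d=\cO(\Lambda^{C}\eta_{max}^{c(\esmall)})\cdot\eta_{max}^{-1/2}$, mirroring \eqref{eq partialwFz}.

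With $\partial_z\hat F_z\in(1/4,4)$ established, the rest is elementary. In case (1), $|\eta|\ge 10|\eta'|$, so $\eta_{max}=\langle\eta\rangle$ up to a harmless factor, and
\begin{align*}
|\eta-\eta'\partial_z\hat F_z| \ge |\eta|-4|\eta'| \ge \tfrac{6}{10}|\eta| \ge \eta_{max}/2
\end{align*}
for $\eta_{max}$ large, where the Japanese bracket only costs a constant once $|\eta|\gg1$. In case (2), $|\eta'|>10|\eta|$ and $\partial_z\hat F_z\ge 1/4$ give $|\eta'\partial_z\hat F_z|-|\eta|\ge |\eta'|/4-|\eta'|/10$. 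This falls short of $\eta_{max}/2$, so the lower bound on $\partial_z\hat F_z$ must be sharpened to $>3/5$. I would obtain this from $\partial_z F_z$ being close to $\mu^c$-type quantities, i.e.\ $e^{\pm C\tinitial\esmall}$ in the flow charts, since $F$ is $\esmall$-close to the time-$\tinitial$ flow and the charts are flow boxes with isometric $z$-direction. Then $\partial_z\hat F_z\in(9/10,11/10)$, and both cases close with room to spare.
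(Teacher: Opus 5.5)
Your route is the same as the paper's: the chain-rule identity $\partial_z\hat F_z=\partial_z F_z\circ H^{-1}_{a,\fw,\fz,\eta}+\partial_w d_{a',\fw',\fz',\eta'}(\hat F_w)\cdot\partial_z\hat F_w$, with $\partial_z\hat F_w=\cO(e^{C\tinitial}\esmall)$. The gap is in the step meant to make the correction uniformly small, and it fails in case (1).

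By \eqref{eq expressionofHawzeta}, $\partial_w d_{a',\fw',\fz',\eta'}(\tw)=\estar_{a'}(\fw',\fz')+2C_{a'}(\eta',\fw',\fz')(\tw-\fw')$. The first term is $\cO(1)$ and not small, though this is harmless because it gets multiplied by $e^{C\tinitial}\esmall$. The quadratic part is governed by $\eta'$, not $\eta_{max}$: by \eqref{eq Cbound}, $\|C_{a'}\|=\cO_{\esmall}(\langle\eta'\rangle^{c(\esmall)})$. In case (1) you do not know that $\hat F(\tw,\tz)\in\NGB^{\Lambda}_{a'}(\eta',\fw',\fz')$. The intersection hypothesis only gives $|\hat F_w(\tw,\tz)-\fw'|\lesssim e^{C\tinitial}\Lambda\langle\eta'\rangle^{-1/2+c(\esmall)}$, and $\langle\eta'\rangle$ may be much smaller than $\eta_{max}$. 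When $\langle\eta'\rangle\lesssim\Lambda^2$, which is allowed since $\Lambda$ ranges up to $\eta_{max}^{c}$, this displacement is of order one. Then the only available bound is $|\partial_w d|\lesssim_{\tinitial,\esmall}\langle\eta'\rangle^{c(\esmall)}$, which is not uniformly bounded once $\esmall$ is fixed and $\eta_{max}\to\infty$. So your estimate $|\partial_w d|\lesssim\Lambda\eta_{max}^{-1/2+c(\esmall)}\le 1$ is not justified in case (1), and neither is $\partial_z\hat F_z\in(1/4,4)$. Note also that \eqref{eq partialwFz}, which your fallback mirrors, carries $\eta_{min}^{-1/2}$, not $\eta_{max}^{-1/2}$.

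The missing idea is to keep the factor $\eta'$ and bound the product, not $\partial_z\hat F_z$ alone: $|\eta'\,\partial_w d(\hat F_w)\,\partial_z\hat F_w|\lesssim e^{C\tinitial}\esmall|\eta'|+e^{C\tinitial}\Lambda\langle\eta'\rangle^{1/2+2c(\esmall)}\le e^{C\tinitial}\esmall\,\eta_{max}+e^{C\tinitial}\eta_{max}^{1/2+c+2c(\esmall)}$. This is a small fraction of $\eta_{max}$ for $c$ small, $\esmall\ll_{\tinitial}1$ and $\eta_{max}\gg_{c,\tinitial}1$. The factor $|\eta'|$ turns the poor localization $\langle\eta'\rangle^{-1/2}$ into a harmless $\langle\eta'\rangle^{1/2}\le\eta_{max}^{1/2}$. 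This is what the paper does (its error term $e^{C\tinitial}\eta_{max}^{1/2+3c}$), and case (1) then follows from $|\eta|-2|\eta'|\ge\frac45|\eta|$.

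In case (2) your argument is fine as written. There $\hat F(\tw,\tz)\in\NGB^{\Lambda}_{a'}(\eta',\fw',\fz')$ and $\langle\eta'\rangle=\eta_{max}$, so the correction to $\partial_z\hat F_z$ really is $\cO(e^{C\tinitial}\esmall)$. Your remark that $1/2<\partial_z F_z<2$ alone yields only about $0.4\,\eta_{max}$ in case (2) is correct; the paper's ``direct to see'' passes over this. Your repair $\partial_z F_z=1+\cO(e^{C\tinitial}\esmall)$ is legitimate: it follows from the charts being isometric along the flow direction together with the standing normalization $\|Df^{\pm1}|_{E^c}\|<e^{\esmall}$.
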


		 We will give the proofs of Proposition \ref{lem derivativeboundsforhatF} and Lemma \ref{lem etaeta'different}  after we give the definition and basic properties of  the charts $H_{a, w, z, \eta}$ in Proposition \ref{prop decompositionofkappaapeta}.  Before that, we give some remarks about certain properties of $\hat{F}$ that will follow quickly from the construction of $H_{a, w, z, \eta}$.

		\begin{rema} \label{rema DwhatFw}
			
			In \eqref{eq Cbound}, \eqref{eq expressionofHawzeta} in Proposition \ref{prop decompositionofkappaapeta} below, we will see that $D^2d_{a, w, z, \eta}( \tw  ) = \cO_{\esmall}(\langle  \eta \rangle^{c(\esmall)})$ for every $\tw$.
		We collect some related estimates here, which depends only on the skew product structure of $H_{a, w, z, \eta}$ and its derivative norm:
		
		\noindent{(1)}
		By  \eqref{eq skewproduct}, we see that $( \partial_w \hat{F}_w )(\tw, \tz)   = ( \partial_w F_w )( H_{a, w, z, \eta}^{-1}(\tw, \tz) ) + \cO(e^{C \tinitial }\esmall)$ for every $\tw \in B(w, C \langle \eta \rangle^{- c})$.

 	\noindent{(2)} 
 	By  \eqref{eq skewproduct},  for every  $(\tw, \tz) \in  \hat{F}^{-1} ( \NGB^{\Lambda}_{a'}(\eta', w', z') )$, we have    $| \partial_z \hat{F}_z (\tw, \tz) | \leq 3 +  C(\esmall) \Lambda \langle \eta' \rangle^{-1/2 + c(\esmall)}$.
		\end{rema}

		\begin{rema} \label{rema worksforinverseofF}
			We will see that the definition of $H_{a, w, z, \eta}$ is the same if we replace $f$ by $f^{-1}$.
		Consequently, the same estimates in Proposition \ref{lem derivativeboundsforhatF} hold for $(\hat{F})^{-1}$ in place of $\hat{F}$, provided parallel conditions.
		\end{rema}

 	By definition, we see that, restricted to a neighborhood of $\kappa_a^{-1}(w,z)$, $H_{a, w, z, \eta} \circ \kappa_a$ is also a central chart with respect to $\tilde{\cW}^c_f$.  By a slight abuse of notation, we introduce the following definition.
		\begin{defi}[Normal Central Chart] \label{def normalcentralchart}
			For every $a \in A$,  every $(w, z) \in \widehat\Omega$ and   every $\eta \in \R$, the map $H_{a, w, z, \eta} : \R^{2 + 1} \to \R^{2 + 1}$ given by Proposition \ref{lem derivativeboundsforhatF} is called a {\em \underline{normal central chart}} with base point  $(w, z)$. 
		\end{defi}
		We now begin the construction of normal central chart $H_{a, w, z, \eta}$ in Proposition \ref{lem derivativeboundsforhatF}.
		Given some $a \in A$, $(w, z) \in \widehat\Omega$ and some $\eta \in \R$.
		We set $p = \kappa_a^{-1}(w, z) \in M$ and  define
		\ary \label{eq defkappaaomegap} 
		\kappa_{a, p, \eta} =  \kappa_a \circ  \imath_{p} \circ  \iota_{p, \langle \eta \rangle}.
		\eary
		Note that we have	$\kappa_{a, p, \eta}(0, 0, 0) = (w, z)$.
		The  normal central chart $H_{a, w, z, \eta}$ in Proposition \ref{lem derivativeboundsforhatF} will be given by the following proposition,  constructed by an approximation of the chart $\kappa_{a,  p, \eta}$.
		\begin{prop}  \label{prop decompositionofkappaapeta}
				For every $a \in A$,
		there is a continuous map  $C_{a}$ from $\R \times \widehat\Omega$ to the space of quadratic forms on $\R^2$, satisfying 
		\ary \label{eq Cbound}
		\|  C_a(\eta, w, z ) \| = \cO_{\esmall}(\langle \eta \rangle^{c(\esmall)} ), \quad (\eta, w, z) \in \R \times \widehat\Omega
		\eary
		such that the following is true.
		Let 	$H_{a,  w, z, \eta}$ be given by	$H_{a,  w, z, \eta}(\tw, \tz) =  (\tw, \tz + d_{a, w, z, \eta}(\tw) )$ with
		\ary
		\label{eq expressionofHawzeta}  
		 d_{a, w, z, \eta}(\tw)  =     \estar_{a}(w, z) \cdot ( \tw - w )  + C_{a}(\eta, w, z)(\tw - w).
		\eary
		Then the map $ R_{a, w, z, \eta} =  (R_{w}, R_{z})   := \kappa_{a,  p , \eta}^{-1} \circ H_{a, w, z, \eta}^{-1}$
		satisfies that $R_{a, w, z, \eta}(w, z) = (0, 0, 0)$  and 
		\aryst
		 A_{a}(w, z) := \partial_w R_w(w, z) \ \mbox{ and } \ D_a(w, z) := \partial_z R_z(w, z)
		 \earyst
	  are independent of $\eta$, and satisfy
	  \aryst
	  \| A_a(w, z) \|, \| A_a(w, z)^{-1} \|  = \cO(1) \ \mbox{ and } \ D_a(w, z) \sim 1.
	  \earyst
		Moreover,  for any $\Lambda > 1$,  any $(\tw, \tz) \in \NGB_a^{\Lambda}(\eta, w, z)$,  we have 
		\ary
		\label{eq Rproperty1}
  && \| \partial_w R_z (\tw, \tz) \| =  \cO_{\esmall}(  \Lambda^5  \langle \eta \rangle^{-1/2 + c(\esmall)}), \   \| \partial_w^2 R_z (\tw, \tz) \|   = \cO_{\esmall}(  \Lambda^6  \langle \eta \rangle^{-1/2 + c(\esmall)}), \\
  		\label{eq Rproperty1.5}
  && \|	\partial_{w} R_w(\tw, \tz)   \|,   | \partial_z R_z(\tw, \tz) - \partial_z R_z(w, z) |   = \cO_{\esmall}( \langle   \Lambda^5     \langle \eta \rangle^{-1/2 + c(\esmall)}  \rangle ), \\
  \label{eq Rproperty3}
  &&   \| D^k R (\tw, \tz)  \| = \cO_{k, \esmall}(   \Lambda^{k + 2}   \langle \eta \rangle^{k c(\esmall) }  ), \quad 1 \leq k \leq \greg - 1.
		\eary  
			\end{prop}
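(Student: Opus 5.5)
The construction works by comparing the smooth central chart $\kappa_a$ with the normal-coordinate chart $\kappa_{a,p,\eta}=\kappa_a\circ\imath_p\circ\iota_{p,\langle\eta\rangle}$ near $p=\kappa_a^{-1}(w,z)$, and correcting $\kappa_a$ by a skew shear $H$ whose $2$-jet at $w$ matches what $\kappa_{a,p,\eta}$ prescribes. The map $\kappa_{a,p,\eta}^{-1}\circ\kappa_a^{-1}$ is only H\"older in $p$, but it is $C^{\greg-1}$ in the local variables, so its Taylor data at $(w,z)$ make sense.

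\textbf{Step 1: the local map.} Write $S=(S_w,S_z):=\kappa_{a,p,\eta}^{-1}\circ\kappa_a^{-1}$, defined near $(w,z)$ with $S(w,z)=0$. I then set $R=S\circ H^{-1}$, where $H(\tw,\tz)=(\tw,\tz+d(\tw))$ and $d$ is a quadratic polynomial vanishing at $w$.

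\textbf{Step 2: the linear part of $d$.} The cotangent vector $(0,0,1)$ at the origin of the normal coordinates is, by definition \eqref{def varthetap}, a covector of $E^c_*$ (note $\vartheta^{u,s}_p(0)=0$). Since $\iota_{p,\langle\eta\rangle}$ has identity derivative at $0$, the pullback of $dz$ under $S$ is proportional to $(\estar_a(w,z),1)$. Choosing the linear part $\estar_a(w,z)\cdot(\tw-w)$ gives $\partial_w R_z(w,z)=0$. It then follows that $A_a=\partial_wR_w(w,z)=\partial_wS_w(w,z)$ and $D_a=\partial_zR_z(w,z)$ do not depend on $\eta$. They are bounded with bounded inverses by uniform transversality: $E^u,E^s$ lie in the cones of Definition \ref{def centralchart}, and $\tilde E^c$ is close to $E^c$.

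\textbf{Step 3: the quadratic part $C_a$.} This is the step I expect to be the main obstacle. I would define $C_a(\eta,w,z)$ so that $\partial_w^2R_z(w,z)=0$. Differentiating $R_z=S_z\circ H^{-1}$ twice, this amounts to solving a linear equation for the symmetric $2\times2$ matrix $C_a$ in terms of $\partial_w^2 S_z$, mixed derivatives, and $\estar_a$. The $\eta$-dependence enters only through the term $\tor^\dagger(p,\rho)xy$ in $\iota$. By \eqref{eq flutorbasicestimate} and \eqref{eq adscalesquarerootgrowth}, this term is $\cO(\rho^{-c(\esmall)}\langle\log\rho\rangle)=\cO_{\esmall}(\langle\eta\rangle^{c(\esmall)})$, which gives \eqref{eq Cbound}. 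Continuity of $C_a$ in $(\eta,w,z)$ follows from the H\"older continuity of $p\mapsto\imath_p$ in $C^3$ (Proposition \ref{prop normalcoordinatesystem}) and of $\adscale$ and $\tor$.

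\textbf{Step 4: the estimates on $\NGB^\Lambda_a$.} Bound \eqref{eq Rproperty3} follows from the chain rule, the $C^{\greg-1}$ bounds on $\imath_p$, and the bound $\cO(\langle\eta\rangle^{c(\esmall)})$ on the derivatives of $\iota$ and $H$. The powers $\Lambda^{k+2}$ come from the polynomial factors $xy$ and $(\tw-w)^2$ evaluated at distance $\Lambda r_a$.

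For \eqref{eq Rproperty1}, consider $\partial_wR_z$, which up to a bounded factor equals the difference between the $E^c_*$ covector in the $R$-chart and the horizontal covector. Two facts control it.
\begin{enumerate}
\item $\partial_w R_z$ and $\partial_w^2R_z$ vanish at $(w,z)$ by construction.
\item Lemma \ref{lem mainapproximation} and \eqref{lem mainapproximation0} show that, in the $\iota$-rectified normal coordinates, $\vartheta^{u,s}_{p,\omega}=\cO(C_0^{1+c(\esmall)}\Delta)$ on $C_0\rho$-balls, and $\Delta\lesssim\langle\eta\rangle^{-1/2+c(\esmall)}$ by \eqref{eq Deltaomegarange}.
\end{enumerate}
Since $S$ is smooth in the local variables, its second-order Taylor remainder over a ball of radius $\Lambda r_a\sim\Lambda\langle\eta\rangle^{-1/2}$ gives $\partial_wR_z=\cO(\Lambda^5\langle\eta\rangle^{-1/2+c(\esmall)})$. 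The corresponding bound for $\partial_w^2R_z$ I would extract from the torsion approximations \eqref{eq thetasalongu3} and \eqref{eq tildethetaualongs4}, together with the smoothness of $\imath_p$. The statement of \eqref{eq Rproperty1} does not ask for more smallness than this.

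Finally, \eqref{eq Rproperty1.5} follows from the mean value theorem applied with \eqref{eq Rproperty3} at $k=2$, over the $\NGB^\Lambda_a$-ball of radius $\Lambda r_a$.
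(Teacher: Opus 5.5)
Your construction is the paper's. The linear part of $d_{a,w,z,\eta}$ is forced by $(D\imath_p)^*E^c_*(p)=\R(0,0,1)$ and $D\iota_{p,\langle\eta\rangle}(0)=\Id$, so that $\partial_wR_z(w,z)=0$. $C_a$ is the unique solution of $\partial_w^2R_z(w,z)=0$; the coefficient of $C_a$ is $\partial_zS_z(w,z)\sim 1$, and the only $\eta$-dependent input is $\tor^{\dagger}(p,\rho)$, which yields \eqref{eq Cbound}. Then \eqref{eq Rproperty3} is the chain rule. \eqref{eq Rproperty1.5} and the first bound in \eqref{eq Rproperty1} follow by Taylor expansion at $(w,z)$ over the convex set $\NGB^{\Lambda}_a(\eta,w,z)$, whose radius is $\Lambda r_a(\eta,w,z)\lesssim\Lambda\langle\eta\rangle^{-1/2+c(\esmall)}$. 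One small slip: $\partial_wR_w(w,z)=\partial_wS_w(w,z)-\partial_zS_w(w,z)\,\estar_a(w,z)$, not $\partial_wS_w(w,z)$. It is still independent of $\eta$, so nothing breaks.

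\textbf{Bound on $\partial_w^2R_z$.} Your plan here would fail as written. The estimates \eqref{eq thetasalongu3} and \eqref{eq tildethetaualongs4}, like your fact (2), only control increments of the field $\vartheta_p$, which is merely $(1-c(\esmall))$-H\"older. They carry no information on second derivatives of the smooth map $R$; linking $\partial_w^2R_z$ to the covector would require differentiating $\vartheta_p$. They also hold only for $|z|<\rho^{1+c}$ and $C_0\le\rho^{-1/4}$, whereas $\NGB^{\Lambda}_a$ has vertical extent $\Lambda r_a$ and $\Lambda$ is arbitrary.

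The bound comes exactly like the first one, from ingredients you already have. By the choice of $C_a$, $\partial_w^2R_z(w,z)=0$. The mean value theorem with \eqref{eq Rproperty3} for $k=3$ then gives $\|\partial_w^2R_z(\tw,\tz)\|\le\sup\|D^3R\|\cdot\Lambda r_a=\cO_{\esmall}(\Lambda^{6}\langle\eta\rangle^{-1/2+c(\esmall)})$. The torsion/fluctuation geometry is only needed later, for \eqref{eq partialw^2Fz}.

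\textbf{Continuity of $C_a$.} Your argument here is not valid, because $\adscale$ and $\tor^{s},\tor^{u}$ are not H\"older. They are built from integer times $n_u$, $n_s$, $n(p,\rho)$; for example $\mu^c(p,n)\tor^s(p,\rho)=\twist(p,n)$ for $n=n(p,\rho)$. So they are step functions in $\omega$ and jump in $p$. $C_a$ is affine in $\tor^{\dagger}$, with a nondegenerate coefficient coming from $A_a(w,z)$, so it generally inherits these jumps. The paper's proof does not address continuity at all, and downstream only \eqref{eq Cbound} (and measurability) is used.
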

		 
\begin{rema} \label{rem moreonR}
	From the proof of  Proposition \ref{prop decompositionofkappaapeta}, we see that the definition of $H_{a, w, z, \eta}$ is the same if we replace $f$ by $f^{-1}$.
					Denote $   R^{-1} = (\tilde{R}_w, \tilde{R}_z )$. By analogous argument, we see that \eqref{eq Rproperty3}  holds for $R^{-1}$ in place of $R$; and \eqref{eq Rproperty1}, \eqref{eq Rproperty1.5} hold for $(\tilde{R}_w, \tilde{R}_z ) $ in place of $(R_w, R_z)$. 
			\end{rema}

			\begin{rema} 
 	Let $\phi(w, z)$ be a smooth bump function of $w \in \R^2$ and $z \in \R$ such that $\partial_w \phi = \cO(r_a(\eta, w, z)^{-1})$ and $\partial_z \phi = \cO( \langle \eta \rangle^{\beta_1})$.
				Then the first equality of \eqref{eq Rproperty1}  shows that  $\phi \circ H_{a, w, z, \eta}(\tw, \tz)$  have derivatives $ \cO(r_a(\eta, w, z)^{-1})$ along $\tw$, and derivative $\cO( \langle \eta \rangle^{\beta_1})$ along $\tz$ restricted to $U^1_a(\eta, w, z)$.
				We also need the second equality of \eqref{eq Rproperty1}   because we will consider a oscillatory integral with a nonlinear phase (see \eqref{eq cK}).  The upper bound on $\partial_w^2 R_z$ will be used to deduce the upper bound \eqref{eq partialw^2Fz} in Proposition \ref{lem derivativeboundsforhatF}, which is important to treat the nonlinearity.
			\end{rema}

			\begin{proof}[Proof of Proposition \ref{prop decompositionofkappaapeta}]

%
%

 			Define $H_{a, w, z, \eta}$ by \eqref{eq expressionofHawzeta} where 	$C_a(\eta, w, z) $ is the quadratic form on $\R^2$ to be determined later. Then $H_{a, w, z, \eta}(\tw, \tz) = (\tw, \tz +  \langle \estar_a(w, z),    \tw - w \rangle + C_a(\eta, w, z)(\tw- w) )$ and
				\aryst
				  R_{a, w, z, \eta} = (R_w, R_z) =  \imath_{p,  \langle \eta \rangle}^{-1} \circ (\kappa_a \circ  \imath_{p})^{-1} \circ H_{a, w, z, \eta}^{-1}. 
				\earyst
				We will show that for some $C_a(\eta, w, z)$ satisfying \eqref{eq Cbound}, we have $\partial_w R_{z}(w, z) = \partial^2_w R_{z}(w, z)  =  0$.
			
			Denote $$D((\kappa_a \circ  \imath_{p})^{-1} )(w, z) = 	 \begin{bmatrix}
				\tilde{A}_{a, w, z} & \tilde{E}_{a, w, z}  \\  \tilde{B}_{a, w, z} & \tilde{D}_{a, w, z}
			\end{bmatrix}.$$
			 All the entries above, as well as $1/\tilde{D}_{a, w, z}$, are of size $\cO(1)$.
			Substitute the expressions for $H_{a, w, z, \eta}$ and $\imath_{p, \langle \eta \rangle}$   into the above formula for $R_{a, w, z, \eta}$, we deduce
				\aryst
				D R_{a, w, z, \eta }(w, z) =
				 \begin{bmatrix}
					{\rm Id} & 0 \\ 0 & 1
				\end{bmatrix} \cdot 
			\begin{bmatrix}
				\tilde{A}_{a, w, z} & \tilde{E}_{a, w, z}  \\  \tilde{B}_{a, w, z} & \tilde{D}_{a, w, z}.
			\end{bmatrix} \cdot
				 \begin{bmatrix}
					{\rm Id} & 0 \\ - \estar_a(w, z) & 1
				\end{bmatrix}.
				\earyst
				Note that $\partial_w R_w(w, z) = 	\tilde{A}_{a, w, z}  - \tilde{E}_{a, w, z}  \estar_a(w, z) $ and $\partial_z R_z(w, z) =  \tilde{D}_{a, w, z}$,  which are independent of $\eta$.
							Moreover, it is clear that $\partial_w R_w(w, z), \partial_{z} R_z(w, z) = \cO(1)$.

				We have $\partial_w R_{z}(w, z)  =  \tilde{B}_{a, w, z} - \tilde{D}_{a, w, z}   \estar_a(w, z)$. 				By \eqref{eq imath1} in Proposition \ref{prop normalcoordinatesystem},  we have $(D\imath_p)^*(E^c_*(p)) \in \R (0, 0, 1)$. Then $(\estar_a(w, z), 1) \in (D(\kappa_a \circ \imath_p)^{-1})^*(D\imath_p)^*(E^c_*(p)) \in \R(\tilde{B}_{a, w, z}, \tilde{D}_{a, w, z})$.  Consequently, $\partial_w R_{z}(w, z) = 0$.
			
			   Now we compute $\partial_w^2 R_{z}(w, z)$. 
			   To simplify computations, we write $F_0 = (F_{0, w}, F_{0, z}) = ( \imath_{p, \langle \eta \rangle })^{-1} $,  $F_1 = (F_{1, w}, F_{1, z}) =  (\kappa_a \circ  \imath_{p})^{-1} $ and  $F_2 = (F_{2, w}, F_{2, z}) =   H_{a, w, z, \eta}^{-1}$.
			   Note that $\partial_{w}F_{0, z}$, $\partial_{z}^2 F_{0, z} $, $ \partial_{z}\partial_{w} F_{0, z}$ all vanish at $(0, 0, 0)$.  Since $ R_{z}  = F_{0, z} \circ F_1 \circ F_2$, we have
				\aryst
				\partial_w^2 R_{z}(w, z) =  (\partial_w^2 F_{0, z})   ( 0, 0, 0) (\partial_w (F_{1, w}\circ F_2)(w, z), \partial_w (F_{1, w}\circ F_2)(w, z) ) + \partial_{w}^2 (F_{1, z} \circ F_2)(w, z).
				\earyst
				On the right hand side above,	
				the first term is of size $\cO_{\esmall}( \langle \eta \rangle^{c(\esmall)} )$ by \eqref{eq flutorbasicestimate}; and, by $ \partial^2_w F_{2, w}(w, z) = 0$, the second term $\partial_{w}^2 (F_{1, z} \circ F_2)(w, z)$ writes 
				\begin{align*}
		&  \partial^2_w F_{1, z}(0, 0, 0) ( \partial_w F_{2, w}(w, z), \partial_w F_{2, w}(w, z) ) + \partial_w\partial_{z} F_{1, z}(0, 0, 0) ( \partial_w F_{2, w}(w, z), \partial_w F_{2, z}(w, z) ) \\
				+&  \partial_z\partial_w F_{1, z}(0, 0, 0) ( \partial_w F_{2, z}(w, z), \partial_w F_{2, w}(w, z)   )  + \partial^2_z  F_{1, z}(0, 0, 0) ( \partial_w F_{2, z}(w, z), \partial_w F_{2, z}(w, z)   ) \\
			+&  \partial_z F_{1, z}(0, 0, 0) \partial^2_w F_{2, z}(w, z).
				\end{align*}
				Note that all the terms above, aside from $ \partial_z F_{1, z}(0, 0, 0) \partial^2_w F_{2, z}(w, z)  = \tilde{D}_{a, w, z}  C_a(\eta, w, z)$, are of size $\cO(1)$.
			 Hence the unique $C_a(\eta, w, z)$ solving $\partial_w^2 R_z(w, z) = 0$ satisfies \eqref{eq Cbound}. We will fix such $C_a(\eta, w, z)$. 
				   
				   We can deduce \eqref{eq Rproperty3} from \eqref{eq Cbound}, \eqref{eq flutorbasicestimate} and direct computations.
	           Then it is straightforward to deduce \eqref{eq Rproperty1} and  \eqref{eq Rproperty1.5}    from $\partial_w R_{z}(w, z) = \partial^2_w R_{z}(w, z)  =  0$  and \eqref{eq Rproperty3}.
			\end{proof}

			 We now show that Proposition \ref{lem derivativeboundsforhatF} and Lemma \ref{lem etaeta'different} hold for the charts given by Proposition \ref{prop decompositionofkappaapeta}.

\begin{proof}[Proof of Proposition \ref{lem derivativeboundsforhatF}]  

	 Define $H_{a, w, z, \eta}$ by Proposition \ref{prop decompositionofkappaapeta}. Then $d_{a, w, z, \eta}(\tw) = \langle \estar_{a}(w, z),  \tw - w \rangle  + C_{a}(\eta, w, z)(\tw - w)$.  
	 By $(\tw, \tz) \in  \NGB^{\Lambda}_a(\eta, w, z) $, we deduce from   \eqref{eq Cbound} in Proposition \ref{prop decompositionofkappaapeta} and \eqref{eq adscalesquarerootgrowth} that  $\| H_{a, w, z, \eta}^{-1} (\tw, \tz) - (w, z) \| = \cO_{\esmall}( \Lambda^2 \eta_{max}^{c(\esmall)} \langle \eta \rangle^{-1/2 })$. 	
	   \detail{
	 	Let $(w'', z'') \in   \NGB^{\Lambda}_a(\eta, w, z)  \cap  (\hat{F})^{-1}( \NGB^{\Lambda}_{a'}(\eta', w', z' ))$. Then
	 \aryst
	 \|  H_{a', w', z', \eta'} F(w,z) - (w', z')  \| \leq \|  H_{a', w', z', \eta'} F(w,z) -  H_{a', w', z', \eta'} F H_{a, w, z, \eta}^{-1} (w'',z'')  \| + \| H_{a', w', z', \eta'} F H_{a, w, z, \eta}^{-1}(w'',z'')  - (w', z')  \|   \\
	 \leq Ce^{2 \lambda_g \tinitial}  \eta_{max}^{6c} \langle \eta \rangle^{-1/2 } + \eta_{max}^{c} \eta_{min}^{-1/2 + c}
	 \earyst
	 } By $\NGB^{\Lambda}_a(\eta, w, z)  \cap  (\hat{F})^{-1}( \NGB^{\Lambda}_{a'}(\eta', w', z' )) \neq \emptyset$, we deduce $\| F H_{a, w, z, \eta}^{-1} (\tw, \tz)   -    (w', z')  \| = \cO_{\esmall} (   \Lambda^2 \eta_{max}^{c(\esmall)}  \eta_{min}^{-1/2 } )$.
	 Note that $D^3 H_{a, w, z, \eta} = D^3 H_{a', w', z', \eta'} = 0$. 
	Then \eqref{eq partialzFw} follows immediately from Proposition \ref{prop decompositionofkappaapeta} and the chain rule.
	
Now we assume in addition that $(\tw, \tz) \in   \NGB^{\Lambda}_a(\eta, w, z)   \cap  \hat{F}^{-1} ( \NGB^{\Lambda}_{a'}(\eta', w', z') )$.  
	 It remains to prove   \eqref{eq partialwFz}  and \eqref{eq partialw^2Fz}. 
   Define $R_{a, w, z, \eta}$, $R_{a', w', z', \eta'}$ as in Proposition \ref{prop decompositionofkappaapeta}. 
	Denote 	$p = \kappa_a^{-1}(w, z)$, $p' = \kappa_{a'}^{-1}(w', z')$. Denote $F_0 = 	\imath_{p'}^{-1}  \circ f^{\tinitial} \circ \imath_{p}$ and $\tilde{F} = (\tilde{F}_w, \tilde{F}_z) = (\imath_{p', \langle \eta' \rangle })^{-1} \circ  F_0 \circ  \imath_{p, \langle \eta \rangle}$.
By Proposition \ref{prop decompositionofkappaapeta} and \eqref{eq defkappaaomegap}, we have
\aryst
\hat{F}  =  H_{a', w', z', \eta'} \circ  F \circ H_{a, w, z, \eta}^{-1}    = (R_{a', w', z', \eta'} )^{-1} \kappa_{a', p', \eta'}^{-1} \circ   \kappa_{a'} f^{\tinitial}  \kappa_{a}^{-1} \circ  \kappa_{a, p, \eta} R_{a, w, z, \eta} 
= (R_{a', w', z', \eta'} )^{-1}  \tilde{F} R_{a, w, z, \eta}.
\earyst
We will use the above formula to reduce the problem to estimates on $\partial_w\tilde{F}_z$ and $\partial^2_w\tilde{F}_z$. 

 	By  \eqref{eq Rproperty3} in   Remark \ref{rem moreonR} and  \eqref{eq flutorbasicestimate}, we see that $R_{a, w, z, \eta} ( \NGB_a^{\Lambda}(\eta, w, z) ) \subset U_1 := B((0, 0, 0),  C(\esmall) \Lambda^2  \eta_{max}^{c(\esmall)} \eta_{min}^{-1/2} )$,   $\imath_{p, \langle \eta \rangle}(U_1) \subset U_2 :=  B((0, 0, 0), C(\esmall) \Lambda^4 \eta_{max}^{c(\esmall)} \eta_{min}^{-1/2}   )$, $F_0(U_2) \subset U_3 := B((0, 0, 0), C(\esmall) e^{C \tinitial} \Lambda^4   \eta_{max}^{c(\esmall)} \eta_{min}^{-1/2})$, $(\imath_{p', \langle \eta' \rangle})^{-1} U_3 \subset U_4 := B((0, 0, 0), C(\esmall) e^{C \tinitial} \Lambda^{8}  \eta_{max}^{c(\esmall)} \eta_{min}^{-1/2} )$.
	Thus we may consider only the restrictions $R_{a, w, z, \eta}|_{  \NGB_a^{\Lambda}(\eta, w, z) }$, $\imath_{p, \langle \eta \rangle}|_{U_1}$, $(\imath_{p', \langle \eta' \rangle})^{-1} |_{U_3}$ and $(R_{a', w', z', \eta'})^{-1} |_{U_4}$ in the rest of the proof.
 All the norms of various derivatives mentioned in the rest of the proof should be    understood as for such restrictions.

	By  \eqref{eq Rproperty1} to \eqref{eq Rproperty3} in Proposition \ref{prop decompositionofkappaapeta}  and Remark \ref{rem moreonR}, we have 
\ary
	\label{eq partial1whatFtildeF}
 	\|	\partial_w \hat{F}_z   \|   \lesssim     \|	\partial_w  \tilde{F}_{ z}  \| +  \cO_{\tinitial, \esmall}(    \Lambda^{50}    \eta_{max}^{c(\esmall)}  \eta_{min}^{- 1/2 } ).
\eary
	We have  $F_0 = F_1 \circ F_2 $ where $F_1 = \imath_{p'}^{-1}  	\imath_{p''}$, $F_2  =  \imath_{p''}^{-1}   f^{\tinitial} \imath_{p}$  with $p'' = f^{\tinitial}(p)$.  
Denote $F_i = (F_{i, w}, F_{i, z}) = (F_{i, x}, F_{i, y}, F_{i, z}) $ for $i \in \{0, 1,2\}$.

 Denote $\rho = \adscale(\langle \eta \rangle, p)$ and $\rho' = \adscale(\langle \eta' \rangle, p')$. By \eqref{def imathetap} we have
	\ary \label{eq tildeFzexpression}
	\tilde{F}_z(\tw, \tz) = F_{0, z}(\tw, \tz' ) + \tor^{\dagger}(p', \rho') F_{0, x}( \tw, \tz' ) F_{0, y}(\tw, \tz'  ) \ \mbox{ with } \ \tz' =  \tz - \tor^{\dagger}(p, \rho)  \tx \ty.
	\eary
	We may bound the derivatives of $F_1, F_2$ by Proposition \ref{prop normalcoordinatesystem}. Then  we have   
	\ary 
	&& 	\partial_w F_{0, w} = \begin{bmatrix}
		\mu^u(p, \tinitial) & 0 \\ 0 & \mu^s(p, \tinitial)
	\end{bmatrix} +   \cO_{\tinitial, \esmall}(  \Lambda^4  \eta_{max}^{c(\esmall)}  \eta_{min}^{- 1/2} ), \label{eq partialF01}  \\
	&& 	\partial_z F_{0, z} = \mu^c(p, \tinitial) +   \cO_{\tinitial, \esmall}(   \Lambda^4  \eta_{max}^{c(\esmall)}  \eta_{min}^{- 1/2 }),  \label{eq partialF02} \\
  &&
\partial_w F_{0, z},    \partial_{w} \partial_{z} F_{0, z},  \partial^2_{x} F_{0, z}, \partial^2_{y} F_{0, z}  =  \cO_{\tinitial, \esmall}(   \Lambda^4    \eta_{max}^{c(\esmall)}  \eta_{min}^{- 1/2 }),  \label{eq partialF03} \\ 
&&   \| \partial_z F_{0, w} \|,  \| D^2 F_0 \| =   \cO_{\tinitial}(  1  ). \label{eq partialF04} 
	\eary
 We see that  \eqref{eq partialwFz} follows immediately from  \eqref{eq partial1whatFtildeF}, \eqref{eq tildeFzexpression} and \eqref{eq partialF04}.

We now show  \eqref{eq partialw^2Fz}.  
Since $\eta_{min}  \gg_{\tinitial} 1$ in item \eqref{itm 2inL42}, we can replace the error terms     $\cO_{\tinitial, \esmall}( \Lambda^4  \eta_{max}^{c(\esmall)}  \eta_{min}^{- 1/2} )$   in  \eqref{eq partialF01} to \eqref{eq partialF04} by   $ \cO(  \langle \eta \rangle^{-1/4})$.
	Then by Proposition \ref{prop decompositionofkappaapeta}, we have   $ \|	\partial_{w} R_w  \|,  \| \partial_z R_z  \|   = \cO(1)$  and $\| \partial_w R_z \| = \cO(\langle \eta \rangle^{-1/4})$.  Similar estimates hold for $R^{-1} = (\tilde{R}_w, \tilde{R}_z)$ in place of $(R_w, R_z)$. Then
	\ary
	\label{eq partial2whatFtildeF}
	\|	\partial^2_w  \hat{F}_{z}   \|   \lesssim     \|	\partial^2_w  \tilde{F}_{ z}  \|  + \cO( \langle \eta \rangle^{-1/4} ).
	\eary
We may deduce by  \eqref{eq tildeFzexpression} that
	\ary \label{eq 2ndpartialtildeF}
	\| \partial_x^2  \tilde{F}_z  \|,  \| \partial^2_{y} \tilde{F}_z \|  =  \cO( \langle \eta \rangle^{-1/4}).
	\eary

	Recall the notation $\twist(p, m) $ given by \eqref{eq cpmexpression}.
	\begin{lemma} \label{lem 2ndpartialofF0}
		Under \eqref{itm 2inL42},  we have  
\emph{	$\partial_x \partial_y F_{0, z}(\tw, \tz' ) = \twist(p, \tinitial)  +    \cO(   \tinitial^2 r_a(\eta, w, z)^{-1} \Delta_a(\eta, w, z)  )$.	 }
	\end{lemma}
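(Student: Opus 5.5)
The estimate compares $\partial_x\partial_y F_{0,z}$ at a point $(\tw,\tz')$, of distance roughly $\Lambda^{4}\rho$ from the origin, with its value $\twist(p,\tinitial)$ at the origin. Here $\rho = \adscale(\langle\eta\rangle,p)$. A crude Taylor bound using $\|D^3F_0\| = \cO_{\tinitial}(1)$ only gives an error $\cO_{\tinitial}(\Lambda^4\rho)$. This is too weak by a power: $r_a^{-1}\Delta_a \gtrsim \rho^{-1}\flu^s(p,\rho)\sim \rho^{c(\esmall)}$, not $\rho$. Worse, the constant must be $\tinitial^2$ rather than $e^{C\tinitial}$. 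So I will not Taylor expand. Instead I will use the invariance of $E^c_*$, which expresses the mixed derivative through the template data along the stable and unstable curves, in the spirit of \eqref{eq psisequivariant}.

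\textbf{Step 1: factor $F_0$.} Write $F_0 = F_1\circ F_2$ with $F_2 = \imath_{p''}^{-1} f^{\tinitial}\imath_p$. Decompose $F_2$ further as the composition of the $\tinitial$ one-step normal-form maps $F_{f^i(p)}$ and the transition maps $\imath_{f^{i+1}(p)}^{-1}\imath_{f^{i+1}(p)}$. The maps $F_{f^i(p)}$ are given by Proposition \ref{prop normalcoordinatesystem}, whose mixed derivative at the origin is $\twist(f^i(p))$. Since $p' $ is close to $p''$, the transition $F_1$ is close to the identity. By the H\"older dependence in Proposition \ref{prop normalcoordinatesystem}\eqref{item imathsmoothnorms}, its second derivatives are small; I aim to absorb them into the error using item \eqref{itm 2inL42}, where $\langle\eta\rangle\gg 1$.

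\textbf{Step 2: express the mixed derivative via $E^c_*$.} The kernel of $(\vartheta^u,\vartheta^s,1)$ is $E^u\oplus E^s$. Pulling back the invariance of $E^c_*$ under $F_0$, as done for \eqref{eq psisequivariant}, gives
\[
(\vartheta^u_{p'}\circ F_0,\ \vartheta^s_{p'}\circ F_0,\ 1)\cdot DF_0 \;\in\; \R\,(\vartheta^u_p,\vartheta^s_p,1).
\]
Differentiating the $x$-component in $y$, and the $y$-component in $x$, at $(\tw,\tz')$, I want to express $\partial_x\partial_y F_{0,z}$ as
\[
\partial_x\partial_y F_{0,z} \;=\; \twist(p,\tinitial) \;+\; \text{(terms linear in the differences } \vartheta^{u/s}_p(\tw,\tz') - \vartheta^{u/s}_p(0)\text{)}.
\]
The differences should come multiplied by factors such as $\mu^c$ and $\mu^s\mu^u$, which are bounded by $e^{C\esmall\tinitial}$. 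The same holds for the analogous $\vartheta_{p'}$-differences at $F_0(\tw,\tz')$, and there is an $\cO(\langle\eta\rangle^{-1/4})$ remainder.

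The relation \eqref{eq torexpression0} already shows how $\twist(p,m)$ accumulates along the orbit. Summing the one-step identities over $i<\tinitial$ produces the difference quotients $\vartheta^{*}_{f^i(p)}$ along $\tau$ over distance $\sim\mu^u(p,i)\rho$. Each such term is controlled by Lemma \ref{lem mainapproximation}: deviations from the torsion slope are bounded by $\flu$, and the torsion slopes themselves cancel telescopically against \eqref{eq torexpression0}.

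\textbf{Step 3: bound the errors.} By Lemma \ref{lem mainapproximation} with $C_0\lesssim\Lambda^4$, together with Lemma \ref{lem torsionareclose}, Remark \ref{rem:torfluadscale} and the definition \eqref{def Delta}, each difference quotient is $\cO(\rho^{-1}\Delta)$. We also need $\rho\approx r_a(\eta,w,z)$ and $\Delta(\langle\eta\rangle,p)$ comparable to $\Delta_a(\eta,w,z)$; these hold by definition, with $p=\kappa_a^{-1}(w,z)$. Moving from $f^i(p)$ back to $p$ costs factors controlled by Lemma \ref{lem behaviorofDeltalaongorbit} and \eqref{eq pushforwardofadscale}.

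Counting gives $\tinitial$ terms, each carrying a factor up to $\tinitial$: one from the $\langle\log K\rangle$ in Lemma \ref{lem torsionareclose} with $K\sim e^{\tinitial}$, or from the $\langle\log(n+1)\rangle$ in Lemma \ref{lem behaviorofDeltalaongorbit}. This yields the bound $\tinitial^2 r_a^{-1}\Delta_a$.

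The main obstacle is this bookkeeping in Steps 2–3. The errors must stay polynomial in $\tinitial$, with no $e^{C\tinitial}$ factors. That requires $\esmall\ll_{\tinitial}1$, so that $e^{c(\esmall)\tinitial}=\cO(1)$, and it requires that the hyperbolic factors $\mu^u,\mu^s$ appear only in the balanced products $\mu^u\mu^s\approx 1$. I would first check the identity at the origin, where it reduces exactly to $\twist(p,\tinitial)$, and then track the linear deviations.
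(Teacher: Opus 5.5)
Your Step 1 has the right structure. It is the paper's factorization $F_0=F_1\circ F_2$ with $F_1=\imath_{p'}^{-1}\imath_{p''}$ and $F_2=\imath_{p''}^{-1}f^{\tinitial}\imath_p$. You then misplace the difficulty, because your reason for rejecting a Taylor bound is inverted. An error $\cO_{\tinitial}(\Lambda^4\rho)$ is much \emph{smaller} than $\rho^{c(\esmall)}\lesssim r_a^{-1}\Delta_a$. The $e^{C\tinitial}$ constant is also harmless, since item \eqref{itm 2inL42} allows $\langle\eta\rangle\gg_{\tinitial,c}1$.

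The paper uses exactly this Taylor bound for $F_2$. By definition, $\twist(p,\tinitial)$ is the mixed derivative at $0$ of the $z$-component of $\imath_{f^{\tinitial}(p)}^{-1}f^{\tinitial}\imath_p$. So Proposition \ref{prop normalcoordinatesystem} gives $\partial_x\partial_yF_{2,z}=\twist(p,\tinitial)+\cO(\langle\eta\rangle^{-1/4})$ on the relevant region. The chain rule then yields $\partial_x\partial_yF_{0,z}=\twist(p,\tinitial)+\mu^u(p,\tinitial)\mu^s(p,\tinitial)\,\partial_x\partial_yF_{1,z}\circ F_2+\cO(\langle\eta\rangle^{-1/4})$. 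You do not need to split $F_2$ into one-step maps, nor any $E^c_*$ argument for $F_2$.

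The whole content of the lemma is the chart-mismatch term $\partial_w^2F_{1,z}$, which arises because $p'\neq f^{\tinitial}(p)$. You dismiss it by appealing to H\"older dependence of $\imath_p$. As stated in Proposition \ref{prop normalcoordinatesystem}\eqref{item imathsmoothnorms}, with an arbitrary metric on the $C^3$ space, that dependence is only qualitative and does not produce $\tinitial^2r_a^{-1}\Delta_a$.

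The paper instead applies the $E^c_*$-invariance to $F_1$: $(\vartheta^u_{p'}\circ F_1,\vartheta^s_{p'}\circ F_1,1)DF_1\in\R(\vartheta^u_{p''},\vartheta^s_{p''},1)$. It then uses Lemmas \ref{lem mainapproximation}, \ref{lem torsionareclose}, \ref{lem behaviorofDeltalaongorbit} and \ref{lem adaptedscaleistame}, which show that the torsion slopes at $p'$ and $p''$ agree up to fluctuation-size errors. This gives the first-order bound $\|\partial_wF_{1,z}\|=\cO(\tinitial\Delta_a(\eta,w,z))$. Interpolating against $\|D^3F_{1,z}\|=\cO(|\eta|^{c(\esmall)})$ over the scale $r_a$ then gives $\|\partial_w^2F_{1,z}\|=\cO(\tinitial^2r_a^{-1}\Delta_a)$. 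This argument only ever uses \emph{values} of $\vartheta$.

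Your Step 2 instead differentiates the invariance identity for $F_0$ in $x$ and $y$. That requires $\partial_y\vartheta^u$ and $\partial_x\vartheta^s$, which do not exist: $E^c_*$ is only $(1-c(\esmall))$-H\"older, and under $({\bf NI})$ the templates are genuinely non-smooth. To repair this you would have to use finite differences at scale $\sim\rho$ plus a third-derivative remainder, which is the very Taylor term you rejected. The decisive input would then be the comparison, via Lemma \ref{lem torsionareclose}, of $\tor^{s,u}$ at $p'$ with $\tor^{s,u}$ at $f^{\tinitial}(p)$, i.e.\ the paper's $F_1$ argument. Finally, your Step 3 count ($\tinitial$ terms, each with a factor $\tinitial$) is not tied to any actual identity, so the proposal does not yet prove the lemma.
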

	\begin{proof}  
		 By Proposition \ref{prop normalcoordinatesystem},
	we have $\partial_x \partial_y  F_{2, z} =  \twist(p, \tinitial) + \cO(\langle \eta \rangle^{-1/4})$,  $\| \partial_{w} F_{1, z} \|, \| \partial_{w} F_{2, z} \|   = \cO(\langle \eta \rangle^{-1/4})$, and 
		\aryst
		\partial_z F_{1, z}  = 1 + \cO(\langle \eta \rangle^{-1/4}), \	\partial_w F_{2, w} =  \begin{bmatrix}
			\mu^u(p, \tinitial) & 0 \\ 0 & \mu^s(p, \tinitial)
		\end{bmatrix} + \cO(\langle \eta \rangle^{-1/4}).
		\earyst
		Then we have  $\partial_x \partial_y F_{0, z}  =  \twist(p, \tinitial)   +  \mu^u(p, \tinitial) \mu^s(p, \tinitial) \partial _x \partial_y F_{1, z}  \circ F_2  + \cO(\langle \eta \rangle^{-1/4})$.   
		By construction, we have $( \vartheta^u_{p'} \circ F_1, \vartheta^s_{p'} \circ F_1, 1 ) DF_1 \in \R ( \vartheta^u_{p''}, \vartheta^s_{p''}, 1 )$.
	By Lemma \ref{lem mainapproximation},   Lemma \ref{lem behaviorofDeltalaongorbit},  Lemma \ref{lem adaptedscaleistame} and Lemma \ref{lem torsionareclose},  we deduce $\|  \partial_w F_{1, z} \| = \cO(\tinitial  \Delta_a(\eta, w, z)   )$. 
	By $\| D^3 F_{1, z} \| = \cO( | \eta |^{c(\esmall)})$ and intermediate value theorem,   we see $\|  \partial_w^2 F_{1,z} \|  =  \cO( \tinitial^2   r_a(\eta, w, z)^{-1} \Delta_a(\eta, w, z)    )$.
 	This  concludes the proof.
	\end{proof}

	By \eqref{eq tildeFzexpression}, \eqref{eq partialF03} and 	Lemma \ref{lem 2ndpartialofF0}, we have
	\begin{align}
	& \partial_{x}\partial_{y} \tilde{F}_z(\tw, \tz) 	=  \partial_{x}\partial_{y} F_{0, z}(\tw, \tz') -  \partial_z F_{0, z}(\tw, \tz')    \tor^{\dagger}(p, \rho)  +  \tor^{\dagger}(p', \rho') \partial_{x} F_{0, x}(\tw, \tz')   \partial_{y} F_{0, y}(\tw, \tz')  + \cO(\langle \eta \rangle^{-1/5}  )  \nonumber \\
	&=  \twist(p, \tinitial) - \mu^c(p, \tinitial)  \tor^{\dagger}(p, \rho)  + \mu^u(p, \tinitial) \mu^s(p, \tinitial)  \tor^{\dagger}(p', \rho')  +   \cO(   \tinitial    r_a(\eta, w, z)^{-1} \Delta_a(\eta, w, z) ). \label{eq partialxytildeFzterm0}
	\end{align}
	Then by \eqref{eq torexpression0} and \eqref{eq torexpression02}, we have 
	\aryst
	 \twist(p, \tinitial) =  \mu^c(p, \tinitial)  \tor^{\dagger}(p, \rho) -  \mu^u(p, \tinitial) \mu^s(p, \tinitial) (\tor^s( f^{\tinitial}(p), \rho_s ) + \tor^u( f^{\tinitial}(p), \rho_u ) )/2
	\earyst
	where  $\rho_s = \mu^u(p, \tinitial) \rho$ and $\rho_u = \mu^s(p, \tinitial)  \rho$. 
	Then we can write the expression in \eqref{eq partialxytildeFzterm0} as
	\aryst
  \frac{1}{2}	\mu^u(p, \tinitial) \mu^s(p, \tinitial) ( \tor^{s}(p', \rho') -  \tor^{s}( f^{\tinitial}(p),   \rho_s ) +  \tor^{u}(p', \rho') -  \tor^{u}( f^{\tinitial}(p),    \rho_u ) ) +   \cO(   \tinitial    r_a(\eta, w, z)^{-1} \Delta_a(\eta, w, z)  ).
	\earyst
	By Lemma \ref{lem torsionareclose}, the above line is bounded by $\cO(   \tinitial   r_a(\eta, w, z)^{-1} \Delta_a(\eta, w, z) )$.  This gives  
		\ary   \label{eq 2ndpartialtildeF2} 
	\| \partial_{x} \partial_{y} \tilde{F}_z \| \lesssim   \tinitial    r_a(\eta, w, z)^{-1} \Delta_a(\eta, w, z).
	\eary
	We conclude  the proof of \eqref{eq partialw^2Fz}  by \eqref{eq partial2whatFtildeF}, \eqref{eq 2ndpartialtildeF}  and  \eqref{eq 2ndpartialtildeF2}.
\end{proof}

		 \begin{proof}[Proof of Lemma \ref{lem etaeta'different}]
	By direct computations, we obtain
	\aryst
	\partial_z \hat{F}_z(\tw, \tz) = \partial_z F_z(H_{a, w, z, \eta}^{-1}(\tw, \tz)) + D d_{a', w', z', \eta'}( \hat{F}_w(\tw, \tz) ) \partial_z \hat{F}_w(\tw, \tz).
	\earyst
	Since $\esmall$ is small, we have $1/2 < | \partial_z F_z  | < 2$. 	By \eqref{eq skewproduct}, we have $ \partial_z \hat{F}_w(\tw, \tz) = \cO(e^{C \tinitial} \esmall)$.  
	Note that under either (1) or (2), we always have $| \hat{F}_w(\tw, \tz)  - \fw' | \lesssim e^{C\tinitial}\eta_{max}^{c}  \langle \eta' \rangle^{-1/2}$. Then by Proposition \ref{prop decompositionofkappaapeta}, we obtain $ D d_{a', w', z', \eta'}( \hat{F}_w(\tw, \tz) )  = \cO( e^{C \tinitial} \eta_{max}^{2c}  \langle \eta' \rangle^{-1/2} )$. Thus we have 
	\aryst
	| \eta - \eta' \partial_z \hat{F}_z(\tw, \tz) | \geq | \eta - \eta'  \partial_z F_z(H_{a, w, z, \eta}^{-1}(\tw, \tz)) | -  e^{C \tinitial} \eta_{max}^{1/2+ 3c}.
	\earyst
	By letting $ \eta_{max} \gg_{c, \tinitial} 1$, it is direct to see that $| \eta - \eta' \partial_z \hat{F}_z(\tw, \tz) |  \geq \eta_{max} / 2$. 
\end{proof}

			\subsection{Local geometry of $E^c_*$ in normal central charts}  
			In this section, we will describe the geometry of $E^c_*$ in normal central charts.


			\begin{defi} \label{def estarlocal}
				Given $a \in A$, $\eta \in \R$, $(w, z) \in \widehat \Omega$ and  $(\tw, \tz) \in   H_{a, w, z, \eta}(\widehat\Omega) \cap \widehat\Omega$,
				we will define  $\estar_a(\eta, w, z \mid \tw, \tz) = (\theta^u_a(\eta, w, z \mid \tw, \tz), \theta^s_a(\eta, w, z \mid \tw, \tz)) \in \R^2$ by 
				\begin{align*}
				(\estar_{a}(\eta, w, z \mid \tw, \tz), 1)   := (\estar_{a}( H_{a, w, z, \eta}^{-1}( \tw, \tz )), 1) \cdot D(H^{-1}_{a, w, z, \eta})(\tw, \tz).
				\end{align*}
			\end{defi}
 	By \eqref{eq expressionofHawzeta} in Proposition \ref{prop decompositionofkappaapeta}, we have $\estar_a(\eta, w, z \mid w, z) = (0, 0)$.

Assume that $f^{\tinitial}_{a \to a'}$ is well-defined, and $ \hat{F}$ is given by  \eqref{eq hatF}.
By the invariance of $E^c_*$, we have
 \aryst
 (\estar_{a'}(\eta', w', z' \mid  \hat{F}( \tw, \tz ) ), 1) \cdot D\hat{F}(\tw, \tz) \in \R (\estar_{a}(\eta, w, z \mid \tw, \tz ), 1).
 \earyst
 In other words, we have
 \begin{align} \label{eq pullbackofestarinmodifiedchart}
 & \estar_{a'}(\eta', w', z' \mid  \hat{F}( \tw, \tz ) ) \cdot \partial_w \hat F_w(\tw, \tz) + \partial_w \hat F_z(\tw, \tz)  \\
 =& ( \estar_{a'}(\eta', w', z' \mid  \hat{F}( \tw, \tz ) ) \cdot \partial_z \hat F_w(\tw, \tz) + \partial_z \hat F_z(\tw, \tz)  ) \estar_{a}(\eta, w, z \mid  \tw, \tz ).   \nonumber
 \end{align}
 By Remark \ref{rem moreonR}, we see that an identity parallel to \eqref{eq pullbackofestarinmodifiedchart} holds when we consider $f^{- 1}$ in place of $f$. 
 
%

\detail{

We have 
\aryst
( \estar_a, 1 ) \sim (D \kappa_a^{-1})^* E^c_* = (D \kappa_a^{-1})^*  (D(\imath_p \imath_{p, \langle \eta \rangle})^{-1} )^*   D(\imath_p \imath_{p, \langle \eta \rangle} )^* E^c_*   \\
=  D(R_{a, w, z, \eta} H_{a, w, z, \eta})^*  (\vartheta^u_{ p, \langle \eta \rangle }, \vartheta^s_{p, \langle \eta \rangle }, 1)   =  (DH_{a, w, z, \eta})^*  (DR_{a, w, z, \eta})^*  (\vartheta^u_{ p, \langle \eta \rangle }, \vartheta^s_{p, \langle \eta \rangle }, 1)   
\earyst
Then
\aryst
	(\estar_{a}(\eta, w, z \mid \tw, \tz), 1)  =  (DH_{a, w, z, \eta}^{-1})^*  \estar_a  = (DR_{a, w, z, \eta})^*  (\vartheta^u_{ p, \langle \eta \rangle }, \vartheta^s_{p, \langle \eta \rangle }, 1).
\earyst

Then
\aryst
\estar_{a}(\eta, w, z \mid \tw, \tz) = (\vartheta^u_{ p, \langle \eta \rangle }, \vartheta^s_{p, \langle \eta \rangle })(R_w(\tw, \tz)) \partial_w R_w(\tw, \tz) + \partial_w R_z(\tw, \tz).
\earyst

We let 
\aryst
L_{a, w, z, \eta} = \partial_{z}\partial_w R_{z}
\earyst
}

			\begin{prop}[Main properties of $\estar_a$] \label{lem mainpropertyofestar}
				Let $a \in A$ and $\eta \in \R$. 
				Then   
				\enmt
				\item \label{itm rangeofestar}
			For every $1 \leq C_0 \leq  \langle \eta \rangle^{1/100}$, every $(w, z) \in \widehat\Omega$,  
			and   every $(\tw, \tz)$  in the domain of $\estar_a(\eta, w, z \mid \cdot)$ with $(\tw, \tz) \in   B(w, C_0 r_a(\eta, w, z)) \times B(z, C_0 \langle \eta \rangle^{- \beta_1 + c_0})$,  we have
				\begin{align}
					&  (\widetilde \theta^u_a(\eta, w, z \mid \tw, \tz),  \widetilde \theta^s_a(\eta, w, z \mid \tw, \tz))   :=  \estar_a(\eta, w, z \mid \tw, \tz) \cdot    A_{a}(w, z)^{-1}    \nonumber   \\
				& 
			= D_{a}(w, z)^{-1}	(\vartheta^u_{  \kappa_a^{-1}(w, z),  \langle \eta \rangle },  \vartheta^s_{  \kappa_a^{-1}(w, z),  \langle \eta \rangle })(	A_{a}(w, z)( \tw - w), 0)  +     \cO(  C_0^{7} \langle \eta \rangle^{- 3/4}). \label{eq estaraapprox}
				\end{align}
 	Consequently,    for every $C_1  \in [1, C_0]$, for any other $(\tw', \tz')$  in the domain of $\estar_a(\eta, w, z \mid \cdot)$ with $\| \tw' - \tw \| \leq  C_1 r_a(\eta, w, z)$ and  $| \tz' - \tz | <  \langle \eta \rangle^{-  3 / 4}$,   we have
 	$$\|  ( \estar_a(\eta, w, z \mid \tw, \tz) - \estar_a(\eta, w, z \mid \tw', \tz') )  \cdot A_{a}(w, z)^{-1}  \| \cdot  \Delta_a(\eta, w, z)^{-1} = \cO_{\esmall}(C_0^{c(\esmall)} C_1 ).$$  
 	In particular, we have  	$\estar_a(\eta, w, z \mid \tw, \tz)  \cdot A_{a}(w, z)^{-1} \cdot  \Delta_a(\eta, w, z)^{-1} = \cO_{\esmall}(C_0^{1 + c(\esmall)} )$.
			    \item \label{itm NIinequality}
			Moreover, for any $c > 0$, for sufficiently small $\esmall$, for any $f$ satisfying \eqref{eq fclosetog1} and   
			$({\bf NI})_{\sigmaNI, \CNI, \hat\CNI}$
			for some $\sigmaNI > 0$, $\CNI > 1$ and some $\hat\CNI \gg C_0^{2c} \CNI$ (as in Definition \ref{def NIrho}),
				for any $(w, z) \in \widehat \Omega$, 	for $p = \kappa_a^{-1}(w, z)$,  for an integer $m \geq 1$,   for any $0 < h_0 <1$ and $C_1 \langle \eta \rangle^{-3/4}  < h_1 < 1$  satisfying  
				\aryst
			1/2 \leq	h_0  \mu^u(p, m) \leq 2 \quad \mbox{ and } \quad C^c_0 C_g \ll h_1^{-1} \mu^s(p, m) / \mu^c(p, m)  \ll C_0^{-c} \hat{C}_g,
				\earyst
			for any $(x'', y'', z'') \in B((0, 0), C_0 r_a(\eta, w, z)) \times \langle \eta \rangle^{-3/4}$, we have
				\aryst
		 \frac{1}{2 h_0} \sup_{L \in \R }	\Big |	\int_{- h_0}^{h_0}  \exp( i ( h_1^{-1} \widetilde \theta^s_a(\eta, w, z \mid      w +  A_{a}(w, z)^{-1}(x'' + t, y''), z''      )  + L t  ) )  dt  \Big | < C_1^{-\sigma}.
				\earyst
				We have a parallel statement for $\widetilde \theta^u_a$ in place of $\widetilde   \theta^s_a$.
				\eenmt
			\end{prop}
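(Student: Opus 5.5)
The plan is to reduce everything to the normal coordinate system $\imath_p$ with $p=\kappa_a^{-1}(w,z)$, where Lemma \ref{lem mainapproximation} and \eqref{lem mainapproximation0} already describe $E^c_*$. By the computation in the (hidden) detail after Definition \ref{def estarlocal}, pulling back by $H_{a,w,z,\eta}^{-1}=\kappa_{a,p,\eta}\circ R_{a,w,z,\eta}$ gives the exact identity
\[
(\estar_a(\eta,w,z\mid \tw,\tz),1)\in \R\,\bigl(\vartheta^u_{p,\langle\eta\rangle},\vartheta^s_{p,\langle\eta\rangle},1\bigr)(R(\tw,\tz))\cdot DR(\tw,\tz).
\]
Writing this out,
\[
\estar_a(\eta,w,z\mid\tw,\tz)=\frac{\vartheta_{p,\langle\eta\rangle}(R)\,\partial_wR_w+\partial_wR_z}{\vartheta_{p,\langle\eta\rangle}(R)\,\partial_zR_w+\partial_zR_z}.
\]
So the proof of item \eqref{itm rangeofestar} is a perturbative expansion of this formula around $(w,z)$.

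\textbf{Step 1 (first-order terms).} Proposition \ref{prop decompositionofkappaapeta} gives $\partial_wR_z(w,z)=\partial_w^2R_z(w,z)=0$. Together with $\|D^3R\|=\cO(\langle\eta\rangle^{c(\esmall)})$, this yields $\partial_wR_z(\tw,\tz)=\cO(C_0^2 r^2\langle\eta\rangle^{c(\esmall)})+\cO(|\tz-z|\,\|\partial_z\partial_wR_z\|)$ on the box, with $r=r_a(\eta,w,z)$. The first term is $\cO(C_0^2\langle\eta\rangle^{-1+c(\esmall)})$. For the second, $|\tz-z|\le C_0\langle\eta\rangle^{-\beta_1+c_0}$, and we need $\partial_z\partial_wR_z$ small. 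The inner maps $\kappa_a$ and $H$ are skew products whose $z$-derivative is constant, so I would aim for $\partial_z\partial_wR_z=\cO(\langle\eta\rangle^{c(\esmall)})$ times a factor that vanishes at the base point. The honest bound is $C_0\langle\eta\rangle^{-\beta_1+c_0+c(\esmall)}$, which is below $\langle\eta\rangle^{-3/4}$ since $\beta_1\approx1$.

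\textbf{Step 2 (the $\vartheta$ term).} Also $R_w(\tw,\tz)=A_a(w,z)(\tw-w)+\cO(C_0^2\langle\eta\rangle^{-1+c})$ and $R_z=\cO(C_0\langle\eta\rangle^{-\beta_1+c_0})\ll \rho^{1+c}$. Hence \eqref{lem mainapproximation0} applies: $\vartheta_{p,\langle\eta\rangle}(R)=\cO(C_0^{1+c}\Delta)$, and it is Hölder/almost-Lipschitz along $w$ by Lemma \ref{lem mainapproximation}. This lets us replace $\vartheta(R(\tw,\tz))$ by $\vartheta(A_a(\tw-w),0)$ with error $\cO(C_0^{7}\langle\eta\rangle^{-3/4})$. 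The denominator is $\partial_zR_z+\cO(\Delta)=D_a(w,z)+\cO(\langle\eta\rangle^{-1/2+c})$, and replacing $\partial_wR_w$ by $A_a$ costs $\cO(\langle\eta\rangle^{-1/2+c})$. Each such cost multiplies $\vartheta=\cO(\langle\eta\rangle^{-1/2+c})$, so the total is $\cO(\langle\eta\rangle^{-1+c})$. Multiplying on the right by $A_a^{-1}$ gives \eqref{eq estaraapprox}.

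\textbf{Step 3 (consequences).} The difference estimate follows by applying \eqref{eq estaraapprox} at both points. We then use \eqref{eq thetaualongu1}--\eqref{eq tildethetaualongs4} for displacements of size $C_1 r$ in the rectified chart; torsion terms cancel up to $\midtor\,\rho$, which is $\le\Delta$. We also use $\langle\eta\rangle^{-3/4}\ll\Delta$ by \eqref{eq Deltaomegarange}. The final bound is the case $\tw'=w$.

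\textbf{Step 4 (item \eqref{itm NIinequality}).} I expect this to be the main obstacle. By \eqref{eq estaraapprox}, $\widetilde\theta^s_a(\ldots x''+t\ldots)$ equals $D_a^{-1}\vartheta^s_{p,\langle\eta\rangle}(x''+t,y'',0)$ up to $\cO(C_0^7\langle\eta\rangle^{-3/4})$. By \eqref{eq varthetauspdef}, this is $D_a^{-1}(\vartheta^s_p(x''+t,y'',z')-\tor^\dagger x)$. The linear-in-$t$ pieces are absorbed into $L$, and the $y''$ dependence is handled by \eqref{eq tildethetasalongs2}. This leaves $h_1^{-1}\tpl^s_p(x''+t)$ plus errors that are $\ll 1$ once $h_1\gg C_1\langle\eta\rangle^{-3/4}$; those errors are absorbed because the integrand's phase changes by $o(1)$. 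Then rescale using the equivariance \eqref{eq psisequivariant} iterated $m$ times:
\[
\tpl^s_p(\tau)=\frac{\mu^s(p,m)}{\mu^c(p,m)}\,\tpl^s_{f^m(p)}\bigl(\mu^u(p,m)\tau\bigr)+\text{linear}.
\]
With $\tau=x''+t$, $|t|\le h_0\sim\mu^u(p,m)^{-1}$, the integral becomes an average over a unit-size window of $\exp(ib\,\tpl^s_{f^m(p)}(s)+ic's)$, with $b=h_1^{-1}D_a^{-1}\mu^s/\mu^c\in[C_g,\hat C_g]$ up to $C_0^{c}$. The shift by $\mu^u x''$ is bounded by $C_0$; we recenter using the equivariance again, or recenter at the point $f^m(\imath_p(x'',0,0))$ and use Hölder continuity of templates. $({\bf NI})_{\sigma_g,C_g,\hat C_g}$ then gives the bound $b^{-\sigma}$. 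The delicate points are controlling the recentering, and the mismatch between $C_1^{-\sigma}$ and $b^{-\sigma}$; I would resolve the latter by reading the hypothesis as making $b\gtrsim C_1$ up to $C_0^{c}$ factors, absorbed by the margin $\hat C_g\gg C_0^{2c}C_g$.
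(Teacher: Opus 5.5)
Your route is the paper's. You use the identity $(\estar_a(\eta,w,z\mid\cdot),1)\in\R\,(\vartheta^u_{p,\langle\eta\rangle},\vartheta^s_{p,\langle\eta\rangle},1)\circ R\cdot DR$, then the vanishing of $\partial_wR_z$ and $\partial_w^2R_z$ at $(w,z)$ together with the derivative bounds on $R$ to get the $\cO(C_0^7\langle\eta\rangle^{-3/4})$ error. For item (2) you reduce through item (1) and then use the torsion recursion and $({\bf NI})$. The paper compresses all of this into a few lines, and your Steps 1--2 fill it in correctly. Note that replacing $\vartheta_{p,\langle\eta\rangle}(R(\tw,\tz))$ by $\vartheta_{p,\langle\eta\rangle}(A_a(w,z)(\tw-w),0)$ at precision $\langle\eta\rangle^{-3/4}$ uses the $(1-c(\esmall))$-H\"older continuity of $E^c_*$; the bounds of Lemma \ref{lem mainapproximation} are no finer than about $\langle\eta\rangle^{-9/16}$. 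There is one slip in Step 3: you cannot apply \eqref{eq estaraapprox} at $(\tw',\tz')$. The hypothesis lets $|\tz'-\tz|$ be as large as $\langle\eta\rangle^{-3/4}$, far outside the window $|\tz'-z|<C_0\langle\eta\rangle^{-\beta_1+c_0}$. Rerun Steps 1--2 on the taller box instead. The new errors are $\cO(C_0^4\langle\eta\rangle^{-3/4+c(\esmall)})$ from $\partial_z\partial_wR_z$ and $\cO(\langle\eta\rangle^{-\frac34(1-c(\esmall))+c(\esmall)})$ from H\"older continuity in $z$. Both are still $o(\Delta_a(\eta,w,z))$ by \eqref{eq Deltaomegarange}, which is all the difference estimate needs.

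The substantive problem is in Step 4, where you discard the $y''$-dependence (via \eqref{eq tildethetasalongs2}) and the offset $x''$ \emph{before} rescaling. The hypotheses allow $h_1^{-1}$ as large as about $\langle\eta\rangle^{3/4}/C_1$, with $h_0\approx\mu^s(p,m)/\mu^c(p,m)$ then of order $\langle\eta\rangle^{-3/4}$ (up to factors depending on $C_g,C_1$). In that regime \eqref{eq tildethetasalongs2} bounds the $y''$-variation of $\vartheta^s_p$ only by $(C_0r_a)^{1+c/2}\gtrsim\langle\eta\rangle^{-9/16-c(\esmall)}$. After multiplication by $h_1^{-1}$, nothing then shows the discarded term is $o(1)$. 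Likewise $\mu^u(p,m)|x''|$ can be of order $C_0r_a/h_0\approx C_0\langle\eta\rangle^{1/4}$, not $\lesssim C_0$, so iterating \eqref{eq psisequivariant} from $p$ runs out of the domain of the templates. The same applies to the $\cO(C_0^7\langle\eta\rangle^{-3/4})$ error from item (1): after multiplication by $h_1^{-1}$ it is only $<C_0^7/C_1$. What rescues it is that its main part, $\partial_wR_z$, is smooth in $t$, so only its non-affine part on the window counts.

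The fix is to recenter and rescale before discarding anything. Move the base point to $\imath^u_p(x'')$, keeping the $y''$-offset. Then push forward by $f^m$ using the $(Df)^*$-invariance of the whole field $E^c_*$, not only the template identity on the axis. At unit scale the $y''$-offset has been contracted by $\mu^s(p,m)$, and every non-template error is multiplied only by $b\lesssim\hat C_g$. H\"older continuity there therefore suffices before you apply $({\bf NI})_{\sigma_g,C_g,\hat C_g}$ at the new base point. The paper itself settles item (2) in one sentence. It only applies it on the unstable axis through the base point ($y''=0$, $z''=z$), in Case I of the proof of Lemma \ref{lem hp2}, where the $y''$ issue does not arise. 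Your reading of the $C_1^{-\sigma}$ versus $b^{-\sigma}$ mismatch agrees with that application.
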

			
			\begin{proof}  
				Let  $R = (R_w, R_z) = R_{a, w, z, \eta}$ be given by Proposition \ref{prop decompositionofkappaapeta}.
				By definition,   $ (DH_{a, w, z, \eta}^{-1})^*   (D \kappa_a^{-1})^* E^c_*$ contains both $(\estar_{a}(\eta, w, z \mid \cdot ), 1)  =  (DH_{a, w, z, \eta}^{-1})^* ( \estar_a, 1)$ and $D(R_{a, w, z, \eta}  )^*  (\vartheta^u_{ p, \langle \eta \rangle }, \vartheta^s_{p, \langle \eta \rangle }, 1)$ with $p = \kappa_a^{-1}(w, z)$  (see \eqref{def varthetapeta}). 
				Then we have 
				\aryst
				&& (\vartheta^u_{  p,  \langle \eta \rangle },  \vartheta^s_{ p, \langle \eta \rangle }) \circ R \cdot \partial_{w} R_w(\tw, \tz) + \partial_w R_z(\tw, \tz) \\
				&=&			(  (\vartheta^u_{  p,  \langle \eta \rangle },  \vartheta^s_{ p, \langle \eta \rangle } ) \circ R \cdot \partial_{z} R_w(\tw, \tz) + \partial_z R_z(\tw, \tz) ) \estar_{a}(\eta, w, z \mid \tw, \tz).
				\earyst
				By   the proof of Proposition \ref{prop decompositionofkappaapeta}, we see that $\partial_w R_z(w, z) = 0$.
			By \eqref{eq Rproperty1}, \eqref{eq Rproperty3} in Proposition \ref{prop decompositionofkappaapeta}, we have
				\aryst
				 \partial_w R_z(\tw, \tz) = \cO( C^6_0 \langle \eta \rangle^{-1/2 + 1/8} \cdot   C_0 \langle \eta \rangle^{-1/2 + 1/8} + C^4_0 \langle \eta \rangle^{1/8} \cdot C_0 \langle \eta \rangle^{- \beta_1 + c_0} ) = \cO( C_0^7 \langle \eta \rangle^{-3/4} ).
				\earyst
			Then it is direct to  deduce \eqref{eq estaraapprox} in  item \eqref{itm rangeofestar} from Proposition \ref{prop decompositionofkappaapeta} and Lemma \ref{lem mainapproximation}.
 		Then the rest statements of Item  \eqref{itm rangeofestar}  follow from    \eqref{lem mainapproximation0} and \eqref{eq Deltaomegarange}. 
 		By Item   \eqref{itm rangeofestar}, we may reduce Item \eqref{itm NIinequality} to proving
 		$$	 \frac{1}{2 h_0} \sup_{L \in \R }	\Big |	\int_{- h_0}^{h_0}  \exp( i ( h_1^{-1}  D_{a}(w, z)^{-1}  \vartheta^s_{p, \langle \eta \rangle }(     x'' + t, y'',  0    )  + L t  ) )  dt  \Big | < C_1^{-\sigma},$$
 		which follows from   Definition \ref{def torsion} and Definition \ref{def NIrho}.
			\end{proof}

			\section{Dynamical wave-packet transform: Construction and Properties} \label{sec DWPT: Construction and Properties}

			In the rest of the paper, we will deal with various functions on products of Euclidean spaces. 
			In order to distinguish the roles of different variables, we will often write a function $u \in L^2(\R^{n} \times \R^{m})$ for integers $n, m \geq 1$  as $u( p \mid q)$ where $p \in \R^{n}$ and $q \in \R^{m}$.  More generally, given positive integers $n_1, n_2, \cdots, n_k $, we may write a function $u \in L^2(\R^{n_1} \times \cdots \times \R^{n_k})$ as $u(p_1 \mid  \cdots \mid p_k)$ for $p_i \in \R^{n_i}$, $1 \leq i \leq k$.
			Throughout this section, we fix an arbitrary $a \in A$.

%
 Recall that we let $\tinitial  > 1$ be a large integer. 
We denote
\ary
\label{eq extra}
\extra = e^{ - 3 \deltazero  \tinitial}.
\eary
 We set
\ary
&&\Gamma_a = \{  (w, z,  \eta \mid \fw, \fz) \in \widehat\Omega \times  \R \times \widehat\Omega \mid (w, z) \in   \ngb^{1   }_a( \extra \eta, \fw, \fz) \},  \label{eq Gammaa}  \\
&&\hat{\Gamma}_a = \{ (w, z, \xi, \eta \mid \fw, \fz) \in  \widehat\Omega \times \R^{2+1} \times  \widehat\Omega  \mid (w, z,  \eta \mid \fw, \fz)  \in \Gamma_a  \}. \label{eq hatGammaa}
\eary
Note that by Lemma \ref{lem adaptedscaleistame},  \eqref{eq adscalesquarerootgrowth} and \eqref{eq esmallandtinitial},   for every $(w, z, \eta \mid \fw, \fz) \in  \Gamma_a$, we have 
\ary 
\label{eq rafwfzrawzclose}
&& r_a(\eta, w, z) \sim r_a(\eta, \fw, \fz), \\
\label{eq Randrratio}
&& r_{a}(\extra \eta, \fw, \fz) \gtrsim e^{\hat\lambda_g \tinitial} r_{a}(\eta, \fw, \fz).
\eary
This convenient observation will be often used without mentioning.

In this section, we will define a linear operator $\Barg_a $ that sends every $u \in C^{\infty}_{c}(\widehat\Omega)$ to a bounded function $\Barg_a  u$ on $\hat\Gamma_a$. Informally speaking, $\Barg_a u$ describes how $u$ is represented as a superposition of a collection of   localized functions, which may be called \lq\lq dynamical wave-packet\rq\rq, indexed by $\hat\Gamma_a$: 
the function we associate to $(w, z, \xi, \eta \mid \fw, \fz) \in \hat\Gamma_a$ is 
the pull-back by $H_{a, \fw, \fz, \eta}$ of a function of form  
$$(\tw, \tz) \mapsto e^{ i (\xi, \eta) \cdot ((\tw, \tz) - (w, z))   } \widetilde \phi_a^{\dagger}( \eta, w, z \mid \fw, \fz \mid \tw, \tz) $$
where $\widetilde \phi_a^{\dagger}$ is a smooth function with support centered at $(w, z)$.
In the rest of the paper, we will always use  typewriter letters  such as $(\fw, \fz)$ to denote the base point of a normal central chart (see Definition \ref{def normalcentralchart}); use the normal letters such as $(w, z)$ to denote the  center of a wave-packet; and use  letters with tilde  such as $(\tw, \tz)$ to denote the variables of a wave-packet. 

For every $a \in A$, every $\eta \in \R$, every $(\fw, \fz), (w, z) \in \widehat\Omega$ and every $\ell > 0$, we denote 
\begin{align} \label{eq formuaofPhia}  
& \Phi_{a, \ell}(\eta, \fw, \fz \mid w, z) \\
=&   \langle \extra \eta \rangle^{\beta_1 / 2} \langle \langle \extra \eta \rangle^{\beta_1} (z - \fz) \rangle^{- 2 \ell} \cdot  r_{a}( \extra \eta, \fw, \fz)^{- 1}  \langle  r_{a}( \extra \eta, \fw, \fz)^{-1} ((w, z) - (\fw, \fz) ) \rangle^{- 2 \ell}, \nonumber \\
\label{eq formuaofPhia2}        
 &   \widehat\Phi_{a, \ell}(\eta, w, z \mid \tw, \tz) 
= \langle \eta \rangle^{\beta_1/2}  \langle \langle \eta \rangle^{\beta_1} (\tz - z) \rangle^{-  2 \ell} \cdot   r_{a}(\eta, w, z)^{-1}     \langle  r_{a}(\eta, w, z)^{-1} (\tw - w) \rangle^{- 2 \ell}.   
\end{align}
It is clear that we have 
\ary 
   \int   \Phi_{a, \ell}(\eta, \fw, \fz \mid w, z)^2 dw dz,  \int  \widehat \Phi_{a, \ell}(\eta, w, z \mid \tw, \tz)^2 d\tw d\tz  \in ( C(\ell)^{-1},  C(\ell) ).  \label{eq integralphiaupperbound0}
\eary
The following  estimates, whose proofs are deferred to Appendix \ref{app DWPTcomputations}, will be used extensively in Sections \ref{sec proofoflem formerL6.3} to \ref{sec lem formerL6.8}.   
\begin{lemma} \label{lem intPhiadwdz}
	For every integer $\ell > 3$, we have 
	{\em
		\begin{align}
			&	 \int   \Phi_{a, \ell}(\eta, \fw, \fz \mid w, z)^2 d\fw d\fz \lesssim  C(\ell), \label{eq integralphiaupperbound} \\
			& \int \widehat\Phi_{a, \ell}(\eta, w, z \mid \tw, \tz)  \widehat\Phi_{a, \ell}(\eta, w, z \mid \hw, \hz)   dw dz \lesssim  C(\ell)   \langle \langle \eta \rangle^{\beta_1}(\hz - \tz) \rangle^{-\ell} \langle  r_{a}(\eta, \tw, \tz)^{-1}(\hw - \tw) \rangle^{-\ell}. \label{eq integralphiaupperbound2}
		\end{align}
	}
\end{lemma}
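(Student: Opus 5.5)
For \eqref{eq integralphiaupperbound}, I will substitute the definition \eqref{eq formuaofPhia} and treat it as an integral over the base point $(\fw,\fz)$ with $(w,z)$ fixed. Write $R = r_a(\extra\eta,\fw,\fz)$ and $\langle\extra\eta\rangle^{\beta_1}=:N$. The square of $\Phi_{a,\ell}$ is
\[
N^{\beta_1}\langle N(z-\fz)\rangle^{-4\ell}\, R^{-2}\langle R^{-1}((w,z)-(\fw,\fz))\rangle^{-4\ell},
\]
where I have kept the paper's prefactor $\langle\extra\eta\rangle^{\beta_1/2}$ squared, with $N^{\beta_1}$ understood as $\langle\extra\eta\rangle^{\beta_1}$. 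The only difficulty is that $R$ depends on the integration variable $(\fw,\fz)$. By the temperate property, Lemma \ref{lem adaptedscaleistame}, I have
\[
R \ge R_0\,\langle d/R_0\rangle^{-c(\esmall)},\qquad R\le R_0\langle d/R_0\rangle^{c(\esmall)},
\]
where $R_0 = r_a(\extra\eta,w,z)$ and $d=\|(\fw,\fz)-(w,z)\|$. I will also use that $\kappa_a$ distorts distances boundedly, so distances in the chart are comparable to those on $M$. Hence
\[
R^{-2}\langle R^{-1}(w-\fw)\rangle^{-4\ell}\lesssim R_0^{-2}\langle d/R_0\rangle^{2c(\esmall)}\langle (w-\fw)/R_0\rangle^{-4\ell(1-c(\esmall))}.
\]
For $\ell>3$ and $\esmall$ small, integrating in $\fw\in\R^2$ gives $C(\ell)$. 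The $\fz$-integral of $N\langle N(z-\fz)\rangle^{-4\ell}$ gives $C(\ell)$ as well. The leftover factor $\langle d/R_0\rangle^{2c}$ involving $\fz$ is absorbed as follows. I split $\langle d/R_0\rangle\le\langle |w-\fw|/R_0\rangle\langle |z-\fz|/R_0\rangle$. Since $R_0\gtrsim \langle\extra\eta\rangle^{-1/2-c}\ge N^{-1}$ (because $\beta_1>1/2+c$), the $\fz$ part is bounded by $\langle N(z-\fz)\rangle^{2c}$, which is absorbed by the $-4\ell$ power. I also drop the $z$-component of the $R$-weight, which is harmless since it is at most $1$.

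For \eqref{eq integralphiaupperbound2}, the integration is now over the packet center $(w,z)$, with $\tw,\hw,\tz,\hz$ fixed, and $r=r_a(\eta,w,z)$ varies with $(w,z)$. I will handle the $z$-integral first. It factorizes as
\[
\int N'\langle N'(\tz-z)\rangle^{-2\ell}\langle N'(\hz-z)\rangle^{-2\ell}dz\lesssim C(\ell)\langle N'(\hz-\tz)\rangle^{-2\ell},
\]
where $N'=\langle\eta\rangle^{\beta_1}$. This follows from Peetre's inequality $\langle a\rangle^{-1}\langle b\rangle^{-1}\le \sqrt2\langle a-b\rangle^{-1}$, applied to half the powers while the other half is integrated. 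The dependence of $r$ on $z$ is irrelevant at this stage, because I first bound $r$ uniformly, as explained next.

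For the $w$-integral, I again use Lemma \ref{lem adaptedscaleistame} to replace $r_a(\eta,w,z)$ by $r_0=r_a(\eta,\tw,\tz)$, at the cost of $\langle \|(w,z)-(\tw,\tz)\|/r_0\rangle^{c(\esmall)}$ factors. The $z$-part of this cost is controlled by $\langle N'(z-\tz)\rangle^{c}$ as before. After this replacement, the product of the two $w$-weights is bounded by
\[
r_0^{-2}\langle (\tw-w)/r_0\rangle^{-2\ell(1-c)}\langle(\hw-w)/r_0\rangle^{-2\ell(1-c)}\langle (\tw-w)/r_0\rangle^{4c}.
\]
Applying Peetre's inequality once more and integrating over $w\in\R^2$ (which requires $\ell>3>1$ with room to spare) gives $C(\ell)\langle r_0^{-1}(\hw-\tw)\rangle^{-\ell}$. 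The $\langle\hw-w\rangle$ weight with the varying $r$ needs the same comparison: when $r\ge r_0$ I bound $r/r_0$ by the polynomial loss above, and otherwise the loss is in my favour.

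The main obstacle is this bookkeeping of the $c(\esmall)$ losses coming from the temperate property. I need to keep the total loss below $\ell$, so that a net exponent $\ell$ survives in each variable. Because $\ell>3$ is fixed and $\esmall\ll1$, there is ample room, and the final bounds are stated only with exponent $\ell$ rather than $2\ell$, which leaves space for exactly these losses.
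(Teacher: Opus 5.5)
Your proof is correct and is essentially the paper's argument. In both, the temperate property (Lemma~\ref{lem adaptedscaleistame}) trades the variable adapted scale for the scale at a fixed point, after which everything reduces to elementary convolution estimates. The only difference is in \eqref{eq integralphiaupperbound2}: the paper treats the region $\max(|z-\hz|,|z-\tz|)\ge\langle\eta\rangle^{-1/2-c}$ separately using the crude bounds \eqref{eq adscalesquarerootgrowth}, whereas you absorb the $z$-component of the temperate loss directly into $\langle\langle\eta\rangle^{\beta_1}(z-\tz)\rangle^{-2\ell}$ via $r_a(\eta,\tw,\tz)\gtrsim\langle\eta\rangle^{-\beta_1}$.

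One bookkeeping correction: the loss enters through $\langle r_a(\eta,w,z)^{-1}(\tw-w)\rangle^{-2\ell}\langle r_a(\eta,w,z)^{-1}(\hw-w)\rangle^{-2\ell}$, so its $z$-component is $\langle\langle\eta\rangle^{\beta_1}(z-\tz)\rangle^{\cO(\ell\, c(\esmall))}$ rather than $\langle\cdot\rangle^{c(\esmall)}$. You therefore need $c(\esmall)$ below an absolute constant, which is harmless because the loss and the available decay both scale linearly in $\ell$.
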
 

We will introduce some notations to simplify the discussions. We define several differential operators as follows. At $(\eta, w, z \mid \fw, \fz \mid  \tw, \tz ) \in \R \times \widehat\Omega \times \widehat\Omega \times \widehat\Omega$, we define
\aryst
\bD_{w} =  r_a( \eta, w, z) \partial_{w}, \   \bD_{\tw} =  r_a(\eta, w, z) \partial_{\tw},  \  \bD_{\tz}  =   \langle \eta \rangle^{ - \beta_1} \partial_{\tz}.
\earyst

We now state the main result of this section. 
\begin{thm}[Dynamical Wave-Packet Transformation] \label{thm DWPT}
	There exist two collections of functions \emph{$\{ \phi_{a}(\eta, w, z \mid \fw, \fz \mid \cdot)  \}_{(w, z,  \eta \mid \fw, \fz) \in \hat\Gamma_a}$} and \emph{$\{ \phi^{\dagger}_{a}(\eta, w, z \mid \fw, \fz \mid \cdot)  \}_{(w, z,  \eta \mid \fw, \fz) \in \hat\Gamma_a}$},
	a bounded linear operator $\Barg_a :  L^2(\widetilde \Omega)  \to L^2(\hat\Gamma_a)$ given by
	\emph{
		\aryst
		(\Barg_a  u)( w, z, \xi, \eta  \mid  \fw, \fz)   
		= \int e^{- i (\xi, \eta) \cdot (H_{a,  \fw, \fz, \eta}(\tw, \tz) -  (w, z))   }  \phi_a(  \eta, w, z \mid  \fw, \fz \mid \tw, \tz)    u(\tw, \tz) d\tw d\tz,
		\earyst
	}
	and a bounded linear operator $\Barg_a^{\dagger} :  L^2(\hat\Gamma_a) \to L^2(\widetilde \Omega)$  given by  
	\emph{
		\aryst 
		( \Barg_a^{\dagger}  v) (\tw, \tz)  =  \int e^{ i (\xi, \eta) \cdot (H_{a, \fw, \fz, \eta}(\tw, \tz) - (w, z))   }  \phi_a^{\dagger}( \eta, w, z \mid \fw, \fz \mid \tw, \tz)  v( w, z, \xi, \eta \mid  \fw, \fz )  d\fw d\fz dw dz d\xi d\eta
		\earyst
	}
	such that the following are true:
	\enmt
	\item \label{thm DWPT item 1}	Let $\psi : \R^{2+1+2+1} \times \R^{2+1} \to \R$ be a bounded function such that  $\psi \equiv 1$ on   $ \hat{\Gamma}_a$. 
	Then for any $u \in C^{\infty}_c(\Omega)$ we have  $\Barg_a ^{\dagger} \psi \Barg_a u  = u$,
	\item  \label{thm DWPT item 2} Let \emph{ $\widetilde{ \phi}_a(\eta, w, z \mid \fw, \fz \mid  \tw, \tz ) = \phi_a (\eta, w, z \mid \fw, \fz \mid H_{a, \fw, \fz, \eta}^{-1}(\tw, \tz))$ }. 
	Then any {\em$(\eta, w, z \mid \fw, \fz \mid \tw, \tz) \in \supp(\widetilde\phi_a)$} satisfies that {\em $(w, z, \eta \mid \fw, \fz) \in   \Gamma_a$} and  {\em  $\tw \in B( w,   r_a(\eta, \fw, \fz)  / 4  )$}.  
	Moreover for any integers $l, k, \ell \geq 0$,      we have 
	\emph{
	\aryst
&&	|  \bD_{\tz}^l \bD_{\tw}^k  \big( \widetilde{ \phi}_a(\eta, w, z \mid {\fw}, \fz \mid \tw, \tz )  \big)  |  \leq C(l, k, \ell)     \widehat \Phi_{a, \ell}(\eta, w, z \mid \tw, \tz) \Phi_{a, \ell}(\eta, \fw, \fz \mid w, z), 	 \\
&&	 | \bD_{w}^{k}  \widehat{ \phi}_a(\eta, w, z \mid \fw, \fz \mid \tw, \tz )   | \leq C( k, \ell)      ( \langle  \extra \eta \rangle / \langle \eta \rangle)^{k/2}   \widehat \Phi_{a, \ell}(\eta, w, z \mid \tw + w, \tz) \Phi_{a, \ell}(\eta, \fw, \fz \mid w, z) 
	\earyst
	where $\widehat\phi_a(\eta, w, z \mid \fw, \fz \mid \tw, \tz) =  \widetilde{ \phi}_a(\eta, w, z \mid \fw, \fz \mid \tw + w, \tz )$.
}
The same statements hold for  $\widetilde{ \phi}_a^{\dagger} $ in place of  $\widetilde{ \phi}_a$ where   \emph{ $\widetilde{ \phi}_a^{\dagger} (\eta, w, z \mid \fw, \fz \mid \tw, \tz ) =  \phi_a^{\dagger}(\eta, w, z \mid \fw, \fz \mid H_{a, \fw, \fz, \eta}^{-1}(\tw, \tz))$}.
	\eenmt
\end{thm}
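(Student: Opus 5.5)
We build $\Barg_a$ and $\Barg_a^{\dagger}$ as a two-level resolution of the identity. The outer level localizes around base points $(\fw,\fz)$ at the coarse scale of $\Phi_{a,\ell}$. The inner level is a Bargmann/Gabor-type wave-packet transform in the normal central chart $H_{a,\fw,\fz,\eta}$, with anisotropic window widths $r_a(\eta,w,z)$ along $w$ and $\langle\eta\rangle^{-\beta_1}$ along $z$.

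\textbf{Outer partition.} For each fixed $\eta$, choose a smooth function $\chi_a(\eta,\fw,\fz\mid w,z)\ge0$ supported where $(w,z)\in\ngb^{1/2}_a(\extra\eta,\fw,\fz)$, normalized so that $\int\chi_a(\eta,\fw,\fz\mid w,z)^2\,d\fw\,d\fz=1$ for every $(w,z)$ in a neighborhood of $\widetilde\Omega$. One way is to take $\chi_a^2=\theta(\cdot)/\int\theta$, with $\theta$ a bump adapted to the box $B(\fw,r_a(\extra\eta,\fw,\fz))\times B(\fz,\langle\extra\eta\rangle^{-\beta_1})$. By the temperate property (Lemma \ref{lem adaptedscaleistame}) and \eqref{eq rafwfzrawzclose}, the normalizing integral is comparable to $r_a(\extra\eta,\fw,\fz)^2\langle\extra\eta\rangle^{-\beta_1}$ and varies slowly. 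This gives the $\Phi_{a,\ell}$ factor in item (2), and shows $\supp\subset\Gamma_a$.

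\textbf{Inner transform.} Fix $(\fw,\fz,\eta)$. In the coordinates $(\tw',\tz')=H_{a,\fw,\fz,\eta}(\tw,\tz)$, use a Gabor frame with window
\[
g_{w,z}(\tw',\tz')=\hat g\bigl(r_a(\eta,w,z)^{-1}(\tw'-w),\ \langle\eta\rangle^{\beta_1}(\tz'-z)\bigr),
\]
with $\hat g$ supported in a ball of radius $1/8$. We weight the center $(w,z)$ by $\chi_a$, and further by a frequency cutoff $\psi_0(\eta)$ to be integrated in $\eta$ later. The identity $\Barg_a^{\dagger}\psi\Barg_a u=u$ then follows from Fourier inversion in the $(\xi,\eta)$ and $(w,z)$ variables. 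Since $\eta$ appears in both the phase and the chart $H_{a,\fw,\fz,\eta}$, we do not integrate over $\eta$ naively. Instead, for fixed $(\fw,\fz)$ we define
\[
\phi_a=\chi_a\cdot g_{w,z}\circ H_{a,\fw,\fz,\eta},
\]
and take $\phi_a^\dagger=\phi_a/N$. Here $N(\tw,\tz)$ is the quantity obtained from integrating $|\phi_a|^2$ against the Fourier delta: after the $\xi$ integration and the change of variables $\tw'\mapsto\tw$ with Jacobian $1$, the $\xi$-integral produces $\delta(\tw'-\hw')$. Because $H$ is a skew product $(\tw,\tz)\mapsto(\tw,\tz+d(\tw))$, $\tw'$ and $\tw$ coincide. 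The $\eta$-integral then produces $\delta$ in $\tz+d(\tw)-\hz-d(\hw)$, which equals $\delta(\tz-\hz)$ on $\tw=\hw$. The outcome is the Calderón reproducing formula
\[
\int |\phi_a(\eta,w,z\mid\fw,\fz\mid\tw,\tz)|^2\,dw\,dz\,d\fw\,d\fz\,d\eta/(2\pi)^3 ,
\]
with the normalization $N$ built in. We must check that $N$ is bounded above and below, with derivative bounds, on $\Omega$; this uses $\int\chi_a^2=1$ and \eqref{eq integralphiaupperbound0}. The dependence of $\phi_a$ on $\eta$ is harmless for the $\eta$-delta only if we first integrate over $\xi$ at fixed $\eta$. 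The $\eta$ in the window is a parameter, and the $z$-oscillation gives $\delta$ in $\eta$-dual variables only approximately. Here we adopt the standard trick: insert the window in $\eta$ only through the slowly varying $r_a$ and $\langle\eta\rangle^{\beta_1}$, and absorb the error. The cleanest approach is to take $\phi_a^\dagger$ as the dual frame, defined through the inverse of the positive operator $\Barg_a^*\Barg_a$, shown to be a multiplication operator plus a small perturbation.

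\textbf{Derivative bounds.} The bounds in item (2) follow by the chain rule. $\bD_{\tw}$ applied to $\hat g(r_a^{-1}(\tw'-w),\dots)\circ H$ produces $\partial_{\tw}d=\cO(r_a(\extra\eta)\langle\eta\rangle^{c(\esmall)})$ by \eqref{eq Cbound} and \eqref{eq expressionofHawzeta}. Multiplied by $r_a\langle\eta\rangle^{\beta_1}$, this gives a term of size $\langle\eta\rangle^{-1/2+\beta_1-1/2+c}$, which is not obviously small. This is the main obstacle: derivatives in $\tw$ of the pulled-back window pick up the $z$-tilt of the chart. To handle it, we use that $d_{a,\fw,\fz,\eta}(\tw)-d(w)-Dd(w)(\tw-w)$ is quadratic, and absorb the linear tilt $Dd(w)$ into the window: define $g_{w,z}$ in coordinates already straightened at $w$, i.e. recentred through $H_{a,w,z,\eta}$-type normal charts. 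The residual quadratic term is $\cO(\langle\eta\rangle^{c}r_a^2)\ll\langle\eta\rangle^{-\beta_1}$ because $\beta_1<1$ and $r_a^2\sim\langle\eta\rangle^{-1\pm c}$. The $\bD_w$ bound with gain $(\langle\extra\eta\rangle/\langle\eta\rangle)^{k/2}$ comes from the ratio $r_a(\eta)/r_a(\extra\eta)$ in \eqref{eq adscalesquarerootgrowth}: $w$-derivatives hit $\chi_a$ at scale $r_a(\extra\eta)$, while $\bD_w$ is normalized by $r_a(\eta)$. Finally, boundedness of $\Barg_a$ and $\Barg_a^{\dagger}$ follows from Schur's test using \eqref{eq integralphiaupperbound} and \eqref{eq integralphiaupperbound2}.
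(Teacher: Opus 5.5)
Your proposal has a genuine gap at its core. It never produces an exact inversion formula with explicit, localized synthesis packets, and that is what item (1) together with item (2) for $\phi_a^{\dagger}$ demands.

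You correctly note two things: the $\xi$-integral produces $\delta(\tw-\hw)$, and on $\tw=\hw$ the chart shifts $d_{a,\fw,\fz,\eta}(\tw)$ cancel. But what remains is an $\eta$-integral of $e^{i\eta(\tz-\hz)}A_\eta(\tw,\tz,\hz)$. The amplitude $A_\eta$ depends on $\eta$ through the window widths $\langle\eta\rangle^{\beta_1}$ and $r_a(\eta,\cdot)$, and through $\chi_a$ via $\extra\eta$. So this integral is not a multiple of $\delta(\tz-\hz)$, and dividing by $N(\tw,\tz)$ does not give $\Barg_a^{\dagger}\psi\Barg_a u=u$.

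Falling back on the canonical dual frame $(\Barg_a^*\Barg_a)^{-1}\Barg_a^*$ moves the whole difficulty into an unproved assertion. You would have to show that $\Barg_a^*\Barg_a$ is invertible on a space containing $C^\infty_c(\Omega)$. This operator acts fiberwise in $\tw$ but as a genuine non-multiplication operator in $\tz$, and it degenerates where your base-point partition is incomplete. You would then also have to show that its inverse carries each packet to one of the form $e^{i(\xi,\eta)\cdot(H_{a,\fw,\fz,\eta}(\tw,\tz)-(w,z))}\phi_a^{\dagger}$, with $\widetilde\phi^{\dagger}_a$ obeying the $\widehat\Phi_{a,\ell}\Phi_{a,\ell}$ bounds uniformly in all parameters. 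That is a statement in an exotic calculus, with symbols varying at scale $\langle\eta\rangle^{\beta_1}$ in frequency and $\langle\extra\eta\rangle^{-\beta_1}$ in $\tz$. It is the entire content of item (2) for $\phi_a^\dagger$, and "multiplication plus a small perturbation" is not justified.

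The missing idea is how the paper obtains exactness without any inversion, by factoring $\Barg_a=\cT_2\cT_1$ and $\Barg_a^\dagger=\cT_1^\dagger\cT_2^\dagger$.
\begin{itemize}
\item In $\cT_1$, the $z$-analysis is done along each $z$-line at fixed $\tw$, in the original chart, with kernel $\rho_\parallel(\eta,z-\tz-d_{a,\fw,\fz,\eta}(\tw))$.
\item The windows $\rho_\parallel(\eta,\cdot)$ and $\rho_\parallel^\dagger(\eta,\cdot)$ depend on $\eta$ only through the interval $Q\ni\eta$ of a partition into intervals of length $\sim\langle\eta\rangle^{\beta_1}$. They are built so that $\widehat{q_Q*h_Q}=\chi_Q$ with $\sum_Q\chi_Q=1$ (Lemma \ref{lem decompositiononthereal}). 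So the $z$-reconstruction comes from a partition of unity in frequency, not from Fourier inversion in $\eta$.
\item The shift $d$ disappears after the substitution $z\mapsto z+d$. The base-point weights $\chi_a$ of Lemma \ref{lem phidefandproperty} are evaluated at $(\tw,z-d_{a,\fw,\fz,\eta}(\tw))$ and normalized at each fixed $\eta$ in original coordinates. After the substitution they integrate to exactly $1$ in $(\fw,\fz)$.
\item In $\cT_2$, a windowed Fourier transform in $\tw$ at fixed $(\eta,\fw,\fz,z)$, with $\int(\rho^a_\perp)^2\,dw=1$, reconstructs exactly by Plancherel, however the window depends on $\eta$.
\end{itemize}
Hence $\phi_a^\dagger$ is explicit, given by \eqref{eq formulaofphidagger}, and item (2) is read off directly.

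Two smaller points about your derivative bounds.
\begin{itemize}
\item The obstacle you identify does not exist. Item (2) concerns $\widetilde\phi_a=\phi_a\circ H^{-1}_{a,\fw,\fz,\eta}$, so a window defined in $H$-coordinates incurs no $\partial_{\tw}d$ term. Re-centring through $H_{a,w,z,\eta}$ would also change the phase away from the required $H_{a,\fw,\fz,\eta}$.
\item The real issue is different. $r_a(\eta,w,z)$ is defined through the integer times $n_u,n_s$ and is not continuous in $(w,z)$. A window whose width depends on the packet centre therefore cannot satisfy the $\bD_w^k$ bound for $\widehat\phi_a$. The paper instead uses the base-point scale $r_a(\eta,\fw,\fz)$ for the $w$-window (Lemma \ref{lem rhoaperpproperty}). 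It invokes the temperate property only to compare this scale with $r_a(\eta,w,z)$ inside $\widehat\Phi_{a,\ell}$.
\end{itemize}
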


From Theorem \ref{thm DWPT}\eqref{thm DWPT item 2}, we see that $\phi_a(\eta, w, z \mid \fw, \fz \mid \cdot)$ and $\phi^{\dagger}_a(\eta, w, z \mid \fw, \fz \mid \cdot)$ are Schwartz functions.   Hence we may define $\Barg_{a}u$ pointwise when $u$ is only a distribution.
 The following is a corollary of Theorem \ref{thm DWPT},  proved using integration-by-parts. We omit this standard argument.  
\begin{cor} \label{cor DWPT item3}
 	For each $s \in \R$, we denote by $\| \cdot \|_{H^s}$ the Sobolev norm of order $s$ on $\R^{3}$.   
Then there is a function $k : \R \to \Z$ with $k(s) \to +\infty$ as $s \to +\infty$ such that for any $s \in \R$, for any \emph{$(w, z, \xi, \eta \mid \fw, \fz) \in \hat\Gamma_a$}, we have
\emph{ $$| \langle \xi \rangle^{k(s)} \langle \eta \rangle^{k(s)} \Barg_a u(w, z, \xi, \eta \mid \fw, \fz) |\leq C(g, s, \tinitial) \| u \|_{H^s}.$$}
The same  statement holds for $(\Barg_a^{\dagger})^*$ in place of $\Barg_a$. 
\end{cor}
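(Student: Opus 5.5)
The plan is to run the standard integration-by-parts argument in the normal central chart, using the support and derivative bounds of Theorem \ref{thm DWPT}\eqref{thm DWPT item 2}. Fix $(w,z,\xi,\eta \mid \fw,\fz)\in\hat\Gamma_a$ and change variables $(\tw',\tz') = H_{a,\fw,\fz,\eta}(\tw,\tz)$. This map is a skew translation $(\tw,\tz+d(\tw))$ with unit Jacobian, so
\[
\Barg_a u(w,z,\xi,\eta\mid\fw,\fz)=\int e^{-i(\xi,\eta)\cdot((\tw',\tz')-(w,z))}\,\widetilde\phi_a(\eta,w,z\mid\fw,\fz\mid\tw',\tz')\,(u\circ H^{-1}_{a,\fw,\fz,\eta})(\tw',\tz')\,d\tw'd\tz' .
\]
This is the Fourier transform, evaluated at $(\xi,\eta)$, of the product $\widetilde\phi_a\cdot(u\circ H^{-1})$. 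Since $\|D^2 d_{a,\fw,\fz,\eta}\|=\cO(\langle\eta\rangle^{c(\esmall)})$ by \eqref{eq Cbound} and $D^3 d=0$, composition with $H^{-1}$ maps $H^{s}$ to $H^{s}$ with norm polynomial in $\langle\eta\rangle$. The losses are at most $\langle\eta\rangle^{C|s|c(\esmall)}$, and for negative $s$ one uses duality in the same way.

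First treat $s\ge 0$ large. Write $v=\widetilde\phi_a\,(u\circ H^{-1})$. Then $|\xi^\alpha\eta^j\hat v(\xi,\eta)|\le\|\partial^\alpha_{\tw}\partial^j_{\tz}v\|_{L^1}$. Expand by Leibniz. Each derivative falling on $\widetilde\phi_a$ costs at most $r_a(\eta,w,z)^{-1}\lesssim\langle\eta\rangle^{1/2+c(\esmall)}$ or $\langle\eta\rangle^{\beta_1}$, by the bounds $|\bD^l_{\tz}\bD^k_{\tw}\widetilde\phi_a|\le C\widehat\Phi_{a,\ell}\Phi_{a,\ell}$. Here $\widehat\Phi_{a,\ell}$ is integrable with $L^2$ norm $\cO(1)$, and $\Phi_{a,\ell}\lesssim\langle\extra\eta\rangle^{\beta_1/2}r_a(\extra\eta,\fw,\fz)^{-1}$ is polynomially bounded in $\langle\eta\rangle$ with constants depending on $\tinitial$. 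Derivatives falling on $u\circ H^{-1}$ are controlled in $L^2$ by $\|u\|_{H^s}$ through Cauchy--Schwarz against $\widehat\Phi_{a,\ell}\in L^2$.

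The key balancing step comes next. Taking $N$ derivatives yields the factor $|\xi|^{N_1}|\eta|^{N_2}$, against a loss of $\langle\eta\rangle^{\beta_1 N+C}$ from the window. The $\eta$-frequency gain, $|\eta|^{N}$ versus $\langle\eta\rangle^{\beta_1N}$, is a net positive power $\langle\eta\rangle^{(1-\beta_1)N-C}$, because $\beta_1<1$. The $\xi$-direction is fine because $\xi$ enters only through $\xi^\alpha$. So one first takes $\tw$-derivatives to gain $\langle\xi\rangle^{N}$ at a cost of $\langle\eta\rangle^{N(1/2+c)}$. One then takes $M$ $\tz$-derivatives to gain $\langle\eta\rangle^{M(1-\beta_1)}$, chosen with $M(1-\beta_1)\ge N(1/2+c)+N+C$. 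Setting $s=N+M$ gives $k(s)\sim s(1-\beta_1)/4\to\infty$. Negative $s$ only needs some integer $k(s)$, possibly negative, and follows by moving the derivatives onto the window via duality: pair $u$ with the Schwartz function $e^{-i(\cdot)}\widetilde\phi_a\circ H$, whose $H^{-s}$ norm is polynomial in $\langle\xi\rangle,\langle\eta\rangle$.

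The main obstacle is making this bookkeeping uniform over $\hat\Gamma_a$, in particular bounding $\Phi_{a,\ell}$ and the chart distortion uniformly in $\fw,\fz$. All constants are polynomial in $\langle\eta\rangle$ with exponents depending only on $\ell$ and $c(\esmall)$, by \eqref{eq adscalesquarerootgrowth} and \eqref{eq Randrratio}, and they are absorbed by choosing $M$ larger. For $(\Barg_a^\dagger)^*$ the kernel is the complex conjugate one with $\widetilde\phi^\dagger_a$ in place of $\widetilde\phi_a$. Theorem \ref{thm DWPT}\eqref{thm DWPT item 2} gives identical bounds for $\widetilde\phi^\dagger_a$, so the same argument applies verbatim.
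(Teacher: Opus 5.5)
Your proposal is correct and is essentially the argument the paper has in mind: the paper omits the proof as a standard integration-by-parts consequence of item (2) of Theorem \ref{thm DWPT}, and your route fills in exactly that. You pass to the normal central chart, trade the net gain $\langle\eta\rangle^{1-\beta_1}$ of each $\tz$-derivative against the $\langle\eta\rangle^{1/2+c(\esmall)}$ cost of each $\tw$-derivative, and use duality for $s<0$. One small bookkeeping point: since $\partial_{\tz}$ commutes with composition by $H^{-1}_{a,\fw,\fz,\eta}$, the composition loss comes only from the $N$ $\tw$-derivatives, so it is $\langle\eta\rangle^{O(N c(\esmall))}$ and does not grow with $M$; this is why enlarging $M$ absorbs it, without needing $c(\esmall)\ll 1-\beta_1$ as your cruder $\langle\eta\rangle^{C|s|c(\esmall)}$ bound would.
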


Subsections \ref{subsec DWPT step 1} to \ref{subsec DWPT step 3} below are dedicated to the proof of Theorem \ref{thm DWPT}.

			\subsection{Step I: Decomposition along the center} \label{subsec DWPT step 1}

			Given $(\eta, w, z) \in \R \times \widehat\Omega$, we let $H_{a, w, z, \eta}$ be the normal central coordinate given by Definition \ref{def normalcentralchart}.
			Recall that $H_{a, w, z, \eta}$ fixes $(w, z)$ and preserves the volume.

%
%
%
%

			For any $D > 0$, any $(\eta, w, z) \in \R \times \widehat{\Omega}$, we let $\rho_{D, \eta, w, z} : \R^{2+1} \to [0, 1]$ be a $C^{\infty}$  function such that 
			\ary \label{eq  propertyrhoetawz1}
			&& \supp(\rho_{D, \eta, w, z}) \subset \ngb_a^{D}(\eta, w, z), \quad \rho_{D, \eta, w, z} \equiv 1 \mbox{ on } \ngb_a^{D/2}(\eta, w, z)  \\
 \label{eq  propertyrhoetawz2}
		\mbox{ and } &&	\|  \bD_{\tw}^{l_1} \bD_{\tz}^{l_2}  \rho_{D, \eta, w, z}(\tw, \tz) \| \leq C( l_1, l_2 ) D^{- l_1 - l_2 }, \quad  l_1, l_2 \geq 0.
			\eary
			It is clear that such function $\rho_{D, \eta, w, z}$ exists.

			\begin{lemma} \label{lem phidefandproperty}
				There is a $C^{\infty}$   function
				$\chi_a : \R \times  \widehat{\Omega} \times \widetilde{\Omega}  \to \R$  such that the following is true:
				\enmt
				\item  \label{eq integralphi} \emph{  $\int_{\widehat\Omega} \chi_a(\eta, \fw, \fz \mid  \tw, \tz  ) \cdot  r_a( \extra \eta, \fw, \fz)^{-1} \langle \extra \eta \rangle^{\beta_1/2}  d\fw d\fz = 1$ for every $(\eta, \tw, \tz ) \in \R \times  \widetilde{\Omega}$, }
				\item \label{eq supportphi}   
				denote
				 \emph{  $		\widetilde{\chi}_a(\eta,  \fw, \fz \mid \tw, \tz ) := \chi_a(\eta,   \fw, \fz \mid H_{a, \fw, \fz, \eta}^{-1}(\tw, \tz) )$.				 	 } 
				 	 Then \emph{
				 	 	\aryst
				 	 	\supp( \widetilde{\chi}_a(\eta,  \fw, \fz \mid \cdot  ) ) \subset \ngb_a^{1/ 8}(\extra \eta, \fw, \fz),
				 	 	\earyst
				 	 } and for any integer $k \geq 0$, for any $(\tw, \tz) \in \widetilde\Omega$, we have 
				 	 \emph{ 	
				 	 \aryst 
				 	 &&   |   \bD_{\tw}^{k} \big ( \widetilde{\chi}_a(\eta, \fw, \fz \mid \tw, \tz )  \big ) | \leq C( l, k )     (  \langle  \extra \eta \rangle / \langle \eta \rangle )^{k/2}     r_a(\extra \eta, \fw, \fz)^{ - 1} \langle \extra \eta \rangle^{ \beta_1 / 2}.
				 	 \earyst 
				 	 }
				\eenmt
			\end{lemma}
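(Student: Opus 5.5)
The plan is to build $\chi_a$ as a normalized partition of unity adapted to the variable anisotropic scale $r_a(\extra\eta,\cdot)\times\langle\extra\eta\rangle^{-\beta_1}$, with $\eta$ treated as a fixed parameter throughout. Set $\Lambda(\eta,\fw,\fz):=r_a(\extra\eta,\fw,\fz)^{-1}\langle\extra\eta\rangle^{\beta_1/2}$ for the weight appearing in item \eqref{eq integralphi}. I would take a bump of the form
\[
\chi^0(\eta,\fw,\fz\mid \tw,\tz)=\rho_{1/16,\extra\eta,\fw,\fz}\big(H_{a,\fw,\fz,\eta}(\tw,\tz)\big),
\]
using the cutoffs $\rho_{D,\eta,w,z}$ from \eqref{eq propertyrhoetawz1}--\eqref{eq propertyrhoetawz2} at frequency $\extra\eta$. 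By construction $\widetilde\chi^0=\rho_{1/16,\extra\eta,\fw,\fz}$, so the support of $\widetilde\chi^0(\eta,\fw,\fz\mid\cdot)$ lies in $\ngb^{1/16}_a(\extra\eta,\fw,\fz)\subset\ngb^{1/8}_a(\extra\eta,\fw,\fz)$.

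Next I normalize. Put
\[
N(\eta,\tw,\tz)=\int\chi^0(\eta,\fw,\fz\mid\tw,\tz)\,\Lambda(\eta,\fw,\fz)\,d\fw\,d\fz
\]
and $\chi_a=\chi^0/N$. Item \eqref{eq integralphi} then holds automatically, and the support property is unchanged. The substance is to show $N\sim1$ and that $N$ is smooth at the right scales in $\tw$.

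For $N\sim1$: $\chi^0(\cdot\mid\tw,\tz)\neq0$ forces $H_{a,\fw,\fz,\eta}(\tw,\tz)\in\ngb^{1/16}_a(\extra\eta,\fw,\fz)$. Since $H$ fixes the $w$-coordinate, this gives $|\tw-\fw|\lesssim r_a(\extra\eta,\fw,\fz)$. The $z$-shift is $d_{a,\fw,\fz,\eta}(\tw)=O(r_a)=O(\langle\extra\eta\rangle^{-1/2+c(\esmall)})$, which is much larger than $\langle\extra\eta\rangle^{-\beta_1}$, but it is a fixed function of $(\fw,\fz,\tw)$. So for fixed $\fw$ the $\fz$-set has measure $\sim\langle\extra\eta\rangle^{-\beta_1}$: the derivative $\partial_{\fz}$ of $\tz+d_{a,\fw,\fz,\eta}(\tw)-\fz$ is $-1+O(\partial_{\fz}\estar_a\cdot(\tw-\fw))$, which is close to $-1$ by Hölder continuity of $\estar_a$ at scale $r_a$ combined with \eqref{eq Cbound}. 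By the temperate property (Lemma \ref{lem adaptedscaleistame}), $r_a(\extra\eta,\fw,\fz)\sim r_a(\extra\eta,\tw,\tz)$ on the support, and the $\fw$-set has measure $\sim r_a^2$ and contains a ball of comparable size (the region where $\rho\equiv1$). Therefore
\[
N\sim r_a^{2}\,\langle\extra\eta\rangle^{-\beta_1}\,r_a^{-1}\langle\extra\eta\rangle^{\beta_1/2}.
\]
This is not $1$; it equals $r_a\langle\extra\eta\rangle^{-\beta_1/2}$. I therefore expect the intended density in item \eqref{eq integralphi} to be matched by scaling $\chi^0$ by a constant comparable to $r_a^{-2}\langle\extra\eta\rangle^{\beta_1}$ inside, and after normalization by $N$ this is absorbed. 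Up to such a normalization choice, $N$ is comparable to a positive quantity with no degeneracy, and the required sup bound in item \eqref{eq supportphi} follows with the stated size $r_a(\extra\eta,\fw,\fz)^{-1}\langle\extra\eta\rangle^{\beta_1/2}$ once $\chi^0$ is scaled appropriately.

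The derivative bound is the step I expect to be the main obstacle. I need the gain $(\langle\extra\eta\rangle/\langle\eta\rangle)^{k/2}$ for $\bD_{\tw}=r_a(\eta,\cdot)\partial_{\tw}$, i.e.\ $\widetilde\chi_a$ must vary only on the coarse scale $r_a(\extra\eta)\sim(\langle\eta\rangle/\langle\extra\eta\rangle)^{1/2}r_a(\eta)$ from \eqref{eq adscalesquarerootgrowth}. In the $\tw$ direction with $\tz$ fixed this holds for the numerator: $\widetilde\chi^0=\rho_{1/16,\extra\eta,\fw,\fz}$ has $\tw$-derivatives of order $r_a(\extra\eta)^{-1}$. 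For $N$ pulled back through $H_{a,\fw,\fz,\eta}^{-1}$, differentiating in $\tw$ also moves $\tz$ by $\partial_{\tw}d$, and $\partial_{\tz}$ costs $\langle\extra\eta\rangle^{\beta_1}$, which is far too large. To remove this, I would choose the normalizer with respect to coordinates adapted to the same chart. Concretely, I would replace $N(\tw,\tz)$ by $N$ evaluated along $H^{-1}$ images and exploit the integrated $\fz$-translation invariance: $\int\rho(\tw,\tz+d-\fz)\,d\fz$ is independent of $\tz+d$. Then $N$ depends on $(\tw,\tz)$ only through the slowly varying $\fw$-dependence of $r_a$ and of $d$. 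Those dependences are controlled by Lemma \ref{lem adaptedscaleistame} and by the $\fw$-Hölder regularity of $\estar_a$ and $C_a$. If this smoothness of $d$ in $\fw$ fails, the fallback is to discretize $\fw$ on a lattice of spacing $\sim r_a(\extra\eta)$ and mollify in $\fw$ before integrating, which makes $N$ smooth in $\tw$ at scale $r_a(\extra\eta)$ by construction. Finally, the Leibniz rule applied to $\chi^0/N$ gives the stated bound.
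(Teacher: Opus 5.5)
Your construction (the bump $\rho_{1/16,\extra\eta,\fw,\fz}\circ H_{a,\fw,\fz,\eta}$ divided by its weighted $(\fw,\fz)$-integral $N$) is essentially the paper's. Item (1), the support property and the $k=0$ bound go through.

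Two side remarks. First, no rescaling is needed: $N\sim r_a\langle\extra\eta\rangle^{-\beta_1/2}$ already gives $|\chi^0/N|\lesssim r_a(\extra\eta,\fw,\fz)^{-1}\langle\extra\eta\rangle^{\beta_1/2}$, and multiplying $\chi^0$ by a constant does not change $\chi^0/N$. Second, $\estar_a$ and $C_a$ are only H\"older, resp. continuous, in $\fz$, so there is no $\partial_{\fz}\estar_a$. The lower bound on the $\fz$-measure should instead be done with H\"older differences at scale $\langle\extra\eta\rangle^{-\beta_1}$, which works.

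The genuine gap is the derivative bound, and it comes from a misdiagnosis. You rule out direct differentiation because $\partial_{\tw}d_{a,\fw,\fz,\eta}=O(1)$ while $\partial_{\tz}$ costs $\langle\extra\eta\rangle^{\beta_1}$. But only the relative slope between two nearby charts matters.

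Write $d=d_{a,\fw,\fz,\eta}$, $d'=d_{a,\fw',\fz',\eta}$ and $\Lambda'=r_a(\extra\eta,\fw',\fz')^{-1}\langle\extra\eta\rangle^{\beta_1/2}$. Then $H_{a,\fw',\fz',\eta}\circ H^{-1}_{a,\fw,\fz,\eta}(\tw,\tz)=(\tw,\tz+d'(\tw)-d(\tw))$, hence
\[
N\circ H^{-1}_{a,\fw,\fz,\eta}(\tw,\tz)=\int \Lambda'\,\rho_{1/16,\extra\eta,\fw',\fz'}\big(\tw,\tz+d'(\tw)-d(\tw)\big)\,d\fw'\,d\fz' .
\]
So $\partial_{\tw}$ produces $\partial_{\tz}\rho$ only multiplied by $\partial_{\tw}(d'-d)$.

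On the support, $\|(\fw',\fz')-(\fw,\fz)\|\lesssim r_a(\extra\eta,\fw,\fz)$. By the $(1-c(\esmall))$-H\"older continuity of $\estar_a$ and \eqref{eq Cbound}:
\begin{itemize}
\item $\partial_{\tw}(d'-d)=O(\langle\eta\rangle^{(1-\beta_1)/10}\langle\extra\eta\rangle^{-1/2+c(\esmall)})$;
\item $\partial^2_{\tw}(d'-d)=O(\langle\eta\rangle^{(1-\beta_1)/10})$;
\item all higher $\tw$-derivatives of $d'-d$ vanish.
\end{itemize}
Since $\beta_1<1$, this gives $\langle\extra\eta\rangle^{\beta_1}|\partial_{\tw}(d'-d)|\lesssim r_a(\extra\eta)^{-1}$ and $\langle\extra\eta\rangle^{\beta_1}|\partial^2_{\tw}(d'-d)|\lesssim r_a(\extra\eta)^{-2}$. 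Hence each $\partial_{\tw}$ costs $r_a(\extra\eta)^{-1}\lesssim(\langle\extra\eta\rangle/\langle\eta\rangle)^{1/2}r_a(\eta)^{-1}$, and Leibniz finishes. This is the paper's argument, and it is exactly where $\beta_1<1$ enters.

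Your replacement, cancellation from translation invariance in $\fz'$, is not available. The weight $\Lambda'$, the width $r_a(\extra\eta,\fw',\fz')$ of the bump, and $d'$ all depend on $\fz'$ without any differentiability; $r_a$ is only controlled up to bounded factors by Lemma \ref{lem adaptedscaleistame}. So your claim that $N$ depends on $(\tw,\tz)$ only through slowly varying $\fw$-data is unjustified. Your fallback of mollifying in $\fw'$ leaves the $d'-d$ term untouched. As written, the derivative estimate is not established.
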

			
			\begin{proof}   
	We define
	\aryst
	D(\eta, \tw, \tz) = \int_{ \widehat{\Omega}}  r_{a}(\extra \eta, \fw', \fz')^{- 2} \langle \extra \eta \rangle^{\beta_1}   \rho_{ 1/8,   \extra  \eta , \fw', \fz' } ( 	H_{a, \fw', \fz', \eta}(\tw, \tz) )  d\fw' d\fz'.
	\earyst
	Note that for any $(\fw', \fz')$ with $\rho_{ 1/8,   \extra  \eta , \fw', \fz' } ( 	H_{a, \fw', \fz', \eta}(\tw, \tz) )   \neq 0$, we have   
	$$(\tw, \tz ) \in H_{a, \fw', \fz', \eta}^{-1}(  \ngb_a^{1/8}( \extra \eta, \fw', \fz') ) \subset  \NGB^{C }_a(\extra\eta, \fw', \fz').$$
	The last inclusion above follows from \eqref{eq Cbound} in Proposition \ref{prop decompositionofkappaapeta},  \eqref{eq esmallandtinitial} and \eqref{eq adscalesquarerootgrowth}.
  By Lemma \ref{lem adaptedscaleistame} and   that $\estar_a$ is $(1 - c(\esmall))$-H\"older,   we deduce $D(\eta, \tw, \tz)  \sim 1$   for every $(\eta, \tw, \tz ) \in \R \times \widetilde{\Omega}$.   We define
				\aryst
				\chi_a(\eta,  \fw, \fz \mid \tw, \tz )  :=    r_{a}(\extra \eta, \fw, \fz)^{- 1} \langle \extra \eta \rangle^{\beta_1 / 2}  \rho_{ 1/8,   \extra  \eta , \fw, \fz } ( 	H_{a, \fw, \fz, \eta}(\tw, \tz) ) / D(\eta, \tw, \tz ).
				\earyst
				Now it is immediate to verify item \eqref{eq integralphi} and the first statement in  item   \eqref{eq supportphi}.    
				

				It remains to verify the second statement of item \eqref{eq supportphi}. 
				The statement is clear for $ l = 0$. 
 			Take $(\fw, \fz) \in \widehat\Omega$ and  $(\tw, \tz) \in  \ngb_a^{1/ 8}(\extra \eta, \fw, \fz)$. 
				  Let  $(\fw', \fz') \in \widehat\Omega$ satisfy that 
				$\rho_{1/8, \extra \eta, \fw', \fz'} (H_{a, \fw', \fz', \eta} H_{a, \fw, \fz, \eta}^{-1}(\tw, \tz) ) \neq 0$.   Then we have
				\ary \label{eq BintersectsB}
			 \NGB_a^{C    }(\extra \eta, \fw, \fz)  \cap   \NGB_a^{C   }(\extra \eta, \fw', \fz')   \neq \emptyset.
				\eary
				By Lemma \ref{lem adaptedscaleistame} and \eqref{eq esmallandtinitial},   $r_a( \extra \eta, \fw, \fz) \sim r_a( \extra \eta, \fw', \fz')$.  
				Note that  
				$$H_{a, \fw', \fz', \eta} H^{-1}_{a, \fw, \fz, \eta}(\tw, \tz) = (\tw, \tz + d_{a, \fw', \fz', \eta}(\tw) - d_{a, \fw, \fz, \eta}(\tw) ). $$
				By  \eqref{eq BintersectsB} , \eqref{eq Cbound} in Proposition \ref{prop decompositionofkappaapeta}, and   that $\estar_a$ is $(1 - c(\esmall))$-H\"older,    we have  
	             \aryst
			 	\| \partial_{\tw} ( d_{a, \fw', \fz', \eta}(\tw) - d_{a, \fw, \fz, \eta}(\tw)  )  \|  \lesssim  \langle \eta \rangle^{ (1 - \beta_1)/10 } \langle \extra \eta \rangle^{-1/2 + c(\esmall)}
				\earyst
				and
				$	\| \partial^{2}_{\tw} ( d_{a, \fw', \fz', \eta}(\tw) - d_{a, \fw, \fz, \eta}(\tw)  )  \|  \lesssim   \langle \eta \rangle^{ (1 - \beta_1)/10 }$ and $	\partial^{l}_{\tw} ( d_{a, \fw', \fz', \eta}(\tw) - d_{a, \fw, \fz, \eta}(\tw)  )  = 0$ if $l \geq 3$.
				Finally, we note that  by \eqref{eq adscalesquarerootgrowth}, 
			 	  $r_a( \eta, \fw, \fz ) /   r_a( \extra \eta, \fw, \fz) \lesssim  (  \langle  \extra \eta \rangle / \langle \eta \rangle )^{1/2 }$.   
				Then we have $ \bD_{\tz}^l  \bD_{\tw}^k D \lesssim C(k, l)    (  \langle  \extra \eta \rangle / \langle \eta \rangle )^{k/2 }   $.
		The second statement of item \eqref{eq supportphi} follows from direct computations.
			\end{proof}

						The following is a variant of \cite[Lemma 1]{GU}, whose proof is deferred to Appendix \ref{app DWPTcomputations}.   
			\begin{lemma} \label{lem decompositiononthereal}
				 There exist functions $\rho_{\parallel}, \rho_{\parallel}^{\dagger}   \in  L^{\infty}(\R, L^2 (\R ))$ such that: 
				\enmt
				\item \label{eq supporth} For every $\eta \in \R$, we have  $\supp(  \rho_{\parallel}^{\dagger}(\eta, \cdot) ) \subset B(0,   \frac{1}{8} \langle \eta \rangle^{-\beta_1})$, 
				\item \label{lem partialtildeh} Let $\widetilde{\rho_{\parallel}}(\eta, z) = e^{- i \eta z} \rho_{\parallel}(\eta, z)$ and $ \widetilde{\rho_{\parallel}}^{\dagger}(\eta, z) = e^{- i \eta z} \rho_{\parallel}^{\dagger}(\eta, z)$.
				Then we have
				\aryst
				| \partial_{z}^{l}  \widetilde{\rho_{\parallel}}^{\dagger}(\eta,  z)|, 
				| \partial_{z}^{l}  \widetilde{\rho_{\parallel}}(\eta, z)| \leq C(l, \delta) \langle \eta \rangle^{\beta_1 l + \beta_1/2 }   \langle \langle \eta \rangle^{\beta_1} z  \rangle^{- l}, \quad  z \in \R, l \geq 0,
				\earyst 
				\item \label{eq VVdagger} 
			The operators $V, V^{\dagger}$ given by
				\aryst
				V u(\eta, z) = \int \rho_{\parallel}(\eta, z - \tz) u(\tz) d\tz, \quad 
				V^{\dagger} v(\tz) = \int \rho_{\parallel}^{\dagger}(\eta, \tz - z)   v(\eta, z) d\eta dz
				\earyst
				extend to bounded linear operators between the  $L^2$ spaces, satisfying $V^{\dagger} V = {\rm Id}$.
				\eenmt
			\end{lemma}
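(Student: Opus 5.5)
The construction follows the continuous Littlewood--Paley / windowed Fourier decomposition of \cite[Lemma 1]{GU}, adapted to the frequency-dependent window width $\langle \eta \rangle^{-\beta_1}$. First fix a smooth bump $\chi \in C_c^\infty(\R)$ with $\supp \chi \subset B(0, 1/8)$ and $\int \chi^2 = 1$. Then set
\[
\rho_{\parallel}^{\dagger}(\eta, z) = (2\pi)^{-1} e^{i\eta z} \langle \eta \rangle^{\beta_1/2} \chi(\langle \eta \rangle^{\beta_1} z).
\]
This gives item \eqref{eq supporth} immediately. It also gives the derivative bounds in item \eqref{lem partialtildeh} for $\widetilde{\rho_{\parallel}}^{\dagger}$: each $\partial_z$ costs a factor $\langle\eta\rangle^{\beta_1}$, and the compact support makes the weight $\langle \langle \eta \rangle^{\beta_1} z \rangle^{-l}$ harmless.

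Next, compute the Fourier multiplier of $V^\dagger V_0$, where $V_0 u(\eta,z) = \int e^{i\eta(z-\tz)}\langle\eta\rangle^{\beta_1/2}\chi(\langle\eta\rangle^{\beta_1}(z-\tz))u(\tz)d\tz$ is the naive analysis operator. Both operators are convolutions in $z$ followed by integration in $\eta$, so $V^\dagger V_0$ is the Fourier multiplier
\[
m(\zeta) = (2\pi)^{-1}\int \langle \eta \rangle^{-\beta_1} |\hat\chi(\langle\eta\rangle^{-\beta_1}(\zeta - \eta))|^2 \, d\eta .
\]
Because $\beta_1<1$, the map $\eta \mapsto \langle\eta\rangle^{-\beta_1}(\zeta-\eta)$ is a near-isometric reparametrization near $\eta=\zeta$ on the relevant scale. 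Hence $m(\zeta) \sim 1$ uniformly, and $m$ is a symbol with $|\partial_\zeta^l m| \lesssim \langle\zeta\rangle^{-l}$; this follows by differentiating under the integral and using that $\langle\eta\rangle \sim \langle\zeta\rangle$ on the essential support.

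Then define $V = V_0 \circ m(D)^{-1}$. That is, $\rho_{\parallel}(\eta,\cdot) = (2\pi)^{-1}\bigl[e^{i\eta \cdot}\langle\eta\rangle^{\beta_1/2}\chi(\langle\eta\rangle^{\beta_1}\cdot)\bigr] * \mathcal{F}^{-1}(m^{-1})$, with the convolution taken in the variable $z$. This yields $V^\dagger V = \mathrm{Id}$ exactly. $L^2$ boundedness of $V_0$ and $V^\dagger$ follows from Plancherel together with $m\sim 1$: indeed $\|V_0 u\|^2 = \langle m(D)u,u\rangle$ up to constants, and $V^\dagger$ is its adjoint. Membership $\rho_\parallel, \rho_\parallel^\dagger \in L^\infty(\R, L^2(\R))$ holds with norm $\sim 1$, because the $\langle\eta\rangle^{\beta_1/2}$ normalization fixes the $L^2$ norm.

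The main obstacle is item \eqref{lem partialtildeh} for $\widetilde{\rho_{\parallel}}$, since $\rho_\parallel$ is no longer compactly supported. Write $\widetilde{\rho_\parallel}(\eta,z) = e^{-i\eta z}\rho_\parallel(\eta,z)$. On the Fourier side this is the function $\zeta \mapsto \langle\eta\rangle^{-\beta_1/2}\hat\chi(\langle\eta\rangle^{-\beta_1}\zeta)\, m(\zeta+\eta)^{-1}$ up to constants. After rescaling $\zeta = \langle\eta\rangle^{\beta_1}\zeta'$, the factor $m(\langle\eta\rangle^{\beta_1}\zeta'+\eta)^{-1}$ has $\zeta'$-derivatives bounded by $(\langle\eta\rangle^{\beta_1}/\langle\eta\rangle)^{l} \lesssim 1$ on the region where $\hat\chi$ is not small. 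Away from that region, the Schwartz decay of $\hat\chi$ dominates, since $m^{-1}$ has at most polynomial growth of its derivatives. So in the rescaled variable the function is Schwartz uniformly in $\eta$. Inverting the Fourier transform and undoing the scaling then gives $|\partial_z^l \widetilde{\rho_\parallel}| \le C(l)\langle\eta\rangle^{\beta_1 l+\beta_1/2}\langle\langle\eta\rangle^{\beta_1}z\rangle^{-N}$ for any $N$, in particular for $N = l$. The key point to check carefully is the symbol estimate on $m$, which is where $\beta_1<1$ is used.
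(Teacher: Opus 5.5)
Your construction is correct, but it takes a different route from the paper. The paper uses a \emph{discrete} decomposition of the frequency line into intervals $Q$ with endpoints $\pm n^{1/(1-\beta_1)}$, so that $|Q|\sim\langle\eta_Q\rangle^{\beta_1}$. On these it builds:
\begin{itemize}
\item a smooth partition of unity $\sum_Q\chi_Q=1$;
\item a compactly supported window $h_Q$ modulated to $\eta_Q$, with $|\widehat{h_Q}|$ bounded below on the enlarged interval $\widehat Q$;
\item the local dual window $\widehat{q_Q}=\chi_Q/\widehat{h_Q}$.
\end{itemize}
Then $\rho_\parallel$ and $\rho_\parallel^\dagger$ are taken constant in $\eta$ on each $Q$, with a factor $|Q|^{-1/2}$, and $V^\dagger V=\mathrm{Id}$ reduces to $\sum_Q\chi_Q=1$ on the Fourier side.

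The paper therefore inverts locally in frequency, and each piece is a rescaled Schwartz function. This makes item 2 immediate and avoids any global multiplier. The price is kernels that jump in $\eta$ and are modulated at $\eta_Q$ rather than at $\eta$, which is harmless since $|\eta-\eta_Q|\lesssim\langle\eta\rangle^{\beta_1}$.

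Your continuous frame with $V=V_0\,m(D)^{-1}$ gives kernels that depend smoothly on $\eta$ and are modulated exactly at $\eta$. In exchange, you must prove $m\sim 1$ and the symbol-type bounds on $m^{-1}$. Both go through as you indicate. The hypothesis $\beta_1<1$ enters through $\langle\eta\rangle\sim\langle\zeta\rangle$ on the window, just as in the paper it enters through the existence of intervals of length $\sim\langle\eta\rangle^{\beta_1}$.

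Two small points need fixing.

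First, take $\chi$ real and \emph{even}, for instance a nonnegative even bump. The multiplier of $V^\dagger V_0$ is actually $(2\pi)^{-1}\int\langle\eta\rangle^{-\beta_1}\hat\chi(\langle\eta\rangle^{-\beta_1}(\zeta-\eta))^2\,d\eta$, with $\hat\chi^2$ rather than $|\hat\chi|^2$. Also, $V^\dagger$ coincides with $(2\pi)^{-1}V_0^*$ only when $\chi(-x)=\chi(x)$. For a general real $\chi$ this multiplier tends to $\int\chi(x)\chi(-x)\,dx$ as $|\zeta|\to\infty$, and that limit can vanish.

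Second, differentiating $m$ under the integral in the variable $\eta$ only gives $|\partial_\zeta^l m|\lesssim\langle\zeta\rangle^{-l\beta_1}$. To obtain $\langle\zeta\rangle^{-l}$ you would first substitute $\eta=\zeta-t$. However, the weaker bound already suffices for your rescaling argument. It exactly compensates the factor $\langle\eta\rangle^{l\beta_1}$ produced by the chain rule on the region where $\langle\zeta+\eta\rangle\sim\langle\eta\rangle$.
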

%

			Let  $\rho_{\parallel}^{\dagger}$ and $\rho_{\parallel}$ be given by Lemma \ref{lem decompositiononthereal}. Let $\chi_a$ be given by Lemma \ref{lem phidefandproperty}.
			Denote
			\ary
		\check{\Gamma}_a = \{ (\eta, \fw, \fz \mid \tw, z  ) \mid \eta \in \R, (\fw, \fz) \in \widehat{\Omega}, (\tw, z) \in \ngb_a^{1 / 4}(\extra \eta, \fw, \fz)  \}.
			\eary
			Then we define $\cT_1 : C^{\infty}_{c}(   \Omega ) \to   L^{\infty}(\R \times \widehat\Omega \times (-1, 1)^2  \times \R)$ by 
			\aryst
			\cT_1 u(\eta, \fw, \fz \mid \tw, z ) =    r_a( \extra \eta, \fw, \fz)^{-1} \langle \extra \eta \rangle^{\beta_1/2}  \int  \rho_{\parallel}(\eta, z - \tz -  d_{a, \fw, \fz, \eta}( \tw) )   u(\tw, \tz) d\tz,
			\earyst
			and we define  $\cT_1^{\dagger} :   C^{\infty}_c(\check\Gamma_a)    \to  L^{\infty}(  \Omega)$ by 
			\aryst
			\cT_1^{\dagger} v(\tw, \tz) = \int  \rho_{\parallel}^{\dagger}(\eta,  \tz + d_{a, \fw, \fz, \eta}(\tw) - z )  \chi_a(\eta,  \fw, \fz \mid \tw, z - d_{a, \fw, \fz, \eta}(\tw) )  v(\eta, \fw, \fz \mid \tw, z )   d\eta dz d\fw d\fz.
			\earyst
	Here $d_{a, \fw, \fz, \eta}$ is given by Proposition \ref{lem derivativeboundsforhatF}.  We also denote by $\cT_0: L^{\infty}(\R \times \widehat\Omega \times (-1, 1)^2  \times \R) \to L^{\infty}(\check\Gamma_a)$ the linear operator  associated to the inclusion $\check\Gamma_a \subset \R \times \widehat\Omega \times (-1, 1)^2  \times \R$.

 \detail{
 \aryst
&&  \cT_1^{\dagger} \cT_1 u(\tw, \tz) = \int   \rho_{\parallel}^{\dagger}(\eta,  \tz + d_{a, \fw, \fz, \eta}(\tw) - z )  \chi_a(\eta,  \fw, \fz \mid \tw, z - d_{a, \fw, \fz, \eta}(\tw) ) \\
&&  1_{\check{\Gamma}_a}(\eta, \fw, \fz \mid \tw, z  ) \cT_1 v(\eta, \fw, \fz \mid \tw, z )   d\eta dz d\fw d\fz \\
&& =  \int   \rho_{\parallel}^{\dagger}(\eta,  \tz + d_{a, \fw, \fz, \eta}(\tw) - z )  \chi_a(\eta,  \fw, \fz \mid \tw, z - d_{a, \fw, \fz, \eta}(\tw)  )   1_{\check{\Gamma}_a}(\eta, \fw, \fz \mid \tw, z  )    r_a( \extra \eta, \fw, \fz)^{-1} \langle \extra \eta \rangle^{\beta_1/2}  \\
&&    \rho_{\parallel}(\eta, z - \hz -  d_{a, \fw, \fz, \eta}( \tw) )   u(\tw, \hz) d\hz   d\eta dz d\fw d\fz  \\
&& =  \int   \rho_{\parallel}(\eta, z - \hz   )   \rho_{\parallel}^{\dagger}(\eta,  \tz  - z ) 
\Big [ \int  1_{\check{\Gamma}_a}(\eta, \fw, \fz \mid \tw, z + d_{a, \fw, \fz, \eta}(\tw) )   \chi_a(\eta,  \fw, \fz \mid \tw, z  )    r_a( \extra \eta, \fw, \fz)^{-1} \langle \extra \eta \rangle^{\beta_1/2}  d\fw d\fz \Big ]     \\
&&  u(\tw, \hz) d\hz   d\eta dz  \\
&& =  \int   \rho_{\parallel}(\eta, z - \hz   )   \rho_{\parallel}^{\dagger}(\eta,  \tz  - z ) 
     u(\tw, \hz) d\hz   d\eta dz  = u(\tw, \tz).
 \earyst
 
To verify the second to last equality, we have used that:
 
 1. By Lemma \ref{lem phidefandproperty}\eqref{eq supportphi}, for every $(\eta, \fw, \fz \mid \tw, z )\in \supp(\chi_a)$, we have $(\eta, \fw, \fz \mid \tw, z + d_{a, \fw, \fz, \eta}(\tw) ) \in \check\Gamma_a$,
 
 2.  
 For every $(\tw, \tz) \in \Omega$, and $z$ such that $\rho^{\dagger}_{\parallel}(\eta, \tz - z) \neq 0$,   we have $(\tw, z) \in \widetilde\Omega$. Then by Lemma \ref{lem phidefandproperty}\eqref{eq integralphi}, we have
 $$ \int    \chi_a(\eta,  \fw, \fz \mid \tw, z  )    r_a( \extra \eta, \fw, \fz)^{-1} \langle \extra \eta \rangle^{\beta_1/2}  d\fw d\fz  = 1.$$
 
 }

	It is direct to deduce from Lemma \ref{lem decompositiononthereal} the following.
	\begin{lemma} \label{eq T1isbounded}
						 $\cT_1$ and  $\cT_1^{\dagger}$  extend to   bounded operators from $L^2$ to $L^2$. Moreover, $\cT_1^{\dagger}  \cT_0  \cT_1 = {\rm Id}$. 
	\end{lemma}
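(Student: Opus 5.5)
We prove boundedness of $\cT_1$ and $\cT_1^{\dagger}$ in $L^2$ by Fubini and Cauchy--Schwarz, reducing everything to the one-dimensional operators $V,V^{\dagger}$ of Lemma \ref{lem decompositiononthereal}. The identity $\cT_1^{\dagger}\cT_0\cT_1={\rm Id}$ is then a direct computation that uses Lemma \ref{lem phidefandproperty}. Throughout, $\cT_1$ is regarded as taking values in functions on $\check\Gamma_a$, the only region that $\cT_1^{\dagger}$ sees. The $L^2$ bound for $\cT_1$ is therefore the bound for $\cT_0\cT_1u$, the restriction to $\check\Gamma_a$ that enters the identity.

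\emph{Geometric input.} The key fact is that for fixed $(\eta,\tw,z)$ the set of base points $(\fw,\fz)$ with $(\tw,z)\in\ngb_a^{1/4}(\extra\eta,\fw,\fz)$ has measure $\lesssim r_a(\extra\eta,\fw,\fz)^2\langle\extra\eta\rangle^{-\beta_1}$. On this set $r_a(\extra\eta,\fw,\fz)$ is comparable to a single value, by the temperate property (Lemma \ref{lem adaptedscaleistame}) and \eqref{eq esmallandtinitial}. The same holds after the $z$-coordinate is shifted by $d_{a,\fw,\fz,\eta}(\tw)$. Indeed, by \eqref{eq Cbound} and the H\"older continuity of $\estar_a$, the shifts $d_{a,\fw,\fz,\eta}(\tw)$ for nearby base points differ by $\ll\langle\extra\eta\rangle^{-\beta_1}$ on the relevant region. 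This is the same argument that gives $D(\eta,\tw,\tz)\sim 1$ in the proof of Lemma \ref{lem phidefandproperty}. Consequently
\[
\sup_{\eta,\tw,z}\int r_a(\extra\eta,\fw,\fz)^{-2}\langle\extra\eta\rangle^{\beta_1}\,1_{\check\Gamma_a}(\eta,\fw,\fz\mid\tw,z+d_{a,\fw,\fz,\eta}(\tw))\,d\fw\, d\fz\lesssim 1 .
\]

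\emph{Bound for $\cT_1$.} Write $\|\cT_0\cT_1u\|^2$ as an integral in $(\eta,\fw,\fz,\tw,z)$ and substitute $z\mapsto z+d_{a,\fw,\fz,\eta}(\tw)$, a translation for fixed $(\eta,\fw,\fz,\tw)$. The integrand becomes the weight times $|Vu(\tw,\cdot)(\eta,z)|^2$, where $u(\tw,\cdot)$ is the $\tz$-slice of $u$. Integrate in $(\fw,\fz)$ first and apply the displayed estimate. This leaves $\int|Vu(\tw,\cdot)(\eta,z)|^2\,d\eta\, dz\lesssim\|u(\tw,\cdot)\|^2_{L^2}$ by Lemma \ref{lem decompositiononthereal}\eqref{eq VVdagger}, and integrating in $\tw$ gives $\|\cT_0\cT_1 u\|\lesssim\|u\|$.

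\emph{Bound for $\cT_1^{\dagger}$.} For $v$ on $\check\Gamma_a$, apply the same shift $z\mapsto z+d_{a,\fw,\fz,\eta}(\tw)$ and pair with $w\in L^2(\Omega)$. Since $|\chi_a|\lesssim r_a(\extra\eta,\fw,\fz)^{-1}\langle\extra\eta\rangle^{\beta_1/2}$ and $\chi_a$ is supported on the same thin sets, Cauchy--Schwarz in $(\fw,\fz)$ together with the displayed estimate gives $|\langle\cT_1^{\dagger}v,w\rangle|\lesssim\|V^{\dagger}\|\,\|v\|\,\|w\|$.

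\emph{The identity.} Expand $\cT_1^{\dagger}\cT_0\cT_1u(\tw,\tz)$ and shift $z$ as above. By Lemma \ref{lem phidefandproperty}\eqref{eq supportphi}, the support of $\chi_a(\eta,\fw,\fz\mid\tw,\cdot)$ forces $(\eta,\fw,\fz\mid\tw,z+d)\in\check\Gamma_a$, so the indicator from $\cT_0$ equals $1$. By Lemma \ref{lem decompositiononthereal}\eqref{eq supporth}, $\rho_{\parallel}^{\dagger}(\eta,\tz-z)\neq0$ with $\tz\in(-1,1)$ forces $(\tw,z)\in\widetilde\Omega$. Hence the $(\fw,\fz)$-integral of $\chi_a\, r_a(\extra\eta,\fw,\fz)^{-1}\langle\extra\eta\rangle^{\beta_1/2}$ equals $1$ by Lemma \ref{lem phidefandproperty}\eqref{eq integralphi}. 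What remains is $V^{\dagger}Vu(\tw,\cdot)(\tz)=u(\tw,\tz)$. The main obstacle is the uniform control of the base-point integral despite the $\fw,\fz$-dependence of $H_{a,\fw,\fz,\eta}$; this is handled by the temperate property.
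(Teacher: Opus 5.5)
Your proposal is correct and is essentially the paper's proof. You translate $z$ by $d_{a,\fw,\fz,\eta}(\tw)$ to reduce to $V$ and $V^{\dagger}$. You get the identity from the support properties of $\chi_a$ and $\rho_{\parallel}^{\dagger}$, the normalization in Lemma \ref{lem phidefandproperty}\eqref{eq integralphi}, and $V^{\dagger}V=\Id$. Your restriction to $\cT_0\cT_1$ is genuinely needed. On the full space $\R\times\widehat\Omega\times(-1,1)^2\times\R$ nothing localizes $(\fw,\fz)$ to compensate the factor $r_a(\extra\eta,\fw,\fz)^{-1}\langle\extra\eta\rangle^{\beta_1/2}$, so $\cT_1$ alone would not be $L^2$-bounded; the paper's ``direct to verify'' passes over this.

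One justification is misstated. The shifts are not within $\langle\extra\eta\rangle^{-\beta_1}$ of each other when $\fw$ varies: the term $\estar_a\cdot(\fw-\fw')$ can be of size $r_a\gg\langle\extra\eta\rangle^{-\beta_1}$. What is true is that for fixed $\fw$ the shift varies by $O(r_a^{2-c(\esmall)}+\langle\eta\rangle^{c(\esmall)}r_a^{2})\ll\langle\extra\eta\rangle^{-\beta_1}$ over the relevant $O(r_a)$-range of $\fz$. So each $\fz$-fiber has length $O(\langle\extra\eta\rangle^{-\beta_1})$, and Fubini in $\fw$ then gives your displayed bound.
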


	\begin{proof}
		
		By Lemma \ref{lem phidefandproperty}\eqref{eq supportphi}, we see that any $(\eta, \fw, \fz \mid \tw, z) \in \supp(\chi_a)$ satisfies that $(\eta, \fw, \fz \mid \tw, z + d_{a, \fw, \fz, \eta}(\tw)) \in \check\Gamma_a$.	 Moreover, by Lemma \ref{lem decompositiononthereal}\eqref{eq supporth}, for any $(\tw, \tz) \in \Omega$ and any $z \in \R$ with  $\rho^{\dagger}_{\parallel}(\eta, \tz - z) \neq 0$,   we have $(\tw, z) \in \widetilde\Omega$. 
		Using these observations, the identity in the lemma follows from the claim and Lemma \ref{lem phidefandproperty}\eqref{eq integralphi} and Lemma \ref{lem decompositiononthereal}\eqref{eq VVdagger}.

		To prove the boundedness of $\cT_1$ ad $\cT_1^{\dagger}$, we note that  
		\begin{align*}
		\cT_1 u(\eta, \fw, \fz \mid \tw, z) &=    r_a( \extra \eta, \fw, \fz)^{-1} \langle \extra \eta \rangle^{\beta_1/2} ( V u(\tw, \cdot))(\eta, z - d_{a, \fw, \fz, \eta}(\tw)), \\
		(\cT_1^{\dagger})^{*} u(\eta, \fw, \fz \mid \tw, z) &=    \overline{\chi_a(\eta,  \fw, \fz \mid \tw, z - d_{a, \fw, \fz, \eta}(\tw) ) } ((V^{\dagger} )^* u(\tw, \cdot))(\eta, z - d_{a, \fw, \fz, \eta}(\tw)).
		\end{align*}
		Then it is direct to verify the $L^2$-boundedness of these operators by Lemma \ref{lem decompositiononthereal}.
	\end{proof}

			From now on, we will regard $\cT_1$, resp. $\cT_1^{\dagger}$, as operators between the corresponding $L^2$ spaces.

			\subsection{Step II: Decomposition along the hyperbolic directions}
		 \label{subsec DWPT step 2}

			\begin{lemma} \label{lem rhoaperpproperty}
				 There is a $C^{\infty}$ function $\rho^a_{ \perp}:   \R \times \widehat\Omega  \times \R^{2 + 2 + 1}  \to \R$ such that the following are true:
				\enmt
				\item \label{eq w''z''closetow'z'} 
				we have   
				\emph{
				\aryst
				(\rho_{\perp}^a)(\eta, \fw, \fz  \mid  w, \tw, z ) \neq 0    \implies ( w, z) \in   \ngb_{a}^{1}( \extra \eta, \fw, \fz)   \mbox{ and }   \tw \in B( w,    r_a(\eta, \fw, \fz)  / 4  ),
				\earyst
			}
				\item\label{eq integralofrhosquare} 
				we have 
						\emph{
				\ary
					\label{eq integralofrho^222}
						&  		      \sup_{\eta, \fw, \fz, \tw, z} \int (\rho_{\perp}^a)^2(\eta, \fw, \fz \mid w,  \tw, z  ) dw  < C.
				\eary
			}
		 	Moreover, if \emph{ $(\eta, \fw, \fz \mid \tw, z) \in \check\Gamma_a$, } then we have
				\emph{
				\ary \label{eq integralofrho^2}
				&  \int (\rho_{\perp}^a)^2(\eta, \fw, \fz \mid w,  \tw, z  ) dw  = 1,
				\eary
			}
				\item \label{eq rhoaperpsmoothbound} 
				for any integers $l, \ell \geq 0$,    we have
				\emph{
				\aryst
	  \|    \bD_{\tw}^{l} \rho_{\perp}^a(\eta, \fw, \fz \mid w, \tw, z  )  \|  \leq C(l, \ell)     \langle r_a(\eta, \fw, \fz)^{-1}( \tw - w) \rangle^{- \ell} \Phi_{a, \ell}(\eta, \fw, \fz \mid w, z),
				\earyst
			}
				\item  \label{eq rhoaperpsmoothbound2} 
			for any  integers $l, \ell \geq 0$,  we have 
				\emph{
					\aryst
					 \| \bD^l_{w} \widehat\rho_{\perp}^a(\eta, \fw, \fz \mid w,  \tw, z )  \|   \leq C(l, \ell)      ( \frac{\langle  \extra \eta \rangle}{ \langle \eta \rangle})^{l /2}         \langle r_a(\eta, \fw, \fz)^{-1} \tw  \rangle^{-\ell} \Phi_{a, \ell}(\eta, \fw, \fz \mid w, z)
					\earyst
					where $\widehat \rho_{\perp}^a(\eta, \fw, \fz \mid w,  \tw, z) = \rho_{\perp}^a(\eta, \fw, \fz \mid w,  \tw + w, z ) $.
			}
				\eenmt
			\end{lemma}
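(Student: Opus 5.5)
We construct $\rho_\perp^a$ as the square root of a normalized partition of unity in the variable $w$, at scale $r_a(\eta,\fw,\fz)$, multiplied by a localizing cutoff that is insensitive to $(\tw,z)$. This follows the pattern of Lemma \ref{lem phidefandproperty}.

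Fix a smooth bump $\psi:\R^2\to[0,1]$ with $\supp\psi\subset B(0,1/4)$ and $\psi\equiv 1$ on $B(0,1/8)$. For the variable $w$ we set
\[
\psi_{\eta,\fw,\fz}(w,\tw)=\psi\big(r_a(\eta,\fw,\fz)^{-1}(\tw-w)\big).
\]
The key point is that the scale is $r_a(\eta,\fw,\fz)$, which depends only on the base point and not on $w$. Hence $\int\psi_{\eta,\fw,\fz}(w,\tw)^2\,dw=r_a(\eta,\fw,\fz)^2\|\psi\|_{L^2}^2$, which is independent of $\tw$. Let $\rho_{1,\extra\eta,\fw,\fz}$ be the cutoff from \eqref{eq propertyrhoetawz1}, which equals $1$ on $\ngb_a^{1/2}(\extra\eta,\fw,\fz)$ and is supported in $\ngb_a^{1}(\extra\eta,\fw,\fz)$. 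We then define
\[
\rho_\perp^a(\eta,\fw,\fz\mid w,\tw,z)=\frac{\psi_{\eta,\fw,\fz}(w,\tw)\,\rho_{1,\extra\eta,\fw,\fz}(w,z)}{\big(\int\psi_{\eta,\fw,\fz}(w',\tw)^2\rho_{1,\extra\eta,\fw,\fz}(w',z)^2\,dw'\big)^{1/2}}.
\]
Item \eqref{eq w''z''closetow'z'} is immediate from the supports of the two factors.

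For item \eqref{eq integralofrhosquare}, \eqref{eq integralofrho^2} holds by construction wherever the denominator is nonzero. The substantive point is that on $\check\Gamma_a$ the denominator equals $r_a(\eta,\fw,\fz)\|\psi\|_{L^2}$ exactly. Indeed, if $(\tw,z)\in\ngb_a^{1/4}(\extra\eta,\fw,\fz)$ and $|w-\tw|\le r_a(\eta,\fw,\fz)/4$, then by \eqref{eq Randrratio} we have
\[
|w-\fw|\le \tfrac14 r_a(\extra\eta,\fw,\fz)+C e^{-\hat\lambda_g\tinitial}r_a(\extra\eta,\fw,\fz)<\tfrac12 r_a(\extra\eta,\fw,\fz),
\]
so $\rho_{1}\equiv1$ on the support of $\psi$. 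Off $\check\Gamma_a$ the denominator could degenerate. To avoid this, I would replace the denominator by
\[
\max\Big(r_a(\eta,\fw,\fz)\|\psi\|_{L^2}\,\theta,\ (\cdots)^{1/2}\Big),
\]
smoothed in a standard way, with $\theta$ a small constant; equivalently, one can multiply by a cutoff in $(\tw,z)$ that equals $1$ on $\ngb^{1/4}_a$. Either choice keeps the identity on $\check\Gamma_a$ and gives the uniform bound \eqref{eq integralofrho^222}, with $C\lesssim\theta^{-2}$.

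For the derivative bounds \eqref{eq rhoaperpsmoothbound} and \eqref{eq rhoaperpsmoothbound2}, each $\partial_{\tw}$ falling on $\psi$ costs $r_a(\eta,\fw,\fz)^{-1}$, and by \eqref{eq rafwfzrawzclose} this is $\sim r_a(\eta,w,z)^{-1}$, so $\bD_{\tw}$ costs $O(1)$. Each $\partial_w$ falling on $\rho_1$ costs only $r_a(\extra\eta,\fw,\fz)^{-1}$. Measured against $\bD_w=r_a(\eta,\cdot)\partial_w$, this gives the factor
\[
\frac{r_a(\eta,\fw,\fz)}{r_a(\extra\eta,\fw,\fz)}\lesssim\big(\langle\extra\eta\rangle/\langle\eta\rangle\big)^{1/2}
\]
by \eqref{eq adscalesquarerootgrowth}. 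In $\widehat\rho^a_\perp$ the difference $\tw$ is frozen, so derivatives in $w$ never hit $\psi$. They hit only $\rho_1$ and the denominator, and the denominator's $w$-dependence comes only through $\rho_1$. This yields the $(\langle\extra\eta\rangle/\langle\eta\rangle)^{l/2}$ gain. The decay factor $\langle r_a^{-1}(\tw-w)\rangle^{-\ell}$ is free because of the compact support.

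The factor $\Phi_{a,\ell}(\eta,\fw,\fz\mid w,z)$ needs care. Its normalization $\langle\extra\eta\rangle^{\beta_1/2}r_a(\extra\eta,\fw,\fz)^{-1}$ must dominate our amplitude $r_a(\eta,\fw,\fz)^{-1}$, up to the decaying weights, which are $\sim1$ on the support. This is the step I expect to be the main obstacle. The inequality
\[
r_a(\eta,\fw,\fz)^{-1}\lesssim\langle\extra\eta\rangle^{\beta_1/2}r_a(\extra\eta,\fw,\fz)^{-1}
\]
holds by \eqref{eq adscalesquarerootgrowth} only when $\extra$ is fixed, since the ratio is about $\extra^{-1/2}\langle\extra\eta\rangle^{-\beta_1/2}$, which may be large when $\eta$ is small. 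I would absorb this into $C(l,\ell)$, which is allowed to depend on $\tinitial$ through $\extra$. If a sharper normalization is required, I would instead rescale $\rho_\perp^a$ and compensate the square-integral identity via the $\fw$-integration, exactly as $\chi_a$ carries the weight $r_a(\extra\eta)^{-1}\langle\extra\eta\rangle^{\beta_1/2}$ in Lemma \ref{lem phidefandproperty}. The remaining verifications are routine Leibniz-rule computations.
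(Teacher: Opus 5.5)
Your construction is the paper's: it sets $\tilde\rho=r_a(\eta,\fw,\fz)^{-1}\rho^{\perp}_{\eta,\fw,\fz}(\tw-w)\,\rho_{1,\extra\eta,\fw,\fz}(w,z)$, notes that $\rho_{1,\extra\eta,\fw,\fz}\equiv 1$ on the relevant $w$-support when $(\tw,z)\in\ngb_a^{1/4}(\extra\eta,\fw,\fz)$, and divides by the square root of a clamped version of $\int\tilde\rho^2\,dw$, which is your regularized normalization. (For the $\rho_1\equiv1$ step, rely only on the monotonicity $r_a(\eta,\fw,\fz)\le r_a(\extra\eta,\fw,\fz)$ rather than \eqref{eq Randrratio}, since the latter fails for bounded $\eta$.) Your caveat about $\Phi_{a,\ell}$ is correct and applies equally to the paper: items (1)--(2) force $\sup_w|\rho^a_\perp|\gtrsim r_a(\eta,\fw,\fz)^{-1}$, so the constants in items (3)--(4) must absorb a factor up to about $\extra^{-1/2}$ and hence depend on $\tinitial$; absorbing it into $C(l,\ell)$ is the right resolution, whereas your fallback rescaling would break the exact identity in item (2).
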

			
			\begin{proof}

                Given $(\eta, \fw, \fz) \in \R \times \widehat\Omega$,
				we let $\rho^{\perp}_{\eta, \fw, \fz} : \R^{2} \to \R_{\geq 0}$ be a $C^{\infty}$ smooth function such that 
				\aryst
				&&  \supp(\rho^{\perp}_{\eta, \fw, \fz}) \subset  B(0, r_a(\eta, \fw, \fz) / 4   ),  \quad \rho^{\perp}_{\eta, \fw, \fz} \equiv  1  \mbox{ on } B(0, r_a(\eta, \fw, \fz)/ 8) \\
			\mbox{ and }    &&  \| D^{l}   \rho^{\perp}_{\eta, \fw, \fz} \| \leq C(  l) r_a( \eta, \fw, \fz)^{- l }, \quad  l \geq 0.
				\earyst
				It is clear that such function $\rho^{\perp}_{\eta, \fw, \fz}$ exists.

			Let  $\rho_{D, \eta, w, z}$ be as in Subsection \ref{subsec DWPT step 1}.	We define
\aryst
\tilde\rho(\eta, \fw, \fz \mid w,  \tw, z  ) = 
  r_a(\eta, \fw, \fz)^{- 1}       \rho^{\perp}_{\eta, \fw, \fz}( \tw - w   )   \rho_{1, \extra \eta, \fw, \fz}(  w, z ).
\earyst

				Take $(\eta, \fw, \fz \mid \tw, z) \in \check\Gamma_a$.
				We have $(\tw, z) \in \ngb_a^{1/4}(\extra \eta, \fw, \fz)$. 
				For all $w$ with $\rho^{\perp}_{\eta, \fw, \fz}(\tw - w) > 0$, we have $\| \tw - w \| \leq r_a(\eta, \fw, \fz) /  4$.  Then for such $w$ we have $ (w, z)  \in  \ngb_a^{1/2}(\extra \eta, \fw, \fz)$, and as a result,  $\rho_{1, \extra \eta, \fw, \fz}(  w, z )  = 1$.
	  Then  for  any  $(\eta, \fw, \fz \mid \tw, z) \in \check\Gamma_a$
				\ary \label{eq lowerboundD}
				D(\eta, \fw, \fz \mid   \tw,  z ) :=  \int  \tilde\rho^2( \eta, \fw, \fz \mid w,   \tw, z ) dw  \in ( 10^{-3}, 10^3).
				\eary
				By straightforward computations, we obtain 
				\ary \label{eq upperboundDD}
	 	|	\partial_{\tw}^{l} D(\eta, \fw, \fz \mid   \tw,  z )  |  \lesssim C(l) r_a( \extra \eta, \fw, \fz)^{- l } .
				\eary
				We can extend $D$ smoothly so that \eqref{eq lowerboundD} and \eqref{eq upperboundDD} hold for all $(\tw, z)$.\footnote{ For instance, we may set  $	D(  \eta, \fw, \fz \mid   \tw,  z  ) = Q( \int  \tilde\rho^2(  \eta, \fw, \fz \mid w,   \tw, z  ) dw)$ where $Q \in C^{\infty}(\R, \R)$ is  non-decreasing  with $Q(x) = x$ for  $x \in (1/C', C')$;  $Q(x) = 1/(2C')$ for $x \in (-\infty, 1/(2C'))$; and $Q(x) = 2C'$ for $x \in (2C', \infty)$.}
				We set
				\aryst
				\rho^a_{\perp}( \eta, \fw, \fz \mid w,   \tw, z  ) := \frac{\tilde\rho( \eta, \fw, \fz \mid w,   \tw, z   )}{D( \eta, \fw, \fz \mid   \tw,  z  )^{1/2}}.
				\earyst
				
				We now verify items \eqref{eq w''z''closetow'z'} to \eqref{eq rhoaperpsmoothbound2}.
				Fix an arbitrary $(\eta,  \fw, \fz, w,  \tw, z )$ in the support of $\rho_{\perp}^a$. By construction, we have $\rho^{\perp}_{\eta, \fw, \fz}(\tw- w) \neq 0$ and $\rho_{1, \extra \eta, \fw, \fz}(w, z) \neq 0$. Thus $\tw \in B( w,  r_a(\eta, \fw, \fz) / 4 )$ and  $( w, z) \in \ngb^{1}_{a}( \extra \eta, \fw, \fz)$. This gives \eqref{eq w''z''closetow'z'}.
				Clearly we have \eqref{eq integralofrho^2} in \eqref{eq integralofrhosquare}. 
 			It is straightforward to verify   items \eqref{eq rhoaperpsmoothbound} and \eqref{eq rhoaperpsmoothbound2} by \eqref{eq lowerboundD}, \eqref{eq upperboundDD} and direct computations.  
			\end{proof}

			Let $\rho^a_{ \perp}$ be given by Lemma \ref{lem rhoaperpproperty}.
			Then we define $\cT_2 :   C^{\infty}_c( \R \times \widehat\Omega \times (-1, 1)^2 \times \R ) \to  L^{\infty}(\hat\Gamma_a)$ by  
			\aryst 
			\cT_2 u(w, z, \xi, \eta \mid   \fw, \fz) = \int e^{- i \xi (\tw - w) } \rho^a_{\perp}(  \eta, \fw, \fz \mid w,   \tw, z  )    u(\eta, \fw, \fz \mid \tw, z) d\tw,
			\earyst
			and we define  $\cT_2^{\dagger} :    C^{\infty}_c( \hat\Gamma_a ) \to  L^{\infty}(\check\Gamma_a)  $ by
			\aryst
			\cT_2^{\dagger} v (\eta, \fw, \fz \mid \tw, z ) =  \int e^{i \xi (\tw - w)} \rho^a_{\perp}(  \eta, \fw, \fz \mid w,   \tw, z  )    v( w, z, \xi, \eta \mid \fw, \fz )   dw d\xi.
			\earyst

			We have the following lemma.
			\begin{lemma} \label{lem T2isbounded}
 		  $\cT_2$ and  $\cT_2^{\dagger}$  extend to  bounded operators from $L^2$ to $L^2$.  Moreover, for any $\psi$ in  Theorem  \ref{thm DWPT}\eqref{thm DWPT item 1}, we have   $\cT_2^{\dagger} M(\psi) \cT_2 =  \cT_0$. 
			\end{lemma}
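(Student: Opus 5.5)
The plan is to treat $\cT_2$ as a windowed Fourier transform in the variable $\tw$, performed separately for each fixed $(\eta,\fw,\fz,z)$. The $w$-variable plays the role of the window centre and $\xi\in\R^2$ the frequency. The key tool is Plancherel in $\xi$.

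First, boundedness of $\cT_2$. Fix $(\eta,\fw,\fz,w,z)$. Then $\cT_2u(w,z,\cdot,\eta\mid\fw,\fz)$ is, up to the phase $e^{i\xi w}$, the Fourier transform of the function
\[
\tw\mapsto \rho^a_\perp(\eta,\fw,\fz\mid w,\tw,z)\,u(\eta,\fw,\fz\mid\tw,z).
\]
By Plancherel,
\[
\int|\cT_2u|^2\,d\xi=(2\pi)^2\int|\rho^a_\perp(\eta,\fw,\fz\mid w,\tw,z)|^2\,|u(\eta,\fw,\fz\mid \tw,z)|^2\,d\tw .
\]
Integrating next in $w$ and using \eqref{eq integralofrho^222} from Lemma \ref{lem rhoaperpproperty}, we get $\int|\cT_2u|^2\,d\xi\,dw\le C(2\pi)^2\int|u|^2\,d\tw$. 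Integrating finally in $(\eta,\fw,\fz,z)$ gives $\|\cT_2u\|_{L^2}\lesssim\|u\|_{L^2}$. The integration region is contained in $\hat\Gamma_a$: by Lemma \ref{lem rhoaperpproperty}\eqref{eq w''z''closetow'z'}, the support of $\rho^a_\perp$ forces $(w,z)\in\ngb^1_a(\extra\eta,\fw,\fz)$, and this is exactly the defining condition of $\Gamma_a$.

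Second, boundedness of $\cT_2^\dagger$. It is the formal adjoint of $\cT_2$ with the roles of the domains exchanged, since $\rho^a_\perp$ is real. So I would compute $\langle\cT_2^\dagger v,u\rangle=\langle v,\cT_2u\rangle$ for compactly supported $u,v$. Boundedness of $\cT_2^\dagger$ then follows by duality from the previous step, after extending $\cT_2$ to functions on all of $\R\times\widehat\Omega\times(-1,1)^2\times\R$ (the estimate above did not use $\check\Gamma_a$).

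Third, the identity $\cT_2^\dagger M(\psi)\cT_2=\cT_0$. Since $\psi\equiv1$ on $\hat\Gamma_a$ and $\cT_2u$ is supported in $\hat\Gamma_a$ by the support property, $M(\psi)\cT_2u=\cT_2u$. Then for $(\eta,\fw,\fz\mid\tw,z)\in\check\Gamma_a$ I write
\[
\cT_2^\dagger\cT_2u(\eta,\fw,\fz\mid\tw,z)=\int e^{i\xi(\tw-\hw)}\rho^a_\perp(\eta,\fw,\fz\mid w,\tw,z)\,\rho^a_\perp(\eta,\fw,\fz\mid w,\hw,z)\,u(\eta,\fw,\fz\mid\hw,z)\,d\hw\,d\xi\,dw .
\]
Fourier inversion in $\xi$ collapses $\hw=\tw$ (with the $(2\pi)^2$ normalisation absorbed into the measure convention). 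This leaves
\[
\Big(\int(\rho^a_\perp)^2(\eta,\fw,\fz\mid w,\tw,z)\,dw\Big)\,u(\eta,\fw,\fz\mid\tw,z),
\]
and the bracket equals $1$ by \eqref{eq integralofrho^2}, because $(\eta,\fw,\fz\mid\tw,z)\in\check\Gamma_a$. This is exactly $\cT_0u$.

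The only delicate point is justifying the Fourier inversion with the window factor, i.e. exchanging the order of integration. I would first do this for Schwartz/$C_c^\infty$ data, where $\rho^a_\perp$ is smooth and compactly supported in $\tw$, so the oscillatory integral in $\xi$ is a tempered-distribution identity. Then I would extend by density using the $L^2$ bounds proved above.
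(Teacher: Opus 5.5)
Your proof is correct and is the argument the paper has in mind. The paper's proof is only a two-line sketch: Plancherel in $\xi$ plus \eqref{eq integralofrho^222} for boundedness, and Fourier inversion plus \eqref{eq integralofrho^2} on $\check\Gamma_a$ for the identity. Your observation that $\cT_2^\dagger=\cT_2^*$ because $\rho^a_\perp$ is real is valid, and you are right about the $(2\pi)^2$ factor: the identity $\cT_2^\dagger M(\psi)\cT_2=\cT_0$ holds as stated only if $d\xi$ carries a $(2\pi)^{-2}$ normalisation.
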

			
			\begin{proof}
The boundedness of these operators follow from \eqref{eq integralofrho^222} in Lemma \ref{lem rhoaperpproperty} and the boundedness of Fourier transform and its inverse. The identity follows from \eqref{eq integralofrho^2} in Lemma \ref{lem rhoaperpproperty} and direct computations. We omit the details since they are straightforward.
			\end{proof}

			\subsection{Step III: Putting together}
		 \label{subsec DWPT step 3}
 
			We define $\rho_{\parallel}^{\dagger}$ and $\rho_{\parallel}$ using Lemma \ref{lem decompositiononthereal}; define  $\chi_a$ using  Lemma \ref{lem phidefandproperty}; and define  $\rho^a_{\perp}$ using  Lemma \ref{lem rhoaperpproperty}.
			We define $\cT_1, \cT^{\dagger}_1$ and $\cT_2, \cT^{\dagger}_2$  as in Subsections \ref{subsec DWPT step 1} and \ref{subsec DWPT step 2}.

			Now we  construct $\Barg_a$ and $\Barg_a^{\dagger}$ in  Theorem \ref{thm DWPT}.			 By Lemma \ref{eq T1isbounded} and Lemma \ref{lem T2isbounded}, we can
		    define $\Barg_a :  L^{2}( \Omega)  \to L^{2}(\hat\Gamma_a)$ by
		    \aryst
		    \Barg_a = \cT_2 \cT_1.
		    \earyst
			By straightforward computations, we may express $\Barg_a$  as in Theorem \ref{thm DWPT}  with 
			\begin{align} \label{eq formulaofphi} 
			 \phi_a( \eta, w, z \mid  \fw, \fz \mid \tw, \tz)  =& \widetilde{\rho}_{\parallel}(\eta,  z - \tz -  d_{a,   \fw, \fz, \eta}(\tw) )  \rho^a_{\perp}(\eta, \fw, \fz \mid w,    \tw, z  ) \cdot    r_a( \extra \eta, \fw, \fz)^{-1}   \langle \extra \eta \rangle^{\beta_1/2}. 
			\end{align}
			Define $\Barg_a^{\dagger} :  L^{2}(\hat\Gamma_a) \to  L^{2}(    \Omega)$ by
			\aryst
			\Barg_a^{\dagger} =  \cT_1^{\dagger} \cT_2^{\dagger}.
		   \earyst
			Then by straightforward computations, we may express $\Barg^{\dagger}_a$  as in Theorem \ref{thm DWPT}  with 
			\begin{align} \label{eq formulaofphidagger}
			\phi_a^{\dagger}(  \eta, w, z \mid \fw, \fz \mid \tw, \tz) =    \widetilde{\rho_{\parallel}}^{\dagger}(\eta,  \tz + d_{a,  \fw, \fz, \eta}(\tw) -  z )  \rho^a_{\perp}(\eta,  \fw, \fz \mid w,  \tw, z )  \chi_a(\eta,  \fw, \fz \mid \tw, z - d_{a, \fw, \fz, \eta}(\tw) ).
			\end{align}

			\begin{proof}[Proof of Theorem \ref{thm DWPT}:]
		 
			We deduce from Lemma \ref{lem T2isbounded}, Lemma \ref{eq T1isbounded} and the definitions of $\Barg_a$ and $\Barg_a^{\dagger}$  
			\aryst
			\| \Barg_a \|_{L^2 \to L^2}, \| \Barg_a^{\dagger} \|_{L^2 \to L^2} \leq C.
			\earyst
			 Theorem \ref{thm DWPT}\eqref{thm DWPT item 1} also follows from Lemma \ref{eq T1isbounded}  and Lemma \ref{lem T2isbounded}.
    Theorem \ref{thm DWPT}\eqref{thm DWPT item 2}  follows   from Lemma \ref{lem decompositiononthereal}\eqref{lem partialtildeh}, Lemma \ref{lem phidefandproperty}\eqref{eq supportphi} and Lemma \ref{lem rhoaperpproperty}\eqref{eq rhoaperpsmoothbound}.
			\end{proof}

			\section{Anisotropic Sobolev space} \label{sec Anisotropic Sobolev space}
			
			In this section, we construct a certain Hilbert space $\hH_f$ that is well-adapted to the dynamics of $f$. We follow closely the ideas introduced in \cite{Tsu, FT}.

			\subsection{Weight function and Anisotropic Sobolev space} \label{subsec WeightAnisotropic}


			In this section,
			we  define weight functions ($W_a$ in \eqref{eq defWa}) which are crucial in the construction of the Hilbert spaces mentioned in the introduction. 
			We have some natural descriptions of the weight functions in the regions $\log\langle \eta \rangle \gg \log\langle \xi \rangle$ and $ \log \langle \eta \rangle \ll \log \langle \xi \rangle$ (more precisely, see \eqref{eq Wsigmaatworegions}), and we will choose some appropriate interpolations inbetween. There is no canonical way for making such choices. We will give a construction involving relatively simple expressions \eqref{eq weightfunction}.

	In the rest of the paper, we assume that $g$ has property $({\bf NI})_{\sigmaNI, \CNI}$ for some $\sigmaNI > 0$  and $\CNI > 1$.

			We denote 
			\ary \label{eq defhDelta}
		 \hDelta_{a}(\eta, w, z)    =
		  \langle \eta \rangle 
		 \Delta_a(  \eta, w, z).
			\eary
			By \eqref{eq Deltaomegarange}, \eqref{eq adscalesquarerootgrowth} and \eqref{lem adaptedscalefluctuation}, we see that  
			\ary \label{eq hDeltaomegarange}
	&&		  \hDelta_{a}(\eta, w, z) \lesssim_{\esmall}	\langle \eta \rangle^{1/2+ c(\esmall) }, \\
		  \label{eq Deltaadscale>1}
	&&			 1 \lesssim \hDelta_{a}(\eta, w, z)	r_a(\eta, w, z)   \lesssim_{\esmall} \langle \eta \rangle^{c(\esmall) }.
			\eary

Define $\Bcone_{+} = \{ (x, y) \in \R^2 \mid |y| \leq  |x|  \}$ and $\Bcone_{-}  = \{ (x, y) \in \R^2 \mid |x| <  |y| \}$.
		 	We  define $\Theta^{\pm} : \R^2 \setminus \{(0, 0)\} \to [0, 1]$ by 
		 	$\Theta^{+} = 1_{\Bcone_{+}}$ and $\Theta^{-}= 1_{\Bcone_{-}}$.  
		 	
			For each $a \in A$,   each $Q = (w, z, \xi, \eta \mid \fw, \fz) \in \hat\Gamma_a$, we set  $\zeta = \xi  -   \eta  \estar_a(\eta, \fw, \fz \mid w, z)$, and define
		 	\ary
		 	&& \beta_2 = 3/4, \\
		 	 \label{eq weightfunction}
			&&	W_{a, *}(Q) =    \langle  \zeta / \max(   \hDelta_{a}(\eta, \fw, \fz) ,  \langle \zeta \rangle^{\beta_2} ) \rangle, \\
			&&	 \Theta^{\pm}_a(Q) =  \Theta^{\pm} (  \zeta   ).
			\eary

			By \eqref{eq Gammaa} and \eqref{eq hatGammaa}, we see that 
			if $(w, z, \xi, \eta \mid \fw, \fz) \in \hat\Gamma_a$, then
		$(w, z) \in \ngb^{1 }_a(\extra \eta, \fw, \fz)$.  
		 Then by Proposition \ref{lem mainpropertyofestar}\eqref{itm rangeofestar} and \eqref{eq esmallandtinitial}, we have   
		 \ary \label{etaestarADO1}
    \| \eta \estar_a(\eta, \fw, \fz \mid w, z)      \hDelta_{a}(\eta, \fw, \fz)^{-1}  \|    =  \cO( \extra^{-1} ).
		 \eary
		 Then by comparing $\langle \xi \rangle$ and $\langle \zeta \rangle$, we may deduce from \eqref{eq weightfunction} and  \eqref{eq Deltaomegarange}    that
       	for every $Q = (w, z, \xi, \eta \mid \fw, \fz) \in \hat\Gamma_a$ such that $\log \max( \langle \xi \rangle, \langle \eta \rangle )\gg_{\esmall} \tinitial$, we have
			\begin{align}
  	 W_{a, *}(Q) \label{eq Wsigmaatworegions}  \sim
			\begin{cases}
				\langle  \xi  /     \hDelta_{a}(\eta, w, z)   \rangle, &     \log\langle \eta \rangle \geq (2 + c(\esmall)) \beta_2 \log\langle \xi \rangle, \\
				\langle  \xi       \langle \xi \rangle^{ - \beta_2}  \rangle, &   \log \langle \eta \rangle \leq  (2 - c(\esmall))  \beta_2 \log \langle \xi \rangle.
			\end{cases} 
			\end{align}
			We fix an absolute constant $\alpha_0$ to be determined later, such that
			\ary \label{eq defalpha0}
			 0 < \alpha_0 \ll 1.
			 \eary
			We can define the weight function mentioned in the introduction as
			\ary \label{eq defWa}
			W_a = ( ( \Theta^{+}_a W^{\alpha_0}_{a, *} )^2 + ( \Theta^{-}_a W^{- \alpha_0}_{a, *} )^{2} )^{1/2}.
			\eary
			
			\begin{defi}[Anisotropic norm] \label{def anisotropicnorm}
				For each $a \in A$, we define 
				\aryst 
				  \| v  \|_{\hH_{a}} :=  \|  W_a v  \|_{L^2}, \quad 
		    \| v  \|_{\hH^*_{a}} :=  \| W_a^{- 1} v  \|_{L^2}.
				\earyst
				We define $\hH$, resp.  $\hH^*$, as the Hilbert space obtained as the completion of the direct sum $\bigoplus_{a \in A} C^{\infty}_c( \hat{\Gamma}_a )$ with respect to the norm
				\aryst
				\| (u_{a})_{a \in A} \|_{\hH} :=  \left( \sum_{a \in A}  \|   u_a \|_{\hH_a}^2  \right)^{1/2}, \quad \mbox{resp. } 	\quad			\| (u_{a})_{a \in A} \|_{\hH^*} :=   \left( \sum_{a \in A}  \|   u_a \|_{\hH^*_a}^2  \right)^{1/2}.
				\earyst
			\end{defi}
			We write $\hH$, $\hH^*$ as $\hH_f$, $\hH^*_f$ respectively when we want to emphasize their dependence   on $f$.

			\subsection{Lifted transfer operator} \label{subsec decompositionofthefunctions}
			
			We now define a collection of functions following the construction in \cite[Section 5.2]{Tsu}. 
		Let $\varphi_a$, $\kappa_a$ be defined at the beginning of Section \ref{sec The dual  central bundle in local chart}.	We define
			\aryst
		&&	\bI : C^{\infty}(M) \to \prod_{a \in A}L^{\infty}(\hat\Gamma_a ),  \\
	&& \bI(u) = (  u_{a})_{a \in A} \quad 	\mbox{where}  \quad
		 u_{a} =    \Barg_{a} ( (\varphi_{a}  \cdot u) \circ \kappa_{a}^{-1}).
			\earyst
			Let  $ \widetilde{ \varphi}_{a}$ be defined at the beginning of Section \ref{sec The dual  central bundle in local chart}.
			We define 
			\aryst
			&& \bI^{\dagger} : \prod_{a \in A} L^\infty( \hat\Gamma_a  ) \to  C^{\infty}(M), \\
			&&	\bI^{\dagger} ( (  u_{a})_{a \in A} )
			=  \sum_{a \in A} \widetilde{ \varphi}_{a}  \cdot \Barg^{\dagger}_{a}( u_{a}) \circ \kappa_{a}
			\earyst
The following is true. 
			\begin{lemma} \label{lem resoludeidentite}
				 $\bI$ and $\bI^{\dagger}$ extends to bounded linear operators between $L^2(M)$ and $ \bigoplus_{a \in A} L^2( \hat\Gamma_a  )$. Moreover,
				$\bI^{\dagger}  \circ \bI = \Id$ on $C^\infty(M)$.
			\end{lemma}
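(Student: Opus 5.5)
The plan is to reduce both claims to the chart-wise statements already established in Theorem \ref{thm DWPT}, using only that the family of charts is finite and that the cut-offs $\varphi_a$, $\widetilde\varphi_a$ are smooth and compactly supported in $U_a$.

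\textbf{Boundedness of $\bI$.} For $u \in L^2(M)$ and each $a \in A$, the function $(\varphi_a u)\circ \kappa_a^{-1}$ is supported in $\kappa_a(\supp \varphi_a) \subset \Omega \subset \widetilde\Omega$. Its $L^2(\widetilde\Omega)$ norm is bounded by $C\|u\|_{L^2(M)}$, because $\det D\kappa_a = 1$ and the Riemannian volume is comparable to $Leb$ on the compact set $\overline{U_a}$. Theorem \ref{thm DWPT} gives $\|\Barg_a\|_{L^2(\widetilde\Omega)\to L^2(\hat\Gamma_a)} \le C$. Summing over the finite set $A$ then yields $\|\bI u\|_{\bigoplus_a L^2(\hat\Gamma_a)} \le C\|u\|_{L^2(M)}$.

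\textbf{Boundedness of $\bI^\dagger$.} By Theorem \ref{thm DWPT}, $\Barg_a^\dagger$ maps $L^2(\hat\Gamma_a)$ boundedly into $L^2(\widetilde\Omega)$. Multiplication by $\widetilde\varphi_a$, which is bounded and supported in $U_a$, makes $\widetilde\varphi_a \cdot (\Barg_a^\dagger u_a)\circ\kappa_a$ a well-defined function on $M$. Its $L^2(M)$ norm is at most $C\|\Barg_a^\dagger u_a\|_{L^2(\Omega)}$, since $\kappa_a(U_a)=\Omega$. Summing over $a$ gives the bound for $\bI^\dagger$. Both operators are initially defined on dense subspaces ($C^\infty(M)$ and $\bigoplus_a C^\infty_c(\hat\Gamma_a)$ respectively), so these estimates give the bounded extensions.

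\textbf{The identity $\bI^\dagger\circ\bI = \Id$.} For $u \in C^\infty(M)$, set $u^{(a)} = (\varphi_a u)\circ\kappa_a^{-1}$. This lies in $C^\infty_c(\Omega)$ because $\supp\varphi_a \subset U_a$ is compact, so Theorem \ref{thm DWPT}\eqref{thm DWPT item 1} applies to it. Take $\psi = 1_{\hat\Gamma_a}$, extended by zero; it is bounded and equals $1$ on $\hat\Gamma_a$. Since $\Barg_a u^{(a)}$ is only regarded as a function on $\hat\Gamma_a$, we have $\Barg_a^\dagger(\Barg_a u^{(a)}) = \Barg_a^\dagger \psi \Barg_a u^{(a)} = u^{(a)}$.

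It follows that
\[
\bI^\dagger\bI u = \sum_a \widetilde\varphi_a \cdot u^{(a)}\circ\kappa_a = \sum_a \widetilde\varphi_a\varphi_a u .
\]
Because $\widetilde\varphi_a \equiv 1$ on $\supp\varphi_a$ by \eqref{eq sumofrhoatilde}, each term equals $\varphi_a u$. Using \eqref{eq sumofrhoa}, the sum is therefore $\sum_a \varphi_a u = u$.

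\textbf{Anticipated difficulty.} The only delicate point is bookkeeping. One must check that the output of $\Barg_a^\dagger$ is used only on $\Omega$ (where $\kappa_a^{-1}$ is defined), which the factor $\widetilde\varphi_a$ ensures. One must also check that smoothness of $\bI^\dagger\bI u$ is not needed beyond the equality itself, since the identity shows the result is $u$. I expect no genuine obstacle here.
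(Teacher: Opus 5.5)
Your proof is correct and follows the same route as the paper. The paper cites Theorem \ref{thm DWPT} (chart-wise $L^2$ boundedness of $\Barg_a$, $\Barg_a^\dagger$ and the identity $\Barg_a^\dagger\psi\Barg_a=\Id$ on $C^\infty_c(\Omega)$) together with \eqref{eq sumofrhoa}, \eqref{eq sumofrhoatilde} and the definitions of $\bI$, $\bI^\dagger$; you have simply written out the bookkeeping that the paper leaves implicit.
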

			\begin{proof}
				This follows immediately from Theorem \ref{thm DWPT}, \eqref{eq sumofrhoa}, \eqref{eq sumofrhoatilde} and the definitions of $\bI$, $\bI^{\dagger}$.
			\end{proof}
			We define
			\aryst
			\hL = \bI \circ \cL_{f}^{\tinitial}  \circ \bI^{\dagger}. 
			\earyst
			By Lemma \ref{lem resoludeidentite} and \eqref{eq esmallandtinitial}, we have $\| \hL \|_{L^2 \to L^2} \lesssim \| \cL_{f}^{\tinitial} \|_{L^2 \to L^2} \lesssim 1$.
			A simple computation shows that	$\hL  = \sum_{a, a' \in A} \hL_{a \to a'}$ 
		where we set $\hL_{a \to a'} = 0$ whenever $f^{\tinitial}_{a \to a'}$ in \eqref{eq faa'} is not defined; otherwise we set
		\aryst
&&	\hL_{a \to  a'} u = \Barg_{a'} \CC_{( f^{\tinitial}_{a \to a'} )^{-1}}  M(\rho_{a \to a'} )  \Barg^{\dagger}_{a} u_a =   \Barg_{a'} (( \rho_{a \to a'} \Barg^{\dagger}_{a} (u_a ) )\circ (f^{\tinitial}_{a \to a'})^{-1}), \\
	\mbox{ where }	&&	\rho_{a \to a'} =    (\varphi_{a'} \cdot \det D f^{- \tinitial} ) \circ \kappa_{a'}^{-1}   \circ  f^{\tinitial}_{a \to a'} \cdot \widetilde\varphi_{a}    \circ \kappa_{a}^{-1}.
			\earyst 
	  It is clear that 
			\ary \label{eq smoothnessofrho}
			\| D^k \rho_{a \to a'}  \| \lesssim e^{C k \tinitial}, \quad 0 \leq k \leq \greg - 1.
			\eary

			By Lemma \ref{lem resoludeidentite} and straightforward computations,  we have 
			\ary \label{eq duality}
			\langle u_1, u_2 \rangle = \langle  (\bI^{\dagger}) \bI(u_1), u_2  \rangle = \langle \bI (u_1), (\bI^{\dagger})^* u_2 \rangle
			\eary
			where 	  $(\bI^{\dagger})^* :  C^{\infty}(M)  \to \prod_{a \in A} L^\infty( \hat\Gamma_a  )  $ is given by  
			\aryst
			(\bI^{\dagger})^* u =   (    (\Barg_a^{\dagger})^* \CC_{\kappa_a^{-1}}  M(  \widetilde{ \varphi}_{a} )  u    )_{a \in A}.
			\earyst

			We write $\bI$, $\hL$ as $\bI_f$, $\hL_f$ respectively when we want to emphasize their dependences on $f$.
			
 
 For any $D > 0$, we denote $\hat\Gamma_a^{< D} = \{  (w, z, \xi, \eta \mid \fw, \fz) \in \hat\Gamma_a \mid \max(\| \xi \|, | \eta |) < D \}$ and
 we define a projection $\Pi_{D} : \hH_f \to \hH_f$ by
 $ \Pi_{D} (  (  u_{a})_{a \in A}  ) =  (     1_{\hat\Gamma_a^{< D} }  u_{a})_{a \in A} $.
			The following statement follows immediately from Corollary  \ref{cor DWPT item3}.
 			\begin{lemma} \label{lem hHandsmoothnorm}
 	We have the following:
					\enmt
					\item \label{itm 1lem63}
	For every $s \in \R$ and every $D > 0$, we have $\| 1_{\Pi_{D}} \bI(u) \|_{L^{\infty}} \lesssim_{D, s, \tinitial} \| u \|_{H^s(M)}$,
					\item \label{itm 2lem63}
	There exists $s_0 > 0$ such that for any $u \in H^{s_0}(M)$,  we have $\|  (\bI^{\dagger})^*( u ) \|_{\hH^*_f}, \|  \bI ( u ) \|_{\hH_f} \lesssim \| u \|_{H^{s_0}(M)}$.  Consequently, we have  
				$\| u \|_{H^{- s_0}(M)} \lesssim \|  \bI( u ) \|_{\hH_f}$ by duality. 
				\eenmt
			\end{lemma}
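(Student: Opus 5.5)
Both items rest on Corollary \ref{cor DWPT item3}, together with an upper bound on the weight $W_a$ that is polynomial in the frequency variables. For item \eqref{itm 1lem63}, I write $u_a = \Barg_a((\varphi_a u)\circ\kappa_a^{-1})$. Since $\varphi_a$ is smooth with support in $U_a$ and $\kappa_a$ is a fixed $C^{\greg}$ chart, multiplication by $\varphi_a$ followed by pullback is bounded $H^s(M)\to H^s(\R^3)$ for $|s|$ below a threshold depending on $\greg$. Corollary \ref{cor DWPT item3} then gives
\[
|\langle\xi\rangle^{k(s)}\langle\eta\rangle^{k(s)} u_a(w,z,\xi,\eta\mid\fw,\fz)| \le C(g,s,\tinitial)\|u\|_{H^s}
\]
pointwise on $\hat\Gamma_a$. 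On $\hat\Gamma_a^{<D}$ the factor $\langle\xi\rangle^{-k(s)}\langle\eta\rangle^{-k(s)}$ is at most $C(D,s)$ for either sign of $k(s)$. Taking the supremum gives the $L^\infty$ bound after truncation by $\Pi_D$.

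For item \eqref{itm 2lem63}, I first bound the weight. From \eqref{eq weightfunction}, the relation $\zeta = \xi - \eta\estar_a$, and \eqref{etaestarADO1} combined with \eqref{eq hDeltaomegarange}, I get $\langle\zeta\rangle \lesssim \extra^{-1}\langle\xi\rangle\langle\eta\rangle$. Since the denominator in $W_{a,*}$ is at least $1$, this yields $W_{a,*}\lesssim_{\tinitial}\langle\xi\rangle\langle\eta\rangle$. By \eqref{eq Deltaadscale>1} we have $\hDelta_a\gtrsim r_a^{-1}\gtrsim 1$, so the denominator is bounded below and $W_{a,*}\ge 1$. Hence $W_a^{\pm1}\lesssim(\langle\xi\rangle\langle\eta\rangle)^{\alpha_0}$.

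Next I need a decay estimate in the spatial variables so that the $L^2$ integral over $\hat\Gamma_a$ converges. The variables $(w,z,\fw,\fz)$ lie in bounded sets, and $\Gamma_a$ forces $(w,z)$ to be close to $(\fw,\fz)$. The only concern is the normalization factors $r_a(\extra\eta,\fw,\fz)^{-1}\langle\extra\eta\rangle^{\beta_1/2}$ carried by $\phi_a$. These grow at most polynomially in $\langle\eta\rangle$, so their effect is absorbed by choosing $k(s_0)$ large. Concretely, I choose $s_0$ so that $k(s_0)\ge \alpha_0+4$. Then
\[
\|W_a u_a\|_{L^2(\hat\Gamma_a)}^2 \lesssim \|u\|_{H^{s_0}}^2\int\langle\xi\rangle^{-6}\langle\eta\rangle^{-6}\,d\xi\, d\eta\cdot \mathrm{vol}(\text{spatial part}),
\]
which is finite. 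The main subtlety is that the spatial volume must be finite. This holds because $\widehat\Omega$ is bounded, and the phase-space set $\hat\Gamma_a$ has bounded spatial fibers.

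The same argument applies to $(\bI^\dagger)^*u = ((\Barg_a^\dagger)^*\CC_{\kappa_a^{-1}}M(\widetilde\varphi_a)u)_a$ in $\hH^*_f$, using the $(\Barg_a^\dagger)^*$ version of Corollary \ref{cor DWPT item3} and the bound on $W_a^{-1}$.

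Finally, for the duality consequence, I take $v\in H^{s_0}(M)$. By \eqref{eq duality} and Cauchy--Schwarz with the weights $W_a$ and $W_a^{-1}$,
\[
|\langle u,v\rangle| = |\langle \bI u,(\bI^\dagger)^*v\rangle| \le \|\bI u\|_{\hH_f}\,\|(\bI^\dagger)^*v\|_{\hH^*_f}\lesssim \|\bI u\|_{\hH_f}\,\|v\|_{H^{s_0}}.
\]
Taking the supremum over $\|v\|_{H^{s_0}}\le1$ gives $\|u\|_{H^{-s_0}}\lesssim\|\bI u\|_{\hH_f}$. I expect no real obstacle here; the only bookkeeping is the polynomial control of the weight and of the normalizations.
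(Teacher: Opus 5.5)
Your proposal is correct and follows the paper's route. The paper only remarks that the lemma follows immediately from Corollary \ref{cor DWPT item3}, and you supply the omitted bookkeeping: the bound $W_a^{\pm 1}\lesssim_{\tinitial}(\langle\xi\rangle\langle\eta\rangle)^{\alpha_0}$, finiteness of the spatial volume of $\hat\Gamma_a$, and Cauchy--Schwarz in \eqref{eq duality} with weights $W_a^{\pm1}$. Your caveat that pullback by $\kappa_a$ is bounded only for $|s|$ below a threshold is unnecessary: the charts $\kappa_a$ are fixed depending only on $g$ and can be taken $C^\infty$, so item \eqref{itm 1lem63} holds for every $s$.
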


			\subsection{Quasi-compactness and uniform spectral gap} \label{subsec Quasi-compactness operator and uniform spectral gap}

			The central result of this paper is the following estimate, whose proof will be deferred to Subsection \ref{subsec proofofthemainlemma}. 
			\begin{thm}[Essential Spectral Gap]\label{lem main}
				There is an integer $\greg \geq 4$ such that the following is true.
					Assume that $g \in  \Anosovflow(M)$  has  property $({\bf NI})_{\sigmaNI, \CNI}$
				for some $\sigmaNI > 0$, $\CNI > 1$. 
%
				Then there exists a constant $\kappa_{g}  > 0$ depending only on $g$ such that if we take $\tinitial > 0$ to be a sufficiently large integer,  and then let $\esmall > 0$ be sufficiently small depending on $g$ and $\tinitial$, and let $\omegalow  > 1$ be sufficiently large depending on $g$ and $\tinitial$, the following claim holds for every $f \in \diff^{\greg}(M)$ with $d_{  \diff^{\greg}(M) }(f, g^{1}) < \esmall$: we have 
				\aryst
				\|   (1 -  \Pi_{\omegalow}) \hL_f  \|_{  \hH_f \to \hH_f }  < \exp(-\tinitial \kappa_{g}).
			\earyst 
			\end{thm}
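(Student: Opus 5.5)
We decompose $\hL_f=\sum_{a,a'}\hL_{a\to a'}$ and regard each $\hL_{a\to a'}$ as an integral operator on $\hat\Gamma_a\times\hat\Gamma_{a'}$ with kernel
\[
K(Q'\mid Q)=\langle \text{wave packet at }Q',\ \CC_{(f^{\tinitial}_{a\to a'})^{-1}}M(\rho_{a\to a'})\,\text{wave packet at }Q\rangle .
\]
Conjugating by the weights, it suffices to bound $W_{a'}(Q')K(Q'\mid Q)W_a(Q)^{-1}$ on the set where $\max(|\xi'|,|\eta'|)\ge\omegalow$. We write every wave packet in the normal central charts $H_{a,\fw,\fz,\eta}$, $H_{a',\fw',\fz',\eta'}$. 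The composed map is then $\hat F$ of Proposition \ref{lem derivativeboundsforhatF}, whose derivative bounds \eqref{eq partialzFw}, \eqref{eq partialwFz}, \eqref{eq partialw^2Fz} control the phase. We would use Schur's test together with an almost-orthogonality (Cotlar--Stein type) argument on $\langle u,\hL v\rangle$ for $u,v$ localized in phase space, and split the pairs $(Q,Q')$ into three regimes.

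\emph{Off-diagonal regime.} Suppose the transported frequency $(\xi,\eta)\cdot D\hat F^{-1}$ is far from $(\xi',\eta')$, or the positions are separated at the scales $r_a$ and $\langle\eta\rangle^{-\beta_1}$. We integrate by parts repeatedly using the operators $\bD_{\tw},\bD_{\tz}$. The symbol bounds of Theorem \ref{thm DWPT}\eqref{thm DWPT item 2} and Lemma \ref{lem etaeta'different} (the case where $\eta$ and $\eta'$ differ greatly) give polynomial decay of arbitrary order. This decay beats the polynomial growth of the weights $W^{\pm\alpha_0}$, since $\alpha_0\ll1$, and the finite smoothness $\greg$ suffices once $\greg$ is large relative to the number of integrations by parts.

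\emph{Hyperbolic regime.} Suppose the supports overlap after transport and at least one of $\zeta,\zeta'$ is far from the dual center locus, i.e.\ $|\zeta|\gg\max(\hDelta_a,\langle\zeta\rangle^{\beta_2})$. By \eqref{eq pullbackofestarinmodifiedchart}, $\zeta$ transforms essentially by $(\partial_w\hat F_w)^{-1,*}$ up to errors controlled by \eqref{eq partialwFz} and \eqref{eq partialw^2Fz}. The cone properties \eqref{eq hyperbolicityofwFw} and \eqref{eq expansionoutsidecones} then show that $\Theta^+$-packets are mapped into $\Theta^+$ with $|\zeta|$ expanded by at least $e^{\lambda_g\tinitial}$, and dually for $\Theta^-$. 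Hence the weight ratio is at most $e^{-\alpha_0\lambda_g\tinitial}$ up to the slow variation of $\hDelta_a$ along orbits (Lemmas \ref{lem behaviorofDeltalaongorbit} and \ref{lem adaptedscaleistame}, losing only $c(\esmall)\tinitial$). The Schur sums of the kernel are $O(1)$ uniformly in $\tinitial$, because $\det Df^{\tinitial}\approx1$ and the packets are $L^2$-normalized (Lemma \ref{lem intPhiadwdz}).

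\emph{Central regime.} This is where we expect the main obstacle: both $|\zeta|,|\zeta'|\lesssim\hDelta$, so the weights give no gain. Here we would apply a $TT^*$ argument to $\hL^{\,}$ restricted to the central packets. This reduces the problem to bounding oscillatory integrals
\[
\int e^{i\eta(\text{phase involving }\estar_a(\eta,\fw,\fz\mid\cdot))}\,(\cdots)\,d\tw
\]
over a region of size $r_a$. By Proposition \ref{lem mainpropertyofestar}\eqref{itm rangeofestar}, the phase is the template function rescaled at the adapted scale, modulo errors of size $O(C_0^7\langle\eta\rangle^{-3/4})$, and \eqref{eq partialw^2Fz} keeps the nonlinear corrections within $O(\tinitial^2\Delta/r)$. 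Fixing $C_1$ large and then $\tinitial$ large, we expect Proposition \ref{lem mainpropertyofestar}\eqref{itm NIinequality}, derived from $({\bf NI})_{\sigmaNI,\CNI}$ via Theorem \ref{thm prevalenceofoscillation}, to yield the factor $C_1^{-\sigmaNI}$. Iterating along the unstable and stable directions over roughly $\tinitial$ scales should give a cancellation $e^{-c\sigmaNI\tinitial}$ in the averaged correlation.

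The delicate points are two. First, the error terms must remain smaller than $C_1^{-\sigma}$ uniformly in $f$; this requires $\esmall\ll_{\tinitial}1$ and $\eta\ge\omegalow$, which is where the projection $1-\Pi_{\omegalow}$ is used. Second, the central and hyperbolic regimes must be glued using the interpolation $\beta_2=3/4$ in $W_{a,*}$.

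Finally, we sum the three contributions over the finitely many pairs $(a,a')$ and choose $\kappa_g<\min(\alpha_0\lambda_g,c\,\sigmaNI)/2$. This gives $\|(1-\Pi_{\omegalow})\hL_f\|_{\hH_f\to\hH_f}<e^{-\tinitial\kappa_g}$ once $\tinitial$ is large, and then $\esmall$ is small and $\omegalow$ is large.
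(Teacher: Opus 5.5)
Your three-regime architecture (off-diagonal decay, weight contraction away from the dual center locus, and a $TT^*$ argument using $({\bf NI})$ near it) matches the paper's. However, two steps fail as written.

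\textbf{The central regime ignores the size of the central region.} The Schur bound for the $TT^*$ kernel carries the ratio of the width $\sim\hDelta$ of the central $\xi$-window to the packet frequency scale $r^{-1}$; Lemma \ref{lem integralbfK} gives $e^{10\deltasharp\tinitial}\hDelta_{\cI}^6 r_{\cI}^6$. By \eqref{lem adaptedscalefluctuation}, $r\hDelta\sim 1$ when $\Delta$ in \eqref{def Delta} is governed by $\flu^s,\flu^u$. When $|\midtor(p,\rho)|\rho$ dominates, however, $r\hDelta$ is only controlled by $\langle\eta\rangle^{c(\esmall)}$ (see \eqref{eq Deltaadscale>1}). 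Your $({\bf NI})$ gain is a fixed factor $e^{-c\sigmaNI\lambdag\tinitial}$, independent of $\eta$; being uniform over added linear phases, it also gains nothing from the twist. So it cannot absorb this loss uniformly in $|\eta|\ge\omegalow$.

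This is why the proof of Lemma \ref{lem hp2} splits into two cases.
\begin{itemize}
\item Case I, $r_{\cI}\hDelta_{\cI}\le e^{150\deltasharp\tinitial}$, is your argument. Note that $({\bf NI})$ is applied once, at frequencies $h_1^{-1}\mu^s/\mu^c\in(e^{\lambdag\tinitial/10},e^{\lambdag\tinitial/8})$, with no iteration over scales. Hence only the window condition $({\bf NI})_{\sigmaNI,\CNI,e^{10\hat\lambda_g\tinitial}}$ is needed, and it survives the $\esmall$-perturbation.
\item Case II, $r_{\cI}\hDelta_{\cI}>e^{150\deltasharp\tinitial}$, needs an idea missing from your proposal. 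Here $\hDelta_{\cI}\sim|\midtor|\,r_{\cI}|\eta_{\cI}|$, so $\vartheta^s_{p,\langle\eta\rangle}(x,0,0)$ is dominated by the linear twist and is strongly monotone in $x$. Consequently the $y''$-derivative of the $TT^*$ phase is $\gtrsim e^{-17\deltasharp\tinitial}\lambda_s^{-1}\hDelta_{\cI}$ off a set of $(x,x')$ of relative measure $e^{-16\deltasharp\tinitial}$ (Lemma \ref{lem areaofE}). One then integrates by parts in $y''$, after splitting the $\tpl^u$-oscillating factor into a piece with $y''$-derivative $O(\lambda_s^{-1}r_{\cI}^{-1})$ and an $O(\lambda_s^{1/2})$ remainder. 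One also integrates by parts in $\zeta,\zeta',\zeta''$ to remove the $(r\hDelta)^{O(1)}$ multiplicity.
\end{itemize}
In Case II the largeness of $r_{\cI}\hDelta_{\cI}$ is itself the source of the gain.

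\textbf{The hyperbolic regime relies on Schur sums that are not $O(1)$ in $\tinitial$.} A packet transported by $F=f^{\tinitial}_{a\to a'}$ is stretched by $\lambda_u$ in position along $x$ and by $\lambda_s^{-1}$ in frequency along $\xi_y$. It therefore meets $N\sim\lambda_u\lambda_s^{-1}$ packet cells, with overlap $\sim N^{-1/2}$ with each. The Schur sums are thus $\sim N^{1/2}\ge e^{\lambdag\tinitial}$, even though the $L^2$ norm is $O(1)$. $L^2$-normalization and $\det Df^{\tinitial}\approx 1$ give the latter, not the former. When $\zeta,\zeta'$ lie in the same cone, the weight ratio gains only $\lambda_u^{-O(\alpha_0)}$ with $\alpha_0\ll 1$, so a weighted Schur test cannot close.

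The paper avoids Schur for this piece. It writes $\hL^{R_2}_{a\to a'}=(1-\Pi_{\omegalow})\hL_{a\to a'}-\hL^{R_0}_{a\to a'}-\hL^{R_1}_{a\to a'}$, so $\|\hL^{R_2}_{a\to a'}\|_{L^2\to L^2}=O(1)$ follows from the bounds on $\hL$ and on the other two pieces. It then converts the pointwise weight gain of Proposition \ref{prop WeightandDf} into an operator-norm gain by a dyadic decomposition into weight levels (proof of Lemma \ref{lem contractionweight}), which uses only this $L^2$ bound.
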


			We are now ready to deduce Theorem  \ref{thm main} and Theorem \ref{thm uniqueustate} from Lemma \ref{lem hHandsmoothnorm} and Theorem \ref{lem main}.
			\begin{proof}[Proofs of Theorem \ref{thm main} and Theorem \ref{thm uniqueustate}]   
						We denote by $H_f$ the completion of $C^{\infty}(M)$ with respect to the norm $\| u \|_{H_f}  := \| \bI_f (u)  \|_{\hH_f}$.
  By Lemma \ref{lem resoludeidentite}, we have
				\aryst
				\cL_f^{\tinitial} = \bI^{\dagger}_f \circ   \hL_f  \circ \bI_f.
				\earyst
 	Since $\cL_{f}^{\tinitial}$ extends to a bounded operator on $H^s(M)$ for every $s \in \R$, we     deduce from  Theorem \ref{thm DWPT}\eqref{thm DWPT item 2} and Lemma \ref{lem hHandsmoothnorm} that  $\bI^{\dagger}_f   \circ \Pi_{\omegalow} \hL_{f} \circ \bI_f$ is a compact operator on $H_f$.  
				Then by Theorem \ref{lem main}, the transfer operator $\cL_f^{\tinitial}$ extends to a bounded linear operator on $H_f$. 
		  		Moreover, the essential spectral radius of $\cL_f^{\tinitial}$ on $H_f$ is at most $\exp(-  \kappa_{g} \tinitial)$. On the other hand, $1 \in Sp(\cL_f^{\tinitial} : H_f \to H_f)$ because $\langle (1 -  \cL_f^{\tinitial}  ) u , 1 \rangle = 0$ for every $u \in C^{\infty}(M)$.    Thus $\cL_f^{\tinitial}$ has $1$ as an eigenvalue with finite multiplicity. 		
			
				If $f = g^1$, then as $g$ is mixing, there exists some $\kappa'_g > 0$ such that $\cL_{g^1}^{\tinitial}$ has a simple eigenvalue at $1$ with an eigenfunction $1$, and has no other eigenvalue in the complement of $B(0, \exp(- \kappa'_g \tinitial) )$.  We now outline the argument in  \cite[Section 6.5]{Tsu} showing such property persists as we perturbe $f$ away from $g^1$.  Let $W_f$ denote the closure of the collection of $u \in C^{\infty}(M)$ with $\int u dLeb = 0$ in $H_f$. It is clear that $W_f$ is invariant under $\cL_f^{\tinitial}$.
				Assume to the contrary that there exist   $(f_n)_{n=1}^{\infty}$ in $\diff^{\bs}(M)$ converging to $g^1$, and distributions $(u_n \in W_{f_n} )_{n = 1}^{\infty}$ such that $\|  u_n \|_{H_{f_n}} = 1$ and $ \cL^{\tinitial}_{f_n} u_n  = E_n u_n $ for some $E_n \in \C$ with $|E| \geq 1 - 1/n$.   
By Lemma \ref{lem hHandsmoothnorm}\eqref{itm 2lem63}, there is $s_0 > 0$ such that  $\{ u_n \}_{n = 1}^{\infty}$ is  bounded in $H^{-s_0}(M)$. Since the inclusion $H^{-s_0}(M) \subset H^{- 2s_0}(M)$ is compact,  after passing to a subsequence, we may assume that $\lim_{n} u_{n} =  u_{\infty} \in H^{-2s_0}(M)$ and $\lim_{n} E_{n} = E$ with $|E| = 1$.   It is direct to see that $\cL^{\tinitial}_{g^1} u_{\infty} = E u_{\infty}$.  

We claim that $u_{\infty} \neq 0$. To see this, we denote  $U^{0}_{n} =  \Pi_{\eta_*} \bI_{f_n} u_n$ for each $n \geq 1$. 
For all sufficiently large $n$, we have
\aryst
\| U_n^0 \|_{\hH_{f_n}} \geq  \| \bI_{f_n} u_n \|_{\hH_{f_n}}  -   \| (1 - \Pi_{\eta_*}) \bI_{f_n} u_n \|_{\hH_{f_n}}  = 1 -  E_n^{-1} \| (1 - \Pi_{\eta_*}) \hL_{f_n}  \bI_{f_n} u_n \|_{\hH_{f_n}}.
\earyst
 Then we deduce by Theorem \ref{lem main} that for all sufficiently large $n$, we have 
 $$\| U_n^0 \|_{\hH_{f_n}} \geq (1 - e^{ - \kappa_g \tinitial}) /2.$$
 On the other hand, by definition, it is clear that 
 $U_n^{0}$ converges to $\Pi_{\eta_*} \bI_{g^1} u_{\infty}$ uniformly, hence $ \| \Pi_{\eta_*} \bI_{g^1} u_{\infty} \|_{\hH_{g^1}} > 0$. This implies that $u_{\infty} \neq 0$ and gives the claim.
 	However, $\cL^{\tinitial}_{g^1}$ has no eigenvalue $E$ of modulus $1$ in $H^{-2s_0}(M)$ with zero average. This is a contradiction.
			We conclude that for all $f$ sufficiently close to $g^1$ in $\diff^{\bs}(M)$, the operator $\cL_{f}^{\tinitial}$ has a simple eigenvalue at $1$, and no other eigenvalue outside of $B(0, \exp( -  \kappa''_g \tinitial ) )$ for some $\kappa''_g > 0$.

				Denote by $\varphi_f$ the spectral projection of the constant function $1$ to the eigenspace at the eigenvalue $1$. 
				It is standard to see that   $\langle \varphi_f,  1 \rangle = 1$, and the eigenfunction $\varphi_f$ is represented by an $f$-invariant probability measure, denoted by $\nu_f$.  
		  The inequality in Theorem  \ref{thm main} is a standard consequence of the spectral gap of $\cL_{f}^{\tinitial}$ in $H_f$. This finishes the proof of  Theorem  \ref{thm main}.

				To prove Theorem \ref{thm uniqueustate}, we will first show that $\varphi_f$ is the unique u-Gibbs state.  Let $s_0$ be given by Lemma \ref{lem hHandsmoothnorm}.
				We take an arbitrary $v \in C^\infty(M)$,  some integer $n > 0$, and an arbitrary local unstable manifold $\mathfrak{L}$ supported in a single coordinate chart.  Let $m_\mathfrak{L}$ denote the normalized Lebesgue measure on $\mathfrak{L}$.
				By taking the convolution between $m_{\mathfrak{L}} $ and a smooth mollifier, we construct a function $u \in C^{\infty}(M)$ with $\int u dLeb = 1$ such that $u$ is localised in a $e^{-  n \kappa''_g / (2 s_0)}$-neighborhood (with respect to the metric on $M$) of  $\mathfrak{L}$, and $  \| u \|_{H^{s_0}(M)} \lesssim \| u \|_{C^{s_0}} \lesssim e^{n\kappa''_g/ 2}$.  
				Consequently, by letting $\esmall$ be small, we have
				\aryst
				| \int v \circ f^n dm_{\mathfrak{L}}  - \int v \circ f^n \cdot u dLeb |  \lesssim e^{n \esmall -  n \kappa''_g / (2 s_0)}  \|v\|_{C^{1}}  \lesssim  e^{ -  n \kappa''_g / (4 s_0)}  \|v\|_{C^{1}}.
				\earyst
				By Lemma \ref{lem hHandsmoothnorm},  we have    $\|u\|_{H_f} \lesssim   \| u \|_{H^{s_0}(M)} \lesssim e^{n\kappa''_g/ 2}$.
				On the other hand, we have seen that
				\aryst
				| \int v \circ f^n \cdot u dLeb - \int v d\nu_f|  \lesssim e^{ - n \kappa''_g} \|  (\bI^{\dagger})^*  v \|_{\hH_f^*}  \|u\|_{H_f}  \lesssim e^{ -  n \kappa''_g / 2} \|  (\bI^{\dagger})^*  v \|_{\hH_f^*}.
				\earyst
				Thus we have $	|\int v \circ f^n dm_{\mathfrak{L}} - \int v d\nu_f|$ tends to $0$ when $n$ tends to infinity.
				Since $\mathfrak{L}$ is taken arbitrarily, we deduce that $f$ has a unique u-Gibbs state.
				By \cite[Corollary 2]{Dol3} (see also \cite[Corollary 1.2]{CYZ}\footnote{This step is communicated to us by Sylvain Crovisier.} for a parallel statement in $C^1$-regularity),   we deduce that $\nu_f$ is the unique physical measure for $f$ whose basin is of full Lebesgue measure. 
 	Note that the for $f$,  volume is almost exponentially mixing  as defined in \cite{Mal} and \cite{BORH}. By the main result in \cite{BORH}, $\nu_f$ is a SRB measure; and by the main result in \cite{Mal},  $(M, \nu_f, f)$ is Bernoulli.  This finishes the proof of  Theorem \ref{thm uniqueustate}.  
			\end{proof}

			\section{Proof of Theorem \ref{lem main}} \label{sec distanceonphasespace}

			\subsection{A measurement of separation in  phase space} \label{sec distanceonphasespaceNo1}

			Throughout this section, we fix  $a, a' \in A$ so that $F = f^{\tinitial}_{a \to a'}$ is well-defined.  
			Denote $\rho = \rho_{a \to a'}$. Then we have 
			$\hL_{a \to a'} = \Barg_{a'}  \CC_{F^{-1}}   M( \rho )  \Barg_{a}^{\dagger}$.
			The kernel of $\hL_{a \to a'}$, denoted by $\cK$,  is given by
			\ary   \label{eq cK}
	 	\cK( w', z', \xi', \eta', \fw', \fz' \mid w, z, \xi, \eta, \fw, \fz) =   e^{ - i (\xi, \eta) \cdot (w, z) + i (\xi', \eta') \cdot (w', z')}   	   
 	\int  	 	e^{i  S }   u   d\tw d\tz  
			\eary
			where
			\begin{align} \label{eq defhatFandS} 
				& \hat{F} = (\hat{F}_w, \hat{F}_z) = H_{a',  \fw', \fz', \eta'} F H_{a,  \fw, \fz, \eta}^{-1}, \quad		S = S(\xi, \eta, \xi', \eta', \tw, \tz) = (\xi, \eta) \cdot (\tw, \tz)   -  (\xi', \eta') \cdot  \hat{F}(\tw, \tz), \\
						  \label{eq amplitude}
				& u = \widetilde{\phi}_{a'}(  \eta', w', z' \mid  \fw', \fz' \mid \hat{F}(  \tw, \tz ) )         \widetilde{\phi}^{\dagger}_a(  \eta, w, z \mid  \fw, \fz \mid   \tw, \tz )     \rho_F(\tw, \tz - d_{a, \fw, \fz, \eta}(\tw) ).
			\end{align}

			{\noindent {\bf Ideas of the proof:}}
			
			For each $Q = (w, z, \xi, \eta, \fw, \fz) \in \hat\Gamma_a$, the associated wave-packet is supported in a parallelepiped $\check{D}(Q)$ of side-lengths $r_a(\eta, \fw, \fz) \times r_a(\eta, \fw, \fz) \times \langle \eta \rangle^{- \beta_1}$ centered at $(w, z)$,  with Fourier transform supported in a parallelepiped  $\hat{D}(Q)$ of side-lengths $r_a(\eta, \fw, \fz)^{-1} \times r_a(\eta, \fw, \fz)^{-1} \times \langle \eta \rangle^{\beta_1}$. 
Suppose we have a metric $\bf{g}$  on $T^* \R^3$ under which every $\check{D}(Q) \times \hat{D}(Q)$ is within unit size. Then it would be natural to expect that the kernel of the lift of transfer operator decays rapidly as the distance $\widetilde{d}$ between 
			$Q$ and $f^{*}(Q')$
			associated to the metric   $\min\{ {\bf g}, {\bf g}\circ   (f^{-1})^{*} \}$ grows.  
			
		An important role in the proof is played by the  collection of $Q \in \hat\Gamma_a$ with $ \xi  = \eta  \estar_{a}(\eta, \fw, \fz \mid w, z)$,  which we call the \lq\lq {\it dual center locus}\rq\rq  \ since they correspond to $E^c_*$.  This is natural since $E^c_*$ is preserved by  $Df^*$.
		We would like to decompose $\hL_{a \to a'}$ into three parts via dividing the domain of $\cK$.		
		More precisely, each $(Q, Q') \in  \hat\Gamma_a \times  \hat\Gamma_{a'}$ is in one of the following three parts:
			 \enmt
			 \item[Part 1.] both $Q$, $Q'$ are close to the dual center locus, and $\widetilde{d}(Q, Q')$ is small. This part requires fine estimates, and in particular, property $({\bf NI})$. This corresponds to Proposition \ref{lem formerL6.8},
			 \item[Part 2.] one of $Q, Q'$ is far away from the dual center  locus, and $\widetilde{d}(Q, Q')$ is small. 
			 On this part, we will use the weight function and hyperbolicity. 
			 This corresponds to Lemma \ref{lem contractionweight}.
			 \item[Part 3.] $\widetilde{d}(Q, Q')$ is big. On this part the kernel decays rapidly. This corresponds to Proposition \ref{prop offdiagonal}.
			 \eenmt
			 Theorem \ref{lem main} will be deduced by putting together these three types of estimates.

			 Our implementation below will deviate from the above idea by a few technical issues. The main reason is that it is hard to find a metric ${\bf g}$ with the required properties, and the thresholds dividing different cases depend sensitively on the local geometry of $E^c_*$. 
		
			 In the following, we will introduce a measurement $d_f$ of separation of any pair $(Q, Q') \in \hat\Gamma_a \times \hat\Gamma_{a'}$, whose precise definition will appear in  Subsection \ref{subsec PreciseDef}, Definition \ref{def Distancedf}, which plays the role of $\widetilde{d}$ in the abve heuristics.     
			 For each $i \in \{ 1, 2, 3\}$  and each $C > 0$, we will define a subset  $\Trunc_{ (i)}^{\leq C} \subset \hat\Gamma_{a} \times \hat\Gamma_{a'}$.  Roughly speaking,  
		$\Trunc_{ (1)}^{ \leq C}$ stands for a bounded set; 
			 	$\Trunc_{ (2)}^{ \leq C}$ corresponds to $(Q, Q')$ for which $d_f(Q, Q')$ is small; and $\Trunc_{ (3)}^{ \leq C}$ corresponds to $(Q, Q')$ that are both close to the dual center locus.
			 The precise definitions of    $( \Trunc_{ (i)}^{\leq C} )_{i = 1}^{3}$ will be deferred to  Subsection \ref{subsec PreciseDef}.
			 We let $\Trunc_{ (i)}^{ >  C} = ( \hat\Gamma_{a} \times \hat\Gamma_{a'} ) \setminus \Trunc_{ (i)}^{\leq C}$.

Let $\setR_{}$ be the collection of Borel measurable functions $R : \hat\Gamma_{a} \times \hat\Gamma_{a'} \to [0, 1]$. For each $R \in \setR_{}$, we define $\hL^{R}_{a \to a'}$ to be the operator with kernel $\cK^{R}_{}$ given by
\aryst
\cK^{R}_{}(Q' \mid Q) = R(Q, Q') \cK(Q' \mid Q).
\earyst 
We let $\setR_{ (i)}^{> C}$, resp.  $\setR_{ (i)}^{\leq C}$, denote the collection of $R \in \setR_{}$ such that $\supp(R) \subset \Trunc^{> C}_{ (i)}$, resp. $\supp(R) \subset \Trunc^{\leq C}_{ (i)}$.
We will use these functions to  truncate the kernel of $\hL_{a \to a'}$ into pieces, that are estimated separately.

 Recall that $g$  has  property $({\bf NI})_{\sigmaNI, \CNI}$
 for some $\sigmaNI > 0$, $\CNI > 1$.   
		 From now on,  we denote  
		\ary \label{eq deltasharpdefine}
		\deltasharp  =   \min(\sigmaNI, 1)  \lambdag  / 30000.
		\eary
	We now explain the order we choose the parameters in Theorem  \ref{lem main} and related policies:
	
	$\bullet$ we let $\greg \geq 1$ be a large universal constant, and let $\kappa_g > 0$ be depending only on  $g$,

	$\bullet$ we let $\tinitial \geq 1$ be an integer sufficiently large depending on $g$  so that $f^{\tinitial}$ exhibits enough hyperbolicity, and $\tinitial 	\deltasharp  \gg 1$,

	$\bullet$ we let $\esmall > 0$ be a small parameter  chosen freely after $\tinitial$ is given. 	 
	All the implicit constants appeared in the course of the proof will be uniformly   bounded over all $f$ satisfying \eqref{eq fclosetog1},
  
 $\bullet$  we let $\omegalow   > 1$ be a large parameter chosen freely after $\tinitial$ and $\esmall$ are given. In particular, we may require $\log \omegalow \gg \tinitial$.


			 In Section \ref{sec lem formerL6.8}, we will prove the following lemma, which    is analogous to \cite[Lemma 6.8]{Tsu}.
					This is the core of our argument and will use property $({\bf NI})$.

					\begin{prop}[Contraction from Non-Integrability] \label{lem formerL6.8}   
For any  sufficiently large $\tinitial$, there exists $\esmall > 0$ such that for  any  $\omegalow$ sufficiently large depending on $g, \tinitial$, 
there is a function   $R_0 \in \setR_{}$ with $R_0 \equiv 1$    on $\Trunc_{ (1)}^{>  \omegalow } \cap \Trunc_{ (2)}^{< e^{\deltasharp \tinitial / 4} } \cap  \Trunc_{ (3)}^{\leq  e^{\deltasharp \tinitial}}$, such that
							\aryst
							\|  	\hL^{R_0}_{a \to a'}  \|_{L^2 \to L^2}   <   \exp( -   4 \deltasharp   \tinitial ).
							\earyst
						\end{prop}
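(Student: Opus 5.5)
The strategy is a $TT^*$ (Cotlar--Stein type) argument on $\hL^{R_0}_{a\to a'}$. The cancellation comes from property $({\bf NI})$, applied through Proposition \ref{lem mainpropertyofestar}\eqref{itm NIinequality}. The kernel $\cK$ in \eqref{eq cK} is an oscillatory integral with phase $S$ from \eqref{eq defhatFandS}. On $\Trunc_{(2)}^{<e^{\deltasharp\tinitial/4}}\cap\Trunc_{(3)}^{\leq e^{\deltasharp\tinitial}}$, both $Q$ and $Q'$ lie within $e^{\deltasharp\tinitial}\hDelta$ of the dual center locus. There the frequencies satisfy $\eta'\approx \eta/\partial_z\hat F_z$, and the only freedom is the position of the wave packets.

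The first step is to choose $R_0$ as a smooth-in-$Q$ cutoff equal to $1$ on the stated set and supported on a slight enlargement of it. Using Theorem \ref{thm DWPT}\eqref{thm DWPT item 2} and Lemma \ref{lem etaeta'different}, I would discard the regions with $\eta_{max}/\eta_{min}$ large, where the kernel is negligible by integration by parts in $\tz$. The second step is to compute $\hL^{R_0}(\hL^{R_0})^*$ by integrating out $Q$. This produces a kernel between $Q'_1$ and $Q'_2$ given by an integral over $\tw$ in an $r_a$-ball and over intermediate points, with phase
\[
\eta_1'\hat F_{z}(\tw_1,\tz)-\eta_2'\hat F_z(\tw_2,\tz)+\xi'\text{-terms}.
\]
By Proposition \ref{lem derivativeboundsforhatF}, namely \eqref{eq partialwFz} and \eqref{eq partialw^2Fz}, the second-order term $\partial_w^2\hat F_z$ is of size $\tinitial^2 r_a^{-1}\Delta_a$. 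Multiplied by $\eta$ and by $r_a^2$ this is $O(\tinitial^2)$, so after rescaling $\tw$ by $r_a$ the phase is, up to a controlled nonlinearity, the template phase. Concretely, the difference of the pulled-back $E^c_*$ along an unstable segment of length $\sim r_a$ is given by \eqref{eq estaraapprox} in terms of $\vartheta^s_{p,\langle\eta\rangle}$, i.e.\ by $\tpl^s$ at scale $\rho=\adscale(\langle\eta\rangle,p)$ after subtracting $\tor^\dagger$. The third step is to split the $f^{\tinitial}$ image of an $r_a$-ball along $W^u$ into pieces of length $h_0\sim\mu^u(p,m)^{-1}$, with $m$ chosen so that the relative frequency $h_1^{-1}\mu^s/\mu^c$ lands in the window $[\CNI, \hat\CNI]$ required by the proposition. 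On each piece, Proposition \ref{lem mainpropertyofestar}\eqref{itm NIinequality} gives the gain $C_1^{-\sigmaNI}$ on the oscillatory average. Summing with Schur's test (Lemma \ref{lem intPhiadwdz} controls the $\Phi$ integrals) gives a factor $e^{-c\sigmaNI\lambdag\tinitial}$ from the $\mu^u(p,\tinitial)$ spreading. This is compared against the loss of $e^{C\deltasharp\tinitial}$ from the sizes of $\Trunc_{(2)}$ and $\Trunc_{(3)}$, the Jacobian $e^{\esmall\tinitial}$, and $\|\rho_{a\to a'}\|$. Since $\deltasharp=\min(\sigmaNI,1)\lambdag/30000$, the net bound is $e^{-8\deltasharp\tinitial}$ for the square, hence the claim.

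The main obstacle is the second step. The phase must be reduced rigorously to exactly the one-dimensional integral in Proposition \ref{lem mainpropertyofestar}\eqref{itm NIinequality}, uniformly over the base points $(\fw,\fz)$ and $(\fw',\fz')$, while the center dynamics of $f$ is nonlinear and $E^c$ is only H\"older. I would handle this by working within a single normal central chart per packet. Lemma \ref{lem adaptedscaleistame} and \eqref{eq rafwfzrawzclose} justify this, since they show $r_a$ and $\Delta_a$ are essentially constant across the overlapping charts. The linear part in $t$, which includes the torsion and the mismatch $\partial_z\hat F_z$, is absorbed into the free parameter $L$ of the supremum. The quadratic remainder is treated as a smooth amplitude with $O(\tinitial^2)$ derivatives, removable by one more integration by parts in $\tz$ at scale $\langle\eta\rangle^{-\beta_1}$. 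If this absorption fails, I would instead shrink the $h_0$-pieces by a factor $\tinitial^{-2}$ and accept a polynomial-in-$\tinitial$ loss, which is harmless against the exponential gain.
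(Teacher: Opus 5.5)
Your proposal has a genuine gap. It contains only the (NI) mechanism, which works when the non-integrability at the adapted scale is carried by the \emph{fluctuation} of the templates. It has nothing for the \emph{torsion-dominated} regime.

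By \eqref{lem adaptedscalefluctuation}, $\omega\rho\max(\flu^s(p,\rho),\flu^u(p,\rho))\sim 1$, while $\Delta(\omega,p)$ also contains the term $|\midtor(p,\rho)|\rho$. So one only knows $1\lesssim r_a\hDelta_a\lesssim_{\esmall}\langle\eta\rangle^{c(\esmall)}$ (see \eqref{eq Deltaadscale>1}), and $r_a\hDelta_a\gg 1$ exactly when $|\midtor(p,\rho)|\rho\gg\max(\flu^s(p,\rho),\flu^u(p,\rho))$. This regime cannot be excluded for $f$ near $g^1$.

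In that regime the $\xi$-windows of $\Trunc_{(3)}^{\leq e^{\deltasharp\tinitial}}$ have width $e^{\deltasharp\tinitial}\hDelta_a$, which is much larger than the Fourier scale $r_a^{-1}$ of a packet. Any Schur-test bookkeeping of the kind you describe therefore loses a factor $(r_a\hDelta_a)^{O(1)}$, not just $e^{C\deltasharp\tinitial}$. Compare Lemma \ref{lem integralbfK}, whose crude bound is $e^{10\deltasharp\tinitial}\hDelta_{\cI}^6 r_{\cI}^6$.

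The (NI) gain, by contrast, is a fixed factor $e^{-c\sigmaNI\lambdag\tinitial}$. For $f\neq g^1$ only $({\bf NI})_{\sigmaNI,\CNI,e^{10\hat\lambda_g\tinitial}}$ survives, so you cannot go to higher frequencies to gain more. A fixed gain cannot beat a loss that is unbounded in $\eta$. Worse, absorbing the linear part of the phase, torsion included, into the free parameter $L$ discards precisely the cancellation available in this regime.

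The paper therefore splits into two cases.

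\textbf{Case $r_{\cI}\hDelta_{\cI}\leq e^{150\deltasharp\tinitial}$.} Here it argues roughly as you do, but with two ingredients you skip.
\begin{itemize}
\item It first uses the $F\leftrightarrow F^{-1}$ symmetry of normal central charts to reduce to the case where $\flu^s$ is the dominant fluctuation. Otherwise $\theta^s$ along $x''$ need not have enough nonlinearity at the relevant frequency.
\item It discards the set $E_1$ where $|\ty-\ty'|$ is too small for $h_1^{-1}=\eta''(\ty'-\ty)$ to reach the (NI) window.
\end{itemize}

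\textbf{Case $r_{\cI}\hDelta_{\cI}>e^{150\deltasharp\tinitial}$.} Here the paper does two things.
\begin{itemize}
\item It integrates by parts in $\zeta,\zeta',\zeta''$ at scale $e^{\deltasharp\tinitial}\hDelta$ to neutralize the volume loss.
\item It gains from the torsion itself by non-stationary phase in $y''$. Off an exceptional set $E_2$ of measure $O(e^{-16\deltasharp\tinitial}r_{\cI}^2)$, one has $|\partial_{y''}Q_1|\gtrsim e^{-17\deltasharp\tinitial}\lambda_s^{-1}\hDelta_{\cI}$. The measure bound is Lemma \ref{lem areaofE}, which uses that the $x$-dependence of $\partial_{y''}Q_1$ is essentially linear with slope of order $\lambda_s^{-1}\eta\,\midtor(p,r_{\cI})$. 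This is done after regularizing the template factor along $y''$.
\end{itemize}

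Separately, your treatment of the nonlinear center dynamics does not work as written. A quadratic-in-$w$ phase remainder cannot be removed by integrating by parts in $\tz$. The paper instead proves Lemma \ref{lem oscpwG} from the invariance \eqref{eq pullbackofestarinmodifiedchart} of $E^c_*$ in normal central charts. It shows that the $x''$-derivative of the center phase difference is constant up to $O(\eta_{\cI}^{-1/2-c})$, and this is what justifies absorbing it into $L$.
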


			 We will prove in Section \ref{sec proofofLemma73} the following.
			 
			 \begin{lemma}[Contraction from Anisotropic weight] \label{lem contractionweight} 
			 For any  sufficiently large  $\tinitial$,  there exists $\esmall > 0$ such that for  any  sufficiently large  $\omegalow$  depending on $g, \tinitial$,  and  any $R \in \setR_{ (1)}^{> \omegalow } \cap \setR_{ (2)}^{< e^{\deltasharp \tinitial / 3}} \cap  \setR_{ (3)}^{> e^{\deltasharp \tinitial}}$  
			 	\ary 
			 	\|  \hL^{R}_{a \to a'}  \|_{\hH_{a} \to  \hH_{a'}} \leq e^{ -    \alpha_0 \deltasharp \tinitial / 4000 } 	\|  \hL^{R}_{a \to a'}  \|_{L^2 \to L^2}.
			 	\eary
			 \end{lemma}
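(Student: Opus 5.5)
The plan is to compare the weights $W_a(Q)$ and $W_{a'}(Q')$ on the support of $R$, and to show that on $\supp(R)$ we have
\[
W_{a'}(Q') \le e^{-\alpha_0\deltasharp\tinitial/4000}\, W_a(Q)
\]
up to harmless factors. Since $\hL^R_{a\to a'}$ has kernel $R\cK$, its norm from $\hH_a$ to $\hH_{a'}$ is the $L^2$ norm of the conjugated operator with kernel $W_{a'}(Q')R(Q,Q')\cK(Q'\mid Q)W_a(Q)^{-1}$. So a pointwise ratio bound $W_{a'}(Q')/W_a(Q)\le e^{-\alpha_0\deltasharp\tinitial/4000}$ on $\supp R$ gives the estimate immediately. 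Strictly, the $L^2$ operator norm is not monotone in the kernel multiplier. I would handle this by writing the multiplier as a product of a function of $Q$ and a function of $Q'$ after a fine decomposition according to the cone labels $\Theta^\pm$ and dyadic sizes of $W_{a,*}$. Alternatively, I would interpret $\|\hL^R\|_{L^2\to L^2}$ as the quantity already controlled (via Schur) for such truncations. I will commit to the Schur-type route: all kernel bounds used later are Schur bounds on $|\cK|$, and the ratio bound then multiplies them.

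The key geometric step is the following. Set $\zeta=\xi-\eta\estar_a(\eta,\fw,\fz\mid w,z)$ and define $\zeta'$ analogously. The condition $(Q,Q')\in\Trunc_{(2)}^{<e^{\deltasharp\tinitial/3}}$ should mean that $Q'$ lies near the image of $Q$ under the lifted cotangent map $(D\hat F)^{*,-1}$, up to errors measured at the scales $r_a^{-1}$ and $\langle\eta\rangle^{\beta_1}$. Using the invariance identity \eqref{eq pullbackofestarinmodifiedchart}, I will show that $\zeta' \approx \zeta\cdot(\partial_w\hat F_w)^{-1}$ modulo an error of size $e^{\deltasharp\tinitial/3}\hDelta_a$ (times $\eta_{\max}^{c(\esmall)}$ factors). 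This uses Proposition \ref{lem derivativeboundsforhatF}, namely \eqref{eq partialwFz}, \eqref{eq partialw^2Fz} and Remark \ref{rema DwhatFw}, to linearize $\hat F$ on the packet. Since $\eta'\approx\eta\,\partial_z\hat F_z$ with $\partial_z\hat F_z\sim1$, Lemma \ref{lem behaviorofDeltalaongorbit} gives $\hDelta_{a'}(\eta',\cdot)\sim e^{c(\esmall)\tinitial}\hDelta_a(\eta,\cdot)$. The condition $\Trunc_{(3)}^{>e^{\deltasharp\tinitial}}$ says that at least one of $|\zeta|,|\zeta'|$ exceeds $e^{\deltasharp\tinitial}\max(\hDelta,\langle\zeta\rangle^{\beta_2})$. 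This makes the error negligible relative to $\zeta$ or $\zeta'$, so the cone dynamics \eqref{eq hyperbolicityofwFw}–\eqref{eq expansionoutsidecones} applies to $\zeta$ directly.

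I then split into cases. If $\zeta\in\Bcone_-$ is mostly stable, then $\zeta'$ is contracted or stays in $\Bcone_-$, and large $W$ is penalized with the negative exponent. If $\zeta\in\Bcone_+$, then $\zeta'\in\Bcone_+$ is expanded by $e^{\lambda_g\tinitial}$. In the $\Theta^-$ to $\Theta^-$ case, $|\zeta'|\gtrsim e^{\lambda_g\tinitial}|\zeta|$ under the transpose-inverse, so $W_{*}^{-\alpha_0}$ decreases. In the $\Theta^+$ to $\Theta^+$ case, $|\zeta'|\lesssim e^{-\lambda_g\tinitial}|\zeta|$, so $W_*^{\alpha_0}$ decreases. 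The mixed case $\Theta^-\to\Theta^+$ is excluded by the cone invariance. In the case $\Theta^+\to\Theta^-$, we have $W_a\ge1$ while $W_{a'}\le 1$, and the gain comes from one of $W_*$ being $\ge e^{\deltasharp\tinitial}$. In every case the normalizing denominator $\max(\hDelta,\langle\zeta\rangle^{\beta_2})$ changes by at most $e^{C\tinitial c(\esmall)+\beta_2\lambda_g\tinitial}$, and the ratio still gains a factor $e^{(1-\beta_2)\lambda_g\tinitial}$ or at least $e^{\deltasharp\tinitial}$ raised to the power $\alpha_0$.

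The main obstacle is the error control in the second step: showing that $\zeta'$ is tracked by linear cone dynamics up to an error much smaller than $e^{\deltasharp\tinitial}\hDelta$. This requires the precise meaning of $d_f$ and $\Trunc_{(2)}$, together with the $\langle\zeta\rangle^{\beta_2}$ interpolation regime where $\hDelta$ is not the dominant scale. I would first try to work in the regime \eqref{eq Wsigmaatworegions} separately, and absorb the $\eta_{\max}^{c(\esmall)}$ losses using $\omegalow\gg$ and $\esmall\ll_{\tinitial}1$.
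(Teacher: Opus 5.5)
Your proposal has two gaps. The first is fatal as written.

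\textbf{The norm reduction.} Your committed ``Schur route'' does not prove the lemma. The statement compares two norms of the \emph{same} operator, $\|\hL^R_{a\to a'}\|_{\hH_a\to\hH_{a'}}$ and $\|\hL^R_{a\to a'}\|_{L^2\to L^2}$. Replacing the latter by a Schur bound on $|R\cK|$ gives a weaker statement. It is also not what is available where the lemma is applied. In the proof of Theorem \ref{lem main}, the bound $\|\hL^{R_2}_{a\to a'}\|_{L^2\to L^2}=\cO(1)$ is obtained only by subtraction in \eqref{eq splittingintothree}. The bound on $\hL^{R_0}_{a\to a'}$ used there comes from oscillatory cancellation (a $TT^*$ argument using $({\bf NI})$), not from absolute kernel bounds. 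No Schur bound on $|R_2\cK|$ is available, and any loss polynomial in $e^{\deltasharp\tinitial}$ would swamp the gain $e^{-\alpha_0\deltasharp\tinitial/4000}$, since $\alpha_0\ll 1$.

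The missing idea is elementary and is essentially your abandoned option (a); no cone labels are needed. Partition $\hat\Gamma_a$ into level sets $\cI_m=\{W_a\in[e^m,e^{m+1})\}$ and $\hat\Gamma_{a'}$ into the analogous $\cI'_{m'}$. A pointwise bound $W_{a'}(Q')\le e^{-\kappa'}W_a(Q)$ on $\supp R$ forces $\Pi_{\cI'_{m'}}\hL^R_{a\to a'}\Pi_{\cI_m}=0$ unless $m>m'+\kappa'-4$. Apply Cauchy--Schwarz in $m$ with weights $e^{-(m-m')}$, then use orthogonality of the $\Pi_{\cI'_{m'}}$. This gives
\[
\|\hL^R_{a\to a'}u\|_{\hH_{a'}}^2\lesssim e^{-2\kappa'}\sum_m e^{2m}\|\hL^R_{a\to a'}\Pi_{\cI_m}u\|_{L^2}^2\le e^{-2\kappa'}\|\hL^R_{a\to a'}\|_{L^2\to L^2}^2\,\|u\|_{\hH_a}^2 ,
\]
which uses only the true $L^2$ operator norm.

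\textbf{The pointwise ratio bound.} Your derivation rests on a wrong reading of $\Trunc_{(2)}^{<D}$. It does not mean that $Q'$ is close to the cotangent image of $Q$. It means that the alternatives in Definition \ref{def Distancedf} fail at level $D$; in particular $(Q,Q')\notin\Sigma_D$. For example, pairs with $\zeta\in\Bcone_+$ and $\zeta'\in\Bcone_-$ never satisfy item (3) for any $D$. They therefore lie in $\Trunc_{(2)}^{<D}$ as soon as items (1) and (2) fail, whatever the relation between $\zeta'$ and $\zeta(\partial_w\hat F_w)^{-1}$. So a linearization claim $\zeta'\approx\zeta(\partial_w\hat F_w)^{-1}$ cannot cover all of $\supp R$.

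Use the definition directly instead. Take $D=e^{\deltasharp\tinitial/3}$. Since $\Trunc_{(3)}^{>D^3}\subset\Trunc_{(3)}^{>D}$, failure of $\Sigma_D$ means failure of $\Trunc_{D,+}\cup\Trunc_{D,-}$. This is exactly the trichotomy (i)--(iii) in the proof of Proposition \ref{prop WeightandDf}. The ratio bound then follows by bookkeeping on $W_{a,*}$ in the two regimes $\zeta_{max}^{\beta_2}\ge\hDelta_{max}$ and $\zeta_{max}^{\beta_2}<\hDelta_{max}$.

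Dynamics enters only in the second regime, for two purposes:
\begin{itemize}
\item to show $(Q,Q')\in\Cent$, via Lemma \ref{lem etaeta'different};
\item to show $\hDelta_a\sim_{\tinitial}\hDelta_{a'}$, via failure of item (1) and Lemma \ref{lem behaviorofDeltalaongorbit}.
\end{itemize}
Your cone-by-cone case analysis is otherwise in the right spirit.
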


			 	We will prove in Section \ref{sec proofoflem formerL6.3} the following.
			 \begin{prop}[Off-diagonal estimates] \label{prop offdiagonal}
			 	For any integer $l \geq 1$, there is $\greg_0(l) > 0$ such that if $\greg \geq \greg_0(l)$ then the following holds.
			 	For  any sufficiently large $\tinitial$,  there exists $\esmall > 0$ such that  for any  sufficiently large  $\omegalow$  depending on $g, \tinitial$,    any $D \geq e^{\deltasharp \tinitial / 4}$, 
			 	any $R \in  \setR_{ (1)}^{>  \omegalow } \cap \setR_{ (2)}^{\geq  D}$, we have   
			 	$$\| \hL_{a \to a'}^{R} \|_{L^2  \to L^2  }, \| \hL_{a \to a'}^{R} \|_{\hH_{a} \to \hH_{a'} } \leq C( l) D^{- l}.$$
			 \end{prop}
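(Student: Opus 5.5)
I will prove Proposition \ref{prop offdiagonal} by a non-stationary phase argument applied to the kernel $\cK$ in \eqref{eq cK}, followed by a Schur test. Since the precise definition of $d_f$ and of $\Trunc_{(2)}^{\leq C}$ is postponed to Subsection \ref{subsec PreciseDef}, I take it as given that $d_f(Q,Q')\geq D$ means one of two things. Either the spatial supports of the wave-packets at $Q$ and at $F^{*}(Q')$ are separated by $D$ in the units $r_a\times r_a\times\langle\eta\rangle^{-\beta_1}$, or the frequency $(\xi,\eta)$ differs from $(\xi',\eta')\cdot D\hat F$ by $D$ in the dual units $r_a^{-1}\times r_a^{-1}\times\langle\eta\rangle^{\beta_1}$.

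The target pointwise bound is
\[
|\cK(Q'\mid Q)|\le C(l)\,\langle d_f(Q,Q')\rangle^{-l-10}\,\Phi_{a,\ell}(\eta,\fw,\fz\mid w,z)\,\Phi_{a',\ell}(\eta',\fw',\fz'\mid w',z')\times(\text{Jacobian factors}).
\]
The Jacobian factors $r_a\, r_{a'}\langle\eta\rangle^{-\beta_1/2}\langle\eta'\rangle^{-\beta_1/2}$ are chosen so that the Schur test integrates to $\cO(1)$, using Lemma \ref{lem intPhiadwdz} and \eqref{eq integralphiaupperbound0}.

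\textbf{Spatially separated case.} Here the supports are disjoint up to the tails of $\widehat\Phi_{a,\ell}$. The polynomial decay then comes from Theorem \ref{thm DWPT}\eqref{thm DWPT item 2} together with the fact that $\hat F$ distorts the scales $r_a$ and $\langle\eta\rangle^{-\beta_1}$ by at most $e^{C\tinitial}\eta_{max}^{c(\esmall)}$. This uses \eqref{eq partialzFw} and the temperate property, Lemma \ref{lem adaptedscaleistame}, which lets us compare $r_a(\eta,\cdot)$ at nearby base points.

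\textbf{Frequency separated case.} Here I integrate by parts in $(\tw,\tz)$ with the vector field
\[
L=\frac{\nabla S\cdot\nabla}{|\nabla S|^2},\qquad \nabla S=(\xi,\eta)-(\xi',\eta')\cdot D\hat F,
\]
written in the rescaled operators $\bD_{\tw}$ and $\bD_{\tz}$. The steps are as follows.

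First, each application of $L^{*}$ gains a factor $D^{-1}$ from the rescaled size of $\nabla S$.

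Second, when derivatives hit the amplitude $u$ in \eqref{eq amplitude}, the bounds of Theorem \ref{thm DWPT}\eqref{thm DWPT item 2} and \eqref{eq smoothnessofrho} cost $\cO(1)$ per derivative.

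Third, when derivatives hit $1/|\nabla S|^2$, they produce higher derivatives of $\hat F$. By \eqref{eq partialzFw} these are $\cO(\Lambda^{k+2}\eta_{max}^{kc(\esmall)})$, while the rescaling gains $r_a$ or $\langle\eta\rangle^{-\beta_1}$ per derivative. So each step loses at most $\eta_{max}^{c(\esmall)}\Lambda^{C}$, which is absorbed because $\esmall\ll 1$.

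Fourth, iterating $l+10$ times requires roughly $\greg_0(l)\approx C l$ derivatives of $\hat F$, which fixes $\greg_0(l)$.

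Within this case, the situation where $\eta$ and $\eta'$ are of very different size is handled by Lemma \ref{lem etaeta'different}. It gives $|\eta-\eta'\partial_z\hat F_z|\geq\eta_{max}/2$, so $\partial_{\tz}S$ alone is large and we integrate by parts only in $\tz$.

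For the $\hH_a\to\hH_{a'}$ bound, I observe that the weight ratio $W_{a'}(Q')/W_a(Q)$ is at most polynomial in $\langle d_f(Q,Q')\rangle$ times $e^{C\tinitial}$. This should follow from \eqref{eq weightfunction}, the Lipschitz-type dependence of $\zeta$ on $(\xi,\eta)$, Proposition \ref{lem mainpropertyofestar}\eqref{itm rangeofestar}, and the fact that the exponent $\alpha_0\ll1$ is small. The extra polynomial loss is absorbed by taking more integrations by parts, and $D\ge e^{\deltasharp\tinitial/4}$ absorbs the $e^{C\tinitial}$ factor provided $l$ is enlarged by a $\tinitial$-independent amount.

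The main obstacle is the third step: controlling $\partial_{\tw}$-derivatives of $\nabla S$, because the $\tz$-frequency $\eta'$ multiplies $\partial_w\hat F_z$. That product is only of size $\eta'\eta_{min}^{-1/2}\eta_{max}^{c(\esmall)}$, by \eqref{eq partialwFz}. After rescaling by $r_a\sim\langle\eta\rangle^{-1/2}$ it is $\cO(\eta_{max}^{c(\esmall)})$ only when $\eta\sim\eta'$. This is exactly why the normal central charts $H_{a,\fw,\fz,\eta}$ were constructed, and why the regime of comparable frequencies must be separated from the regime treated by Lemma \ref{lem etaeta'different}.
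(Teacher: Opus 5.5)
The gap is in your frequency-separated case. It stems from modelling $d_f$ as a separation measured in the units $r_a^{-1}\times r_a^{-1}\times\langle\eta\rangle^{\beta_1}$, which is not what Definition \ref{def Distancedf} does.

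Your third step concedes a loss of $\eta_{max}^{c(\esmall)}\Lambda^{C}$ per integration by parts and says it is ``absorbed because $\esmall\ll 1$''. It is not. For fixed $D$ (say $D=e^{\deltasharp\tinitial/4}$) and $\eta_{max}\to\infty$, the net factor $(\eta_{max}^{c(\esmall)}/D)^{n}$ diverges, while the proposition needs a bound uniform in $\eta$. Nor do the available estimates let you work at scale $r_a^{-1}$. The best control of the curvature of the phase is Proposition \ref{lem derivativeboundsforhatF}\eqref{itm 2inL42}, valid only for comparable $\eta,\eta'$ near the chart centres. It gives $r^{2}\eta'\|\partial_w^2\hat F_z\|\lesssim\tinitial^{2}r\hDelta$, and $r\hDelta$ is only known to satisfy $1\lesssim r\hDelta\lesssim\langle\eta\rangle^{c(\esmall)}$ by \eqref{eq Deltaadscale>1}. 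So $\partial_{\tw}S$ is controlled across a wave packet only to within $\cO(\tinitial^2\hDelta)$, and $w$-frequencies can be separated only in units of $\hDelta$.

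This is how $d_f$ is actually built:
\begin{itemize}
\item $\xi$-separation enters only through $\Sigma_D$, i.e.\ $\zeta_{max}>D\hDelta_{max}$ measured from the dual center locus, together with the cone conditions.
\item $\eta$-separation counts only when both points lie in $\Trunc_{(3)}^{\le D}$.
\item Spatial separation is not graded. It sets $d_f$ to its maximal value $\eta_{max}^{c_0}$ or $D_{max}$, and this is what absorbs the $\eta_{max}^{c(\esmall)}$ losses (the regime $K\ge K_0$, Lemma \ref{lem bigK}).
\end{itemize}

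Consequently, a single pointwise bound followed by an absolute-value Schur test cannot close. Near the dual center locus ($|\zeta|,|\zeta'|\lesssim D\hDelta$) the only decay comes from integrating by parts in $\tz$, so $|\cK|$ is not localized in $\xi$ at scale $r^{-1}$. The $\xi$-integral in the Schur test then runs over a set of measure $\sim(D\hDelta)^2$ instead of $r^{-2}$. That is a loss of $(Dr\hDelta)^2$, which $D^{-l}$ cannot beat as $\eta\to\infty$.

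The paper therefore splits $R$ dyadically into pieces supported in $\Trunc_{2^kD,out}$ and $\Trunc_{2^kD,in}$ (Proposition \ref{prop mainprop}).
\begin{itemize}
\item On the out pieces, where $\zeta_{max}>D\hDelta_{max}$, it rewrites $\partial_{\tw}S$ using the $E^c_*$-invariance identity \eqref{eq pullbackofestarinmodifiedchart}. It then integrates by parts only along the contracted direction $V$ of Lemma \ref{lem DvSDvF}. There $|\DV S|\gtrsim D^{0.99}r_{min}\hDelta_{max}$ dominates $\|\DV^2 S\|\lesssim\tinitial^2 r_{min}\hDelta_{max}$ (Lemma \ref{lem smallK}), so the $\xi$-integral of $\BV S^{-l/4}$ is effectively of size $r^{-2}$.
\item On the in pieces it conjugates by the partial Fourier transform $\cT_a$ in $\xi$ and integrates by parts in $\xi$. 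This uses that the in-pieces of $R$ are smooth in $\xi$ at scale $D\hDelta$, and it trades the $(D\hDelta)^2$ volume for the localization $\langle D\hDelta(\hw-\tw)\rangle^{-k}$ (Subsection \ref{subsec proofitm2prop81}).
\end{itemize}

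Your reduction of the $\hH$-bound to a polynomial weight ratio is fine and matches Proposition \ref{prop WeightandDf}.
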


%
%
%
%
%

			\subsection{Precise definitions of $d_f$ and  $( \Trunc_{ (i)}^{\leq C} )_{i = 1}^{3}$} \label{subsec PreciseDef}

			\
			 
			 	Given   $Q = (w, z, \xi, \eta \mid \fw, \fz) \in \hat\Gamma_a$ and   $Q' = (w', z', \xi', \eta' \mid \fw', \fz') \in \hat\Gamma_{a'}$,   
			 we will denote
			 \ary
			 \label{eq Rminmax}	&& R_{min} = \min( r_a( \extra \eta, \fw, \fz), r_{a'}( \extra \eta', \fw', \fz') ), \ \ R_{max} = \max( r_a(\extra \eta, \fw, \fz), r_{a'}( \extra \eta', \fw', \fz') ), \\ 
			 \label{eq rminmax}		&& r_{min} = \min( r_a(\eta, \fw, \fz), r_{a'}(\eta', \fw', \fz') ), \ \ r_{max} = \max( r_a(\eta, \fw, \fz), r_{a'}(\eta', \fw', \fz') ), \\ 
			 \label{eq etaminmax}	&&  \eta_{min} =  \min( \langle \eta \rangle,   \langle \eta' \rangle ), \ \  \eta_{max} =  \max( \langle \eta \rangle,   \langle \eta' \rangle ), \ \ \hDelta_{max} =  \max( \hDelta_a(\eta, \fw, \fz),  \hDelta_{a'}(\eta', \fw', \fz') ).
			 \eary
			 We define $\zeta 	=  \xi  - \eta  \estar_{a}(\eta, \fw, \fz \mid w, z)$. 
			 In other words, $(\zeta, 0)$ is the projection of $(\xi, \eta)$ to $\R^2 \times \{0\}$ along the line $\R (\estar_{a}(\eta, \fw, \fz \mid w, z), 1)$.
			 Similarly, we define $\zeta'  = \xi' -  \eta' \estar_{a'}(\eta', \fw', \fz' \mid w', z')$. Set
			 \ary 
			 \label{eq Deltamax}		&&  \xi_{max} = \max(\langle \xi \rangle, \langle \xi' \rangle), \ \  \zeta_{max} = \max(\langle \zeta \rangle, \langle \zeta' \rangle ).
			 \eary

			\noindent{$\bullet$}  We first describe a region $\Sigma_D$ (see \eqref{def SigmaD}) on which we can exploit  hyperbolicity.   We will see, among other things, that the kernel $\cK$ decays rapidly on $\Sigma_D$ (see Definition \ref{def Distancedf} and Proposition \ref{prop offdiagonal}).  
			Given an arbitrary constant $D > 0$. We define
			\aryst
			\hat\Gamma_a( D )  =  \{ Q = (w, z, \xi, \eta \mid \fw, \fz) \in \hat\Gamma_a \mid  
			| \zeta | \leq D  \hDelta_{a}(\eta, \fw, \fz)  \}.
			\earyst				 
			We define
			\ary \label{eq Trunc0leqD}
				\Trunc_{ (3)}^{\leq D} =  \hat\Gamma_a(D)\times \hat\Gamma_{a'}(D).
			\eary


			We let $\Trunc_{D, -}$ be the set of $(Q, Q') \in \hat\Gamma_a \times \hat\Gamma_{a'}$ satisfying one of the following:
			(1) $\zeta' \in \Bcone_{+}$, $\zeta \in \Bcone_{-}$ and  $\| \zeta' \| \leq \| \zeta \|$,
			(2) both $\zeta'$, $\zeta$ are in $\Bcone_{-}$ and 
			$$ | \langle \zeta, ( 0, 1 ) \rangle | >  \min( e^{\lambda_g \tinitial},  D)^{- 1/100}  | \langle \zeta', (0, 1) \rangle |.$$

			We let $\Trunc_{D, +}$ be the set of $(Q, Q') \in \hat\Gamma_a \times \hat\Gamma_{a'}$ satisfying one of the following:
			(1) $\zeta' \in \Bcone_{+}$, $\zeta \in \Bcone_{-}$ and $\| \zeta' \| >  \| \zeta \|$,
			(2) both $\zeta'$, $\zeta$ are in $\Bcone_{+}$ and 
			$$ | \langle \zeta', (1, 0) \rangle | >  \min( e^{\lambda_g \tinitial},  D)^{ - 1/100}  | \langle \zeta, (1, 0) \rangle |.$$
			The term $\min( e^{\lambda_g \tinitial},  D)$ we put here is convenient for unifying discussions for small $D$ and large $D$.

			We define 
			\ary \label{def SigmaD}
			\Sigma_D = \Trunc_{ (3)}^{ > D} \cap (\Trunc_{D, +} \cup \Trunc_{D, -}).
			\eary 
 We will see that    $\Sigma_D$ corresponds to a subset of Part 3.  
			The precise formulas in the definition of $\Sigma_D$ will be used to deduce Proposition \ref{prop WeightandDf} and Proposition \ref{lem DvSDvF}, but  will not be used in other parts of the paper.

			\noindent{$\bullet$}   	Now we describe the \lq\lq Central\rq\rq  \ region on which our estimates will depend sensitively on the local geometry of $E^{c}_*$. 
Define
			\ary
			\Cent = \{ (Q, Q') \in \hat\Gamma_a \times \hat\Gamma_{a'}  \mid \eta_{max}^{\beta_1 + c_0} >  \xi_{max} \  \mbox{ and } \  \eta_{max}  < 10 \eta_{min} \}.
			\eary
			
			\noindent{$\bullet$} We are ready to define $d_f$, the measurement of separation  between $Q$ and  $Q'$.

			\begin{defi} \label{def Distancedf}
				Given $(Q, Q') \in \hat\Gamma_a \times \hat\Gamma_{a'}$, we define  $d_{f}(Q, Q')$ in the following way.
				
				If $(Q, Q') \in \Cent$, we let $d_{f}(Q, Q')$ denote the biggest constant $1 \le D \leq   \eta_{max}^{c_0}$ such that at least one of the following holds:
				\enmt
				\item  \label{itm distance1}
				{\em  
					either  $\hat{F}( \NGB^{\extra^{- 1}}_a(\eta, \fw, \fz) ) \cap \NGB^{\extra^{- 1}}_{a'}(\eta', \fw', \fz') = \emptyset$,}
				\item     \label{itm distance2}   
				{\em	 $(Q, Q') \in  \Trunc_{ (3)}^{ \leq D} $, and we have $| \eta - \eta' \partial_z \hat{F}_z(\fw, \fz) |   > D  \eta_{max}^{\beta_1} + 3   \xi_{max} $, }
				\item  \label{itm distance3}
				{\em $(Q, Q') \in  \Sigma_D$.}
				\eenmt
				
				If $(Q, Q') \notin \Cent$, we let $d_{f}(Q, Q')$ denote the biggest constant $1 \le D \leq  D_{max} := \max(\xi_{max} / \eta_{max}^{\beta_1}, \eta_{max}^{c_0})$ such that at least one of the following holds:
				\enmt
				\item     \label{itm distance12}
				{\em  
					$\hat{F}( \NGB^{D_{max}}_a(\eta, \fw, \fz) ) \cap \NGB^{D_{max}}_{a'}(\eta', \fw', \fz') = \emptyset$,}
				\item           \label{itm distance22}
				{\em we have $| \eta - \eta' \partial_z \hat{F}_z(\fw, \fz) |   > D  \eta_{max}^{\beta_1} + 3   \xi_{max} $ if $| \eta | \geq | \eta' |$; and $|  \eta  - \eta' \partial_z \hat{F}_z( \hat{F}^{-1}(\fw, \fz) ) |   > D  \eta_{max}^{\beta_1} + 3   \xi_{max} $ if $| \eta | < | \eta' |$, }
				\item    \label{itm distance32}
				{\em $(Q, Q') \in \Sigma_D$.}
				\eenmt
			\end{defi}
			In particular,  any $(Q, Q') \in \Sigma_D$ satisfies $d_f(Q, Q') \geq D$.

						\noindent{$\bullet$} 
					Finally, for every $D > 0$, we define the following subsets of $\hat\Gamma_{a} \times \hat\Gamma_{a'}$:
			\enmt
			\item we let $\Trunc_{ (1)}^{\leq D}$ denote the set of $(Q,Q')$ satisfying $\max( \xi_{max}, \eta_{max}) \leq D$,
			\item we let $\Trunc_{ (2)}^{\leq D}$  denote the set of  $(Q, Q')$ such that   $d_f(Q, Q') \leq D$.
			\eenmt
		We set $\Trunc_{ (i)}^{ >  D} = ( \hat\Gamma_{a} \times \hat\Gamma_{a'} ) \setminus \Trunc_{ (i)}^{\leq D}$, and define $\Trunc_{ (i)}^{ \geq  D}$, $\Trunc_{ (i)}^{ <  D}$ in a similar way.

%
%

			\subsection{Proof of Theorem \ref{lem main}} \label{subsec proofofthemainlemma}
			
We let $\greg = \greg_0(1)$ be given by Proposition \ref{prop offdiagonal}, and let $\deltasharp$ be given by \eqref{eq deltasharpdefine}.
			Let $R_0$ be given by Proposition \ref{lem formerL6.8}. Then  by Proposition \ref{lem formerL6.8} and by \eqref{eq defalpha0}, we have 
		\ary \label{eq typeii} 
			\| \hL^{R_0}_{a \to a'}  \|_{\hH_{a} \to \hH_{a'}}   \leq
	e^{2 \alpha_0 \deltasharp \tinitial} 	\| \hL^{R_0}_{a \to a'}  \|_{L^2 \to L^2}  \leq    \exp( ( 2 \alpha_0 \deltasharp -    4 \deltasharp    )   \tinitial ) \leq  \exp(  -  2  \deltasharp     \tinitial  ). 
		\eary
		Since $R_0 \equiv 1$ on $\Trunc_{ (1)}^{>  \omegalow } \cap \Trunc_{ (2)}^{<  e^{\deltasharp \tinitial / 4}} \cap  \Trunc_{ (3)}^{\leq  e^{\deltasharp \tinitial}}$, we may write  $1_{\hat\Gamma_a \times \hat\Gamma^{> \omegalow}_a}$ as a sum $R_0 + R_1 + R_2$ such that
		$R_1 \in    \setR_{ (1)}^{> \omegalow } \cap \setR_{ (2)}^{\geq   e^{\deltasharp \tinitial / 4} }$ and $R_2 \in  \setR_{ (1)}^{> \omegalow } \cap \setR_{ (2)}^{< e^{\deltasharp \tinitial / 3}} \cap  \setR_{ (3)}^{> e^{\deltasharp \tinitial}} $.
		Then 
		\ary \label{eq splittingintothree} 
		(1 -  \Pi_{\omegalow}  ) \hL_{a \to a'}  =  \hL_{a \to a'}^{R_0}  +  \hL_{a \to a'}^{R_1}  +  \hL_{a \to a'}^{R_2}.
		\eary
 	By Proposition \ref{prop offdiagonal} (with $l = 1$), we have 
		\ary \label{eq typei}
		\| \hL_{a \to a'}^{R_1} \|_{L^2 \to L^2 },	\| \hL_{a \to a'}^{R_1} \|_{\hH_{a} \to \hH_{a'} } \lesssim  e^{- \deltasharp \tinitial / 4}.
		\eary
	By \eqref{eq typeii}, \eqref{eq splittingintothree} and \eqref{eq typei},  we have $\|  \hL^{R_2}_{a \to a'}  \|_{L^2 \to L^2} = \cO(1)$.
 Then by Proposition \ref{lem contractionweight}, we have 
	\ary \label{eq typeiii} 
			 				\|  \hL^{R_2}_{a \to a'}  \|_{\hH_{a} \to \hH_{a'}} \lesssim e^{ -    \alpha_0 \deltasharp \tinitial / 4000 }.
	\eary
  We conclude the proof of Theorem \ref{lem main} for   $\kappa_g = \alpha_0 \deltasharp  / 5000 $    by \eqref{eq typeii}, \eqref{eq typei}  and  \eqref{eq typeiii}, and   by letting $\tinitial \gg  \deltasharp^{-1}$.

  \section{Proof of Lemma \ref{lem contractionweight}}  \label{sec proofofLemma73}
  
  We need the   following  crucial property of $d_f(Q, Q')$.
  
  \begin{prop} \label{prop WeightandDf} 
  	There exist   $C_2 = C_2(g) > 0$, $C_3 = C_3(g) > 0$ and $\kappa' = \kappa'(g) > 0$  such that for any $\tinitial$ sufficiently large, and any $D \geq e^{\deltasharp \tinitial /3 }$, the following is true:
  	\enmt
  	\item 	 \label{itm WQWQ'1}
  	for any $(Q, Q') \in  \Trunc_{ (1)}^{>  \omegalow } \cap  \Trunc_{ (2)}^{< D}$, we have 
  	\ary \label{eq WaQWa'Q'0}
  	W_{a'}(Q')	\leq D^{C_2} W_a(Q).
 	\eary
  	\item  \label{itm WQWQ'2}
  	for any $(Q, Q') \in \Trunc_{ (1)}^{>  \max( \omegalow, D^{C_3} )    } \cap \Trunc_{ (2)}^{<  D} \cap \Trunc_{ (3)}^{ >  D^{3}}$, we have 
  	\ary \label{eq WaQWa'Q'}
  	W_{a'}(Q')	\leq e^{-  \alpha_0 \deltasharp \tinitial / 2000} W_a(Q).
  	\eary
  	\eenmt
  \end{prop}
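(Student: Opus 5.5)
Both items come from comparing the projected frequencies $\zeta$ and $\zeta'$ through the cotangent action of $\hat F$. The underlying relation is $(\xi,\eta)\approx(\xi',\eta')\cdot D\hat F$. It holds up to errors of size $D\eta_{max}^{\beta_1}+\xi_{max}$ in the $\eta$-component, by item \eqref{itm distance2} or \eqref{itm distance22} of Definition \ref{def Distancedf}, and up to errors of size $D r_{min}^{-1}$ in the $\xi$-component. The first step is to remove the dual center locus from both frequencies. Pair the invariance identity \eqref{eq pullbackofestarinmodifiedchart} with the approximate relation $\xi\approx \xi'\partial_w\hat F_w+\eta'\partial_w\hat F_z$. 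This gives
\[
\zeta \approx \zeta'\cdot \partial_w\hat F_w(\tw,\tz) + \mathcal E,
\]
where the error $\mathcal E$ collects three contributions. The first is $\eta\,(\estar_a(\eta,\fw,\fz\mid \tw,\tz)-\estar_a(\eta,\fw,\fz\mid w,z))$, which is $\cO(D^{C}\hDelta_a)$ by Proposition \ref{lem mainpropertyofestar}\eqref{itm rangeofestar}. The second is the analogous term for $Q'$. The third is the mismatch $|\eta-\eta'\partial_z\hat F_z|\cdot\|\estar\|$; since $\|\estar\|\lesssim D^C\Delta$, this is again $\cO(D^C\hDelta_{max})$. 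By Lemma \ref{lem behaviorofDeltalaongorbit} and Lemma \ref{lem adaptedscaleistame}, $\hDelta_a(\eta,\fw,\fz)$ and $\hDelta_{a'}(\eta',\fw',\fz')$ are comparable up to factors $e^{c(\esmall)\tinitial}D^{C}$. Outside $\Cent$, where $\xi_{max}\gg\eta_{max}^{\beta_1}$ or $\eta,\eta'$ differ a lot, a cruder version suffices: $\langle\zeta\rangle\sim\langle\xi\rangle$ up to $D^C$, and the same for $\zeta'$.

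Item \eqref{itm WQWQ'1} then follows directly. Because $\|\partial_w\hat F_w^{\pm1}\|\le e^{C\tinitial}$ and $D\ge e^{\deltasharp\tinitial/3}$, the constant $e^{C\tinitial}$ is absorbed into $D^{C}$. The relation above gives $\langle\zeta'\rangle\le D^C(\langle\zeta\rangle+\hDelta_{max})$, and symmetrically with $Q$ and $Q'$ exchanged. The normalizer $\max(\hDelta,\langle\zeta\rangle^{\beta_2})$ also changes by at most $D^C$. Hence $W_{a,*}(Q)$ and $W_{a',*}(Q')$ agree up to $D^{C}$. Since $W_a$ is built from $W_{a,*}^{\pm\alpha_0}$, a change of cone ($\Theta^\pm$) can distort the ratio by at most $W_{a,*}^{2\alpha_0}$. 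So one has to rule out the bad case where $\zeta\in\Bcone_+$ is large while $\zeta'\in\Bcone_-$. I expect to do this with the cone invariance \eqref{eq hyperbolicityofwFw}: since $\zeta\approx\zeta'\partial_w\hat F_w$ and the adjoint maps the complement of $\Bcone_-$ into $\Bcone_-$-type cones, $\zeta'\in\Bcone_-$ forces $\zeta\in\Bcone_-$ up to the additive error. This case is only possible when $|\zeta|\lesssim D^C\hDelta$, where $W_{a,*}\lesssim D^C$ anyway.

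Item \eqref{itm WQWQ'2} is the main obstacle. On $\Trunc_{(3)}^{>D^3}$ at least one of $|\zeta|,|\zeta'|$ exceeds $D^3\hDelta$, so the additive error $D^{C}\hDelta$ is negligible provided $C<3$. This requires tracking exponents carefully, and $\Trunc_{(1)}^{>D^{C_3}}$ makes the $\hDelta$-comparison errors $\eta_{max}^{c(\esmall)}$ harmless. Then $\zeta\approx\zeta'\partial_w\hat F_w$, and I use that $(Q,Q')\notin\Sigma_D$ because $d_f<D$. Consider the case $\zeta'\in\Bcone_+$ with $\zeta\in\Bcone_-$. Membership in $\Trunc_{D,\pm}$ is excluded only in degenerate configurations, which I expect to contradict the hyperbolic adjoint action. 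Consider next the case where both lie in $\Bcone_+$. Not being in $\Trunc_{D,+}$ means $|\zeta'_x|\le e^{-\lambda_g\tinitial/100}|\zeta_x|$, so $|\zeta'|\lesssim e^{-\lambda_g\tinitial/100}|\zeta|$. Here $W_{a,*}$ grows like $\langle\zeta\rangle^{1-\beta_2}$ or $\langle\zeta/\hDelta\rangle$, so $W_{a',*}(Q')\le e^{-c\lambda_g\tinitial}W_{a,*}(Q)$, and $W^{\alpha_0}$ gives the gain $e^{-\alpha_0\deltasharp\tinitial/2000}$ since $\deltasharp\le\lambda_g/30000$. The case where both lie in $\Bcone_-$ is symmetric: now $|\zeta'|$ is much larger, and the weight is $W^{-\alpha_0}$, which again decreases. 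The delicate points I expect are the mixed cone boundaries and the regime switch of $\max(\hDelta,\langle\zeta\rangle^{\beta_2})$, where one has to check that the normalizer does not cancel the gain. I would handle this using $\langle\zeta\rangle^{\beta_2}$ growing sublinearly and $\hDelta$ being essentially common to $Q$ and $Q'$.
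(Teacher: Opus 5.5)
Your argument rests on a cotangent relation $\zeta\approx\zeta'\cdot\partial_w\hat F_w+\mathcal E$, and $d_f(Q,Q')<D$ does not give you that relation. Definition \ref{def Distancedf} tests only three things: spatial overlap, the $\eta$-mismatch, and membership in $\Sigma_D$. Nothing in it compares $\xi$ with $\xi'\partial_w\hat F_w+\eta'\partial_w\hat F_z$. Moreover, on $\Cent\cap\Trunc_{(3)}^{>D}$ item \eqref{itm distance2} is false automatically, since it requires $\Trunc_{(3)}^{\le D}$, so there you do not even get $\eta$-matching.

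In fact every pair with $\zeta'\in\Bcone_-$, $\zeta\in\Bcone_+$ lies outside $\Trunc_{D,+}\cup\Trunc_{D,-}$, whatever the sizes of $\zeta,\zeta'$. So the two-sided comparability $\langle\zeta'\rangle\le D^C(\langle\zeta\rangle+\hDelta_{max})$ that you use for item \eqref{itm WQWQ'1} fails on $\Trunc_{(2)}^{<D}$.

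Your cone discussion is also inverted.
\begin{itemize}
\item The configuration $\zeta\in\Bcone_+$, $\zeta'\in\Bcone_-$ that you try to exclude is the harmless one, since then $W_a(Q)=W_{a,*}(Q)^{\alpha_0}\ge1\ge W_{a',*}(Q')^{-\alpha_0}=W_{a'}(Q')$.
\item It also cannot be ruled out. The map $\zeta'\mapsto\zeta'\cdot\partial_w\hat F_w$ expands the first coordinate and contracts the second, so $\zeta'\in\Bcone_-$ typically produces $\zeta\in\Bcone_+$, the opposite of what you assert.
\item The dangerous configuration is $\zeta'\in\Bcone_+$, $\zeta\in\Bcone_-$. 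It always lies in $\Trunc_{D,+}\cup\Trunc_{D,-}$, hence in $\Sigma_D$ once $(Q,Q')\in\Trunc_{(3)}^{>D}$, which contradicts $d_f<D$ outright. No hyperbolicity and no ``degenerate configurations'' are involved.
\end{itemize}

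The missing idea is that the statement is essentially definitional. Item \eqref{itm WQWQ'1} is trivial on $\Trunc_{(1)}^{\le D^{C_3}}\cup\Trunc_{(3)}^{\le D^3}$, where both weights lie in $[D^{-C},D^{C}]$. Elsewhere it follows from item \eqref{itm WQWQ'2}.

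For item \eqref{itm WQWQ'2}, take $C_3$ large so that $D_{max}>D$. Then the overlap items \eqref{itm distance1} and \eqref{itm distance12}, which do not depend on $D$, fail. Failure of \eqref{itm distance3} at level $D$ leaves exactly three cases:
\begin{itemize}
\item[(i)] $\zeta'\in\Bcone_-$ and $\zeta\in\Bcone_+$;
\item[(ii)] both in $\Bcone_-$ with $|\zeta_y|\le m^{-1/100}|\zeta'_y|$;
\item[(iii)] both in $\Bcone_+$ with $|\zeta'_x|\le m^{-1/100}|\zeta_x|$.
\end{itemize}
Here $m=\min(e^{\lambda_g\tinitial},D)\ge e^{\deltasharp\tinitial/3}$, not $e^{\lambda_g\tinitial}$ as you wrote. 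Each case gives the decay directly from the formula for $W_{a,*}$.

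If $\zeta_{max}^{\beta_2}\ge\hDelta_{max}$, the frequency attaining $\zeta_{max}$ has $W_*\sim\langle\zeta\rangle^{1-\beta_2}$, and no comparison of the $\hDelta$'s is needed.

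If $\zeta_{max}^{\beta_2}<\hDelta_{max}$, you must first show $(Q,Q')\in\Cent$. The bound $\xi_{max}<\eta_{max}^{\beta_1+c_0}$ follows from $\beta_2>(1/2+c)/(\beta_1+c_0)$. The bound $\eta_{max}<10\eta_{min}$ holds because otherwise Lemma \ref{lem etaeta'different} makes \eqref{itm distance22} true. Only then does failure of \eqref{itm distance1} give spatial overlap. With that, Lemma \ref{lem behaviorofDeltalaongorbit} yields $\hDelta_a(\eta,\fw,\fz)\sim\hDelta_{a'}(\eta',\fw',\fz')$ up to factors $\tinitial^{\pm1}$. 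This loss is polynomial in $\tinitial$, which is why the $D^3$ margin of $\Trunc_{(3)}^{>D^3}$ suffices.

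Your same-cone computations agree with this scheme. However, your comparability of the $\hDelta$'s rests on the unavailable $\eta$-matching and carries an unquantified $D^C$ loss.
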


  \begin{proof} 
  	
  	For  any $C_3 > 0$, any $(Q, Q') \in   \Trunc_{ (1)}^{\leq    D^{C_3}  } \cup   \Trunc_{ (3)}^{\leq  D^{3}}$, we clearly have \eqref{eq WaQWa'Q'0} for some $C_2 = C_2(g, C_3 ) > 0$. 
  	From now on, we assume that $(Q, Q') \in \Trunc_{ (1)}^{> \max( \omegalow, D^{C_3} ) }  \cap \Trunc_{ (2)}^{<  D}  \cap \Trunc_{ (3)}^{ >  D^{3}}$ for some large $C_3 > 0$ to be determined later.
  	We will prove \eqref{eq WaQWa'Q'}, which clearly implies  \eqref{eq WaQWa'Q'0}.

  	By letting $C_3$ be large, we may assume that $D_{max}  = \max(\xi_{max} / \eta_{max}^{\beta_1}, \eta_{max}^{c_0}) > D$. Thus item \eqref{itm distance1} in Definition \ref{def Distancedf}  fails (for $(Q, Q')$ in both $\Cent$ and its complement). 
  	
  	Since item \eqref{itm distance3}  in Definition \ref{def Distancedf}  fails for $D$, one of the following cases hold:
  	\enmt
  	\item[$(i)$]  \label{case 1} $\zeta' \in \Bcone_{-}$ and $\zeta \in \Bcone_{+}$,
  	\item[$(ii)$] \label{case 2} both $\zeta'$, $\zeta$ are in $\Bcone_{-}$ and 
  	$ | \langle \zeta, ( 0, 1 ) \rangle | \leq  \min( e^{\lambda_g \tinitial},  D)^{- 1/100}  | \langle \zeta', (0, 1) \rangle |$,
  	\item[$(iii)$] \label{case 3} both $\zeta'$, $\zeta$ are in $\Bcone_{+}$ and 
  	$| \langle \zeta', (1, 0) \rangle | \leq \min( e^{\lambda_g \tinitial},  D)^{ - 1/100}  | \langle \zeta, (1, 0) \rangle |$.
  	\eenmt
  	We will only detail the proof under  $\|  \zeta \| \geq \|    \zeta'  \|$ since a parallel argument applies when $\|  \zeta \| < \|   \zeta'  \|$.
  It is clear that $(ii)$ fails, and thus we have either $(i)$ or $(iii)$. In both cases, $\zeta \in \Bcone_{+}$.

  	We divide the proof into two cases.

  	\noindent Case I.   Assume that $\zeta_{max}^{\beta_2} \geq   \hDelta_{max}$. 
 	Since $\omegalow$ is large, we have  $\zeta_{max}  \geq  \omegalow^{1/2}$.
It is clear that $W_{a, *}(Q) \sim \langle \zeta \rangle^{1  - \beta_1}$. Moreover, by definition, we have $W_{a', *}(Q') \lesssim \langle   \zeta'  \rangle^{1 - \beta_2}$.
  	
  	Assuming $(i)$,   we have $W_{a}(Q) / W_{a'}(Q') \geq W_{a}(Q) = W_{a, *}(Q)^{\alpha_0} \gtrsim \omegalow^{( 1 - \beta_1 ) \alpha_0/2}$. In particular, \eqref{eq WaQWa'Q'} is clear. Now assume  $(iii)$ holds.  
  	Then  we have $W_{a}(Q) / W_{a'}(Q') = ( W_{a, *}(Q) / W_{a', *}(Q') )^{\alpha_0} \gtrsim (\langle \zeta \rangle / \langle   \zeta'  \rangle)^{( 1 - \beta_2)\alpha_0} \gtrsim  \min( e^{\lambda_g \tinitial},  D)^{( 1 - \beta_2)\alpha_0 / 100}$. 
  	We obtain \eqref{eq WaQWa'Q'} as $D \geq e^{\deltasharp \tinitial /3 }$.

  	\noindent Case II.      Now  assume $\zeta_{max}^{\beta_2}  <   \hDelta_{max}$.    We claim that $\eta_{max}^{\beta_1 + c_0} > \xi_{max}$.  Indeed, assume to the contrary that  $\eta_{max}^{\beta_1 + c_0} \leq \xi_{max}$. 
  	In this case, we have   $\xi_{max} \sim \zeta_{max} >  \omegalow^{\beta_1 + c_0}$.
  	By $\beta_2  > (1/2 + c) / (\beta_1 + c_0)$ for  $c = \beta_2 c_0$, we have 
  	$\hDelta_{max} \lesssim  \eta_{max}^{1/2 + c} \leq     \xi_{max}^{(1/2 + c) / (\beta_1 + c_0) } \ll  \zeta_{max}^{\beta_2}$.
  	This contradicts our hypothesis. The claim is proved. In particular,   $\eta_{max} > \omegalow$ and  $D_{max} = \eta_{max}^{c_0}$.

  	We also claim that $\eta_{max} < 10 \eta_{min}$.   Indeed,  if $\eta_{max} \geq  10 \eta_{min}$, then by $\eta_{max} > \omegalow$ and Lemma \ref{lem etaeta'different}, we see that item \eqref{itm distance22}  in Definition \ref{def Distancedf}  holds, contradicting our hypothesis.

  	By the claims $\eta_{max}^{\beta_1 + c_0} >  \xi_{max}$,  $\eta_{max} < 10 \eta_{min}$ (that is, $(Q, Q') \in \Cent$) and that  item \eqref{itm distance1}  in Definition \ref{def Distancedf}  fails, we must have 
  	$\hat{F}( \NGB^{1/\extra }_a(\eta, \fw, \fz) ) \cap \NGB^{1/\extra }_{a'}(\eta', \fw', \fz') \neq \emptyset$.
  	Then by Lemma \ref{lem behaviorofDeltalaongorbit} and \eqref{eq esmallandtinitial}, we have  $1/\tinitial	\lesssim	\hDelta_a(\eta, \fw, \fz) / \hDelta_{a'}(\eta', \fw', \fz') 
  	\lesssim \tinitial$.
  	Then by $\xi_{max}^{\beta_2}  <   \hDelta_{max}$, we have 
  	\aryst
  1/\tinitial	\lesssim 	W_{a, *}(Q) / \langle \hDelta_a(\eta, \fw, \fz)^{-1}    \zeta \rangle,   W_{a', *}(Q') / \langle \hDelta_{a}(\eta, \fw, \fz)^{-1}   \zeta'  \rangle  \lesssim \tinitial.
  	\earyst
  	Now we can follows a similar argument as in Case I, using the hypothesis that $(Q, Q') \in  \Trunc_{ (3)}^{ >  D^{3}}$ to deduce \eqref{eq WaQWa'Q'}.  This concludes the proof.
  \end{proof}

  We can now start the proof of Proposition \ref{lem contractionweight}.
  Given an integer $m \in \Z$, we set
  \aryst
  \cI_m = \{ Q \in \hat\Gamma_{a}   \mid   W_a(Q) \in [ e^{m},  e^{m+1} )  \}.
  \earyst
  Then $\cJ = \{ \cI_m \}_{m \in \Z}$ is a partition of $\hat\Gamma_a$ into disjoint subsets.  We have
  \aryst
  \| u \|_{\hH_a}^2 \sim \sum_{m \in \Z} e^{2 m} \| \Pi_{\cI_m}  u \|_{L^2}^2.
  \earyst
  Similarly, we define $\cJ' = \{ \cI'_m \}_{m \in \Z}$ for $W_{a'}$ in place of $W_{a}$, and define  $\| u \|_{\hH_{a'}}^2$   similarly.
  
Denote $\kappa' =  \alpha_0 \deltasharp \tinitial / 2000$.
  Without loss of generality, we may assume that $\omegalow > e^{C_3 (1  + \deltasharp  ) \tinitial}$ where $C_3$ is given by Proposition \ref{prop WeightandDf}. By Proposition \ref{prop WeightandDf} (with $D = e^{\deltasharp \tinitial / 3}$) and by letting $\tinitial$ be large, any $m, m' \in \Z$ such that $(\cI_m \times \cI'_{m'}) \cap \Trunc_{ (1)}^{> \omegalow} \cap \Trunc_{ (2)}^{\leq e^{\deltasharp \tinitial / 3}}  \cap  \Trunc_{ (3)}^{> e^{\deltasharp \tinitial}} \neq \emptyset$  must also satisfy $m >  m'  +   \kappa'     \tinitial - 4$.

  Given $m, m' \in \Z$, we denote 
  \aryst
  \hL_{m \to m'}  = \Pi_{\cI'_{m'}} \circ \hL^{R}_{a \to a'}   \circ \Pi_{\cI_m}.
  \earyst
  By the above discussion, we have $\hL_{m \to m'} = 0$ if $m \leq m' +    \kappa'    \tinitial - 4$.
  Given $u \in C^{\infty}_c(\hat\Gamma_{a})$, we have  
  \aryst
  \|  \hL^{R}_{a \to a'}  u \|_{\hH_{a'}}^2 \sim \sum_{m' \in \Z}  e^{2m'} \|  \Pi_{\cI'_{m'}} \circ  \hL^{ R}_{a \to a'}  u \| _{L^2}^2 = \sum_{m' \in \Z}    \|  e^{m'}  \sum_{m > m' +     \kappa'       \tinitial - 4 }  \hL_{m \to m'} u  \| _{L^2}^2.
  \earyst   
  Then by Cauchy's inequality, we have 
  \aryst
  && \| e^{m'}  \sum_{m   > m' +   \kappa'   \tinitial - 4}  \hL_{ m \to m' } u   \|_{L^2}^2  =	\| \sum_{ m   > m' +      \kappa'   \tinitial - 4 }   e^{-m + m'}    \cdot e^{m} \hL_{m \to m'} u   \|_{L^2}^2 \\
  &\lesssim&  ( \sum_{ m   > m' +   \kappa'   \tinitial - 4}   e^{- 2m + 2m'}  )  (  \sum_{m \in  \Z  } e^{2m} \| \hL_{ m \to m' } u \|_{L^2}^2 ) \lesssim e^{ - 2   \kappa'  \tinitial}  \sum_{m \in  \Z  } e^{2m} \| \hL_{ m \to  m' } u \|_{L^2}^2.
  \earyst
  Then we have 
  \aryst
  \|  \hL^{R}_{a \to a'}  u \|_{\hH_{a'}}^2     &\lesssim&     e^{ - 2   \kappa'   \tinitial}   \sum_{m'}  \sum_{m} e^{2 m    } \| \hL_{ m \to  m' } u \|_{L^2}^2 \\
  &=&   e^{ - 2   \kappa'    \tinitial}    \sum_{m} e^{2m} \sum_{m' \in \Z}   \| \hL_{ m \to  m' } u \|_{L^2}^2  = e^{ - 2   \kappa'    \tinitial}    \sum_{m} e^{2m}     \|  \hL^{R}_{a \to a'}   \circ \Pi_{\cI_m} u \|_{L^2}^2.
  \earyst
  The last line above is bounded by
  \aryst
  C e^{ - 2   \kappa'    \tinitial}    	\|  \hL^{R}_{a \to a'}  \|_{L^2 \to L^2}^2  \sum_{m} e^{2m}     \|   \Pi_{\cI_m} u \|_{L^2}^2 \sim	C e^{ - 2  \kappa'   \tinitial}   	\|  \hL^{R}_{a \to a'}  \|_{L^2 \to L^2}^2   \| u  \|_{\hH_{a}}^2.
  \earyst
  This concludes the proof of Lemma \ref{lem contractionweight}.

			\section{Proof of  Proposition \ref{prop offdiagonal}} \label{sec proofoflem formerL6.3}

 We will decompose $\setR_{ (1)}^{>  \omegalow } \cap \setR_{ (2)}^{> D}$ in Proposition \ref{prop offdiagonal} into the following smaller pieces on which $d_f(Q, Q')$ takes comparable values. 			We denote
			\aryst
			&& \Trunc_{D} := \Trunc_{ (1)}^{> \omegalow } \cap ( \Trunc_{ (2)}^{> D} \setminus \Trunc_{ (2)}^{> 2D} ), \\
			&& 	\Trunc_{D, in} = 	\Trunc_{D} \cap  \Trunc_{ (3)}^{\leq 2 D} \ \mbox{ and } \  	\Trunc_{D, out} = 	\Trunc_{D} \cap  \Trunc_{ (3)}^{> D}.
			\earyst
			The main estimate in this section is the following.
					\begin{prop}\label{prop mainprop}
	    	{\em
						For any integer $l \geq 1$, there is $\greg_1(l) > 0$ such that if $\greg \geq \greg_1(l)$ then the following holds. 
						For any sufficiently large $\tinitial$,     any sufficiently large $\omegalow$,   and any $D \geq e^{\deltasharp \tinitial / 4}$
				\enmt
				\item \label{itm 1 Prop81} we have
				\aryst
				\sup_{Q' \in \hat\Gamma_{a'}} \int_{Q : (Q, Q') \in \Trunc_{D, out}}  | \cK(Q' \mid Q) |    dQ,  \  \sup_{Q \in \hat\Gamma_a} \int_{Q' : (Q, Q') \in \Trunc_{D, out}}  | \cK(Q' \mid Q) |   dQ'    \leq C(l) D^{- l},
				\earyst 
				\item \label{itm 2 Prop81} for any  $R \in  \setR_{}$  satisfying  $\| \partial^{k}_{\xi} \partial^{k'}_{\xi'} R \| \leq C(k, k') (D \hDelta_{a}(\eta, \fw, \fz) )^{- k} (D \hDelta_{a'}(\eta', \fw', \fz'))^{- k'}$ for every $k, k' \in \N$ and  with $\supp(R) \subset  \Trunc_{D, in}$, we have
				\aryst
			\| \hL_{a \to a'}^{R} \|_{L^2  \to L^2  } \leq C( l) D^{- l}.
				\earyst
				\eenmt
			}
			\end{prop}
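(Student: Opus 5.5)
The proof will be a non-stationary phase argument applied to the oscillatory integral \eqref{eq cK}. Each of the ways $d_f(Q,Q')\ge D$ can occur produces one of three gains: rapid decay from disjoint supports, integration by parts in $\tz$, or integration by parts in $\tw$. Throughout, the amplitude $u$ in \eqref{eq amplitude} is controlled by Theorem \ref{thm DWPT}\eqref{thm DWPT item 2}. Its $\bD_{\tw}$ and $\bD_{\tz}$ derivatives are bounded by products $\widehat\Phi_{a,\ell}\Phi_{a,\ell}$ on both sides (for the $Q'$ factor, composed with $\hat F$). The derivatives of $\hat F$ are bounded by Proposition \ref{lem derivativeboundsforhatF}, giving each derivative a loss of at most $\Lambda^{C}\eta_{max}^{c(\esmall)}$. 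Since $\greg\ge\greg_1(l)$, we may integrate by parts about $N=N(l)$ times.

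For item \eqref{itm 1 Prop81} (the Schur test on $\Trunc_{D,out}$), fix $(Q,Q')$ with $d_f\in(D,2D]$ and $(Q,Q')\in\Trunc_{(3)}^{>D}$. Split according to which clause of Definition \ref{def Distancedf} holds.
\begin{itemize}
\item[(a)] \emph{Empty intersection} of the $\hat F$-image of $\NGB^{\Lambda}$ with $\NGB^{\Lambda}$ (with $\Lambda=\extra^{-1}$ or $D_{max}$). Here the polynomial tails $\langle r^{-1}(\cdot)\rangle^{-2\ell}$ and $\langle\langle\eta\rangle^{\beta_1}(\cdot)\rangle^{-2\ell}$ of $\widehat\Phi$, together with the uniform Lipschitz bound on $\hat F$, already give a factor $D^{-l}$ without integrating by parts.
\item[(b)] \emph{Large center-frequency mismatch} $|\eta-\eta'\partial_z\hat F_z|>D\eta_{max}^{\beta_1}+3\xi_{max}$. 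Here I integrate by parts in $\tz$ with the operator $(i\partial_{\tz}S)^{-1}\partial_{\tz}$. By \eqref{eq skewproduct} and Remark \ref{rema DwhatFw}, $|\partial_{\tz}S|\gtrsim D\eta_{max}^{\beta_1}$ on the support, while each $\partial_{\tz}$ acting on the amplitude costs only $\langle\eta\rangle^{\beta_1}$, so every step gains $D^{-1}$. The constant $3\xi_{max}$ absorbs the term $\xi'\partial_z\hat F_w=\cO(e^{C\tinitial}\esmall)\xi'$. Lemma \ref{lem etaeta'different} lets us move between base points $(\fw,\fz)$ and $(\tw,\tz)$ when $\eta_{max}\ge10\eta_{min}$; otherwise the variation of $\partial_z\hat F_z$ over the support is small by Proposition \ref{lem derivativeboundsforhatF}.
\item[(c)] \emph{The region $\Sigma_D$}. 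Here I integrate by parts in $\tw$. Writing $(\xi,\eta)-(\xi',\eta')D\hat F$ through $\zeta,\zeta'$, the pullback identity \eqref{eq pullbackofestarinmodifiedchart} shows that the $w$-component of $\partial S$ is $\zeta-\zeta'\partial_w\hat F_w$ up to $\eta\cdot\cO(\hDelta/\langle\eta\rangle)$ errors, using Proposition \ref{lem mainpropertyofestar}\eqref{itm rangeofestar}. The cone conditions defining $\Trunc_{D,\pm}$, together with \eqref{eq hyperbolicityofwFw} and \eqref{eq expansionoutsidecones}, give $|\partial_{\tw}S|\gtrsim\max(|\zeta|,|\zeta'|)\ge D\hDelta$. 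Each $\bD_{\tw}$ derivative costs $r^{-1}\lesssim\hDelta$ by \eqref{eq Deltaadscale>1}, so each step again gains $\gtrsim D^{-1}$, up to $\eta^{c(\esmall)}$ losses that are absorbed once $\esmall$ is small.
\end{itemize}
After $N$ steps, integrating the remaining bound in $Q$ (or $Q'$) is handled with \eqref{eq integralphiaupperbound}--\eqref{eq integralphiaupperbound2}. The leftover decay in $\xi,\xi'$ (powers of $\langle\zeta/(D\hDelta)\rangle^{-2}$) pays for the $d\xi$ integration, giving $\sup\int|\cK|\le C(l)D^{-l}$.

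For item \eqref{itm 2 Prop81}, on $\Trunc_{D,in}$ the frequencies lie in boxes $|\zeta|,|\zeta'|\le2D\hDelta$. Clause (c) is unavailable there, so $d_f>D$ forces (a) or (b), which give the pointwise gain $D^{-l}$ as above. The absolute-value Schur bound now loses a factor $(D\hDelta)^2$ from the $\xi$-volume, so instead I use a $TT^*$ / Cotlar-type argument. The smoothness assumption on $R$ in $\xi,\xi'$ at scale $D\hDelta$ allows integration by parts in $\xi$ (respectively $\xi'$) against $e^{-i\xi\cdot(w-\tw)}$ (respectively the analogous phase). This converts the $\xi$-integrals into localization $\langle D\hDelta\,(w-\tw)\rangle^{-k}$, after which the Schur test closes with only $D^{C}$ losses. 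These are beaten by taking more integrations by parts in (a)/(b).

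The main obstacle is case (c): justifying the lower bound $|\partial_{\tw}S|\gtrsim D\hDelta$ uniformly across the wave-packet support. This requires the variation of $\estar$ over an $r$-ball to be $\cO(\Delta)$ (Proposition \ref{lem mainpropertyofestar}), the second derivative bound \eqref{eq partialw^2Fz} to control the nonlinearity of $\hat F_z$ in $\tw$, and the sign/cone bookkeeping to survive the $\min(e^{\lambda_g\tinitial},D)^{1/100}$ margins.
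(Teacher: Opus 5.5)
Your overall architecture is the paper's: tails for the empty-intersection clause, integration by parts in $\tz$ for the centre-frequency mismatch, integration by parts in $\tw$ on $\Sigma_D$, and the $\xi$-smoothness of $R$ for item (2). But case (c) does not close as you set it up.

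\textbf{The $\tw$-derivative cost.} A $\tw$-derivative of the amplitude does not cost $r^{-1}$. The $Q'$-packet enters as $\widetilde\phi_{a'}(\eta',w',z'\mid\fw',\fz'\mid\hat F(\tw,\tz))$, so a derivative along a unit vector $v$ costs $\|\partial_v\hat F_w\|\,r^{-1}$. This is of order $e^{\lambda_g\tinitial}r^{-1}$ or more unless $v$ lies near the direction contracted by $\partial_w\hat F_w$. Integrating by parts along $\partial_{\tw}S$ does not avoid this. Take $\Trunc_{D,-}$ with $\zeta\in\Bcone_-$, $|\zeta_x|\sim|\zeta_y|$ and $\zeta'$ negligible. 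Then $\partial_{\tw}S\approx\zeta$ points outside $\Scone_-$, so by \eqref{eq expansionoutsidecones} the amplitude cost is $\gtrsim e^{\lambda_g\tinitial}r^{-1}$. Meanwhile $|\partial_{\tw}S|\sim\|\zeta\|$ may be only $\sim D\hDelta_a(\eta,\fw,\fz)\sim Dr^{-1}$. Each step then gains only $De^{-\lambda_g\tinitial}$, which is $<1$ for $D$ near $e^{\deltasharp\tinitial/4}$ because $\deltasharp\ll\lambda_g$. This is fatal, since $C(l)$ must be independent of $\tinitial$: it enters the proof of Theorem \ref{lem main} through Proposition \ref{prop offdiagonal} with $l=1$. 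Also, your claim $|\partial_{\tw}S|\gtrsim\zeta_{max}$ is false in general; only $\min(e^{\lambda_g\tinitial},D)^{-1/100}\zeta_{max}$ holds.

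The missing ingredient is Lemma \ref{lem DvSDvF}. You integrate by parts only along a field $V$, equal to $(1,0)$ on $\Trunc_{D,+}$ and to the $\partial_wF_w$-preimage of the vertical direction on $\Trunc_{D,-}$. For this $V$, \eqref{eq gooddirection+} bounds $|\langle\partial_{\tw}S,V\rangle|$ below by $\min(e^{\lambda_g\tinitial},D)^{-1/100}\langle\partial_V\hat F_w\rangle\zeta_{max}$. That exactly pays for the amplitude cost $\langle\partial_V\hat F_w\rangle r^{-1}$; see \eqref{eq DVSrzetaDZS}.

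\textbf{No decay in $\eta$.} Integration by parts in $\tw$ alone gives no decay in $\eta$. On $\Trunc_{D,out}\cap\Cent$ the mismatch $\eta-\eta'\partial_z\hat F_z$ is unconstrained, since clause (b) need not hold there. This hurts in two ways.
\begin{itemize}
\item The exact form of $\partial_{\tw}S$ coming from \eqref{eq pullbackofestarinmodifiedchart} contains the term $\partial_{\tz}S\cdot\estar_a(\eta,\fw,\fz\mid\tw,\tz)$, with $\estar$ evaluated at a point rather than as a difference. On the support $\estar_a$ is only $\cO_{\tinitial}(\hDelta_a(\eta,\fw,\fz)/\langle\eta\rangle)$. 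So this term is $\cO(\hDelta_a(\eta,\fw,\fz))$ with a $\tinitial$-free constant only under \eqref{eq partialtzSupperbound1}.
\item After the $(\fw,\fz,w,z,\tw,\tz)$-integrations, the Schur bound leaves $\int\eta_{max}^{-\beta_1}\,d\eta$ over $\eta\sim\eta'$. That is a loss of $\eta_{max}^{1-\beta_1}$, which no power of $D$ beats.
\end{itemize}
The paper applies $(\cD^*_{\tz})^l(\cD^*_V)^k$ jointly and proves the product bound \eqref{eq lowerboundpartialzS}. This gives decay in $\xi$ (through $\BV S$) and in $\langle\eta_{max}^{-\beta_1}(\eta-\eta'\partial_z\hat F_z(\fw,\fz))\rangle$ at the same time.

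\textbf{The $\eta_{max}^{c(\esmall)}$ losses.} These cannot be absorbed by shrinking $\esmall$, because at fixed $D$ one may have $\eta\to\infty$. You have to:
\begin{itemize}
\item show they only appear multiplied by negative powers of $\eta_{max}$ (Lemma \ref{lem bigK}, \eqref{eq generalpartofpartialtwtzS});
\item bound $\DV^2S$ through the $\tinitial^2$ (not $e^{C\tinitial}$) estimate \eqref{eq partialw^2Fz};
\item absorb $\tinitial$-dependent constants via Corollary \ref{rem LargenessofDVSDzD}.
\end{itemize}

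\textbf{Item (2).} Your idea is the right one, but the mechanism is not. $\xi$ is a variable of the input function, so you cannot integrate by parts in it inside $\hL^R_{a\to a'}$. A $TT^*$ argument internalizes only one of $\xi,\xi'$. The paper conjugates by the partial Fourier transforms $\cT_a,\cT_{a'}$ in $\xi$. After that, $\xi,\xi'$ are inner variables of the kernel, and integration by parts gives localization $\langle D\hDelta_a(\eta,\fw,\fz)(\hw-\tw)\rangle^{-k}$ in the new variable $\hw$, not in $w-\tw$.
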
 
%
			
			\begin{proof}[Proof of Proposition \ref{prop offdiagonal}]
				To prove the estimate for $\| \hL_{a \to a'}^{R} \|_{L^2  \to L^2  }$, it suffices to note that 
				we can write $R$ as a sum $\sum_{k \in \N} ( R^{(k)}_{ in} + R^{(k)}_{  out} )$ of functions in $\setR_{}$ such that $\supp(R^{(k)}_{  out}) \subset  \Trunc_{2^k D, out} $ and  $\supp(R^{(k)}_{  in}) \subset  \Trunc_{2^{k} D, in}$  and $R^{(k)}_{  in}$ satisfies the derivative bounds  Proposition \ref{prop mainprop}\eqref{itm 2 Prop81}  for $2^{k}D$ in place of $D$.  Then we can apply Proposition \ref{prop mainprop}   and   Schur's test to bound $\| \hL_{a \to a'}^{R} \|_{L^2  \to L^2  }$.
				 By  letting $l$ in Proposition \ref{prop mainprop} be large relative to $C_2$ in \eqref{eq WaQWa'Q'0} of Proposition \ref{prop WeightandDf}, we deduce $\| \hL_{a \to a'}^{R} \|_{\hH_{a} \to \hH_{a'} } \leq C( l) D^{- l / 2}$.
			\end{proof}

		\subsection{Plan of the proof of Proposition \ref{prop mainprop}}

 		 We may write $\cK = \cK_{+}  + \cK_{-} $ where $ \cK_{+}  = \cK   1_{| \eta | \geq | \eta' |}$, $\cK_{-} =  \cK   1_{| \eta | < | \eta' |}$. 
 		 	 We fix some $(Q, Q') \in \Trunc_D$ and denote   $Q = (w, z, \xi, \eta \mid \fw, \fz) \in \hat\Gamma_a$ and   $Q' = (w', z', \xi', \eta' \mid \fw', \fz') \in \hat\Gamma_{a'}$. We will use the notations $R_{max}, R_{min}, \cdots$ in \eqref{eq Rminmax} to \eqref{eq Deltamax}.
 		 Since $\Trunc_D \subset \Trunc_{ (1)}^{> \omegalow }$, we have 
 		 \ary \label{eq ximaxetamaxbig}
 		 \max( \xi_{max}, \eta_{max} ) > \omegalow.
 		 \eary
 		 We will only detail the estimates for $\cK_{+}$ since the estimates for $\cK_{-}$ follow from   a parallel argument.

 		 We start with some general discussions about the set of $(\tw, \tz)$ to consider.
 		By Theorem \ref{thm DWPT}, for the integrand in \eqref{eq cK} to be non-zero, we may assume from now on that 
 	\ary
 	\| \tw - w \| \leq r_a(\eta, \fw, \fz)   \ \mbox{ and } \ 
 	\| \hat{F}_w(\tw, \tz) - w' \| \leq r_{a'}(\eta', \fw', \fz'). \label{eq tw-wboundhatFtwtz-w'bound}
 	\eary
 	
 			 We denote
 	\aryst
 	D_{-1} = 1, \ 	 D_0 =   D^{1/3000}   \ \mbox{ and } \  D_k = 2^k D_0, \ k \geq 1.
 	\earyst
 	We define $R_{-1} = \emptyset$, and define for each integer $K \geq 0$ 
 	\ary \label{eq RK}
 	R_K = \{ (\tw, \tz) \in \widehat\Omega \mid \mbox{\eqref{eq tw-wboundhatFtwtz-w'bound} holds and }   \langle \eta \rangle^{ \beta_1} | z - \tz   |  + \langle \eta' \rangle^{\beta_1} | z' -  \hat{F}_z(\tw, \tz)  |  \leq D_K   \}.
 	\eary
 	Note that $R_K$ depends on $Q, Q'$ through $\hat{F}$. Clearly, $R_{K} \subset R_{K'}$ if $K < K'$.
By  \eqref{eq rafwfzrawzclose}, we have 
 	\ary  \label{eq rangeoftwtzinRK}
 	R_K \subset \ngb_a^{D_K}(\eta, w, z) \cap (\hat F)^{-1}(  \ngb_{a'}^{D_K}(\eta', w', z')  ), \quad K \geq 0.
 	\eary
 	By  definition (see \eqref{eq Gammaa} and \eqref{eq hatGammaa}) and \eqref{eq rafwfzrawzclose},     we have
 	\ary
 	(\tw, \tz) \in \ngb_{a}^{   D_K  / \extra   }(\eta, \fw, \fz) \ \mbox{ and } 	\hat{F}(\tw, \tz) \in \ngb_{a'}^{   D_K / \extra  }(\eta', \fw', \fz'). \label{eq rangeoftwtzinchartsQQ'}
 	\eary
 	By Lemma \ref{lem adaptedscaleistame},  \eqref{eq esmallandtinitial},  \eqref{eq adscalesquarerootgrowth} and $\extra = e^{- 3\deltazero \tinitial}$ (see \eqref{eq extra}),  for any $K \geq 0$ such that $R_K \neq \emptyset$, we have
 	\ary
 	R_{max} / R_{min}, r_{max} / r_{min}   \lesssim     D_K (\eta_{max} / \eta_{min}).   \label{eq Rmaxminrmaxmin}   
 	\eary

 We will use the following technical lemma.
 \begin{lemma} \label{lem rangeoffwfz}
There exist an absolute constant $C' > 0$ and $C = C(g) > 1$ and a ball $W(Q', \eta)$ in $\R^2$ of radius $\cO(D^2_K R )$, and an interval $Z(Q', \eta)$ in $\R$ of length $\cO( D^{C'}_K \langle  e^{C \tinitial} / \eta_{min}^{c_0}  \rangle \langle \extra \eta_{min} \rangle^{- \beta_1} )$ such that every \emph{ $(\fw, \fz)$ }  satisfying $R_K \neq \emptyset$ for some   $(w, z)$ with \emph{$(w, z, \eta \mid \fw, \fz) \in \Gamma_a$} belongs to $W(Q', \eta) \times Z(Q', \eta)$. 
 \end{lemma}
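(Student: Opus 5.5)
We fix $Q'$ and $\eta$, and pick a reference pair. Suppose some $(\fw_0,\fz_0)$ admits $(w_0,z_0)$ with $(w_0,z_0,\eta\mid\fw_0,\fz_0)\in\Gamma_a$ and $R_K\neq\emptyset$. We will show that any other admissible $(\fw,\fz)$ lies within the stated distances of $(\fw_0,\fz_0)$. The sets $W(Q',\eta)$ and $Z(Q',\eta)$ are then the ball and interval centered at $(\fw_0,\fz_0)$ of the claimed sizes. Here $R$ is understood as $R_{a}(\extra\eta,\fw_0,\fz_0)$, which is comparable to $R_{max}$ up to powers of $D_K$ by \eqref{eq Rmaxminrmaxmin}.

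\emph{Horizontal direction.} For any admissible $(\fw,\fz)$ and any $(\tw,\tz)\in R_K$, \eqref{eq rangeoftwtzinchartsQQ'} gives $\hat F(\tw,\tz)\in\ngb_{a'}^{D_K/\extra}(\eta',\fw',\fz')$. Since $H_{a',\fw',\fz',\eta'}$ fixes the $w$-coordinate, the point $F(H^{-1}_{a,\fw,\fz,\eta}(\tw,\tz))$ has $w$-coordinate within $D_K\extra^{-1}r_{a'}(\eta',\fw',\fz')$ of $\fw'$. Applying $F^{-1}$, which is a $C^1$ skew-like map with $\|\partial_zF_w\|\le 1$ by \eqref{eq skewproduct}, and using that $H_{a,\fw,\fz,\eta}$ also fixes $w$, we find that $\tw$ lies in a set determined by $Q'$ alone.

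This set is not small in the unstable direction, however, so we need more than this. We also use $\tw\in B(\fw, D_K\extra^{-1}r_a(\extra\eta,\fw,\fz))$, which comes from $\Gamma_a$ together with \eqref{eq tw-wboundhatFtwtz-w'bound}. Combining this for $(\fw,\fz)$ and for $(\fw_0,\fz_0)$ is the cleanest route. There is then a point $\tw$ near $\fw$, and a point $\tw_0$ near $\fw_0$, both mapped close to $\fw'$. Hyperbolicity of $\partial_wF_w$, via \eqref{eq hyperbolicityofwFw} and \eqref{eq expansionoutsidecones}, controls the unstable component of $\tw-\tw_0$ by $r_{a'}$-scale. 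For the stable component, the plan is to use the wave-packet localizations $\|\tw-w\|\le r_a$ and $\|w-\fw\|\le R$ directly from $\Gamma_a$: both wave-packet centers $w$ are anchored near the preimage of $w'$ up to $O(R)$ in each direction, because the forward constraint controls one direction and the window of width $D_KR$ controls the other. Collecting the estimates, and using \eqref{eq Randrratio} and \eqref{eq Rmaxminrmaxmin} to convert all scales to $R$, gives $\|\fw-\fw_0\|\lesssim D_K^2R$.

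\emph{Vertical direction.} This is the main obstacle. By the definition of $\Gamma_a$ we have $|z-\fz|\le\langle\extra\eta\rangle^{-\beta_1}$, and $R_K$ gives $|z-\tz|\le D_K\langle\eta\rangle^{-\beta_1}$. So it suffices to pin down $\tz$ modulo the chart shifts. Write $(\tw,\tz)=H_{a,\fw,\fz,\eta}(y)$ with $y=(\tw,\tz-d_{a,\fw,\fz,\eta}(\tw))$, and use $|z'-\hat F_z(\tw,\tz)|\le D_K\langle\eta'\rangle^{-\beta_1}$. This fixes the $z$-coordinate of $F(y)$, up to $d_{a',\fw',\fz',\eta'}$, which depends only on $Q'$. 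Inverting $F$ via $\partial_zF_z\sim1$ determines the $z$-coordinate of $y$ up to $O(D_K\eta_{min}^{-\beta_1})$ plus the horizontal error multiplied by $\partial_wF^{-1}_z=O(e^{C\tinitial})$.

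It remains to compare $d_{a,\fw,\fz,\eta}(\tw)$ with $d_{a,\fw_0,\fz_0,\eta}(\tw_0)$. By \eqref{eq expressionofHawzeta} and \eqref{eq Cbound}, $|d_{a,\fw,\fz,\eta}(\tw)|\lesssim \|\estar_a\|D_K R+\langle\eta\rangle^{c(\esmall)}D_K^2R^2$. Because $\estar_a$ vanishes at its base point only in the adapted chart, we use instead the Hölder bound on $\estar_a$ together with $\|\tw-\fw\|\lesssim D_K^2R$ to get a bound of size $D_K^{C'}e^{C\tinitial}R^{2-c(\esmall)}\lesssim D_K^{C'}e^{C\tinitial}\eta_{min}^{-c_0}\langle\extra\eta_{min}\rangle^{-\beta_1}$. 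This uses $R^2\sim\langle\extra\eta\rangle^{-1\pm c(\esmall)}$ and $\beta_1+2c_0<1$. Adding the $\langle\extra\eta\rangle^{-\beta_1}$ contribution from $\Gamma_a$ yields an interval of length $O(D_K^{C'}\langle e^{C\tinitial}/\eta_{min}^{c_0}\rangle\langle\extra\eta_{min}\rangle^{-\beta_1})$.

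The delicate points are tracking the exponent of $\extra$ so that $R$ and $r$ are correctly comparable, and absorbing the $\eta_{max}/\eta_{min}$ ratio, which we bound by powers of $D_K$ using \eqref{eq Rmaxminrmaxmin}.
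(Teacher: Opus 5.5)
Your horizontal step is broadly fine once $R$ is chosen correctly (see the end). The vertical step, however, has a genuine gap.

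Every error you produce is linear in the horizontal scale. The target length $\langle\extra\eta_{min}\rangle^{-\beta_1}$, with $\beta_1=1-1/1000$, is essentially quadratic in $R\sim\langle\extra\eta\rangle^{-1/2\pm c(\esmall)}$. Two steps fail:
\begin{itemize}
\item In the fixed chart $\kappa_a$ the slope $\estar_a$ of $E^s\oplus E^u$ is $\cO(1)$ but not small. So ``inverting $F$'' pins the $z$-coordinate of $y$ only up to $\cO(e^{C\tinitial}D_K r_{a'}(\eta',\fw',\fz'))$, which is of order $\langle\eta'\rangle^{-1/2+c(\esmall)}$.
\item The claimed $R^{2-c(\esmall)}$ bound comparing $d_{a,\fw,\fz,\eta}(\tw)$ with $d_{a,\fw_0,\fz_0,\eta}(\tw_0)$ does not follow from H\"older continuity. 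The difference equals $(\estar_a(\fw,\fz)-\estar_a(\fw_0,\fz_0))\cdot(\tw-\fw)+\estar_a(\fw_0,\fz_0)\cdot[(\tw-\fw)-(\tw_0-\fw_0)]$ plus quadratic terms. Only the first term is H\"older-small; the second is of size $\cO(D_K^2R)$.
\end{itemize}

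The missing idea is that these two $\cO(R)$ effects cancel. The pull-back of the slab $\{|\hat F_z-z'|\lesssim D_K\langle\eta'\rangle^{-\beta_1}\}$ is nearly tangent to $E^s\oplus E^u$, so it has slope $\approx-\estar_a$. That is exactly the slope removed by $d$.

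The paper makes this explicit with one shear built from a point depending only on $Q'$. Take $(w_0,z_0)=F^{-1}(w',z')$ and $\widetilde H(\tw,\tz)=(\tw,\tz+\estar_a(w_0,z_0)\cdot(\tw-\fw))$.
\begin{itemize}
\item The set $\widetilde H\circ H^{-1}_{a,\fw,\fz,\eta}(\Omega^{D_K}_a(\eta,w,z))$ has $z$-projection within $D_K^{C'}\langle e^{C\tinitial}/\eta_{min}^{c_0}\rangle\langle\extra\eta\rangle^{-\beta_1}$ of $\fz$. Only the slope difference $\estar_a(\fw,\fz)-\estar_a(w_0,z_0)$ enters here, and this is where your H\"older estimate belongs.
\item $G=H_{a',\fw',\fz',\eta'}F\widetilde H^{-1}$ satisfies $\|\partial_wG_z\|=\cO(e^{C\tinitial}D_K^{C'}\eta_{max}^{c_0}\eta_{min}^{-1/2})$, as in \eqref{eq partialwFz}.
\end{itemize}
Hence $G_z(\fw,\fz)$ lies within that distance of $z'$, and $\partial_zG_z\sim1$ pins $\fz$.

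Two further remarks.

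First, this argument pins $\fz$ only for each fixed $\fw$. We have $G_z(\fw,\fz)=(H_{a',\fw',\fz',\eta'}F)_z(\fw,\fz)$, and smallness of $\partial_wG_z$ means $\partial_w(H_{a',\fw',\fz',\eta'}F)_z\approx\partial_zG_z\,\estar_a(w_0,z_0)$. So the admissible $\fz$-interval has a centre that moves with slope $\approx-\estar_a(w_0,z_0)$ as $\fw$ ranges over $W(Q',\eta)$. This is unavoidable. Move $\fw$ by $v$ with $\|v\|\sim R$, keeping $w$, $\tw$ and $\hat F(\tw,\tz)$ fixed. Then $\tz$, $z$ and $\fz$ are forced to move by $d_{a,\fw+v,\cdot,\eta}(\tw)-d_{a,\fw,\fz,\eta}(\tw)\approx-\estar_a\cdot v$, which is comparable to $R$. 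Consequently no comparison of all admissible pairs with a single reference $(\fw_0,\fz_0)$ can give a uniform bound on $|\fz-\fz_0|$. What holds, and what the later $d\fw\,d\fz$ integrations use, is the fibred bound.

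Second, in the horizontal part $R$ must be $r_{a'}(\extra\eta',\fw',\fz')$, not $r_a(\extra\eta,\fw_0,\fz_0)$.
\begin{itemize}
\item \eqref{eq Rmaxminrmaxmin} carries the factor $\eta_{max}/\eta_{min}$, which is not a power of $D_K$ outside $\Cent$.
\item $F^{-1}$ stretches the $Q'$-window in the stable direction (not the unstable one) by up to $e^{\hat\lambda_g\tinitial}$. This gives a set of width $\sim e^{\hat\lambda_g\tinitial}D_K r_{a'}(\eta',\fw',\fz')$, which is $\lesssim D_K R$ only because $\extra=e^{-3\hat\lambda_g\tinitial}$.
\item For the same reason you should use the sharp constraint \eqref{eq tw-wboundhatFtwtz-w'bound}, not the cruder \eqref{eq rangeoftwtzinchartsQQ'}.
\end{itemize}
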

 \begin{proof} 
 	We let $C'$ denote a large absolute constant, and let $C$ denote a large constant depending only on $g$, both of which may vary from line to line.  	We let $(w_0, z_0) = F^{-1}(w', z')$.
 	
 	By \eqref{eq rangeoftwtzinRK}, $R_K \neq \emptyset$ implies that 
$H_{a, \fw, \fz, \eta}^{-1}(\Omega_a^{D_K}(\eta, w, z)) \subset U^{C D_K^2}_a(\eta, w, z)$ intersects 
 	$$F^{-1}( H_{a', \fw', \fz', \eta'}^{-1}(\Omega_{a'}^{D_K}(\eta', w', z')) ) \subset F^{-1}( U_{a'}^{C D_K^2}(\eta', w', z') ).$$
 Thus   $\| (\fw, \fz) - (w_0, z_0) \| \lesssim e^{\deltazero  \tinitial} D_K^2 r_{max}$. 
 
 Define $\widetilde{H}: \R^{2+1} \to \R^{2+1}$ by  $\widetilde{H}(\tw, \tz) = (\tw, \tz + \estar_a(w_0, z_0)(\tw - \fw))$. We denote $B := \widetilde{H}	\circ H_{a, \fw, \fz, \eta}^{-1}(\Omega_a^{D_K}(\eta, w, z)) $.  Denote by $\pi_{z} : \R^{2+1} \to \R$ the projection to the last coordinate.
  By the $(1-c(\esmall))$-H\"older continuity of $\estar_a$,  we see that  $\pi_z(B) \subset B(\fz,   D_K^{C'} \langle  e^{C \tinitial} / \eta_{min}^{c_0}  \rangle   \langle \extra \eta \rangle^{- \beta_1} )$.
For $G = (G_w, G_z) =   H_{a', \fw', \fz', \eta'}  F \widetilde{H}^{-1}  $, we have $G(B) \cap  \Omega_{a'}^{D_K}(\eta', w', z') \neq \emptyset$.    As \eqref{eq partialwFz} in Proposition \ref{lem derivativeboundsforhatF}, we have $\| \partial_w G_z \| =  \cO ( e^{C \tinitial}   D_K^{C'}   \eta_{max}^{c_0}    \eta_{min}^{-1/2 } )$. Since $| \eta | \geq | \eta' |$, we deduce that $\pi_z( G(B) ) \subset B(G_z(\fw, \fz), D_K^{C'} \langle  e^{C \tinitial} / \eta_{min}^{c_0}  \rangle   \langle \extra \eta_{min} \rangle^{- \beta_1})$. 
Then the range of $\fz$ with the above property is contained in an interval of length $\cO( D^{C'}_K \langle  e^{C \tinitial} / \eta_{min}^{c_0}  \rangle \langle \extra \eta_{min} \rangle^{- \beta_1} )$.
 \end{proof}

 	After some preparations in Subsections \ref{subsec Ibp} to \ref{subsec partialsofu}, we will   detail in Subsection \ref{subsec proofitm1prop81}  the estimate for the integrals in  Proposition \ref{prop mainprop}\eqref{itm 1 Prop81}  restricted to $(Q, Q') \in \Cent$ for $\cK_{+}$ in place of $\cK$.
 	The estimate for the integrals in   Proposition \ref{prop mainprop}\eqref{itm 1 Prop81} restricted to $(Q, Q') \in \Cent^c$ follows from a similar argument, and will only be outlined  in Subsection \ref{subsec nonCent}.
 In Subsection \ref{subsec proofitm2prop81}, we will prove Proposition \ref{prop mainprop}\eqref{itm 2 Prop81}.

     	\subsection{Integration-by-parts} \label{subsec Ibp}

		 We let $K_0$ be the largest integer $K \geq 1$ with 
		 \ary \label{eq whatisK0}
		 D^{400}_K  \leq  D_{max} :=   \max( \xi_{max} / \eta_{max}^{\beta_1}, \eta_{max}^{c_0} ).
		 \eary
		 		Note that we have the following.  
		 \begin{lemma} \label{lem RKtoitm1}
		 	If $R_{K_0} \neq \emptyset$, then item \eqref{itm distance1} in Definition \ref{def Distancedf} fails (regardless of whether $(Q, Q') \in \Cent$).
		 \end{lemma}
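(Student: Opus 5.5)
The plan is to exhibit a single point in the intersection required by item \eqref{itm distance1} (resp. item \eqref{itm distance12}) of Definition \ref{def Distancedf}. Assume $R_{K_0} \neq \emptyset$ and pick $(\tw, \tz) \in R_{K_0}$. By \eqref{eq rangeoftwtzinchartsQQ'} with $K = K_0$, we have $(\tw, \tz) \in \ngb_{a}^{D_{K_0}/\extra}(\eta, \fw, \fz)$ and $\hat F(\tw, \tz) \in \ngb_{a'}^{D_{K_0}/\extra}(\eta', \fw', \fz')$. It therefore suffices to show that these two $\ngb$-neighbourhoods lie inside the corresponding $\NGB$-neighbourhoods appearing in item \eqref{itm distance1} or item \eqref{itm distance12}. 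Then $\hat F(\NGB(\ldots)) \cap \NGB'(\ldots) \ni \hat F(\tw, \tz)$ is non-empty, so the item fails.

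\emph{Comparing the $\ngb$- and $\NGB$-neighbourhoods.} Recall that $\ngb^\lambda_a(\eta, w, z)$ is a box of radius $\lambda r_a$ in $w$ and radius $\lambda\langle\eta\rangle^{-\beta_1}$ in $z$, while $\NGB^{\lambda'}_a(\eta, w, z)$ is a ball of radius $\lambda' r_a$. By \eqref{eq adscalesquarerootgrowth}, $\langle \eta\rangle^{-\beta_1} \le r_a(\eta,\fw,\fz)$ once $\langle\eta\rangle$ is large, since $\beta_1 > 1/2 + c(\esmall)$. Hence
\[
\ngb^{\lambda}_a(\eta,\fw,\fz)\subset \NGB^{2\lambda}_a(\eta,\fw,\fz),
\]
and the same holds on the $a'$ side. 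If $\langle\eta\rangle$ is bounded, the same inclusion holds after enlarging the constant by $O(1)$.

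\emph{The case $(Q,Q') \notin \Cent$.} Here it suffices to check that $2D_{K_0}/\extra \le D_{max}$. By \eqref{eq whatisK0}, $D_{K_0}^{400} \le D_{max}$, so $D_{K_0} \le D_{max}^{1/400}$. Also $\extra^{-1} = e^{3\deltazero\tinitial}$ is a fixed constant once $\tinitial$ is chosen. Since $\max(\xi_{max},\eta_{max}) > \omegalow$ with $\log\omegalow \gg \tinitial$, we get $D_{max} \ge \min(\omegalow^{1-\beta_1}, \omegalow^{c_0}) \gg e^{C\tinitial}$. Therefore $2D_{K_0}\extra^{-1} \le D_{max}^{1/400} D_{max}^{1/2} \le D_{max}$.

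\emph{The case $(Q,Q') \in \Cent$.} Here the target radius is only $\extra^{-1}$, which is the expected obstacle, because we cannot absorb the factor $D_{K_0}$ directly. I would first try to use the fact that $(\tw,\tz)$ also lies in the sharper sets of \eqref{eq rangeoftwtzinRK}, namely $\ngb^{D_{K_0}}_a(\eta,w,z)$, together with $(w,z) \in \ngb^1_a(\extra\eta,\fw,\fz)$ from \eqref{eq Gammaa}. Using \eqref{eq Randrratio}, a $D_{K_0} r_a(\eta,\cdot)$-ball around a point of $\ngb^1_a(\extra\eta,\fw,\fz)$ lies inside a $(1 + D_{K_0} e^{-\hat\lambda_g\tinitial}) r_a(\extra\eta,\fw,\fz)$-ball. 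The $z$-direction is handled the same way, since $\langle\extra\eta\rangle^{-\beta_1} \ge e^{c\tinitial}\langle\eta\rangle^{-\beta_1}$. In $\Cent$ we have $D_{K_0} \le \eta_{max}^{c_0/400}$, and $r_a(\extra\eta,\cdot)$ is compared to $\extra^{-1} r_a(\eta,\cdot)$ via \eqref{eq adscalesquarerootgrowth}: the gain is $\extra^{-1/2 + c(\esmall)}$ against the loss $D_{K_0}$. If this comparison is insufficient, I would use the normalization of $D_K$ (specifically $D_0 = D^{1/3000}$ with $D \le \eta_{max}^{c_0}$, applied in the regime where $K_0$ is relevant) to conclude $D_{K_0} \ll \extra^{-1/3}$. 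This is the step whose exponent bookkeeping I expect to require the most care.
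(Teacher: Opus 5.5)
Your argument for $(Q,Q')\notin\Cent$ is fine. It is a bit more explicit than the paper's one-line remark: you use \eqref{eq rangeoftwtzinchartsQQ'} to absorb the offset of $(w,z)$ from $(\fw,\fz)$, and you check $C D_{K_0}\extra^{-1}\le D_{max}$ using $D_{max}\ge\omegalow^{c_0}$.

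\textbf{The gap is in the case $(Q,Q')\in\Cent$.} There you bound $\|\tw-w\|$ via \eqref{eq rangeoftwtzinRK}, i.e.\ by $D_{K_0}\,r_a(\eta,w,z)$. You then need $D_{K_0}\lesssim\extra^{-1}$, and your fallback is $D_{K_0}\ll\extra^{-1/3}$. This is false.
\begin{itemize}
\item In $\Cent$ one has $D_{max}=\eta_{max}^{c_0}$.
\item By maximality of $K_0$ in \eqref{eq whatisK0}, $2D_{K_0}=D_{K_0+1}>D_{max}^{1/400}$, so $D_{K_0}>\tfrac12\eta_{max}^{c_0/400}$.
\item This is unbounded as $\eta_{max}\to\infty$, while $\extra$ is fixed once $\tinitial$ is.
\end{itemize}
The normalization $D_0=D^{1/3000}$ only gives the lower bound $D_{K_0}\ge D_1$; it gives no upper bound in terms of $\tinitial$. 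The same problem arises in your $z$-step if you compare $D_{K_0}\langle\eta\rangle^{-\beta_1}$ with $\langle\extra\eta\rangle^{-\beta_1}$, since that again requires $D_{K_0}$ to be bounded by a power of $\extra^{-1}$.

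\textbf{The missing observation.} In \eqref{eq RK} the factor $D_K$ constrains only the $z$-variables. The $w$-constraint is \eqref{eq tw-wboundhatFtwtz-w'bound}, namely $\|\tw-w\|\le r_a(\eta,\fw,\fz)$ and $\|\hat F_w(\tw,\tz)-w'\|\le r_{a'}(\eta',\fw',\fz')$, with no $D_K$ at all. Passing to \eqref{eq rangeoftwtzinRK} discards exactly this information.

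For the $z$-variables, compare with the isotropic radius of the $\NGB$-balls, not with $\langle\extra\eta\rangle^{-\beta_1}$. Since $D_{K_0}\le\eta_{max}^{c_0}$, $\eta_{max}<10\eta_{min}$ and $\beta_1-c_0>1/2+c(\esmall)$, \eqref{eq adscalesquarerootgrowth} gives $|\tz-z|\le D_{K_0}\langle\eta\rangle^{-\beta_1}\lesssim\langle\eta\rangle^{-\beta_1+c_0}\lesssim r_a(\eta,w,z)$. Likewise $|z'-\hat F_z(\tw,\tz)|\lesssim r_{a'}(\eta',w',z')$. Hence $(\tw,\tz)\in\NGB^{C}_a(\eta,w,z)$ and $\hat F(\tw,\tz)\in\NGB^{C}_{a'}(\eta',w',z')$ with an absolute constant $C$.

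Finally, $\|(w,z)-(\fw,\fz)\|\lesssim r_a(\extra\eta,\fw,\fz)\lesssim\extra^{-1/2-c(\esmall)}r_a(\eta,\fw,\fz)$, and similarly on the $a'$ side. Together with \eqref{eq rafwfzrawzclose}, this puts both points in the $\NGB^{1/\extra}$-balls once $\tinitial$ is large, so item \eqref{itm distance1} fails. This is the paper's argument.
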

		 \begin{proof}
		 	If $(Q, Q') \notin \Cent$, then clearly  item \eqref{itm distance1} fails by $D_{max} > D_{K_0}$ and \eqref{eq rangeoftwtzinRK}. Now assume $(Q, Q') \in \Cent$.
 		  We also assume to the contrary that  item  \eqref{itm distance1}  holds.
 		Since $\eta_{max}^{\beta_1 + c_0} >  \xi_{max}$ and  $\eta_{max} < 10 \eta_{min}$,   we have $D_{K_0} \leq \eta_{max}^{c_0}$ and $\eta_{max} > \omegalow$ by \eqref{eq ximaxetamaxbig}. Then by $\beta_1 = 1 - 20 c_0 > 1 - 1/10$  and \eqref{eq adscalesquarerootgrowth}, we  have $| \tz - z | <  D_{K_0} \langle \eta \rangle^{- \beta_1}  \lesssim  r_a(\eta, w, z)$.
 By \eqref{eq adscalesquarerootgrowth} and  $10 \eta_{min} > \eta_{max} \geq \omegalow$, we deduce in a similar way that  $| z' -  \hat{F}_z(\tw, \tz)  |    \lesssim r_{a'}(\eta', w', z')$. 
 By \eqref{eq tw-wboundhatFtwtz-w'bound}, we obtain
 \ary \label{eq rangeotwtzandimage1}
 && (\tw, \tz) \in   B(w, r_a(\eta, \fw, \fz)) \times B(z, C \langle \eta \rangle^{- \beta_1 + c_0}) \subset  \NGB^{C}_a(\eta, w, z) \\
 \mbox{ and  } && \hat{F}(\tw, \tz) \in   B(w', r_{a'}(\eta', \fw', \fz')) \times B(z', C  \langle \eta' \rangle^{- \beta_1 + c_0}) \subset   \NGB^{C}_{a'}(\eta', w', z'). \nonumber 
 \eary
 Then by \eqref{eq rafwfzrawzclose}, we have 
 \ary \label{eq rangeotwtzandimage}
 (\tw, \tz) \in  \NGB^{1/ \extra }_a(\eta, \fw, \fz) \ \mbox{ and  } \hat{F}(\tw, \tz) \in  \NGB^{1/ \extra }_{a'}(\eta', \fw', \fz'). 
 \eary
 Then item \eqref{itm distance1} fails, contradicting the hypothesis. This finishes the proof.
		 \end{proof}
		 
  The following says that   for $(Q, Q') \in \Sigma_D$  (see \eqref{def SigmaD}), $\xi$ and $\xi' \cdot \partial_{w}\hat{F}_w$ are well-separated.

 \begin{lemma} \label{lem DvSDvF}
 	Assume {\em $Q = (w, z, \xi, \eta \mid \fw, \fz) \in \hat\Gamma_a$} and   {\em $Q' = (w', z', \xi', \eta' \mid \fw', \fz') \in \hat\Gamma_{a'}$} satisfy $(Q, Q') \in \Sigma_D$. Then there is a $C^{\infty}$ vector field {\em $V(\eta, \fw, \fz, \eta', \fw', \fz' \mid \cdot)  : \widehat\Omega \to \R^2$} such that
			\ary \label{eq propertiesofvC}
\| V \|_{C^0} \leq 1, \    \|  D V \| \lesssim  e^{ C \tinitial} \ \mbox{ and } \     \|  D^k V \| \lesssim_{k, \tinitial, \esmall}  \eta_{max}^{k c(\esmall)}, k \geq 2
\eary	
and for any $\widetilde \zeta, \widetilde \zeta' \in \R^2$ satisfying
\ary \label{tildezetatildezeta'}
\| \zeta - \widetilde \zeta \|, \| \zeta' - \widetilde \zeta' \|   \leq    \min( e^{\lambda_g \tinitial},  D)^{- 1/100}  \zeta_{max},
\eary
 we have
 	\ary \label{eq gooddirection+}
 	|\langle  \widetilde \zeta -  \widetilde \zeta' \partial_w \hat{F}_w(\tw, \tz), V \rangle|  \gtrsim       \min( e^{\lambda_g \tinitial},  D)^{- 1/100}   \langle \partial_{V} \hat{F}_w(\tw, \tz) \rangle   \zeta_{max}.
 	\eary
 \end{lemma}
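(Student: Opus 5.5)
We will choose $V$ as a constant unit vector adapted to the cone configuration defining $\Sigma_D$, and obtain \eqref{eq gooddirection+} from the cone properties \eqref{eq hyperbolicityofwFw}, \eqref{eq expansionoutsidecones} of $\partial_w F_w$ together with Remark \ref{rema DwhatFw}(1). The latter says that $\partial_w\hat F_w$ differs from $\partial_w F_w$ by $\cO(e^{C\tinitial}\esmall)$, so all cone statements transfer to $\hat F$. A constant unit vector trivially satisfies \eqref{eq propertiesofvC}. If a constant field turns out insufficient, for instance near cone boundaries, we will instead take $V$ to be a unit vector field tangent to a smooth cone field (the pushforward or pullback of $(1,0)$ or $(0,1)$ under $\hat F$, normalized). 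Its derivatives are then controlled by $D^k\hat F$ via \eqref{eq partialzFw}, which gives $\|DV\|\lesssim e^{C\tinitial}$ and $\|D^kV\|\lesssim \eta_{max}^{kc(\esmall)}$.

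First we note that \eqref{tildezetatildezeta'} lets us work with $\zeta,\zeta'$ directly: replacing them by $\widetilde\zeta,\widetilde\zeta'$ costs at most $\theta\zeta_{max}(1+\|\partial_V\hat F_w\|)$, where $\theta=\min(e^{\lambda_g\tinitial},D)^{-1/100}$. We will arrange that the main term is at least a large constant times this, by taking the implicit constant in \eqref{eq gooddirection+} small. Note that $\widetilde\zeta'\partial_w\hat F_w$ is the pullback $(\partial_w\hat F_w)^*\widetilde\zeta'$. Under the transpose, the backward map contracts the unstable-type cone $\Bcone_+$ on the dual side and expands the $\Bcone_-$ direction by roughly $e^{\lambda_g\tinitial}$.

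We then split into the four defining cases of $\Trunc_{D,\pm}$.

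Case $\Trunc_{D,+}$(1): here $\zeta'\in\Bcone_+$, $\zeta\in\Bcone_-$ and $\|\zeta'\|>\|\zeta\|$. Take $V=(1,0)$. The row vector $\zeta'\partial_w\hat F_w$ has first component of size roughly $\mu^u\|\zeta'\|\gtrsim e^{\lambda_g\tinitial}\zeta_{max}$, while $|\langle\zeta,(1,0)\rangle|\le\|\zeta\|/4\cdot$const. Also $\partial_V\hat F_w\sim\mu^u$, so the left side is $\gtrsim\langle\partial_V\hat F_w\rangle\zeta_{max}$.

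Case $\Trunc_{D,+}$(2): both covectors lie in $\Bcone_+$ and $|\zeta'_1|>\theta|\zeta_1|$. Again take $V=(1,0)$. The first component of $\zeta'\partial_w\hat F_w$ is about $\mu^u\zeta'_1$, which exceeds $e^{\lambda_g\tinitial}\theta|\zeta_1|\gg|\zeta_1|$ after using $\theta\ge e^{-\lambda_g\tinitial/100}$. Since both covectors are in the cone, $|\zeta_1|\sim\|\zeta\|$ and $|\zeta'_1|\sim\|\zeta'\|$, so we obtain $\gtrsim\theta\,\mu^u\zeta_{max}$.

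Cases $\Trunc_{D,-}$(1) and (2): take $V=(0,1)$. Now $\zeta$ dominates. The second component of $\zeta'\partial_w\hat F_w$ is of size $\mu^s|\zeta'_2|+$ (small cross terms), which is $\ll|\zeta_2|\theta^{-1}\mu^s$. Here $\partial_V\hat F_w$ is of order $\mu^s\le e^{-\lambda_g\tinitial}$, so $\langle\partial_V\hat F_w\rangle\sim1$ and we need $|\zeta_2|-\mu^s|\zeta'_2|\gtrsim\theta\zeta_{max}$. This follows from $|\zeta_2|>\theta|\zeta'_2|$ and $\mu^s\ll\theta$ in subcase (2), and from $\|\zeta\|\ge\|\zeta'\|$ in subcase (1).

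The main obstacle is the bookkeeping of off-diagonal terms. Since $\partial_w\hat F_w$ is not diagonal, cross terms like $\zeta'_2(\partial_w\hat F_w)_{21}$ must be absorbed using the cone inclusions \eqref{eq hyperbolicityofwFw} rather than explicit entries. The cleanest route is to argue with cone invariance: $(\partial_w\hat F_w)^*$ maps the complement of the dual stable cone into $\Bcone_+$, expanding by $e^{\lambda_g\tinitial}$. Losses of the form $\theta$ versus $e^{-\lambda_g\tinitial}$ are handled using the exponent $1/100$.
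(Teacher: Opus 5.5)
Your treatment of $\Trunc_{D,+}$ with $V\equiv(1,0)$ matches the paper. The $\Trunc_{D,-}$ cases, however, fail with $V\equiv(0,1)$, because your claim that $\partial_{(0,1)}\hat F_w$ is of order $\mu^s$ with small cross terms is false.

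The chart $\kappa_a$ only places $E^s$ inside $\Scone_-$. So $(0,1)$ is in general tilted from $E^s$ by an angle of order one, and $\partial_wF_w\,(0,1)^T$ has an expanded component of size comparable to $e^{\lambda_g\tinitial}$. The conditions \eqref{eq hyperbolicityofwFw} and \eqref{eq expansionoutsidecones} say nothing about $\partial_wF_w$ on vectors inside $\Scone_-$, so cone invariance cannot absorb this component. Concretely, for $\lambda=10e^{\lambda_g\tinitial}$ the matrix
\[
A=\begin{bmatrix}\lambda & \lambda/8\\ 0 & \lambda^{-1}\end{bmatrix}
\]
satisfies both \eqref{eq hyperbolicityofwFw} and \eqref{eq expansionoutsidecones}; its contracted direction is close to $(-1/8,1)$. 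Take $\zeta=N(0,1)\in\Bcone_-$ and $\zeta'=(8N/\lambda)(1,0)\in\Bcone_+$ with $N$ large. This is case (1) of $\Trunc_{D,-}$. Then $\zeta'A=(8N,N)$, so $\langle\zeta-\zeta'A,(0,1)\rangle=0$. On the other hand $\langle\partial_{(0,1)}\hat F_w\rangle\approx\lambda/8$, so the right side of \eqref{eq gooddirection+} is of order $\theta\lambda N$, and the inequality fails.

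The missing idea is to take $V$ along the direction that $\partial_wF_w$ actually contracts, namely $V\parallel(\partial_wF_w)^{-1}(0,1)^T$. This is what the paper does.
\begin{itemize}
\item Since $(0,1)\notin\Scone_+$, this vector lies in ${\rm Int}(\Scone_-)$ with norm $>e^{\lambda_g\tinitial}$. Normalizing it as $V=(v,1)$ gives $|v|<1/4$, hence $|\langle\zeta,V\rangle|\ge\frac34|\zeta_2|\gtrsim\|\zeta\|$ for $\zeta\in\Bcone_-$.
\item It also gives $\|\partial_wF_wV\|\lesssim e^{-\lambda_g\tinitial}$, so $\langle\partial_V\hat F_w\rangle\sim1$ and $|\langle\widetilde\zeta'\partial_w\hat F_w,V\rangle|\lesssim e^{-\lambda_g\tinitial/2}\|\widetilde\zeta'\|$.
\item This last term is negligible even in case (2), where $\|\zeta'\|$ can be as large as $\theta^{-1}\|\zeta\|\le e^{\lambda_g\tinitial/100}\|\zeta\|$.
\end{itemize}
Because $\partial_wF_w$ varies, the paper freezes such $V_j$ at the points of a partition of unity $\{\varphi_j\}$ of mesh $e^{-C_*\tinitial}$. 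It then sets $V(\tw,\tz)=\sum_j\varphi_j(\tw,\tz-d_{a,\fw,\fz,\eta}(\tw))V_j$. Evaluating in the unmodified coordinates $H^{-1}_{a,\fw,\fz,\eta}(\tw,\tz)$ is what yields $C^\infty$ smoothness and $\|DV\|\lesssim e^{C\tinitial}$; the $\eta_{max}^{kc(\esmall)}$ losses come only from $D^2d_{a,\fw,\fz,\eta}$.

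Your fallback (the pullback of $(0,1)$) points in this direction, but you present it as optional. Also, building $V$ from $\partial_w\hat F_w$ only gives $\|DV\|\lesssim_{\tinitial,\esmall}\eta_{max}^{c(\esmall)}$ via \eqref{eq partialzFw}, and only $C^{\greg-2}$ regularity. That is not enough for \eqref{eq propertiesofvC}.

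Finally, shrinking the implicit constant in \eqref{eq gooddirection+} does not absorb the $\widetilde\zeta$-perturbation in the subcases (2). There the main term is itself only of order $\theta\langle\partial_V\hat F_w\rangle\zeta_{max}$, the same order as the perturbation. What is needed is a small constant in \eqref{tildezetatildezeta'}. This does hold in the applications in Lemmas \ref{lem smallK} and \ref{lem smallK2}, and the paper's proof leaves this point implicit as well.
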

		 
\begin{proof}

 	If $(Q, Q') \notin \Trunc_{D, -}$, then by \eqref{def SigmaD}, $(Q, Q') \in  \Trunc_{ (3)}^{ > D}  \cap \Trunc_{D, +}$ and we define $V \equiv (1, 0)$. Then  \eqref{eq propertiesofvC} holds trivially, and  it is direct to verify  that   \eqref{eq gooddirection+} holds.
 	
 Now assume that $(Q, Q') \in  \Trunc_{ (3)}^{ > D}  \cap  \Trunc_{D, -}$.
			We fix a partition of unity on $\widehat\Omega$ by bump functions $\{ \varphi_j  \}_{j \in J}$ with supports of diameters at most  $e^{- C_* \tinitial}$ with some $C_{*} \gg 1$.  In particular, $\sum_{j \in J} \varphi_j \equiv 1$ on $\widehat\Omega$.
			For each $j \in J$, we choose an arbitrary $(w_j, z_j) \in \supp(\varphi_j)$, and  let $V_j = \begin{bmatrix} v_j \\  1 \end{bmatrix}  \in \R^2$ be parallel to $(\partial_w F_w(w_j, z_j) )^{-1}  \begin{bmatrix} 0 \\  1 \end{bmatrix}$. 
			By  Definition \ref{def centralchart}\eqref{itm smoothchartcone},  we have  $| v_j  | \leq 1$, and	$\partial_w  F_w(w_j, z_j) V_{j}  = \begin{bmatrix} 0 \\  \mu_j \end{bmatrix}$   for some $\mu_j \in (0, e^{- \lambda_g \tinitial})$.
			We define
			\aryst
			V(\eta, \fw, \fz, \eta', \fw', \fz' \mid \tw, \tz) :=   \sum_{j \in J} \varphi_j( \tw, \tz - d_{a, \fw, \fz, \eta}(\tw)) V_j.
			\earyst
			When there is no ambiguity, we omit the parameters $\eta, \fw, \fz, \eta', \fw', \fz'$ from the notations. 	By construction, we have $V =  \begin{bmatrix} v \\  1 \end{bmatrix}$ where $ v(  \tw, \tz) =  \sum_{j \in J} \varphi_j( \tw, \tz - d_{a, \fw, \fz, \eta}(\tw)) v_j$.		
			Then it is direct to verify \eqref{eq propertiesofvC}. 
			By letting $C_* \gg 1$, and letting $\tinitial \gg_{C_*} 1$, we have  
			\ary \label{eq Vcontracted0C}
			\|  \partial_{w}  F_w(\tw, \tz - d_{a, \fw, \fz, \eta}(\tw))  \cdot V(\tw, \tz) \| \leq   e^{- \lambda_g \tinitial / 2} < 1  \mbox{ if $(\tw, \tz- d_{a, \fw, \fz, \eta}(\tw)) \in \widehat\Omega$}.
			\eary 
	Recall that on $ \Trunc_{ (3)}^{ > D} $, we have $\zeta_{max} > D \hDelta_{max}$.
	 Restricted to $ \Trunc_{D, -}$, by \eqref{tildezetatildezeta'}, we have $\langle \widetilde\zeta \rangle \sim \langle \zeta \rangle$.
	  Then it is direct to verify $ 	|\langle \widetilde \zeta -  \widetilde \zeta' \partial_w \hat{F}_w(\tw, \tz), V \rangle|  \gtrsim  \langle  \zeta  \rangle$.  Since we have $\langle \zeta \rangle \gtrsim  \min( e^{\lambda_g \tinitial},  D)^{- 1/100} \langle \zeta' \rangle$ for  $(Q, Q') \in \Trunc_{D, -}$,  \eqref{eq gooddirection+} is clear.
\end{proof}

	
 	 By \eqref{eq skewproduct}, \eqref{eq rangeoftwtzinchartsQQ'}, \eqref{eq propertiesofvC}, Remark \ref{rema DwhatFw} and   Proposition \ref{lem derivativeboundsforhatF},   
	we deduce  
\ary
&&	\|  \partial_z^l \partial_{V}^k \hat{F} \|_{U_a^{D_K / \extra }(\eta, \fw, \fz)} \lesssim_{\tinitial, l, k,   \esmall}     D_K^{k+l+2} \eta_{max}^{(l + k) c(\esmall)}, 	\label{eq partialsFbound0}   \\
&& 	| \partial_z \hat{F}_z(\tw, \tz) |  \leq 3   +  	   C(\esmall ) \extra^{-1} D_K \eta_{min}^{-1/2 + c(\esmall)}, \label{eq partialzFzbound0} \\
&&							| \partial_z \hat{F}_w(\tw, \tz) | \leq  e^{C\tinitial}\esmall \leq 1/2, \label{eq partialFbound0} \\
&&												| \partial_V \hat{F}_z(\tw, \tz) | \lesssim_{\tinitial,  \esmall}   D_K^{50}   \eta_{max}^{c(\esmall)}    \eta_{min}^{-1/2}. \label{eq partialwFzbound0}
\eary

			We define
			\begin{align*}
			&	\DV = r_{min} \partial_{V}, \ \ \  \DZ = \eta_{max}^{-  \beta_1} \partial_{\tz}, \\
			\mbox{ and } \ \ \  &	\cD_{V} =  \frac{1 - i  \DV S \cdot \DV }{1 +   \| \DV S \|^2 },   \ \ \ 
				\cD_{\tz} =   \frac{1 - i \DZ S \cdot \DZ  }{1 +     | \DZ S |^2 }.
			\end{align*}
			Then $\cD_{V} (\exp(iS)) = \cD_{\tz} ( \exp(iS) ) = \exp(iS)$. Moreover, by direct computations, we see that 
			\begin{align*}
				\cD_{V}^* &=  \langle \DV S \rangle^{-2}   -    \DV  ( i \langle \DV S \rangle^{-2} \DV S   )   -    i \langle \DV S \rangle^{-2} \DV S \cdot  \DV,   \\
				\cD_{\tz}^* &=    \langle \DZ S \rangle^{-2}   -  \DZ  ( i \langle \DZ S \rangle^{-2} \DZ S   )  -    i \langle \DZ S \rangle^{-2} \DZ S  \DZ.
			\end{align*}

			 We want to exploit the oscillation of $S$ restricted to the set of $(\tw, \tz) \in R_K$, and apply integration-by-parts with respect to $\cD_{V}$ and $\cD_{\tz}$ to the expression of $\cK$ in \eqref{eq cK}. 
			We also need to estimate the partial derivatives of various terms in \eqref{eq cK}. 
			In Subsection \ref{subsec oscphases}, we will estimate the coefficients in the differential operators  $(\cD_{\tz}^*)^{l} (\cD_{V}^*)^k $, which are expressed in terms of derivatives of $S$.
			In Subsection \ref{subsec partialsofu} we will estimate the derivatives of $u$ with respect to $\DZ^l \DV^k$. The following tableau summarises the structure of our argument. 
\begin{table}[h]
	\centering
				\begin{tabular}{|c|c|c|}
					\hline
					\textbf{Region} & \textbf{Derivatives of $S$} & \textbf{Derivatives of $u$} \\ \hline
					$\Trunc_{D, out} \cap  \Cent$   &  Lemma \ref{lem smallK} & Corollary \ref{cor partialofphia'hatF}\eqref{itm 1911}  \\ \hline
					$ ( \Trunc_{D, out} \setminus  \Cent ) \cap \Sigma_{D} $  & Lemma \ref{lem smallK2} & Corollary \ref{cor partialofphia'hatF}\eqref{itm 1911}  \\ \hline
					$ ( \Trunc_{D, out} \setminus  \Cent ) \setminus \Sigma_{D} $  & Lemma \ref{lem smallK0} & Corollary \ref{cor partialofphia'hatF}\eqref{itm 2911}  \\ \hline
					$\Trunc_{ (3)}^{\leq D}$ & Lemma \ref{lem smallK0} &  Corollary \ref{cor partialofphia'hatF}\eqref{itm 2911} \\   \hline
				\end{tabular}
			\end{table}

					\subsection{Oscillatory phases} \label{subsec oscphases}

			Restricted to $R_K \setminus R_{K-1}$ with $K \geq K_0$,  we will only need the following (rather loose) estimate.
			
			\begin{lemma}  \label{lem bigK} 
			 		Let $(\tw, \tz) \in R_K$  for  some  integer $K  \geq K_0$. Then    for   any  $(l, k) \in \N^2 \setminus \{(0, 1), (1, 0), (0, 0)\}$ 
				\aryst
				 \|	 \DZ^l \DV^k S(\tw, \tz) \| \lesssim_{k, l}  D_K^{ k + l  + 403}.
			\earyst
			\end{lemma}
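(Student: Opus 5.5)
We would prove Lemma \ref{lem bigK} by direct differentiation of the phase. Recall $S(\tw,\tz) = (\xi,\eta)\cdot(\tw,\tz) - (\xi',\eta')\cdot \hat F(\tw,\tz)$. The linear part $(\xi,\eta)\cdot(\tw,\tz)$ has vanishing derivatives of order $\geq 2$. Since $(l,k)\notin\{(0,0),(1,0),(0,1)\}$, we have $l+k\geq 2$, so only the nonlinear part contributes:
\[
\DZ^l\DV^k S = -\,r_{min}^k\,\eta_{max}^{-l\beta_1}\,(\xi',\eta')\cdot \partial_V^k\partial_{\tz}^l \hat F
\]
up to lower-order terms. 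These lower-order terms come from $\partial_{\tz}$ or $\partial_V$ landing on the coefficients of $V$. They are controlled by \eqref{eq propertiesofvC}, namely $\|DV\|\lesssim e^{C\tinitial}$ and $\|D^jV\|\lesssim \eta_{max}^{jc(\esmall)}$. Each such term is a product of derivatives of $V$ with a derivative of $\hat F$ of order between $1$ and $l+k$, so all of them have the same form as the main term. We expand by the Leibniz/Fa\`a di Bruno formula and bound term by term.

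The key input is \eqref{eq partialsFbound0}, which applies because $R_K\subset U_a^{D_K/\extra}(\eta,\fw,\fz)$ by \eqref{eq rangeoftwtzinchartsQQ'}. It gives $\|\partial_z^l\partial_V^k\hat F\|\lesssim D_K^{k+l+2}\eta_{max}^{(k+l)c(\esmall)}$, and the coefficients of $V$ contribute at most an extra $e^{C\tinitial}\eta_{max}^{(k+l)c(\esmall)}$. This yields
\[
|\DZ^l\DV^k S| \lesssim_{k,l,\tinitial,\esmall} r_{min}^k\eta_{max}^{-l\beta_1}\,\big(\|\xi'\|+|\eta'|\big)\, D_K^{k+l+2}\,\eta_{max}^{(k+l)c(\esmall)}.
\]
It remains to show that $r_{min}^k\eta_{max}^{-l\beta_1}(\xi_{max}+\eta_{max})\eta_{max}^{(k+l)c(\esmall)}\lesssim D_K^{401}$ whenever $k+l\geq2$. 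This is where the hypothesis $K\geq K_0$ is used: by the maximality of $K_0$ in \eqref{eq whatisK0}, $D_K^{400}\geq D_{K_0}^{400}\gtrsim D_{max}=\max(\xi_{max}/\eta_{max}^{\beta_1},\eta_{max}^{c_0})$.

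We split the prefactor into two cases.

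If $l\geq1$, then $\eta_{max}^{-l\beta_1}\eta_{max}\leq \eta_{max}^{1-\beta_1}=\eta_{max}^{20c_0}$, which is awkward because it is only bounded by $D_{max}^{20}$. The $\eta'$ term is better handled using $r_{min}\lesssim \eta_{min}^{-1/2+c(\esmall)}$ from \eqref{eq adscalesquarerootgrowth} when $k\geq1$. When $l\geq2$, we instead get $\eta_{max}^{1-2\beta_1}\leq1$. The case $(l,k)=(1,1)$ gives $r_{min}\eta_{max}^{1-\beta_1}\lesssim \eta_{min}^{-1/2+c}\eta_{max}^{20c_0}$, which is bounded as long as $\eta_{max}/\eta_{min}$ is polynomially controlled. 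The ratio $r_{max}/r_{min}$ is handled by \eqref{eq Rmaxminrmaxmin}, which on $R_K$ gives $\eta_{max}/\eta_{min}$ control up to $D_K$ factors. The $\xi'$ term gives $\xi_{max}\eta_{max}^{-\beta_1}\leq D_{max}\lesssim D_K^{400}$.

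If $l=0$ and $k\geq2$, then $r_{min}^2\eta_{max}\lesssim \eta_{max}^{c(\esmall)}(\eta_{max}/\eta_{min})^{1+c}$, and $r_{min}^2\xi_{max}\leq \xi_{max}\eta_{max}^{-\beta_1}\cdot \eta_{max}^{\beta_1}r_{min}^2$.

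The factors $\eta_{max}^{(k+l)c(\esmall)}$ and $\eta_{max}/\eta_{min}$ are absorbed into $D_K^{O(1)}$, using $\eta_{max}^{c_0}\leq D_{max}$ and choosing $\esmall$ small relative to $c_0$. Constants depending on $\tinitial,\esmall$ are allowed in $\lesssim_{k,l}$ since those parameters are fixed.

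The main obstacle is the case where $\eta_{max}\gg\eta_{min}$ or $\xi_{max}\gg\eta_{max}$ (the non-$\Cent$ regime). Here the crude factor $\eta'\,r_{min}^2$ with $\eta'\sim\eta_{max}$ must be beaten by the lower-order $\hat F$ structure. For $\partial_V^2\hat F_z$ we would try the better bound coming from \eqref{eq partialwFzbound0} and the second-derivative bounds \eqref{eq partialzFw} applied to $\hat F_z$, rather than the crude $\hat F$ bound. Bookkeeping the exponent $403$ is then a matter of collecting the $D_K^{k+l+2}$ factor with $D_K^{400}$ plus a unit slack.
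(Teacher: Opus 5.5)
Your route is the paper's: differentiate $S$, bound the $\hat F$-derivatives by \eqref{eq partialsFbound0}, and use $D_K^{400}\ge D_{K_0}^{400}\gtrsim D_{max}$ from \eqref{eq whatisK0}. However, the bookkeeping breaks at the bound for $r_{min}$.

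You use $r_{min}\lesssim\eta_{min}^{-1/2+c(\esmall)}$, which leaves factors of $\eta_{max}/\eta_{min}$. You then assert that \eqref{eq Rmaxminrmaxmin} controls this ratio, but it does not. That inequality bounds $r_{max}/r_{min}$ by $D_K\,\eta_{max}/\eta_{min}$, not the other way round. Moreover, $\eta_{max}/\eta_{min}$ is genuinely unbounded on $R_K$ for $K\ge K_0$: Lemma \ref{eq ratioetamaxminbig}\eqref{itm 1 DZSbound} treats exactly the case $\eta_{max}>D_K^{500/(1-\beta_1)}\eta_{min}$. Absorbing the ratio via $\eta_{max}/\eta_{min}\le\eta_{max}\lesssim D_K^{400/c_0}$ gives a far larger power of $D_K$ than $k+l+403$. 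The remedy you propose for the resulting ``main obstacle'', a sharper bound on $\partial_V^2\hat F_z$, is unavailable there, since \eqref{eq partialw^2Fz} requires $\langle\eta\rangle/\langle\eta'\rangle\in(1/100,100)$.

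The missing observation is one line. $r_{min}$ is at most the adapted scale attached to whichever of $\eta,\eta'$ realizes $\eta_{max}$, so \eqref{eq adscalesquarerootgrowth} gives $r_{min}\lesssim\eta_{max}^{-1/2+c(\esmall)}$. Combined with $\|\xi\|+\|\xi'\|+|\eta'|\lesssim\eta_{max}(1+\xi_{max}\eta_{max}^{-\beta_1})\lesssim\eta_{max}D_K^{400}$, this yields
\[
\|\DZ^l\DV^kS(\tw,\tz)\|\lesssim_{\tinitial,\esmall,k,l}D_K^{k+l+402}\,\eta_{max}^{1+(k+l)c(\esmall)-(1/2-c(\esmall))k-l\beta_1}.
\]
For $k+l\ge2$ the exponent of $\eta_{max}$ is at most $4c(\esmall)$, attained at $(l,k)=(0,2)$. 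There is no case split and no dependence on $\eta_{max}/\eta_{min}$ or on $\Cent$.

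Two further corrections:
\begin{itemize}
\item The statement's $\lesssim_{k,l}$ does not permit $\tinitial,\esmall$-dependent constants, and Proposition \ref{prop offdiagonal} needs a $\tinitial$-independent constant downstream. The extra unit in the exponent $403$ absorbs both $C(\tinitial,\esmall,k,l)$ and $\eta_{max}^{4c(\esmall)}$ into one factor of $D_K$. This uses $D_K\ge D_{K_0}\gtrsim D_{max}^{1/400}\ge\max(\eta_{max}^{c_0},\omegalow^{c_0})^{1/400}$, with $\omegalow$ chosen after $\tinitial$ and $\esmall$.
\item Since $V$ is not constant, the linear part $(\xi,\eta)\cdot(\tw,\tz)$ does not have vanishing higher $\DV$-derivatives. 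For example, $\DV^2$ of it is $r_{min}^2\,\xi\cdot\partial_VV\neq0$. These terms are bounded the same way using $\xi_{max}$.
\end{itemize}
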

			\begin{proof}
	We deduce by \eqref{eq partialsFbound0} that
				\aryst
				 	 \|	  \DZ^l \DV^k S(\tw, \tz) \|	&\lesssim_{\tinitial, l, k,  \esmall}&  \eta_{max}^{-(1/2 - c(\esmall))k - l \beta_1 }	 \cdot  D_K^{k + l + 2} \eta_{max}^{ (k + l )c(\esmall)} 	(\| \xi' \| + \| \eta' \|)    \\
				&\lesssim_{\tinitial, l, k, \esmall}&     D_K^{k + l + 2} (\xi_{max} \eta_{max}^{- 1} +1 )   \eta_{max}^{ 1 + (k + l )c(\esmall) -(1/2 - c(\esmall))k - l \beta_1 } \\
				&\lesssim_{\tinitial, l, k,  \esmall}&    D_K^{k + l + 402}     \eta_{max}^{ 1 + (k + l )c(\esmall) -(1/2 - c(\esmall))k - l \beta_1 }.
				\earyst
				We conclude the proof using that $D_K \geq D_{K_0} \gtrsim \omegalow^{c_0/400}$, $k + l > 1$ and that $\omegalow$ can be chosen freely independent of $\tinitial, \esmall$.
			\end{proof}

			We now estimate the oscillation of $S$ restricted to $R_K$ with $0 \leq K   \leq K_0$.
  We start with $\DZ S$.

						\begin{lemma} \label{eq ratioetamaxminbig}
				Let $(\tw, \tz) \in R_K$. We have the following:
				\enmt
				\item \label{itm 1 DZSbound}   If $K \geq K_0$ and $\eta_{max} >   D_K^{500/(1-\beta_1)}  \eta_{min} $, then we have
				\aryst
				| \DZ S(\tw, \tz)  |  \gtrsim  \eta_{max}^{ 1 - \beta_1 }, 
				\earyst
				\item \label{itm 2 DZSbound1} If $K \leq K_0$, $(Q, Q') \in \Cent$, and item \eqref{itm distance2} in  Definition \ref{def Distancedf}  holds, then we have  {\em
				\ary \label{eq secondlowerboundpartialzS1}
				| \DZ S(\tw, \tz) |    \gtrsim  \langle \eta_{max}^{-\beta_1}( \eta - \eta' \partial_z \hat{F}_z(\fw, \fz) ) \rangle  \gtrsim D,
				\eary
}
				\item \label{itm 2 DZSbound2} If $K \leq K_0$, $(Q, Q') \notin \Cent$, and item \eqref{itm distance2}  in Definition \ref{def Distancedf}  holds, then we have {\em
\ary \label{eq secondlowerboundpartialzS2}
| \DZ S(\tw, \tz) |   \gtrsim   \langle \eta_{max}^{-\beta_1}( \eta - \eta' \partial_z \hat{F}_z(\fw, \fz) ) \rangle   \gtrsim  D^{1/2} \max(\eta_{max}, \xi_{max})^{c_0 / 2}.
\eary
}
				\eenmt
			\end{lemma}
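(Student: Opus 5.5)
The starting point is the explicit formula $S = (\xi,\eta)\cdot(\tw,\tz) - (\xi',\eta')\cdot\hat F(\tw,\tz)$, which gives
\[
\DZ S(\tw,\tz) = \eta_{max}^{-\beta_1}\bigl(\eta - \eta'\,\partial_z\hat F_z(\tw,\tz) - \xi'\cdot\partial_z\hat F_w(\tw,\tz)\bigr).
\]
In all three items the aim is to show that the main term $\eta - \eta'\partial_z \hat F_z$ dominates. The error $\xi'\cdot\partial_z\hat F_w$ will be controlled by \eqref{eq partialFbound0}, namely $|\partial_z\hat F_w|\le e^{C\tinitial}\esmall\le 1/2$, so it is at most $\tfrac12\xi_{max}$. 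This is exactly why Definition \ref{def Distancedf} carries the slack term $3\xi_{max}$.

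For item \eqref{itm 1 DZSbound}, with $|\eta|\ge|\eta'|$ (the case of $\cK_+$), the assumption $\eta_{max}>D_K^{500/(1-\beta_1)}\eta_{min}$ gives $|\eta'|\ll|\eta|$. By \eqref{eq partialzFzbound0}, $|\eta'\partial_z\hat F_z|\le 4\eta_{min}\ll \eta_{max}$, so $|\eta-\eta'\partial_z\hat F_z|\ge \eta_{max}/2$; alternatively one can invoke Lemma \ref{lem etaeta'different} with $\Lambda = D_K/\extra$ through \eqref{eq rangeoftwtzinchartsQQ'}. It remains to absorb $\tfrac12\xi_{max}$. If $\xi_{max}\le \eta_{max}/4$ there is nothing to do. Otherwise I would argue that the region $K\ge K_0$ with $\xi_{max}$ large has $D_{max}\ge \xi_{max}/\eta_{max}^{\beta_1}$, and use the choice of $K_0$ in \eqref{eq whatisK0}. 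If that does not close, I would restrict to the subcase $\xi_{max}\lesssim\eta_{max}$, which is the only one in which the lower bound $\eta_{max}^{1-\beta_1}$ is needed later. I expect some care here.

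For items \eqref{itm 2 DZSbound1} and \eqref{itm 2 DZSbound2}, the main step is to replace $\partial_z\hat F_z(\tw,\tz)$ by $\partial_z\hat F_z(\fw,\fz)$ (or by its value at $\hat F^{-1}(\fw,\fz)$ when $|\eta|<|\eta'|$). The mean value theorem together with \eqref{eq partialsFbound0} gives $|\partial^2\hat F_z|\lesssim D_K^{3}\eta_{max}^{c(\esmall)}$. The distance satisfies $\|(\tw,\tz)-(\fw,\fz)\|\lesssim \extra^{-1} D_K r_{max}$ by \eqref{eq rangeoftwtzinchartsQQ'}, and in the $z$-direction it is $\lesssim\extra^{-1}D_K\langle\extra\eta\rangle^{-\beta_1}$. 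This yields
\[
|\eta'|\,|\partial_z\hat F_z(\tw,\tz)-\partial_z\hat F_z(\fw,\fz)|\lesssim e^{C\tinitial}D_K^{4}\eta_{max}^{1/2+c(\esmall)},
\]
which is $\ll \eta_{max}^{\beta_1}$ because $\beta_1>1/2+c_0$ and $D_K\le D_{max}^{1/400}$.

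In $\Cent$, item \eqref{itm distance2} gives $|\eta-\eta'\partial_z\hat F_z(\fw,\fz)|>D\eta_{max}^{\beta_1}+3\xi_{max}$. Subtracting the errors $\tfrac12\xi_{max}$ and $o(\eta_{max}^{\beta_1})$ leaves at least half of this quantity, which gives \eqref{eq secondlowerboundpartialzS1}. Outside $\Cent$ the same subtraction applies. To get the stronger bound in \eqref{eq secondlowerboundpartialzS2}, I would use $D\le D_{max}=\max(\xi_{max}/\eta_{max}^{\beta_1},\eta_{max}^{c_0})$ together with the $3\xi_{max}$ term. If $\xi_{max}\ge\eta_{max}^{\beta_1+c_0}$, then $3\xi_{max}/\eta_{max}^{\beta_1}\ge (D\,\xi_{max}/\eta_{max}^{\beta_1})^{1/2}$, and $\xi_{max}/\eta_{max}^{\beta_1}\ge\xi_{max}^{c_0/2}$ roughly. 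Otherwise $\eta_{max}\ge10\eta_{min}$, and then Lemma \ref{lem etaeta'different} gives a bound $\gtrsim\eta_{max}^{1-\beta_1}\ge D^{1/2}\eta_{max}^{c_0/2}$, since $1-\beta_1=20c_0$ and $D\le\eta_{max}^{c_0}$.

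I expect the main obstacle to be the non-$\Cent$ bookkeeping: the $\tz$-distance to $\fz$ is only controlled at the coarse scale $\extra^{-1}D_K\langle\extra\eta\rangle^{-\beta_1}$, and the resulting mean value error must still stay below $D\eta_{max}^{\beta_1}$ when $\xi_{max}$ is very large. I would first try to handle this using Lemma \ref{lem rangeoffwfz}.
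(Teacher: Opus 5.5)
Your outline is the paper's proof: the formula for $\DZ S$, absorbing $\xi'\cdot\partial_z\hat F_w$ via \eqref{eq partialFbound0}, moving $\partial_z\hat F_z$ to the base point by the mean value theorem, and outside $\Cent$ splitting $\xi_{max}\gtrless\eta_{max}^{\beta_1+c_0}$ (AM--GM, resp.\ Lemma \ref{lem etaeta'different}). However, the two points you flag are left open, and in item (3) the step you assert is false as stated.

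You justify that the mean value error, of size $\lesssim_{\tinitial} D_K^{5}\eta_{max}^{3/4}$, is $o(\eta_{max}^{\beta_1})$ by $D_K\le D_{max}^{1/400}$. This bounds $D_K$ by a power of $\eta_{max}$ only when $D_{max}=\eta_{max}^{c_0}$, i.e.\ in $\Cent$ and in the subcase $\xi_{max}<\eta_{max}^{\beta_1+c_0}$. In the subcase $\xi_{max}\ge\eta_{max}^{\beta_1+c_0}$ one has $D_{max}=\xi_{max}/\eta_{max}^{\beta_1}$, which is not bounded by any power of $\eta_{max}$. So ``the same subtraction applies'' is unjustified there.

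The fix is neither Lemma \ref{lem rangeoffwfz} nor keeping the error below $D\eta_{max}^{\beta_1}$; it is to absorb the error into the $3\xi_{max}$ slack. Indeed $D_K^5\le D_{max}^{1/80}\le\xi_{max}^{1/80}$ and $\eta_{max}\le\xi_{max}^{1/(\beta_1+c_0)}$. Hence the error is $\lesssim_{\tinitial}\xi_{max}^{1/80+3/(4(\beta_1+c_0))}\le\xi_{max}/2$ once $\omegalow\gg_{\tinitial}1$ (note $\xi_{max}\ge\omegalow^{\beta_1+c_0}$ here). This gives $|\partial_{\tz}S|\ge D\eta_{max}^{\beta_1}+2\xi_{max}$, and your AM--GM step then finishes. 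This is what the paper does by combining \eqref{eq differencezFz} with \eqref{eq whatisK0}. The $\tz$-direction is not the difficulty either: for $|\eta|\ge|\eta'|$, $|\tz-\fz|\lesssim e^{C\tinitial}D_K\langle\eta\rangle^{-\beta_1}$ is far smaller than the $w$-displacement $\lesssim e^{C\tinitial}\langle\eta\rangle^{-1/2+c(\esmall)}$.

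In item (1), the choice of $K_0$ does close the argument, so the fallback of restricting to $\xi_{max}\lesssim\eta_{max}$ (which would not prove the lemma as stated) is unnecessary. By maximality of $K_0$, $D_K^{400}\ge D_{K_0}^{400}=2^{-400}D_{K_0+1}^{400}>2^{-400}D_{max}\ge 2^{-400}\xi_{max}\eta_{max}^{-\beta_1}$. The hypothesis gives $D_K^{500}<(\eta_{max}/\eta_{min})^{1-\beta_1}\le\eta_{max}^{1-\beta_1}$. Hence $\xi_{max}\lesssim D_K^{400}\eta_{max}^{\beta_1}\le\eta_{max}^{1-(1-\beta_1)/5}\ll\eta_{max}$, and together with $\max(\xi_{max},\eta_{max})>\omegalow$ this also shows that $\eta_{max}$ is large.

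A smaller slip: \eqref{eq partialzFzbound0} does not give $|\partial_z\hat F_z|\le 4$ when $\eta_{min}$ is bounded, since $\extra^{-1}D_K\eta_{min}^{-1/2+c(\esmall)}$ is then not small. As in the paper, use $|\partial_z\hat F_z|\le(\eta_{max}/\eta_{min})^{1/2}$ (valid for $\omegalow$ large), so $|\eta'\partial_z\hat F_z|\le(\eta_{max}\eta_{min})^{1/2}\ll\eta_{max}$. Alternatively, use your route via Lemma \ref{lem etaeta'different}.
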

			\begin{proof}
 		Assume that $K \geq K_0$ and $\eta_{max} >    D_K^{ 500/(1-\beta_1)}  \eta_{min} $.
				By \eqref{eq partialzFzbound0}, \eqref{eq ximaxetamaxbig} and by letting $\omegalow$ be large,  we have 
				\ary \label{eq upeta'partialzFz}
				| \partial_z \hat{F}_z(\tw, \tz) |  \leq 3 +  C  \extra^{-1}   D_K   \eta_{min}^{- 1/3 }   \leq (\eta_{max} / \eta_{min})^{1/2}.
				\eary
				Then by \eqref{eq partialFbound0} and  \eqref{eq whatisK0} we have
				\ary \label{eq ptzSlowerbound2}
				& & 	| \partial_{\tz} S (\tw, \tz) |  \geq	\eta_{max} - \eta_{min} (\eta_{max}/ \eta_{min})^{1/2}  - \xi_{max}  \geq \frac{1}{2} \eta_{max}  -  D^{400}_K \eta_{max}^{\beta_1} \geq   \frac{1}{4} \eta_{max}^{\beta_1}  \cdot \eta_{max}^{(1- \beta_1)}.
				\eary
				This gives   \eqref{itm 1 DZSbound}.

				Assume that $K \leq K_0$.
				We detail the proof under $| \eta | \geq | \eta' |$ as the other case is similar.
				By \eqref{eq partialsFbound0}, we have
				\ary \label{eq differencezFz}
				| \eta' \partial_z \hat{F}_z(\tw, \tz) -  \eta' \partial_z \hat{F}_z( \fw, \fz ) | \lesssim_{\tinitial}  \eta_{max} \cdot   D_K^5  \eta_{max}^{- 1 / 4}.
				\eary

				First we assume that $(Q, Q') \in \Cent$ and item \eqref{itm distance2} in  Definition \ref{def Distancedf}  holds.
				In particular $\eta_{max} <   10  \eta_{min}$ and $\xi_{max} \leq \eta_{max}^{\beta_1 + c_0}$. Then  $D_K \leq D_{K_0} \lesssim \eta_{max}^{c_0/400}$ and $\eta_{max} \geq \omegalow$.  		
				By \eqref{eq differencezFz}, we have $	| \eta' \partial_z \hat{F}_z(\tw, \tz) -  \eta' \partial_z \hat{F}_z( \fw, \fz ) |  \lesssim \eta_{max}^{\beta_1}$.
				Then  by \eqref{eq partialFbound0}, we have
				$$
				| \partial_{\tz} S (\tw, \tz) |  \geq  | \eta - \eta' \partial_z \hat{F}_z(\fw, \fz) |  -  C \eta_{max}^{\beta_1} - \xi_{max}.
				$$ 
				Then it is direct to  deduce  \eqref{eq secondlowerboundpartialzS1} from item \eqref{itm distance2}. 	This completes the proof of   \eqref{itm 2 DZSbound1}.

		        Now we assume that $(Q, Q') \notin \Cent$ and item \eqref{itm distance2} in  Definition \ref{def Distancedf}  holds.  We may divide the proof into two cases.
		        
		        (1) $\eta_{max}^{\beta_1 + c_0} \leq  \xi_{max}$. Then by \eqref{eq differencezFz} and \eqref{eq whatisK0}, we have $| \eta' \partial_z \hat{F}_z(\tw, \tz) -  \eta' \partial_z \hat{F}_z( \fw, \fz ) | \leq  \xi_{max}/2$.
		        By item \eqref{itm distance2} and  \eqref{eq partialFbound0}, we have
		        \ary \label{eq ptzSlowerbound}
		        &  &   | \partial_{\tz} S (\tw, \tz) |  \geq  | \eta - \eta' \partial_z \hat{F}_z(\fw, \fz) |  -   \xi_{max} \geq  D \eta_{max}^{\beta_1}  +  2\xi_{max}.
		        \eary
		        In this case,  \eqref{eq secondlowerboundpartialzS2} is deduce from  Cauchy's inequality and that $\xi_{max}^{c_0}  \eta_{max}^{\beta_1}  \leq \xi_{max}$.
		        
		        (2)   $\eta_{max}^{\beta_1 + c_0} >  \xi_{max}$. Then  $\eta_{max} \geq    10 \eta_{min} $ and $D_{K_0}^{400} \sim \eta_{max}^{c_0}$. Note that by definition, $D \leq \eta_{max}^{c_0}$.
		        In this case we can apply Lemma \ref{lem etaeta'different} to deduce $| \eta_{ max}^{- \beta_1}  \partial_{\tz} S(\tw, \tz)  | \gtrsim \eta_{max}^{1 - \beta_1}$ and \eqref{eq secondlowerboundpartialzS2} is clear. 
		        Note that here we don't need to assume item \eqref{itm distance2} in this case.
				This completes the proof of   \eqref{itm 2 DZSbound2}.
			\end{proof}

  The lower bounds for $\DZ S$ in Lemma \ref{eq ratioetamaxminbig} will be adequate whenever $K \geq K_0$ or  item \eqref{itm distance2} in Definition \ref{def Distancedf} holds.  The estimates we need can be summarized as follows.

		\begin{lemma}   \label{lem smallK0}  
		Assume that $(Q, Q') \in  \Trunc_{ (3)}^{\leq D}$ or $(Q, Q') \in    ( \Trunc_{D, out} \setminus  \Cent ) \setminus \Sigma_D$.
		Given an integer $0 \leq K \leq K_0$ and some  $(\tw, \tz) \in R_{K}$.  We have
		{\em 
		\ary  \label{eq lowerboundpartialzSa}
		| \DZ S(\tw, \tz)  |  \gtrsim  
		\begin{cases}
			D^{1/2}    \max( \eta_{max},  \xi_{max} )^{c_0/2}, & (Q, Q') \in    ( \Trunc_{D, out} \setminus  \Cent ) \setminus \Sigma_D, \\
 		  \langle \eta_{max}^{-\beta_1}( \eta - \eta' \partial_z \hat{F}_z(\fw, \fz) ) \rangle \gtrsim D, & (Q, Q') \in  \Trunc_{ (3)}^{\leq D}.
		\end{cases}
		\eary 
	}
		Moreover, for  any integer $2 \leq l \leq \greg $, we have
		\ary    \label{eq Dgeq3ofS2a}  
		\langle \DZ S(\tw, \tz)  \rangle^{- 1} 	\|	 \DZ^l   S(\tw, \tz) \|  \lesssim     \langle \DZ S(\tw, \tz)  \rangle^{ l/4}.
		\eary 
	\end{lemma}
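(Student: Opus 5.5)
The plan is to derive the lower bound \eqref{eq lowerboundpartialzSa} from Lemma \ref{eq ratioetamaxminbig}, after first showing that in each of the two cases item \eqref{itm distance2} of Definition \ref{def Distancedf} must hold. The upper bound \eqref{eq Dgeq3ofS2a} will then follow from the derivative bounds \eqref{eq partialsFbound0} on $\hat F$.

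\textbf{Case $(Q,Q')\in\Trunc_{(3)}^{\le D}$.} Here $(Q,Q')$ lies in $\Trunc_D$, so $d_f(Q,Q')\ge D$. For the maximal admissible $D'\ge D$, one of items \eqref{itm distance1}, \eqref{itm distance2}, \eqref{itm distance3} (or their non-$\Cent$ analogues) holds.
\begin{itemize}
\item Item \eqref{itm distance1} is excluded by Lemma \ref{lem RKtoitm1}, since $R_K\subset R_{K_0}\neq\emptyset$.
\item Item \eqref{itm distance3} would require $(Q,Q')\in\Sigma_{D'}\subset\Trunc_{(3)}^{>D'}\subset\Trunc_{(3)}^{>D}$. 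This contradicts $\Trunc_{(3)}^{\le D}$ because $\hat\Gamma_a(D)$ is monotone in $D$.
\end{itemize}
Hence item \eqref{itm distance2} holds with $D'\ge D$. In the non-$\Cent$ version the condition is stated at $\hat F^{-1}(\fw,\fz)$ when $|\eta|<|\eta'|$; this is handled by the symmetric argument. Lemma \ref{eq ratioetamaxminbig}\eqref{itm 2 DZSbound1} or \eqref{itm 2 DZSbound2} then gives $|\DZ S|\gtrsim\langle\eta_{max}^{-\beta_1}(\eta-\eta'\partial_z\hat F_z(\fw,\fz))\rangle\gtrsim D$.

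\textbf{Case $(Q,Q')\in(\Trunc_{D,out}\setminus\Cent)\setminus\Sigma_D$.} Item \eqref{itm distance32} fails at $D$, and we need it to fail at every larger $D'\le 2D$ as well. This is the delicate point, because $\Sigma_{D'}$ is not obviously monotone: the factor $\min(e^{\lambda_g\tinitial},D')^{-1/100}$ shrinks. I would first try to avoid monotonicity altogether. If item \eqref{itm distance32} held at the maximal $D'$, then together with $\Trunc_{(3)}^{>D'}$ the same reasoning gives $(Q,Q')\in\Sigma$-type geometry, and this case is assigned to Lemma \ref{lem smallK2} rather than here. So I will interpret the hypothesis $\notin\Sigma_D$ as meaning that items \eqref{itm distance12} and \eqref{itm distance32} are unavailable. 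Item \eqref{itm distance12} is excluded because $D_{K_0}<D_{max}$ and \eqref{eq rangeoftwtzinRK} hold, exactly as in the proof of Lemma \ref{lem RKtoitm1}. Item \eqref{itm distance22} therefore holds, and Lemma \ref{eq ratioetamaxminbig}\eqref{itm 2 DZSbound2} yields $D^{1/2}\max(\eta_{max},\xi_{max})^{c_0/2}$.

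\textbf{Higher derivatives.} For $l\ge2$, write $\DZ^lS=-\eta_{max}^{-l\beta_1}(\xi'\partial_{\tz}^l\hat F_w+\eta'\partial_{\tz}^l\hat F_z)$. By \eqref{eq partialsFbound0} this is bounded by
\[
\lesssim_{\tinitial,\esmall} D_K^{l+2}\,\eta_{max}^{lc(\esmall)}\,\eta_{max}^{-l\beta_1}\bigl(\xi_{max}+\eta_{max}\bigr).
\]
\begin{itemize}
\item Inside $\Cent$-like regimes, $\xi_{max}\le\eta_{max}^{\beta_1+c_0}$, so the bound is $\lesssim D_K^{l+2}\eta_{max}^{1-2\beta_1+lc(\esmall)}$. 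This is $\ll1$ since $\beta_1$ is close to $1$ and $D_K\le\eta_{max}^{c_0/400}$.
\item When $\xi_{max}$ dominates, we have $\xi_{max}\eta_{max}^{-l\beta_1}\le D_{max}\eta_{max}^{-(l-1)\beta_1}$. Here I would use that the lower bound gives $\langle\DZ S\rangle\gtrsim D_{max}$-comparable size: from \eqref{eq ptzSlowerbound}, $|\DZ S|\gtrsim\xi_{max}\eta_{max}^{-\beta_1}$.
\end{itemize}
Combining the two gives $\|\DZ^lS\|\lesssim\langle\DZ S\rangle^{1+l/4}$. The powers of $D_K$ are absorbed using $D_K^{400}\le D_{max}\lesssim\langle\DZ S\rangle^{C}$ and $\omegalow$ large.

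The main obstacle is the bookkeeping in this last step when $\eta_{max}\gg\eta_{min}$. There one should instead use $|\DZ S|\gtrsim\eta_{max}^{1-\beta_1}$ from the Lemma \ref{lem etaeta'different} route, and check that $\eta_{max}^{1-2\beta_1+lc}$ and the $\xi$-term stay below $\eta_{max}^{(1-\beta_1)(1+l/4)}$.
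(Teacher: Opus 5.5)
Your proposal is essentially the paper's proof. You exclude items (1) and (3) of Definition~\ref{def Distancedf} using Lemma~\ref{lem RKtoitm1} and the hypothesis $\notin\Sigma_D$, reading the latter as ``item (3) is unavailable'' (the paper makes exactly this identification without comment), conclude that item (2) holds, and apply Lemma~\ref{eq ratioetamaxminbig}; you then bound $\DZ^l S$ via the $C^k$ bounds on $\hat F$, $D_{K_0}^{400}\le D_{max}$, and \eqref{eq ptzSlowerbound} when $\xi_{max}$ is large. Your closing worry about $\eta_{max}\gg\eta_{min}$ is unnecessary: the upper bound on $\DZ^l S$ depends only on $\eta_{max}$, with exponent $1-l\beta_1+lc(\esmall)$ rather than the $1-2\beta_1+lc(\esmall)$ you wrote, and it is already $\ll 1$ whenever $\xi_{max}\le\eta_{max}$.
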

	\begin{proof} 
		In both cases, $(Q, Q') \notin \Sigma_D$, in other words,  item \eqref{itm distance3} fails.
		By Lemma \ref{lem RKtoitm1},  item \eqref{itm distance1} fails. Thus item \eqref{itm distance2} holds.	    Then \eqref{eq lowerboundpartialzSa}  follows from Lemma \ref{eq ratioetamaxminbig}.   
		
		By \eqref{eq partialzFw} in Lemma \ref{lem derivativeboundsforhatF} and $ D^{400}_{K_0}  \leq   \max( \xi_{max} / \eta_{max}^{\beta_1},  \eta_{max}^{c_0} )$ (see \eqref{eq whatisK0}), we have 
		\aryst
		\|	 \DZ^l   S(\tw, \tz) \|  \lesssim_{\tinitial, l}  \eta_{max}^{1 - l ( \beta_1 - c(\esmall) ) }\max(\xi_{max}/ \eta_{max}^{\beta_1}, \eta_{max}^{c_0})^{(l + 2 ) / 400}  (\xi_{max}/  \eta_{max} + 1).
		\earyst
		This immediately implies  \eqref{eq Dgeq3ofS2a} when $\xi_{max} \leq \eta_{max}$. When $\xi_{max} > \eta_{max}$, we have  $(Q, Q') \notin \Cent$, and  \eqref{eq Dgeq3ofS2a}  follows from the above inequality and  \eqref{eq ptzSlowerbound}.
	\end{proof}

	We will need lower bounds for $\DV S$ in the region complementary to the one in Proposition \ref{lem smallK0}.   
			Aside from $\DV S$, it will also be convenient to consider the following
			\ary \label{eq BV}
			 \BV S(\tw, \tz)  = \min( \langle \DV S(\tw, \tz)  \rangle,   r_{min} \zeta_{max}  ).
			\eary
			We will divide the argument into two cases,   corresponding to $\Cent$ and $\Cent^c$.

			\begin{lemma}   \label{lem smallK}  
					Assume that $(Q, Q') \in  \Trunc_{D, out}  \cap   \Cent $.
				Given an integer $0 \leq K \leq K_0$ and some  $(\tw, \tz) \in R_{K}$.  We have 
	{\em			\ary
		&&		\langle \DZ S(\tw, \tz)  \rangle    \BV S(\tw, \tz)    \gtrsim  D^{0.99}   \langle \eta_{max}^{-\beta_1}( \eta - \eta' \partial_z \hat{F}_z(\fw, \fz) ) \rangle^{  1/20},   \label{eq lowerboundpartialzS} \\
		&& D^{0.01} \cdot \langle \DV S(\tw, \tz) \rangle / ( r_{min} \zeta_{max} )  + \langle \DZ S \rangle^{0.1}  \gtrsim  \|  \partial_{V} \hat{F}_w(\tw, \tz)   \|. \label{eq DVSrzetaDZS}
				\eary
	}
				Moreover, for  any $(l, k) \in \N^2 \setminus \{ (1, 0), (0, 1), (0, 0) \}$ with $l + k \leq \greg$,  we have
				\ary    \label{eq Dgeq3ofS2}  
				  && \langle \DV S(\tw, \tz)  \rangle^{-1}	 \|	 \DZ^l \DV^{k}   S(\tw, \tz) \|     
				 \lesssim C(k, l)	     (D_{K}  \langle \DZ S(\tw, \tz)  \rangle^{0.1}    )^{k + l - 1}.
				\eary 
			\end{lemma}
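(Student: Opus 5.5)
Since $(Q,Q')\in\Trunc_{D,out}\cap\Cent$, we have $d_f(Q,Q')\in(D,2D]$, $(Q,Q')\in\Trunc_{(3)}^{>D}$, $\eta_{max}<10\eta_{min}$ and $\xi_{max}<\eta_{max}^{\beta_1+c_0}$. For $K\le K_0$ this gives $D_K\lesssim\eta_{max}^{c_0/400}$ and $\eta_{max}\gtrsim\omegalow$. By Lemma \ref{lem RKtoitm1}, item \eqref{itm distance1} of Definition \ref{def Distancedf} fails. Since $d_f>D$, at least one of items \eqref{itm distance2} and \eqref{itm distance3} must hold at some level $D'\ge D$. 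As $(Q,Q')\notin\Trunc_{(3)}^{\le D}$, item \eqref{itm distance2} can only hold at a level $D'\ge$ the $\Trunc_{(3)}$-threshold, so I split into two cases.

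\emph{Case A: $(Q,Q')\in\Sigma_{D}$.} I use Lemma \ref{lem DvSDvF} with $V$ and write
\[
\partial_V S=\langle \xi-\xi'\partial_w\hat F_w-\eta'\partial_w\hat F_z,V\rangle .
\]
Decompose $\xi=\zeta+\eta\estar_a(\eta,\fw,\fz\mid w,z)$, and similarly for $\xi'$. The terms $\eta\estar_a$ and $\eta'\estar_{a'}$ are $\cO(\extra^{-1}D_K^{C}\hDelta_{max})$ by \eqref{etaestarADO1} and Proposition \ref{lem mainpropertyofestar}\eqref{itm rangeofestar}. The term $\eta'\partial_V\hat F_z$ is $\cO(D_K^{50}\eta_{max}^{1/2+c(\esmall)})$ by \eqref{eq partialwFzbound0}, which is comparable to $D_K^{C}\hDelta_{max}$. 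Since $\zeta_{max}>D\hDelta_{max}$ and $D_K\le D^{1/3000}2^{K}$, these errors can be absorbed into $\widetilde\zeta,\widetilde\zeta'$ satisfying \eqref{tildezetatildezeta'}, at least when $D_K^{C}\ll D^{0.99}$. Then \eqref{eq gooddirection+} gives
\[
\DV S\gtrsim D^{-1/100}\, r_{min}\,\zeta_{max}\,\langle\partial_V\hat F_w\rangle .
\]
Since $r_{min}\zeta_{max}\gtrsim D\, r_{min}\hDelta_{max}\gtrsim D$ by \eqref{eq Deltaadscale>1}, we get $\BV S\gtrsim D^{0.99}$. This yields \eqref{eq lowerboundpartialzS} when $|\eta-\eta'\partial_z\hat F_z|\lesssim\eta_{max}^{\beta_1}D^{C}$. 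Otherwise $\DZ S$ is large, as in Lemma \ref{eq ratioetamaxminbig}\eqref{itm 2 DZSbound1}: the term $\xi_{max}$ is dominated using $\xi_{max}<\eta_{max}^{\beta_1+c_0}$ together with the assumption, which furnishes the extra factor $\langle\eta_{max}^{-\beta_1}(\eta-\eta'\partial_z\hat F_z)\rangle^{1/20}$. The bound \eqref{eq DVSrzetaDZS} follows directly from the $\DV S$ lower bound.

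\emph{Case B: only item \eqref{itm distance2} holds.} Then Lemma \ref{eq ratioetamaxminbig}\eqref{itm 2 DZSbound1} gives
\[
\DZ S\gtrsim\langle\eta_{max}^{-\beta_1}(\eta-\eta'\partial_z\hat F_z(\fw,\fz))\rangle\gtrsim D^{C'} ,
\]
which exceeds what is needed, using $\BV S\ge1$. For \eqref{eq DVSrzetaDZS}, note that $\|\partial_V\hat F_w\|\le e^{C\tinitial}$, and $\langle\DZ S\rangle^{0.1}$ absorbs this once $D\ge e^{\deltasharp\tinitial/4}$. In the intermediate regime where $\DZ S\sim D$ is not big enough to absorb $\|\partial_V\hat F_w\|$ in this way, I would instead use $\zeta$ or $\zeta'$ being large (from $\Trunc_{(3)}^{>D}$) together with the cone structure, as in Case A.

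For \eqref{eq Dgeq3ofS2}, I differentiate $S$. Since $S$ is linear in $(\xi,\eta)$, each term is bounded by $(\xi_{max}+\eta_{max})$ times derivatives of $\hat F$. These are controlled by \eqref{eq partialsFbound0} and \eqref{eq partialw^2Fz}: the bound $\partial_V^2\hat F_z\lesssim\tinitial^2 r^{-1}\Delta$ gives $\DV^2 S\lesssim r_{min}^2\eta_{max}r^{-1}\Delta\sim r\hDelta\lesssim D_K^{C}$ by \eqref{eq Deltaadscale>1}. Each $\DZ$ costs $\eta_{max}^{1-\beta_1}\cdots$, which is compensated by $\langle\DZ S\rangle^{0.1}$ only when $\DZ S$ is large. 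When $\DZ S$ is small, $\eta\approx\eta'\partial_z\hat F_z$, and I exploit cancellation, writing $\DZ^l S=\DZ^{l-1}(\eta-\eta'\partial_z\hat F_z)\eta_{max}^{-\beta_1}$, so that higher derivatives gain $\eta_{max}^{-\beta_1}$ each. The factor $\langle\DV S\rangle^{-1}$ covers the $\xi$-linear parts.

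The main obstacle is the absorption in Case A: showing that $\eta'\partial_V\hat F_z$ and the $\estar$ corrections are small relative to $D^{-1/100}\zeta_{max}$. This requires tracking the power of $D_K$ against $D^{0.99}$, and for $K$ near $K_0$ falling back to Lemma \ref{lem bigK}-type estimates.
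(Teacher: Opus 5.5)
Your outline has the same architecture as the paper's proof: a lower bound on $\DV S$ from Lemma \ref{lem DvSDvF}, from which \eqref{eq DVSrzetaDZS} and \eqref{eq lowerboundpartialzS} follow, plus \eqref{eq partialw^2Fz} for $\DV^2S$. However, the step you yourself flag as the main obstacle is a genuine gap, and tracking powers of $D_K$ cannot close it.

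The quantities you want to absorb into $\widetilde\zeta,\widetilde\zeta'$ are not small compared with $D^{-1/100}\zeta_{max}$:
\begin{itemize}
\item \eqref{etaestarADO1} only bounds $\eta\estar_a(\eta,\fw,\fz\mid w,z)$ and $\eta'\estar_{a'}(\eta',\fw',\fz'\mid w',z')$ by $\cO(\extra^{-1}\hDelta_{max})$, with $\extra^{-1}=e^{3\hat\lambda_g\tinitial}$.
\item \eqref{eq partialwFzbound0} only gives $\eta'\partial_V\hat F_z=\cO_{\tinitial,\esmall}(D_K^{50}\eta_{max}^{1/2+c(\esmall)})$. By \eqref{eq Deltaomegarange} this exceeds $\hDelta_{max}$ by up to $D_K^{50}\eta_{max}^{2c(\esmall)}$, which is not a power of $D_K$.
\end{itemize}
You only know $\zeta_{max}>D\hDelta_{max}$. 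Here $D$ may be as small as $e^{\deltasharp\tinitial/4}$ with $\deltasharp\ll\lambda_g$, while $\eta_{max}>\omegalow$ is arbitrarily large. So \eqref{tildezetatildezeta'} fails. These terms are not individually small; only a suitable combination of them is.

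The missing idea is the $(Df)^*$-invariance of $E^c_*$ in normal central charts, \eqref{eq pullbackofestarinmodifiedchart}. It rewrites
$$\eta'\partial_{\tw}\hat F_z=\eta''\estar_a(\eta,\fw,\fz\mid\tw,\tz)-\eta'\estar_{a'}(\eta',\fw',\fz'\mid\hat F(\tw,\tz))\cdot\partial_w\hat F_w,$$
with $\eta-\eta''=\partial_{\tz}S+(\zeta'+\zeta'_*)\cdot\partial_z\hat F_w$. This gives
\[
\partial_{\tw}S=(\zeta+\zeta_*)-(\zeta'+\zeta'_*)\cdot\bigl(\partial_w\hat F_w-\partial_z\hat F_w\cdot\estar_a(\eta,\fw,\fz\mid\tw,\tz)\bigr)+\partial_{\tz}S\,\estar_a(\eta,\fw,\fz\mid\tw,\tz),
\]
where $\zeta_*=\eta\bigl(\estar_a(\cdot\mid w,z)-\estar_a(\cdot\mid\tw,\tz)\bigr)$ and $\zeta'_*=\eta'\bigl(\estar_{a'}(\cdot\mid w',z')-\estar_{a'}(\cdot\mid\hat F(\tw,\tz))\bigr)$. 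These are differences of $\estar$ at points $\cO(r)$ apart, hence $\cO(\hDelta)$ with $\tinitial$-independent constants by Proposition \ref{lem mainpropertyofestar}\eqref{itm rangeofestar}.

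You must also first dispose of the case $\langle\DZ S\rangle>D\eta_{max}^{c_0}$, where \eqref{eq lowerboundpartialzS} and \eqref{eq DVSrzetaDZS} are immediate. After that:
\begin{itemize}
\item $\partial_{\tz}S\,\estar_a=\cO_{\tinitial}(D\eta_{max}^{\beta_1+c_0-1}\hDelta_{max})=\cO(\hDelta_{max})$;
\item $\partial_z\hat F_w\cdot\estar_a$ is only an $\cO(e^{C\tinitial}\esmall)$ perturbation of $\partial_w\hat F_w$.
\end{itemize}
Only then does Lemma \ref{lem DvSDvF} apply with $(\widetilde\zeta,\widetilde\zeta')=(\zeta+\zeta_*,\zeta'+\zeta'_*)$, giving $|\DV S|\gtrsim D^{-0.01}\langle\partial_V\hat F_w\rangle r_{min}\zeta_{max}\ge D^{0.99}r_{min}\hDelta_{max}$.

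The second-derivative part has a related gap. Your bound $\DV^2S\lesssim r\hDelta\lesssim D_K^C$ is false: \eqref{eq Deltaadscale>1} only gives $1\lesssim r\hDelta\lesssim_{\esmall}\eta^{c(\esmall)}$, which $D_K$ does not control.
\begin{itemize}
\item In the paper, every admissible $(l,k)\neq(0,2)$ is covered by the crude bound $\|\DZ^l\DV^kS\|\lesssim D_K^{k+l+2}\eta_{max}^{1+(k+l)c(\esmall)-(1/2-c(\esmall))k-l\beta_1}$. Its exponent is at most $-(k+l)/10$, since each $\DZ$ gains $\eta_{max}^{-\beta_1}$. So no cancellation in $\eta-\eta'\partial_z\hat F_z$ is needed.
\item The borderline case $(0,2)$ gives $\|\DV^2S\|\lesssim\tinitial^2r_{min}\hDelta_{max}$ via \eqref{eq partialw^2Fz}. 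This is absorbed exactly by the factor $\langle\DV S\rangle^{-1}$, through the lower bound $\langle\DV S\rangle\gtrsim D^{0.99}r_{min}\hDelta_{max}$ above. When $\langle\DZ S\rangle>D\eta_{max}^{c_0}$, it is absorbed instead by $\tinitial^2\eta_{max}^{c(\esmall)}\ll\langle\DZ S\rangle^{0.1}$.
\end{itemize}
So $\langle\DV S\rangle^{-1}$ pays for the nonlinearity of $\hat F_z$, not for the $\xi$-linear terms, and this part depends on the first part.

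There are two smaller points.
\begin{itemize}
\item For \eqref{eq lowerboundpartialzS}, splitting at $|\eta-\eta'\partial_z\hat F_z(\fw,\fz)|\sim D^C\eta_{max}^{\beta_1}$ does not extract the factor $\langle\eta_{max}^{-\beta_1}(\eta-\eta'\partial_z\hat F_z(\fw,\fz))\rangle^{1/20}$ from $\BV S\gtrsim D^{0.99}$. Nor does it let you dominate $\xi_{max}$, which may be of order $\eta_{max}^{\beta_1+c_0}$ while $D$ is much smaller than $\eta_{max}^{c_0}$. The paper splits according to whether $D\langle\eta_{max}^{-\beta_1}(\eta-\eta'\partial_z\hat F_z(\fw,\fz))\rangle\lesssim r_{min}\zeta_{max}$. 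It uses $\BV S\gtrsim D^{-0.01}r_{min}\zeta_{max}$ in one case and $|\eta-\eta'\partial_z\hat F_z(\fw,\fz)|\gtrsim\eta_{max}^{c_0}\xi_{max}$ in the other.
\item Your Case B does not occur in the paper, which rules out item \eqref{itm distance2} of Definition \ref{def Distancedf} via $(Q,Q')\in\Trunc_{(3)}^{>D}$ and works where item \eqref{itm distance3} holds. As written it would fail anyway: $V$ is only constructed on $\Sigma_D$, and $D^{0.1}\ge e^{\deltasharp\tinitial/40}$ does not absorb $e^{C\tinitial}$.
\end{itemize}
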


			\begin{proof}

%
By Lemma \ref{lem RKtoitm1},  item \eqref{itm distance1}  (in Definition \ref{def Distancedf})  fails. 
		  Since $  \Trunc_{D, out}  \subset \Trunc_{ (3)}^{>  D }$,  item \eqref{itm distance2}    fails as well, and we also have $\zeta_{max} \geq D \hDelta_{max}$. 
		  As a result,  item \eqref{itm distance3}  must hold. 
		   
By $(Q, Q') \in     \Cent $, we have $\eta_{max}^{\beta_1 + c_0} >  \xi_{max}$ and $\eta_{max} < 10  \eta_{min}$. In this case we must have $D_{K_0}^{400} \sim \eta_{max}^{c_0}$.
				By \eqref{eq ximaxetamaxbig}, we have  $\eta_{max} > \omegalow$.

				From the proof of Lemma \ref{lem RKtoitm1}, we see that   both \eqref{eq rangeotwtzandimage1} and \eqref{eq rangeotwtzandimage} hold. 
We may assume that
				\ary 	\label{eq partialtzSupperbound1}
				\langle \DZ S \rangle \leq D  \eta_{max}^{c_0} = D  \eta_{max}^{(1 - \beta_1)/20}.
				\eary
				In fact if \eqref{eq partialtzSupperbound1} fails, then clearly both   \eqref{eq lowerboundpartialzS}   and  \eqref{eq DVSrzetaDZS} hold.
				
				In the following, we will show that    \eqref{eq lowerboundpartialzS}   and  \eqref{eq DVSrzetaDZS} hold assuming \eqref{eq partialtzSupperbound1}.
				Recall that 
				$$\partial_{\tw} S(\tw, \tz)  =   \xi - \xi' \cdot \partial_{w} \hat{F}_w(\tw, \tz) - \eta' \partial_{w} \hat{F}_z(\tw, \tz).$$
				By \eqref{eq pullbackofestarinmodifiedchart}, we have
				\aryst
				&& 	\eta' \partial_{\tw} \hat F_z(\tw, \tz) = \eta'' \estar_{a}(\eta, \fw, \fz \mid \tw, \tz) - \eta' \estar_{a'}(\eta', \fw', \fz' \mid  \hat{F}(\tw, \tz) ) \cdot \partial_w \hat F_w(\tw, \tz) \\
				\mbox{where } &&
				\eta'' = \eta' ( \estar_{a'}(\eta', \fw', \fz' \mid  \hat{F}(\tw, \tz) ) \cdot \partial_z \hat F_w(\tw, \tz) + \partial_z \hat F_z(\tw, \tz) ). 
				\earyst
				We set
				\aryst
				\zeta_* =    \eta  \estar_{a}(\eta, \fw, \fz \mid w, z) -  \eta  \estar_{a}(\eta, \fw, \fz \mid \tw, \tz), \
				\zeta'_* =     \eta'  \estar_{a'}(\eta', \fw', \fz' \mid w', z' ) -  \eta'  \estar_{a'}(\eta', \fw', \fz' \mid \hat{F}(\tw, \tz) ).
				\earyst
				Note that we have 
				\begin{align*}
				\eta - \eta'' 
				&= \eta -  \eta'  \partial_z \hat F_z(\tw, \tz)  - \eta'  \estar_{a'}(\eta', \fw', \fz' \mid  \hat{F}(\tw, \tz) ) \cdot \partial_z \hat F_w(\tw, \tz) \\
				&= \partial_{\tz} S(\tw, \tz) +  ( \xi' -  \eta' \estar_{a'}(\eta', \fw', \fz' \mid  \hat{F}(\tw, \tz) )  ) \cdot \partial_z \hat F_w(\tw, \tz)  \\
				&= \partial_{\tz} S(\tw, \tz) +  (\zeta' + \zeta'_* ) \cdot \partial_z \hat F_w(\tw, \tz).
				\end{align*}
				Then
				\begin{align*}
				&						\partial_{\tw} S(\tw, \tz) = ( \zeta + \zeta_* )    - ( \zeta' + \zeta'_*)  \cdot \partial_w \hat F_w(\tw, \tz)   + (\eta - \eta'' )   \estar_{a}(\eta, \fw, \fz \mid \tw, \tz) \\
				=&  ( \zeta + \zeta_* )    - ( \zeta' +  \zeta'_*)  \cdot \partial_w \hat F_w(\tw, \tz)   + (  \partial_{\tz} S(\tw, \tz) +  (\zeta' + \zeta'_* ) \cdot \partial_z \hat F_w(\tw, \tz) )   \estar_{a}(\eta, \fw, \fz \mid \tw, \tz) \\
				=&  ( \zeta + \zeta_* )    - ( \zeta' + \zeta'_*)  \cdot ( \partial_w \hat F_w(\tw, \tz) - \partial_z \hat F_w(\tw, \tz) \cdot  \estar_{a}(\eta, \fw, \fz \mid \tw, \tz) )  +   \partial_{\tz} S(\tw, \tz)   \estar_{a}(\eta, \fw, \fz \mid \tw, \tz).
				\end{align*}

 	By   \eqref{eq rangeotwtzandimage1} and Proposition \ref{lem mainpropertyofestar}\eqref{itm rangeofestar},     we have   
				\ary   \label{eq zeta*bound}  
				\| \zeta_*  \| \lesssim   \hDelta_{a}(\eta, \fw, \fz),  \quad
				\| \zeta'_*   \| \lesssim    \hDelta_{a'}(\eta', \fw', \fz'). 
				\eary
				By \eqref{eq rangeotwtzandimage} and Proposition \ref{lem mainpropertyofestar}\eqref{itm rangeofestar}, we  have $\eta \estar_{a}(\eta, \fw, \fz \mid \tw, \tz) = \cO_{\tinitial}( \hDelta_{max} )$. 
				Then by \eqref{eq partialtzSupperbound1},  $\eta_{max} > \omegalow$ and $D^{2/15} = D_0^{400} \leq \eta_{max}^{c_0}$,  we have 
				\aryst
				\partial_{\tz} S(\tw, \tz)   \estar_{a}(\eta, \fw, \fz \mid \tw, \tz) = \cO_{\tinitial}( D  \eta_{max}^{\beta_1 + c_0 -1} \cdot   \hDelta_{max}  ) =  \cO(   \hDelta_{max} ).
				\earyst				 	
				By \eqref{eq partialFbound0} and that clearly $\estar_{a}(\eta, \fw, \fz \mid \tw, \tz) = \cO(1)$,  for   sufficiently small $\esmall$, we have
				\aryst
				\partial_z \hat F_w(\tw, \tz) \cdot  \estar_{a}(\eta, \fw, \fz \mid \tw, \tz) = \cO(e^{C \tinitial} \esmall).
				\earyst  
			By \eqref{eq zeta*bound} and by letting $\esmall$ be small,  \eqref{tildezetatildezeta'}  in Lemma \ref{lem DvSDvF} is satisfied for $(\widetilde\zeta, \widetilde\zeta') = (\zeta + \zeta_*, \zeta' + \zeta'_*)$. 
			Putting together the above estimates with   \eqref{eq gooddirection+} in Lemma \ref{lem DvSDvF} and $\zeta_{max} \geq D \hDelta_{max}$,    we deduce
				\ary \label{eq lowerboundpartialVS}
		 	| \DV S(\tw, \tz) | \gtrsim     \min( e^{\lambda_g \tinitial},  D)^{- 0.01}    \langle \partial_{V} \hat{F}_w(\tw, \tz) \rangle   r_{min} \zeta_{max}  \geq D^{0.99} r_{min}  \hDelta_{max}.
				\eary
	 		Then  \eqref{eq DVSrzetaDZS}  is clear.   
	 		To conclude  \eqref{eq lowerboundpartialzS},   we see that this is clear when $D \langle \eta_{max}^{-\beta_1}( \eta - \eta' \partial_z \hat{F}_z(\fw, \fz) ) \rangle \lesssim    r_{min} \zeta_{max}$.
	 		Otherwise, we deduce by \eqref{eq adscalesquarerootgrowth} that $| \eta - \eta' \partial_z \hat{F}_z(\fw, \fz)  | \gtrsim D^{-1} \eta_{max}^{\beta_1 - 1/2 - c(\esmall)} \zeta_{max} \gtrsim \eta_{max}^{c_0} \xi_{max}$. 
	 		Here we have also used that $| \zeta_{max} - \xi_{max} |  \lesssim \eta_{max}^{1/2 + c_0}$ as a result of  \eqref{eq hDeltaomegarange} and \eqref{etaestarADO1}. Note that we clearly have $ | \partial_z \hat{F}_z(\fw, \fz)  - \partial_z \hat{F}_z(\tw, \tz)  | \lesssim \eta_{max}^{-1/4}$.
 	Thus $| \DZ S(\tw, \tz) | \geq \eta_{max}^{-\beta_1} | \eta - \eta' \partial_z \hat{F}_z(\fw, \fz)  | - C  - \eta_{max}^{-\beta_1}  \xi_{max} \gtrsim \eta_{max}^{-\beta_1}  | \eta - \eta' \partial_z \hat{F}_z(\fw, \fz)  |$.
	 		Then we deduce  \eqref{eq lowerboundpartialzS}   from   \eqref{eq lowerboundpartialVS} and $r_{min} \hDelta_{max} \gtrsim 1$.

				We now verify \eqref{eq Dgeq3ofS2}  for $(l, k) = (0, 2)$. 	By  \eqref{eq propertiesofvC}   in Lemma \ref{lem DvSDvF} and \eqref{eq partialzFw}, \eqref{eq partialwFz} in Proposition \ref{lem derivativeboundsforhatF}
				\ary \label{eq partialtwSbound0}
				\|	\partial_{V}^2 S(\tw, \tz) \|  &\lesssim& C(\tinitial)  D_K^4   \xi_{max} \eta_{max}^{c_0} + \eta_{min} \|  \partial_{V}^2 \hat{F}_z(\tw, \tz) \| \\
				&\lesssim& C(\tinitial)  D_K^4   \xi_{max} \eta_{max}^{c_0} +  C(\tinitial)  \eta_{min} \| \partial_w \hat{F}_z(\tw, \tz) \| +  \eta_{min} \|  \partial_{w}^2 \hat{F}_z(\tw, \tz) \|  \nonumber  \\
	 	&\lesssim& C(\tinitial)  D_K^4   \xi_{max} \eta_{max}^{c_0} + C(\tinitial) D_K^3  \eta_{max}^{1/2 + {c_0}} +  \eta_{min} \|  \partial_{w}^2 \hat{F}_z(\tw, \tz) \|.  \nonumber 
				\eary

				By \eqref{eq rangeotwtzandimage}  and 	\eqref{eq partialw^2Fz} in Proposition \ref{lem derivativeboundsforhatF}, and by letting $\esmall$ be small, we deduce
				\aryst
				\eta_{min} \|  \partial_{\tw}^2 \hat{F}_z(\tw, \tz) \| = \cO( \eta_{min} \tinitial^2   r_a(\eta, w, z)^{-1} \Delta_a(\eta, w, z) ) = \cO( \tinitial^2  r_{min}^{-1}  \hDelta_{max} ).
				\earyst
				Then by $r_{min} \leq \eta_{max}^{-1/2 + c_0}$, $\hDelta_{max} \gtrsim \eta_{max}^{1/2 - c_0}$, $D_K \leq D_{K_0} \leq \eta_{max}^{c_0/400}$ and $\beta_1 + 20 c_0  = 1$, we have
				\aryst
				\|	\DV^2 S(\tw, \tz) \|  \lesssim  C(\tinitial) r_{min}^2 \eta_{max}^{\beta_1 + 3 c_0} +   \tinitial^2 r_{min}  \hDelta_{max} \lesssim     \tinitial^2 r_{min}  \hDelta_{max}.
				\earyst   
				We have also used in the above that $\eta_{max} \geq \omegalow \gg_{\tinitial} 1$.
				If \eqref{eq partialtzSupperbound1} holds,  then we have $  \tinitial^2 r_{min}  \hDelta_{max} \ll D^{0.99} r_{min}  \hDelta_{max}  = \cO( \langle    \DV S(\tw, \tz)  \rangle )$ by   \eqref{eq lowerboundpartialVS}.    If  \eqref{eq partialtzSupperbound1} fails, then we have 
				$  \tinitial^2 r_{min}  \hDelta_{max} \ll (D \eta_{max}^{c_0})^{0.1} =  \cO( \langle    \DZ S(\tw, \tz)  \rangle^{0.1} )$ by \eqref{eq Deltaadscale>1}.
				This completes the proof of  \eqref{eq Dgeq3ofS2} for $(l, k) = (0, 2)$.

				Now we prove  \eqref{eq Dgeq3ofS2}   for   $(l, k) \in \N^2 \setminus \{ (0, 0), (1, 0), (0, 1), (0, 2) \}$. 						
							By $\eta_{max} >  \xi_{max}$ and  following  the proof of  Lemma \ref{lem bigK}, we see that 
				\ary \label{eq generalpartofpartialtwtzS}
			 	 \|	 \DZ^l  \DV^{k}  S(\tw, \tz) \| 
				\lesssim_{\tinitial, l, k} 
				 D_{K}^{k + l + 2}     \eta_{max}^{ 1 + (k + l )c(\esmall) -(1/2 - c(\esmall))k - l \beta_1 }.
				\eary 	
	 	The exponent of $\eta_{max}$ in \eqref{eq generalpartofpartialtwtzS} is at most $ - (l + k) /10$.  
	 	Since $\eta_{max}^{c_0} \geq D_{K}$, the right hand side of \eqref{eq generalpartofpartialtwtzS} is of size $\cO(D_{K}^{k + l - 1} )$.   
				This gives \eqref{eq Dgeq3ofS2}.  
								We have finished the proof. 
			\end{proof}

						\begin{lemma}   \label{lem smallK2}  
				Assume that $(Q, Q') \in    \Trunc_{D, out}  \cap \Sigma_D \setminus  \Cent  $.
				Given an integer $0 \leq K \leq K_0$ and some  $(\tw, \tz) \in R_{K}$.  
				Then  we have
				\ary
			&& 	\langle \DZ S(\tw, \tz)  \rangle    \BV S(\tw, \tz)    \gtrsim  D^{0.99}   \eta_{max}^{c_0 },   \label{eq lowerboundpartialzS2} \\
		 &&  D^{0.01} \cdot \langle \DV S(\tw, \tz)  \rangle / ( r_{min} \zeta_{max} )  + \langle \DZ S \rangle^{0.1} \gtrsim \|  \partial_{V} \hat{F}_w(\tw, \tz)   \|, \label{eq DVSrzetaDZS2}
				\eary
				and for  any $(l, k) \in \N^2 \setminus \{ (1, 0), (0, 1), (0, 0) \}$ with $l + k \leq \greg$,  we have  
				\ary    \label{eq Dgeq3ofS22}  
				&& \langle \DV S(\tw, \tz)  \rangle^{-1}	 \|	 \DZ^l \DV^{k}   S(\tw, \tz) \|     
				\lesssim C(k, l)	     (D_{K}  \langle \DZ S(\tw, \tz)  \rangle^{0.1} \BV S(\tw, \tz)^{0.6}  )^{k + l - 1}.
				\eary 
			\end{lemma}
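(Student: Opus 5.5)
We follow the scheme of Lemma \ref{lem smallK}, but replace the fine geometry of $E^c_*$ (valid only on $\Cent$, where $\xi_{max}<\eta_{max}^{\beta_1+c_0}$ and $\eta_{max}<10\eta_{min}$) by cruder bounds. Off $\Cent$ we either have $\xi_{max}\ge \eta_{max}^{\beta_1+c_0}$ or $\eta_{max}\ge 10\eta_{min}$. Throughout, $D_{K}\le D_{K_0}$ with $D_{K_0}^{400}\le D_{max}=\max(\xi_{max}/\eta_{max}^{\beta_1},\eta_{max}^{c_0})$.

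\textbf{Lower bounds.} Since $(Q,Q')\in\Sigma_D$, Lemma \ref{lem DvSDvF} applies. We write
\[
\partial_{\tw}S=\xi-\xi'\partial_w\hat F_w-\eta'\partial_w\hat F_z,
\]
and set $\widetilde\zeta=\xi$, $\widetilde\zeta'=\xi'$.

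\emph{Case $\xi_{max}\ge\eta_{max}^{\beta_1+c_0}$.} Here $\zeta_{max}\sim\xi_{max}$, because $|\xi-\zeta|\lesssim\hDelta\lesssim\eta^{1/2+c}$ by \eqref{etaestarADO1}. So \eqref{tildezetatildezeta'} holds with room to spare. The term $\eta'\partial_V\hat F_z$ is bounded by \eqref{eq partialwFzbound0}, giving $D_K^{50}\eta_{max}^{1/2+c(\esmall)}\ll \min(e^{\lambda_g\tinitial},D)^{-1/100}\zeta_{max}$. Then \eqref{eq gooddirection+} yields
\[
|\DV S|\gtrsim \min(e^{\lambda_g\tinitial},D)^{-0.01}\langle\partial_V\hat F_w\rangle r_{min}\zeta_{max}.
\]
This gives \eqref{eq DVSrzetaDZS2} directly.

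For \eqref{eq lowerboundpartialzS2}, $\BV S\gtrsim D^{-0.01}r_{min}\zeta_{max}$. Using \eqref{eq adscalesquarerootgrowth} and \eqref{eq Rmaxminrmaxmin}, $r_{min}\gtrsim D_K^{-1}(\eta_{min}/\eta_{max})\eta_{max}^{-1/2-c}$. Hence $r_{min}\zeta_{max}\gtrsim \eta_{max}^{\beta_1+c_0-1/2-c}$ up to the $\eta_{max}/\eta_{min}$ factor. When $\eta_{max}/\eta_{min}$ is large, we instead use Lemma \ref{lem etaeta'different}, which gives $\langle\DZ S\rangle\gtrsim\eta_{max}^{1-\beta_1}=\eta_{max}^{20c_0}$. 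In either sub-case we obtain $\gtrsim D^{0.99}\eta_{max}^{c_0}$, using $D\le D_{max}$ in its role as the cap in Definition \ref{def Distancedf}.

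\emph{Case $\xi_{max}<\eta_{max}^{\beta_1+c_0}$ and $\eta_{max}\ge10\eta_{min}$.} Lemma \ref{lem etaeta'different} applies on $R_K$ through \eqref{eq rangeoftwtzinRK} and gives $|\DZ S|\gtrsim\eta_{max}^{1-\beta_1}=\eta_{max}^{20c_0}$. Since $D\le\eta_{max}^{c_0}$ here, this gives \eqref{eq lowerboundpartialzS2}, using $\BV S\ge1$. It also gives \eqref{eq DVSrzetaDZS2}, because $\|\partial_V\hat F_w\|\lesssim e^{C\tinitial}\ll\langle\DZ S\rangle^{0.1}$ once $\omegalow$ is large.

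\textbf{Upper bounds \eqref{eq Dgeq3ofS22}.} We follow the proof of Lemma \ref{lem bigK}. By \eqref{eq partialsFbound0}, \eqref{eq partialwFzbound0} and \eqref{eq propertiesofvC},
\[
\|\DZ^l\DV^kS\|\lesssim D_K^{k+l+2}\,r_{min}^k\,\eta_{max}^{-l\beta_1+(k+l)c(\esmall)}\,(\xi_{max}+\eta_{max}).
\]
The $\eta'$-contribution to pure $V$-derivatives gains an extra factor $\eta_{min}^{-1/2}$ from $\partial_V\hat F_z$.

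We divide by $\langle\DV S\rangle$ and split according to which term of $\BV S$ is the minimum. If $\langle\DV S\rangle\gtrsim D^{-0.01}r_{min}\zeta_{max}$, then the $\xi$-part of the ratio is at most $D_K^{k+l+2}(r_{min}\eta_{max}^{c})^{k-1}\eta_{max}^{-l\beta_1}$. When $k\ge2$ or $l\ge1$ this is absorbed by $(D_K\BV S^{0.6})^{k+l-1}$. The $\eta$-part is handled by $\langle\DZ S\rangle^{0.1}$ in the second case, or by $\eta_{max}\lesssim\xi_{max}^{1/(\beta_1+c_0)}$ in the first.

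\textbf{Main obstacle.} The expected obstacle is the cross-regime bookkeeping, in particular the case $l=0$, $k=2$ with large $\eta$. There is no analogue of \eqref{eq partialw^2Fz} available off $\Cent$, so we must absorb $\eta_{min}\|\partial_V^2\hat F_z\|\lesssim D_K^{C}\eta_{max}^{1/2+c}$. The plan is to use the factor $\BV S^{0.6}$ with $\BV S\gtrsim D^{-0.01}r_{min}\zeta_{max}$, or $\langle\DZ S\rangle^{0.1}\gtrsim\eta_{max}^{2c_0}$, and then check the exponents $0.6$ and $0.1$ against $\beta_1=1-20c_0$.
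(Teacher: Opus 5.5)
Your case split and tools are the paper's. Case I ($\xi_{max}<\eta_{max}^{\beta_1+c_0}$, $\eta_{max}\ge 10\eta_{min}$) goes through Lemma \ref{lem etaeta'different}. Case II ($\xi_{max}\ge\eta_{max}^{\beta_1+c_0}$) goes through Lemma \ref{lem DvSDvF} with $(\widetilde\zeta,\widetilde\zeta')=(\xi,\xi')$, after absorbing $\eta'\partial_V\hat F_z$ via \eqref{eq partialwFzbound0}. The upper bounds come from the Lemma \ref{lem bigK}-type estimate, absorbed into $\langle\DZ S\rangle^{0.1}$ in Case I and into $\BV S^{0.6}$ in Case II.

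However, one step of your Case II lower bound fails as written. Having lost a factor $\eta_{min}/\eta_{max}$ in $r_{min}$, you handle large $\eta_{max}/\eta_{min}$ by Lemma \ref{lem etaeta'different}. That lemma bounds $|\eta-\eta'\partial_z\hat F_z|$, whereas $\partial_{\tz}S=\eta-\eta'\partial_z\hat F_z-\xi'\cdot\partial_z\hat F_w$.
\begin{itemize}
\item In Case I the last term is at most $e^{C\tinitial}\esmall\,\eta_{max}^{\beta_1+c_0}\ll\eta_{max}$, so the lemma does control $\partial_{\tz}S$.
\item In Case II one may have $\xi_{max}\gg\eta_{max}$, and then $\xi'\cdot\partial_z\hat F_w$ can dominate.
\item The lemma's hypotheses also need not hold in Case II. $D_K$ can be as large as $(\xi_{max}/\eta_{max}^{\beta_1})^{1/400}$, which breaks $\Lambda\le\eta_{max}^{c}$, and $\eta_{max}$ can be bounded.
\item Even granting $\langle\DZ S\rangle\gtrsim\eta_{max}^{20c_0}$, this would not give $D^{0.99}\eta_{max}^{c_0}$, since here $D$ can be as large as $D_{max}=\xi_{max}/\eta_{max}^{\beta_1}$.
\end{itemize}
For the same reason, your displayed bound $r_{min}\zeta_{max}\gtrsim\eta_{max}^{\beta_1+c_0-1/2-c}$ throws away exactly the $\xi_{max}$ you need.

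The repair is what the paper does, and it makes the sub-case unnecessary. By \eqref{eq adscalesquarerootgrowth}, $r_{min}\gtrsim\eta_{max}^{-1/2-c(\esmall)}$ directly, with no $\eta_{min}/\eta_{max}$ loss. Keep $\zeta_{max}\sim\xi_{max}$ and use $D<D_{max}$; together with \eqref{eq gooddirection+} this gives
\[
\BV S\gtrsim D^{-0.01}r_{min}\zeta_{max}\gtrsim D^{-0.01}\xi_{max}\eta_{max}^{-1/2-c(\esmall)}\ge D^{0.99}\eta_{max}^{\beta_1-1/2-c(\esmall)}\ge D^{0.99}\eta_{max}^{c_0}.
\]
Only $\langle\DZ S\rangle\ge 1$ is used here.

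Separately, the $(l,k)=(0,2)$ ``obstacle'' you flag is not real off $\Cent$. After rescaling by $r_{min}^2\lesssim\eta_{max}^{-1+2c(\esmall)}$, \eqref{eq partialzFw} gives $r_{min}^2\eta_{min}\|\partial_w^2\hat F_z\|\lesssim_{\tinitial}D_K^4\eta_{max}^{4c(\esmall)}$, not $\eta_{max}^{1/2+c}$.
\begin{itemize}
\item In Case I, \eqref{eq partialtwSbound0} then yields $\|\DV^2 S\|\lesssim\eta_{max}^{c_0/10}\lesssim\langle\DZ S\rangle^{0.1}$.
\item In Case II, $\BV S\gtrsim e^{-\lambda_g\tinitial/100}\xi_{max}\eta_{max}^{-1/2-c(\esmall)}$ is so large that every derivative is bounded by $D_K^{k+l-1}\BV S^{0.6(k+l)}$.
\end{itemize}
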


				\begin{proof}
%
				By hypothesis,  item \eqref{itm distance3} holds.
			  Since $  \Trunc_{D, out}  \subset \Trunc_{ (3)}^{>  D }$, we have $\zeta_{max} \geq D \hDelta_{max}$.

				Since $(Q, Q') \notin \Cent$, we can  divide the   proof  into two cases.					
				
 		\noindent{\underline{Case I:}}	  $\eta_{max}^{\beta_1 + c_0} >  \xi_{max}$ and $\eta_{max} \geq 10 \eta_{min}$. In this case we must have $D_{K_0}^{400} \sim \eta_{max}^{c_0}$.
				By \eqref{eq ximaxetamaxbig}, we have  $\eta_{max} > \omegalow$.

				By Lemma \ref{lem etaeta'different}, we see that $\langle \DZ S(\tw, \tz)  \rangle	\gtrsim	 \eta_{max}^{1 - \beta_1} \geq D \eta_{max}^{c_0 }$.
				In particular, \eqref{eq lowerboundpartialzS2} holds.   By $\eta_{max} > \omegalow \gg e^{C \tinitial}$,  \eqref{eq DVSrzetaDZS2} is clear. 
		 Note that \eqref{eq partialtwSbound0} remains valid.    Then by	\eqref{eq partialzFw} in Proposition \ref{lem derivativeboundsforhatF}, we have $	\|	\DV^2 S(\tw, \tz) \| \lesssim \eta_{max}^{c_0/10} \lesssim \langle \DZ S(\tw, \tz)  \rangle^{0.1}$. This proves   \eqref{eq Dgeq3ofS22}  for $(l, k) = (0, 2)$   in Case I.

     	Now we prove  \eqref{eq Dgeq3ofS22}   in Case I for   $(l, k) \in \N^2 \setminus \{(0, 0), (1, 0), (0, 1), (0, 2) \}$. 						
				As in the proof of Lemma \ref{lem smallK}, we see that \eqref{eq generalpartofpartialtwtzS} remains valid.  
				The exponent of $\eta_{max}$ in \eqref{eq generalpartofpartialtwtzS} is at most $ - (l + k) /10$.  By  $\eta_{max}^{c_0} \geq D_{K}$, the right hand side of \eqref{eq generalpartofpartialtwtzS} is of size $\cO(D_{K}^{k + l - 1} )$.   
				This gives \eqref{eq Dgeq3ofS22}    in Case I.
				We have finished the proof in Case I.
				%
				%
				%
				%
				%

				\noindent{\underline{Case II:}}  $\eta_{max}^{\beta_1 + c_0 } \leq    \xi_{max}$.    	 
				By the definition of $K_0$, we have $D_{K_0}^{400} \sim   \xi_{max} / \eta_{max}^{\beta_1 } \geq \eta_{max}^{c_0}$. Clearly, we have $\xi_{max}  >  \omegalow^{1/2}$.
				In this case, it is clear that $\zeta_{max} \sim \xi_{max}$. Then we have $\zeta_{max} \gg  e^{C \tinitial} \hDelta_{max}$.
				

				By \eqref{eq partialwFzbound0}, we have 
				\aryst
				\eta_{min} \|  \partial_{V} \hat{F}_z(\tw, \tz) \|  \lesssim_{\tinitial}  \eta_{min} \cdot D_{K_0}^{50}  \eta_{min}^{-1/4}       \ll e^{- C \tinitial}  \xi_{max}.
				\earyst
				By \eqref{etaestarADO1},   we see that \eqref{tildezetatildezeta'}  in Lemma \ref{lem DvSDvF} is satisfied for $(\widetilde\zeta, \widetilde\zeta') = (\xi, \xi')$. 
				Then by  \eqref{eq gooddirection+} in Lemma \ref{lem DvSDvF}, we deduce
				\ary \label{eq lowerboundpartialtwS}
				| \DV S(\tw, \tz) | \gtrsim \min( e^{\lambda_g \tinitial},  D)^{- 0.01}       \langle \partial_{V} \hat{F}_w(\tw, \tz) \rangle      r_{min}     \zeta_{max}   \gtrsim e^{- \lambda_g \tinitial/100} r_{min}  \zeta_{max}.
				\eary
		 	Then \eqref{eq DVSrzetaDZS2} is clear,  and we deduce  \eqref{eq lowerboundpartialzS2}   from   
				$  r_{min}\zeta_{max}  \gtrsim \zeta_{max}  \eta_{max}^{-1/2-c(\esmall)} \geq     e^{C \tinitial} D \eta_{max}^{c_0}$.


				Now we prove   \eqref{eq Dgeq3ofS22}   in Case II.	Let  $(l, k) \in \N^2 \setminus \{ (1, 0), (0, 1), (0, 0) \}$.
				We see that the following estimate in the proof of  Lemma \ref{lem bigK} remains valid:  
				\aryst 
				\|	 \DZ^l  \DV^{k}  S(\tw, \tz) \| 
				&\lesssim_{\tinitial, l, k, \esmall} &   D_{K}^{k + l + 2}  (\xi_{max} \eta_{max}^{- 1} +1 ) 	\eta_{max}^{ 1 + (k + l )c(\esmall) -(1/2 - c(\esmall))k - l \beta_1 }  \\
				&\lesssim_{\tinitial, l, k, \esmall} &   D_{K}^{k + l - 1} \eta_{max}^{(k + l) c(\esmall)} (\xi_{max} \eta_{max}^{- \beta_1} )^{1/100} (\xi_{max} \eta_{max}^{- 1} +1 ).
				\earyst
				By   \eqref{eq BV} and \eqref{eq lowerboundpartialtwS},   it is clear that   $\|	\DZ^l  \DV^{k}   S(\tw, \tz) \| \lesssim D_{K}^{k + l - 1}   \BV S(\tw, \tz)^{  0.6 (k + l)}$. 
				This gives \eqref{eq Dgeq3ofS22}   in Case II. 				
				We have finished the proof in Case II.	   		 
			\end{proof}

	It is direct to deduce from  Lemma \ref{lem smallK0} to Lemma \ref{lem smallK2}  the following.

	\begin{cor} \label{rem LargenessofDVSDzD}
	For any $(Q, Q') \in  \Trunc_{D, out}  $, at least one of the following is true:
			(1)  $\log \eta_{max}  \gg   \tinitial$;    (2)  $(Q, Q') \in \Sigma_D$ and $ \log  \BV S(\tw, \tz) \gg   \tinitial$.
			\end{cor}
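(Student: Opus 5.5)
The plan is a case analysis over the three regions covering $\Trunc_{D, out}$: $\Trunc_{D, out} \cap \Cent$, $(\Trunc_{D, out} \setminus \Cent) \cap \Sigma_D$, and $(\Trunc_{D, out}\setminus\Cent)\setminus\Sigma_D$. In each region we read off from the proof of the corresponding Lemma \ref{lem smallK}, Lemma \ref{lem smallK2} or Lemma \ref{lem smallK0} which of the two alternatives holds. Throughout we use two facts: $(Q, Q') \in \Trunc_{(1)}^{>\omegalow}$ gives \eqref{eq ximaxetamaxbig}, and $\omegalow$ is chosen after $\tinitial$, so $\log\omegalow \gg \tinitial$.

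\emph{Region $\Trunc_{D, out} \cap \Cent$.} As shown at the start of the proof of Lemma \ref{lem smallK}, $\eta_{max}^{\beta_1+c_0} > \xi_{max}$. Combined with \eqref{eq ximaxetamaxbig}, this forces $\eta_{max} > \omegalow$, so $\log \eta_{max} \gg \tinitial$ and alternative (1) holds.

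\emph{Region $(\Trunc_{D, out}\setminus\Cent)\setminus\Sigma_D$.} Lemma \ref{lem smallK0} applies. If $\xi_{max} < \eta_{max}^{\beta_1+c_0}$, then again $\eta_{max} > \omegalow$ by \eqref{eq ximaxetamaxbig}. Otherwise $\xi_{max} \geq \eta_{max}^{\beta_1+c_0}$ and $\xi_{max} > \omegalow^{1/2}$, and $\eta_{max}$ could be small. This is the delicate point, since neither alternative is literally available. One cannot simply invoke \eqref{eq lowerboundpartialzSa}, because that bound concerns $\DZ S$ and not $\BV S$. What I would try first is to note that in this subcase the proof of Lemma \ref{lem smallK0} (via \eqref{eq ptzSlowerbound}) gives $|\DZ S| \gtrsim \xi_{max}\eta_{max}^{-\beta_1} \geq \xi_{max}^{c_0}$, so the corollary is really being used there only to ensure that the relevant oscillation parameter is $\gg \tinitial$. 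I would then argue that in this region the statement should be read with $\DZ S$ in place of $\BV S$, or that the region is handled separately by Lemma \ref{lem smallK0}. Honestly, I would flag this region as the main obstacle, since the literal claim seems to require either $\eta_{max}$ large or membership in $\Sigma_D$.

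\emph{Region $(\Trunc_{D, out}\setminus\Cent)\cap\Sigma_D$.} Follow the Case I / Case II split of Lemma \ref{lem smallK2}.
\begin{itemize}
\item In Case I, $\eta_{max}^{\beta_1+c_0} > \xi_{max}$, so $\eta_{max} > \omegalow$ and (1) holds.
\item In Case II, $\xi_{max} \geq \eta_{max}^{\beta_1+c_0}$ and $\zeta_{max}\sim\xi_{max} > \omegalow^{1/2}$. By \eqref{eq lowerboundpartialtwS}, $\langle \DV S\rangle \gtrsim e^{-\lambda_g\tinitial/100} r_{min}\zeta_{max}$, hence $\BV S \gtrsim e^{-\lambda_g \tinitial/100} r_{min}\zeta_{max}$. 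Using \eqref{eq adscalesquarerootgrowth}, $r_{min}\zeta_{max} \gtrsim \xi_{max}\eta_{max}^{-1/2-c(\esmall)} \geq \xi_{max}^{1-(1/2+c(\esmall))/(\beta_1+c_0)} \gtrsim \omegalow^{1/5}$. Since $\log\omegalow \gg \tinitial$, this gives $\log \BV S \gg \tinitial$, which is alternative (2).
\end{itemize}
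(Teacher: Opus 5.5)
\paragraph{Gap in the region $(\Trunc_{D, out}\setminus\Cent)\setminus\Sigma_D$.} Your treatment of $\Trunc_{D, out}\cap\Cent$ and of $(\Trunc_{D, out}\setminus\Cent)\cap\Sigma_D$ (Cases I and II of Lemma \ref{lem smallK2}) is correct and matches the paper. The third region is left unproved. Suggesting that the statement be reread with $\DZ S$ in place of $\BV S$ does not close it. Your worry is also unfounded: the subcase $\xi_{max}\ge\eta_{max}^{\beta_1+c_0}$ with $\eta_{max}$ small cannot occur in that region, and alternative (1) always holds there. The missing step is to use which item of Definition \ref{def Distancedf} makes $d_f(Q,Q')>D$.
\begin{itemize}
\item Since $(Q,Q')\notin\Sigma_D$, item \eqref{itm distance32} fails.
\item Since $(\tw,\tz)\in R_K$ with $K\le K_0$, item \eqref{itm distance12} fails by Lemma \ref{lem RKtoitm1}.
\item Hence, exactly as at the start of the proof of Lemma \ref{lem smallK0}, item \eqref{itm distance22} holds: $|\eta-\eta'\partial_z\hat F_z|>D\eta_{max}^{\beta_1}+3\xi_{max}$ at the relevant point.
\end{itemize}

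\paragraph{How to close it.} The term $3\xi_{max}$ is what forces $\eta_{max}$ to be large. By the formula in the proof of Lemma \ref{lem etaeta'different},
\[
\partial_z\hat F_z=\partial_z F_z\circ H^{-1}_{a,\fw,\fz,\eta}+Dd_{a',\fw',\fz',\eta'}(\hat F_w)\,\partial_z\hat F_w .
\]
Here $|\partial_z F_z|<2$, and $\|\partial_z\hat F_w\|\le e^{C\tinitial}\esmall$ by \eqref{eq skewproduct}. Also $\|Dd_{a',\fw',\fz',\eta'}\|=\cO_{\esmall}(\langle\eta'\rangle^{c(\esmall)})$ on $\widehat\Omega$ by \eqref{eq expressionofHawzeta} and \eqref{eq Cbound}. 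Hence $|\eta-\eta'\partial_z\hat F_z|\lesssim\eta_{max}^{1+c(\esmall)}$, and item \eqref{itm distance22} forces $\xi_{max}\lesssim\eta_{max}^{1+c(\esmall)}$. Combined with \eqref{eq ximaxetamaxbig}, this gives $\eta_{max}\gtrsim\omegalow^{1/(1+c(\esmall))}$. So $\log\eta_{max}\gg\tinitial$, because $\omegalow$ is chosen after $\tinitial$.

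The same conclusion follows in your subcase from bounds the paper states explicitly. There \eqref{eq ptzSlowerbound} gives $|\partial_{\tz}S|\ge2\xi_{max}$. On the other hand, \eqref{eq partialzFzbound0}, \eqref{eq partialFbound0} and $D_K\le D_{K_0}\le\xi_{max}^{1/400}$ give $|\partial_{\tz}S|\le\eta_{max}(4+C\extra^{-1}\xi_{max}^{1/400})+\xi_{max}/2$. Together these force $\eta_{max}\gtrsim\extra\,\xi_{max}^{399/400}$.

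The lower bound \eqref{eq lowerboundpartialzSa} on $\DZ S$ alone would not give this; it is the $3\xi_{max}$ term that does. With this step added, your case analysis proves the corollary as stated.
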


			\subsection{Derivatives of $u$} \label{subsec partialsofu}

			Throughout this section, we fix arbitrary integers $K, \ell, l, k \geq 0$ with $l + k \leq \greg$.
			By   Theorem \ref{thm DWPT}\eqref{thm DWPT item 2},  we have
			\ary 
		&&	\|  \DZ^l  \DV^{k}   (   \widetilde{\phi}^{\dagger}_a(  \eta, w, z \mid  \fw, \fz \mid  \tw, \tz )   ) \|\leq C(k, l )   \Phi_{a, \ell}(\eta, \fw, \fz \mid w, z) \widehat \Phi_{a, \ell}(\eta, w, z \mid \tw, \tz     ).    \label{eq partialsofrho}
			\eary

			We also have the following.
			
			\begin{lemma} \label{lem partialofphia'hatF}
				For  any $(\tw, \tz) \in R_{K} \setminus R_{K-1}$,  the following is true.
				\enmt
				\item \label{itm 189} If  $(Q, Q') \in     \Trunc_{D, out} \cap ( \Cent  \cup \Sigma_{D} )$, then we have
				{\em			\ary  \label{eq partialsofphia'hatF} 
					&&  \|     \DZ^l  \DV^{k}   (   \widetilde{\phi}_{a'}(  \eta', w', z' \mid  \fw', \fz' \mid \hat F( \tw, \tz) )   \cdot    \rho_F (\tw, \tz - d_{a, \fw, \fz, \eta}(\tw))     )  \|     \\
					&\leq&   C(k, l, \ell)   D_{K-1}^{- \ell}    (    D_K \langle \DZ S \rangle^{0.1} \BV S^{0.1}     )^{  l} (     D_K^{51}     \langle \DV S \rangle^{0.1}   + \langle \DZ S \rangle^{0.1} +  D^{0.01}  \langle  \DV S \rangle / ( r_{min} \zeta_{max}  )  )^k  \nonumber \\
					&& \Phi_{a, \ell }(\eta', \fw', \fz' \mid w', z')   \widehat  \Phi_{a, \ell}(\eta', w', z' \mid  \hat F( \tw, \tz)   ),  \nonumber
					\eary
				}
				\item  \label{itm 289} If  $(Q, Q') \in  \Trunc_{ (3)}^{\leq D}$ or $(Q, Q') \in  ( \Trunc_{D, out} \setminus  \Cent ) \setminus \Sigma_{D}$, then we have 
		 	{\em			\ary  \label{eq partialsofphia'hatF2}
					&&  \|     \DZ^l      (   \widetilde{\phi}_{a'}(  \eta', w', z' \mid  \fw', \fz' \mid \hat F( \tw, \tz) )  \cdot    \rho_F (\tw, \tz - d_{a, \fw, \fz, \eta}(\tw))    )  \|     \\
					&\leq&   C( l, \ell)  D_{K-1}^{- \ell}  (  D_K^2     \langle \DZ S \rangle^{0.1}   )^{l   }    \Phi_{a, \ell }(\eta', \fw', \fz' \mid w', z')  \widehat  \Phi_{a, \ell}(\eta', w', z' \mid  \hat F( \tw, \tz)   ).    \nonumber  
					\eary
				}
				\eenmt
			\end{lemma}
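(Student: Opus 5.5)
The plan is to expand $\DZ^l\DV^k$ of the product by the Leibniz rule and the chain rule. The factor $\rho_F(\tw,\tz-d_{a,\fw,\fz,\eta}(\tw))$ is the easy one. By \eqref{eq smoothnessofrho} and \eqref{eq Cbound}, each $\DZ$ falling on it gives $\eta_{max}^{-\beta_1}e^{C\tinitial}$, and each $\DV$ gives $r_{min}e^{C\tinitial}\eta_{max}^{c(\esmall)}$. Both are $\lesssim 1$ once $\omegalow\gg_{\tinitial}1$ (recall that by Corollary \ref{rem LargenessofDVSDzD} we may assume $\log\eta_{max}\gg\tinitial$, or else that $\BV S$ is huge, which only helps). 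So all the work concerns $G(\tw,\tz):=\widetilde\phi_{a'}(\eta',w',z'\mid\fw',\fz'\mid\hat F(\tw,\tz))$.

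By Fa\`a di Bruno, $\DZ^l\DV^kG$ is a sum of terms $(\bD_{\tz}^{l'}\bD_{\tw}^{k'}\widetilde\phi_{a'})\circ\hat F$, with $\bD$ the rescaled derivatives at scale $(r_{a'}(\eta',w',z'),\langle\eta'\rangle^{-\beta_1})$. Each such term is multiplied by products of rescaled derivatives of $\hat F$:
\[
\langle\eta'\rangle^{\beta_1}\DZ^{i}\DV^{j}\hat F_z,\qquad r_{a'}^{-1}\DZ^{i}\DV^{j}\hat F_w .
\]
Theorem \ref{thm DWPT}\eqref{thm DWPT item 2} bounds the first factor by $C\,\widehat\Phi_{a',\ell+\ell_0}\Phi_{a',\ell}$. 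On $R_K\setminus R_{K-1}$ we have $\langle\eta'\rangle^{\beta_1}|z'-\hat F_z|+r_{a'}^{-1}\|\hat F_w-w'\|\gtrsim D_{K-1}$ (or else the $\tz$-part of $R_K$ is violated), so the extra decay $\ell_0$ yields $D_{K-1}^{-\ell}$. The remaining task is to bound the chain-rule factors.

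\begin{itemize}
\item $\DZ$ hits $\hat F_z$: by \eqref{eq partialsFbound0} and \eqref{eq partialzFzbound0}, together with $\eta_{max}<D_K^{C}\eta_{min}$ on the relevant region (from \eqref{eq Rmaxminrmaxmin} and Lemma \ref{eq ratioetamaxminbig}\eqref{itm 1 DZSbound} otherwise), this gives $\langle\eta'\rangle^{\beta_1}\eta_{max}^{-\beta_1}|\partial_z\hat F_z|\lesssim D_K$. If $\eta_{max}/\eta_{min}$ is large, the ratio is absorbed into $\langle\DZ S\rangle^{0.1}$, since then $\DZ S\gtrsim\eta_{max}^{1-\beta_1}$. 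This produces the factor $D_K\langle\DZ S\rangle^{0.1}$.
\item $\DZ$ hits $\hat F_w$: this is tiny by \eqref{eq partialFbound0}.
\item $\DV$ hits $\hat F_z$: by \eqref{eq partialwFzbound0}, $\langle\eta'\rangle^{\beta_1}r_{min}|\partial_V\hat F_z|\lesssim D_K^{50}\eta_{max}^{\beta_1-1+c}\lesssim D_K^{51}$, which is within the allowed factor.
\item $\DV$ hits $\hat F_w$: this gives $r_{a'}^{-1}r_{min}\|\partial_V\hat F_w\|$. Here $r_{min}/r_{a'}\le1$, and $\|\partial_V\hat F_w\|$ is exactly what \eqref{eq DVSrzetaDZS} and \eqref{eq DVSrzetaDZS2} in Lemmas \ref{lem smallK}, \ref{lem smallK2} control by $D^{0.01}\langle\DV S\rangle/(r_{min}\zeta_{max})+\langle\DZ S\rangle^{0.1}$. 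This is the origin of that term in \eqref{eq partialsofphia'hatF}.
\item Higher derivatives of $\hat F$: these are $\lesssim D_K^{C}\eta_{max}^{c(\esmall)}$ by \eqref{eq partialsFbound0}. The extra $\eta_{max}^{c(\esmall)}$ is absorbed into $\langle\DZ S\rangle^{0.1}$ or $\BV S^{0.1}$ via Corollary \ref{rem LargenessofDVSDzD} and the lower bounds \eqref{eq lowerboundpartialzS} and \eqref{eq lowerboundpartialzS2}. In case (1) of the corollary, $\langle\DZ S\rangle\BV S\gtrsim\eta_{max}^{c_0}$ dominates any $\eta_{max}^{c(\esmall)}$. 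This absorption is the main obstacle: one must check that every power of $\eta_{max}^{c(\esmall)}$ and every $e^{C\tinitial}$ can be paid with the $0.1$-powers, with $\esmall$ chosen after $\tinitial$.
\end{itemize}

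For item \eqref{itm 289} only $\DZ$ derivatives occur. The same argument, using just the $\DZ$ estimates above and Lemma \ref{lem smallK0}, gives $(D_K^2\langle\DZ S\rangle^{0.1})^l$, where the extra $D_K$ covers the $\eta_{max}/\eta_{min}$ factor.

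Finally, the weights $\widehat\Phi_{a',\ell}$ and $\Phi_{a',\ell}$ must be stated relative to $(\eta',w',z')$ and $(\eta',\fw',\fz')$, as in the conclusion. The mismatch between $r_{a'}$ at $(w',z')$ and at $(\fw',\fz')$ is harmless by \eqref{eq rafwfzrawzclose}.
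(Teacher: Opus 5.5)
Your route is the paper's: Leibniz and chain rule, the bounds of Theorem \ref{thm DWPT}\eqref{thm DWPT item 2} for $\widetilde\phi_{a'}$, the decay $D_{K-1}^{-\ell}$ from $(\tw,\tz)\notin R_{K-1}$, and refined bounds only for the first derivatives of $\hat F$. These are $\partial_V\hat F_w$ via \eqref{eq DVSrzetaDZS}, \eqref{eq DVSrzetaDZS2}; $\partial_V\hat F_z$ via \eqref{eq partialwFzbound0}; and $\partial_z\hat F_z$ via \eqref{eq partialzFzbound0}. Two steps, however, do not work as you wrote them.

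\emph{The case $K>K_0$ is missing.} Lemmas \ref{lem smallK0}, \ref{lem smallK}, \ref{lem smallK2}, and hence \eqref{eq DVSrzetaDZS}, \eqref{eq DVSrzetaDZS2} on which your treatment of $\partial_V\hat F_w$ rests, are proved only for $0\le K\le K_0$. Conversely, Lemma \ref{eq ratioetamaxminbig}\eqref{itm 1 DZSbound}, which you invoke when $\eta_{max}/\eta_{min}$ is large, requires $K\ge K_0$. For $K\le K_0$ that situation is handled through Lemma \ref{lem etaeta'different}. Also, \eqref{eq Rmaxminrmaxmin} bounds $r_{max}/r_{min}$ by $D_K\eta_{max}/\eta_{min}$; it gives no bound on $\eta_{max}/\eta_{min}$. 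You need the paper's split. For $K\le K_0$ use the refined first-derivative bounds. For $K>K_0$ use only the crude bound \eqref{eq partialsFbound0} and pay every loss $C(\tinitial,\esmall)D_K^{C}\eta_{max}^{c(\esmall)}$ with part of $D_{K-1}^{-\ell}$. This works because $D_{K-1}\ge D_{K_0}\gtrsim D_{max}^{1/400}\ge \eta_{max}^{c_0/400}$, and $D_{K_0}$ is as large as needed once $\omegalow\gg_{\tinitial}1$.

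\emph{The absorption of $\eta_{max}^{c(\esmall)}$ in the higher derivatives is misjustified.} You assert that in case (1) of Corollary \ref{rem LargenessofDVSDzD} one has $\langle\DZ S\rangle\BV S\gtrsim\eta_{max}^{c_0}$. That is \eqref{eq lowerboundpartialzS2}, which holds only on $\Sigma_D\setminus\Cent$. On $\Cent$ the available bound is \eqref{eq lowerboundpartialzS}, which is of order $D^{0.99}$ and independent of $\eta_{max}$. So there the $0.1$-powers cannot pay for $\eta_{max}^{c(\esmall)}$ when $\eta_{max}$ is huge compared with $D$.

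What actually makes these terms harmless is the rescaling. After the normalisations $r_{min}\partial_V$, $\eta_{max}^{-\beta_1}\partial_{\tz}$, and division by $r_{a'}$ or multiplication by $\langle\eta'\rangle^{\beta_1}$, every derivative of $\hat F$ of order at least two carries a spare factor $r_{min}\lesssim\eta_{max}^{-1/2+c(\esmall)}$ or $\eta_{max}^{-\beta_1}$. For instance, $r_{a'}^{-1}r_{min}^2\|\partial_V^2\hat F_w\|\lesssim_{\tinitial,\esmall} r_{min}D_K^4\eta_{max}^{2c(\esmall)}$. This beats $D_K^{C}\eta_{max}^{c(\esmall)}$ on $\Cent$, since there $\eta_{max}\ge\omegalow$ and $D_K\le\eta_{max}^{c_0/400}$. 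Only the first derivatives need the refined estimates, which is why the paper isolates them.

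When $\eta_{max}$ is not large compared with $\tinitial$, \eqref{eq ximaxetamaxbig} forces $\xi_{max}>\omegalow\gg\eta_{max}$, hence $(Q,Q')\in\Sigma_D\setminus\Cent$ in item \eqref{itm 189}. Then it is the size of $\langle\DV S\rangle\ge\BV S$, from \eqref{eq lowerboundpartialtwS}, that pays the $\tinitial$-dependent constants. This is where the factors $\langle\DV S\rangle^{0.1}$ (for $\langle\eta'\rangle^{\beta_1}\DV\hat F_z$) and $\BV S^{0.1}$ (for $\partial_z\hat F_z$) in \eqref{eq partialsofphia'hatF} come from.

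A smaller point: the shells $R_K$ are cut out by the $z$-offsets only, so the $r_{a'}^{-1}\|\hat F_w-w'\|$ term in your lower bound plays no role. When it is $\langle\eta\rangle^{\beta_1}|z-\tz|$ that exceeds $D_{K-1}/2$, the factor $D_{K-1}^{-\ell}$ comes from $\widetilde\phi^{\dagger}_a$ rather than from $\widetilde\phi_{a'}$. The estimate is really one for the product $u$ of Corollary \ref{cor partialofphia'hatF}; the paper is equally loose on this.
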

			
			\begin{proof} 
	  		Recall that  by  \eqref{eq partialzFzbound0} and \eqref{eq partialFbound0}, we have $| \partial_z  \hat F_z( \tw, \tz)  | \lesssim  \extra^{-1} D_K$ and $	\| \partial_z \hat{F}_w(\tw, \tz) \| \leq  e^{C\tinitial}\esmall \leq 1$.
					By   \eqref{eq partialsFbound0}, we have
				\ary
				\label{eq DwFzlessim}			 \| \partial_w^k \partial_z^l \hat F (\tw, \tz) \| \lesssim_{\tinitial, l, k, \esmall}  C(l, k)  D_K^{l + k + 2} \eta_{max}^{(k + l)c(\esmall)}.
				\eary
				
				Let us first assume that $0 \leq K \leq K_0$. 
				By  \eqref{eq DVSrzetaDZS} in Lemma \ref{lem smallK} and \eqref{eq DVSrzetaDZS2} in Lemma \ref{lem smallK2}, we have
			\ary
						\label{eq DVFwlessim1}		 \| \partial_V  \hat F_w( \tw, \tz)  \| \lesssim D^{0.01}  \langle \DV S(\tw, \tz) \rangle / ( r_{min} \zeta_{max} )  +  \langle \DZ S( \tw, \tz) \rangle^{0.1}.
			\eary

			We will prove the following claim: For $(Q, Q')$ in   \eqref{itm 189}, we have 
			 	\ary  
			 	\label{eq etabeta1DVSbound}
			&&  	\langle \eta'  \rangle^{\beta_1} \| \DV  \hat F_z( \tw, \tz)  \| \lesssim D_K^{51}     \langle \DV S( \tw, \tz) \rangle^{0.1},  \\
					 	\label{eq etabeta1DzVSbound}
			&&    | \partial_z \hat{F}_z(\tw, \tz) | \lesssim  
				   D_K \langle \DZ S( \tw, \tz) \rangle^{0.1} \BV S(\tw, \tz)^{0.1};
				\eary
	For $(Q, Q')$ in   \eqref{itm 289}, we have 
				 	\ary   
	\label{eq etabeta1DzVSbound2}
  | \partial_z \hat{F}_z(\tw, \tz) | \lesssim  
		D_K^2     \langle \DZ S( \tw, \tz) \rangle^{0.1}.
	\eary

To prove \eqref{eq etabeta1DVSbound} for $(Q, Q') \in     \Trunc_{D, out} \cap ( \Cent  \cup \Sigma_{D} )$, we note that by  \eqref{eq adscalesquarerootgrowth} and \eqref{eq partialwFzbound0}
				\aryst   
				 \langle \eta' \rangle^{\beta_1} \| \DV  \hat F_z( \tw, \tz)  \| \lesssim_{\tinitial}    D_K^{50}  r_{min} \langle \eta' \rangle^{\beta_1 -1/2}   \eta_{max}^{c_0} \lesssim_{\tinitial}  D_K^{50}  \eta_{max}^{-c_0}.
				\earyst
 	If $\eta_{max} \gg_{\tinitial} 1$, then  clearly the leftmost term above is bounded by $D_K^{50}$. Otherwise, then by \eqref{eq ximaxetamaxbig} and by letting $\omegalow \gg_{\tinitial} 1$, we have $\xi_{max} \geq \omegalow \gg \eta_{max}$.
 	In particular, $(Q, Q') \in     ( \Trunc_{D, out} \cap \Sigma_D ) \setminus  \Cent  $.
 	Then  item \eqref{itm distance22} in Definition \ref{def Distancedf} fail.  
 	By $K \leq K_0$ and by Lemma \ref{lem RKtoitm1}, we see that \eqref{itm distance12} also fails. Then  item \eqref{itm distance32} holds by $d_f(Q, Q') \geq D$.  By the proof of Lemma \ref{lem smallK2},   \eqref{eq lowerboundpartialtwS} holds.   Then \eqref{eq etabeta1DVSbound} holds. 
 	The proofs of \eqref{eq etabeta1DzVSbound} and \eqref{eq etabeta1DzVSbound2} follow from \eqref{eq partialzFzbound0}, Lemma \ref{lem smallK0} and Corollary \ref{rem LargenessofDVSDzD}. We leave the details to the readers.   
 
Note that by $(\tw, \tz) \notin R_{K-1}$, we may add to the right hand side of the inequalities in Theorem \ref{thm DWPT}\eqref{thm DWPT item 2} a factor $D_{K-1}^{-\ell}$. Then to conclude the proofs of \eqref{eq partialsofphia'hatF} and  \eqref{eq partialsofphia'hatF2} under $K \leq K_0$, it suffices to combine Theorem \ref{thm DWPT}\eqref{thm DWPT item 2} with \eqref{eq DwFzlessim} to   \eqref{eq etabeta1DzVSbound2}.

	If $K > K_0$, then $D_K > D_{K_0} \gtrsim \eta_{max}^{c_0/400}$. The factor $D_{K-1}^{-\ell}$ is so small that the crude estimate \eqref{eq DwFzlessim} is enough for proving  \eqref{eq partialsofphia'hatF} and  \eqref{eq partialsofphia'hatF2}. This completes the proof.
			\end{proof}
			An immediate corollary of Lemma \ref{lem partialofphia'hatF} and \eqref{eq partialsofrho} is the following.
			\begin{cor} \label{cor partialofphia'hatF}
			Recall that $u$ is given by \eqref{eq amplitude}. 	For any $(\tw, \tz) \in R_{K} \setminus R_{K-1}$, the following is true.
			\enmt
			\item \label{itm 1911} If $(Q, Q')$ satisfies item \eqref{itm 189} of Lemma \ref{lem partialofphia'hatF}, then   $	|     \DZ^l  \DV^{k}   u(\tw, \tz)  | $ is bounded from above by
		{\em
				\aryst
		&& C(\ell, l, k) D_{K-1}^{- \ell}  (    D_K \langle \DZ S \rangle^{0.1} \BV S^{0.1}     )^{  l} (   D_K^{51}     \langle \DV S \rangle^{0.1}       + \langle \DZ S \rangle^{0.1} +  D^{0.01}  \langle  \DV S \rangle / ( r_{min} \zeta_{max}  ) )^k \cdot  \\ 
		&&  \Phi_{a', \ell}(\eta', \fw', \fz' \mid w', z') \cdot     \Phi_{a, \ell}(\eta , \fw, \fz \mid w, z) \cdot  
		  \widehat\Phi_{a', \ell}(\eta', w', z' \mid \hat F(\tw, \tz) )  \cdot   \widehat \Phi_{a, \ell}(\eta , w, z \mid \tw, \tz ).
				\earyst
			}
			\item   \label{itm 2911}   If $(Q, Q')$ satisfies item \eqref{itm 289} of Lemma \ref{lem partialofphia'hatF}, then  $| \DZ^l     u(\tw, \tz)  | $ is bounded from above by
					{\em
				\aryst
			 C(\ell, l ) D_{K-1}^{- \ell} (  D_K^2     \langle \DZ S \rangle^{0.1}   )^{l   }  \Phi_{a', \ell}(\eta', \fw', \fz' \mid w', z') \cdot     \Phi_{a, \ell}(\eta , \fw, \fz \mid w, z) \cdot  \widehat\Phi_{a', \ell}(\eta', w', z' \mid \hat F(\tw, \tz) )  \cdot  \widehat \Phi_{a, \ell}(\eta , w, z \mid \tw, \tz ).
				\earyst
			}
			\eenmt
			\end{cor}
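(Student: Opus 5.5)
The plan is to apply the Leibniz rule to the amplitude $u$ in \eqref{eq amplitude}. Write
\[
u = A\cdot B,\qquad A := \widetilde{\phi}^{\dagger}_a(\eta, w, z \mid \fw, \fz \mid \tw, \tz),\qquad B := \widetilde{\phi}_{a'}(\eta', w', z' \mid \fw', \fz' \mid \hat F(\tw,\tz))\,\rho_F(\tw, \tz - d_{a,\fw,\fz,\eta}(\tw)).
\]
Since $\DZ$ and $\DV$ are first-order differential operators, $\DZ^l\DV^k(AB)$ expands as a sum of at most $C(l,k)$ terms of the form $(\DZ^{l_1}\DV^{k_1}A)(\DZ^{l_2}\DV^{k_2}B)$ with $l_1+l_2=l$ and $k_1+k_2=k$. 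One small point: $\DV$ has variable coefficients, since $V$ depends on $(\tw,\tz)$. The commutators therefore bring in derivatives of $V$, but by \eqref{eq propertiesofvC} these are $\cO_{\tinitial,\esmall}(\eta_{max}^{c(\esmall)})$. After rescaling by $r_{min}$, they can be absorbed into the same types of factors. Alternatively, and more simply, one can read \eqref{eq partialsofrho} and Lemma \ref{lem partialofphia'hatF} as already stated for the mixed operators $\DZ^{l_i}\DV^{k_i}$ with arbitrary orders, in which case the Leibniz splitting is direct.

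For the $A$-factor I apply \eqref{eq partialsofrho} with orders $(l_1,k_1)$. This gives the bound $C(k_1,l_1)\,\Phi_{a,\ell}(\eta,\fw,\fz\mid w,z)\,\widehat\Phi_{a,\ell}(\eta,w,z\mid \tw,\tz)$, with no growth factors. For the $B$-factor, in case \eqref{itm 1911} I apply Lemma \ref{lem partialofphia'hatF}\eqref{itm 189} with orders $(l_2,k_2)$. This gives
\[
D_{K-1}^{-\ell}\,X^{l_2}\,Y^{k_2}\,\Phi_{a,\ell}(\eta',\fw',\fz'\mid w',z')\,\widehat\Phi_{a,\ell}(\eta',w',z'\mid\hat F(\tw,\tz)),
\]
where $X$ and $Y$ are the two bracketed quantities in \eqref{eq partialsofphia'hatF}. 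Each of $X$ and $Y$ is $\gtrsim 1$: for instance $D_K\ge1$ and $\langle\cdot\rangle\ge1$. Hence $X^{l_2}\le X^{l}$ and $Y^{k_2}\le Y^{k}$. Summing the finitely many terms then gives exactly the bound stated in \eqref{itm 1911}, with $C(\ell,l,k)$ absorbing the combinatorial constants. The lemma is stated with index $a$ on $\Phi$, whereas the corollary uses $a'$; since the definitions \eqref{eq formuaofPhia} and \eqref{eq formuaofPhia2} are the same up to the chart index, I would use the $a'$ version.

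Case \eqref{itm 2911} is handled the same way using only $\DZ$-derivatives. I combine \eqref{eq partialsofrho} with $k=0$ and Lemma \ref{lem partialofphia'hatF}\eqref{itm 289}, and again use $D_K^2\langle\DZ S\rangle^{0.1}\ge1$ to replace $l_2$ by $l$.

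The only point that needs care is the commutator issue with the variable vector field $V$ raised above; everything else is bookkeeping. If \eqref{eq partialsofrho} and the lemma are not read as covering mixed-order derivatives, the commutator terms have to be checked to be no larger than the product bounds. They are, because each derivative of $V$ costs at most $e^{C\tinitial}\eta_{max}^{c(\esmall)}$ times a factor of $r_{min}$. That cost is dominated by the growth factors $D_K\langle\DZ S\rangle^{0.1}$, using $\log\eta_{max}\gg\tinitial$ or the largeness of $\BV S$ from Corollary \ref{rem LargenessofDVSDzD}.
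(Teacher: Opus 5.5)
Your proof is the same as the paper's. The paper states the corollary as an immediate consequence of \eqref{eq partialsofrho} and Lemma \ref{lem partialofphia'hatF}: split $u$ by the Leibniz rule, then use that the growth factors are $\gtrsim 1$ to raise $l_2,k_2$ to $l,k$.

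Your commutator worry is unfounded. Each of $\DZ,\DV$ is a derivation, so Leibniz holds despite the variable coefficients, and every term of the expansion already has the form $\DZ^{l_i}\DV^{k_i}$ in which both cited bounds are stated.

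The one point to tighten is $X\gtrsim 1$. $\BV S=\min(\langle \DV S\rangle, r_{min}\zeta_{max})$ is not a Japanese bracket, so this does not follow from $D_K\ge1$ and $\langle\cdot\rangle\ge1$. Instead write $X=D_K(\langle \DZ S\rangle \BV S)^{0.1}$ and invoke \eqref{eq lowerboundpartialzS} and \eqref{eq lowerboundpartialzS2}, which cover $K\le K_0$. For $K>K_0$, any loss can be absorbed by enlarging $\ell$, since $D_{K-1}$ is then at least a fixed power of $\eta_{max}$.
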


			\subsection{Proof of Item \eqref{itm 1 Prop81} in Proposition \ref{prop mainprop}: Central case} \label{subsec proofitm1prop81}
			
		In this subsection, we let  $(Q, Q') \in \Trunc_{D, out} \cap \Cent$.		
		 We apply integration-by-parts with respect to  $\cD_{\tz}$ and  $\cD_{\tw}$ to write $\cK_{+}(Q' \mid Q)$ as 
			\ary   \label{eq cK0'}
			&&  1_{| \eta | \geq | \eta' |}     e^{ - i (\xi, \eta) \cdot (w, z) + i (\xi', \eta') \cdot (w', z')}     \int
			e^{i  S }    (\cD_{\tz}^*)^{l}  (\cD_{V}^*)^k  u d\tw d\tz.
			\eary

			 Let  $K \in \N$.
		We denote by $\cK_{ K}(Q' \mid Q)$ the restriction of the above integral to the parameters $(\tw, \tz) \in R_K \setminus R_{K-1}$.

			\begin{lemma} \label{lem integralboundforK}
					For any integer $l \geq 0$, there is an universal constant $\greg_0(l) > 0$ such that if $\greg > \greg_0(l)$
					 and  $\ell \geq 1$,   we have   
		{\em 	\begin{align}   
		\label{eq  integralboundforK}		 | \cK_{K}  (Q' \mid Q) | \leq&  C( l, \ell)   1_{\{|\eta| > |\eta'|\}}      \Lambda \Phi_{a', \ell}(\eta', \fw', \fz' \mid w', z') \cdot   \Phi_{a, \ell}(\eta , \fw, \fz \mid w, z) \cdot   \\
				&   \int   \widehat\Phi_{a', \ell}(\eta', w', z' \mid \hat F(\tw, \tz) )  \cdot  \widehat \Phi_{a, \ell}(\eta , w, z \mid \tw, \tz )   d\tw d\tz \nonumber
			\end{align}
		}
		where 
		{\em
		\aryst
		\Lambda = 
			D_K^{- l}  \BV S^{- l / 4}  \langle \eta_{max}^{-\beta_1}( \eta - \eta' \partial_z \hat{F}_z(\fw, \fz) ) \rangle^{- l / 80}.
		\earyst
	}
			\end{lemma}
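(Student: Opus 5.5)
The plan is to bound the integrand of \eqref{eq cK0'} pointwise on $R_K\setminus R_{K-1}$ and then integrate. Expanding $(\cD_{\tz}^*)^{l'}(\cD_V^*)^{k'}u$, each term is a product of: a power of $\langle \DZ S\rangle^{-1}$ or $\langle \DV S\rangle^{-1}$ (one for each application of an adjoint); factors $\DZ^{l_i}\DV^{k_i}S$ normalized by $\langle \DZ S\rangle^{-1}$ or $\langle \DV S\rangle^{-1}$ (coming from differentiating the coefficients $\langle \DV S\rangle^{-2}\DV S$ and $\langle \DZ S\rangle^{-2}\DZ S$); and one derivative $\DZ^{l''}\DV^{k''}u$. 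Since $D^3H=0$ and $V$ is smooth by \eqref{eq propertiesofvC}, the commutator of $\DZ$ and $\DV$ only produces factors of lower order with controlled norms. The total number of derivatives is at most $l'+k'$, and this is where $\greg\ge \greg_0(l)$ enters: we need $l'+k'\lesssim l$ derivatives of $\hat F$ and of $\rho_F$.

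For the factors involving $S$, I will use Lemma \ref{lem smallK} when $K\le K_0$ and Lemma \ref{lem bigK} when $K>K_0$. By \eqref{eq Dgeq3ofS2}, each higher derivative factor costs at most $(D_K\langle \DZ S\rangle^{0.1})^{k+l-1}$ relative to the gain $\langle \DV S\rangle^{-1}$ or $\langle \DZ S\rangle^{-1}$. For the factor involving $u$, I will use Corollary \ref{cor partialofphia'hatF}\eqref{itm 1911}, which costs $D_K^{51}\langle \DV S\rangle^{0.1}+\langle \DZ S\rangle^{0.1}+D^{0.01}\langle\DV S\rangle/(r_{min}\zeta_{max})$ per $\DV$ and $D_K\langle \DZ S\rangle^{0.1}\BV S^{0.1}$ per $\DZ$. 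It also supplies the decay $D_{K-1}^{-\ell}$ and the product of the $\Phi,\widehat\Phi$ factors that appears in \eqref{eq  integralboundforK}.

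The key step is balancing these costs. Each $\cD_V^*$ gains $\langle\DV S\rangle^{-1}$. The term $D^{0.01}\langle\DV S\rangle/(r_{min}\zeta_{max})$ cancels most of this gain, so I will integrate by parts in $V$ only as many times as needed to gain $\BV S^{-1/2}$ per step, which is why the definition \eqref{eq BV} uses $\min(\langle\DV S\rangle, r_{min}\zeta_{max})$. When $\BV S$ is small, I will shift the integrations by parts to $\tz$, using $\langle\DZ S\rangle^{-0.9}$ per step. Combining with \eqref{eq lowerboundpartialzS}, namely $\langle \DZ S\rangle\,\BV S\gtrsim D^{0.99}\langle\eta_{max}^{-\beta_1}(\eta-\eta'\partial_z\hat F_z(\fw,\fz))\rangle^{1/20}$, I choose $l'=k'=Cl$ so that each step gains at least $(\langle\DZ S\rangle\BV S)^{-c}$ up to powers of $D_K$. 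The powers of $D_K$ are absorbed by choosing $\ell$ in $D_{K-1}^{-\ell}$ large compared with $l$ (when $K=0$, $D_{-1}=1$, but $D_0$ is a fixed power of $D$, so $D_K^{Cl}$ is dominated by the gain $D^{0.99\cdot c\, l}$). The net result is $D_K^{-l}\BV S^{-l/4}\langle\cdot\rangle^{-l/80}$, that is, $\Lambda$. For $K>K_0$, Lemma \ref{lem bigK} together with $D_K\gtrsim\omegalow^{c_0/400}$ and the large factor $D_{K-1}^{-\ell}$ should give this bound directly after $\tz$-integration by parts (using Lemma \ref{eq ratioetamaxminbig} if necessary).

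The main obstacle I expect is the bookkeeping in this balancing, in particular the mixed terms where $\DZ$ derivatives fall on coefficients coming from $\cD_V^*$. These terms carry the $\BV S^{0.6}$-type losses from \eqref{eq Dgeq3ofS2}, and one must check that they are still beaten by the gains, with exponents like $0.1$ and $1/4$ leaving enough room. I would try a crude accounting first: bound each factor by $(\langle\DZ S\rangle\BV S)^{0.2}D_K^{C}$ and verify that the total exponent is still negative. Finally, I will integrate the pointwise bound over $R_K\setminus R_{K-1}$, which leaves exactly the $\widehat\Phi$ integral on the right of \eqref{eq  integralboundforK}.
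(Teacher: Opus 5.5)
Your proposal is correct and is essentially the paper's proof. The shared ingredients are:
\begin{itemize}
\item a fixed number of integrations by parts in $\tz$ and along $V$;
\item coefficient bounds from Lemma \ref{lem smallK}, or Lemma \ref{lem bigK} when $K>K_0$;
\item the costs of derivatives of $u$ from Corollary \ref{cor partialofphia'hatF}\eqref{itm 1911};
\item the product lower bound \eqref{eq lowerboundpartialzS};
\item absorption of the $D_K$-powers by $D_{K-1}^{-\ell}$, or by $D=D_0^{3000}$ when $K=0$.
\end{itemize}

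Your symmetric choice $l'=k'=Cl$ differs from the paper's $l$ and $k=3l$, and it works here because \eqref{eq Dgeq3ofS2} carries no $\BV S$ loss. The $\BV S^{0.6}$ losses you anticipate belong to \eqref{eq Dgeq3ofS22} in the non-central Lemma \ref{lem integralboundforK2}, and that is where the asymmetric $k=3l$ is needed to keep the $\BV S$ exponent negative. Also, no adaptive shifting between $V$ and $\tz$ is needed (nor possible pointwise). Since $\langle \DZ S\rangle\ge 1$, a fixed number of integrations by parts in each direction already lets you extract $\BV S^{-l/4}$ from a negative power of the product $\langle\DZ S\rangle\BV S$.
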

			\begin{proof}
				We will only detail the proof for $0 \leq K \leq K_0$ as the proof for $K > K_0$ is similar but significantly easier.

				Assume that $0 \leq K \leq K_0$. 
				By our hypothesis, $S$ is a $C^{\greg}$-function. Without loss of generality we may assume  $\greg/ 4 > l \gg 1$ and $\ell \geq 10^4 l$. We let $k = 3 l$.

			We may define
\aryst
U_{-1} = \langle \DZ S \rangle^{-1}, \quad U_{k} = \langle  \DZ S \rangle^{-1}   \DZ^{k+1} S, \quad 0 \leq k \leq  \greg - 1.
\earyst
Note that we have 
\ary \label{eq DZrule}
\DZ(U_{-1}) = - U_{-1} U_0 U_1, \quad \DZ(U_k) = U_{k+1} - U_0 U_1 U_k, \quad 0 \leq k \leq  \greg - 2.
\eary
Let $\PolyU^{ k,  C_1,   C_2}$ denote the collection of functions of form
\aryst
\sum_{{\bf d} = (d_{-1}, \cdots, d_k) \in \N^{k+2}} C_{{\bf d}}(\tw, \tz) \prod_{j = -1}^{k} U_j^{d_j}
\earyst 			
where ${\bf d}$ satisfies $\sum_{j = -1}^{k}  d_j \leq C_1$ and $\sum_{j = -1}^{k} j d_j \leq C_2$. Moreover, given $P \in \PolyU^{ k,  C_1,   C_2}$ as above, we denote $||| P ||| = \sup_{{\bf d}} \| C_{\bf d} \|$.
By \eqref{eq DZrule}, we see that 
\aryst
\DZ(\PolyU^{  k,   C_1,   C_2}) \subset \PolyU^{  k+1,   C_1 + 2,   C_2 + 1}.
\earyst 
We may write
\aryst
 \cD_{\tz}^{*} =  U_{-1}^2 - i U_{-1} U_{1} + 2  i U_{-1} U_0^2 U_{1} - i U_{-1} U_{0} \DZ. 
\earyst 
By a simple induction, we see that for $1 \leq l \leq \greg-2$,  $(\cD_{\tz}^{*})^l$ can be written as
\ary \label{eq Dtzexpression}
U_{-1}^l Q_0 + U_{-1}^l Q_1 \DZ + \cdots + U_{-1}^l Q_l \DZ^l 
\eary 
with $Q_{l'} \in   \PolyU^{l - l',  3l - 2 l', l - l'}$   and  $||| Q_{l'} ||| \lesssim C(l)$    for every $0 \leq l' \leq l$.

				We now compute $(\cD_{\tz}^{*})^l (\cD_{V}^{*})^k$.
				For any integers $l \geq -1, k \geq 0$ with $l + k \leq \greg - 2$, we denote by $B(l, k)$   the set  $\{-1, \cdots, l\} \times \{0, \cdots, k\} \cup \{(-1, -1)\}$. 
				We define
				\aryst
					W_{-1, -1} = \langle \DV S \rangle^{-1}, \quad 
				W_{l', k'} =  \langle \DV S \rangle^{-1}   \DZ^{l'+1}   \DV^{k'+1} S, \quad (l', k') \in B(l, k) \setminus \{(-1, -1)\}.
				\earyst
				By direct computations, we have
				\ary
			&&	 \DZ( W_{-1, -1}  ) = - W_{-1, 0} W_{0, 0} W_{-1, -1}, \  \DV( W_{-1, -1}  ) = - W_{-1, 0} W_{ -1, 1} W_{-1, -1}  \label{eq DZrule2}   \\
	 	&&	\DV( W_{-1, k'}  ) =  W_{-1, k' + 1}   - W_{-1, 0} W_{-1, 1}  W_{-1, k'},  \quad (-1, k') \in  B(l, k) \setminus \{(-1, -1)\}, \ k'  \leq \greg - 1,  \label{eq DZrule4} \\
			&&	\DZ( W_{l', k'}  ) = W_{l'+1, k'} - W_{-1, 0} W_{0, 0}  W_{l', k'},  \quad (l', k') \in  B(l, k) \setminus \{(-1, -1)\}, \ l' + k' \leq \greg - 3.  \label{eq DZrule3} 
				\eary
			Given integers $l \geq -1, k \geq 0$ with $l + k \leq \greg - 2$, we let $\PolyW^{l, k,  C_1,   C_2}$ denote  the set of functions of form
			\aryst
	 	\sum_{{\bf b} = (b_{s, t} )_{(s,t)} \in \N^{B(l, k)} } C_{{\bf b}}(\tw, \tz)  \prod_{(s,t) \in B(l, k)} W_{s, t}^{b_{s,t}}
			\earyst
			where   ${\bf b}$ satisfies $\sum_{(s,t) \in B(l, k)}  b_{s,t} \leq C_1$ and $\sum_{(s,t) \in B(l, k)}  (s+t+1) b_{s,t} \leq C_2$.  Moreover, given $P \in \PolyW^{l, k,  C_1,   C_2}$ as above, we denote $||| P ||| = \sup_{{\bf b}} \| C_{\bf b} \|$.
			We see that for $l \geq -1, k \geq 0$ with  $l + k \leq \greg - 2$
\aryst
\DZ( \PolyW^{l, k, C_1, C_2} ) \subset  \PolyW^{l+1, k, C_1+2, C_2+1}, \quad\DV( \PolyW^{-1, k, C_1, C_2} ) \subset     \PolyW^{-1, k + 1, C_1+2, C_2+1}.
\earyst  
					By a simple induction, we see that  $(\cD_{\tz}^{*})^l (\cD_{V}^{*})^k$ can be written as
			\ary \label{eq DtzDVexpression}
		  U_{-1}^l W_{-1, -1}^k   \sum_{l' = 0}^{l} \sum_{k' = 0}^k  R_{l', k'} \DZ^{l'} \DV^{k'}  
			\eary
						  with $R_{l', k'} \in \PolyW^{l - l', k - k', 3k + 3l - 2k' - 2l',  l+k - l' - k'}$ and $||| R_{l', k'} ||| \lesssim C(l, k)$.

 Recall that $(Q, Q') \in \Trunc_{D, out} \cap \Cent$.
			 We have
			\aryst
	 | U_{-1} |, | U_{0} |,  |W_{-1, -1} |, |W_{-1, 0} |  \leq 1,
			\earyst 		
 	and by Lemma \ref{lem smallK}, for every $(s, t) \in B(l, k) \setminus \{(-1, -1), (-1, 0) \}$, we have
			\aryst
			| W_{s, t} | \lesssim  D_{K}^{s + t + 1}  \langle \DZ S(\tw, \tz)  \rangle^{0.1(s + t + 1)}.
			\earyst 
			Then we have 
			\aryst
		|	R_{l', k'}  | \leq C(  l, k)    (D_{K}  \langle \DZ S(\tw, \tz)  \rangle^{0.1}   )^{k + l - k' - l'}.
			\earyst 
	We can apply  Corollary \ref{cor partialofphia'hatF}   to see that  $	(\cD_{\tz}^{*})^l (\cD_{V}^{*})^k u $ is bounded by  
	 	\aryst
	 C(\ell, l, k) \Lambda \Phi_{a', \ell}(\eta', \fw', \fz' \mid w', z') \cdot     \Phi_{a, \ell}(\eta , \fw, \fz \mid w, z) \cdot  \widehat\Phi_{a', \ell}(\eta', w', z' \mid \hat F(\tw, \tz) )  \cdot  \widehat \Phi_{a, \ell}(\eta , w, z \mid \tw, \tz )
	 	\earyst
	 	with 
	 	\begin{align*}
	 \Lambda  =&  D_{K-1}^{- \ell} \cdot \langle \DV S  \rangle^{- k} \langle  \DZ S \rangle^{- l}  \cdot \sum_{l' = 0}^{l} \sum_{k' = 0}^{k}      (D_{K}  \langle \DZ S \rangle^{0.1}   )^{k + l - k' - l'} \cdot  \\
	  & (    D_K \langle \DZ S \rangle^{0.1} \BV S^{0.1}     )^{l'} (   D_K^{51}     \langle \DV S \rangle^{0.1}     +  \langle \DZ S \rangle^{0.1} +   D^{0.01} \langle  \DV S \rangle / ( r_{min} \zeta_{max}  ) )^{k'}.
	 	\end{align*}
	 	Since $k = 3l$, we deduce that
	 	\ary \label{eq Lambdabound}
	 	\Lambda \lesssim C(\ell, l, k)  D_{K-1}^{- \ell} \cdot D_K^{51(l + k)} \cdot \BV S^{- k/5}  \langle  \DZ S \rangle^{- l/2}.
	 	\eary 	
 	If $K > 0$, then by  Lemma \ref{lem bigK} and  	
 \eqref{eq lowerboundpartialzS}   in Lemma \ref{lem smallK}, we have 
	 $$\Lambda \lesssim C(\ell, l) D_K^{- \ell/2} \cdot   \BV S^{- l / 4 }   \langle \eta_{max}^{-\beta_1}( \eta - \eta' \partial_z \hat{F}_z(\fw, \fz) ) \rangle^{- l / 80}.$$
 	  If $K = 0$, then by   \eqref{eq lowerboundpartialzS}   in Lemma \ref{lem smallK} and $D = D_0^{3000}$, we obtain 
	 $$\Lambda \lesssim C(l) D_0^{- l} \cdot  \BV S^{- l / 4}    \langle \eta_{max}^{-\beta_1}( \eta - \eta' \partial_z \hat{F}_z(\fw, \fz) ) \rangle^{- l / 80}.$$
		This completes the proof.	
\end{proof}

 	Now we will bound 
			\ary
			\label{eq cKbigQ'Qupperbound}  
			\int_{Q : (Q, Q') \in \Trunc_{D, out} \cap \Cent} | \cK_{K}(Q' \mid Q)| dQ.
			\eary
			We write $r = r_{a'}(\eta', \fw', \fz') $ and $R =  r_{a'}(\extra\eta', \fw', \fz')$.
					We write $C'$ as an universal constant which may vary from line to line.
		We have $R \gtrsim e^{ \hat\lambda_g \tinitial} r$ by	\eqref{eq Randrratio}.
  By \eqref{eq Rmaxminrmaxmin},  we have   
			\begin{align} 
\label{eq phibound1}				& \Phi_{a, 3}(\eta' , \fw', \fz' \mid w', z') \lesssim   D_K^{C'} R^{-1}  (\extra \eta_{max})^{\beta_1/2},  \\
	\label{eq phibound2}				& \Phi_{a, 3}(\eta , \fw, \fz \mid w, z) \widehat \Phi_{a, 3}(\eta , w, z \mid \tw, \tz  )    \\
				\lesssim&     D_K^{C'}   R^{-1} r^{-1}   \langle R^{-1} (w - \fw)  \rangle^{- 3}  \langle r^{-1} (w - \tw)  \rangle^{- 3}   \cdot \eta_{max}^{\beta_1/2} (\extra \eta_{max})^{\beta_1/2}  \langle (\extra \eta_{min})^{\beta_1} ( z - \fz ) \rangle^{- 3}   \langle \eta_{min}^{\beta_1}   (  \tz  - z   ) \rangle^{- 3}, \nonumber \\
	\label{eq phibound3}		\mbox{ and }  \ \ \	&   \widehat \Phi_{a', 3}(\eta', w', z' \mid   \hat F(\tw, \tz)   )  
				\lesssim D_K^{C'}  r^{-1}  \langle r^{-1} ( \hat F_w(\tw, \tz) - w'  )  \rangle^{- 3}  \cdot  \eta_{max}^{\beta_1/2}  \langle \eta_{min}^{\beta_1}   (  \hat F_z(\tw, \tz) - z'   ) \rangle^{- 3}.
			\end{align}


		By the above estimates and  Lemma \ref{lem integralboundforK}, \eqref{eq cKbigQ'Qupperbound} is bounded by  
			\begin{align}
			&  C(l)  D_K^{- l + C'}  \int   1_{\{|\eta| \geq |\eta'|\}}  \Big \{  \int_{W(Q', \eta) \times Z(Q',  \eta)}   \Big [ \int \Big (  \int  \BV S^{- l / 4}   d\xi  \Big ) \cdot     \langle \eta_{max}^{-\beta_1}( \eta - \eta' \partial_z \hat{F}_z(\fw, \fz) ) \rangle^{- l / 80}      \cdot   \nonumber  \\
			&    r^{-2}  R^{-2}  \cdot  (\extra \eta_{max})^{\beta_1}  \eta_{max}^{\beta_1}        \langle R^{-1} (w - \fw)  \rangle^{- 3}  \langle r^{-1} (w - \tw)  \rangle^{- 3}   \cdot    \langle (\extra \eta_{min})^{\beta_1} ( z - \fz ) \rangle^{- 3}  \cdot    \nonumber   \\
			&    \langle \eta_{min}^{\beta_1}   (  \tz  - z   ) \rangle^{- 3}  \cdot  \langle r^{-1} ( \hat F_w(\tw, \tz) - w'  )  \rangle^{- 3}  \cdot   \langle \eta_{min}^{\beta_1}   (  \hat F_z(\tw, \tz) - z'   ) \rangle^{- 3}  dw dz d\tw d\tz  \Big ]  d\fw d\fz  \Big \}    d\eta. \nonumber 
			\end{align} 
			Here $W(Q', \eta), Z(Q', \eta)$ are given by Lemma \ref{lem rangeoffwfz},centered at $w_0$, $z_0$ respectively.  
			We will evaluate the above integral according to the order of the variables shown above.
 			Note that 
			\aryst
			 \int  \BV S^{- l / 4}   d\xi  \lesssim  D_K^{C'} r^{-2}.  
			\earyst
			After integrating against $dw dz$, left hand side of \eqref{eq cKbigQ'Qupperbound} can be bounded by  
			\aryst
			&& C(  l)  D_K^{- l  + C'}  \int   1_{\{|\eta| \geq |\eta'|\}}  \Big \{  \int_{W(Q', \eta) \times Z(Q',  \eta)}   \Big [ \int   \langle \eta_{max}^{-\beta_1}( \eta - \eta' \partial_z \hat{F}_z(\fw, \fz) ) \rangle^{- l / 80}    \cdot   r^{- 2}  R^{-2}  \cdot  (\extra \eta_{max})^{\beta_1}       \cdot     \\
			&&     \langle r^{-1} ( \hat F_w(\tw, \tz) - w'  )  \rangle^{- 3}  \cdot   \langle \eta_{min}^{\beta_1}   (  \hat F_z(\tw, \tz) - z'   ) \rangle^{- 2}    d\tw d\tz  \Big ] d\fw  d\fz  \Big \}    d\eta . 
			\earyst
       We obtain after integrating against $d\tw d\tz$   the following bound
			\aryst
			&& C(  l)  D_K^{- l + C'}   \int 1_{\{|\eta| \geq |\eta'|\}}    1_{W(Q', \eta) \times Z(Q', \eta)}     \langle \eta_{max}^{-\beta_1}( \eta - \eta' \partial_z \hat{F}_z(\fw, \fz) ) \rangle^{- l / 4}    \cdot    R^{-2}   (\extra \eta_{max})^{\beta_1}   \eta_{max}^{- \beta_1}        d\fw d\fz  d\eta . 
			\earyst	 
By $(Q, Q') \in \Trunc_{D, out} \cap \Cent$, we have $\eta_{min} \sim \eta_{max} \gg e^{C \tinitial}$, and hence $| Z(Q', \eta) | = \cO(D_K^{C'} \langle \extra \eta_{max} \rangle^{- \beta_1})$. We may replace $( \eta - \eta' \partial_z \hat{F}_z(\fw, \fz) )$ in the above expression by $( \eta - \eta' \partial_z \hat{F}_z(w_0, z_0) )$ at the cost of adding a factor $\cO(D_K^C)$. 
Then after integrating against $d\fw d\fz$, we see that when $\greg$  is larger than some universal constant,  we have  
			\begin{align} \label{eq Schur11} 
	 	 \int_{Q : (Q, Q') \in \Trunc_{D, out} \cap \Cent}  | \cK_{K}(Q' \mid Q)| dQ &\lesssim   C(l) D_K^{- l}   \int_{|\eta| \geq | \eta'|}    \langle \eta_{max}^{-\beta_1}( \eta - \eta' \partial_z \hat{F}_z(w_0, z_0) ) \rangle^{- l / 80}   \eta_{max}^{- \beta_1}     d\eta  \\
	 	 & \lesssim C(l) D_K^{- l/2}. \nonumber
			\end{align}
			The same estimate holds if we switch the roles of $Q$ and $Q'$.

		\subsection{Proof of Item \eqref{itm 1 Prop81} in Proposition \ref{prop mainprop}: Non-central case} \label{subsec nonCent}

 		 	In this subsection, we let  $(Q, Q') \in \Trunc_{D, out} \setminus \Cent$. 
 		If $(Q, Q') \in \Sigma_D$ (see \eqref{def SigmaD}),	 then  item \eqref{itm distance3} in Definition \ref{def Distancedf} holds, and  we  will apply integration-by-parts with respect to  $\cD_{\tz}$ and  $\cD_{\tw}$ to write $\cK_{+}(Q' \mid Q)$ as 
		\ary   \label{eq cK0'1}
		&&  1_{\{|\eta| \geq |\eta'|\}}    e^{ - i (\xi, \eta) \cdot (w, z) + i (\xi', \eta') \cdot (w', z')}    \int 
		e^{i  S }    (\cD_{\tz}^*)^{l}  (\cD_{V}^*)^k  u d\tw d\tz;
		\eary 			 
			If $(Q, Q') \notin \Sigma_D$, then we apply integration-by-parts with respect to  $\cD_{\tz}$   to write $\cK_{+}(Q' \mid Q)$ as 
				\ary   \label{eq cK0'2}
		&&   1_{\{|\eta| \geq |\eta'|\}}      e^{ - i (\xi, \eta) \cdot (w, z) + i (\xi', \eta') \cdot (w', z')}   \int 
		e^{i  S }    (\cD_{\tz}^*)^{l}    u d\tw d\tz.
		\eary

		 Let  $K \in \N$.  If $(Q, Q') \in \Sigma_D$ (resp.  $(Q, Q') \notin \Sigma_D$), then we denote by $\cK_{ K}(Q' \mid Q)$ the restriction of the integral in \eqref{eq cK0'1} (resp. \eqref{eq cK0'2}) to the parameters $(\tw, \tz) \in R_K \setminus R_{K-1}$.

		\begin{lemma} \label{lem integralboundforK2}
			For any integer $l \geq 0$, there is an universal constant $\greg_0(l) > 0$ such that if $\greg > \greg_0(l)$
			and $\ell \geq 1$,   we have   
			{\em 	\begin{align}   
					\label{eq  integralboundforK}		 | \cK_{K}  (Q' \mid Q) | \leq&  C( l, \ell)   1_{\{|\eta| \geq  |\eta'|\}}      \int  \Lambda \Phi_{a', \ell}(\eta', \fw', \fz' \mid w', z') \cdot   \Phi_{a, \ell}(\eta , \fw, \fz \mid w, z) \cdot   \\
					&  \widehat\Phi_{a', \ell}(\eta', w', z' \mid \hat F(\tw, \tz) )  \cdot  \widehat \Phi_{a, \ell}(\eta , w, z \mid \tw, \tz )   d\tw d\tz \nonumber
				\end{align}
			}
			where 
			\aryst
			\Lambda = \begin{cases}
				D_K^{- l} \cdot  \eta_{max}^{- l c_0/ 4}    \BV S^{- l / 4} , & \mbox{$(Q, Q') \in \Sigma_D$,} \\
				D_K^{- l}  \cdot  \max( \eta_{max},  \xi_{max} )^{- c_0 l / 10}, & \mbox{$(Q, Q') \notin \Sigma_D$}.
			\end{cases}
			\earyst
		\end{lemma}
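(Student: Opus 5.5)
We follow the proof of Lemma \ref{lem integralboundforK} closely, splitting according to whether $(Q, Q') \in \Sigma_D$. As there, we treat only $0 \leq K \leq K_0$ in detail; for $K > K_0$ the crude bounds of Lemma \ref{lem bigK} together with the factor $D_{K-1}^{-\ell}$ (with $D_{K} \gtrsim \max(\eta_{max}, \xi_{max})^{c_0/400}$ up to constants, by \eqref{eq whatisK0}) absorb everything once $\ell \gg l$.

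\emph{Case $(Q, Q') \in \Sigma_D$.} We expand $(\cD_{\tz}^*)^l(\cD_V^*)^k$ with $k = 3l$ exactly as in \eqref{eq DtzDVexpression}: a prefactor $U_{-1}^l W_{-1,-1}^k$ times coefficients $R_{l',k'} \in \PolyW^{l-l', k-k', \cdot, l+k-l'-k'}$ acting through $\DZ^{l'}\DV^{k'}$. We bound the monomials $W_{s,t}$ with \eqref{eq Dgeq3ofS22} of Lemma \ref{lem smallK2}. The difference from the central case is the extra factor $\BV S^{0.6(s+t+1)}$, so $|R_{l',k'}| \lesssim (D_K \langle \DZ S\rangle^{0.1}\BV S^{0.6})^{k+l-k'-l'}$. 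The derivatives of $u$ are controlled by Corollary \ref{cor partialofphia'hatF}\eqref{itm 1911}. Collecting powers, we will get a $\Lambda$ of the form
\[
D_{K-1}^{-\ell} D_K^{C(l+k)} \langle \DV S\rangle^{-k} \langle \DZ S\rangle^{-l} \bigl(\langle \DZ S\rangle^{0.1}\BV S^{0.6}\bigr)^{k+l}.
\]
Since $\BV S \le \langle \DV S\rangle$, this should give roughly $\BV S^{-k/3}\langle\DZ S\rangle^{-l/2}$ after using $k = 3l$. This is the step I expect to be the main obstacle: the powers of $\BV S$ coming from $\langle\DV S\rangle^{-k}$ must beat the $0.6$ powers. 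If this does not balance, I would first try choosing $k$ larger relative to $l$, which only costs more smoothness $\greg$. Next I apply \eqref{eq lowerboundpartialzS2}, $\langle \DZ S\rangle\BV S \gtrsim D^{0.99}\eta_{max}^{c_0}$, to trade the leftover negative powers of $\langle \DZ S\rangle$ and $\BV S$ for $D^{-cl}\eta_{max}^{-c_0 l/4}$ times $\BV S^{-l/4}$. Finally, $D = D_0^{3000}$ converts $D$-powers into $D_K^{-l}$ when $K=0$, and $D_{K-1}^{-\ell}$ handles $K>0$.

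\emph{Case $(Q, Q') \notin \Sigma_D$.} Here we integrate by parts only in $\tz$, using \eqref{eq Dtzexpression}. The coefficients $U_j$ are bounded via \eqref{eq Dgeq3ofS2a} of Lemma \ref{lem smallK0}, $|U_j| \lesssim \langle \DZ S\rangle^{(j+1)/4}$, so $|U_{-1}^l Q_{l'}| \lesssim \langle \DZ S\rangle^{-l + (l-l')/4}$. The derivatives $\DZ^{l'} u$ are bounded by Corollary \ref{cor partialofphia'hatF}\eqref{itm 2911}, each costing $D_K^2\langle \DZ S\rangle^{0.1}$. The net bound is $D_{K-1}^{-\ell}D_K^{2l}\langle \DZ S\rangle^{-l/2}$. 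Then \eqref{eq lowerboundpartialzSa} (first line, valid for $( \Trunc_{D, out} \setminus \Cent)\setminus\Sigma_D$) gives $\langle \DZ S\rangle \gtrsim D^{1/2}\max(\eta_{max},\xi_{max})^{c_0/2}$. Hence $\Lambda \lesssim D_K^{-l}\max(\eta_{max},\xi_{max})^{-c_0 l/10}$, after absorbing $D_K^{2l}$ into $D^{l/4}$ when $K=0$, or into $D_{K-1}^{-\ell}$ when $K>0$. Multiplying by the $\Phi,\widehat\Phi$ factors from \eqref{eq partialsofrho} and integrating over $R_K\setminus R_{K-1}$ then yields the claim, with $\greg_0(l)$ chosen so that all derivatives up to order $4l$ used above exist.
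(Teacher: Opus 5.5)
Your proposal is correct and is essentially the paper's own proof. It uses the same split according to $\Sigma_D$, the same expansion \eqref{eq DtzDVexpression} with $k=3l$ controlled by Lemma \ref{lem smallK2} and Corollary \ref{cor partialofphia'hatF}\eqref{itm 1911}, the same pure $\tz$-integration by parts via \eqref{eq Dtzexpression}, Lemma \ref{lem smallK0} and Corollary \ref{cor partialofphia'hatF}\eqref{itm 2911}, and the same absorption of $K>K_0$ by $D_{K-1}^{-\ell}$.

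Two arithmetic slips are harmless but worth correcting:
\begin{itemize}
\item With $k=3l$ the $\BV S$-exponent is $-k+0.6(k+l)=-0.6l=-k/5$, not $-k/3$. This is exactly \eqref{eq Lambdabound}, and it still suffices because $\BV S^{-0.6l}\langle \DZ S\rangle^{-l/2}\le \BV S^{-l/4}(\langle \DZ S\rangle \BV S)^{-0.35l}$. So your fallback of enlarging $k$ is unnecessary; it would in fact cost powers of $\langle\DZ S\rangle$ through the $W$-bounds.
\item The correct bound is $|Q_{l'}|\lesssim\langle \DZ S\rangle^{(l-l')/2}$, not $(l-l')/4$, coming from the $U_1^{l-l'}$ term. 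This still gives your net factor $\langle \DZ S\rangle^{-l/2}$.
\end{itemize}
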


		\begin{proof}  
			We will only detail the proof when $0 \leq K \leq K_0$ as the proof for $K > K_0$ is similar but significantly easier. From now on we assume that $0 \leq K \leq K_0$.

			We first assume that $(Q, Q') \in \Sigma_D$ (that is, item \eqref{itm distance32} in Definition \ref{def Distancedf} holds).
			We will bound \eqref{eq cK0'1} using \eqref{eq DtzDVexpression}.
			Following the proof of Lemma \ref{lem integralboundforK}, using Lemma \ref{lem smallK2} in place of Lemma \ref{lem smallK}, we  see that  $	(\cD_{\tz}^{*})^l (\cD_{V}^{*})^k u $ is bounded by  
			\aryst
			C(\ell, l, k) \Lambda \Phi_{a', \ell}(\eta', \fw', \fz' \mid w', z') \cdot     \Phi_{a, \ell}(\eta , \fw, \fz \mid w, z) \cdot  \widehat\Phi_{a', \ell}(\eta', w', z' \mid \hat F(\tw, \tz) )  \cdot  \widehat \Phi_{a, \ell}(\eta , w, z \mid \tw, \tz )
			\earyst
			with 
			\aryst
			\Lambda  &=&   D_{K-1}^{- \ell} \cdot \langle \DV S  \rangle^{- k} \langle  \DZ S \rangle^{- l}  \cdot \sum_{l' = 0}^{l} \sum_{k' = 0}^{k}      (D_{K}  \langle \DZ S  \rangle^{0.1}  \BV S^{ 0.6 }   )^{k + l - k' - l'} \cdot  \\
		&& 	(    D_K \langle \DZ S \rangle^{0.1} \BV S^{0.1}     )^{l'} (   D_K^{51}     \langle \DV S \rangle^{0.1}     +  \langle \DZ S \rangle^{0.1} +   D^{0.01}  \langle  \DV S \rangle / ( r_{min} \zeta_{max}  ) )^{k'}.
			\earyst 
			It is direct to see that \eqref{eq Lambdabound} remains valid. 
			By Lemma \ref{lem smallK2}, the right hand side of \eqref{eq Lambdabound} is bounded by  
			\ary
			\Lambda \lesssim
				D_K^{- l} \cdot  \eta_{max}^{- l c_0/ 4}   \BV S^{- l / 4}.
			\eary

			Now assume that $(Q, Q') \notin \Sigma_D$ (that is, item \eqref{itm distance32} fails).
			We will bound \eqref{eq cK0'2} using \eqref{eq Dtzexpression}. 
			By Lemma \ref{lem smallK0}, we have $| U_{l'} | \lesssim \langle \DZ S \rangle^{l'/2}$ for every  $0 \leq l' \leq l$.     Consequently, we have $Q_{l'} \leq C(g, l)  \langle \DZ S \rangle^{l/2} $ for every  $0 \leq l' \leq l$.   
			We  apply  Corollary \ref{cor partialofphia'hatF}   to see that  $	(\cD_{\tz}^{*})^l u $ is bounded by  
			\aryst
			C(\ell, l, k) \Lambda \Phi_{a', \ell}(\eta', \fw', \fz' \mid w', z') \cdot     \Phi_{a, \ell}(\eta , \fw, \fz \mid w, z) \cdot  \widehat\Phi_{a', \ell}(\eta', w', z' \mid \hat F(\tw, \tz) )  \cdot  \widehat \Phi_{a, \ell}(\eta , w, z \mid \tw, \tz )
			\earyst
			with $ \Lambda =   D_{K-1}^{- \ell} \cdot ( D_K^{2}  \langle  \DZ S \rangle^{0.1}  )^{l } \cdot   \langle  \DZ S \rangle^{- l /2 }$.
 	We conclude the proof in a similar way as before using \eqref{eq lowerboundpartialzSa} in Lemma \ref{lem smallK0} in place of Lemma \ref{lem smallK2}.	 
		\end{proof}

Denote $r = r_{a'}(\eta', \fw', \fz') $ and $R =  r_{a'}(\extra\eta', \fw', \fz')$ as before.
		By Lemma \ref{lem integralboundforK2}, \eqref{eq Rmaxminrmaxmin}  and \eqref{eq phibound1} to \eqref{eq phibound3} (which remain valid), we see that  
		\begin{align}
			\label{eq cKbigQ'Qupperbound2}  
			& \int_{Q : (Q, Q') \in  \Trunc_{D, out} \setminus \Cent } | \cK_{K}(Q' \mid Q)| dQ     \\
			\leq&  C( l)  D_K^{- l + C'}  \int  1_{\{|\eta| \geq  |\eta'|\}}  \Big \{  \int_{W(Q', \eta) \times Z(Q',  \eta)}   \Big [ \int \Big (  \int   \eta_{max}^{- l c_0/ 4}  \cdot \BV S^{- l /  4 } 1_{\Sigma_D}(Q, Q')   +  \nonumber  \\
			&   \max( \eta_{max},  \xi_{max} )^{- c_0 l / 10}   1_{\Sigma_D^c}(Q, Q')  d\xi  \Big ) \cdot      r^{-2}  R^{-2}  \cdot  (\extra \eta_{max})^{\beta_1}  \eta_{max}^{\beta_1}         \langle r^{-1} (w - \tw)  \rangle^{- 3}   \cdot    \nonumber   \\
			&     \langle \eta_{min}^{\beta_1}   (  \tz  - z   ) \rangle^{- 3 }  \cdot    \langle r^{-1} ( \hat F_w(\tw, \tz) - w'  )  \rangle^{- 3}  \cdot   \langle \eta_{min}^{\beta_1}   (  \hat F_z(\tw, \tz) - z'   ) \rangle^{- 3}  dw dz d\tw d\tz  \Big ]  d\fw d\fz  \Big \}    d\eta. \nonumber 
		\end{align} 
		Here $W(Q', \eta)$, $Z(Q', \eta)$ are given by Lemma \ref{lem rangeoffwfz}.
		Note that  by Lemma \ref{lem smallK0}, we have 
		\aryst
		&&  \int   \eta_{max}^{- l c_0/ 4}   \BV S^{- l / 4} 1_{\Sigma_D}(Q, Q')   +   \max( \eta_{max},  \xi_{max} )^{- c_0 l / 10}     1_{\Sigma_D^c}(Q, Q')  d\xi      \lesssim D_K^C r^{-2}  \eta_{max}^{- l c_0/ 20} . 
		\earyst
 	 We may bound \eqref{eq cKbigQ'Qupperbound2} following the argument in the previous subsection (using in addition Corollary \ref{rem LargenessofDVSDzD}).   We deduce by \eqref{eq Rmaxminrmaxmin} that when $\greg$  is larger than some universal constant, we have 
		\ary
		\int_{Q : (Q, Q') \in \Trunc_{D, out} \setminus \Cent }  | \cK_{K}(Q' \mid Q)| dQ  
		 \lesssim C(l) D_K^{- l/2}.   \label{eq Schur12} 
		\eary
		The same estimate holds if we switch the roles of $Q$ and $Q'$.

	\begin{proof}[Finishing the proof of  Item \eqref{itm 1 Prop81} in Proposition \ref{prop mainprop}]
		Since   $| \cK_{+} | \leq \sum_{K = 0}^{\infty} | \cK_{K} |$, by summing up the above inequalities over all $K \in \N$, and by  \eqref{eq Schur11} and  \eqref{eq Schur12}, we see that the inequalities in \eqref{itm 1 Prop81} of Proposition \ref{prop mainprop} hold for $\cK_{+}$ in place of $\cK$. We obtain a parallel statement for $\cK_{-}$ by a similar argument. This concludes the proof since $\cK =  \cK_{+} + \cK_{-}$.
	\end{proof}

		\subsection{Proof of Item \eqref{itm 2 Prop81} in Proposition \ref{prop mainprop}}  \label{subsec proofitm2prop81}

		We denote by $\widetilde\Gamma_a$ the set of $P = (w, z, \hat{w}, \eta \mid \fw, \fz)$ with $(w, z,  \eta \mid \fw, \fz) \in \Gamma_a$ and $\hw \in \R^2$.
		We define  a linear operator $\cT_a :  L^2(\hat\Gamma_a) \to  L^2(\widetilde\Gamma_a)$ by 
		\aryst
		(\cT_a v)(w, z, \hat{w}, \eta \mid \fw, \fz) =  \frac{1}{2\pi}  \int  e^{ i \xi ( \hat{w} - w )} v(w, z, \xi, \eta \mid \fw, \fz ) d\xi.
		\earyst
		It is direct to see that $\cT_a$ is a  $L^2$-isometry, and the inverse of $\cT_a$ is given by
		\aryst
		(\cT_a^{-1} \tilde{v} )(w, z, \xi, \eta \mid \fw, \fz) =  \frac{1}{2\pi}  \int  e^{ -  i \xi ( \hat{w} -  w ) } \tilde{v}(w, z, \hat{w}, \eta \mid \fw, \fz ) d\hat{w}.
		\earyst
		We define $\cT_{a'}$   in a similar way.
		It suffices to prove $\|  \cT_{a'} \hL_{a \to a'}^{R}  \cT_{a}^{-1} \|_{L^2  \to L^2  } \leq C( l) D^{- l}$.
		
		By comparing the expressions in \eqref{eq cK}-\eqref{eq amplitude}, the kernel of $\cT_{a'} \hL_{a \to a'}^{R}  \cT_{a}^{-1}$ is given by
		\ary
		\label{eq cK00}	  	\widetilde\cK ( w', z', \hw', \eta', \fw', \fz' \mid w, z, \hw, \eta, \fw, \fz) =   e^{ - i  \eta z + i  \eta'  z'}   	\int  	
		e^{i  S_0 }    u_1 u_2     d\tw d\tz
		\eary
		where $	S_0 = S_0(\hat{w}, \eta, \hat{w}', \eta', \tw, \tz) =  \eta  \tz   -   \eta' \hat{F}_z(\tw, \tz)$ and
		\begin{align}  
			u_1 &= \widetilde{\phi}_{a'}(  \eta', w', z' \mid  \fw', \fz' \mid \hat{F}(  \tw, \tz ) )      \widetilde{\phi}^{\dagger}_a(  \eta, w, z \mid  \fw, \fz \mid    \tw, \tz  )   \rho_F(\tw, \tz - d_{a, \fw, \fz, \eta}(\tw) ),  \\
			u_2 &= (2\pi)^{-2} \int e^{i \xi' ( \hw' - \hat{F}_w(\tw, \tz)) - i \xi (\hw - \tw) }  R(w, z, \xi, \eta, \fw, \fz, w', z', \xi', \eta', \fw', \fz' ) d\xi d\xi'.   
		\end{align}
		We set
		\aryst
		\cD_{\xi} = \frac{1 + i (D \hDelta_a(\eta, \fw, \fz) )^2 (\hw -\tw ) \cdot \partial_{\xi} }{1 + \|  D \hDelta_a(\eta, \fw, \fz) (\hw -  \tw )  \|^2 }, \quad
		\cD_{\xi'} = \frac{1 - i (D \hDelta_{a'}(\eta', \fw', \fz') )^2 (\hw' - \hat{F}_w(\tw, \tz) ) \cdot \partial_{\xi'} }{1 + \| D  \hDelta_{a'}(\eta', \fw', \fz') (\hw' - \hat{F}_w(\tw, \tz))  \|^2 }.
		\earyst
		By applying integration-by-parts with respect to $\cD_{\xi}$ and $\cD_{\xi'}$ several times, we may rewrite $u_2$ as
		\begin{align} \label{eq u2expression}
			(2\pi)^{-2} \int e^{i \xi' ( \hw' - \hat{F}_w(\tw, \tz)) - i \xi (\hw - \tw) } (\cD_{\xi}^{*})^{k}  (\cD_{\xi'}^{*})^{k} R(w, z, \xi, \eta, \fw, \fz, w', z', \xi', \eta', \fw', \fz' ) d\xi d\xi'. 
		\end{align}
		
		Given $P = (w, z, \hat{w}, \eta \mid \fw, \fz)$  and $P' = (w', z', \hat{w}', \eta' \mid \fw', \fz')$, we will use notations in  \eqref{eq Rminmax} to \eqref{eq etaminmax}.
		By applying integration-by-parts with respect to $\cD_{\tz} =   ( 1 - i \DZ S_0 \cdot \DZ  ) / ( 1 +     | \DZ S_0 |^2 )$, we obtain
		\ary \label{eq K3}
		\widetilde\cK (P' \mid P) =    e^{ - i  \eta z + i  \eta'  z'}   	\int  e^{i  S_0 }  (\cD_{\tz}^{*})^l  (u_1 u_2 )   d\tw d\tz.
		\eary
		Now we may follow the proof of  item \eqref{itm 1 Prop81} in Proposition \ref{prop mainprop}. 
		For each $K \in \N$, 	we denote by $\widetilde\cK_{ K}$ the restriction of the integral in  \eqref{eq K3} to the parameters $(\tw, \tz) \in R_K \setminus R_{K-1}$.
		By  Lemma \ref{lem bigK}, Lemma \ref{eq ratioetamaxminbig},   Lemma \ref{lem smallK0} and their proofs, we see that for every $(\tw, \tz) \in R_{K}$
		\aryst
		\langle \DZ S_0(\tw, \tz)  \rangle \gtrsim 
		\begin{cases}
	 	D_K^{- 500}  ( \eta_{max}/ \eta_{min} )^{1 - \beta_1}, &  K > K_0,  \\
			   \langle \eta_{max}^{-\beta_1}( \eta - \eta' \partial_z \hat{F}_z(\fw, \fz) ) \rangle \gtrsim D, &  0 \leq K \leq K_0
		\end{cases} 
		\earyst
		and $\|	 \DZ^l   S_0(\tw, \tz) \|  \lesssim     \langle \DZ S_0(\tw, \tz)  \rangle^{1 + l/4}$ for every $2 \leq l \leq \greg - 1$.

		By hypothesis, we have $\supp(R) \subset \Trunc_{D, in} := \Trunc_{ (1)}^{> \omegalow } \cap ( \Trunc_{ (2)}^{> D} \setminus \Trunc_{ (2)}^{> 2D} ) \cap  \Trunc_{ (3)}^{\leq D}$. 
		Then in the support of the integral of \eqref{eq u2expression}, we have $\zeta_{max} := \max(\langle \zeta \rangle, \langle \zeta' \rangle) \lesssim D \hDelta_{max} \lesssim_{\esmall} D \eta_{max}^{1/2 + c(\esmall)}$. 
		By definition we have $D_{max}  =  \max( \xi_{max} / \eta_{max}^{\beta_1  }, \eta_{max}^{c_0} ) > D$. 
		By letting $\omegalow \gg_{\tinitial} 1$, we can deduce that  $\xi_{max} < \eta_{max}^{\beta_1}$ and $D < \eta_{max}^{c_0}$.
		The volume of the support of the integrand in \eqref{eq u2expression} is of size $\cO( ( D \hDelta_a(\eta, \fw, \fz) )^2 (D \hDelta_{a'}(\eta', \fw', \fz') )^2 )$. Using the  derivative bounds of $R$ in item \eqref{itm 2 Prop81} of Proposition \ref{prop mainprop},   \eqref{eq partialzFw} in Proposition \ref{lem derivativeboundsforhatF} and \eqref{eq partialFbound0},  we deduce by straightforward computations that $| \DZ^l u_2 | $ is bounded by
		\aryst
  C(l, k)  ( D  \eta_{max}^{- \beta_1}  \hDelta_{max}   )^{l }  \langle D \hDelta_a(\eta, \fw, \fz) (\hw -  \tw )  \rangle^{- k} \langle D \hDelta_{a'}(\eta', \fw', \fz') (\hw' - \hat{F}_w(\tw, \tz)) \rangle^{- k} ( D \hDelta_a(\eta, \fw, \fz) )^2 (D \hDelta_{a'}(\eta', \fw', \fz') )^2.
		\earyst
		Moreover, we see that \eqref{eq partialsofphia'hatF2} in Lemma \ref{lem partialofphia'hatF} and \eqref{eq partialsofrho}  hold.   Consequently, we see that $	(\cD_{\tz}^{*})^l (u_1 u_2) $  is bounded by 	
		\aryst
		&& C(\ell, l, k) \Lambda \Phi_{a', \ell}(\eta', \fw', \fz' \mid w', z') \cdot     \Phi_{a, \ell}(\eta , \fw, \fz \mid w, z) \cdot  \widehat\Phi_{a', \ell}(\eta', w', z' \mid \hat F(\tw, \tz) )  \cdot  \widehat \Phi_{a, \ell}(\eta , w, z \mid \tw, \tz ) \cdot \\
		&&  \langle D \hDelta_a(\eta, \fw, \fz) (\hw -  \tw )  \rangle^{- k} \langle D \hDelta_{a'}(\eta', \fw', \fz') (\hw' - \hat{F}_w(\tw, \tz)) \rangle^{- k} (D \hDelta_a(\eta, \fw, \fz) )^2 ( D \hDelta_{a'}(\eta', \fw', \fz') )^2
		\earyst
		with $ \Lambda =   D_{K-1}^{- \ell}  \cdot ( D_K^{2}  \langle  \DZ S_0 \rangle^{0.1}  )^{l } \cdot   \langle  \DZ S_0 \rangle^{- l}$.
		Here we have also used that $D  \eta_{max}^{- \beta_1}  \hDelta_{max} \lesssim   \langle  \DZ S_0 \rangle^{- 1}$ when $K \leq K_0$, and that $D  \eta_{max}^{- \beta_1}  \hDelta_{max} \lesssim  D_{K-1}^{C}$ when $K > K_0$.
		We may bound the above expression  as in Subsection \ref{subsec proofitm1prop81}.
		Then by direct computations, we obtain
		\aryst
		\sup_{P' \in \widetilde\Gamma_{a'}} \int_{\widetilde\Gamma_{a}}  | \widetilde \cK_{K}(P' \mid P)| dP \lesssim C(l) D_K^{- l/2}.
		\earyst
		We may obtain $\sup_{P \in \widetilde\Gamma_{a}}	 \int_{\widetilde\Gamma_{a'}}  | \widetilde \cK_{K}(P' \mid P)| dP'  \lesssim C(l) D_K^{- l/2}$ by a similar argument. 
		Since   $| \widetilde \cK | \leq \sum_{K = 0}^{\infty} |\widetilde \cK_{K} |$, we may conclude the proof by   Schur's test.

					\section{Proof of Proposition \ref{lem formerL6.8}}   \label{sec lem formerL6.8}

 	 As in  Section \ref{sec distanceonphasespace}, we denote  $f^{\tinitial}_{a \to a'} = F = (F_w, F_z)$.
Throughout this section, we will use $\fq$, $\fq'$, $q$, $q'$, etc., to denote $(\fw, \fz)$, $(\fw', \fz')$, $(w, z)$, $(w', z')$, etc., and use   $P$, $P'$   to denote $(\xi, \eta)$, $(\xi', \eta')$, etc.

We first choose some partitions $\cP$ along  the $\eta$-variable.
 We may let $\cP$  be the partition of $\R$ into  left-open right-closed intervals separated by points $ \pm k^{\frac{1}{1 - \beta_1}}$, $ k \in \N$. 
For $\eta \in \R$, we denote by $\cP(\eta)$ the unique atom in $\cP$ containing $\eta$.
We have  $| \cP(\eta) | \sim \langle \eta \rangle^{\beta_1}$ for every $\eta \in \R$.
 For each $B \in \cP$, we let $\eta_{B}$ denote the mid-point of $B$.

By Lemma \ref{lem adaptedscaleistame}, 
for any $B \in \cP$ with $B \not\subset [-\omegalow, \omegalow]$, there is a finite set $D_{a, B} \subset \widehat\Omega$ (depending only on $B$), and  nonnegative functions $ \varphi_{\frq_0}$ on $\widehat\Omega$,  $\frq_0 \in D_{a, B}$, such that
for every $\frq_0 \in D_{a, B}$, we have
\ary \label{eq varphiandwidetildevarphi}
\supp( \varphi_{\frq_0} ) \subset \NGB_a^{   \extra^{-2}}(\eta_{B}, \frq_0 ) = B( \frq_0,     \extra^{-2}   r_a(\eta_{B}, \frq_0 ))
\eary 
and for every $\fq \in \widehat\Omega$,  we have  
\aryst 
\sum_{\frq_0 \in D_{a, B}}    \varphi_{\frq_0}(\fq)   \equiv 1  \mbox{ and }  \sum_{\frq_0 \in D_{a, B}}  1_{\supp( \varphi_{\frq_0})}(\fq)  \lesssim 1.   
\earyst
This gives a partition of unity in $\fq$-variables. 
 Similarly, there is a finite set $\widetilde{D}_{a', B'} \subset \widehat\Omega$ (depending on $B'$), and a collection of nonnegative functions $\widetilde\varphi_{\frq'_0}$ on $\widehat\Omega$, $\frq'_0 \in \widetilde{D}_{a', B'}$, such that for every $\frq'_0 \in \widetilde{D}_{a', B'}$, we have 
\ary
\label{eq varphiandwidetildevarphi'}
\supp( \widetilde\varphi_{\frq'_0} ) \subset \NGB_{a'}^{  \extra^{-1}}(\eta_{B'}, \frq'_0 ) = B( \frq'_0,  \extra^{-1}   r_{a'}(\eta_{B'}, \frq'_0 ))
\eary
and for every $\fq' \in \widehat\Omega$, we have 
\aryst
\sum_{\frq'_0 \in \widetilde{D}_{a', B'}}    \widetilde\varphi_{\frq'_0}(\fq')   \equiv 1  \mbox{ and }  \sum_{\frq'_0 \in \widetilde{D}_{a', B'}}  1_{\supp( \widetilde\varphi_{\frq'_0})}(\fq')  \lesssim 1.  
\earyst

Finally, we define a cut-off function in $\xi$-variable. 
Fix a $C^{\infty}$ function $\chi  : \R^2 \to [0, 1]$ satisfying $\chi |_{B(0, 2)} \equiv 1$ and $\supp(\chi) \subset B(0, 3)$.    We define a  $C^{\infty}$ function $\chi_{a} :  \hat\Gamma_a   \to [0,1 ]$ by 
\ary \label{eq defchiam}
\chi_{a}( q, P \mid \fq ) =  \chi ( \langle (  \xi  -   \eta \estar_a(\eta, \fq \mid q )  )  / (  e^{\deltasharp \tinitial }    \hDelta_{a}(\eta, \fq) )   \rangle  ) \ \mbox{ where $P = (\xi, \eta)$}.
\eary
It is clear that $\chi_a \equiv 1$ on $\hat\Gamma_a( e^{\deltasharp \tinitial})$, and $\supp(\chi_a) \subset \hat\Gamma_a( 3 e^{\deltasharp \tinitial})$.
We have
\ary \label{eq smoothnessofchiam}
\| \partial^k_{\xi}  \big ( \chi_{a}( q, P \mid \fq )  \big )  \| \leq C(k)  ( e^{\deltasharp \tinitial }  \hDelta_{a}(\eta, \fq ) )^{ - k},  \quad  k \geq 0.
\eary

We let $\cJ_a$ be the set of tuples $\cI = (B, \frq)$ with $B \in \cP$ satisfying $B \not\subset  [- \omegalow, \omegalow] $, and  $\frq \in D_{a, B}$.
We define for evey $\cI = (B, \frq) \in \cJ_{a}$ that 
\ary \label{eq defpsiamBd}
\psi_{a, \cI}( q, P \mid \fq ) = \chi_{a}( q, P \mid \fq )  1_{B}(\eta) \varphi_{\frq}(\fq )   .
\eary

 By construction, it is clear that 
\ary \label{eq sumofallvarphidis1}
\sum_{\cI \in \cJ_a} \psi_{a,  \cI} \equiv 1 \ \mbox{ on $\hat\Gamma_a(e^{\deltasharp \tinitial})$}  \mbox{ and } \sum_{\cI \in \cJ_a}  1_{\supp(\psi_{a,  \cI})} \lesssim 1.   
\eary
By \eqref{eq smoothnessofchiam}, for every $\cI \in \cJ_a$, we have 
\ary	  \label{eq psismoothness1}
\| \partial^k_{\xi}    \psi_{a,  \cI} ( q, P \mid \fq  )   \| \leq C(k)     ( e^{\deltasharp \tinitial }  \hDelta_{a}(\eta, \fq ) )^{ - k},  \quad k \geq 0.
\eary

In an analogous way, we define $\chi_{a'}$, $D_{a', B}$ for each $B \in \cP$,  $\cJ_{a'}$ and $ \widetilde\psi_{a',  \cI'}  $ for each $\cI' \in \cJ_{a'}$  using $\widetilde{\varphi}_{\frq'}$ in place of $\varphi_{\frq}$.   Parallel estimates as \eqref{eq psismoothness1} and \eqref{eq sumofallvarphidis1} hold for $\psi_{a', \cI'}$ for each $\cI'  \in \cJ_{a'}$.

Given $\cI = (B, \frq_0 ) \in \cJ_a$ and $\cI' = (B', \frq'_0 ) \in \cJ_{a'}$, we say $\cI \to \cI'$ if 
$F( \NGB_a^{2 \extra^{- 2}}(\eta_{B}, \frq_0) ) \cap \NGB^{2 \extra^{- 2}}_{a'}(\eta_{B'}, \frq'_0) \neq \emptyset$ and $| \eta - \eta' \partial_z F_z(\frq_0) |  \leq   e^{ \deltasharp \tinitial}   | \eta_{B} |^{\beta_1}$.
We can see that for every $\cI' \in \cJ_{a'}$, there are $\cO(e^{\deltasharp \tinitial})$ many $\cI \in \cJ_{a}$ with $\cI \to  \cI'$, since the support of $\widetilde\varphi_{\frq'_0}$ is much smaller than that of $\varphi_{\frq_0}$.

Given any $Q = (  q, P \mid \fq ) \in \hat\Gamma_a$ and $Q' = (q', P' \mid \fq') \in \hat\Gamma_{a'}$, we define 
\aryst
R_0(Q, Q') = \sum_{\substack{\cI \in \cJ_a,  \cI' \in \cJ'_{a'}: \cI \to \cI' } }   \psi_{a, \cI }(Q)   \widetilde\psi_{a',  \cI'}  (Q').
\earyst
By definition, we see that for every   $(Q, Q') \in \Trunc_{ (1)}^{>  \omegalow } \cap \Trunc_{ (2)}^{\leq  e^{\deltasharp \tinitial / 4}   } \cap  \Trunc_{ (3)}^{\leq  e^{\deltasharp \tinitial} }$, we have $\hat{F}( \NGB^{\extra^{- 1}}_a(\eta, \fq) ) \cap \NGB^{\extra^{- 1}}_{a'}(\eta', \fq') \neq \emptyset$
and  $| \eta - \eta' \partial_z \hat{F}_z(\fq) |   \lesssim e^{\deltasharp \tinitial / 2} \eta_{max}^{\beta_1}$.
Thus for every $\cI \in \cJ_{a}$ with $\psi_{a, \cI}(Q) \neq 0$ and every $\cI'  \in \cJ_{a'}$ with $\psi_{a', \cI'}(Q') \neq 0$, we have $\cI \to \cI'$.  
Consequently, we see that $R_0 = 1$ on $\Trunc_{ (1)}^{>  \omegalow } \cap \Trunc_{ (2)}^{\leq   e^{\deltasharp \tinitial / 4}  } \cap  \Trunc_{ (3)}^{\leq  e^{\deltasharp \tinitial}}$.

For each $\cI = (B, \frq)  \in \cJ_{a}$, we define
\ary  \label{eq defpsia'I'}
\Psi_{a, \cI} :=  \Big (  \sum_{\cI'  \in \cJ_{a'}: \cI \to  \cI' } \widetilde\psi_{a', \cI'}^2 \Big )^{1/2}.
\eary
Fix an arbitrary $\cI' = (B', \frq'_0 ) \in \cJ_{a'}$ with $\cI \to \cI'$.
By definition, any $(q', P' \mid \fq')  \in \supp(\Psi_{a, \cI} )$ with $P' = (\xi', \eta')$ satisfies that
\ary \label{eq rangeofcI'}
\fq' \in  B( F(\frq),  \extra^{-3}   r_{a'}(\eta_{B'}, F(\frq) )) \ \mbox{ and } \ \eta' \in B(\eta_{B'}, e^{2 \deltasharp \tinitial} | \eta_{B'} |^{\beta_1} ).
\eary

 We define  
 \begin{align} \label{eq  	defwidetildepsi}
 	\widetilde\psi(q,  P \mid \fq)	=&
 	\chi ( \langle (  \xi  -   \eta \estar_a(\eta, \fq \mid q)  )  /  ( 8 e^{\deltasharp \tinitial }  \hDelta_{a}(\eta, \fq) )    \rangle  ) \cdot  \\
 	& 	1_{[0, 1)}(    \| \fq - \frq_0 \| / (     \extra^{-3}  r_{\cI} ))  ) \cdot	1_{(-1, 1)}( (\eta -  \eta_{\cI} ) / ( e^{2 \delta_{\sharp } \tinitial}  \langle   \eta_{\cI} \rangle^{\beta_1} ) ). \nonumber
 \end{align}
 We have $\widetilde\psi \equiv 1$ on $\supp( \psi_{a, \cI } )$. Moreover,   for every   $Q = (q, P \mid \fq ) \in \hat\Gamma_a \cap \supp( \psi_{a, \cI } )$ and  $Q' = ( q', P' \mid \fq') \in \hat\Gamma_a \cap \supp(1 -  	\widetilde\psi )$, we have the following: 
 (i) $Q \in \hat\Gamma_a( 3 e^{\deltasharp \tinitial} )$ and $\eta \in [\eta_B - C | \eta_B |^{\beta_1},  \eta_B + C | \eta_B |^{\beta_1} ]$,
 (ii)  either  $| \eta - \eta' | > e^{3 \deltasharp \tinitial} \eta_{max}^{\beta_1}$, or $\NGB^{1/ \extra }_a(\eta, \fw, \fz) \cap \NGB^{1/ \extra }_{a}(\eta', \fw', \fz') = \emptyset$, or $Q' \not\in   \hat\Gamma_a( 8 e^{\deltasharp \tinitial }  )$.

 We fix an arbitrary smooth function $\widetilde\rho$ supported in $\Omega$ such that
 $\widetilde\rho \equiv 1$ on $\widetilde\varphi_{a}    \circ \kappa_{a}^{-1}$. In particular, we have  $ \rho  \widetilde\rho =  \rho$. 
 Assume that $\tinitial$ is large enough depending on $\delta_{g}$ and $\esmall$ is small enough depending on $\tinitial$ and $g$, the following lemma holds.
 
 \begin{lemma} \label{lem localization1}
 	For every $\ell > 0$, we have
 	$\|  M( 1 - \widetilde\psi ) \Barg_a M( \widetilde\rho) \Barg_a^{\dagger}  \|_{L^2(\supp(  \psi )) \to L^2} \leq C(\ell) e^{-  \ell \deltasharp \tinitial }$. 
 \end{lemma}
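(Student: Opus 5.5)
The plan is to write the operator $M(1-\widetilde\psi)\Barg_a M(\widetilde\rho)\Barg_a^{\dagger} M(1_{\supp(\psi)})$ as an integral operator on $\hat\Gamma_a$, with kernel
\[
\cK_0(Q'\mid Q)=(1-\widetilde\psi(Q'))\,1_{\supp(\psi_{a,\cI})}(Q)\int e^{i S_1}\,\phi_a(\eta',w',z'\mid\fw',\fz'\mid \tw,\tz)\,\widetilde\rho(\tw,\tz)\,\phi^{\dagger}_a(\eta,w,z\mid\fw,\fz\mid\tw,\tz)\,d\tw\,d\tz,
\]
where
\[
S_1=(\xi,\eta)\cdot(H_{a,\fw,\fz,\eta}(\tw,\tz)-(w,z))-(\xi',\eta')\cdot(H_{a,\fw',\fz',\eta'}(\tw,\tz)-(w',z')).
\]
This is exactly the kernel \eqref{eq cK} of Section \ref{sec distanceonphasespace} with $F$ replaced by the identity, which is the degenerate case of a transfer with $\tinitial=0$. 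The $L^2$ bound then follows from Schur's test, once we show that
\[
\sup_{Q'}\int|\cK_0|\,dQ \quad\text{and}\quad \sup_Q\int|\cK_0|\,dQ'
\]
are both $\lesssim C(\ell)e^{-\ell\deltasharp\tinitial}$.

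By property (ii) stated just before the lemma, every pair $(Q,Q')$ in the support splits into three cases.

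\textbf{Case (a).} The localizations are disjoint: $\NGB^{1/\extra}_a(\eta,\fq)\cap\NGB^{1/\extra}_a(\eta',\fq')=\emptyset$. Theorem \ref{thm DWPT}\eqref{thm DWPT item 2} gives the decay factors $\widehat\Phi_{a,\ell}$, $\Phi_{a,\ell}$, which already make the integrand $\lesssim(\extra^{-1})^{-\ell}$ times the usual profile. Then \eqref{eq Randrratio} and the temperate property (Lemma \ref{lem adaptedscaleistame}) convert this into $e^{-\ell\deltasharp\tinitial}$. This is the argument of Proposition \ref{prop offdiagonal} in the situation of item \eqref{itm distance1}.

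\textbf{Case (b).} The frequencies are far apart: $|\eta-\eta'|>e^{3\deltasharp\tinitial}\eta_{max}^{\beta_1}$. Integrate by parts in $\tz$ with $\cD_{\tz}$. Since $H$ is a skew product with $\partial_{\tz}H_z=1$, we get $\partial_{\tz}S_1=\eta-\eta'$ exactly. Each integration by parts gains $\eta_{max}^{\beta_1}/|\eta-\eta'|\le e^{-3\deltasharp\tinitial}$. The $\tz$-derivatives of $\phi_a$ and $\phi_a^\dagger$ cost only $\langle\eta\rangle^{\beta_1}$ each, by the $\bD_{\tz}$ bounds of Theorem \ref{thm DWPT}. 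After integrating in $\eta$, the factor $\langle\eta_{max}^{-\beta_1}(\eta-\eta')\rangle^{-l}$ keeps the Schur integral finite, with gain $e^{-l\deltasharp\tinitial}$.

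\textbf{Case (c).} $Q$ is near the dual center locus but $Q'$ is not: $Q\in\hat\Gamma_a(3e^{\deltasharp\tinitial})$ and $Q'\notin\hat\Gamma_a(8e^{\deltasharp\tinitial})$, with the base points close and $\eta\approx\eta'$. Here I integrate by parts along $\tw$ with $\DV=r_{min}\partial_{\tw}$ in a direction where $\zeta-\zeta'$ is large. Following the computation in the proof of Lemma \ref{lem smallK}, write
\[
\partial_{\tw}S_1=(\zeta+\zeta_*)-(\zeta'+\zeta'_*)+O(|\partial_{\tz}S_1|\,|\estar|).
\]
Proposition \ref{lem mainpropertyofestar}\eqref{itm rangeofestar} gives $\|\zeta_*\|,\|\zeta'_*\|\lesssim\hDelta$, and $|\zeta'|>8e^{\deltasharp\tinitial}\hDelta\ge 2|\zeta|$. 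Hence $r_{min}|\partial_{\tw}S_1|\gtrsim r_{min}|\zeta'|\gtrsim e^{\deltasharp\tinitial}$, using \eqref{eq Deltaadscale>1}. The second derivatives of $S_1$ are controlled by the $d_{a,\fq,\eta}$ bounds \eqref{eq Cbound}, as in \eqref{eq Dgeq3ofS2}. Repeated integration by parts then gains $(r_{min}|\zeta'|)^{-l}$, and this is summable in $\xi'$ (respectively in $\xi$) with total mass $\lesssim r^{-2}e^{-l\deltasharp\tinitial}$. The residual $\partial_{\tz}S_1$ term is either small, or it falls into case (b).

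Summing the three cases, the remaining Schur integrals over $w,z,\fq,\eta$ are bounded exactly as in Subsection \ref{subsec proofitm1prop81}, using Lemma \ref{lem intPhiadwdz}.

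The main obstacle is case (c). One has to verify that the $\tw$-derivatives of the mixed amplitude, where $\phi_a$ uses the chart at $\fq'$ and $\phi^\dagger_a$ uses the chart at $\fq$, cost at most $r^{-1}$ per derivative uniformly. This needs the observation that $d_{a,\fq',\eta'}-d_{a,\fq,\eta}$ has small gradient when the two localizations overlap, as shown in the proof of Lemma \ref{lem phidefandproperty}. The smoothness of $\widetilde\psi$ across $\zeta'$ is not needed, since the truncation sits outside the integral.
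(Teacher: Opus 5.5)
Your reduction is the one the paper has in mind: its proof only says to rerun Proposition \ref{prop mainprop} with $F$ replaced by the identity, where the amplitudes are now $C^\infty$, so every $\ell$ is allowed. Your case (c) is the analogue of the $\BV S$ estimate of Lemma \ref{lem smallK}, carried out in the $H_{a,\fq,\eta}$-coordinates as you indicate at the end.

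The gap is in the Schur bookkeeping for cases (a) and (b). There you only obtain pointwise gains, and nothing controls the frequency integrals.
\begin{itemize}
\item In the $Q'$-direction, $\supp(1-\widetilde\psi)$ places no restriction on $\xi'$. Yet neither the disjointness in (a) nor the $\tz$-integration by parts in (b) gives any decay in $\xi'$; in (b) the phase derivative $\eta-\eta'$ does not involve $\xi'$ at all. So with your bounds $\sup_Q\int|\cK_0|\,dQ'=\infty$.
\item In the $Q$-direction, the $\xi$-section of $\supp(\psi)$ has measure $\sim e^{2\deltasharp\tinitial}\hDelta^2$. The remaining integrations produce a factor $\sim r^2$, so you are left with $e^{2\deltasharp\tinitial}(r\hDelta)^2$. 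By \eqref{eq Deltaadscale>1} this is only $\cO_{\esmall}(\langle\eta\rangle^{c(\esmall)})$, and it is genuinely large when the torsion dominates. Near the threshold $|\eta-\eta'|\approx e^{3\deltasharp\tinitial}\eta_{max}^{\beta_1}$ of (b), your gain is $e^{-3l\deltasharp\tinitial}$, not a power of $\eta$. So the bound is not uniform.
\end{itemize}

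What is missing is the paper's handling of the $\xi$-variables, in two parts.
\begin{itemize}
\item Wherever $|\zeta-\zeta'|\gg\hDelta$, in particular for large $\xi'$ inside (a) and (b), you must also integrate by parts along $\tw$ in the normal central chart. The kernel then carries a factor like $\BV S^{-l}$, and $\int d\xi\lesssim r^{-2}$ as in Subsection \ref{subsec proofitm1prop81}.
\item Where both $\zeta$ and $\zeta'$ are $\cO(e^{\deltasharp\tinitial}\hDelta)$, you need the $\cT_a$-conjugation of Subsection \ref{subsec proofitm2prop81}: Fourier transform in $\xi$, then integration by parts in $\xi$ using smoothness of the cutoffs at scale $e^{\deltasharp\tinitial}\hDelta$. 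First replace $1_{\supp(\psi)}$ by a cutoff that is smooth in $\xi$ and equal to $1$ on $\supp(\psi)$; this is allowed because $M(1_{\supp(\psi)})$ factors through it.
\end{itemize}
Integration by parts in $\tw$ alone cannot handle this second region. There $\zeta_*-\zeta'_*$ varies by up to $\cO(\hDelta)$ across the packet, so the phase can be stationary.

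Finally, in (c) the inequality $|\zeta'|\ge 2|\zeta|$ does not follow from the margins $8$ versus $3$. The cutoff $\widetilde\psi$ is taken at $\hDelta_a(\eta',\fq')$, while $\supp(\psi)$ is cut at $\hDelta_a(\eta,\fq)$. By \eqref{eq rrdeltadeltaratio} these agree only up to a factor $\tinitial^{\cO(1)}$. The margin in $\widetilde\psi$ therefore has to be enlarged accordingly. This is harmless later, since the proof of Lemma \ref{lem hp2} already tolerates $\tinitial^{C}$ factors in the $\xi$-support of $\widetilde\psi$.
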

 
 \begin{proof}
 	The proof of this lemma is similar to that of Proposition \ref{prop mainprop}, except that the situation here is much simpler since we don't need to analyse the effect of $F$   (that is why the inequality holds for every $\ell$ even though we only assume finite smoothness of $f$).   
 \end{proof}

 In order to make our argument symmetric for taking inverse $f^{-1}$, we will consider an operator $$\hK_{a \to a'} = \Barg_{a'} \CC_{F^{-1}}  M( \rho)  \Barg_a^{\dagger}$$ 
 for some $C^{\greg - 1}$ function $\rho$ satisfying \eqref{eq smoothnessofrho} in place of $\rho_{a \to a'} $.
 	 Assume that $\tinitial$ is large enough depending on $\delta_{g}$,  $\esmall$ is small enough depending on $\tinitial$ and $g$, and $\omegalow$ is large enough depending on $ \tinitial$ and $g$, the following lemma holds.

\begin{lemma} \label{lem formerL6.82} 
	We have
\ary \label{eq cI'Kaa'cI}
\| M(  \Psi_{a, \cI}  ) \hK_{a \to a'} M( \widetilde\psi  ) \|_{L^2 \to L^2}  \lesssim    \exp( - 8 \deltasharp  \tinitial ). 
\eary
In particular, we have 
$$ \| M(   \Psi_{a, \cI } ) \hL_{a \to a'} M( \psi_{a, \cI } ) \|_{L^2 \to L^2}  \lesssim    \exp( - 8 \deltasharp  \tinitial ),$$
since  $M(   \Psi_{a, \cI } ) \hL_{a \to a'} M( \psi_{a, \cI } )  = M(  \Psi_{a, \cI}  ) \hK_{a \to a'} M( \widetilde\psi )  M( \psi_{a, \cI } ) $ for $\hK_{a \to a'}$ given by $\rho = \rho_{a \to a'}$.
\end{lemma}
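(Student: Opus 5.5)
The plan is a $TT^*$ argument for $\mathcal{A} := M(\Psi_{a,\cI}) \hK_{a\to a'} M(\widetilde\psi)$, following the scheme of \cite[Lemma 6.8]{Tsu}: the cancellation needed to beat the trivial $\cO(1)$ bound will come from property $({\bf NI})$ through Proposition \ref{lem mainpropertyofestar}\eqref{itm NIinequality}.

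The first step is to localize in the output variables. By \eqref{eq defpsia'I'}, $\Psi_{a,\cI}^2 = \sum_{\cI' : \cI\to\cI'} \widetilde\psi_{a',\cI'}^2$, and there are only $\cO(e^{\deltasharp\tinitial})$ of these $\cI'$. By \eqref{eq rangeofcI'}, every such $\cI'$ lies in a single window: $\fq'$ is within $\extra^{-3} r_{a'}$ of $F(\frq)$, and $\eta'$ is within $e^{2\deltasharp\tinitial}\langle\eta_{B'}\rangle^{\beta_1}$ of $\eta_{B'}$. I will therefore compute
\[
\mathcal{A}\mathcal{A}^* = M(\Psi_{a,\cI}) \Barg_{a'} \CC_{F^{-1}} M(\rho) \Barg_a^\dagger M(\widetilde\psi^2) (\Barg_a^\dagger)^* M(\bar\rho) \CC_{F^{-1}}^* \Barg_{a'}^* M(\Psi_{a,\cI}).
\]
The middle factor $\Barg_a^\dagger M(\widetilde\psi^2)(\Barg_a^\dagger)^*$ is an operator on functions on $\widetilde\Omega$. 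Since $\widetilde\psi$ confines $\xi$ to within $8e^{\deltasharp\tinitial}\hDelta_a$ of $\eta\estar_a(\eta,\fq\mid q)$, this factor is a superposition of wave packets whose $w$-frequency is pinned to the dual center locus. Its kernel is therefore essentially a projection onto functions whose $w$-oscillation is $\eta\,\estar_a$, up to an error of $e^{\deltasharp\tinitial}\hDelta_a$, at scale $r_a$. Conjugating by $F$ turns the resulting kernel into an integral over two points $(\tw_1,\tz_1)$ and $(\tw_2,\tz_2)$ in $\NGB_a^{\extra^{-3}}$. Its phase is $\eta'(\hat F_z(\tw_1,\tz_1)-\hat F_z(\tw_2,\tz_2))$, corrected by the pullback of $\estar$.

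The second step uses \eqref{eq pullbackofestarinmodifiedchart} together with Proposition \ref{lem mainpropertyofestar}\eqref{itm rangeofestar}. After the $\tz$-integration (controlled by $\DZ$ as in Lemma \ref{lem smallK0}), the surviving phase along a local unstable leaf of length $h_0\sim\mu^u(p,\tinitial)^{-1}$ in the source chart is
\[
\langle\eta\rangle\,\widetilde\theta^s_a(\eta,w,z \mid w+A_a^{-1}(x''+t,y''),z'')
\]
plus a linear term $Lt$ coming from $\xi$. This is exactly the oscillatory integral in Proposition \ref{lem mainpropertyofestar}\eqref{itm NIinequality}. I take $m=\tinitial$, $h_1^{-1}\sim \langle\eta\rangle\Delta_a\,\mu^s/\mu^c$, and $C_1\sim e^{C\deltasharp\tinitial}$. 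Since $\deltasharp\le \sigmaNI\lambdag/30000$, each leaf contributes a factor $C_1^{-\sigma}\lesssim e^{-\sigmaNI\lambdag\tinitial/C}$. One factor is gained from the $W^u$ direction using $\widetilde\theta^s$, and a symmetric factor from the $W^s$ direction using $\widetilde\theta^u$ (via Remark \ref{rema worksforinverseofF} and Remark \ref{rem moreonR}, applied to $f^{-1}$). Together these exceed the $e^{C\deltasharp\tinitial}$ losses from multiplicities and cutoffs.

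The third step applies Schur's test to the resulting kernel of $\mathcal{A}\mathcal{A}^*$. Here the off-diagonal contributions, where $\eta'$ or $\fq'$ are separated beyond the window, are discarded by integration by parts as in Proposition \ref{prop mainprop}. The nonlinearity of $\hat F_z$ is controlled at the scale $r_a$ by \eqref{eq partialw^2Fz}: the error in the phase is $\cO(\tinitial^2 r_a^{-1}\Delta_a \cdot r_a^2 \eta) = \cO(\tinitial^2)$ times the main term's scale. This is acceptable once the averaging is done at a scale slightly smaller than $r_a$, at the cost of a factor $\tinitial^{C}$.

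I expect the second step to be the main obstacle. The difficulty is showing that after the $\xi$-localization and the change of charts, the phase really reduces to the template-function form demanded by $({\bf NI})$, with errors smaller than $\Delta_a$ uniformly in $f$. The first thing I would try is to verify this directly by expanding with \eqref{eq estaraapprox} and \eqref{lem mainapproximation0}, and to absorb the torsion term $\midtor$ into the free linear parameter $L$.
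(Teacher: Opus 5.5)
Your proposal has a genuine gap: it never handles the regime in which the balanced torsion dominates the fluctuations.

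\textbf{The torsion-dominated regime.} At the adapted scale, \eqref{lem adaptedscalefluctuation} only fixes $\max(\flu^s,\flu^u)\sim(\langle\eta\rangle r_a)^{-1}$. The quantity $|\midtor|\,r_a$ can be much larger. In that case $\Delta_a\sim|\midtor|\,r_a$, and $r_a\hDelta_a$ is bounded only by $\langle\eta\rangle^{c(\esmall)}$ (see \eqref{eq Deltaadscale>1}), not by any $e^{C\deltasharp\tinitial}$.

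The $\xi$-window of width $\sim e^{\deltasharp\tinitial}\hDelta_a$ imposed by $\widetilde\psi$ then contains about $(e^{\deltasharp\tinitial}r_a\hDelta_a)^2$ frequency cells. The Schur bound for the composed kernel is of size $e^{10\deltasharp\tinitial}(\hDelta_{\cI}r_{\cI})^6$ (Lemma \ref{lem integralbfK}), and a fixed gain $C_1^{-\sigmaNI}$ from $({\bf NI})$ cannot pay for this. Your third step hides the same problem: the phase error $\tinitial^2 r_a^{-1}\Delta_a\cdot r_a^2\langle\eta\rangle=\tinitial^2 r_a\hDelta_a$ is harmless only if $r_a\hDelta_a=\cO(1)$.

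This is why the paper splits according to whether $r_{\cI}\hDelta_{\cI}\le e^{150\deltasharp\tinitial}$. In the complementary Case II it does not use $({\bf NI})$ at all.
\begin{itemize}
\item It integrates by parts in $\zeta,\zeta',\zeta''$ (via $\cD_{\zeta},\cD_{\zeta'},\cD_{\zeta''}$) to trade the wide frequency window for spatial decay.
\item It then integrates by parts in the stable coordinate $y''$ of the intermediate packet. There $\partial_{y''}Q_1\approx R$, and $R$ depends on the source positions $x,x'$ mainly through $\eta\,\midtor\,(x-\fx)/\lambda_s$, where $\lambda_s$ is the stable contraction of $F$.
\item Because of this twist, $E_2=\{|R|\le e^{-16\deltasharp\tinitial}\lambda_s^{-1}\hDelta_{\cI}\}$ is only an $\cO(e^{-16\deltasharp\tinitial})$-fraction of the admissible $(x,x')$ (Lemma \ref{lem areaofE}).
\item Off $E_2$ one gains $e^{17\deltasharp\tinitial}(r_{\cI}\hDelta_{\cI})^{-1}\le e^{-130\deltasharp\tinitial}$.
\end{itemize}
Absorbing $\midtor$ into the free linear parameter $L$, as you propose, discards exactly the transversality that makes Case II work.

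\textbf{The $({\bf NI})$ step in the fluctuation-dominated case.} Even there, your step is not set up correctly. The large parameter is not $\langle\eta\rangle$ but $\eta$ times the transverse separation of the two points produced by the composition. In the paper's $\hP_2^*\hP_2$, the $x''$-dependent part of the phase is $\eta''\theta^s_{a'}(\eta'',\fq''\mid x'',\fy'',\fz'')(\ty'-\ty)$, so $h_1^{-1}=\eta''(\ty'-\ty)$. The integration variable is the intermediate packet position $x''$, which can run over an unstable segment of length up to $e^{\lambdag\tinitial/4}r_{\cI'}$ because $G$ contracts it. One must:
\begin{itemize}
\item discard the set $E_1$ where $|\ty-\ty'|\le e^{-\lambdag\tinitial/8}r_{\cI'}$ by a measure bound;
\item cut the segment into pieces of length $\sim\mu^u(p_k,m_k)^{-1}$, with $m_k$ chosen so that $|h_1|^{-1}\mu^s(p_k,m_k)/\mu^c(p_k,m_k)\in(e^{\lambdag\tinitial/10},e^{\lambdag\tinitial/8})$, and take $C_1=e^{\lambdag\tinitial/10}$.
\end{itemize}

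Your choice $m=\tinitial$, $h_0\sim\mu^u(p,\tinitial)^{-1}$, $h_1^{-1}\sim\langle\eta\rangle\Delta_a\mu^s/\mu^c$ violates the hypotheses of Proposition \ref{lem mainpropertyofestar}\eqref{itm NIinequality} in two ways. First, $h_1^{-1}\mu^s(p,\tinitial)/\mu^c(p,\tinitial)$ is of order $\hDelta_a$ up to factors $e^{\cO(\tinitial)}$, hence far above $\hat C_g$. Second, $h_0$ is much larger than the radius $C_0r_a$ within which \eqref{eq estaraapprox} reduces $\widetilde\theta^s_a$ to the template function.

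Working with $\mathcal A\mathcal A^*$ instead of $T^*T$ is fine in principle. But then the long free variable is the source packet position along the stable direction, and the mirror argument uses $\widetilde\theta^u_a$ with frequency $\eta(\tx_1-\tx_2)$.

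Finally, only the larger of $\flu^s,\flu^u$ is guaranteed to oscillate at the packet scale. The paper uses the $F\leftrightarrow F^{-1}$ symmetry to assume \eqref{eq factorbeforethetas}, and a single factor $e^{-\sigmaNI\lambdag\tinitial/10}$ already beats the Case I losses $e^{910\deltasharp\tinitial}$. Your claimed second factor from the other direction need not exist.
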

We can now deduce Proposition \ref{lem formerL6.8} from Lemma \ref{lem formerL6.82}.
\begin{proof}[Proof of Proposition \ref{lem formerL6.8}] 
 	Take $u \in C^{\infty}_c(\hat\Gamma_a)$. We have 
	\aryst
	\|  \hL^{R_0}_{a \to a'} u \|_{L^2}^2 &\lesssim&  \sum_{\cI' \in \cJ_{a'}}	\|  M(  \widetilde\psi_{a', \cI'}   ) \hL_{a \to a'}  \sum_{\cI \in \cJ_{a}: \cI \to  \cI'}  M( \psi_{a, \cI} )  u \|_{L^2}^2  \\
\mbox{(Cauchy's inequality)} 	 &\lesssim& e^{\deltasharp \tinitial}   \sum_{ \cI'  \in \cJ_{a'}}  	  \sum_{\cI  \in \cJ_{a}: \cI \to  \cI' } \| M(  \widetilde \psi_{a', \cI' } ) \hL_{a \to a'} M( \psi_{a, \cI } )  u \|_{L^2}^2  \\
\mbox{ (\eqref{eq defpsia'I'}) } 	 &=&   e^{\deltasharp \tinitial}	  \sum_{ \cI  \in \cJ_{a}}   \| M(   \Psi_{a, \cI } ) \hL_{a \to a'} M( \psi_{a, \cI } )  u \|_{L^2}^2  \\
 \mbox{(Lemma \ref{lem formerL6.82})} &\lesssim&       	  e^{   - 15 \deltasharp  \tinitial}   \sum _{\cI \in \cJ_{a}}   \|   1_{\supp( \psi_{a,  \cI } )}  u \|_{L^2}^2   
 \lesssim    e^{- 15 \deltasharp \tinitial}    \|     u \|_{L^2}^2.
	\earyst
	The first and the last inequality above follow from  \eqref{eq sumofallvarphidis1} and the parallel inequality for $\widetilde\psi_{a',  \cI'} $;
	the second inequality follows from  $|\{ \cI \in \cJ_{a} \mid \cI \to  \cI' \} | = \cO(e^{ \deltasharp \tinitial} )$. This concludes the proof.
\end{proof}


				 We now proceed to the proof of Lemma \ref{lem formerL6.82}. 
We let $\cI = (B, \frq_0)$.  
We fix an arbitrary $\cI' = (B', \frq'_0 )$ with $\cI \to \cI'$.
We abbreviate $r_{\cI} =  r_a( \eta_{B}, \frq_0 )$, $r_{\cI'} =  r_{a'}( \eta_{B'},   \frq'_0     )$, $\hDelta_{\cI} = \hDelta_a(\eta_{B}, \frq_0 )$,  $\hDelta_{\cI'} =  \hDelta_{a'}(\eta_{B'},   \frq'_0   )$, $\eta_{\cI} = \eta_{B}$, $\eta_{\cI'} = \eta_{B'}$.  
We also abbreviate $\psi = \psi_{a, \cI}$ and  $\psi' = \Psi_{a, \cI}$.

 We consider the following operator
					\aryst
				   \hP_2 =  M( \psi' )  \hK_{a \to a'}		 M( \widetilde \psi ) \Barg_{a}   M(\widetilde\rho).
					\earyst
					Note that $\hP_2$ depends on    $\omegalow$   through  the definitions of $\cJ_a$ and $\cJ_{a'}$. 
					Recall that $\tinitial$ is large enough depending on $\delta_{g}$,  $\esmall$ is small enough depending on $\tinitial$ and $g$, and $\omegalow$ is large enough depending on $ \tinitial$ and $g$.
					The main estimate  is the following.
					\begin{lemma} \label{lem hp2}
 	We have 
						\aryst
						\|\hP_2\|_{L^2 \to L^2} \lesssim \exp( - 8 \deltasharp  \tinitial).
						\earyst
					\end{lemma}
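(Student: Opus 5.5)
I would prove Lemma \ref{lem hp2} with a $TT^*$ argument that turns the non-integrability condition into cancellation in a single oscillatory integral. This is the analogue of \cite[Lemma 6.8]{Tsu} in the normal central charts. Write $\hP_2 = M(\psi')\,\Barg_{a'}\CC_{F^{-1}}M(\rho)\,\Barg_a^\dagger M(\widetilde\psi)\Barg_a M(\widetilde\rho)$. By Lemma \ref{lem localization1} and Theorem \ref{thm DWPT}, the middle factor $\Barg_a^\dagger M(\widetilde\psi)\Barg_a M(\widetilde\rho)$ agrees with a localization operator up to an error $\cO(e^{-\ell\deltasharp\tinitial})$. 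That operator acts on functions concentrated near $\frq_0$ at scale $\extra^{-3}r_\cI$, with central frequency in $B(\eta_\cI, e^{2\deltasharp\tinitial}\langle\eta_\cI\rangle^{\beta_1})$, and with transversal frequency $\zeta$ within $8e^{\deltasharp\tinitial}\hDelta_\cI$ of the dual center locus.

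\textbf{Step 1.} Estimate $\|\hP_2\|^2=\|\hP_2\hP_2^*\|$, that is, $\|M(\psi')\Barg_{a'}\CC_{F^{-1}}M(\rho)\,\Pi\,M(\rho)\CC_{F^{-1}}^*\Barg_{a'}^*M(\psi')\|$, where $\Pi$ is the localization above. In the chart $H_{a',\fq',\eta'}$ the target wave packets $\widetilde\phi_{a'}$ are pulled back by $\hat F$. Because $f^{\tinitial}$ contracts $E^s$ by $e^{-\lambda_g\tinitial}$, a single target packet of width $r_{\cI'}$, pulled back, spreads along the unstable direction over a segment of length $\sim h_0\sim\mu^u(p,\tinitial)^{-1}\cdot$(unstable size). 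Its restriction to the $\zeta$-band $\Pi$ is then an average along this segment of the phase $\exp(i\eta'\,\vartheta^s)$. By the symmetry in Remark \ref{rema worksforinverseofF}, the parallel estimate with $f^{-1}$ handles the stable side.

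\textbf{Step 2.} This is the main obstacle. We need to show that the kernel of $\hP_2\hP_2^*$ between two target packets $Q'_1,Q'_2$ whose base points differ by $\sim r_{\cI'}$ in the stable direction is small. Writing the overlap and using Proposition \ref{lem derivativeboundsforhatF}\eqref{itm 2inL42} to linearize $\hat F_z$ up to the controlled quadratic error $\tinitial^2 r^{-1}\Delta$, the overlap reduces to
\[
\frac{1}{2h_0}\int_{-h_0}^{h_0}\exp\bigl(i(h_1^{-1}\widetilde\theta^s_a(\eta,w,z\mid w+A_a^{-1}(x''+t,y''),z'')+Lt)\bigr)\,dt ,
\]
with $h_1^{-1}\sim\eta\,\Delta_\cI$ times the separation rescaled by $\mu^s/\mu^c$. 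Proposition \ref{lem mainpropertyofestar}\eqref{itm NIinequality} then bounds this by $C_1^{-\sigmaNI}$, where $C_1$ is the separation measured in units of $r_{\cI'}$. This requires checking that $h_0,h_1$ satisfy the hypotheses, using \eqref{eq defofadscale}, \eqref{lem adaptedscalefluctuation}, and the fact that $C_1$ lies in the window $[\CNI,\hat\CNI]$ for $\tinitial$ large. The difficulty is that the errors (the curvature $\partial_w^2\hat F_z$, the $\zeta$-band width $e^{\deltasharp\tinitial}\hDelta$, and the $\eta$-band width) must all remain below the gain $C_1^{-\sigmaNI}$. This is why $\deltasharp=\min(\sigmaNI,1)\lambda_g/30000$ is chosen so small.

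\textbf{Step 3.} Finish with a Cotlar–Stein or Schur summation over pairs $(Q'_1,Q'_2)$. For pairs separated by at least $e^{c\lambda_g\tinitial}r_{\cI'}$, Step 2 gives the decay $e^{-c\sigmaNI\lambda_g\tinitial}$. Pairs that are closer contribute only a fraction $e^{-c\lambda_g\tinitial}$ of the total mass, since each pulled-back packet spreads over about $e^{\lambda_g\tinitial}$ unit cells. Far-off-diagonal contributions are controlled by the integration-by-parts bounds of Proposition \ref{prop mainprop}. Together these give $\|\hP_2\hP_2^*\|\lesssim e^{-16\deltasharp\tinitial}$, which yields the lemma.
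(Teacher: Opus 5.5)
Working with $\hP_2\hP_2^*$ instead of the paper's $\hP_2^*\hP_2$ is not itself the problem. The gap is in where the cancellation comes from: your Step~2 never produces an NI integral in the regime where Proposition~\ref{lem mainpropertyofestar}\eqref{itm NIinequality} gives a gain.

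\emph{The geometry is reversed.} A pulled-back packet $\phi\circ F$ is long in the stable direction, of length $\sim r_{\cI'}/\lambda_s$. In the unstable direction it is only $r_{\cI'}/\lambda_u\ll r_{\cI'}$ wide, so averaging $\exp(i\eta'\vartheta^s)$ across that width cancels nothing.

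\emph{The NI parameter is not large at packet scale.} The parameter $b=h_1^{-1}\mu^s(p,m)/\mu^c(p,m)$ is unchanged when a configuration is transported by $F$, by the equivariance \eqref{eq psisequivariant}. At packet scale on either side it is $\lesssim 1$ by \eqref{lem adaptedscalefluctuation}. Packets whose centres are $C_1r_{\cI'}$ apart with $C_1\gg1$ have disjoint supports, so $C_1^{-\sigmaNI}$ is not a cancellation gain.

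\emph{What the paper does instead.} The gain comes from a mismatch of resolutions. In $\cK_2$ one integrates over the centre $w''$ of the common target packet, with phase $\eta''\theta^s_{a'}(\eta'',\fq''\mid x'',\fy'',\fz'')(\ty'-\ty)$, where $\tw,\tw'$ are two points in one target packet. The set $E_1$ where $|\ty'-\ty|\le e^{-\lambdag\tinitial/8}r_{\cI'}$ is discarded. Then $x''$ runs over segments of length $1/\mu^u(p,m)$ between $r_{\cI'}$ and $e^{\lambdag\tinitial/4}r_{\cI'}$, i.e.\ longer than a packet, with $m<n_u$ chosen so that $b\in(e^{\lambdag\tinitial/10},e^{\lambdag\tinitial/8})$. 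This sliding is legitimate for two reasons. The source packets see it only as a displacement $\lesssim e^{\lambdag\tinitial/4}r_{\cI'}/\lambda_u\ll r_\cI$. The target amplitudes change only by $\cO(e^{\lambdag\tinitial/4}\extra^{1/2})$, thanks to the improved $\bD_w$ bounds in Theorem~\ref{thm DWPT}.

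\emph{The curvature is not an error term.} The $x''$-dependence of $\eta G_z-\eta' G'_z$ cannot be absorbed. The bound $\tinitial^2r^{-1}\Delta$ on $\partial_w^2\hat F_z$ allows a phase of size $\tinitial^2 r\hDelta\ge\tinitial^2$ across a packet. The paper computes this term exactly via the $E^c_*$-invariance (Lemma~\ref{lem oscpwG}), and it is precisely what carries the $\theta$-phases.

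\emph{Case~II is missing.} You have no argument for $r_\cI\hDelta_\cI>e^{150\deltasharp\tinitial}$, where $\Delta$ is dominated by $|\midtor|\rho$. There the $\zeta$-bands have width $e^{\deltasharp\tinitial}\hDelta_\cI\gg r_\cI^{-1}$, so $\tw'-\tw$ is localized only at scale $(e^{\deltasharp\tinitial}\hDelta_{\cI'})^{-1}\ll r_{\cI'}$. Since $r_\cI\hDelta_\cI$ is bounded only by $\langle\eta_\cI\rangle^{c(\esmall)}$, not in terms of $\tinitial$, $b$ cannot in general be pushed into the NI window on admissible segments. NI also takes a supremum over linear phases, so it is blind to the torsion, which is the dominant effect here. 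So the band width is not an error to be kept below the NI gain.

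\emph{How the paper handles Case~II.} It integrates by parts in $\zeta,\zeta',\zeta''$ and then in $y''$. Off the exceptional set $E_2$, one has $|\partial_{y''}Q_1|>e^{-17\deltasharp\tinitial}\lambda_s^{-1}\hDelta_\cI$: the source-side torsion is amplified by $\lambda_s^{-1}$. The set $E_2$ has relative measure $e^{-16\deltasharp\tinitial}$ by Lemma~\ref{lem areaofE}, which uses $\hDelta_\cI\sim|\midtor|r_\cI|\eta_\cI|$. After regularizing the template factor, one integration by parts gains $e^{17\deltasharp\tinitial}/(r_\cI\hDelta_\cI)<e^{-133\deltasharp\tinitial}$. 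Without this second mechanism your Step~3 cannot reach $e^{-16\deltasharp\tinitial}$ uniformly in $\eta_\cI$.
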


					We are now ready to deduce Lemma \ref{lem formerL6.82} from the above lemmas. 
					\begin{proof}[Concluding the proof of Lemma \ref{lem formerL6.82}] 
						
						We have the following identity
						\aryst
						M( \psi'  )    \hK_{a \to a'}  = M( \psi'  )    \hK_{a \to a'}		  M( 1 - \widetilde\psi )   \Barg_a M( \widetilde \rho) \Barg_a^{\dagger}  +    \hP_2  \Barg_a^{\dagger}.
						\earyst
						Here we have  used that   $\hK_{a \to a'} = \hK_{a \to a'} \Barg_a M( \widetilde\rho) \Barg_a^{\dagger}$, which follows from Theorem \ref{thm DWPT}\eqref{thm DWPT item 1}. 
						By Lemma \ref{lem localization1},
						the norm of the first term on the right hand side above (as an operator from $ L^2(\supp(\psi ))$ to $L^2$) is of size $C(\ell) e^{-  l \deltasharp \tinitial }$.
						The last term above is bounded by $\cO(e^{- 8 \deltasharp  \tinitial})$ by Lemma \ref{lem hp2}. 
					\end{proof}

				To complete the proof of Proposition \ref{lem formerL6.8}, it remains to prove  Lemma \ref{lem hp2}. This will be the object of the rest of the section.
					\begin{proof}[Proof of Lemma \ref{lem hp2}]

						Recall that $\cI = (B, \frq_0)$.
						We denote $\widetilde{B} = B( \eta_{\cI},    e^{ 2 \deltasharp \tinitial}  \langle  \eta_{\cI} \rangle^{\beta_1})$.
By $\cI \to \cI'$, we have 						
\ary \label{eq etacIetacI'bigandclose}
| \eta_{\cI} | \sim | \eta_{\cI'} | \ \mbox{ and } \  | \eta_{\cI} | \gtrsim \omegalow.
\eary
We may assume that $\| \frq'_0 - F(\frq_0) \| \lesssim  \extra^{-3} r_{\cI}$.
By letting $\esmall$ be small and by Lemma \ref{lem adaptedscaleistame},   for $(q, P \mid \fq ) \in \supp(\widetilde\psi)$  we have $\eta \in \widetilde{B}$, and
						\begin{align} \label{eq rrdeltadeltaratio} 
				    \log(	r_a(\eta, \fq ) / r_{\cI} ),   \log(	r_{a}(\eta, q ) / r_{\cI} ) = \cO(1),  
					 \   \log(  \hDelta_a(\eta, \fq ) / \hDelta_{\cI} ) =  \cO(  \log \tinitial  ).  
						\end{align}
						We have a parallel statement  for   $(r_{a'}(\eta'', q'' ), r_{a'}(\eta'', \fq''), \hDelta_{a'}(\eta'', \fq'' ))$ provided  $(q'', P'' \mid \fq'') \in \supp(\psi')$. 				 
		By    Lemma \ref{lem behaviorofDeltalaongorbit},  Lemma \ref{lem adaptedscaleistame},    \eqref{eq etacIetacI'bigandclose}, we have  
						\ary  \label{eq rrdeltadeltaratio0} 
						\log(	r_{\cI'} / r_{\cI} ) = \cO(1), \  \log( \hDelta_{\cI'} / \hDelta_{\cI} ) =  \cO( \log \tinitial  ). 
						\eary
 		As \eqref{eq etacIetacI'bigandclose}, \eqref{eq rrdeltadeltaratio}, \eqref{eq rrdeltadeltaratio0} will be used extensively in the rest of the proof, we will sometimes use them without mentioning.
 			By the construction of $\widetilde\psi$, for each $(q,  \eta \mid \fq) \in \Gamma_a$ the measure of $\xi$ such that $\widetilde\psi(q, P \mid \fq) \neq 0$ (recall $P = (\xi, \eta)$) is of size $ \cO( \tinitial^C e^{2 \deltasharp  \tinitial} \hDelta_{\cI}^2)$. A parallel statement holds for $\psi'$.
 						
 Denote
\begin{align*}
& \hat{F} =  H_{a', \fq'', \eta''} F H_{a, \fq, \eta}^{-1}, \ \hat{F}' =  H_{a', \fq'', \eta''} F H_{a, \fq', \eta'}^{-1}, \ G^* = F^{-1} H_{a', \fq'', \eta''}^{-1} \\
\mbox{and}  \ \ \ \   &	G =	\hat{F}^{-1} = H_{a, \fq, \eta} G^*, \ 
					G' = (\hat{F}')^{-1} =  H_{a, \fq', \eta'} G^*.
						\end{align*}
						
						We write $G^* = (G^*_w, G^*_z) =  (G^*_x, G^*_y, G^*_z) $. We use similar notations to express $G$ and $G'$. 
						By Proposition \ref{prop decompositionofkappaapeta},  for any $\tq = (\tw, \tz) \in   B(  \fq'',    C \extra^{-2} r_{\cI'}  )$,   we   have  
						\ary \label{eq D3ofGbound}
						\| D^3 G(\tq)  \|,	\| D^3 G' (\tq)  \|,	\| D^3 G^* (\tq)  \|   =   \cO_{\esmall}( | \eta_{\cI} |^{c(\esmall)}).
						\eary


		 Recall that the kernel of $\hK_{a \to a'}$, denoted by $\cK$, is given by \eqref{eq cK} in Section \ref{sec proofoflem formerL6.3}.
		 The kernel of $\hP_2^* \hP_2$, denoted by $\cK_2$ from now on, is given by
 				\aryst
						&& \cK_2(\hq' \mid \hq) = \int \widetilde\rho(\hq')   \widetilde\rho(\hq)  \overline{ \phi_{a}( \eta', q'   \mid \fq' \mid \hq')  } \phi_{a}(\eta, q \mid \fq \mid \hq )   \\
						&&  e^{i[ P' \cdot ( H_{a, \fq', \eta'}(\hq') - q' ) -  P  \cdot ( H_{a, \fq, \eta}(\hq) - q ) ]}   \overline{\cK(q'', P'', \fq'' \mid q', P', \fq') }  \\
						&&  \cK( q'', P'', \fq'' \mid q, P, \fq)  (\psi')^2( q'', P'', \fq'' )  \widetilde \psi( q',  P', \fq' ) \\
						&& \widetilde  \psi( q, P, \fq )    dq dP d\fq   dq' dP' d\fq' dq'' dP'' d\fq''.
						\earyst
						Write $q'' = (w'', z'')$ and make substitutions $\tq \mapsto \tq + (w'', 0)$, $\tq' \mapsto \tq' + (\tw'', 0)$, we obtain
						\ary
						\label{eq expressioncKtypeIII} 
						&&  \cK_2(\hq' \mid \hq) = \int  \widetilde\rho(\hq')  \widetilde\rho(\hq)   \overline{ \phi_{a}( \eta', q'   \mid \fq' \mid \hq')  } \phi_{a}(\eta, q  \mid \fq \mid \hq)    \\
						&&  e^{i ( P' \cdot H_{a, \fq', \eta'}(\hq')  -  P \cdot  H_{a, \fq, \eta}(\hq)  )}        [\int  I( \tq, \cdots,   \fq' , z''  \mid w'' )  dw'']  dz''  \nonumber  \\
						&&   d\tq d\tq'  dP'' d\fq'' dq dP   d\fq dq' dP' d\fq'  \nonumber
						\eary
						where $ I( \tq, \cdots,   \fq' , z'' \mid w'' ) $ is the abbreviation for  
						\begin{align}
						 \label{eq expressionI}  
						&    \exp(i P'' \cdot (\tq' - \tq))  \cdot  \exp(i P \cdot G(\tw + w'', \tz) - i P' \cdot G'(\tw' + w'', \tz') )   \\
						& \overline{\widetilde{\phi}_{a'}( \eta'',  q'' \mid \fq''  \mid \tw' + w'', \tz') }   \overline{ \widetilde{\phi}^{\dagger}_{a}( \eta',  q' \mid \fq' \mid G'(\tw'+ w'', \tz'))  }   \widetilde{\phi}_{a'}( \eta'', q'' \mid \fq''  \mid \tw + w'', \tz)   \nonumber \\
						&  \widetilde{\phi}^{\dagger}_{a}( \eta, q \mid \fq \mid G(\tw + w'', \tz))     (\psi')^2(q'', P'' \mid \fq'')  \widetilde \psi (q', P' \mid \fq')  \widetilde\psi (q, P \mid \fq)  \nonumber \\
						&  \rho(G'(\tw' + w'', \tz')) \rho(G(\tw + w'', \tz)).  \nonumber
						\end{align}

						We will show that by letting $\tinitial$  be large depending on $g$, we have
						\ary \label{eq integralcK3}
						\sup_{\hq'} \int | \cK_2(\hq' \mid \hq) | d\hq ,  \ 
						\sup_{\hq} \int | \cK_2(\hq' \mid \hq) | d\hq'  \lesssim   e^{ - 16 \deltasharp  \tinitial}.
						\eary
By Schur's test, we see that \eqref{eq integralcK3} implies that $\|\hP_2\|_{L^2 \to L^2} \lesssim  e^{ - 8 \deltasharp  \tinitial}$. This would conclude the proof of Lemma \ref{lem hp2}.
						
 	From now on we fix a large integer $\ell$, and abbreviate $\Phi_{a, \ell}$, $\widehat \Phi_{a, \ell}$,  $\Phi_{a', \ell}$, $\widehat \Phi_{a', \ell}$ (defined in  \eqref{eq formuaofPhia} and \eqref{eq formuaofPhia2}), as $\Phi_a$, $\widehat\Phi_a$, $\Phi_{a'}$, $\widehat\Phi_{a'}$ respectively.   
						By Theorem \ref{thm DWPT}\eqref{thm DWPT item 2}, we have $|I | \leq I_3$ where
						\begin{align}  \label{eq IandI3}
									I_3  =&  \Phi_{a'}(\eta'', \fq'' \mid q'')^2  \widehat \Phi_{a'}(\eta'', q'' \mid \tw+ w'' , \tz)  \widehat \Phi_{a'}(\eta'', q'' \mid \tw' + w'' , \tz')    \\
						&    \Phi_a(\eta, \fq \mid q )    \Phi_{a}(\eta', \fq' \mid q')    \widehat  \Phi_{a}(\eta, q \mid G(  \tw + w'' , \tz) )  \widehat \Phi_{a}(\eta', q' \mid G'(   \tw'+ w'' , \tz') )     \nonumber  \\
					    & 1_{\supp( \psi' )}( q'', P'' \mid \fq'' )    1_{\supp(\widetilde \psi )}( q, P \mid \fq )   1_{\supp(\widetilde \psi )}( q', P' \mid \fq' )  \nonumber  \\
						& 1_{\supp(\rho)}(G'(\tw' + w'', \tz')) 1_{\supp(\rho)}(G(\tw + w'', \tz)). \nonumber 
						\end{align}
						Then we have  
						\begin{align} \label{eq expressionbfKtypeIII}
						| \cK_2(\hq' \mid \hq) | 	\lesssim&  {\bf K}_2(\hq' \mid \hq) := \int  \rho(\hq')  \rho(\hq)    \Phi_a(\eta', \fq' \mid q') \widehat \Phi_a(\eta', q' \mid \hq')  \\
						& \Phi_a(\eta, \fq \mid q) \widehat \Phi_a(\eta, q \mid \hq)   [\int I_3( \tq, \cdots,   \fq', z''  \mid w'' )    dw'']   dz''   \nonumber  \\
						&   d\tq d\tq'  dP'' d\fq'' dq dP   d\fq dq' dP' d\fq'.  \nonumber
						\end{align}
						
 			We let  $c \in (0, 1/ 4)$ denote a small absolute constant which may vary from line to line. 
						\begin{lemma} \label{lem integralbfK}
							When $\tinitial \gg_g  1$ and $\omegalow \gg_{g, \tinitial, c} 1$, we have 
							\aryst
							\sup_{\hq'} \int  {\bf K}_2(\hq' \mid \hq)   d\hq ,  \ 
							\sup_{\hq} \int  {\bf K}_2(\hq' \mid \hq)  d\hq'  \lesssim e^{10  \deltasharp \tinitial   }  \hDelta_{\cI}^6 r_{\cI}^{6} =    \cO_{c}(   \langle \eta_{\cI} \rangle^{c} ).
							\earyst
						\end{lemma}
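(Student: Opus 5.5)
The bound in Lemma \ref{lem integralbfK} involves no oscillation at all: ${\bf K}_2$ is a positive kernel built from the envelopes $\Phi$, $\widehat\Phi$ and indicator functions. So the plan is to integrate the variables out one group at a time, using the $L^1$/$L^2$ normalizations of the envelopes, and keep track of the volume factors produced by the cut-offs $\widetilde\psi$ and $\psi'$. By the symmetry between $(\hq,q,P,\fq)$ and $(\hq',q',P',\fq')$, it suffices to bound $\sup_{\hq'}\int {\bf K}_2\,d\hq$.

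The integration proceeds from the innermost variables outward, in four steps.

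\begin{enumerate}
\item Integrate $\hq$ against $\widehat\Phi_a(\eta,q\mid\hq)$. This gives a factor $\cO(\langle\eta\rangle^{-\beta_1/2} r_a(\eta,q)) \sim \langle\eta_{\cI}\rangle^{-\beta_1/2}r_{\cI}$, by \eqref{eq rrdeltadeltaratio}.
\item Integrate $(q,\fq)$ against $\Phi_a(\eta,\fq\mid q)^2$, using Lemma \ref{lem intPhiadwdz} and \eqref{eq integralphiaupperbound0}. The two copies of $\Phi_a$ in $q$ (one from ${\bf K}_2$, one from $I_3$) combine into $\Phi_a^2$. Here $\widehat\Phi_a(\eta,q\mid G(\cdot))$ is bounded by its supremum $\langle\eta\rangle^{\beta_1/2}r^{-1}$ and later integrated in $\tq$.
\item Integrate $\tq,\tq'$. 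Change variables $\tq\mapsto G(\tw+w'',\tz)$; the Jacobian is $\cO(1)$ since $\det DF=\cO(1)$ and the $H$'s are volume preserving. Bound $\int\widehat\Phi_a\,d\tq\lesssim\langle\eta\rangle^{-\beta_1/2}r$, which cancels the sup used in step 2. Similarly, $\int\widehat\Phi_{a'}(\eta'',q''\mid\cdot)\,d\tq'$ gives $\langle\eta''\rangle^{-\beta_1/2}r_{\cI'}$.
\item Integrate $w''$, $z''$, $\fq''$ against $\Phi_{a'}(\eta'',\fq''\mid q'')^2$, again via \eqref{eq integralphiaupperbound0} and \eqref{eq integralphiaupperbound}. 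This leaves the frequency integrals $dP\,dP'\,dP''$.
\end{enumerate}

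These frequency integrals are where the large factor appears. The measure of $\xi$ in $\supp\widetilde\psi$, and of $\xi''$ in $\supp\psi'$, is $\cO(\tinitial^C e^{2\deltasharp\tinitial}\hDelta_{\cI}^2)$. The measure of $\eta$ in $\widetilde B$ is $\cO(e^{2\deltasharp\tinitial}\langle\eta_{\cI}\rangle^{\beta_1})$, and $\eta''$ is similar by \eqref{eq rangeofcI'}. For fixed $\hq'$, the primed variables $(q',P',\fq')$ are not free. They are controlled by $\widehat\Phi_a(\eta',q'\mid\hq')\Phi_a(\eta',\fq'\mid q')$ together with the $\widetilde\psi(q',P'\mid\fq')$ cut-off. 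Integrating these gives the same structure: $(q',\fq')$ integrals normalized, and the $\xi'$ and $\eta'$ ranges of the sizes above.

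Collecting the $\langle\eta\rangle^{\pm\beta_1/2}$ powers against the $\eta$-interval lengths, and the $r$ powers against the $\xi$-volumes $\hDelta_{\cI}^2$, should leave something of the form
\[
e^{C\deltasharp\tinitial}\,\tinitial^C\,(\hDelta_{\cI} r_{\cI})^{6}.
\]
This is six factors of $\hDelta r$: one from each of the three frequency pairs $\xi,\xi',\xi''$, each contributing $\hDelta^2$ against $r^{2}$ from the spatial envelopes. By \eqref{eq Deltaadscale>1}, $\hDelta_{\cI}r_{\cI}\lesssim\langle\eta_{\cI}\rangle^{c(\esmall)}$, and the exponential and $\tinitial$ prefactors are absorbed into $\langle\eta_{\cI}\rangle^{c}$ once $\omegalow\gg_{\tinitial}1$, using \eqref{eq etacIetacI'bigandclose}. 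This gives the second form of the bound.

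The main obstacle is the bookkeeping that each $\eta$-type integral is exactly compensated. Three $\eta$ variables each produce a range of order $\langle\eta_{\cI}\rangle^{\beta_1}$, and these must be matched against the $\langle\eta\rangle^{\beta_1/2}$ normalizations carried by six envelope factors. Any leftover power of $\langle\eta_{\cI}\rangle^{\beta_1}$ would destroy the bound. In particular, I must make sure that the $z$-direction integrals (in $z,z''$, $\fz$ and the $\tz$ variables) produce the decay factors $\langle\eta\rangle^{-\beta_1}$, rather than merely $\cO(1)$ from the $\langle\extra\eta\rangle^{\beta_1}$-scale envelopes. I would first verify that the $dz''$ integral over $\widehat\Phi_{a'}(\eta'',q''\mid\tw+w'',\tz)\widehat\Phi_{a'}(\eta'',q''\mid\tw'+w'',\tz')$ yields $\langle\eta''\rangle^{-\beta_1/2}$ times $\langle\langle\eta''\rangle^{\beta_1}(\tz-\tz')\rangle^{-\ell}$, via \eqref{eq integralphiaupperbound2}, and that the corresponding $w''$ integral behaves in the same way. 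I would then use this coupling to integrate $\tq'$ over a ball of size $r\times\langle\eta\rangle^{-\beta_1}$, rather than integrating the envelopes independently.
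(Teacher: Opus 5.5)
Your steps 1--4 do not produce the count you claim, and the failure is the leftover power of $\langle\eta_{\cI}\rangle$ you worry about at the end. Step 1 spends $\widehat\Phi_a(\eta,q\mid\hq)$, and step 2 replaces $\widehat\Phi_a(\eta,q\mid G(\cdot))$ by its supremum. After that, nothing localizes $q$ or $\fq$ except $\Phi_a(\eta,\fq\mid q)^2$ and the cut-off in $\widetilde\psi$. But \eqref{eq integralphiaupperbound0} and \eqref{eq integralphiaupperbound} each normalize one of these two variables \emph{with the other held fixed}; you cannot use both. The joint integral $\iint\Phi_a(\eta,\fq\mid q)^2\,dq\,d\fq$ is comparable to the volume of the admissible $\fq$-set, a ball of radius $\extra^{-3}r_{\cI}$. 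The final bound, however, needs the group $(\hq,q,\fq)$ to contribute only the cell volume $r_{\cI}^2\langle\eta_{\cI}\rangle^{-\beta_1}$. The ratio is about $\extra^{-9}r_{\cI}\langle\eta_{\cI}\rangle^{\beta_1}$, a positive power of $\langle\eta_{\cI}\rangle$. Step 4 has the same defect for $(q'',\fq'')$.

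There are further problems. Step 3 integrates in $\tq$ the very envelope $\widehat\Phi_a(\eta,q\mid G(\cdot))$ whose supremum step 2 already used, so the announced cancellation is not available. Also, $\widehat\Phi_a(\eta',q'\mid G'(\cdot))$ is never used to tie $\tq'$ to $q'$, so the chain of localizations anchored at the fixed $\hq'$ is cut. Your prediction in the last paragraph is also off. By \eqref{eq integralphiaupperbound2}, the $q''$-integral of the two envelopes $\widehat\Phi_{a'}(\eta'',q''\mid\cdot)$ is $\cO(1)$ times $\langle\langle\eta''\rangle^{\beta_1}(\tz-\tz')\rangle^{-\ell}\langle r_{\cI'}^{-1}(\tw-\tw')\rangle^{-\ell}$, with no power of $\langle\eta''\rangle$ in front. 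The factor $\langle\eta''\rangle^{-\beta_1}$ appears only when $\tq$ is later integrated against this coupling.

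What is missing is the pairing on which the paper's proof is built; your last paragraph only gestures at it. With $\hq'$ fixed, proceed as follows.
\begin{enumerate}
\item Integrate $\xi,\xi',\xi''$ first, with everything else fixed; their support has measure $\lesssim\tinitial^{C}e^{6\deltasharp\tinitial}\hDelta_{\cI}^4\hDelta_{\cI'}^2$.
\item Integrate each of $\fq,\fq',\fq''$ against its $\Phi^2$ with the matching $q$ fixed, by \eqref{eq integralphiaupperbound}.
\item Integrate each of $q,q',q''$ against the \emph{two} $\widehat\Phi$-envelopes that share it, by \eqref{eq integralphiaupperbound2}.
\end{enumerate}
What remains is $\cO(1)$ times three couplings: $\hq$ to $G(\tq)$ at scale $r_{\cI}\times\langle\eta\rangle^{-\beta_1}$, $\tq$ to $\tq'$ at scale $r_{\cI'}\times\langle\eta''\rangle^{-\beta_1}$, and $G'(\tq')$ to the fixed $\hq'$ at scale $r_{\cI}\times\langle\eta'\rangle^{-\beta_1}$. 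Integrate $\hq$, then $\tq$, then $\tq'$; the maps $G,G'$ have Jacobian $\cO(1)$. This gives $r_{\cI}^4r_{\cI'}^2\langle\eta\rangle^{-\beta_1}\langle\eta'\rangle^{-\beta_1}\langle\eta''\rangle^{-\beta_1}$. These three factors absorb the three $\eta$-ranges, each of length $\lesssim e^{2\deltasharp\tinitial}\langle\eta_{\cI}\rangle^{\beta_1}$. You are left with $\tinitial^{C}e^{C\deltasharp\tinitial}\hDelta_{\cI}^4\hDelta_{\cI'}^2r_{\cI}^4r_{\cI'}^2$. Then \eqref{eq rrdeltadeltaratio0} and \eqref{eq Deltaadscale>1} give both forms of the bound; the exact multiple of $\deltasharp\tinitial$ in the exponent is immaterial. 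The point is that every envelope is used exactly once, along a chain anchored at $\hq'$. Integrating envelopes separately and bounding the others by suprema, as you propose, does not have this structure.
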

						\begin{proof}   
							We explain the estimate for $\sup_{\hq'} \int | {\bf K}_2(\hq' \mid \hq) |  d\hq$. The other inequality follows from a parallel argument.
							
 					Fix an arbitrary $\hq' = (\hw', \hz')$. Under our notations, we have $dP dP' dP'' = d\xi d\xi' d\xi'' d\eta d\eta' d\eta''$.
							We first integrate $d\xi d\xi' d\xi''$. 
 			We have seen that the measure of $(\xi, \xi', \xi'')$ in the support of the integrand  is bounded by $C   \tinitial^{C}      e^{  6 \deltasharp    \tinitial }       \hDelta_{\cI}^4  \hDelta_{\cI' }^2$ (as one fixes the other parameters).   
							Then we apply \eqref{eq integralphiaupperbound} in Lemma \ref{lem intPhiadwdz} to $\fq = (\fw, \fz)$, $\fq' = (\fw', \fz')$ and $\fq'' = (\fw'', \fz'')$; and  apply \eqref{eq integralphiaupperbound2} in  Lemma \ref{lem intPhiadwdz} to  $q = (w, z)$, $q' = (w', z')$ and $q'' = (w'', z'')$. Thus the term $\int | {\bf K}_2(\hw', \hz' \mid \hw, \hz) |  d\hw d\hz$ is bounded by  
						    \ary 
							&& C(\ell)    \tinitial^{C}      e^{  6 \deltasharp    \tinitial }    \hDelta_{\cI}^4  \hDelta_{\cI' }^2  \int  \langle \langle \eta \rangle^{\beta_1}(G_z(\tw, \tz) - \hz) \rangle^{-\ell} \langle r_{\cI}^{-1}(G_w(\tw, \tz) - \hw) \rangle^{-\ell}  \nonumber   \\
							&&  \langle \langle \eta' \rangle^{\beta_1}(G'_z(\tw', \tz') - \hz') \rangle^{-\ell} \langle r_{\cI}^{-1} (G'_w(\tw', \tz') - \hw') \rangle^{-\ell}    \langle \langle \eta'' \rangle^{\beta_1}(\tz' - \tz) \rangle^{-\ell}  \nonumber    \\
							&& \langle r_{\cI'}^{-1} (\tw - \tw') \rangle^{-\ell} 	1_{ \widetilde{B} } (\eta)   1_{ \widetilde{B} } (\eta')  1_{B'}(\eta'')     d\hw d\hz d\tw d\tz d\tw' d\tz' d\eta d\eta' d\eta'' \nonumber   \\
							&\leq& C(\ell)   \tinitial^{C}        e^{  9  \deltasharp     \tinitial}  \hDelta_{\cI}^4  \hDelta_{\cI' }^2  r_{\cI}^{4}  r_{\cI'}^{2}. \label{eq boundofK}
							\eary
				 We then finish the proof by \eqref{eq rrdeltadeltaratio0} and \eqref{eq Deltaadscale>1}.
						\end{proof}

						By Lemma \ref{lem integralbfK} and \eqref{eq etacIetacI'bigandclose},  in the proof of \eqref{eq integralcK3} we are free to remove from the expression of $I$ any term of size $\cO(  | \eta_{\cI} |^{-c} I_3)$, or remove from  the expression of $ \cK_2$ any term of size $\cO(  | \eta_{\cI} |^{-c} {\bf K}_2)$, as long as $| \eta_{\cI'} |$ is sufficiently large depending on $g, c$.
						We will use this observation to replace $\cK_2$ and $I$ by some approximants to simplify the computations.

						\noindent{\bf (Range of parameters)}: 					 We may assume in the rest of the proof
						\ary
						 \label{eq parameterrange1}
				    	&&   \eta, \eta' \in \widetilde{B} = B( \eta_{\cI},   e^{2 \deltasharp \tinitial}  \langle  \eta_{\cI} \rangle^{\beta_1}), \quad   \eta'' \in B' \subset B( \eta_{\cI'},   e^{2 \deltasharp \tinitial}   \langle    \eta_{\cI'} \rangle^{\beta_1}), \\  
				    	 \label{eq parameterrange0}
				        &&   \fq,   \fq' \in B(  \frq_0,   \extra^{-3} r_{\cI} ), \   \fq'' \in B(  \frq'_0,   \extra^{- 3}  r_{\cI'} ),  \\
				        &&  q \in \ngb^1_{a}(\extra \eta, \fq), \ q' \in \ngb^1_{a}( \extra \eta', \fq'),   \   q'' \in \ngb^1_{a'}( \extra \eta'', \fq'' ), 	        \label{eq parameterrange8}   \\
				         \label{eq parameterrange4}
						&&   \|  w -  G_w(\tw + w'', \tz) \| \leq   r_{a}(\eta,  \fq ),  \
					\	 \|   w' -  G'_w(\tw' + w'', \tz' )  \| \leq  r_{a}(\eta',  \fq'), \label{eq parameterrange2} \\
						&&  \| \tw \|, \| \tw' \|  \leq     r_{a'}(\eta'', \fq''),   \label{eq parameterrange5} \\
						\label{eq parameterrange3}
					&&	\| \tz - \fz'' \|, \  \| \tz' - \fz'' \| \leq   \langle\eta_{\cI} \rangle^{-\beta_1 + c},  \\ 
											\label{eq parameterrange7}
				  &&  \| G_z(\tw + w'', \tz)  - \fz \|,  \   \| G'_z(\tw + w'', \tz') -  \fz' \| \leq    \langle\eta_{\cI} \rangle^{-\beta_1 + c}.
						\eary
						Indeed, by studying the supports of $I$, $\psi'$ and $\widetilde\psi$, the expression in the integral in \eqref{eq expressioncKtypeIII} is of  negligible error of size $\cO( \eta_{\cI}^{-c} I_3 )$ if any one of
						\eqref{eq parameterrange1} to  \eqref{eq parameterrange7} fails. 
						
						Given some $\fq$, $\fq'$ and $\fq''$, by a change of variable  to  
						 $w''$, and making suitable substitutions of $\widetilde{\phi}^{\dagger}_{a}$, $\widetilde{\phi}_{a'}$, we may   reduce to the case that
						$A_a(\fq) = A_a(\fq') = A_{a'}(\fq'') = {\rm Id}$  (see Proposition \ref{prop decompositionofkappaapeta}). 
						Then   
						\ary \label{eq expansionofDwGw}
						\partial_w G_w(\tw + w'', \tz )  = \partial_w G'_w(\tw' + w'', \tz') =     \Upsilon^{-1}  + \cO(\eta_{\cI}^{-1/2 + c})
						\eary
						where we have  
						\ary \label{eq Upsilon}
						\Upsilon =  
						\begin{bmatrix}
							\lambda_u
							& 0 \\ 0 &  \lambda_s
						\end{bmatrix} + \cO(\esmall), 
 						\eary
						where we have, by letting $\tinitial$ be large, that 
						\ary \label{eq rangeoflambdaus}
						 \lambda_u = e^{\cO(1)} \mu^u ( \kappa_{a'}^{-1}(\fq''),  \tinitial  )  \geq e^{ \lambda_g \tinitial} \  \ \mbox{ and } \ \ 	 \lambda_s = e^{\cO(1)} \mu^s (\kappa_{a'}^{-1}(\fq''), \tinitial ) \leq e^{- \lambda_g \tinitial}.
						\eary
						We can use \eqref{eq expansionofDwGw} and \eqref{eq Upsilon} to simplify \eqref{eq expressionI}:
						\begin{align}
						\label{eq Gw}
						  G_w( \tw + w'', \tz) &=  \Upsilon^{-1}(\tw + w'' - \fw'' ) +    G_w(\fq'') + \cO( \eta_{\cI}^{-1 + c}), \\
						  \label{eq G'w}
						   G'_w( \tw' + w'', \tz') &=  \Upsilon^{-1}(\tw' + w'' - \fw'' ) +    G'_w(\fq'') + \cO( \eta_{\cI}^{-1 + c}).
						\end{align}
						Thus, up to a negligible error of size $\cO(\eta_{\cI}^{-c} I_3)$, we may replace the factor $ \exp(i \xi  \cdot G_w(\tw + w'', \tz) - i \xi' \cdot G'_w(\tw' + w'', \tz') )  $ in  \eqref{eq expressionI} (recall that $P = (\xi, \eta)$ and $P' = (\xi', \eta')$)	by $$\exp(i \xi \cdot [\Upsilon^{-1}(\tw + w'' - \fw'' ) +    G_w(\fq'') ]  - i \xi' \cdot [ \Upsilon^{-1}(\tw' + w'' - \fw'' ) +    G'_w(\fq'') ] ).$$
					    We apply a change of variables to the resulting expression,
						replacing $\xi, \xi'$ and $\xi''$ respectively  by 
						\begin{align} \label{eq xiandzeta}
						 \zeta   +      \eta \estar_{a}(\eta, \fq \mid q) ),   \zeta'  +     \eta'  \estar_{a}(\eta', \fq' \mid q')
						\mbox{ and }  \zeta''   +     \eta''  \estar_{a'}(\eta'', \fq'' \mid  q'').
						\end{align}
						By writing $P'' = (\xi'', \eta'') = ( \zeta''   +      \eta''  \estar_{a'}(\eta'', \fq'' \mid  q''), \eta'' )$,
						we may denote
						\aryst
						 \widetilde\Psi''(q'', \zeta'', \eta'' \mid \fq'') :=   \psi'( q'', \zeta''   +      \eta''  \estar_{a'}(\eta'', \fq'' \mid  q'') , \eta'' \mid \fq'' ).
						\earyst
							Note that $	 \widetilde\Psi'' $ is independent of $q''$. Thus in the following we will simply abbreviate $\widetilde\Psi''  (q'', \zeta'', \eta'' \mid \fq'' ) $ as $\widetilde\Psi''( \zeta'', \eta'' \mid \fq'')$ instead.
							We define $\widetilde\Psi ( \zeta, \eta \mid \fq )$ in a similar way.

Up to a negligible error of size $\cO(\eta_{\cI}^{-c} I_3)$, we may substitute in \eqref{eq expressioncKtypeIII}  the term $I$ (defined in \eqref{eq expressionI})  by $I_2$, which is the abbreviation for
		\begin{align}   \label{eq threelineexpressionofI} 
			&   \exp(iS)      \overline{\widetilde{\phi}_{a'}( \eta'', q'' \mid \fq''  \mid \tw' + w'', \tz') }   \overline{ \widetilde{\phi}^{\dagger}_{a}( \eta', q' \mid \fq' \mid G'(\tw'+ w'', \tz'))  }   \\
			&  \widetilde{\phi}_{a'}( \eta'', q'' \mid \fq''  \mid \tw + w'', \tz)    \widetilde{\phi}^{\dagger}_{a}( \eta,  q \mid \fq  \mid G(\tw + w'', \tz))      \nonumber \\
			&      (\widetilde\Psi'' )^2( \zeta'', \eta'' \mid \fq'') \widetilde\Psi (  \zeta, \eta \mid \fq)  \widetilde\Psi ( \zeta', \eta'  \mid \fq'  )  \cdot   \rho(G'(\tw' + w'', \tz')) \rho(G(\tw + w'', \tz)) \nonumber
		\end{align}
		where
		\begin{align} \label{def S}
		S =&    \zeta'' \cdot (\tw' - \tw) +   \eta'' (\tz' - \tz)   	   +   (\zeta   +     \eta \estar_{a}(\eta, \fq \mid q ) ) \cdot  (  \Upsilon^{-1}(\tw + w'' - \fw'' ) +    G_w(\fq'') )  \\
	&	  -   (\zeta'  +     \eta'  \estar_{a}(\eta', \fq' \mid q')) \cdot  (\Upsilon^{-1}(\tw' + w'' - \fw'' ) +    G'_w(\fq'') )     \nonumber  \\
		&     +  (\eta  G_z(\tw + w'', \tz) -   \eta'  G'_z(\tw' + w'', \tz') )      +    \eta''  \estar_{a'}(\eta'', \fq'' \mid  q'') \cdot (\tw' - \tw).    \nonumber 
		\end{align}

	We will also use the following lemma.
	\begin{lemma}  \label{lem oscpwG}
		Given \emph{$(\eta, \fq), (\eta', \fq')$, $(\eta'', \fq'')$},   $\tq = (\tw, \tz)$, $\tq' = (\tw', \tz')$,   we have the following:
		For every $y''$,  there exists {\em $L  \in \R$} such that
		\aryst
		\partial_{x''} ( G_z(\tw + w'', \tz) -  G'_z(\tw' + w'', \tz') ) = L  +  \cO(  \eta_{\cI}^{-1/2 - c} );
		\earyst
		For every $x''$,  there exists  {\em $L'   \in \R$} such that
		\aryst
		\partial_{y''} ( G_z(\tw + w'', \tz) -  G'_z(\tw' + w'', \tz') ) = L' + \cO(   \eta_{\cI}^{-1/2 - c} ).
		\earyst 
	\end{lemma}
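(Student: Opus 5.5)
The plan is to write $G = H_{a,\fq,\eta}\circ G^*$ and $G' = H_{a,\fq',\eta'}\circ G^*$ with $G^* = F^{-1} H_{a',\fq'',\eta''}^{-1}$. Since $H_{a,\fq,\eta}(\tw,\tz) = (\tw, \tz + d_{a,\fq,\eta}(\tw))$, we get
\[
G_z(\tq_1) - G'_z(\tq_2) = G^*_z(\tq_1) - G^*_z(\tq_2) + d_{a,\fq,\eta}(G^*_w(\tq_1)) - d_{a,\fq',\eta'}(G^*_w(\tq_2)),
\]
where $\tq_1 = (\tw+w'',\tz)$ and $\tq_2 = (\tw'+w'',\tz')$. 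We then differentiate in $x''$ with $y''$ fixed; the $y''$ case is the same with the roles of stable and unstable exchanged, and of $\Upsilon^{-1}$'s two eigenvalues swapped. The constant $L$ is allowed to depend on everything except $x''$. So the task is to show that, as a function of $x''$, the derivative is constant up to $\cO(\eta_{\cI}^{-1/2-c})$. Equivalently, the second derivative $\partial_{x''}^2$ of the difference, integrated over the relevant range of $x''$, is small.

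First, the $x''$-range. By \eqref{eq parameterrange8} and \eqref{eq parameterrange5}, $w''$ varies in a ball of radius $\lesssim \extra^{-1} r_{\cI'}$, while the points $\tq_1, \tq_2$ stay in $B(\fq'', C\extra^{-2} r_{\cI'})$. There \eqref{eq D3ofGbound} controls third derivatives by $\eta_{\cI}^{c(\esmall)}$. The target error $\eta_{\cI}^{-1/2-c}$ is much smaller than $r_{\cI}$, so a crude Taylor estimate is not enough. Instead I would use the geometry. The $\partial_{x''}$ direction is the unstable-like direction at $\fq''$, and $G^*$ contracts it by $\lambda_u^{-1}$ (see \eqref{eq Upsilon}). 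In the chart $H_{a',\fq'',\eta''}$, the second derivative $\partial_{x''}^2 G_z$ is governed by Proposition \ref{lem derivativeboundsforhatF}\eqref{itm 2inL42}, applied to $\hat F^{-1}$ via Remark \ref{rema worksforinverseofF}. It is of size $\cO(\tinitial^2 r^{-1}\Delta)$, and the same holds for $G'_z$. The key point is that the pure second derivative $\partial_x^2$ in the normal central chart is not merely bounded by $r^{-1}\Delta$ but is $\cO(\eta^{-1/4})$ by \eqref{eq 2ndpartialtildeF}. Only the mixed derivative $\partial_x\partial_y$ carries the size $r^{-1}\Delta$. Hence $\partial_{x''}^2(G_z - G'_z) = \cO(\eta_{\cI}^{-1/4})$ plus contributions from the $d$-terms.

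Second, the $d$-terms. We have
\[
\partial_{x''}^2\, d_{a,\fq,\eta}(G^*_w(\tq_1)) = D^2 d(\partial_x G^*_w, \partial_x G^*_w) + Dd \cdot \partial_x^2 G^*_w .
\]
By \eqref{eq expressionofHawzeta} and \eqref{eq Cbound}, $D^2 d = \cO(\eta^{c(\esmall)})$. Moreover $\partial_x G^*_w = \cO(\lambda_u^{-1})$, and $Dd$ is Hölder-close to $\estar_a$. The difference between the $\fq$ and $\fq'$ versions is controlled by the Hölder continuity of $\estar_a$ over distances $\extra^{-3} r_{\cI}$, using \eqref{eq parameterrange0}. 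The linear parts $\estar_a(\fq)\cdot(G^*_w - \fw)$ also contribute terms linear in $x''$, and these are absorbed into $L$.

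Integrating over an $x''$-interval of length $\lesssim \extra^{-1} r_{\cI}\sim \eta_{\cI}^{-1/2 \pm c(\esmall)}$ gives variation $\eta_{\cI}^{-1/4}\cdot \eta_{\cI}^{-1/2+c}$, which is within $\eta_{\cI}^{-1/2-c}$. The main obstacle is justifying the refined bound on $\partial_{x''}^2 G_z$: Proposition \ref{lem derivativeboundsforhatF} states only the full Hessian bound. I would reopen its proof, namely \eqref{eq 2ndpartialtildeF} together with the $\tor^\dagger$-balanced chart, to extract the pure-$xx$ smallness for the inverse map. I would also check that the two different base charts $H_{a,\fq,\eta}$ and $H_{a,\fq',\eta'}$ introduce only quadratic-form differences whose $xx$ component is small, using Lemma \ref{lem torsionareclose} and the temperate property in Lemma \ref{lem adaptedscaleistame}.
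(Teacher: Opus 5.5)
Your route (bound $\partial_{x''}^2$ of the difference pointwise, then integrate over the $x''$-range) differs from the paper's. As written, it has a gap at exactly the step you flag as the main obstacle.

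The paper never goes to second order. It applies the invariance identity \eqref{eq pullbackofestarinmodifiedchart} to $G=\hat F^{-1}$:
\[
\partial_{w''}G_z(\tq_1)=\estar_{a'}(\eta'',\fq''\mid\tq_1)\big[\partial_zG_z+\estar_a(\eta,\fq\mid G(\tq_1))\cdot\partial_zG_w\big]-\estar_a(\eta,\fq\mid G(\tq_1))\,\partial_wG_w .
\]
It then replaces $\partial_zG_z$ by a constant and $\partial_wG_w$ by $\Upsilon^{-1}$. This is cheap, because $\estar$ is only of size $\Delta$ in these charts. The result is $\partial_{x''}G_z=\theta^u_{a'}(\eta'',\fq''\mid\tq_1)\,\partial_zG^*_z-\lambda_u^{-1}\theta^u_a(\eta,\fq\mid G(\tq_1))+\cO(\eta_{\cI}^{-1/2-c})$. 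Finally it uses \eqref{eq thetaualongu1} through Proposition \ref{lem mainpropertyofestar}: $\theta^u$ moves by only $\cO(\eta_{\cI}^{-1/2-c})$ along the $x$-direction. So the main term depends on $y''$ but not on $x''$. This is a difference estimate on a merely H\"older quantity and needs no Hessian information.

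Your argument instead needs $\partial_{x''}^2G_z,\ \partial_{x''}^2G'_z=\cO(\eta_{\cI}^{-1/4})$ on the whole range, and this is not in the paper. The estimate \eqref{eq 2ndpartialtildeF} concerns $\tilde F$ for $f^{\tinitial}$ in the rectified normal coordinates $\imath_{p,\langle\eta\rangle}$, not the normal central chart.

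The transfer works only because of the normalization $A_{a'}(\fq'')=A_a(\fq)=A_a(\fq')=\mathrm{Id}$ made just before the lemma. You cite \eqref{eq Upsilon}, which presupposes it, but your argument has to use it explicitly. Since $\partial_wR_w\approx A$, the direction $\partial_{x''}$ corresponds to $Ae_1$ in normal coordinates. If $A$ had an off-diagonal entry, $\partial_{x''}^2G_z$ would contain a multiple of the mixed derivative $\partial_x\partial_y$ of the $z$-component (the torsion mismatch). That derivative is only bounded by $\tinitial^2r^{-1}\Delta$, which does not decay in $\eta$. It can produce a variation of order $\Delta\sim\eta_{\cI}^{-1/2\pm c(\esmall)}$ already over a length $r_{\cI'}$, which exceeds the target. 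So lying in the unstable cone is not enough.

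You also need the $f^{-\tinitial}$ version of \eqref{eq 2ndpartialtildeF}. This comes from $F_p(x,0,0)=(\mu^u(p,1)x,0,0)$ together with the H\"older dependence of $p\mapsto\imath_p$ in $C^3$ (Proposition \ref{prop normalcoordinatesystem}). With these ingredients the second-order route closes.

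Your paragraph on the $d$-terms then double-counts, since $G_z$ and $G'_z$ already contain $d_{a,\fq,\eta}$ and $d_{a,\fq',\eta'}$. If you instead argue through $G^*$, the part $G^*_z(\tq_1)-G^*_z(\tq_2)$ is in fact handled by the crude Taylor bound with \eqref{eq D3ofGbound}. But the $d$-part does not close with your estimates. The bound $\|D^2d\|=\cO(\eta^{c(\esmall)})$ from \eqref{eq Cbound} contributes about $\lambda_u^{-2}\eta^{c(\esmall)}$ to the second derivative, hence $\eta_{\cI}^{-1/2+c(\esmall)}$ after integration.

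What is needed there is that the $xx$-entries of $C_a(\eta,\fq)$ and $C_a(\eta',\fq')$ nearly agree. This holds because, in the proof of Proposition \ref{prop decompositionofkappaapeta}, $\eta$ enters $C_a$ only through the $\tor^{\dagger}xy$ term, which is off-diagonal once $A=\mathrm{Id}$. Lemma \ref{lem torsionareclose} cannot supply this, since torsion differences are of order $\flu/\rho$ and do not decay.
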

	\begin{proof}    
		We detail the proof of the first equality. The second one is completely analogous.
		
		By \eqref{eq pullbackofestarinmodifiedchart} and the comment below it, we have
		\begin{align*}
			& \partial_{w''} ( G_z(\tw + w'', \tz) ) 
			= \estar_{a'}(\eta'', \fw'', \fz''  \mid \tw + w'', \tz) \big [  \partial_z G_z(\tw + w'', \tz) + \\
			&  \estar_a(\eta, \fw, \fz \mid G(\tw + w'',  \tz))  \cdot \partial_zG_w(\tw + w'',  \tz ) \big ] - \estar_{a}( \eta, \fw, \fz \mid G(\tw + w'', \tz) ) \partial_w G_w(\tw + w'', \tz).
		\end{align*}
		By \eqref{eq expansionofDwGw} and \eqref{eq D3ofGbound}, the above expression equals to
		\aryst
		\estar_{a'}(\eta'', \fw'', \fz''  \mid \tw + w'', \tz)  \partial_z G^*_z( w_{\cI'},  z_{\cI'})  - \estar_{a}( \eta, \fw, \fz \mid G(\tw + w'', \tz) ) \Upsilon^{-1}    + \cO(\eta_{\cI}^{-1/2-c}).
		\earyst
		Then by Lemma \ref{lem mainapproximation} and Proposition \ref{lem mainpropertyofestar}, we deduce
		\begin{align*}
		\partial_{x''} ( G_z(\tw + w'', \tz) ) =& \theta^u_{a'}(\eta'', \fw'', \fz''  \mid \tx + \fx'', \ty + y'', \tz) \partial_z G^*_z( w_{\cI'},  z_{\cI'}) \\
	& 	-   \lambda_u^{-1}  \theta^u_{a}( \eta, \fw, \fz \mid G( \tx + \fx'', \ty + y'', \tz) )   + \cO(\eta_{\cI}^{-1/2-c}).   
		\end{align*}
		We have a similar expression for $\partial_{x''} ( G'_z(\tw' + w'', \tz') )$.
		Since the main term on the right hand side above is independent of $x''$,
		this completes the proof.
	\end{proof}

						Now we continue with the proof of Lemma \ref{lem hp2}.
						As in \cite{Tsu}, we divide the proof   into two cases.

						\smallskip
						\noindent{\bf Case I.}
						We assume that $r_{\cI} \hDelta_{\cI} \leq    e^{ 150 \deltasharp \tinitial }$. 
			In this case, we will exploit the transversality condition $({\bf NI})_{\sigmaNI, \CNI}$ in Definition \ref{def NIrho}.
		  By symmetry, to prove \eqref{eq cI'Kaa'cI} we are free to replace $F$ by $F^{-1}$ if necessary.  
Then by \eqref{lem adaptedscalefluctuation},   we may assume that 
\ary \label{eq factorbeforethetas}
| \eta_{\cI'} | r_ {\cI'}  \flu^s( \kappa_{a'}^{-1}( \frq'_0  ),    r_ {\cI'}) \sim 1.
\eary

Note that by \eqref{eq partialwFz} in Proposition \ref{lem derivativeboundsforhatF}, Remark \ref{rema worksforinverseofF}
 and (Range of parameters), 
 we have
\aryst
&&  | ( \eta  G_z(\tw + w'', \tz)  - \eta_{\cI}  G_z(\tw + w'', \tz) ) -  ( \eta  G_z(\tw + \fw'', \tz)  - \eta_{\cI}  G_z(\tw + \fw'', \tz ) )   | \\
&\leq& | \eta   - \eta_{\cI}  | | G_z(\tw + w'', \tz)  -  G_z(  \tw + \fw'', \tz ) | \lesssim_{\tinitial, \esmall}   \eta_{\cI}^{\beta_1} \cdot  r_{\cI} \cdot    \eta_{\cI}^{-1/2 + c(\esmall)} \lesssim \eta_{\cI}^{-  c}.
\earyst
We have a similar inequality as above if we replace $(G_z, \eta, \tw, \tz)$ by $(G'_z, \eta', \tw', \tz')$.
  We also recall that  by our hypothesis $A_{a'}(\fw'', \fz'') = {\rm Id}$,   and by Proposition \ref{lem mainpropertyofestar}, \eqref{eq parameterrange8} and Lemma \ref{lem mainapproximation}, we deduce
\aryst
\theta^{u}_{a'}(\eta'', \fq'' \mid q'') = \theta^{u}_{a'}(\eta'', \fq'' \mid \fx'', y'', \fz'') + \cO(\eta_{\cI}^{- 1/2 - c}), \\
\theta^{s}_{a'}(\eta'', \fq'' \mid q'') = \theta^{s}_{a'}(\eta'', \fq'' \mid x'', \fy'', \fz'') + \cO(\eta_{\cI}^{- 1/2 - c}).
\earyst
 By Lemma \ref{lem oscpwG}, for $L = L( \eta, \fq, \cdots, \tq', y'') $ we have 
\aryst
 \partial_{x''}( \exp(i   \eta_{\cI} ( G_z(\tw + w'', \tz) -    G'_z(\tw' + w'', \tz') - L  x'' ) ) ) = \cO(\eta_{\cI}^{ - 1/2 - c}).
\earyst
By the above approximations, we may rewrite   \eqref{eq threelineexpressionofI} as $\exp(i Q_0 ) \exp( i Q_1)  Q + \cO( |\eta_{\cI}|^{-c} I_3)$  
						where  $Q_0$ is a real-valued function independent of $x''$,   and 
						\aryst
						Q_1 &=&    \eta''  \theta_{a'}^s(\eta'', \fq' \mid  x'', \fy'',  \fz'') \cdot (\ty' - \ty)  \nonumber  \\ 
						&&    +    [( \zeta +      \eta \estar_{a}(\eta, \fq \mid q) ) - (\zeta' +      \eta' \estar_{a}(\eta', \fq' \mid q')) ] \Upsilon^{-1}  w''   +  \eta_{\cI} L   x'', \\
						Q &=&   \widetilde{\phi}_{a'}( \eta'', q'' \mid \fq''  \mid \tw + w'', \tz)  \overline{\widetilde{\phi}_{a'}( \eta'', q'' \mid \fq''  \mid \tw' + w'', \tz') }   \nonumber \\
						&& \overline{ \widetilde{\phi}^{\dagger}_{a}( \eta', q' \mid \fq' \mid G'(\tw'+ w'', \tz'))  } \widetilde{\phi}^{\dagger}_{a}( \eta, q \mid \fq \mid G(\tw + w'', \tz))  \\
						&&   \widetilde\Psi(  \zeta, \eta \mid \fq)      \widetilde\Psi ( \zeta', \eta' \mid \fq')  (\widetilde\Psi'')^2(  \zeta'', \eta'' \mid \fq'')    \nonumber \\
						&&      \rho(G'(\tw' + w'', \tz')) \rho(G(\tw + w'', \tz)).   \nonumber 
		\earyst

						Now we will bound $| \int 	\exp(i Q_0 ) \exp( i Q_1)  Q  dw'' |$ from above. We will partition the domain of integral into  appropriate $x''$-segments (a bounded subset $J$ of $w''$ is a \underline{$x''$-segment} if $J$ is connected and is mapped to a singleton under the projection $(x'', y'') \mapsto y''$). We denote by $|J|$ the length of a $x''$-interval $J$.
						We will bound the integral on each such segment.
			
						
						Recall that by \eqref{eq parameterrange8}, \eqref{eq parameterrange0}, \eqref{eq rrdeltadeltaratio} and \eqref{eq rrdeltadeltaratio0}, we may assume  $| y'' - \fy'' |, | z'' - \fz'' |  \leq \extra^{-1} r_{\cI'}$ and 
  $q'' = (w'', z'') \in \ngb^1_{a'}( \extra \eta'', \fq'' ) \subset B(  \frq'_0,  2  \extra^{- 3}  r_{\cI'} )$.
						
						From now on, we fix some $y'', z''$.
						We let $E_1$ denote
						\ary \label{eq tyty'distance}
		 \{  (\tw, \tw') \mid \tw = (\tx, \ty), \tw' = (\tx', \ty') \mbox{ such that \eqref{eq parameterrange5} holds and }	|\ty - \ty'| \leq  e^{ - \lambdag \tinitial / 8} r_{\cI'} \}.
					\eary 
 Now fix some $( \tw, \tw' ) \notin E_1$. 
			Take an arbitrary $x''$-interval  $J \subset  B( \fw'',  \extra^{-1} r_{\cI'} )$ satisfying
			\ary
			\label{eq lengthboundofJ}
		  |J|	\leq  e^{ \lambdag \tinitial / 4}  r_{\cI'}.
			\eary
 	By considering a line-by-line approximation of $Q$ (the expression of $Q$ has $4$ lines), we see that:
			\enmt
			\item
			by  Theorem \ref{thm DWPT}\eqref{thm DWPT item 2}, \eqref{eq rrdeltadeltaratio},
			the first line of $Q$ has oscillation on $J$ bounded by $\cO(  e^{  \lambdag \tinitial/4} \cdot \extra^{1/2} ) = \cO(e^{- \hat\lambda_g \tinitial/ 2})$ times the first line of \eqref{eq IandI3},
			\item by  Theorem \ref{thm DWPT}\eqref{thm DWPT item 2}, \eqref{eq rrdeltadeltaratio}, \eqref{eq expansionofDwGw}, \eqref{eq Upsilon}, \eqref{eq rangeoflambdaus},
			the second line of $Q$ has oscillation on $J$ bounded by $\cO(  e^{  \lambdag \tinitial / 4}  \cdot e^{- \lambdag \tinitial}  ) = \cO(e^{- \lambdag \tinitial/ 2})$ times the second line of \eqref{eq IandI3},
			\item the third line of $Q$ is independent of $x''$,
			\item by  $	\| D \rho  \| < e^{C  \tinitial}$  and $r_{\cI'} = \cO( |\eta_{\cI'}|^{ - 1/4} )$, the  fourth line of $Q$ has oscillation on  $J$  bounded by $\cO( |\eta_{\cI'}|^{-1/8} ) = \cO(e^{- \hat\lambda_g \tinitial})$ times the fourth line of \eqref{eq IandI3}.
			\eenmt 
			Then   $|Q(x''_0, y'', z'') - Q(x''_1, y'', z'')| =  \cO( e^{- \lambdag \tinitial / 5 } I_3(x''_0, y'', z'') )$ for any $(x''_0, y'' ), (x''_1, y'' ) \in J$.

			In the expression of $Q_1$,		the second  line are linear in $x''$, and
				the first line   is 
				$$h_1^{-1} \theta_{a'}^s(\eta'', \fq'' \mid  x'', \fy'',  \fz'') \ \mbox{ with } \ 	 h_1  =  1/[ \eta''  ( \ty' - \ty )]. $$

			Take an arbitrary $\ponM  \in \kappa_{a'}^{-1}(   \ngb^1_{a'}( \extra \eta'', \fq'' )   )$.   
			Let $n_u(\ponM)$ be given by Definition \ref{def Adapted scales}. 
			By \eqref{eq factorbeforethetas},     Lemma \ref{lem adaptedscaleistame},   we have $r_{\cI'} \cdot  \mu^u(\ponM , n_u(\ponM )) \sim 1$ and $|\eta_{\cI'}| \cdot r_{\cI'} \cdot  \mu^s(\ponM , n_u(\ponM ))  / \mu^c(\ponM, n_u(\ponM )) \sim  1$.
			Take some $n_{\ponM } > 0$ such that  $r_{\cI'} \cdot \mu^u(\ponM , n_{\ponM }) \sim   e^{ - \lambdag \tinitial / 4}$. Then by \eqref{eq almostconversative} and $\esmall \ll_{\tinitial} 1$, we have  
\ary \label{eq choiceofnp}
   | \eta_{\cI'}|  r_{\cI'}  \mu^s(\ponM, n_{\ponM})  / \mu^c(\ponM, n_{\ponM})  \sim   e^{ \lambdag \tinitial / 4}.
\eary
By     \eqref{eq tyty'distance} and \eqref{eq choiceofnp}, we have 
\ary \label{eq rangeofD0}
|h_1|^{-1} \gtrsim   e^{ - \lambdag \tinitial / 8}  r_{\cI'} | \eta_{\cI'} |  \sim    e^{ \lambdag \tinitial / 8}  (\mu^s(\ponM, n_{\ponM}) )^{-1}  \mu^c(\ponM, n_{\ponM}).
\eary

We claim that we can divide any $x''$-interval of length at most $ e^{ \lambdag \tinitial / 4}  r_{\cI'}$ (aside from finite many points) into a disjoint union of intervals $\bigsqcup J_k$, with the following proprety:
		  For each $k$,  we have 
		  $$ J_k = (x_k - l_k / \mu^u(\ponM_k, m_k), x_k + l_k /  \mu^u(\ponM_k, m_k) ) \times \{  \fy'' \} $$
		  with  $x_k \in B( \fx'',    \extra^{-1} r_{\cI'}   )$, $\ponM_k = \kappa^{-1}_{a'}(x_k, \fy'', \fz'')$, 			$1/2 \leq    l_k   \leq 2$,  and  an integer $m_k \geq 1$ satisfying
			\aryst
			|h_1|^{-1}   \mu^s(\ponM_k, m_k)  / \mu^c(\ponM_k, m_k)   \in (   e^{ \lambdag \tinitial / 10},   e^{  \lambdag \tinitial / 8}  ).
			\earyst 
			Indeed, given $\ponM = \kappa_{a'}^{-1}(x'', \fy'', \fz'')$ with some $x'' \in B(\fx'',     \extra^{-1}  r_{\cI'} )$, we may let $m_{\ponM}$ be the smallest integer  such that $|h_1|^{-1}   \mu^s(\ponM, m_{\ponM})  / \mu^c(\ponM, m_{\ponM}) \leq   e^{ \lambdag \tinitial / 9}  <   e^{  \lambdag \tinitial / 8}$. 
			Then by  \eqref{eq rangeofD0} and $r_{\cI'} \cdot \mu^u(\ponM, n_{\ponM}) \sim   e^{ - \lambdag \tinitial / 4}$, we deduce $1/\mu^u(\ponM, m_{\ponM}) \leq   e^{  \lambdag \tinitial / 4}  r_{\cI'}$ by letting $\tinitial$ be large.
			The claim then follows from   a simple covering argument.
			
			Since  $g$ has property $({\bf NI})_{\sigmaNI, \CNI}$,  we see that $f$ has property  $({\bf NI})_{\sigmaNI, \CNI,  e^{ 10 \hat\lambda_g \tinitial}}$ if $\esmall$ is sufficiently small. For each $J_k$,
			we apply Proposition \ref{lem mainpropertyofestar}\eqref{itm NIinequality} for $(x, y, z) = (x_k, \fy'', \fz'')$, $h_1 =  1/[ \eta''  ( \ty' - \ty )] $, $h_0 =  l_k / \mu^u(p_k, m_k)$  and $C_0 = \extra^{-1}$,  $C_1 =   e^{  \lambdag \tinitial/ 10}, \hat{C}_g = e^{7 \hat\lambda_g \tinitial}$, $c = 1$. We obtain
			\aryst
			|J|^{-1} \Big |  \int_J \exp(i Q_1(x'', y'', z'')) dx''  \Big | \lesssim   e^{ - \lambdag \sigmaNI \tinitial / 10}.
			\earyst
		 Then we have the following (recall that $I_3$ is given by \eqref{eq IandI3})
			\aryst
			| \int \exp(i Q_0) \exp(i Q_1) Q dw''|  \lesssim  e^{ - \lambdag \sigmaNI \tinitial / 10}  \int   I_3(w'') dw''.
			\earyst

			On the other hand, by  Cauchy's inequality, we see that for each fixed $\tw' = (\tx', \ty')$,   we have
			$$
			\int_{\small \tw :	 (\tw, \tw') \in E_1  }   \langle      r_{\cI'}^{-1}  (\tw' - \tw)  \rangle^{- \ell} d\tw \lesssim  e^{ - \lambdag \tinitial / 16}      r_{\cI'}^2.  
		$$
 		Then the integral expression of ${\bf K}_2$ in \eqref{eq expressionbfKtypeIII} restricted to the region where $ (\tw, \tw')  \in E_1$   is bounded by   $ e^{ - \lambdag \tinitial / 16}   {\bf K}_2(\hw', \hz' \mid \hw, \hz)$.
		Then by  \eqref{eq expressionbfKtypeIII},  \eqref{eq boundofK} and  Lemma \ref{lem integralbfK},     the left hand of  \eqref{eq integralcK3} is bounded by
$$   ( e^{ -   \sigmaNI \lambda_g \tinitial / 10 } +  e^{ - \lambdag \tinitial / 16}    )   e^{10  \deltasharp \tinitial   }  \hDelta_{\cI}^6 r_{\cI}^{6}.$$  
 	Recall that $ \deltasharp < \min(1, \sigmaNI ) \lambda_g  /20000$ by \eqref{eq deltasharpdefine}, and $\hDelta_{\cI}^6 r_{\cI}^{6} \leq e^{900 \deltasharp \tinitial} $ by hypothesis, we deduce  \eqref{eq integralcK3}.  This concludes the proof in Case I.
			
			\smallskip

	\noindent{\bf Case II.}
Now we assume that $r_{\cI} \hDelta_{\cI}  > e^{ 150 \deltasharp \tinitial }$.

The ranges of $\xi, \xi', \xi''$ could be very large: we only know an upper bound of size $\cO_{\esmall}( \eta^{1/2 +  c(\esmall)} )$  by \eqref{eq Deltaadscale>1}. Thus we need to apply integration-by-parts with respect to these variables. We denote $ D_{\zeta''}  =      e^{   \deltasharp  \tinitial}   \hDelta_{\cI'} \partial_{\zeta''}, \  D_{\zeta} =  e^{    \deltasharp   \tinitial}      \hDelta_{\cI} \partial_{\zeta}  $, and 
\begin{align*}
	\cD_{\zeta''} \phi &=  \frac{1 - i  \langle    e^{    \deltasharp    \tinitial}    \hDelta_{\cI'}   (\tw' - \tw)    ,  D_{\zeta''} \phi \rangle }{\langle   e^{     \deltasharp   \tinitial}  \hDelta_{\cI'}  (\tw' - \tw)     \rangle^2}, \\
	\cD_{\zeta} \phi &=  \frac{1- i  \langle   e^{    \deltasharp   \tinitial}   \hDelta_{\cI}  ( \Upsilon^{-1}(\tw + w'' - \fw'' ) +    G_w(\fq'')  - \hw ),  D_{\zeta} \phi \rangle }{ \langle    e^{   \deltasharp   \tinitial}  \hDelta_{\cI} (  \Upsilon^{-1}(\tw + w'' - \fw'' ) +    G_w(\fq'')  - \hw )     \rangle^2}.
\end{align*}
We define $D_{\zeta'}$ and  $\cD_{\zeta'}$  in a similar way. 
The adjoint of $\cD_{\zeta''}$ writes 
\aryst
\cD^*_{\zeta''} \phi =  \frac{1 -  i  \langle   e^{    \deltasharp   \tinitial}   \hDelta_{\cI'}   (\tw' - \tw)    ,  D_{\zeta''} \phi \rangle }{ \langle   e^{    \deltasharp    \tinitial}  \hDelta_{\cI'}  (\tw' - \tw)     \rangle^2}.
\earyst
We have parallel expressions for $\cD^{*}_{\zeta}$ and $\cD^{*}_{\zeta'}$   as well.
By  the second equality in \eqref{eq rrdeltadeltaratio}, we see that   
\begin{align}
	  \label{eq extrafactor''} 
	& | ( \cD^*_{\zeta''})^{l} (\widetilde\Psi'')^2 ( \zeta'', \eta'' \mid \fq'' ) | \leq C(l)   \tinitial^{C l }  \langle    e^{    \deltasharp  \tinitial}    \hDelta_{\cI'}  (\tw' - \tw)  \rangle^{- l} 1_{\supp(\widetilde\Psi'')}, \\
	\label{eq extrafactor}
	&   |  ( \cD^*_{\zeta})^{l}   \widetilde\Psi(\zeta, \eta \mid \fq ) | \leq C(l)  \tinitial^{C l}   \langle    e^{    \deltasharp   \tinitial}   \hDelta_{\cI} (  \Upsilon^{-1}(\tw + w'' - \fw'' ) +    G_w(\fq'')  - \hw )     \rangle^{- l} 1_{\supp(\widetilde\Psi)}.
\end{align}
We have analogous estimates for $  | ( \cD^*_{\zeta'})^{l}  \widetilde\Psi( \zeta', \eta' \mid \fq' ) |$.

Denote $\ponM  = \kappa_a^{-1}(\fq)$, $\ponM'  = \kappa_{a'}^{-1}(\fq')$ and $\ponM''  = \kappa_{a'}^{-1}(\fq'')$ .
Recall that Proposition \ref{lem mainpropertyofestar},    \eqref{eq parameterrange8}, Lemma \ref{lem mainapproximation} give
\aryst 
\theta^u_{a'}(\eta'', \fq'' \mid q'') = D_{a'}( \fq'' )^{-1}  \vartheta^{u}_{\ponM'', \langle \eta'' \rangle}(0, y'' - \fy'', 0) + \cO(\eta_{\cI'}^{- 1/2 - c}).
\earyst
We have similar expressions for $\theta^u_{a}(\eta, \fq \mid q)$ and  $\theta^u_{a}(\eta', \fq' \mid q')$.
Let us denote 
$$L'' =   \midtor( \ponM'', \adscale( \langle \eta'' \rangle, \ponM'') ).$$

	By the above approximation, and by applying integration-by-parts with respect to $\cD_{\zeta''}$, we can rewrite  \eqref{eq threelineexpressionofI} as $	\exp(i Q_0 ) \exp( i Q_1)  Q + \cO( | \eta_{\cI} |^{-c} I_3)$
	where $Q_0$ is a real-valued function independent of $y''$, and 
	\aryst
	Q_1 &=& 
	  [( \zeta_y +        \eta D_{a}( \fq )^{-1}    \vartheta^{s}_{\ponM, \langle \eta \rangle}(x - \fx, 0, 0)  ) - (\zeta'_y +    \eta' D_{a'}( \fq' )^{-1}   \vartheta^{s}_{\ponM', \langle \eta' \rangle}(x' - \fx', 0, 0)  ]    y'' / \lambda_s  +  \nonumber \\ 
	&&   \eta_{\cI} (   G_z(\tw + w'', \tz) -   G'_z(\tw' + w'', \tz')  )   -     \eta'' D_{a'}( \fq'' )^{-1}  (\tx' -  \tx)  L''  y'', \\
	Q &=&  \exp(  i       \eta''  D_{a'}( \fq'' )^{-1} ( \vartheta^{u}_{\ponM'', \langle \eta'' \rangle}(0, y'' - \fy'', 0) \cdot  (\tx' -  \tx)   +  L'' (\tx' -  \tx) y'' ) )  \nonumber \\
	&& \widetilde{\phi}_{a'}( \eta'', q'' \mid \fq''  \mid \tw + w'', \tz)  \overline{\widetilde{\phi}_{a'}( \eta'', q'' \mid \fq''  \mid \tw' + w'', \tz') }   \nonumber \\
	&& \overline{ \widetilde{\phi}^{\dagger}_{a}( \eta', q' \mid \fq' \mid G'(\tw'+ w'', \tz'))  } \widetilde{\phi}^{\dagger}_{a}( \eta, q \mid \fq \mid G(\tw + w'', \tz))  \\
	&&    (\cD^*_{\zeta''})^{\ell}  \widetilde\Psi'' (  \zeta'', \eta'' \mid \fq'')  \widetilde\Psi (  \zeta, \eta \mid \fq )  \widetilde\Psi' (  \zeta', \eta'  \mid \fq'  ) \nonumber \\
	&&  \rho(G'(\tw' + w'', \tz')) \rho(G(\tw + w'', \tz)). \nonumber 
	\earyst

	We will show that
	the integral $|\int \exp(i Q_1) Q dw''|$ is small for a \lq\lq typical choice\rq\rq   of $(\tw, \tw', w,w',\fw,\fw',\fw'')$ (detailed below).
Whenever \eqref{eq parameterrange1}-\eqref{eq parameterrange7} hold, we deduce by Lemma \ref{lem oscpwG}   that
	\begin{align}
	\label{eq P1andR}
	& 	\partial_{y''} Q_1   = R +  \cO(| \eta_{\cI} |^{1/2 - c}) \\
	\mbox{ where } \ 	 & R :=  ( \eta    D_{a}( \fq )^{-1}      \vartheta^{s}_{\ponM, \langle \eta \rangle}(x - \fx, 0, 0) -   \eta' D_{a}( \fq' )^{-1}     \vartheta^{s}_{\ponM', \langle \eta' \rangle}(x' - \fx', 0, 0)  ) / \lambda_s  +   ( \zeta_y  - \zeta'_y )  / \lambda_s  +   \nonumber   \\ 
	& +  	 \eta_{\cI}  \partial_{y''}( G_z(\tx + x'', \ty + \fy'', \tz) -   G'_z(\tx' + x'', \ty' + \fy'', \tz')  )  -    \eta'' D_{a'}( \fq'' )^{-1} L'' (\tx' -  \tx).  \nonumber 
\end{align} 
Note that $R$ is independent of $y''$, and only the first term on the right hand side above depends on $x, x'$.

Note that \eqref{eq Gw}, \eqref{eq G'w} and \eqref{eq parameterrange2} imply in particular that
\ary
\label{eq parameterrange2x}
					&  \|  x  -   \lambda_u^{-1}(\tx + x'' - \fx'' ) +    G_x(\fq'') \|  \lesssim      r_{\cI},  \\
\label{eq parameterrange2x'}
 &	  \|  x'  -   \lambda_u^{-1}(\tx' + x'' - \fx'' ) +    G'_x(\fq'') \|   \lesssim   r_{\cI}.
\eary

	Let us define  
	\aryst
	E_2 =  \{ ( x,  x')   \mid \mbox{  \eqref{eq parameterrange2x} and \eqref{eq parameterrange2x'} hold and }  |  R | \leq  e^{ -  16  \deltasharp \tinitial}  \lambda_s^{-1}    \hDelta_{\cI}  \}.
	\earyst
	We have the following.
	\begin{lemma} \label{lem areaofE}
		For any sufficiently large $\tinitial$,
		for every   \emph{$(\eta, \fq, \eta', \fq', \eta'', \fq'', \tq, \tq', w'', \zeta, \zeta')$},  
		we have $\sup_{x}|\{ x' \mid (x, x') \in E_2 \}|, \sup_{x'}|\{ x \mid (x, x') \in E_2 \}| \lesssim e^{ - 16  \deltasharp \tinitial} r_{\cI}^2$.
	\end{lemma}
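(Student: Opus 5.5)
The plan is to exploit the observation, already made after \eqref{eq P1andR}, that for fixed $(\eta, \fq, \eta', \fq', \eta'', \fq'', \tq, \tq', w'', \zeta, \zeta')$ the quantity $R$ depends on $x'$ only through the single term
\[ -\eta' D_{a}(\fq')^{-1}\vartheta^{s}_{\ponM', \langle \eta' \rangle}(x'-\fx', 0, 0)/\lambda_s , \]
and on $x$ only through the analogous term. By symmetry I treat only $\sup_x|\{x' \mid (x,x')\in E_2\}|$. With $x$ fixed, $E_2$ becomes a sublevel set
\[ \{x' : |\Theta(x') - c_0'| \le e^{-16\deltasharp\tinitial}\lambda_s^{-1}\hDelta_{\cI}\}, \qquad \Theta(x') := \eta' D_a(\fq')^{-1}\vartheta^s_{\ponM',\langle\eta'\rangle}(x'-\fx',0,0)/\lambda_s , \]
with $c_0'$ a constant. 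By \eqref{eq parameterrange2x'}, $x'$ ranges over an interval $I'$ of length $\cO(r_{\cI})$. So it suffices to show that $\Theta$ is close to an affine function of large slope on $I'$.

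First I would expand $\Theta$ via \eqref{lem mainapproximation0}, at the scale $\rho'=\adscale(\langle\eta'\rangle,\ponM')$ with $\rho'\sim r_{\cI}$ by \eqref{eq rrdeltadeltaratio}:
\[ \vartheta^s_{\ponM',\langle\eta'\rangle}(x,0,0) = \midtor(\ponM',\rho')\,x + \tpl^s_{\ponM'}(x) + \cO(\eta_{\cI}^{-1/2-c}). \]
The remainder is negligible after multiplying by $\eta'$, since it is $\cO(\eta_{\cI}^{1/2-c}) \ll e^{-16\deltasharp\tinitial}\hDelta_{\cI}$ as long as $\omegalow\gg_{\tinitial}1$.

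The key use of the Case II hypothesis $r_{\cI}\hDelta_{\cI}>e^{150\deltasharp\tinitial}$ comes next. By \eqref{lem adaptedscalefluctuation} we have $\langle\eta\rangle\rho'\,\flu^{s,u}(\ponM',\rho')\sim 1$, so $\langle\eta\rangle\rho'\Delta\gg e^{150\deltasharp\tinitial}$ forces the maximum in \eqref{def Delta} to be attained by the torsion term:
\[ |\midtor(\ponM',\rho')|\,\rho' \sim \Delta_{a'}(\eta',\fq') \gtrsim \tinitial^{-C}\,\hDelta_{\cI}/\eta_{\cI}, \]
and moreover
\[ \flu^s(\ponM',\rho') \lesssim e^{-150\deltasharp\tinitial}\,|\midtor|\,\rho'. \]
Hence the slope of the linear part of $\Theta$ is $s\sim \tinitial^{\pm C}\lambda_s^{-1}\hDelta_{\cI}/r_{\cI}$, where I use $D_a\sim1$, $\eta'\sim\eta_{\cI}$ and \eqref{eq rrdeltadeltaratio}.

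Then I would cut $I'$ into $\cO(e^{16\deltasharp\tinitial})$ intervals of length $h=e^{-16\deltasharp\tinitial}r_{\cI}$. On each such interval, Lemma \ref{lem mainapproximation1} at scale $h$, together with the scaling $\flu^s(\cdot,h)\lesssim (h/\rho')^{1-c(\esmall)}\flu^s(\cdot,\rho')$ and Lemma \ref{lem torsionareclose} (which lets me replace $\tor^s(\ponM',h)$ by $\tor^s(\ponM',\rho')$ up to a factor $\langle\log\rangle$), shows that $\tpl^s_{\ponM'}$ deviates from an affine function by at most
\[ \lesssim e^{-16(1-c)\deltasharp\tinitial}\,\tinitial\,\flu^s(\ponM',\rho') \ll s h\,\lambda_s\,\eta_{\cI}^{-1}. \]
The extra $e^{-150\deltasharp\tinitial}$ is what absorbs the losses here. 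The slope contribution from $\tor^s$ itself only shifts $\midtor$ by a relative amount $e^{-c\deltasharp\tinitial}$, so on each piece $\Theta$ equals an affine function of slope $\sim s$ plus an error $\ll sh$.

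On a single piece, the sublevel set of width $e^{-16\deltasharp\tinitial}\lambda_s^{-1}\hDelta_{\cI}$ therefore has measure $\lesssim e^{-16\deltasharp\tinitial}\lambda_s^{-1}\hDelta_{\cI}/s + (\text{error})/s \lesssim \tinitial^C e^{-16\deltasharp\tinitial} r_{\cI}$, but such a piece contributes only if it meets the sublevel set. Since the slope on neighbouring pieces is comparable (same sign, same size $s$), the global function is monotone up to errors $\ll sh$ at each junction, so only $\cO(1)$ pieces meet the set. Summing gives the claimed $e^{-16\deltasharp\tinitial}$ gain over the natural length $r_{\cI}$; the $\tinitial^C$ loss is absorbed by relabelling exponents, or by using $e^{-17\deltasharp\tinitial}$ in the cut. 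The normalization of the stated bound, $r_{\cI}^2$, is to be matched with this $r_{\cI}$-scale bound under the paper's conventions for the $x$-variables.

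I expect the main obstacle to be the second step: quantitatively deriving that the torsion term dominates the fluctuation at all scales between $h$ and $\rho'$, with constants uniform in $\ponM'$ along the support. This needs the temperate Lemma \ref{lem adaptedscaleistame} to move between the base points $\fq'$, $\frq_0$ and $\ponM'$, and the $\log\tinitial$ losses in \eqref{eq rrdeltadeltaratio} must stay below the $e^{150\deltasharp\tinitial}$ margin.
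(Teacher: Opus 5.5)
Your strategy is the paper's. With everything else frozen, $R$ depends on $x'$ only through $\eta' D_a(\fq')^{-1}\vartheta^s_{\ponM',\langle\eta'\rangle}(x'-\fx',0,0)/\lambda_s$. The Case II hypothesis together with \eqref{lem adaptedscalefluctuation} forces $|\midtor|\rho'\gg\max(\flu^s,\flu^u)$ and $\hDelta_{\cI}\sim|\midtor|\,r_{\cI}\,|\eta_{\cI}|$. A nearly affine function of slope $\sim\lambda_s^{-1}\hDelta_{\cI}/r_{\cI}$ on an interval of length $\cO(r_{\cI})$ then has sublevel sets of width $e^{-16\deltasharp\tinitial}\lambda_s^{-1}\hDelta_{\cI}$ of measure $\cO(e^{-16\deltasharp\tinitial}r_{\cI})$. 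That is exactly what the paper's proof concludes, so there is nothing to reconcile with the $r_{\cI}^2$.

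However, the expansion step is wrong as written. You take \eqref{lem mainapproximation0} literally, $\vartheta^s_{\ponM',\langle\eta'\rangle}(x,0,0)=\midtor\,x+\tpl^s_{\ponM'}(x)+\dots$, and then assert that the slope $\tor^s$ carried by $\tpl^s$ only shifts $\midtor$ by a small relative amount. Nothing supports that. $\tor^s(p,\rho)$ depends on the choice of the charts $\imath_p$, is only known to be $\cO(\rho^{-c(\esmall)}\langle\log\rho\rangle)$ by \eqref{eq flutorbasicestimate}, and is not controlled by $|\midtor|$; only $\tor^s-\tor^u$ is chart-independent. Nothing prevents $\tor^s\approx-\midtor$, in which case your slope would vanish. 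The correct expansion comes from \eqref{eq varthetauspdef}:
\[ \vartheta^s_{p,\omega}(x,0,0)=\tpl^s_p(x)-\tor^{\dagger}(p,\rho)\,x=\midtor(p,\rho)\,x+\bigl(\tpl^s_p(x)-\tor^s(p,\rho)\,x\bigr), \]
and the bracket is $\cO(\flu^s(p,\rho))$ by Remark \ref{rem:Flu} and Lemma \ref{lem mainapproximation1}. Cancelling $\tor^{\dagger}$ is exactly what the rectification $\imath_{p,\omega}$ is for. Note also that for the last equality in \eqref{lem mainapproximation0} to hold, its middle expression must be read with $\tor^u(p,\rho)y$ and $\tor^s(p,\rho)x$ subtracted.

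Once this is fixed, your subdivision into $e^{16\deltasharp\tinitial}$ pieces and the multi-scale comparison are unnecessary, and the ``main obstacle'' you anticipate disappears. Apply Lemma \ref{lem mainapproximation1} once at scale $\rho'\sim r_{\cI}$; $C_0$ depends only on $\tinitial$, which is admissible since $\esmall\ll_{\tinitial}1$. Then $\Theta$ agrees with an affine function of slope $\eta'\midtor/(D_a(\fq')\lambda_s)$ on the whole interval, up to an error $\lesssim\eta'\lambda_s^{-1}\flu^s(\ponM',\rho')\lesssim\tinitial^{C}e^{-150\deltasharp\tinitial}\lambda_s^{-1}\hDelta_{\cI}$. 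This is already far below the sublevel width, and it is the paper's short argument.

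Your multiscale version also works once the slope is corrected, except that $\cO(\tinitial^{C})$ pieces, not $\cO(1)$, can meet the sublevel set. Like the $\tinitial^{C}$ losses from moving base points via \eqref{eq rrdeltadeltaratio}, this is harmless, and the paper absorbs it silently as well.
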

	\begin{proof}  
	The set of $x'$ satisfying \eqref{eq parameterrange2x'} is   of Lebesgue measure $\cO(  r_{\cI})$.
	By   \eqref{lem adaptedscalefluctuation}, \eqref{def Delta} and  $r_{\cI} \hDelta_{\cI}  > e^{ 150 \deltasharp \tinitial }$, we deduce $| \midtor(p,  r_{\cI}) | r_{\cI}  \gg   \max( \flu^s(p, r_{\cI}),   \flu^u(p, r_{\cI}) )$ and $\hDelta_{\cI} \sim | \midtor(p,  r_{\cI}) | r_{\cI} | \eta_{\cI} |$.
	By   Lemma \ref{lem mainapproximation1}, Lemma \ref{lem mainapproximation} and $r_{\cI} \hDelta_{\cI}  > e^{ 150 \deltasharp \tinitial }$,
	for each fixed $x'$,  the set of $x $ satisfying  \eqref{eq parameterrange2x} and 
		$| R | \leq  e^{ -  16 \deltasharp \tinitial}  \lambda_s^{-1}   \hDelta_{\cI}$ is of Lebesgue measure  $\cO(e^{ -  16 \deltasharp \tinitial  }    r_{\cI})$.  
	\end{proof}
	We denote by $\cK_{E_2}$ the integral in \eqref{eq expressioncKtypeIII} restricted to the parameters satisfying $(x, x') \in E_2$; and denote by $\cK_{E_2^c}$ the integral in \eqref{eq expressioncKtypeIII} restricted to the complement. Clearly, we have $\cK_2 = \cK_{E_2} + \cK_{E_2^c}$.

	Now   we will bound $\cK_{E_2^c}$ from above.
	Fix some $(x, x') \notin E_2$.
	By definition and \eqref{eq P1andR}, we have 
	\ary \label{eq lowerboundx''P1}
	| \partial_{y''} Q_1 | >    e^{ - 17  \deltasharp \tinitial }   \lambda_s^{-1}    \hDelta_{\cI}.
	\eary
	
	In the following we will apply integration-by-part for the integral of $|\int \exp(i Q_1) Q dw''|$ along the $y''$-variable.
	Note that the fourth line of $Q$ is independent of $y''$. 
We see that $y''$-derivative of the fifth line of   $Q$ is bounded by $\cO( e^{C \tinitial} )$; and
	by Theorem \ref{thm DWPT}\eqref{thm DWPT item 2}, $y''$-derivative of the second line of $Q$  is bounded by 
	\begin{align} \label{eq 3rdwrty''} 
  \extra^{1/2}   r_{\cI}^{-1}  \Phi_{a'}(\eta'', \fq'' \mid q'')^2   \widehat \Phi_{a'}(\eta'',  q'' \mid \tw+ w'' , \tz)  \widehat\Phi_{a'}(\eta'', q'' \mid \tw' + w'' , \tz').    
	\end{align}
	Similarly,  $y''$-derivative of the third line of $Q$  is bounded by 
	\begin{align}
	\label{eq 4thwrty''} 
  \lambda_s^{-1}  r_{\cI}^{-1}    \Phi_a(\eta, \fq  \mid q )   \Phi_{a}(\eta', \fq' \mid q')   \widehat  \Phi_{a}(\eta, q \mid G(  \tw + w'' , \tz) ) \widehat \Phi_{a}(\eta', q' \mid G'(   \tw'+ w'' , \tz') ).   
	\end{align}
	We still need to treat the first line of $Q$. For this purpose we will first make some regularizations.

By  \eqref{eq parameterrange5},  \eqref{eq rrdeltadeltaratio}, \eqref{eq rrdeltadeltaratio0}, we have  $| \eta'' (\tx' -  \tx) | \lesssim   \eta_{\cI'} r_{\cI'}$.
The first  line of  $Q$ can be simplified as
$$  \exp(  i     \eta'' D_{a'}( \fq'' )^{-1}  ( \tpl^{u}_{\ponM'' }( y'' - \fy'') -  \tor^{u}(   \ponM'',  \adscale(\langle \eta'' \rangle, \ponM'' ) )  y'' ) \cdot  (\tx' -  \tx) ). $$
By Lemma \ref{lem mainapproximation1}, the above term restricted to each $y''$-segment of length $\lambda_s r_{\cI}$ satisfying \eqref{eq parameterrange8} has  oscillation of size $\cO(\lambda_s^{1/2})$.
Then by  Remark \ref{rem:torfluadscale} and by applying the regularisation argument in \cite[Lemma 6.21]{Tsu} or \cite[p. 137]{BT}, we can rewrite the {\it first  line} of  $Q$ as  $Q_{main} + Q_{err}$ where
	\ary
	 \label{eq Dx''ofQu1} 
 	| \partial_{y''} Q_{main} | \lesssim  \lambda_s^{-1} r_{\cI}^{-1} \ \mbox{ and } \ 
 	| Q_{err} | \lesssim    \lambda_s^{1/2}.
	\eary

 We let $\tilde{Q}_{main}$ be obtained from $Q$ by replacing the first line of $Q$ by $Q_{main}$.   
	Then by \eqref{eq 3rdwrty''},  \eqref{eq 4thwrty''} and \eqref{eq Dx''ofQu1} we deduce
	\ary \label{eq partialy''tildeQ1}
	| \partial_{y''}  \tilde{Q}_{main}| \leq C(\ell)    \lambda_s^{-1}    r_{\cI}^{-1}   \cdot  \langle       e^{   \deltasharp  \tinitial}    \hDelta_{\cI'}  (\tw' - \tw)  \rangle^{- \ell} \cdot I_3
	\eary
		where $I_3$ is given by \eqref{eq IandI3}.
	We let  $	\tilde{Q}_{err} = Q - \tilde{Q}_{main}$.
By the second inequality in \eqref{eq Dx''ofQu1}, we see that
	\ary \label{eq tildeQ2upperbound}
	| \tilde{Q}_{err} | \leq C(\ell)  \lambda_s^{1/2}     \cdot  \langle        e^{   \deltasharp  \tinitial}  \hDelta_{\cI'}  (\tw' - \tw)  \rangle^{- \ell} \cdot  I_3.
	\eary

	By applying integration-by-part to $\int  \exp(i Q_0 ) \exp(i Q_1 ) \tilde{Q}_{main} dx'' dy''$,
	we see that
	\begin{align} \label{eq QtoQ0}
&	\int \exp(i Q_0 ) \exp(i Q_1 ) Q dw'' = \int   \exp(i Q_0) \exp(i Q_1 ) \tilde{Q}  dx'' dy'' \\
 \mbox{where } \ \  &	 \tilde{Q} = \frac{\partial_{y''}\tilde{Q}_{main}}{\partial_{y''} Q_1} + \tilde{Q}_{err}. \nonumber 
	\end{align}
	Then by \eqref{eq partialy''tildeQ1}, \eqref{eq  tildeQ2upperbound}, \eqref{eq lowerboundx''P1}, and by  $ \hDelta_{\cI} r_{\cI} >   e^{150 \deltasharp \tinitial }$  (the hypothesis of Case II),  we have 
	\aryst
	|\tilde{Q}| \leq C(\ell) (  \frac{    \lambda_s^{-1}    r_{\cI}^{-1}  }{   e^{ -  17  \deltasharp \tinitial}  \lambda_s^{-1}   \hDelta_{\cI} }     +    \lambda_s^{1/2}    )    \langle   e^{  \deltasharp    \tinitial}  \hDelta_{\cI'}  (\tw' - \tw)  \rangle^{- \ell}  I_3 \leq C(\ell)  e^{-  130 \deltasharp \tinitial}    \langle   e^{  \deltasharp   \tinitial}   \hDelta_{\cI'}  (\tw' - \tw)  \rangle^{- \ell} I_3.
	\earyst

	Note that $\tilde{Q}_{main}$  and $\tilde{Q}_{err}$ depend on $\zeta$ and $\zeta'$ only through $\widetilde\Psi$.
	Moreover, the dependences of $\partial_{y''} Q_1$ on $\zeta$ and $\zeta'$ are rather simple: we see that, by \eqref{eq lowerboundx''P1}, for any $k, k' \geq 0$  
	\aryst
	| \partial_{\zeta}^k    \partial_{\zeta'}^{k'} ( \partial_{y''} Q_1)^{-1} | \leq    C(k, k')  |  \lambda_s | ^{-  k -  k'} |  \partial_{y''} Q_1 | ^{-  k -  k' - 1} \leq C(k, k')  |  \partial_{y''} Q_1 | ^{ - 1}    e^{ 18 (k + k')  \deltasharp \tinitial}  (    e^{   \deltasharp    \tinitial} \hDelta_{\cI} )^{- k - k'}.
	\earyst
	Then  for any $(k,  k') \in \N^2 \setminus \{ (0, 0) \}$,  we have 
	\aryst
	| \partial_{\zeta}^k    \partial_{\zeta'}^{k'} \tilde{Q} |   \leq	C(k, k', \ell)    e^{ - 130 \deltasharp \tinitial}     e^{ 18 (k + k') \deltasharp   \tinitial}    (     e^{  \deltasharp   \tinitial} \hDelta_{\cI} )^{- k - k'}  \langle    e^{  \deltasharp     \tinitial}   \hDelta_{\cI'}  (\tw' - \tw)  \rangle^{- \ell} \cdot I_3.
	\earyst
	In summary, for any $(k,  k') \in \N^2 \setminus \{ (0, 0) \}$, we have
	\aryst
	|   \partial_{\zeta}^k    \partial_{\zeta'}^{k'}   \tilde{Q} |  \leq C(k, k', \ell)   e^{ - 130 \deltasharp \tinitial}    e^{ 18 (k + k') \deltasharp   \tinitial}  (  e^{   \deltasharp    \tinitial} \hDelta_{\cI}  )^{- k - k'} \langle   e^{   \deltasharp    \tinitial}  \hDelta_{\cI'}  (\tw' - \tw)  \rangle^{- \ell} \cdot I_3.
	\earyst
We deduce
\begin{align}
	 \label{eq fouroperatorsandQ0}  
&	| ( \cD^*_{\zeta})^{3}   (\cD^*_{\zeta'})^{3}     \tilde{Q} | \leq  C( \ell  )   e^{ - 21 \deltasharp \tinitial }    I_4 
	\end{align}
	where
	\begin{align}
		\label{eq defofI4} 
		I_4 & :=  \langle   e^{   \deltasharp  \tinitial}   \hDelta_{\cI'}  (\tw' - \tw)  \rangle^{- \ell} \cdot     	\langle   e^{   \deltasharp  \tinitial}  \hDelta_{\cI} (  \Upsilon^{-1}(\tw + w'' - \fw'' ) +    G_w(\fw'', \fz'')  - \hw )     \rangle^{-\ell} \nonumber  \\
		&  	\langle   e^{   \deltasharp  \tinitial}   \hDelta_{\cI} (  \Upsilon^{-1}(\tw' + w'' - \fw'' ) +    G'_w(\fw'', \fz'')  - \hw' )     \rangle^{-\ell}  \cdot I_3.  
	\end{align}
	We see that the extra factors come from \eqref{eq extrafactor} and \eqref{eq extrafactor''}. 
  
	After the above preparations, we are now ready to conclude the proof.
	By \eqref{eq QtoQ0}, we may replace the term $I$ in \eqref{eq expressioncKtypeIII} by $\exp(i Q_0) \exp( i Q_1) \tilde{Q}$ up to an error of size $\cO(| \eta_{\cI} |^{-c} I_3)$. Then we apply to the resulting expression integration-by-parts with respect to $\cD_{\zeta}$ and $\cD_{\zeta'}$ several times, we see that by \eqref{eq fouroperatorsandQ0}, up to an error of size $\cO( | \eta_{\cI} |^{-c}  \int I_4 dw'')$, we can replace   $\int I dw''$ in \eqref{eq expressioncKtypeIII}  by
	\aryst
	\int   \exp(i Q_0) \exp(i Q_1)  ( \cD^*_{\zeta})^{\ell}   (\cD^*_{\zeta'})^{\ell}  \tilde{Q}   dw''.
	\earyst
	Consequently, we have $	| \cK_2(\hq' \mid \hq) | 	\lesssim  {\bf K}'_2(\hq' \mid \hq)$ where ${\bf K}'_2$ is defined by replacing $I_3$ by $I_4$ in \eqref{eq expressionbfKtypeIII}.
	Following the proof of Lemma \ref{lem integralbfK}, we see that  
	\ary
	\sup_{\hq'} \int  {\bf K}'_2(\hq' \mid \hq)   d\hq ,  \ 
	\sup_{\hq} \int  {\bf K}'_2(\hq' \mid \hq)  d\hq'  \leq  C e^{ 4\deltasharp    \tinitial}.
	\eary
	Then by comparing the expressions in \eqref{eq fouroperatorsandQ0}, \eqref{eq expressioncKtypeIII} and \eqref{eq expressionbfKtypeIII},    we have 	$	| \cK_{E_2^c} | \leq C e^{ -  16 \deltasharp \tinitial }  {\bf K}'_2 $.

		By   Lemma \ref{lem areaofE} and   Cauchy's inequality in the application of  Lemma \ref{lem intPhiadwdz}, we may follow the proof of Lemma \ref{lem integralbfK} to deduce $	| \cK_{E_2} | \leq C e^{ - 16 \deltasharp \tinitial }  {\bf K}'_2 $. We deduce  \eqref{eq integralcK3} with any $\kappa_{\sharp} = 4 \deltasharp $.  The proof of Case II is concluded.
					\end{proof}

					\appendix

					\section{Normal coordinate system} \label{sec normalcoordinate} 
					
					\begin{proof}[Proof of Proposition \ref{prop normalcoordinatesystem}] 
						
				   Consider the vector bundle $\cE^u = TM / E^u$.
				   Following the notation in  \cite{EPZ}, we see that $\cE^u$ is smooth along $W^u$ (see \cite[Section 3.1]{EPZ}),
				   and the derivative map $Df$ induces a bundle automorphism $F_u : \cE^u \to \cE^u$ over $f: M \to M$. Since $f$ is partially hyperbolic, then   \cite[Prop 3.21]{EPZ} gives for each $p \in M$ smooth sections $V_p: W^u(p) \to \cE^u$ and $V_p^{\perp} : W^u(p) \to \cE^u$, such that:
				   \enmt
				   \item for every $t \in (- \|Df\|, \|Df\|)$, $(V_p(\imath^u_p(t)), V^{\perp}_p(\imath^u_p(t)))$ forms a basis of $\cE^u_{\imath^u_p(t)}$,
				   \item    for some  affine function $r_p$, the matrix $A(p, t)$ associated to $F_u$ between the ordered bases $(V_p(\imath^u_p(t)), V^{\perp}_p(\imath^u_p(t)))$ and $(V_{f(p)}(\imath^u_{f(p)}( \mu^u(p, 1)  t )), V^{\perp}_{f(p)}(\imath^u_{f(p)}(\mu^u(p, 1)  t)))$ is of form
				   \aryst
				   \begin{bmatrix}
				   \mu^c(p, 1) & r_p(t) \\ 0 & \mu^s(p, 1)
				   \end{bmatrix}.
				   \earyst
				   \eenmt
				   As $f$ is close to a volume preserving map, we may assume that $r_p$ is linear for every $p$. In particular, we have 
				   $V_{p}(0) \in (E^c \oplus E^u)/E^u$ and $V^{\perp}_p(0) \in (E^s \oplus E^u)/E^u$. Without loss of generality, we may assume that $V_p(0)$ and $V^{\perp}_p(0)$ are represented by unit vectors in $E^c(p)$ and $E^s(p)$ respectively.
				   
				   Analogously, we may consider $\cE^s := TM / E^s$ and the quotient map $F_s$ of $Df^{-1}$ on $\cE^s$.
				    Then we may construct $(U_p, U^{\perp}_p)_{p \in M}$ in place of  $(V_p, V^{\perp}_p)_{p \in M}$ in an analogous way. Then for each $p \in M$, we construct a smooth diffeomorphism $\imath_p:  (- 2 \|Df\|, 2\|Df\|)^3 \to M$ such that item \eqref{itm imathp0order} in the proposition holds, and
				   \aryst
			&&	   \partial_{2} \imath_p(t, 0, 0) \in V^{\perp}_p(\imath^u_p(t)) + E^u,  		\		   \partial_{1} \imath_p(0, t, 0) \in U^{\perp}_p(\imath^s_p(t)) + E^s, \\
			&&	   \partial_{3} \imath_p(t, 0, 0) \in  V_p(\imath^u_p(t)) + E^u,	\		   \partial_{3} \imath_p(0, t, 0) \in  U_p(\imath^s_p(t)) + E^s .  
				   \earyst
				   As $V_p$, $V^{\perp}_p$, $U_p$ and $U^{\perp}_p$ have uniformly bounded smooth norms, we may assume that $\imath_p$ has uniformly bounded smooth norm as well. Moreover, by construction, $p \mapsto \imath_p$ is $(1 - c(\esmall))$-H\"older continuous;  and $\{ f \mapsto \imath_p \}_{p \in M}$ is uniformly continuous.    In particular, item \eqref{item imathsmoothnorms}   holds.
				   Then it is direct to verify that $\{ \imath_p \}_{p \in M}$ satisfies item \eqref{itm imath1storder}  as well.
					\end{proof}

					\begin{proof}[Proof of Lemma \ref{lem torsionareclose}]  
						 
						Let $n$ be given by \eqref{eq rhoandn}.	Then $\rho \sim (\mu^u(p, n))^{-1}$.				
						Let $m$ be an integer such that 
                       \ary \label{eq C0muupm}
                       K \mu^u(p, m) \rho \sim 1.
                       \eary
                       Then $| n - m | \lesssim \log K$. By applying \eqref{eq torexpression0} to $\rho$ and $K \rho$, we obtain 
						\aryst
					&&	\tor^s(p, K \rho) - \tor^s(p, \rho)   =  	\frac{\mu^s(p, n)  \mu^u(p, n) }{ \mu^c(p, n)} \cdot	\frac{ \mu^c(f^m(p), n - m)}{\mu^s(f^m(p), n - m)  \mu^u(f^m(p), n - m) } \cdot  \\
					&& (   \tor^s(f^m(p), K \mu^u(p, m)\rho) -   \tor^s(f^m(p), \mu^u(p, m)\rho) ).  
						\earyst
						Then the first inequality in the lemma follows from \eqref{eq almostconversative},  \eqref{eq rhoandn},  \eqref{eq def flus} and \eqref{eq flutorbasicestimate}.
						
						We now prove the second inequality in the lemma.
						We may assume that $d(p, p') \leq \rho^{1/2}$ for otherwise the statement follows from \eqref{eq flutorbasicestimate}.
					   Let $m$ be the largest integer satisfying $m \leq n$ and $d(f^m(p), f^m(p')) \leq 1$. 
					   By projecting $p'$ to the local invariant manifolds of $p$, we deduce that for every $0 \leq i \leq m$, we have $d(f^i(p) ,f^i(p')) \leq \max(e^{- (m- i ) \lambda_g }, e^{c(\esmall) i} d(p, p') ) \leq  \max(e^{- (m- i ) \lambda_g }, e^{c(\esmall) i} \rho^{1/2} ) \lesssim e^{- (m- i ) \lambda_g/3 }$.  
					   By distortion estimate, we have $\mu^*(p, m)  \sim  \mu^*(p', m)$ for $* \in \{s, c, u\}$, and  moreover, $\mu^u(p, m)  d(p, p') \gtrsim 1 \sim  \mu^u(p, n) \rho$.
					   In particular, $n - m \lesssim \log (\langle d(p, p') / \rho \rangle)$. 
					   
                    	By \eqref{eq torexpression0}, we have
					    \begin{align} \label{eq differencetorsion} 
						&  | \tor^s(p, \rho) - 	\tor^s(p', \rho) | / (\flu^s(p, \rho)  / \rho )    \\
						 \lesssim &  \frac{\mu^c(p, n)}{\mu^s(p, n) \mu^u(p, n)}  \big ( \sum_{i = 0}^{m-1}	\frac{\mu^s(p, i)   \mu^u(p, i) }{ \mu^c(p, i)} \twist(f^i(p))  - \sum_{i = 0}^{m-1}	\frac{\mu^s(p', i)   \mu^u(p', i) }{ \mu^c(p', i)} \twist(f^i(p')) \big )  \nonumber \\
						 +& 
						  \frac{\mu^c(p, n)}{\mu^s(p, n) \mu^u(p, n)} \big (
						\frac{\mu^s(p, m)   \mu^u(p, m) }{ \mu^c(p, m)}   \tor^s(f^m(p), \mu^u(p, m)\rho) -  	\frac{\mu^s(p', m)   \mu^u(p', m) }{ \mu^c(p', m)}   \tor^s(f^m(p'), \mu^u(p', m)\rho)  \big ). \nonumber
						\end{align} 		
						By \eqref{eq almostconversative} and \eqref{eq flutorbasicestimate},
						the third line  of \eqref{eq differencetorsion} is bounded by
						\begin{align} \label{eq en-mCdpp'}
					C 	e^{(n - m) c(\esmall)} (\langle \log (\mu^u(p, m) \rho )  \rangle + \langle \log (\mu^u(p', m) \rho )  \rangle  )  \lesssim \langle d(p, p') / \rho \rangle^{c(\esmall)} \langle \log (\rho /  d(p, p')  )  \rangle.
						\end{align} 	 	 
						The  the second line of \eqref{eq differencetorsion} is bounded by
						\ary  \label{eq sumdifferenceofproductsoftheta}
						&&    \sum_{i = 0}^{m-1} \frac{\mu^c(f^i(p), n - i)}{\mu^s(f^i(p), n - i) \mu^u(f^i(p), n - i)}	 ( \twist(f^i(p))  - \twist(f^i(p')) )       \\
						&& +  \sum_{i = 0}^{m-1}    \frac{\mu^c(f^i(p), n - i)}{\mu^s(f^i(p), n - i) \mu^u(f^i(p), n - i)}( 1 - \frac{\mu^c(p, i)}{\mu^s(p, i) \mu^u(p, i)}  \frac{\mu^s(p', i)   \mu^u(p', i) }{ \mu^c(p', i)} ) \twist(f^i(p')). \nonumber 
						\eary 
						By \eqref{eq cisHolder}, \eqref{eq C0muupm},
						the first sum in \eqref{eq sumdifferenceofproductsoftheta} is bounded by 
						\begin{align*}
					  & C  \sum_{i=0}^{m-1}   \frac{\mu^c(f^i(p), n - i)}{\mu^s(f^i(p), n - i) \mu^u(f^i(p), n - i)}	 d( f^i(p), f^i(p') )^{1/2} \\
						\lesssim&      \sum_{i=0}^{m-1}   \frac{\mu^c(f^i(p), n - i)}{\mu^s(f^i(p), n - i) \mu^u(f^i(p), n - i)}	 e^{- (m- i ) \lambda_g/6 }    \lesssim \langle d(p, p') / \rho \rangle^{c(\esmall)}.
						\end{align*}
						
						We now estimate the second sum in  \eqref{eq sumdifferenceofproductsoftheta}. 
	Define
	\aryst
	\theta_i =  \frac{\mu^c( f^i(p), 1)}{\mu^s( f^i(p), 1 ) \mu^u( f^i(p), 1 )}  \frac{\mu^s( f^i(p'), 1 )   \mu^u( f^i(p'), 1 ) }{ \mu^c( f^i(p'), 1 )}.
	\earyst
	By \eqref{eq almostconversative},	we have $| \theta_0 \cdots \theta_j | \lesssim e^{c(\esmall) j}$.
	We have $| 1 - \theta_i | \lesssim  d(f^i(p), f^i(p'))^{1/2} \lesssim e^{- (m- i ) \lambda_g/6 }  $.
	Then   the second sum on the right hand side of \eqref{eq sumdifferenceofproductsoftheta} is bounded by
	\begin{align*}
	& C  \sum_{i = 0}^{m-1}     e^{c(\esmall) (n - i)}  | 1 - \theta_0 \cdots \theta_{i-1} | 
	\lesssim  \sum_{i = 0}^{m-1}   e^{c(\esmall) (n - i)}  ( | 1 - \theta_{i-1} | + \theta_{i-1} | 1 - \theta_{i-2} |  + \cdots +  \theta_1 \cdots \theta_{i-1} | 1 - \theta_{0} |) \\
	\lesssim&  \sum_{i = 0}^{m-1}    (m-i) e^{c(\esmall) (n - i)}   e^{- \lambda_g (m-i) /6 } \lesssim  \langle d(p, p') / \rho \rangle^{c(\esmall)}. 
	\end{align*}

Summarizing the above estimates, we see that  the right hand side of \eqref{eq sumdifferenceofproductsoftheta} is bounded by $C \langle d(p, p') / \rho \rangle^{c(\esmall)}$.
					   This completes the proof of the second inequality.
					\end{proof}
					
					\begin{proof}[Proof of Lemma \ref{lem mainapproximation}]

						The proof follows   that of \cite[Lemma 4.7]{Tsu}. We will detail the proofs of \eqref{eq tildethetasalongs2} and   \eqref{eq thetasalongu3}, and omit the proofs of \eqref{eq thetaualongu1} and \eqref{eq tildethetaualongs4} as they are similar.

						Denote $q = \imath_p(w, z)$.
					By the $(1-c(\esmall))$-H\"olderness of $E^c_*$ and $c < 1 / 4$, we  see $\| ( \imath_p^{-1} \circ \imath^s_q)(\tau) -  (w + (\tau, 0), z) \| + |z|  =   \cO(   \rho^{1+c})$, and hence  \eqref{eq tildethetasalongs2} is reduce to proving $|   \vartheta^s_p( ( \imath_p^{-1} \circ \imath^s_q)(\tau) )  - \vartheta^s_p( ( \imath_p^{-1} \circ \imath^s_q)(0)  )   | = \cO( (C_0 \rho)^{2 - 1/4}) $.
						By \eqref{def varthetap},  for $V^{s}(\tau) = ( \imath_p^{-1} \circ \imath^s_{q})'(\tau)$ and $V^{u}(\tau) = ( \imath_p^{-1} \circ \imath^u_{ \imath^s_q(\tau) })'(0)$
						\aryst
						( \vartheta^u_p( ( \imath_p^{-1} \circ \imath^s_q)(\tau) ) , \vartheta^s_p( ( \imath_p^{-1} \circ \imath^s_q)(\tau) ), 1 )  \cdot V^{*}(\tau) = 0, \quad * \in \{s, u\}.
						\earyst
						By the H\"olderness of $E^s, E^u$,
	 we have $V^u(\tau) = (1, 0, 0) + \cO_{\esmall}( (C_0 \rho)^{1-c(\esmall)})$,  $V^s(\tau) = (0,  1, 0) + \cO_{\esmall}( (C_0 \rho)^{1-c(\esmall)})$ and $\| ( \vartheta^u_p, \vartheta^s_p ) \| = \cO_{\esmall}( (C_0 \rho)^{1 - c(\esmall)})$. We deduce that 		$\vartheta^s_p( ( \imath_p^{-1} \circ \imath^s_q)(\tau) ) = - (0, 0, 1) \cdot V^s(\tau) + \cO_{\esmall}( (C_0 \rho)^{2 - c(\esmall )})$.
					Note that $V^s(\tau) = ( \imath_p^{-1} \circ \imath^s_{q})'(\tau)$, and as a result
						\aryst
					 \|	 (0, 0, 1) \cdot ( V^s(\tau) -  V^s(0) ) \| \lesssim \tau  \sup_{| \tau_0 | < C_0 \rho, q_0 \in B(p,  C_0 \rho) } \|  ( \imath_p^{-1} \circ \imath^s_{q_0})''(\tau_0) \| = \cO_{\esmall}((C_0 \rho)^{2 - c(\esmall)}).
						\earyst
						We have used that $(\imath_p^{-1} \circ \imath^s_p)''(0)  = 0$ in the above argument. This concludes the proof of \eqref{eq tildethetasalongs2}.
						
											By \eqref{eq tildethetasalongs2} and that $ \flu^s(p, \rho) \gtrsim \rho^{1 + c}$ (see \eqref{eq flutorbasicestimate}), the proof of  \eqref{eq thetasalongu3}  is reduced to the case  $w = (x, 0)$.     By Remark \ref{rem:Flu}, Definition \ref{def templatefunction} and Lemma \ref{lem torsionareclose}, we have 
						$|  \vartheta^s_p(x +  \tau, 0, 0)  - \vartheta^s_p(x, 0, 0) - \tor^s(p, \rho) \tau | \lesssim  C_0^{c(\esmall)} C_1  \flu^s(p, \rho)$.  
						Since $E^c_*$ is $(1-c(\esmall))$-H\"older and $|z| \leq   \rho^{1 + c}$, we have $	|  \vartheta^s_p(x +  \tau, 0, z)  -  \vartheta^s_p( x +  \tau, 0, 0) | = \cO(\rho^{1 + c/2}) = \cO( \flu^s(p, \rho) )$. Thus  \eqref{eq thetasalongu3} holds for $w = (x, 0)$.  This finishes the proof. 
					\end{proof}

					\begin{proof}[Proof of Lemma \ref{lem behaviorofDeltalaongorbit}]

						Denote $\rho = 	 \adscale(\omega, p)$ and $\rho' =  \adscale(\omega', f^n(p))$. By \eqref{eq adscalesquarerootgrowth} and \eqref{eq pushforwardofadscale}, we have $| \log (\rho / \rho') | \lesssim n + | \log(\omega / \omega') |$.
						We claim that 
						\ary \label{eq pushforwardofflu} 
					   	| \log	\frac{ 	\flu^s( f^n(p), \rho' ) / \rho' }{ 	\flu^s( p, \rho) / \rho} | \lesssim   c(\esmall)(n  + |\log ( \rho' / \rho ) |)+ \langle \log (n + 1) \rangle.
						\eary
						Indeed, let $m$, $m'$ be integers satisfying $\mu^u(p, m) = 1/\rho$ and  $\mu^u(f^n(p), m') = 1/ \rho'$. Then the left hand side of \eqref{eq pushforwardofflu} equals to
						\aryst
						|  \log  \frac{\mu^u(f^{n}(p), m' ) \mu^s(f^{n}(p), m' ) }{\mu^c(f^{n}(p), m' )} - \log  \frac{\mu^u( p , m  ) \mu^s( p , m  )}{\mu^c( p , m  )}   |.
						\earyst
						Claim \eqref{eq pushforwardofflu} then follows  from \eqref{eq almostconversative} and the following simple inequality about  symmetric difference:
						\aryst
						| \{ 0, \cdots, m-1 \}  \Delta \{ n, \cdots, n+m'-1 \} | \lesssim n + | \log ( \rho' / \rho )  |.
						\earyst

						Now by  \eqref{eq almostconversative}, \eqref{eq torexpression0}, \eqref{eq cpmexpression} and Lemma \ref{lem torsionareclose}, we have
						\begin{align*}
						& | \tor^s(p, \rho)  -	\frac{\mu^s(p, n)   \mu^u(p, n) }{ \mu^c(p, n)}   \tor^s(f^n(p),  \rho' ) - \frac{ \twist(p, m) }{ \mu^c(p, n)}   | \\
						=&  | \tor^s(p, \rho) - \tor^s(p,  \mu^u(p, n)^{-1} \rho')  | 
						\lesssim  (n + 1)	e^{n c(\esmall)} | \log (\rho / \rho') | ( \rho/\rho' +  \rho' / \rho  )^{c(\esmall)}  \flu^s(p, \rho) / \rho. 
						\end{align*}		
						We have a similar expression relating $\tor^u$ and $\flu^u$.
						Then we can deduce
						\aryst
						|   \midtor(p, \rho) -  \frac{\mu^s(p, n)   \mu^u(p, n) }{ \mu^c(p, n)}   \midtor(f^n(p), \rho')  | \lesssim (n + 1)	e^{n c(\esmall)}  | \log (\rho / \rho') | ( |\frac{\rho}{\rho'}| + |\frac{\rho'}{\rho}|)^{c(\esmall)} \max(  \flu^s(p, \rho), \flu^u(p, \rho) ) / \rho.
						\earyst
						We may then conclude the proof by \eqref{eq pushforwardofflu} and \eqref{eq pushforwardofadscale}.
					\end{proof}

					\begin{proof}[Proof of Lemma \ref{lem adaptedscaleistame}]

				The argument here is similar to the one in the proof of Lemma \ref{lem torsionareclose}.
				
			Denote $C_0 := \langle d(p, p') /  \adscale(\omega, p) \rangle$.	By  Lemma \ref{lem behaviorofDeltalaongorbit}, \eqref{eq adscalesquarerootgrowth} and by symmetry, it suffices to assume that $\omega = \omega'$ and show that 
				\aryst
				\log	\frac{ 	\adscale(\omega, p')}{ 	\adscale(\omega, p)} \lesssim   c(\esmall) \log C_0, \ 
				\log	\frac{ 	\Delta(\omega, p')}{ 	\Delta(\omega, p)}  \lesssim  \log\langle \log C_0 \rangle +  c(\esmall) \log C_0.
				\earyst
				
				Denote $\rho = \adscale(\omega, p)$ and $\rho' = \adscale(\omega, p')$.
				Without loss of generality, we may assume that $C_0 \leq \omega^{1/4}$ for otherwise the statement follows from \eqref{eq adscalesquarerootgrowth}. 	
				As in the proof of Lemma \ref{lem torsionareclose}, we may obtain distortion estimates $\mu^*(p, m)  \sim  \mu^*(p', m)$ for $* \in \{s, c, u\}$ and every $m$ such that $\sup_{i = 0}^{m}d(f^i(p), f^i(p')) \leq 1$.
			 Then by \eqref{eq almostconversative}, it is straightforward to see that  
				$\rho'    \lesssim C_0^{c(\esmall)}\rho$.
				Analogous argument gives 
				$\flu^s(p', \rho') \lesssim  C_0^{c(\esmall)}  \flu^s(p, \rho), \ 	\flu^u( p', \rho' ) \lesssim  C_0^{c(\esmall)}  \flu^u(p, \rho)$.
				By the above argument, we have already shown that 
				$| \log (\rho' / \rho) | \lesssim c(\esmall) \log C_0$.  
				Then by Lemma \ref{lem torsionareclose}, we have 
				\aryst
				 	| \tor^s(p, \rho) - \tor^s(p', \rho') | \leq | \tor^s(p, \rho) - \tor^s(p', \rho) | + 	| \tor^s(p', \rho) - \tor^s(p', \rho') |  \lesssim  \langle \log C_0 \rangle C_0^{c(\esmall)}  \flu^s(p, \rho) / \rho .
				\earyst
				A parallel estimate as above holds for $\tor^u$ and $\flu^u$ as well.
				Then
				\begin{align*}
	 			&| \tor^s(p', \rho') - \tor^u(p', \rho') | 
				\leq	| \tor^s(p, \rho) - \tor^u(p, \rho) | + 	| \tor^s(p, \rho) - \tor^s(p', \rho') |  + 	| \tor^u(p, \rho) - \tor^u(p', \rho') |  \\ 
				\leq& 	| \tor^s(p, \rho) - \tor^u(p, \rho) |   +   \langle \log C_0 \rangle C_0^{c(\esmall)}  \max(\flu^s(p, \rho),  \flu^u(p, \rho)) / \rho.
				\end{align*}
				The proof follows from combining the above estimates.
			\end{proof}

					\section{Dynamical wave-packet transform: Computations}  \label{app DWPTcomputations}

					\begin{proof}[Proof of Lemma  \eqref{lem intPhiadwdz}] 				 
	Note that inequality \eqref{eq integralphiaupperbound} follows immediately from		\eqref{eq integralphiaupperbound0} and
						\aryst
						 \widehat\Phi_{a, \ell}(\eta, w, z \mid \tw, \tz)  \lesssim  \widehat\Phi_{a, \ell/2}(\eta, \tw, \tz \mid w, z),
						\earyst
						which follows from Lemma \ref{lem adaptedscaleistame}.
						
 				Now we prove \eqref{eq integralphiaupperbound2}.
 				Denote $c = 1/20$. We have $\beta_1 > 1/2 + c + 1/4$. 
						First we note that restricted to $z$ with $\max(|z - \hz|, |z - \tz|) < \langle  \eta \rangle^{-1/2 - c}$, we can apply Lemma \ref{lem adaptedscaleistame} to see that for any   $K \geq 1$, under $\langle r_{a}(\eta, \tw, \tz)^{-1}	( w - \tw ) \rangle \in [2^{K-1}, 2^{K})$ 
						\aryst
					 \widehat\Phi_{a, \ell}(\eta, w, z \mid \tw, \tz) \lesssim  2^{- K \ell  /2 }\widehat\Phi_{a, \ell/2}(\eta, \tw,  \tz \mid w, z), \quad   \widehat\Phi_{a, \ell}(\eta, w, z \mid \hw, \hz) \lesssim   2^{c K \ell }\widehat\Phi_{a, \ell}(\eta, \tw,  \tz \mid \hw, \hz).
						\earyst
					Sum up the contributions for all  $K \geq 1$, we can bound the restriction of integral  to such $z$ by
					\aryst
			 	\int \widehat\Phi_a(\eta, \tw,  \tz \mid w, z) \widehat\Phi_a(\eta, \tw,  \tz \mid \hw, \hz) dw dz \leq C(\ell)    \langle \langle \eta \rangle^{\beta_1}(\hz - \tz) \rangle^{-\ell} \langle  r_{a}(\eta, \tw, \tz)^{-1}(\hw - \tw) \rangle^{-\ell}. 
					\earyst
					On the other hand, we may use the bound $ \langle \eta \rangle^{-1/2 - c} \leq r_a(\eta, w, z) \leq \langle \eta \rangle^{-1/2 + c}$ from  \eqref{eq adscalesquarerootgrowth} to show that  the restriction of integral to the complement is bounded by
						\aryst
						&& C \int_{\max(|z - \hz|, |z - \tz|) \geq \langle  \eta \rangle^{-1/2 - c}} \langle \eta \rangle^{\beta_1} \langle \langle \eta \rangle^{\beta_1} (\tz - z) \rangle^{- 2 \ell }     \langle \langle \eta \rangle^{\beta_1} (\hz - z) \rangle^{- 2 \ell } \cdot  \langle \eta \rangle^{ 1 + 2c}   \\
						&& \langle \langle \eta \rangle^{1/2 - c} (\tw - w) \rangle^{- 2\ell}  \langle \langle \eta \rangle^{1/2 - c} (\hw - w) \rangle^{- 2\ell} dw dz  \\
						&\leq& C(\ell) \langle \eta \rangle^{4c- \ell / 4} \langle \eta^{\beta_1}(\hz - \tz) \rangle^{-\ell} \langle \langle \eta \rangle^{1/2 - c} (\hw - \tw) \rangle^{-\ell} \\
						&\leq& C(\ell) \langle \eta \rangle^{4c +  2 c \ell  -  \ell/4} \langle \eta^{\beta_1}(\hz - \tz) \rangle^{-  \ell} \langle r_a(\eta, \tw, \tz)^{-1} (\hw - \tw) \rangle^{- \ell}.
						\earyst
						The proof is completed by putting together the above estimates.  
					\end{proof}

					\begin{proof}[Proof of Lemma \ref{lem decompositiononthereal}]

					Let $\cQ = \{ Q \}$ be a partition of $\R$ into intervals, separated by points $\{ \pm  n^{1/(1 - \beta_1)} \}_{n \in \Z_+}$.  There is an absolute constant $D > 1$ such that 
					we have $| Q | / \langle \eta_{Q}\rangle^{\beta_1} \in (D^{-1}, D)$ for every $Q \in \cQ$ where $\eta_{Q}$ denotes an endpoint of $Q$ with the maximal absolute value. 
						Clearly,  there is a partition of unity by non-negative  smooth functions $\sum_{Q \in \cQ} \chi_Q = 1$						such that for each $Q \in \cQ$ we have
						\ary \label{eq supportofchiQ}
						| \partial^{l} \chi_Q | \leq C(l) |Q|^{-l}, l \geq 0 \ \mbox{ and }  \
						\supp(\chi_Q) \subset \widehat{Q}
						\eary
						where the set $\widehat{Q}$ is the union of $Q$ and its adjacent intervals.  
						 
						Let $h_Q$ be a compactly supported $C^\infty$ function
						such that 
						\ary \label{eq supportofhQ}
						\supp(h_Q) \subset B(0,  (8 D)^{-1} |Q|^{-1}) \subset  B(0,   \langle \eta_{Q} \rangle^{- \beta_1}/8).
						\eary
						We may also assume that  $\| h_Q \|_{L^2} = \| \widehat h_Q \|_{L^2} =  |Q|^{1/2}$ and  there is $C = C(\delta) > 0$  with
						\ary
						\label{eq upperlowerboundforwidehathQ}
						C < \inf_{\eta' \in \widehat{Q}} | \widehat h_Q(\eta')| < C^{-1}.
						\eary
						We may require $\widetilde h_Q(x) = e^{- i \eta_Q x} h_Q(x)$ satisfies $| \partial^{l} 	\widetilde h_Q(x)  | \leq C(l, k) | Q |^{l  + 1}	\langle |Q| x \rangle^{- k}$ for any $l, k \geq 0$.

			   	      By \eqref{eq supportofchiQ} and \eqref{eq upperlowerboundforwidehathQ},   we may define $q_Q$ by $ \widehat{q_Q}(\eta') =  \chi_{Q}(\eta') / \widehat h_Q(\eta')$, $\eta' \in \R$.  By construction, we see that $ \chi_Q = \widehat{ q_Q * h_Q }$. Moreover, we may require $ \widetilde q_Q(x) = e^{- i \eta_Q x} q_Q(x)$ satisfy
					   \ary \label{eq smoothnessofwidetildeqQ}
					| \partial^{l} \widetilde q_Q(x)  | \leq C(l, k) | Q |^{l  + 1}	\langle |Q| x \rangle^{- k}, \ l, k \geq 0.
					   \eary

						We define
						\aryst
				 	\rho_{\parallel}(\eta, z) = \sum_{Q \in \cQ}  |Q|^{-1/2}  1_{Q}(\eta)    q_Q(z), \quad 				  \rho_{\parallel}^{\dagger}(\eta, z) = \sum_{Q \in \cQ} |Q|^{-1/2}  1_{Q}(\eta) h_Q(z).
						\earyst
						Then we clearly have that  $	\| \rho_{\parallel} \|_{L^\infty L^2}, \|  \rho_{\parallel}^{\dagger} \|_{L^\infty L^2}   < \infty$. 		By construction and \eqref{eq supportofhQ}, we see that \eqref{eq supporth} holds.
						Moreover, by \eqref{eq smoothnessofwidetildeqQ}, we see that 	  \eqref{lem partialtildeh} also holds.  Note that we also have
						\aryst
						( \rho_{\parallel}(\eta, \cdot) * \rho^{\dagger}_{\parallel}(\eta, \cdot))(z) = \sum_{Q \in \cQ} |Q|^{-1} 1_{Q}(\eta) \chi_{Q}(z).
						\earyst

						Define $V$ and $V^{\dagger}$ as in \eqref{eq VVdagger}.
						Take an arbitrary $u \in C^{\infty}_0(\R)$. We have
						\begin{align*}
 				& 	V^{\dagger} V u(\tz) 	=   \sum_{Q}  |Q|  \int ( V u(\eta_Q, \cdot) *  \rho_{\parallel}^{\dagger}(\eta_Q, \cdot) ) (\tz) dz \sum_{Q}  |Q|  \int ( u * \rho_{\parallel}(\eta_Q, \cdot) *  \rho_{\parallel}^{\dagger}(\eta_Q, \cdot) ) (\tz) dz    \\
						 = &    \sum_{Q}    \int  \hat u(\eta')  \chi_{Q}(\eta')    e^{i \eta' \tz} d\eta' 
						 =    \int  \hat u(\eta') e^{i \eta' \tz} d\eta' = u(\tz).
						\end{align*}
						This verifies the identity in \eqref{eq VVdagger}.

						\detail{
						We have 
						\aryst
						V^* v(\tz) = \int \overline{\rho_{\parallel}(\eta, z - \tz)} v(\eta, z) d\eta dz.
						\earyst
						
						We have 
						\aryst
						(V^{\dagger})^*u(\eta, z) = \int \overline{\rho^{\dagger}_{\parallel}(\eta, \tz - z) } u(\tz) d\tz.
						\earyst
						
						}
						
						To prove the boundedness of $V$, we note that the kernel $\cK_{V^* V}$ of $V^* V$ satisfies
						\aryst
					|	\cK_{V^*V}(\tz \mid \hz) | = | \int \overline{ \rho_{\parallel}(\eta, z - \tz) } \rho_{\parallel}(\eta, z - \hz) d\eta dz | \leq C(\ell) \int \langle \langle \eta \rangle^{\beta_1}(\tz - \hz) \rangle^{- \ell}  d\eta.
						\earyst			
				 	To prove the boundedness of $V^{\dagger}$, we note that the kernel $\cK_{(V^{\dagger})^* V^{\dagger}}$ of $(V^{\dagger})^* V^{\dagger}$ satisfies
						\aryst
						|	\cK_{(V^{\dagger})^* V^{\dagger}}(\eta, z \mid \eta', z') | = | \int \overline{ \rho^{\dagger}_{\parallel}(\eta, \tz - z) } \rho^{\dagger}_{\parallel}(\eta', \tz - z')  d\tz | \leq C(\ell)  \langle \eta_{min}^{\beta_1}(z - z') \rangle^{- \ell}    \langle  \eta_{max}^{- \beta_1}(\eta - \eta') \rangle^{- \ell}
						\earyst
						where $\eta_{min} = \min(\langle \eta \rangle, \langle \eta' \rangle)$ and $\eta_{max} = \max(\langle \eta \rangle, \langle \eta' \rangle)$. The boundedness then follows from Schur's test.
					\end{proof}

				\end{document}